\DeclareMathOperator{\Rep}{Rep}
\DeclareMathOperator{\Div}{Div}
\DeclareMathOperator{\Kern}{Ker}
\DeclareMathOperator{\Bun}{Bun_{G}}
\DeclareMathOperator{\Bung}{Bun_{G'}}
\DeclareMathOperator{\Bunz}{Bun_{Z}}
\DeclareMathOperator{\Bunl}{Bun_{G,\lambda}}
\DeclareMathOperator{\Bunp}{Bun_{G}^{par}}
\DeclareMathOperator{\Buns}{Bun_{S}}
\DeclareMathOperator{\Bunpl}{Bun_{G,\lambda}^{par}}
\DeclareMathOperator{\Spec}{Spec}
\DeclareMathOperator{\inv}{inv}
\DeclareMathOperator{\supp}{supp}
\DeclareMathOperator{\Id}{Id}
\DeclareMathOperator{\Hom}{Hom}
\DeclareMathOperator{\soc}{soc}
\DeclareMathOperator{\End}{End}
\DeclareMathOperator{\dime}{dim}
\DeclareMathOperator{\codim}{codim}
\DeclareMathOperator{\Conv}{Conv}
\DeclareMathOperator{\Gr}{Gr}
\DeclareMathOperator{\Aut}{Aut}
\DeclareMathOperator{\ad}{ad}
\DeclareMathOperator{\Ad}{Ad}
\DeclareMathOperator{\Ker}{Ker}
\DeclareMathOperator{\Tr}{Tr}
\DeclareMathOperator{\car}{car}
\DeclareMathOperator{\rg}{rg}
\DeclareMathOperator{\occ}{occ}
\DeclareMathOperator{\out}{Out}
\DeclareMathOperator{\Res}{Res}
\DeclareMathOperator{\Gal}{Gal}
\DeclareMathOperator{\val}{val}
\DeclareMathOperator{\mi}{min}
\DeclareMathOperator{\amp}{amp}
\DeclareMathOperator{\act}{act}
\DeclareMathOperator{\Lie}{Lie}
\DeclareMathOperator{\cl}{cl}
\DeclareMathOperator{\Fr}{Fr}
\DeclareMathOperator{\vol}{vol}
\newtheorem{thm}{Théorème}[section]
\newtheorem{prop}[thm]{Proposition}
\newtheorem{lem}[thm]{Lemme}
\newtheorem{defi}[thm]{Définition}
\newtheorem{cor}[thm]{Corollaire}
\newcommand{\rmq}{\noindent\textbf{Remarque :}}
\newcommand{\cH}{\mathcal{H}}
\newcommand{\cR}{\mathcal{R}}
\newcommand{\GL}{GL}
\newcommand{\co}{\mathcal{O}}
\newcommand{\bv}{\bar{v}}
\newcommand{\so}{\textbf{SO}}
\newcommand{\cov}{\mathcal{O}_{v}}
\newcommand{\cobv}{\bar{\mathcal{O}}_{v}}
\newcommand{\cP}{\mathcal{P}}
\newcommand{\cB}{\mathcal{B}}
\newcommand{\cL}{\mathcal{L}}
\newcommand{\cm}{\mathcal{M}}
\newcommand{\f}{\phi}
\newcommand{\bO}{\textbf{O}}
\newcommand{\g}{\gamma}
\newcommand{\la}{\lambda}
\newcommand{\eps}{\epsilon}
\newcommand{\chl}{\overline{\mathcal{H}}_{\lambda}}
\newcommand{\chm}{\overline{\mathcal{H}}_{\lambda}}
\newcommand{\cmdan}{\overline{\mathcal{M}}_{\lambda}^{ani}}
\newcommand{\tcmdanf}{\overline{\mathcal{M}}_{\lambda,\infty}^{ani,\flat}}
\newcommand{\cma}{\overline{\mathcal{M}}_{\lambda,v}(a)}
\newcommand{\cmaH}{\overline{\mathcal{M}}_{\lambda,v}^{H}(a)}
\newcommand{\tcmdan}{\overline{\mathcal{M}}_{\lambda,\infty}^{ani}}
\newcommand{\tcPan}{\tilde{\mathcal{P}}^{ani}}
\newcommand{\cPaan}{\mathcal{P}^{aug,ani}}
\newcommand{\cPinf}{\mathcal{P}^{\infty,ani}}
\newcommand{\abdan}{\mathcal{A}_{\lambda}^{ani}}
\newcommand{\abdban}{\mathcal{A}_{\lambda}^{ani,\flat}}
\newcommand{\tabdan}{\tilde{\mathcal{A}}_{\lambda}^{ani}}
\newcommand{\tabdanf}{\tilde{\mathcal{A}}_{\lambda}^{ani,\flat}}
\newcommand{\tabd}{\tilde{\mathcal{A}}_{\lambda}}
\newcommand{\tabdk}{\tilde{\mathcal{A}}_{\lambda}(\bar{k})}
\newcommand{\tabdp}{\tilde{\mathcal{A}}_{\lambda,\psi}}
\newcommand{\tabdi}{\tilde{\mathcal{A}}_{\lambda,(I_{-},W_{-})}}
\newcommand{\tabdde}{\tilde{\mathcal{A}}_{\lambda,\delta}}
\newcommand{\abdde}{\mathcal{A}_{\lambda,\delta}}
\newcommand{\tabdeth}{\tilde{\mathcal{A}}_{\eta^{*}\lambda,H}}
\newcommand{\abdeth}{\mathcal{A}_{\eta^{*}\lambda,H}}
\newcommand{\tabdethb}{\tilde{\mathcal{A}}_{\eta^{*}\lambda,H}^{bon}}
\newcommand{\tabdethanf}{\tilde{\mathcal{A}}_{\eta^{*}\lambda,H}^{ani,\flat}}
\newcommand{\bql}{\overline{\mathbb{Q}}_{l}}
\newcommand{\tabdanka}{\tilde{\mathcal{A}}_{\lambda,\kappa}^{ani}}
\newcommand{\cmdo}{\mathcal{M}_{\lambda}}
\newcommand{\cmD}{\mathcal{M}_{D}}
\newcommand{\cmDan}{\mathcal{M}_{D}^{ani}}
\newcommand{\cmd}{\overline{\mathcal{M}}_{\lambda}}
\newcommand{\cmdaan}{\overline{\mathcal{M}}^{aug,ani}_{\lambda}}
\newcommand{\cmdhp}{\overline{\mathcal{M}}_{\lambda}^{\heartsuit, +}}
\newcommand{\cmdb}{\overline{\mathcal{M}}_{\lambda}^{\flat}}
\newcommand{\cmdbD}{\overline{\mathcal{M}}_{\lambda}^{\flat,\leq d}}
\newcommand{\cmdh}{\overline{\mathcal{M}}_{\lambda}^{\heartsuit}}
\newcommand{\cmdhk}{\overline{\mathcal{M}}_{\lambda}^{\heartsuit}(\bar{k})}
\newcommand{\cmda}{\overline{\mathcal{M}}_{\lambda}(a)}
\newcommand{\cmavH}{\overline{\mathcal{M}}_{\lambda,v}^{H}(a_{H})}
\newcommand{\cmdaa}{\overline{\mathcal{M}}_{\lambda}^{aug}}
\newcommand{\kc}{\mathfrak{C}_{+}}
\newcommand{\kch}{\mathfrak{C}_{+,H}}
\newcommand{\kC}{\mathfrak{C}}
\newcommand{\kD}{\mathfrak{D}}
\newcommand{\kCH}{\mathfrak{C}_{H}}
\newcommand{\kcd}{\mathfrak{C}_{+}^{\lambda}}
\newcommand{\kcdr}{\mathfrak{C}_{+}^{\lambda,rs}}
\newcommand{\kcdfr}{\mathfrak{C}_{+}^{\lambda,f-rs}}
\newcommand{\kg}{\mathfrak{g}}
\newcommand{\kt}{\mathfrak{t}}
\newcommand{\kX}{\mathfrak{X}}
\newcommand{\ev}{ev}
\newcommand{\bo}{\textbf{O}}
\newcommand{\bh}{\textbf{H}}
\newcommand{\cQ}{\mathcal{Q}}
\newcommand{\abd}{\mathcal{A}_{\lambda}}
\newcommand{\abD}{\mathcal{A}_{D}}
\newcommand{\abDan}{\mathcal{A}_{D}^{ani}}
\newcommand{\abda}{\mathcal{A}^{aug}_{\lambda}}
\newcommand{\abdaan}{\mathcal{A}^{aug,ani}_{\lambda}}
\newcommand{\abdanf}{\mathcal{A}_{\lambda}^{ani,\flat}}
\newcommand{\abdbon}{\mathcal{A}_{\lambda}^{bon}}
\newcommand{\abde}{\mathcal{A}_{\lambda,H}}
\newcommand{\abdae}{\mathcal{A}^{aug}_{\lambda,H}}
\newcommand{\abdah}{\mathcal{A}^{aug,\heartsuit}_{\lambda}}
\newcommand{\abdb}{\mathcal{A}_{\lambda}^{\flat}}
\newcommand{\abdbf}{\mathcal{A}_{\lambda}^{ani,\flat}}
\newcommand{\abdbD}{\mathcal{A}_{\lambda}^{\flat,\leq d}}
\newcommand{\abdD}{\mathcal{A}_{\lambda}^{\leq d}}
\newcommand{\abdhk}{\mathcal{A}_{\lambda}^{\heartsuit}(\bar{k})}
\newcommand{\abdk}{\mathcal{A}_{\lambda}(\bar{k})}
\newcommand{\abdh}{\mathcal{A}_{\lambda}^{\heartsuit}}
\newcommand{\abdhp}{\mathcal{A}_{\lambda}^{\heartsuit,+}}
\newcommand{\abdd}{\mathcal{A}_{\lambda}^{\diamondsuit}}
\newcommand{\abddk}{\mathcal{A}_{\lambda}^{\diamondsuit}(\bar{k})}
\newcommand{\fx}{F_{x}}
\newcommand{\fv}{F_{v}}
\newcommand{\grl}{\overline{\Gr}_{\la}}
\newcommand{\bfv}{\bar{F}_{v}}
\newcommand{\La}{\Lambda}
\newcommand{\ab}{\mathbb{A}}
\newcommand{\bF}{\mathbb{F}}
\newcommand{\abdank}{\mathcal{A}_{\lambda}^{ani}(\bar{k})}
\title{Géométrisation du lemme fondamental pour l'algèbre de Hecke}
\author{Alexis Bouthier}
\begin{document}

\maketitle

\tableofcontents
\selectlanguage{francais}
\textbf{Abstract:}
This article is the third one of the series \cite{Bt1}-\cite{Bt2} on Hitchin-Frenkel-Ngô fibration and Vinberg semigroup. Ngô \cite{N} proved the fundamental lemma for Lie algebras in equal characteristics as a consequence of geometric stabilization. This article shows the geometric stabilization in the group case which was conjectured by Frenkel and Ngô \cite{FN}. Along the proof, we establish an identity between orbital integrals, which is analog to Langlands-Shelstad fundamental lemma. From this equality, we deduce a formula for Langlands-Shelstad transfer factors which was previously only known for Lie algebras.
\bigskip

\textbf{Résumé:}
Cet article est le troisième de la série \cite{Bt1}-\cite{Bt2} sur la fibration de Hitchin-Frenkel-Ngô et le semigroupe de Vinberg. Ngô \cite{N} a démontré le lemme fondamental pour les algèbres de Lie en égales caractéristiques comme conséquence de la stabilisation géométrique. Cet article s'attache à démontrer la stabilisation géométrique dans le cas des groupes qui a été conjecturé par Frenkel et Ngô \cite{FN}. Il permet également d'en déduire une identité entre intégrales orbitales analogue au lemme fondamental de Langlands-Shelstad pour l'algèbre de Hecke. Cette identité nous permet en particulier d'obtenir une formule pour les facteurs de transfert pour les groupes, connue auparavant que pour les algèbres de Lie.
\bigskip

\section*{Introduction}
\subsection{La fibration de Hitchin-Frenkel-Ngô}
Cette fibration est l'outil de base pour entreprendre une étude géométrique des intégrales orbitales pour les groupes. Elle a été introduite par Frenkel et Ngô dans \cite{FN}.
\medskip

Soit $k$ un corps algébriquement clos ou fini. Soit $X$ une courbe projective lisse géométriquement connexe, $F$ son corps de fonctions. Pour  cette introduction, on considère $G$ semisimple, simplement connexe et déployé sur $k$. Soit $(B,T)$ une paire de Borel. On note $X_{*}(T)$ le réseau des cocaractères et $X_{*}(T)^{+}$ le cône des cocaractères dominants. On considère $\chi:G\rightarrow T/W$, le morphisme issu du théorème de Chevalley \cite[Thm. 6.1]{S}, dit  polynôme caractéristique. Comme $G$ est simplement connexe c'est un espace affine de dimension $r$ dont les coordonnées sont les $\chi_{i}:=\Tr(\rho_{\rho_{\omega_{i}}})$ avec $\rho_{\omega_{i}}$ la représentation irréductible de plus haut poids $\omega_{i}$.
On se donne une somme formelle sur les points fermés $\la=\sum\limits_{x\in X}\la_{x}[x]$ avec les $\la_{x}\in X_{*}(T)^{+}$ nuls presque partout. Posons $S=\supp(\la):=\{x\in X~\vert~\la_{x}\neq 0\}$.
Partant de telles données, on peut construire la fibration de Hitchin-Frenkel-Ngô:
\begin{center}
$f:\cmd\rightarrow\abd$.
\end{center}
L'espace total $\cmd$ classifie les paires $(E,\phi)$ constituées d'un $G$-torseur $E$ sur $X$ et d'un automorphisme de $E$ en dehors de $S$, avec des pôles bornés par $\la_{s}$ en chaque point $s\in S$. La base de Hitchin $\abd$ est l'espace affine classifiant les polynômes caractéristiques des sections $\phi$; quant à la flèche $f$, elle est donnée par le polynôme caractéristique de la section $\phi$.
On a la description adélique suivante de la fibration: 
\begin{center}
$\cmd(k):=G(F)\backslash\{(\g,(g_{x}))\in G(F)\times G(\ab)/G(\co_{\ab})\vert~ g_{x}^{-1}\g g_{x}\in\overline{G(\co_{x})\pi_{x}^{\la}G(\co_{x})}\}$,
\end{center}
où $G(F)$ agit par $h.(\g,(g_{x}))=((h\g h^{-1},(hg_{x})))$. La base de Hitchin admet la description suivante:
 \begin{center}
 $\abd(k)=\{(a_{1},\dots,a_{r})\in F^{r}\vert~ \forall~ x, a_{i}\in\pi_{x}^{-\left\langle \omega_{i},-w_{0}\la\right\rangle}\co_{x}\}$.
 \end{center}
Enfin, la flèche $f$ revient à considérer le $r$-uplet $(\chi_{i}(\g))_{1\leq i\leq r}$.
\medskip

\subsection{La stratégie de Ngô}
Partant de la fibration de Hitchin usuelle $f_{D}:\cmD\rightarrow\abD$, où $D$ est un diviseur sur la courbe, Ngô montre qu'il existe un certain ouvert $\abDan$, dit anisotrope, au-dessus duquel la fibration $f_{D}^{ani}:\cmDan\rightarrow\abDan$ est propre. Comme de surcroît, l'espace source est lisse, nous savons par Deligne et Beilinson-Bernstein-Deligne-Gabber que le complexe $f_{D,*}^{ani}\overline{\mathbb{Q}}_{l}$ est pur et semisimple. Tout le jeu est alors d'étudier les supports des faisceaux pervers qui interviennent dans cette décomposition.

Au groupe $G$, Langlands \cite{Lan} et Kottwitz \cite {Kot3} associent une certaine famille de groupes que l'on appelle des groupes endoscopiques.
A chaque groupe endoscopique $H$, la base $\mathcal{A}_{D,H}^{ani}$ s'identifie à un sous-schéma fermé de $\abDan$. On note $\nu_{H}:\mathcal{A}_{D,H}^{ani}\rightarrow\abDan$ l'immersion fermée.
De plus, sur le complexe $f_{D,*}^{ani}\bql$, nous avons une action naturelle d'un groupe fini $\pi_{0}(P)$, qui nous permet de définir pour chaque caractère $\kappa:\pi_{0}(P)\rightarrow\overline{\mathbb{Q}}_{l}$, une composante $\kappa$-isotypique $(f_{D,*}^{ani}\bql)_{\kappa}$.
Si $\kappa=1$, on note $(f_{D,*}^{ani}\bql)_{st}$, cette composante et on l'appelle la composante stable.
Pour chaque $\kappa$, on a un ensemble fini $S_{\kappa}$, canoniquement associé, constitué de groupes endoscopiques et l'énoncé de stabilisation géométrique de Ngô \cite[Thm.6.4.1]{N} établit qu'au-dessus de $\abdan$, on a un isomorphisme de complexes sur $\bar{k}$, avec un certain décalage:
\begin{center}
$(f_{D,*}^{ani}\bql)_{\kappa}[2r](r)=\bigoplus\limits_{H\in S_{\kappa}}\nu_{H,*}(f_{D,*}^{H,ani}\bql)_{st}$.
\end{center}
\medskip

\subsection{Le théorème principal}
Dans le cas qui nous occupe, nous disposons également d'un ouvert anisotrope $\abdan$ au-dessus duquel la fibration $f^{ani}:\cmdan\rightarrow\abdan$ est propre.
Néanmoins, en général, l'espace total $\cmdan$ n'est pas lisse et donc $f_{*}^{ani}\overline{\mathbb{Q}}_{l}$ n'a pas de raison apparente d'être pur. On remplace donc le faisceau constant par le complexe d'intersection $IC_{\cmdan}$ et l'on s'interroge sur les supports qui interviennent dans $f_{*}^{ani}IC_{\cmdan}$.
Pour obtenir un tel énoncé pour la fibration de Hitchin-Frenkel-Ngô, nous avons d'abord besoin de contrôler le complexe d'intersection.
On considère alors un ouvert $\abdb\subset\abd$ et si de plus, $k$ est fini, on démontre dans \cite{Bt2} un énoncé de transversalité dont un des corollaires est l'identité suivante:
\begin{center}
$\forall~ a\in\abdban(k),\Tr(\Fr_{a},(f_{*}^{ani}IC_{\cmdan})_{a})=\so_{a}(\phi_{\la})$.
\end{center}
où $\so_{a}$ désigne l'intégrale orbitale stable, avec $\phi_{\la}:=\bigotimes\limits_{x\in X}\phi_{\la_{x}}$, où les fonctions $\phi_{\la_{x}}$ sont les fonctions de Kazhdan-Lusztig de l'algèbre de Hecke sphérique en $x$ $\cH_{G,x}$. Ce théorème nous permet en particulier de relier le complexe d'intersection de $\cmdb$ avec la grassmannienne affine et l'ouvert considéré est suffisamment gros pour des applications locales.
On se place alors sur l'ouvert $\abdanf$. Pour simplifier les notations, on note $IC_{\la}:=IC_{\cmdb}$, $f:=f^{ani,\flat}$ et pour les groupes endoscopiques on ajoute un exposant \og H\fg. Le théorème principal est le suivant:
\begin{thm}\label{1}
Soit $G$ un groupe connexe réductif déployé tel que $G_{der}$ est simplement connexe. On suppose que la caractéristique de $k$ est première à l'ordre du groupe de Weyl $W$ et que l'endoscopie est déployée. Alors nous avons un isomorphisme de complexes purs et semisimples sur $\bar{k}$:
\begin{center}
$(f_{*}IC_{\la})_{\kappa}=\nu_{H,*}(f_{*}^{H}S_{\eta^{*}\la,H})_{st}$,
\end{center}
où $S_{\eta^{*}\la,H}$ est un faisceau pervers pur, obtenu par l'équivalence de Satake géométrique à l'aide du morphisme $\eta:\cH_{G}\rightarrow\cH_{H}$ au niveau des algèbres de Hecke sphériques (cf. section \ref{stabI}).
\end{thm}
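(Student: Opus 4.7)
La strat\'egie que je propose suit fid\`element l'approche de Ng\^o \cite{N} pour la stabilisation g\'eom\'etrique de la fibration de Hitchin, adapt\'ee au contexte des groupes o\`u l'espace total $\cmdb$ n'est plus lisse et o\`u le faisceau constant doit \^etre remplac\'e par $IC_{\la}$. Je commencerais par me ramener, par descente et sp\'ecialisation standard, au cas o\`u le corps de base est fini, afin de disposer des notions de puret\'e et des traces de Frobenius.

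J'\'etablirais ensuite que les deux c\^ot\'es de l'isomorphisme sont des complexes purs et semisimples sur $\bar{k}$. C\^ot\'e $G$, la propret\'e de $f$ au-dessus de $\abdanf$, jointe \`a la puret\'e de $IC_{\la}$ et au th\'eor\`eme de d\'ecomposition de Beilinson-Bernstein-Deligne-Gabber, fournissent la puret\'e et la semisimplicit\'e de $f_{*}IC_{\la}$; l'action du faisceau en groupes commutatifs $\cP$ par cup-produit produit la d\'ecomposition isotypique selon les caract\`eres $\kappa$ de $\pi_{0}(\cP)$. C\^ot\'e endoscopique, la puret\'e de $S_{\eta^{*}\la,H}$ d\'ecoule de l'\'equivalence de Satake g\'eom\'etrique, et celle de $f^{H}_{*}S_{\eta^{*}\la,H}$ de la propret\'e de $f^{H}$.

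Le c\oe ur de la preuve r\'eside dans un th\'eor\`eme de support \`a la Ng\^o: tout support d'un constituant perverse simple de $(f_{*}IC_{\la})_{\kappa}$ est de la forme $\nu_{H}(\tabdethanf)$ pour un groupe endoscopique $H$ associ\'e \`a $\kappa$. C'est l\`a l'obstacle technique principal. L'argument original de Ng\^o s'appuie sur la lissit\'e de $\cmDan$ et la puret\'e de $f_{D,*}^{ani}\bql$; dans notre cas cette lissit\'e fait d\'efaut, et il faut travailler directement avec $IC_{\la}$ sur l'ouvert $\abdbf$ o\`u l'on contr\^ole le complexe d'intersection gr\^ace \`a \cite{Bt2}. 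L'estimation de dimension \`a la Goresky-Kottwitz-MacPherson pour le mod\`ele de Picard $\cP$, l'hypoth\`ese d'anisotropie et la condition de r\'egularit\'e $\flat$ devraient permettre d'adapter la preuve en rempla\c{c}ant le r\^ole du faisceau constant par $IC_{\la}$; c'est cette adaptation qui demande le plus de soin, notamment pour v\'erifier que tout support g\'en\'erique rencontre effectivement l'ouvert $\abdbf$ o\`u l'identification avec les int\'egrales orbitales est disponible.

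Les supports \'etant ainsi identifi\'es, il reste \`a comparer les deux complexes g\'en\'eriquement sur chaque strate $\tabdethanf$. Sur un ouvert dense suffisamment petit, le r\'esultat rappel\'e ci-dessus de \cite{Bt2} identifie la trace de Frobenius sur $(f_{*}IC_{\la})_{a,\kappa}$ \`a l'int\'egrale orbitale $\kappa$ $\bO^{\kappa}_{a}(\phi_{\la})$, tandis que du c\^ot\'e endoscopique l'analogue livre $\so^{H}_{a_{H}}(\phi_{\eta^{*}\la,H})$. L'\'enonc\'e se r\'eduit ainsi \`a un lemme fondamental pour l'alg\`ebre de Hecke sph\'erique, que j'obtiendrais globalement par comptage de points sur la fibration, dont je d\'eduirais la version locale par un argument de densit\'e le long de la courbe $X$, et que je rel\`everais enfin en un isomorphisme de complexes par unicit\'e de la d\'ecomposition perverse \`a supports fix\'es.
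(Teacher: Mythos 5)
Votre plan reproduit dans ses grandes lignes l'architecture de la preuve du texte: d\'ecomposition $\kappa$-isotypique via $\pi_{0}(\cP)$, th\'eor\`eme du support pour localiser les constituants pervers sur les strates endoscopiques, comparaison g\'en\'erique par comptage de points, puret\'e et Chebotarev, puis identification des prolongements interm\'ediaires \`a supports fix\'es; cette derni\`ere r\'eduction est correcte.

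Le trou se situe exactement l\`a o\`u vous d\'esignez \og l'obstacle technique principal\fg{} sans le r\'esoudre: le th\'eor\`eme du support quand la source est singuli\`ere. L'argument de Ng\^o utilise la lissit\'e de $M$ en deux endroits: l'in\'egalit\'e \`a la Goresky--McPherson (qui demande des in\'egalit\'es de dimension entre les strates de $M$ et de leurs images) et surtout la \emph{libert\'e ponctuelle} de $\bigoplus_{n}H^{n}(M_{s},IC_{M})$ comme module sur $\La_{A_{s}}=\bigoplus_{i}\bigwedge^{i}T_{\bql}(A_{s})[i]$, obtenue chez Ng\^o par la d\'eg\'en\'erescence de Deligne pour la fibration projective lisse $M_{s}\rightarrow[M_{s}/A_{s}]$. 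Avec $IC_{M}$ sur un $M$ singulier cette d\'eg\'en\'erescence n'est plus disponible, et invoquer les estim\'ees de Goresky--Kottwitz--McPherson et l'anisotropie ne comble pas ce d\'efaut. L'ingr\'edient manquant est une \emph{pr\'esentation g\'eom\'etrique} $\cP$-\'equivariante $\pi:\tilde{M}\rightarrow M$, propre, lisse au-dessus d'un ouvert dense, telle que $IC_{M}$ soit facteur direct de $\pi_{*}\bql$; elle s'obtient ici par changement de base d'une r\'esolution de Demazure--Faltings du champ de Hecke $\overline{\cH}_{\la}^{par}$, combin\'ee au th\'eor\`eme de transversalit\'e sur $\abdb$ qui garantit pr\'ecis\'ement que $IC_{\cmdb}$ est restriction d\'ecal\'ee de $IC_{\overline{\cH}_{\la}}$. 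C'est cette construction qui restitue la libert\'e ponctuelle, donc l'in\'egalit\'e d'amplitude et le th\'eor\`eme du support. Deux points secondaires: m\^eme sous vos hypoth\`eses sur $G$, le groupe endoscopique $H$ n'a aucune raison d'avoir $H_{der}$ simplement connexe, de sorte que le c\^ot\'e droit exige la fibration augment\'ee (champ de Picard $\cP^{aug}$, invariant $\delta$, comparaison $\delta(a)-\delta_{H}(a_{H})=r_{H}^{G}(\la)$) sans laquelle ni le support ni le comptage c\^ot\'e $H$ ne sont d\'efinis; et l'ordre logique est inverse du v\^otre: on \'etablit d'abord l'identit\'e locale de comptage aux places o\`u $a_{H}$ coupe transversalement $\kD_{H}+\mathfrak{R}_{H}^{G}$ (calcul explicite via Mirkovic--Vilonen et Ng\^o--Polo), on en d\'eduit l'isomorphisme g\'en\'erique puis global, et c'est seulement ensuite que le lemme fondamental local g\'en\'eral s'en d\'eduit.
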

Nous nous  sommes placés sous les mêmes hypothèses que Chaudouard-Laumon \cite{CL}, cela a l'avantage d'alléger et de simplifier la présentation; dans ce cas l'ensemble $S_{\kappa}$ est réduit à un singleton. Pour étendre les résultats au cas quasi-déployé, la différence notable est que l'on a besoin de choisir un relèvement de la flèche $\hat{\eta}:\hat{H}\rightarrow\hat{G}$ aux duaux de Langlands $\mathstrut^{L}\eta:\mathstrut^{L}H\rightarrow \mathstrut^{L}G$ et qu'il n'y a pas de choix canonique. Cet énoncé a été en partie conjecturé par Frenkel et Ngô \cite[Conj. 4.2]{FN} dans le cadre de leur programme sur la géométrisation de la formule des traces. Il doit être une première pierre pour établir de nouveaux cas de la fonctorialité.

Pour établir un tel énoncé, on commence par établir un théorème de support, puis il nous  suffit ensuite de compter les points, c'est lors de ce comptage qu'apparaît une identité similaire au lemme fondamental de Langlands-Shelstad.
\subsection{Une identité entre intégrales orbitales}
On suppose $k$ fini, $\co=k[[t]]$, $F$ son corps de fractions. On garde les hypothèses du théorème \ref{1} et on pose $K=G(\co)$. On dispose de l'algèbre de Hecke sphérique $\cH_{G}$ constituée des fonctions à support compact $K$-invariantes à droite et à gauche, munie du produit de convolution. D'après Kazhdan-Lusztig, elle admet une base naturelle formée par les fonctions $\phi_{\la}$ qui correspondent par le dictionnaire faisceaux-fonctions au complexe d'intersection des strates $\overline{Kt^{\la}K}/K$, pour un cocaractère dominant $\la\in X_{*}(T)^{+}$. Soit $\kappa$ une donnée endoscopique de $G$ (cf. Déf.\ref{endos1}), $H$ le groupe endoscopique correspondant.
Soit $\cH_{H}$ l'algèbre de Hecke pour $H$, par l'isomorphisme de Satake, on a un morphisme naturel 
\begin{center}
$b:\cH_{G}\rightarrow\cH_{H}$
\end{center}
dit de changement de base. Le théorème est alors le suivant:
\begin{thm}\label{2}
Soit un cocaractère dominant $\la\in X_{*}(T)^{+}$, on a l'égalité:
\begin{center}
$(-1)^{\val(\mathfrak{D}_{H}(a_{H}))}\Delta_{H}(a_{H})\so_{a_{H}}(b(\phi_{\la}))=(-1)^{\val(\mathfrak{D}_{G}(a))}\Delta_{G}(a)\bo^{\kappa}_{a}(\phi_{\la})$
\end{center}
associées aux classes de conjugaison fortement régulières semisimples $a$ et $a_{H}$ de $G(F)$ et $H(F)$ qui se correspondent, avec $\Delta(a)=q^{-\frac{\val(\mathfrak{D}_{G}(a))}{2}}$, $\Delta_{H}(a_{H})=q^{-\frac{\val(\mathfrak{D}_{H}(a_{H}))}{2}}$ et $\kD_{G}$, $\kD_{H}$ les fonctions discriminants de $G$ et $H$.
\end{thm}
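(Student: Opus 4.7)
L'id\'ee centrale est de d\'eduire cette identit\'e locale du th\'eor\`eme global \ref{1} en prenant la trace de Frobenius de part et d'autre et en appliquant la formule produit pour la fibration de Hitchin-Frenkel-Ng\^o.

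\'Etant donn\'es $\la \in X_{*}(T)^{+}$ et des classes fortement r\'eguli\`eres semisimples $a \in G(F)$, $a_{H} \in H(F)$ qui se correspondent via $\kappa$, je commencerais par choisir une courbe projective lisse $X$ sur $k$, un point ferm\'e $v \in X$ muni d'un isomorphisme $\co_{v} \simeq \co$, et un rel\`evement global $\tilde{a} \in \abd(k)$ de $a$ (accompagn\'e du rel\`evement endoscopique correspondant $\tilde{a}_{H}$), pour une globalisation convenable $\la = \la_{v}[v] + \sum_{x \neq v}\la_{x}[x]$, de sorte que $\tilde{a} \in \abdanf(k)$. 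Une telle globalisation se construit par approximation selon les m\'ethodes standards (cf. \cite{N}, chap.~4). Aux points auxiliaires $x \neq v$, on s'arrangera pour que l'identit\'e locale soit d\'ej\`a connue, par exemple en prenant des donn\'ees non ramifi\'ees o\`u l'identit\'e est triviale, ou en se ramenant au cas Lie alg\'ebrique d\'ej\`a trait\'e par Ng\^o \cite{N}.

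Je sp\'ecialiserais alors le th\'eor\`eme \ref{1} en $\tilde{a}$. D'un c\^ot\'e, la trace de Frobenius sur $(f_{*} IC_{\la})_{\kappa, \tilde{a}}$ se calcule, via l'identit\'e de \cite{Bt2} rappel\'ee dans l'introduction et la formule produit de Ng\^o, comme un produit global sur les points ferm\'es $x \in X$ faisant intervenir les int\'egrales orbitales $\kappa$-pond\'er\'ees locales $\bo^{\kappa}_{a_{x}}(\phi_{\la_{x}})$, modulo la contribution du champ de Picard associ\'e et des facteurs globaux portant les discriminants $\kD_{G}$ et les signes $(-1)^{\val(\kD_{G}(\tilde{a}))}$. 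De l'autre c\^ot\'e, l'isomorphisme $(f_{*} IC_{\la})_{\kappa} = \nu_{H,*}(f^{H}_{*} S_{\eta^{*}\la, H})_{st}$ combin\'e \`a l'interpr\'etation via Satake g\'eom\'etrique de $S_{\eta^{*}\la, H}$ en termes de $b(\phi_{\la})$ fournit un produit analogue d'int\'egrales orbitales stables $\so_{a_{H,x}}(b(\phi_{\la_{x}}))$, pond\'er\'ees par $\Delta_{H}$ et $(-1)^{\val(\kD_{H})}$. En comparant les deux produits et en utilisant le fait que l'identit\'e est connue aux points $x \neq v$, on isole le facteur en $v$, qui est exactement l'\'egalit\'e cherch\'ee.

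L'obstacle principal sera la construction simultan\'ee d'une globalisation compatible avec toutes les contraintes: anisotropie de $\tilde{a}$, existence du rel\`evement endoscopique global $\tilde{a}_{H}$ avec la bonne classe $\kappa$, appartenance au bon ouvert $\abdanf$, et connaissance pr\'ealable de l'identit\'e aux points auxiliaires. C'est le point technique d\'elicat habituel dans les arguments de g\'eom\'etrisation du lemme fondamental. Il faudra aussi v\'erifier avec soin que la contribution du champ de Picard et les facteurs de transfert globaux se d\'ecomposent correctement en produit de facteurs locaux, et que la normalisation globale des signes $(-1)^{\val(\kD)}$ soit compatible avec la normalisation locale attendue dans l'\'enonc\'e.
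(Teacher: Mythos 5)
Votre stratégie est essentiellement celle du papier: le théorème \ref{2} y est obtenu (corollaire \ref{rr}, renvoyant à \cite[sect. 8.6]{N}) précisément par l'argument global--local que vous décrivez, c'est-à-dire en spécialisant la stabilisation géométrique en un point de la base de Hitchin globalisant la donnée locale, en prenant la trace de Frobenius, en appliquant la formule du produit, et en isolant le facteur en $v$ grâce au fait que l'identité est connue aux points auxiliaires (choisis transverses au discriminant, cas traités explicitement dans la proposition \ref{bon} et ses lemmes). Le point technique que vous signalez (construction de la globalisation compatible) est bien celui que le papier délègue à Ng\^o.
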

\subsection{Les facteurs de transfert}
Expliquons le lien avec le lemme fondamental de Langlands-Shelstad.
Il a été démontré par Ngô \cite{N} et Waldspurger \cite{W1} (sans restriction aucune) et établit l'égalité suivante:
\begin{center}
$\so_{\g_{H}}(\phi_{\la}^{H})=\Delta(\g_{H},\g)\bo^{\kappa}_{\g}(\phi_{\la})$
\end{center}
avec des classes de conjugaison fortement régulières semisimples $\g$, $\g_{H}$ dans $G(F)$ et $H(F)$ qui se correspondent et $\Delta(\g_{H},\g)$ est le facteur de transfert de Langlands-Shelstad \cite{LS}. Il s'écrit comme une certaine puissance de $q$ bien connue, multipliée par un signe compliqué. Ce facteur de transfert prend en compte le fait que la $\kappa$-intégrale orbitale dépend du choix de $\g$ dans sa classe de conjugaison stable. Dans le cas des algèbres de Lie, on sait d'après Kottwitz \cite{Kot} qu'en prenant la section de Kostant \cite{Ko}, le signe est égal à un. En revanche, dans le cas des groupes, cela n'est pas connu.
On note $\eps:T/W\rightarrow G$ la section de Steinberg \cite[Thm 8.1]{S}. On conserve les hypothèses du théorème \ref{1}, la conjonction du théorème \ref{2} et du lemme fondamental de Langlands-Shelstad nous donne alors l'identité suivante:
\begin{thm}\label{3}
Soit $\g$ et $\g_{H}$ des classes de conjugaison comme ci-dessus. Posons $a=\chi(\g)$ (resp. $a_{H}=\chi_{H}(\g_{H})$) le polynôme caractéristique de $\g$ (resp. $\g_{H}$) et $\g_{0}=\eps(a)\in G(F)$;
\begin{center}
$\Delta(\g_{H},\g_{0})=(-1)^{\val(\kD_{G}(a))-\val(\kD_{H}(a_{H}))}$.
\end{center}
\end{thm}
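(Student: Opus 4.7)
The strategy is to combine the orbital integral identity of Theorem~\ref{2} with the Langlands-Shelstad fundamental lemma, in the unconditional form proved by Ng\^o and Waldspurger, specialised at the distinguished representative $\g_{0}=\eps(a)$ produced by Steinberg's section. Because $\chi(\g_{0})=a$ by the very definition of $\eps$, the element $\g_{0}$ lies in the stable conjugacy class attached to $a$, and it will play on the group side the r\^ole played by the Kostant section representative in Kottwitz's Lie algebra argument.

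\textbf{Execution.} Fix a dominant cocharacter $\la$. The Langlands-Shelstad transfer, applied to the pair $(\g_{H},\g_{0})$ and to the function $\phi_{\la}^{H}=b(\phi_{\la})$, reads
\begin{center}
$\so_{\g_{H}}(b(\phi_{\la}))=\Delta(\g_{H},\g_{0})\bo^{\kappa}_{\g_{0}}(\phi_{\la})$,
\end{center}
while Theorem~\ref{2} applied to $(a,a_{H})$ reads
\begin{center}
$(-1)^{\val(\kD_{H}(a_{H}))}\Delta_{H}(a_{H})\so_{a_{H}}(b(\phi_{\la}))=(-1)^{\val(\kD_{G}(a))}\Delta_{G}(a)\bo^{\kappa}_{a}(\phi_{\la})$.
\end{center}
Stable orbital integrals are invariants of the stable class, hence $\so_{a_{H}}=\so_{\g_{H}}$; moreover $\bo^{\kappa}_{a}(\phi_{\la})=\bo^{\kappa}_{\g_{0}}(\phi_{\la})$, since $a=\chi(\g_{0})$ and the $\kappa$-integral attached to the characteristic polynomial is computed at the Steinberg representative. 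For any $\la$ making these integrals non-zero, dividing the two identities yields
\begin{center}
$\Delta(\g_{H},\g_{0})=(-1)^{\val(\kD_{G}(a))-\val(\kD_{H}(a_{H}))}\cdot\dfrac{\Delta_{G}(a)}{\Delta_{H}(a_{H})}$.
\end{center}
The quotient $\Delta_{G}(a)/\Delta_{H}(a_{H})$ equals $q^{-(\val(\kD_{G}(a))-\val(\kD_{H}(a_{H})))/2}$, which is exactly the ``well-known power of $q$'' (the factor $\Delta_{IV}$ in the Langlands-Shelstad decomposition) already built into the transfer factor; after matching conventions only the sign survives, giving the claimed formula.

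\textbf{Main obstacle.} The sole analytic input needed is a non-vanishing statement: one must exhibit a single $\la$ with $\bo^{\kappa}_{\g_{0}}(\phi_{\la})\neq 0$ in order to divide legitimately. Since the ratio produced by the two identities is manifestly independent of $\la$, one such $\la$ suffices, and it is easily produced by taking $\la$ sufficiently dominant so that the Schubert support $\overline{Kt^{\la}K}$ meets a $G(F)$-translate of $\g_{0}$. The other, essentially book-keeping, subtlety is to reconcile the conventions in which Theorem~\ref{2} and the Ng\^o-Waldspurger statement are formulated: the $q$-power part $\Delta_{IV}$ appears explicitly on the Hitchin side as $\Delta_{G}/\Delta_{H}$ and is implicit inside $\Delta(\g_{H},\g_{0})$ on the Langlands-Shelstad side, and checking that these two contributions cancel exactly is what guarantees that the residual sign $(-1)^{\val(\kD_{G}(a))-\val(\kD_{H}(a_{H}))}$ is indeed the complete transfer factor at the Steinberg representative.
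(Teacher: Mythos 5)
Votre démonstration suit exactement la démarche du texte : le théorème~\ref{3} y est obtenu précisément comme la conjonction du théorème~\ref{2} et du lemme fondamental de Langlands-Shelstad (Ng\^o, Waldspurger) évalué au représentant de Steinberg $\g_{0}=\eps(a)$, le quotient $\Delta_{G}(a)/\Delta_{H}(a_{H})$ s'identifiant au facteur $\Delta_{IV}$ de sorte que seul le signe subsiste. Votre remarque sur la nécessité d'un $\la$ rendant la $\kappa$-intégrale orbitale non nulle pour légitimer la division est un point que l'article laisse implicite, mais elle ne change pas la nature de l'argument.
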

Cet égalité corrige et précise une question de Ngô \cite[sect. 2.2]{N3}. On devrait pouvoir également la prouver directement en reprenant les formules de Langlands-Shelstad \cite{LS}.
\bigskip

\subsection{Ce qui diffère des algèbres de Lie}
Pour la fibration de Hitchin-Frenkel-Ngô, les premières originalités concernent l'action d'un champ de Picard et l'étude du complexe d'intersection de l'espace total lesquelles ont été étudiées dans \cite{Bt1} et \cite{Bt2}.
A ces premières difficultés s'ajoutent celles du présent article dont nous mentionnons les principales.

Tout d'abord pour établir un théorème du support, nous devons généraliser l'énoncé de Ngô \cite[sect. 7]{N} au cas d'un espace source singulier où l'on remplace le faisceau constant par le complexe d'intersection.
Pour ce faire, nous introduisons une forme faible de résolution des singularités qui, combinée au théorème de transversalité établi  dans \cite{Bt2}, nous permet de démontrer un tel énoncé. Cette forme faible de résolution des singularités contient la cohomologie d'intersection comme facteur direct et nous permet d'appliquer des résultats de dégénérescence des suites spectrales.
Un autre outil pour pouvoir appliquer cet énoncé abstrait du théorème du support est la stratification à $\delta$ constant. Cette stratification doit vérifier des inégalités de codimension, connues grâce à Goresky-Kottwitz-McPherson \cite{GKM2} pour les algèbres de Lie, mais pas dans le cas des groupes.

De plus, pour établir une stabilisation géométrique, il est nécessaire d'obtenir des résultats pour un groupe connexe réductif général. Néanmoins, la fibration ne se comporte bien que pour les groupes connexes réductifs à groupe dérivé simplement connexe. En effet, dans le cas général, la fibration n'admet pas de champ de Picard, pas de section, la base de Hitchin est  singulière et nous n'avons pas de stratification à $\delta$ constant. En particulier, certaines fibres de Hitchin peuvent être vides.

D'ordinaire, en considérant des $z$-extensions, au niveau des fonctions, on peut se ramener au cas où $G_{der}$ simplement connexe, néanmoins du point de vue de la géométrie, cela ne fonctionne pas et la base de Hitchin pour une $z$-extension $G'$ de $G$ n'est même pas surjective sur la base de Hitchin de $G$.
On commence donc par montrer qu'en se restreignant au-dessus de l'ouvert anisotrope, la fibration de Hitchin-Frenkel-Ngô devient surjective et nous construisons une \og fibration de Hitchin augmentée\fg~ qui, si $G$ est semisimple, est une interpolation entre $G$ et son revêtement simplement connexe.
Sur cette fibration, on montre alors que nous avons l'action d'un champ de Picard et une stratification à $\delta$ constant avec les bonnes dimensions qui nous permettent d'étendre les résultats pour un groupe quelconque.

Enfin, l'identité obtenue entre ces intégrales orbitales nous permet d'en déduire une formule pour les facteurs de transfert qui n'était pas connue dans le cas des groupes et simplifie la stabilisation de la formule des traces dans le cas où $G_{der}$ est simplement connexe avec une endoscopie déployée.
\bigskip

Passons en revue l'organisation de l'article. Les quatre premiers chapitres rappellent les résultats établis dans \cite{Bt1} et \cite{Bt2}. On commence par étudier le semi-groupe de Vinberg et le quotient adjoint. Dans le chapitre suivant, on introduit la fibration de Hitchin-Frenkel-Ngô munie de l'action d'un champ de Picard. Dans les chapitres trois et quatre, on énonce le théorème de transversalité qui identifie le complexe d'intersection de $\cmd$ au-dessus de $\abdb$ et nous rappelons les résultats de dimension sur les fibres de Springer affines pour les goupes, ainsi que l'analyse locale des symétries d'icelles.
\medskip

Le cinquième chapitre étudie l'action du champ de Picard sur la fibration de Hitchin et passe en revue un certain nombre de propriétés générales de la fibration de Hitchin. On commence par introduire le revêtement caméral et étudions comment le centralisateur régulier s'exprime en fonction de ce revêtement.
Par la suite, nous prouvons qu'en considérant le modèle de Néron du centralisateur régulier, cela nous mène naturellement à un dévissage du champ de Picard en une partie affine et une partie abélienne. En un sens, les singularités des fibres de Hitchin sont contenues dans la partie affine, dont la taille sera un invariant incontournable pour stratifier la base de Hitchin. On calcule également la dimension de la base ainsi que celle du champ de Picard et établissons la densité de l'ouvert régulier en utilisant une formule du produit analogue à celle de Ngô \cite[sect. 4.15]{N}.
\medskip

Le sixième chapitre introduit une stratification par des invariants monodromiques qui nous permet d'en déduire une description explicite du groupe des composantes connexes du champ de Picard, qui contrôle la cohomologie ordinaire de degré maximal des fibres de Hitchin, indispensable dans la suite pour le théorème du support. Cette partie est sans doute la plus proche des énoncés de Ngô.
\medskip

Le septième chapitre est intimement lié au chapitre précédent. Il s'agit d'étudier la codimension des strates à $\delta$ constant. On étudie alors une stratification plus fine, inspirée de Goresky-Kottwitz-McPherson \cite{GKM2}, par les valuations radicielles. Elle nous permet d'obtenir l'énoncé de codimension quitte à ce que l'invariant $\delta$ soit suffisamment petit par rapport à $\la$. Il est à noter qu'en caractéristique zéro pour les algèbres de Lie, on arrive à obtenir un résultat optimal, à savoir que la codimension est précisément $\delta$. En revanche, dans notre cas, même en caractéristique nulle, nous avons besoin de l'hypothèse que $\la$ soit grand devant $\delta$.
\medskip

Le huitième chapitre s'attelle à l'étude du lieu anisotrope; on  démontre la propreté de la fibration de Hitchin au-dessus de $\abdan$ et expliquons comment apparaissent les groupes endoscopiques. Cela nous permet de formuler un énoncé cohomologique sur les supports qui interviennent dans la décomposition en faisceaux pervers irréductibles du complexe pur $f_{*}^{ani}(IC_{\cmd})$.
Essentiellement, sous certaines conditions techniques, on montre que ces supports correspondent aux strates issues des groupes endoscopiques. On démontre ce théorème dans le chapitre onze.
\medskip

Le neuvième chapitre étend tous les résultats nécessaires pour un groupe qui n'est pas nécessairement à groupe dérivé simplement connexe.
Il est à noter que la fibration de Hitchin-Frenkel-Ngô, dans ce degré de généralité, est plutôt incommode. Un exemple est que l'on ne dispose ni de sections, ni de champ de Picard qui agit. On introduit donc une fibration augmentée qui est finie surjective au-dessus d'elle et qui elle, admet les propriétés escomptées.
\medskip

Le dixième chapitre est consacré à la construction de ce que nous appelons une présentation géométrique du complexe d'intersection de $\cmdb$. Il consiste en un espace qui remplace avantageusement une résolution des singularités, qui contient dans sa cohomologie, la cohomologie du complexe d'intersection comme facteur direct et qui nous sera utile pour formuler le théorème du support dans un contexte singulier. Il s'obtient par changement de base d'une résolution des singularités d'un certain  champ de Hecke augmenté $\overline{\cH}_{\la}^{par}$.
Le changement de base propre ainsi que le théorème de transversalité nous assureront que le complexe d'intersection $IC_{\cmdb}$ apparaît bien comme facteur direct de la cohomologie de la présentation géométrique.
\medskip

Le onzième chapitre démontre un énoncé abstrait pour une fibration $f:M\rightarrow S$ lorsque $M$ est singulier, muni d'une action d'un schéma en groupes commutatif lisse $g:P\rightarrow S$, pour le faisceau $f_{*}IC_{M}$. Il généralise l'énoncé de Ngô dans un contexte singulier. La preuve du théorème abstrait diffère de Ngô à deux endroits, l'argument de dualité de Goresky-McPherson et la liberté ponctuelle de la cohomologie des fibres sur l'algèbre de Pontryagin du module de Tate du champ de Picard $\cP_{a}$ pour $a\in\abdb$.
L'argument de dualité s'étend  pour peu qu'on ajoute des inégalités sur la dimension des strates tandis que la liberté ponctuelle s'obtient en utilisant la présentation géométrique.
On applique ensuite ce théorème à la fibration de Hitchin pour en déduire le théorème de détermination des supports \ref{detsupport}.
\medskip

Le dernier chapitre se consacre au comptage. On compte les points des fibres de Springer affines et des fibres de Hitchin et on les relie aux intégrales orbitales. On démontre dans ce chapitre les énoncés clés \ref{1}, \ref{2}.
\bigskip

Je remercie sincèrement Ngô Bao Châu et Gérard Laumon pour l'aide qu'ils m'ont apporté tout au long de ce travail.
Je remercie Jean-Loup Waldspurger pour m'avoir signalé une erreur sur les facteurs de transfert dans une version antérieure de ce papier. Je remercie également Yakov Varshavsky pour les discussions que nous avons eu ainsi que Raphaël Beuzart-Plessis pour son utile remarque sur les facteurs de transfert. Enfin, ce travail a été réalisé en partie lors de mon séjour au MSRI à l'automne 2014 et à l'Université Hébraïque de Jerusalem, je leur exprime ma reconnaissance pour les conditions de travail fort agréables qu'ils ont pu me fournir.
\bigskip

\section{Le semi-groupe de Vinberg}
\subsection{Le quotient adjoint et le centralisateur régulier}\label{introsemi}
Dans cette section on rappelle les résultats établis dans \cite{Bt1} et \cite{Bt2}, on commence par le semi-groupe de Vinberg.
\medskip

Soit $k$ un corps. On considère un groupe connexe réductif $G$  tel que $G_{der}$ soit simplement connexe, déployé sur $k$.  Soit $(B, T)$ une paire de Borel de $G$, $R$ l'ensemble des racines, $\Delta$ l'ensemble des racines simples, $r$ le rang semisimple de $G$ et $W$ le groupe de Weyl. On note $X_{*}(T)$ (resp. $X^{*}(T)$) le réseau des cocaractères (resp. caractères). Enfin, on note d'un exposant \og+\fg~ les cocaractères dominants (resp. caractères dominants).
On commence par construire le semi-groupe de Vinberg $V_{G}$ sur $k$.
Choisissons une base $\omega_{1}',\dots,\omega_{l}'$ du réseau des caractères:
\begin{center}
$\chi:T\rightarrow\mathbb{G}_{m}$
\end{center}
tels que
\begin{center}
$\forall~\alpha\in\Delta=\{\alpha_{1},\dots,\alpha_{r}\}, \left\langle \chi,\check{\alpha}\right\rangle=0.$
\end{center}
Chaque $\omega_{i}':T\rightarrow\mathbb{G}_{m}$, $1\leq i\leq l$, se prolonge de manière unique en un caractère:
\begin{center}
$\omega_{i}':G\rightarrow\mathbb{G}_{m}$.
\end{center}
Pour tout indice $i, 1\leq i\leq l$, notons $V_{\omega_{i}'}$ l'espace vectoriel de dimension un sur lequel $G$ agit par $\omega_{i}':G\rightarrow\mathbb{G}_{m}$.
L'ensemble des poids dominants est stable par translation par les éléments du réseau $\mathbb{Z}\omega_{1}'+\dots+\mathbb{Z}\omega_{l}'$. Le quotient par ce réseau est un cône saturé non dégénéré de $X^{*}(T)/(\mathbb{Z}\omega_{1}'+\dots+\mathbb{Z}\omega_{l}')$, engendré par $\bar{\omega}_{1},\dots,\bar{\omega}_{r}$ que l'on relève en une famille de caractères $\omega_{1},\dots,\omega_{r}$ de $T$.

Soit $(\rho_{\omega_{i}}, V_{\omega_{i}})$ la représentation irréductible de plus haut poids $\omega_{i}$.
Considérons $G_{+}:=(T\times G)/Z_{G}$ où $Z_{G}$ se plonge par $\la\rightarrow (\la,\la^{-1})$ et $T_{+}=(T\times T)/Z_{G}$.
On a une immersion :
$$\begin{array}{ccccc}
&  & G_{+} & \to & \prod\limits_{i=1}^{r}\End(V_{\omega_{i}})\times\prod\limits_{i=1}^{l}\Aut(V_{\omega_{i}'})\times\prod\limits_{i=1}^{r}\mathbb{A}_{\alpha_{i}} \\
& & (t,g) & \mapsto & (\omega_{i}(t)\rho_{\omega_{i}}(g),\omega_{i}'(tg),\alpha_{i}(t)).\\
\end{array}$$
Ici on pose, $H_{G}:=\prod\limits_{i=1}^{r}\End(V_{\omega_{i}})\times\prod\limits_{i=1}^{l}\Aut(V_{\omega_{i}'})\times\prod\limits_{i=1}^{r}\mathbb{A}^{1}_{\alpha_{i}}$
et $H_{G}^{0}$ sera la même chose où l'on enlève $\{0\}$ dans chaque $\End(V_{\omega_{i}})$.
On note alors $V_{G}$ (resp $V_{G}^{0}$) la normalisation de l'adhérence de $G_{+}$ dans $H_{G}$ (resp. $H_{G}^{0})$ et $V_{T}$ l'adhérence de $T_{+}$ dans $V_{G}$, lequel est normal par \cite[Cor. 6.2.14]{BK}.
On a un théorème analogue à celui de Chevalley \cite[Prop. 1.3]{Bt1}:
\begin{thm}\label{bouth}
L'application de restriction $\phi:k[V_{G}]^{G}\rightarrow k[V_{T}]^{W}$ est un isomorphisme de $k$-algèbres. De plus, $V_{T}/W$ est le produit d'un espace affine de dimension $2r$ avec un tore de dimension $l$, dont les coordonnées sont données par les $\omega_{i}'$, $(\alpha_{i},0)$, $\chi_{i}=(\omega_{i},\Tr(\rho_{\omega_{i}}))$.
\end{thm}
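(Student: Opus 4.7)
La strat�gie suit la preuve classique du th�or�me de restriction de Chevalley, adapt�e au cadre des semi-groupes. L'observation cl� est que l'ouvert dense $G_{+} = (T \times G)/Z_{G}$ de $V_{G}$ est lui-m�me un groupe connexe r�ductif, de tore maximal $T_{+}$, de groupe de Weyl $W$ et de groupe d�riv� simplement connexe (�gal � $G_{der}$), de sorte que le th�or�me classique de Chevalley fournit d�j� un isomorphisme $k[G_{+}]^{G} \cong k[T_{+}]^{W}$. Tout le probl�me revient � �tendre cet isomorphisme aux compl�tions normales $V_{G}$ et $V_{T}$.

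\textbf{Injectivit� et surjectivit� de $\phi$.} Puisque $V_{G}$ et $V_{T}$ sont int�gres, comme normalisations de vari�t�s irr�ductibles, les restrictions $k[V_{G}]^{G} \hookrightarrow k[G_{+}]^{G}$ et $k[V_{T}]^{W} \hookrightarrow k[T_{+}]^{W}$ sont injectives, et compos�es avec l'isomorphisme de Chevalley pour $G_{+}$, elles entra�nent l'injectivit� de $\phi$. Pour la surjectivit�, on prend $f \in k[V_{T}]^{W}$; par Chevalley pour $G_{+}$, il existe $F \in k[G_{+}]^{G}$ dont la restriction � $T_{+}$ co�ncide avec $f|_{T_{+}}$, et il s'agit de prolonger $F$ � $V_{G}$. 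Par normalit� de $V_{G}$, il suffit de v�rifier la r�gularit� de $F$ aux points g�n�riques des composantes de $V_{G} \setminus G_{+}$; or ces composantes, provenant des faces de codimension un du c�ne d�fini par $H_{G}$, sont stables sous $G \times G$ et rencontrent $V_{T}$ le long de diviseurs de $V_{T} \setminus T_{+}$, sur lesquels $F$ co�ncide avec $f$ par $G$-invariance, d'o� la r�gularit�.

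\textbf{Structure explicite de $V_{T}/W$.} L'adh�rence $V_{T}$ de $T_{+}$ dans $H_{G}$ est une vari�t� torique pour un c�ne que l'on peut lire sur les caract�res intervenant dans le plongement. Les $l$ caract�res $\omega_{i}'$, annulant toutes les coracines, sont $W$-invariants et donnent, apr�s restriction aux facteurs $\Aut(V_{\omega_{i}'})$, le facteur torique de dimension $l$. Les $r$ fonctions $(\alpha_{i},0)$, qui ne d�pendent que du premier facteur $t_{1}$ de $T_{+}$ et sont donc $W$-invariantes (le $W$ n'agissant que sur $t_{2}$), fournissent $r$ coordonn�es affines. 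Enfin, les traces $\omega_{i}(t_{1}) \Tr(\rho_{\omega_{i}}(t_{2}))$ provenant des facteurs $\End(V_{\omega_{i}})$ donnent les $r$ fonctions $\chi_{i}$, qui sont $W$-invariantes. Le c�ne quotient par $W$, engendr� par $\bar{\omega}_{1},\dots,\bar{\omega}_{r}$ modulo le r�seau $\mathbb{Z}\omega_{1}'+\dots+\mathbb{Z}\omega_{l}'$, est satur� non d�g�n�r� dans $X^{*}(T)/(\mathbb{Z}\omega_{1}'+\dots+\mathbb{Z}\omega_{l}')$, donc isomorphe � l'octant positif $\mathbb{N}^{r}$; combin� avec la partie torique issue du sous-r�seau, ceci donne bien $V_{T}/W \cong \mathbb{A}^{2r} \times \mathbb{G}_{m}^{l}$, engendr� par les $2r+l$ fonctions alg�briquement ind�pendantes ci-dessus.

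\textbf{Principal obstacle.} La v�rification la plus d�licate est celle de la surjectivit�: il faut s'assurer que la fonction $F$ obtenue sur l'ouvert $G_{+}$ se prolonge effectivement en une fonction r�guli�re sur tout $V_{G}$, ce qui requiert de contr�ler les diviseurs de bord de $V_{G}$ et leur rencontre avec $V_{T}$. De mani�re alternative, on peut s'appuyer sur un analogue du th�or�me de Richardson: pour une action d'un groupe r�ductif sur une vari�t� affine normale admettant une sous-vari�t� de Cartan, les $G$-invariants co�ncident avec les $W$-invariants sur la sous-vari�t� de Cartan, pourvu que chaque $G$-orbite la rencontre; cette propri�t� d�coule ici de la d�composition � la Bruhat h�rit�e de $G_{+}$ et de la stabilit� des strates de bord sous $T \times T$.
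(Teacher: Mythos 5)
The paper does not prove this statement: it is recalled verbatim from \cite[Prop.~1.3]{Bt1}, so there is no internal proof to compare against. On its own terms, your overall strategy (classical Chevalley restriction for the open group of units $G_{+}$, whose derived group is $G_{der}$ and hence simply connexe, followed by an extension of invariants across the boundary of $V_{G}$ using normality) is a legitimate route, your injectivity argument is fine, and the identification of the $2r+l$ generating functions and the dimension count for $V_{T}/W$ are correct in outline.

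The surjectivity step, however, has a genuine gap, and it is exactly the one you flag as the "principal obstacle" without resolving it. Having produced $F\in k[G_{+}]^{G}$ with $F|_{T_{+}}=f|_{T_{+}}$, you deduce regularity of $F$ at the generic point of each boundary divisor $D$ of $V_{G}$ from the fact that "$F$ coincides with $f$ on $D\cap V_{T}$". This inference is invalid: a rational function can restrict to a regular function on a proper closed subvariety of a divisor while still having a pole along that divisor (compare $x/y$ on $\mathbb{A}^{2}$, regular and constant on the diagonal, with a pole along $y=0$). What is actually needed is that the conjugation map $G\times(V_{T}\cap D)\rightarrow D$ be dominant for every boundary divisor $D$, i.e. that the $G$-saturation of $V_{T}$ be dense in each boundary stratum; this is a nontrivial structural fact about the Vinberg semigroup (density of the "semisimple" locus in the boundary), and neither the Bruhat decomposition of $G_{+}$ nor the $T\times T$-stability of the strata gives it. The proof in \cite{Bt1}, following Vinberg and Rittatore, sidesteps the extension problem entirely: one writes $k[V_{G}]$ as the $G\times G$-module $\bigoplus_{\la}\End(V_{\la})^{*}$ over the saturated weight monoid, so that $k[V_{G}]^{G}$ is freely spanned by the characters $\chi_{\la}$, while $k[V_{T}]^{W}$ is freely spanned by the orbit sums; the restriction map is unitriangular for the dominance order, hence bijective, and the explicit coordinates $\omega_{i}'$, $(\alpha_{i},0)$, $\chi_{i}$ and the product structure of $V_{T}/W$ fall out of the same computation. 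If you wish to keep your geometric argument, you must supply the density statement above as a separate lemma.
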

On en déduit alors un morphisme 
\begin{center}
$\chi_{+}: V_{G}\rightarrow\kC_{+}:=V_{T}/W$.
\end{center}
Nous avons également une flèche $\chi: G\rightarrow T/W=\mathbb{G}_{m}^{l}\times\mathbb{A}^{r}$, issue du théorème de Chevalley \cite[Th.6.1]{S}.
Steinberg a construit une section à cette flèche de la manière suivante; on commence par supposer que $G$ est semisimple simplement connexe, dans ce cas $T/W=\mathbb{A}^{r}$.
Pour un $r$-uplet $(a_{1},\dots, a_{r})\in T/W:=\mathbb{A}^{r}$, on définit:
\begin{center}
$\epsilon(a_{1},.., a_{r}):=\prod\limits_{i=1}^{r}x_{\alpha_{i}}(a_{i})n_{i}$,
\end{center}
où les $x_{\alpha_{i}}(a_{i})$ sont des éléments du groupe radiciel $U_{\alpha_{i}}$ et les $n_{i}$ sont des éléments du normalisateur $N_{G}(T)$ représentant les réflexions simples $s_{\alpha_{i}}$ de $W$.\\
Ainsi, $\epsilon(a)\in\prod\limits_{i=1}^{r}U_{i}n_{i}$ et en utilisant les relations de commutation on a que :
\begin{center}
$\prod\limits_{i=1}^{r}U_{i}n_{i}=U_{w}w$
\end{center}
où $w=s_{1}s_{2}...s_{r}$ et $U_{w}=U\cap wU^{-}w^{-1}$.
\medskip

Voyons comment on l'étend au cas réductif. Soit $T_{der}=T\cap G_{der}$.
Soit $S$ un sous-tore de $T$ de telle sorte que $T=S\times T_{der}$, alors $G=S\ltimes G_{der}$.
La flèche de Steinberg est donnée par:
\begin{center}
$\chi:G\rightarrow S\times\mathbb{A}^{r}$,
\end{center}
où $\mathbb{A}^{r}$ est la partie correspondant au quotient adjoint de $G_{der}$.
Soit $G^{reg}:=\{(g,\g)\in G\times G\vert~ g\g g^{-1}=\g\}$.
Nous avons alors le théorème suivant \cite[Th. 8.1]{S} et \cite[Prop. 2.5]{dC-M}:
\begin{prop}
Soit $\epsilon_{G_{der}}$ la section de Steinberg pour $G_{der}$.
On pose alors $\eps_{G}:S\times\mathbb{A}^{r}\rightarrow G$ donnée par
\begin{center}
$\eps_{G}(s,a)=s\epsilon_{G_{der}}(a)$.
\end{center}
Alors $\eps_{G}$ est une section à $\chi$ et tombe dans $G^{reg}$, à un automorphisme de $S\times\mathbb{A}^{r}$ près.
\end{prop}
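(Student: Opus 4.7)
L'id�e centrale est de ramener l'�nonc� au r�sultat classique de Steinberg \cite[Th. 8.1]{S}, d�j� �tabli pour le groupe semi-simple simplement connexe $G_{der}$. Puisque la donn�e de $\eps_{G}$ consiste � tordre $\eps_{G_{der}}$ par un �l�ment de $S$, il s'agit de contr�ler l'effet de cette torsion sur les deux propri�t�s requises: propri�t� de section d'une part, et r�gularit� d'autre part.

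Pour la propri�t� de section, on d�compose source et but de $\chi$ suivant $T=S\times T_{der}$, d'o� $T/W=S\times(T_{der}/W)=S\times\mathbb{A}^{r}$. Sur le facteur $S$, les coordonn�es sont fournies par les caract�res $\omega_{i}'$ qui, par construction, sont triviaux sur $G_{der}$; ainsi $\omega_{i}'(s\eps_{G_{der}}(a))=\omega_{i}'(s)$, et l'on r�cup�re $s$ modulo l'isomorphisme $S\cong\mathbb{G}_{m}^{l}$ d�fini par la base $(\omega_{i}')$ de $X^{*}(S)$. Sur le facteur $\mathbb{A}^{r}=T_{der}/W$, il s'agit de comparer la partie semi-simple, modulo $W$-conjugaison dans $T_{der}$, de $s\eps_{G_{der}}(a)$ avec celle de $\eps_{G_{der}}(a)$. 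Un calcul � l'aide de la forme explicite $\eps_{G_{der}}(a)=\prod x_{\alpha_{i}}(a_{i}) n_{i}$ montre que la multiplication par $s\in T$ ne modifie la classe dans $T_{der}/W$ qu'� travers un automorphisme contr�lable, d�pendant de l'action de $s$ sur les racines simples et du choix de rel�vement des poids fondamentaux de $T_{der}$ en caract�res $\omega_{i}$ de $T$.

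Pour la r�gularit�, on observe que $Z_{G}(\eps_{G}(s,a))\supseteq S\cdot Z_{G_{der}}(\eps_{G_{der}}(a))$, d'o� l'in�galit� $\dim Z_{G}(\eps_{G}(s,a))\geq l+r=\rg(G)$ via la r�gularit� de $\eps_{G_{der}}(a)$ dans $G_{der}$ donn�e par Steinberg. L'in�galit� oppos�e r�sulte de la minoration usuelle de la dimension du centralisateur par le rang, dont l'�galit� caract�rise pr�cis�ment le lieu r�gulier $G^{reg}$. On en d�duit donc aussi bien l'appartenance de l'image de $\eps_{G}$ � $G^{reg}$.

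Le point d�licat de la preuve est la description pr�cise de l'automorphisme de $S\times\mathbb{A}^{r}$ apparaissant dans l'�nonc�: il refl�te � la fois l'identification $S\cong\mathbb{G}_{m}^{l}$ via les $\omega_{i}'$ (qui n'est pas l'identit� au sens na�f) et un possible d�calage sur les coordonn�es de $T_{der}/W=\mathbb{A}^{r}$ induit par la non-centralit� �ventuelle de $S$ dans $G$. Dans le cas particulier o� $S$ peut �tre choisi central (par exemple lorsque $G$ est un produit direct de $G_{der}$ et d'un tore), cet automorphisme se r�duit � l'identit� et l'argument devient compl�tement transparent; dans le cas g�n�ral, il s'agit d'un automorphisme explicite et inoffensif pour la suite.
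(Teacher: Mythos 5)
Your argument for regularity does not work, and this is the substantive half of the statement. First, the inclusion $Z_{G}(\eps_{G}(s,a))\supseteq S\cdot Z_{G_{der}}(\epsilon_{G_{der}}(a))$ is false in general: the complementary torus $S$ is not central in $G$, so $S$ does not centralize $s\epsilon_{G_{der}}(a)$ (take $G=\GL_{2}$ with $S$ the first diagonal $\mathbb{G}_{m}$ and $\gamma_{0}=x_{\alpha}(a)n$: conjugating $s\gamma_{0}$ by $\mathrm{diag}(u,1)$ rescales the off-diagonal entries), and $Z_{G_{der}}(\epsilon_{G_{der}}(a))$ does not centralize $s$ either. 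Second, and more importantly, even granting the inclusion your logic is inverted. The inclusion would give the lower bound $\dim Z_{G}(\eps_{G}(s,a))\geq l+r=\rg(G)$, which holds for every element of $G$ and carries no information. Regularity is precisely the opposite, non-trivial inequality $\dim Z_{G}(\eps_{G}(s,a))\leq\rg(G)$, and you claim to deduce it from the usual lower bound of the centralizer dimension by the rank --- but that standard fact is again an inequality in the direction you already have; it cannot produce the upper bound. As written, you have shown $\dim Z_{G}\geq\rg(G)$ twice and concluded equality, which is a non sequitur. The missing input is the one supplied by Steinberg \cite[Th. 8.1]{S} and De Concini--Maffei \cite[Prop. 2.5]{dC-M}: since $s$ normalizes every root subgroup, $s\epsilon_{G_{der}}(a)$ again lies in $U_{w}\,tw$ for the Coxeter element $w=s_{1}\cdots s_{r}$, and one proves directly that any element of this form has centralizer of dimension exactly $\rg(G)$; some such argument specific to the shape of the element is unavoidable.

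The section half is the right strategy and matches the cited references: split $T/W=S\times(T_{der}/W)$ using the $\omega_{i}'$ (which are trivial on $G_{der}$, so the $S$-coordinate of $\chi(s\epsilon_{G_{der}}(a))$ is read off immediately) and the lifted weights $\omega_{i}$ on the $\mathbb{A}^{r}$-factor, then check that $\chi\circ\eps_{G}$ is an automorphism of $S\times\mathbb{A}^{r}$. But the decisive step on the $\mathbb{A}^{r}$-factor is only asserted: since $\rho_{\omega_{i}}(s)$ is not a scalar, $\chi_{i}(s\epsilon_{G_{der}}(a))$ is not $\omega_{i}(s)\chi_{i}(\epsilon_{G_{der}}(a))$, and one must actually establish the triangular form $\chi_{i}(s\epsilon_{G_{der}}(a))=c_{i}(s)\,a_{i}+(\text{terms not involving }a_{i})$ with $c_{i}(s)$ a nowhere-vanishing character value, which is the computation carried out in the references. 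So the first half is an acceptable sketch, but the regularity claim needs to be reproved from scratch.
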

On considère maintenant le schéma en groupes des centralisateurs:
\begin{center}
$I:=\{(g,\g)\in G\times V_{G}\vert~ g^{-1}\g g=\g\}$.
\end{center}
Voyons  comment on construit une section pour le semigroupe de Vinberg $V_{G}$.
Soit la flèche $\phi:T\rightarrow T_{+}$, $t\rightarrow (t,t^{-1})$. L'image $T_{\Delta}$, le tore antidiagonal est isomorphe à $T^{ad}:=T/Z_{G}$, et on a un isomorphisme canonique donné par:
\begin{center}
$\alpha_{\bullet}:T_{\Delta}\rightarrow\mathbb{G}_{m}^{r}$.
\end{center}
Soit alors $\psi$ l'isomorphisme inverse, nous opterons pour la notation indiciaire. Ainsi pour $b\in \mathbb{G}_{m}^{r}$, on a: 
\begin{center}
$\forall~ 1\leq i\leq r, \alpha_{i}(\psi_{b})=b_{i}$ et $\psi_{b}\in T_{\Delta}$.
\end{center}
On définit $\epsilon_{+}:\mathbb{G}_{m}^{r}\times (\mathbb{G}_{m}^{l}\times\mathbb{A}^{r})\rightarrow G_{+}$ par: 
\begin{center}
$\epsilon_{+}(b,a)=\epsilon_{G}(a)\psi_{b}$.
\end{center}
Dans la suite, on pose $\cQ_{+}:=\cQ T_{\Delta}$ où $\cQ:=\eps_{G}(T/W)$.
On obtient alors le théorème de structure suivant tiré de \cite[Thm.0.2-0.3]{Bt1}:
\begin{thm}\label{bouth2}
Soit $V_{G}^{reg}:=\{\g\in V_{G}~\vert~\dim I_{g}=r\}$.
La section  $\eps_{+}$ se prolonge en un morphisme $\eps_{+}:\kC_{+}\rightarrow V_{G}^{reg}$.
De plus, le morphisme $\chi_{+}^{reg}$ est lisse et ses fibres géométriques sont une union disjointe de $G$-orbites.
Enfin, il existe un unique schéma en groupes commutatifs $J$, lisse sur $\kC_{+}$ et muni d'un morphisme $\chi_{+}^{*}J\rightarrow I$, qui est un isomorphisme sur $V_{G}^{reg}$.
\end{thm}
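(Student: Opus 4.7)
The strategy decomposes into three parts: extend the section $\epsilon_+$ to all of $\kc$, transfer regularity and smoothness from the open stratum via this section, and finally construct $J$ by pullback. The section $\epsilon_+(b,a) = \epsilon_G(a)\psi_b$ is a priori defined only on the open subset $\mathbb{G}_m^r\times(\mathbb{G}_m^l\times\mathbb{A}^r)\subset\kc$, since $\psi_b$ is meaningful only for $b\in\mathbb{G}_m^r$. To extend across $\{b_i=0\}$, I would compute the image of $\epsilon_+(b,a)$ in the embedding $V_G\hookrightarrow H_G$ coordinate by coordinate: the $\alpha_i$-factor is simply $b_i$, while the $\End(V_{\omega_i})$-factor is $\omega_i(\psi_b)\rho_{\omega_i}(\epsilon_G(a))$. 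Expressing each dominant weight $\omega_i$ restricted to $T_{\Delta}$ as a non-negative integral combination of the simple roots, the monomial $\omega_i(\psi_b)$ is polynomial in the $b_j$ with non-negative exponents, hence extends across $\{b_j=0\}$. Normality of $V_G$ then forces the extension to factor through $V_G$.

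Next I would verify that $\epsilon_+(\kc)\subset V_G^{reg}$ via a stratification argument. Stratify $\kc$ according to the subset $\Sigma\subset\Delta$ of simple roots vanishing at the point $\xi$. On each stratum, the image of $\epsilon_+$ lies in a degeneration of the Vinberg semigroup associated to the Levi subgroup $L_\Sigma\subset G$, and $\epsilon_+$ restricts there to a Steinberg-type section for $L_\Sigma$ composed with the degeneration morphism; this restricted section lands in the regular locus by Steinberg's classical theorem. With $\epsilon_+(\kc)\subset V_G^{reg}$ established, smoothness of $\chi_+^{reg}$ follows from the infinitesimal lifting criterion applied to the section, combined with $G$-equivariance; the fiber description as a disjoint union of $G$-orbits comes from the standard fact that regular elements with the same image under $\chi_+$ are $G$-conjugate (up to connected components of the centralizer).

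For the regular centralizer group scheme, I would set $J := \epsilon_+^*I$. Since $\epsilon_+$ takes values in $V_G^{reg}$, the fibers of $J$ are abelian of dimension $r$, and $J$ is smooth by smoothness of $\chi_+^{reg}$. The morphism $\chi_+^*J\to I$ over $V_G^{reg}$ is defined by descent: over a regular $\gamma$ with $\chi_+(\gamma)=\xi$, pick any $g\in G$ with $g\cdot\epsilon_+(\xi)=\gamma$ and transport $J_\xi=I_{\epsilon_+(\xi)}$ by conjugation by $g$; independence of the choice of $g$ uses the commutativity of $I_\gamma$. Uniqueness of the pair $(J,\chi_+^*J\to I)$ follows from density of $V_G^{reg}$ in $V_G$ and the faithful flatness of $\chi_+^{reg}$.

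The principal obstacle lies in the regularity verification on the boundary strata of $\kc$: Steinberg's classical argument is local on the open part of $G$ and must be systematically adapted to the boundary of $V_G$ using the Levi-type degenerations of the Vinberg semigroup. This requires a careful analysis of the centralizers on those strata, together with the normality properties of $V_G$ guaranteed by Theorem \ref{bouth} and \cite[Cor. 6.2.14]{BK}.
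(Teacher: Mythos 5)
The paper does not actually prove this statement: it is recalled from \cite{Bt1}[Thm. 0.2--0.3], so your proposal can only be measured against the strategy of that reference, whose outline you do follow (extension of the Steinberg section via positivity of weight monomials and normality of $V_{G}$, then pullback of $I$ by the section to define $J$). Two of your steps, however, are genuinely incomplete or would fail as written. First, the regularity of $\eps_{+}$ on the boundary strata is the real content of the theorem, and you reduce it to ``Steinberg's classical theorem for the Levi $L_{\Sigma}$''. But regularity of $\eps_{+}(\xi)$ at a boundary point is a statement about the centralizer in the full group $G$, not in $L_{\Sigma}$: the unipotent radical of the parabolic attached to $\Sigma$ can contribute to $I_{\eps_{+}(\xi)}$, and bounding that contribution (e.g.\ at the most degenerate point $(0,0)\in\kc$) is precisely the computation that has to be carried out. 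The paper's own remark after the theorem --- that different orderings of the factors $U_{i}n_{i}$ give non-conjugate sections at $(0,0)$ and that the geometric fibers of $\chi_{+}^{reg}$ are disconnected --- shows this is not a formal consequence of the split case. A smaller point: $\omega_{i}(\psi_{b})$ is not by itself a well-defined monomial in the $b_{j}$ (the $\omega_{i}$ need not lie in the root lattice); what extends polynomially are the matrix entries $\omega_{i}(s)\mu(s)^{-1}=(\omega_{i}-\mu)(s)$ for $\mu$ a weight of $V_{\omega_{i}}$, since $\omega_{i}-\mu$ is a non-negative combination of simple roots. Your conclusion is correct but the computation as stated is not.

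Second, your construction of $\chi_{+}^{*}J\rightarrow I$ by transporting $I_{\eps_{+}(\xi)}$ along some $g$ with $g\cdot\eps_{+}(\xi)=\g$ presupposes that every regular $\g$ over $\xi$ is conjugate to $\eps_{+}(\xi)$. That is exactly what fails here: the fibers of $\chi_{+}^{reg}$ are disjoint unions of several $G$-orbits and the section meets only one of them. Your descent therefore defines the morphism only over the open set $G\cdot\eps_{+}(\kc)$, and an additional argument is needed to extend it to all of $V_{G}^{reg}$ (for instance a codimension estimate in $V_{G}$ together with normality and smoothness of $J$, or the Galois description of $J$ via $V_{T}\rightarrow\kc$ that the paper records in Proposition \ref{galois}). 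For the same reason, smoothness of $\chi_{+}^{reg}$ does not follow from the infinitesimal criterion ``applied to the section'' alone, since the $G$-translates of the section do not cover $V_{G}^{reg}$.
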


$\rmq$
\begin{enumerate}
\item
Il est à noter que la section $\eps_{+}$ prolongée au semigroupe de Vinberg dépend fortement de l'ordre des facteurs $U_{i}n_{i}$. En effet, un ordre différent amène à des sections qui ne sont plus nécessairement conjuguées, comme on peut le voir au point au point $(0,0)\in\kC_{+}$. Ceci explique le fait que les fibres géométriques de $\chi_{+}$ ne sont pas connexes en général.
\item
Dans \cite{Bt1}, la preuve est faite dans le cas semisimple mais s'étend telle quelle au cas réductif.
\end{enumerate}
On dispose également d'un morphisme, dit d'abélianisation:
\begin{center}
$\alpha: V_{G}\rightarrow A_{G}:=\mathbb{G}_{m}^{l}\times\mathbb{A}^{r}$
\end{center}
donné par $(t,g)\mapsto((\omega_{i'}(tg))_{1\leq i'\leq l}, (\alpha_{i}(t))_{1\leq i\leq r})$.
\bigskip

A la suite de Donagi-Gaitsgory \cite{DG} et Ngô \cite[sect. 2.4]{N}, nous avons une description galoisienne du centralisateur régulier.
On suppose de plus la caractéristique du corps $k$ est première à l'ordre de $W$.
On a un morphisme fini plat $W$-équivariant:
\begin{center}
$\theta:V_{T}\rightarrow\kc$
\end{center}
ramifié le long du diviseur $\mathfrak{D}=\bigcup\limits_{\alpha\in R}\overline{\Kern(\alpha)}$, où
$\overline{\Kern(\alpha)}$ désigne l'adhérence dans $V_{T}$ de $\Kern(\alpha)$ avec $\alpha$ une racine.
On note $\kc^{rs}$ le complémentaire de ce diviseur, que l'on appelle le lieu régulier semisimple et $V_{T}^{rs}$ l'ouvert de $V_{T}$ correspondant. Considérons le schéma:
\begin{center}
$\Omega:=\prod\limits_{V_{T}/\kc}(T\times V_{T})$.
\end{center}
Pour tout $S$-schéma sur $\kc$, les $S$-points de $\Omega$ sont donnés par:
\begin{center}
$\Omega(S)=\Hom_{V_{T}}(S\times_{\kc}V_{T}, T\times V_{T})=\theta_{*}(T\times V_{T})$.
\end{center}
Le morphisme $\theta:V_{T}\rightarrow\kc$ étant fini plat, nous obtenons que $\Omega$ est représentable. C'est un schéma en groupes commutatifs, lisse de dimension $r\left|W\right|$ et au-dessus de l'ouvert régulier semi-simple, $\theta$ étant fini étale, $\Omega$ restreint à cet ouvert est un tore. L'action diagonale de $W$ sur $T\times V_{T}$ induit une action sur $\Omega$.
On considère alors:
\begin{equation}
J^{1}=\Omega^{W}=(\prod\limits_{V_{T}/\kc}(T\times V_{T}))^{W}
\label{jun}
\end{equation}
Comme la caractéristique du corps est première avec l'ordre de $W$, $J^{1}$ est un schéma en groupes lisse sur $\kc$.
Nous allons avoir besoin de considérer un sous-schéma ouvert de $J^{1}$.
\begin{defi}
Soit $\tilde{J}$ le sous-foncteur de $J^{1}$ qui à tout $\kc$-schéma $S$, associe le sous-ensemble $\tilde{J}(S)$ de $J^{1}(S)$ des morphismes $W$-équivariants:
\begin{center}
$f:S\times_{\kc}V_{T}\rightarrow T$
\end{center}
tel que pour tout point géométrique $x$ de $S\times_{\kc}V_{T}$ stable sous une involution $s_{\alpha}(x)=x$ attachée à une racine $\alpha$, on a $\alpha(f(x))\neq -1$.
\end{defi}
Les deux résultats suivants sont tirés de \cite[Lem. 11, Prop.12]{Bt2}
\begin{lem}
Le foncteur $\tilde{J}$ est représentable par un sous-schéma ouvert affine de $J^{1}$. De plus, on a les inclusions $J^{0}\subset\tilde{J}\subset J^{1}$ et au-dessus de $\kc^{rs}$ ce sont des isomorphismes.
\end{lem}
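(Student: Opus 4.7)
The plan has four steps, matching the three claims of the lemma.

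\emph{Representability as an open subscheme.} For each root $\alpha\in R$ let $V_{T}^{s_{\alpha}}\subset V_{T}$ denote the closed subscheme of fixed points of the reflection $s_{\alpha}$. The composite $V_{T}^{s_{\alpha}}\hookrightarrow V_{T}\xrightarrow{\theta}\kc$ is finite, hence proper. On the product $J^{1}\times_{\kc}V_{T}^{s_{\alpha}}$ the universal evaluation $(f,x)\mapsto f(x)$ defines a $T$-valued point; composing with $\alpha\colon T\to\mathbb{G}_{m}$ gives a function whose vanishing locus $\alpha(f(x))+1=0$ is a closed subscheme $Z_{\alpha}$. Since the projection $J^{1}\times_{\kc}V_{T}^{s_{\alpha}}\to J^{1}$ is proper (base change of $V_{T}^{s_{\alpha}}\to\kc$), its image is closed in $J^{1}$. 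Define $\tilde{J}$ as the complement of the finite union $\bigcup_{\alpha\in R}\mathrm{pr}_{1}(Z_{\alpha})$; by construction this open subscheme of $J^{1}$ represents the functor described in the definition.

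\emph{Affineness.} Since the characteristic of $k$ is prime to $|W|$, each $V_{T}^{s_{\alpha}}\to\kc$ is finite locally free, so the norm construction associates to the section $\alpha(f(x))+1\in\Gamma(J^{1}\times_{\kc}V_{T}^{s_{\alpha}},\mathcal{O})$ an element $N_{\alpha}\in\Gamma(J^{1},\mathcal{O})$ whose vanishing locus is exactly $\mathrm{pr}_{1}(Z_{\alpha})$. Hence $\tilde{J}$ is the principal open in $J^{1}$ defined by invertibility of $\prod_{\alpha\in R}N_{\alpha}$. Since $\Omega=\prod_{V_{T}/\kc}(T\times V_{T})$ is affine over the affine base $\kc$, its closed subscheme $J^{1}=\Omega^{W}$ is affine, and any principal open of an affine scheme is affine.

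\emph{The two inclusions.} The inclusion $\tilde{J}\subset J^{1}$ is tautological. For $J^{0}\subset\tilde{J}$, recall that the Galois description of $J^{0}$ (Donagi--Gaitsgory, cf.~\cite[2.4]{N}) imposes the stronger condition $\alpha(f(x))=1$ at every $s_{\alpha}$-fixed geometric point $x$. Since the characteristic of $k$ is prime to $|W|$ and $|W|$ is even, one has $1\neq-1$ in $k$, so $\alpha(f(x))=1$ forces $\alpha(f(x))\neq -1$, whence the inclusion.

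\emph{Identifications over $\kc^{rs}$.} By definition $V_{T}^{rs}$ is the complement of $\kD=\bigcup_{\alpha}\overline{\Kern(\alpha)}$, and a direct verification identifies $V_{T}^{s_{\alpha}}$ with $\overline{\Kern(\alpha)}$; hence over $\kc^{rs}$ each fibre $V_{T}^{s_{\alpha}}\times_{\kc}\kc^{rs}$ is empty. The pointwise conditions defining $\tilde{J}$ and $J^{0}$ inside $J^{1}$ therefore become vacuous, and both inclusions become equalities over $\kc^{rs}$.

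The only delicate point is the affineness assertion: one must collapse the family of conditions indexed by geometric points of the reflection fixed loci into a single principal-open condition, which is exactly what the norm construction does, and this is where the hypothesis on the characteristic genuinely enters (through flatness of $V_{T}^{s_{\alpha}}\to\kc$).
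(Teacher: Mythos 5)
Your overall strategy --- realising $\tilde{J}$ as the complement in $J^{1}$ of the images of the closed loci $Z_{\alpha}\subset J^{1}\times_{\kc}V_{T}^{s_{\alpha}}$ under the finite projections --- is the natural one; note that the paper itself gives no argument here but refers to \cite{Bt2}, so there is nothing in-text to compare with. Your first step (representability by an open subscheme), the inclusion $J^{0}\subset\tilde{J}$, and the identifications over $\kc^{rs}$ are essentially correct. Two small remarks: for $J^{0}\subset\tilde{J}$ you need not invoke any description of $J^{0}$, since $W$-equivariance gives $f(x)=s_{\alpha}(f(x))$ at an $s_{\alpha}$-fixed point, hence $\alpha(f(x))=\alpha(f(x))^{-1}$, so $f\mapsto\alpha(f(x))$ is a $\mu_{2}$-valued homomorphism on the fibre of $J^{1}$ and is therefore trivial on the neutral component; and over $\kc^{rs}$ what you really use is that $\theta$ is \'etale there (so $W$ acts freely and the fixed loci are empty) together with the fact that $J^{1}$ restricted to $\kc^{rs}$ is a torus, hence has connected fibres, which gives $J^{0}=J^{1}$ there.

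The affineness step, however, is genuinely broken, and it is precisely the point on which you lean hardest. The morphism $V_{T}^{s_{\alpha}}\to\kc$ is finite but can never be finite locally free: $V_{T}^{s_{\alpha}}$ is a proper closed subscheme of the irreducible scheme $V_{T}$, and its image under $\theta$ is contained in the discriminant divisor (over $\kc^{rs}$ the $W$-action is free), so $\dim V_{T}^{s_{\alpha}}\leq\dim\kc-1$; a nonempty finite flat morphism onto the irreducible $\kc$ would have to be surjective. Consequently the norm $N_{\alpha}$ is not defined, the claim that $\mathrm{pr}_{1}(Z_{\alpha})$ is a principal closed subset does not follow, and the hypothesis on the characteristic certainly does not enter ``through flatness of $V_{T}^{s_{\alpha}}\to\kc$''. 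This matters: the complement of an arbitrary closed subset of an affine scheme need not be affine (it fails already for a nonempty closed subset of codimension $\geq 2$ in a normal affine scheme). A workable substitute is to use that $J^{1}$ is smooth over $\kc$ and $\kc$ is smooth over $k$, so $J^{1}$ is regular and every closed subset of pure codimension one is locally principal, whence its complement is affine over $J^{1}$ and therefore affine; one is then reduced to showing that each $\mathrm{pr}_{1}(Z_{\alpha})$ is of pure codimension one in $J^{1}$ (here the observation that $\alpha(f(x))\in\mu_{2}$ on $J^{1}\times_{\kc}V_{T}^{s_{\alpha}}$, so that $Z_{\alpha}$ is open and closed in that scheme, is useful for controlling the image). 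As written, your proof establishes that $\tilde{J}$ is an open subscheme but not that it is affine.
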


\begin{prop}\label{galois}
Le morphisme $J\rightarrow J^{1}$ se factorise via $\tilde{J}$ et induit un isomorphisme entre $J$ et $\tilde{J}$.
\end{prop}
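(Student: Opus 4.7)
The plan is to construct a morphism $J\to J^{1}$, verify that it factors through the open subscheme $\tilde{J}$, and show that the resulting morphism $J\to\tilde{J}$ is an isomorphism. Both $J$ and $\tilde{J}$ are smooth commutative group schemes over $\kc$, they agree on the regular semisimple locus $\kc^{rs}$ (which is open and dense in $\kc$), so the whole difficulty is concentrated in codimension one, along the components of the discriminant divisor $\mathfrak{D}$.

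To construct $J\to J^{1}$, recall from Theorem \ref{bouth2} that $\chi_{+}^{*}J\cong I$ over the regular locus $V_{G}^{reg}$ and that $J$ is smooth over $\kc$. Pulling back along $\theta:V_{T}\to\kc$ and restricting to $V_{T}^{rs}$, where the regular centralizer identifies with $T\times V_{T}^{rs}$ via the Steinberg section $\eps_{+}$, one obtains a $W$-equivariant morphism $\theta^{*}J|_{V_{T}^{rs}}\to T\times V_{T}^{rs}$. By adjunction and taking $W$-invariants, this yields a morphism $J|_{\kc^{rs}}\to J^{1}|_{\kc^{rs}}$. Since $J$ is smooth over $\kc$, hence normal, and $J^{1}$ is separated, this extends uniquely to a morphism $J\to J^{1}$ of group schemes over $\kc$.

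The crucial step is a local computation at the generic point of each component $\overline{\Kern(\alpha)}$ of $\mathfrak{D}$. At such a point, the Weyl stabilizer reduces to $\{1,s_{\alpha}\}$ and the local structure of $J$ is obtained from the Steinberg-type section $\eps_{+}(a,b)$ in the Vinberg semigroup by a Jordan-decomposition analysis; the corresponding $W$-equivariant section $f$ of $T$ takes a value whose image under $\alpha$ is never $-1$. Indeed, if one had $\alpha(f)=-1$ at an $s_{\alpha}$-fixed point, the element would cease to be regular in the group-theoretic sense, an obstruction that is invisible in the Lie algebra setting of Ng\^o. This shows the image of $J\to J^{1}$ is contained in $\tilde{J}$. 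For the isomorphism statement, both $J$ and $\tilde{J}$ are smooth group schemes over $\kc$ of the same relative dimension, and the morphism is an isomorphism on $\kc^{rs}$; combined with the fibrewise local description of $J$ along the root divisors, one concludes by verifying that the homomorphism is \'etale at the identity of every geometric fibre. Being an \'etale homomorphism of smooth commutative group schemes over $\kc$ that is an isomorphism on a dense open, it is an isomorphism on identity components, hence everywhere by functoriality.

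The main obstacle is the precise local analysis along each codimension-one stratum $\overline{\Kern(\alpha)}$. One must explicitly describe the centralizer of $\eps_{+}(a,b)$ in the Vinberg semigroup when $\alpha$ vanishes, identify the $W$-equivariant section of $T$ it produces, verify the non-vanishing of $\alpha(f)+1$ at the $s_{\alpha}$-fixed locus, and match this description with the local model of $\tilde{J}$. This delicate codimension-one computation is precisely what distinguishes the group/Vinberg situation from the Lie algebra case, and what motivates the introduction of $\tilde{J}$ via the condition $\alpha\neq-1$.
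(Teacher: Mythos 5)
First, note that the paper does not prove this statement here: Proposition \ref{galois} (together with the lemma before it) is quoted verbatim from \cite[Lem. 11, Prop. 12]{Bt2}, so there is no in-text proof to compare with; your proposal has to stand on its own. Its architecture --- construct $J\to J^{1}$, check the condition $\alpha(f)\neq -1$ along the walls, deduce the isomorphism from generic agreement plus a fibrewise statement in codimension one --- is the right one. But the two steps that carry the actual content are asserted rather than proved, and one auxiliary step is justified by a false principle.

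Concretely: (a) the extension of $J|_{\kc^{rs}}\to J^{1}|_{\kc^{rs}}$ across the discriminant is justified by ``$J$ normal, $J^{1}$ separated'', which is not an extension principle (a morphism from a normal scheme to an affine scheme defined off a divisor need not extend; think of $x\mapsto x^{-1}$). The correct route is to use the globally defined map $\chi_{+}^{*}J\to I\subset G\times V_{G}$, restrict it over $V_{T}$, and observe that the resulting map $\theta^{*}J\to G$ factors through the closed subgroup $T$ because it does so on the dense open $\theta^{*}J|_{V_{T}^{rs}}$ and the source is reduced. (b) The factorization through $\tilde{J}$ is the heart of the matter, and your justification --- that $\alpha(f)=-1$ would contradict regularity --- is not correct: in $PGL_{2}$ the element of $T^{s_{\alpha}}$ with $\alpha=-1$ is perfectly regular. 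The actual mechanism is Steinberg's connectedness theorem: for $G_{der}$ simply connexe, the centralizer $C_{G}(s)$ of the semisimple part of $\eps_{+}(a)$ at the generic point of $\overline{\Kern(\alpha)}$ is connected of semisimple rank one, and the value $f(x)$ lies in the component of $T^{s_{\alpha}}$ containing $s$, on which $\alpha=1$. You yourself defer this computation to the last paragraph as ``the main obstacle''. (c) The final step ``isomorphism on identity components, hence everywhere by functoriality'' begs the question: the entire point of replacing $J^{1}$ by $\tilde{J}$ is to match the \emph{component groups} of the fibres over the discriminant, so one must actually compute $\pi_{0}$ of both fibres at the generic point of each wall (by reduction to the rank-one Levi attached to $\alpha$) and see that the open condition $\alpha(f)\neq -1$ removes exactly the excess components of $J^{1}$. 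As written, the proposal is a plausible plan with the decisive local analysis left undone.
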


\subsection{Le cas d'un groupe connexe réductif}
Nous étendons dans cette section les propriétés établies pour un groupe $G$ tel que $G_{der}$ est simplement connexe à un groupe connexe réductif quelconque.
La différence notable est que dans ce cas, il n'y a ni section, ni centralisateur régulier. Enfin, la flèche $G_{reg}\rightarrow T/W$ n'est même pas plate.
\medskip

Soit  $G$ connexe réductif  déployé sur $k$. On suppose que la caractéristique est première à l'ordre de  $\pi_{1}(G)$, ce qui est toujours le cas si $\car(k)\wedge\left|W\right|=1$.
Soit $\tilde{G}$ un groupe connexe réductif tel que $\tilde{G}_{der}$ est simplement connexe, muni d'une flèche $\la:\tilde{G}\rightarrow G$, finie étale de groupe de Galois abélien $\Gamma$, par hypothèse sur la caractéristique. 
On peut prendre par exemple, le revêtement simplement connexe de $G_{der}$ que l'on multiplie par le centre.
On note pour cette raison le morphisme de Steinberg pour $\tilde{G}$,
\begin{center}
$\chi^{sc}:\tilde{G}\rightarrow\kc^{sc}$
\end{center}
On définit le semi-groupe de Vinberg de $G$ par $V_{G}:=V_{\tilde{G}}//\Gamma$. Ici, la double barre désigne le quotient au sens des invariants. On a de plus une action de $\Gamma$ sur l'espace affine $\kc^{sc}$, on considère alors $\kc:=\Spec(k[\kc^{sc}]^{\Gamma})$ et la flèche $\chi_{+}^{sc}$ passe au quotient. On désigne toujours par $G_{+}:=(T\times G)/Z_{G}$ le groupe des unités de $V_{G}$.
Nous avons une flèche finie et surjective $V_{\tilde{G}}\rightarrow V_{G}$, on note alors  $V_{G}^{reg}$ l'ouvert image de $V_{\tilde{G}}^{reg}$ par cette application. Nous obtenons le diagramme commutatif :
$$\xymatrix{V_{\tilde{G}}\ar[r]\ar[d]_{\chi_{+}^{sc}}&V_{G}\ar[d]^{\chi_{+}}\\\kc^{sc}\ar[r]&\kc}.$$
\begin{defi}
Si $G_{der}$ n'est pas simplement connexe, on considère $V_{G}^{f-rs}$ le lieu fortement régulier semisimple, i.e. le lieu où le centralisateur est un tore maximal. 
\end{defi}
Posons $\kc^{f-rs}:=\chi_{+}(V_{G}^{f-rs})$. On a le lemme suivant:
\begin{lem}\label{Pop}
Le lieu fortement régulier semisimple $\kc^{f-rs}$ est un ouvert non vide lisse de $\kc$.
\end{lem}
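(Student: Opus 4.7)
The approach is to descend from the simply connected cover $\tilde G$, where regular semisimple elements are fully understood via Theorem~\ref{bouth2}, through the finite \'etale quotient $\lambda\colon\tilde G\to G$ with Galois group $\Gamma$. Write $\pi\colon\kc^{sc}\to\kc$ for the induced finite quotient by $\Gamma$.

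The key step is a group-theoretic translation showing that $\kc^{f-rs}=\pi(U)$, where
\[
U:=\kc^{sc,rs}\setminus\bigcup_{\gamma\neq 1}(\kc^{sc})^{\gamma}
\]
is the free locus of the $\Gamma$-action on $\kc^{sc,rs}$. Indeed, take $\tilde g\in V_{\tilde G}^{rs}$ with image $g\in V_G$ and set $a:=\chi_+^{sc}(\tilde g)$. Since $\tilde G_{der}$ is simply connected and $\tilde g$ is regular semisimple, $Z_{\tilde G}(\tilde g)$ is a maximal torus $\tilde T$, so $Z_G(g)^{\circ}=\lambda(\tilde T)$ is automatically a maximal torus; thus $g\in V_G^{f-rs}$ iff $Z_G(g)=Z_G(g)^{\circ}$. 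An element of $Z_G(g)$ lifts to $h\in\tilde G$ with $h\tilde g h^{-1}=\gamma\tilde g$ for some $\gamma\in\Gamma$; applying $\chi_+^{sc}$ forces $\gamma\cdot a=a$, so $\gamma\in\Gamma_a$. Conversely, for each $\gamma\in\Gamma_a$, the regular semisimple elements $\tilde g$ and $\gamma\tilde g$ share the invariant $a$ and are therefore $\tilde G$-conjugate, producing such a lift; this uses the uniqueness of the regular semisimple $\tilde G$-orbit over a point of $\kc^{sc,rs}$, specific to the simply connected case. Hence $Z_G(g)/Z_G(g)^{\circ}\simeq\Gamma_a$, whence the claim.

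The rest is routine. For non-emptiness: a central $\gamma\in\Gamma\subset Z(\tilde G)$ acts on the coordinate $\chi_i=\Tr\rho_{\omega_i}$ by multiplication by the scalar $\omega_i(\gamma)$, and the characters $\omega_i,\omega_j'$ separate the points of $Z(\tilde G)$, so the $\Gamma$-action on $\kc^{sc}$ is faithful; each $(\kc^{sc})^{\gamma}$ for $\gamma\neq 1$ is then a proper closed subset of the irreducible $\kc^{sc}$, and $U$ is a non-empty open. For openness in $\kc$: since $\pi$ is finite and $\kc^{sc}\setminus U$ is $\Gamma$-stable and closed, $\pi(\kc^{sc}\setminus U)$ is closed, so $\kc^{f-rs}=\pi(U)$ is open. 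For smoothness: $\Gamma$ acts freely on the smooth $U$ with $|\Gamma|$ invertible in $k$ by hypothesis, so $U\to U/\Gamma=\kc^{f-rs}$ is \'etale, transferring smoothness.

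The main obstacle is the identification $Z_G(g)/Z_G(g)^{\circ}\simeq\Gamma_a$, in particular its surjectivity: this rests essentially on the uniqueness of the $\tilde G$-orbit over a regular semisimple point of $\kc^{sc}$, a property that genuinely requires $\tilde G_{der}$ simply connected. All subsequent steps are standard openness/quotient arguments in characteristic prime to $|\Gamma|$.
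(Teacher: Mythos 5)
Votre argument est correct et suit pour l'essentiel la même voie que l'article, qui se contente de renvoyer au lemme 5.1 de Popov : le mécanisme de ce lemme est précisément votre descente depuis le revêtement $\tilde G$, l'identification $Z_G(g)/Z_G(g)^{\circ}\simeq\Gamma_a$ via le relèvement des centralisateurs, et la réduction au lieu où $\Gamma$ agit librement sur $\kc^{sc,rs}$. Le seul point laissé implicite dans votre rédaction est l'inclusion $\kc^{f-rs}\subseteq\pi(\kc^{sc,rs})$ (tout élément fortement régulier semisimple de $V_G$ se relève en un élément régulier semisimple de $V_{\tilde G}$), qui se déduit de la finitude du noyau de $Z_{\tilde G}(\tilde g)\to Z_G(g)$ et de la théorie de structure de $V_{\tilde G}$ rappelée au théorème \ref{bouth2}.
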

\begin{proof}
C'est la même preuve que \cite[Lem. 5.1]{P}.
\end{proof}
Si $G_{der}$ est simplement connexe, on pose 
\begin{center}
$V_{G}^{f-rs}:=p^{-1}(V_{G_{ad}}^{f-rs})$,
\end{center}
avec $p: V_{G}\rightarrow V_{G_{ad}}$.
\medskip

Nous terminons par une description du morphisme d'abélianisation dans le cas $G$ semisimple et une étude du discriminant.
\begin{center}
$\alpha:V_{G}\rightarrow A_{G}$.
\end{center}
 Nous avons le théorème suivant d'après  \cite[Thm. 3-5]{Vi} et  \cite[Thm. 17]{Ri}:
\begin{prop}\label{abelianise}
Soit $G$ semisimple, alors l'abélianisé $A_{G}=\overline{Z}_{+}/Z_{G}$ est isomorphe à $\mathbb{A}^{r}=\prod\limits_{i=1}^{r}\ab^{1}_{\alpha_{i}}$ comme $T_{ad}$-variété torique. En particulier, nous avons $A_{G}=A_{G_{ad}}$.
\end{prop}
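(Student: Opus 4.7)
Le plan est d'appliquer les théorèmes structurels de Vinberg \cite{Vi} et Rittatore \cite{Ri} après spécialisation au cas semisimple. Comme $G$ est semisimple, tout caractère de $G$ est trivial, donc $l=0$ et le facteur $\mathbb{G}_{m}^{l}$ dans la définition de $A_{G}=\mathbb{G}_{m}^{l}\times\ab^{r}$ disparaît. La flèche d'abélianisation se réduit alors à $\alpha:V_{G}\to\ab^{r}=\prod_{i=1}^{r}\ab^{1}_{\alpha_{i}}$, donnée par $(t,g)\mapsto(\alpha_{i}(t))_{1\leq i\leq r}$.

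Je rappellerais ensuite que le tore antidiagonal $T_{\Delta}\subset T_{+}$ est isomorphe à $T_{ad}$ via $\phi$, et que la restriction de $\alpha$ à $T_{\Delta}$ coïncide avec l'isomorphisme $\alpha_{\bullet}:T_{\Delta}\xrightarrow{\sim}\mathbb{G}_{m}^{r}$. Comme $\{\alpha_{1},\ldots,\alpha_{r}\}$ est une base du réseau des caractères $X^{*}(T_{ad})$ pour $G$ semisimple (c'est le réseau radiciel $Q$), la clôture de $T_{\Delta}$ dans $\prod_{i=1}^{r}\ab^{1}_{\alpha_{i}}$ est tout $\ab^{r}$, muni de la structure naturelle de $T_{ad}$-variété torique donnée par les racines simples.

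L'étape clé est d'identifier $\overline{Z}_{+}/Z_{G}$ à cette variété torique. D'après la théorie de Vinberg-Rittatore, $\overline{Z}_{+}$ est la clôture normale du centre $Z_{+}$ dans $V_{G}$, et c'est une $T_{+}$-variété torique affine dont le quotient par l'action de $Z_{G}$ (via son plongement canonique) fournit une $T_{ad}$-variété torique. La description du cône engendré par les racines simples, combinée aux identifications précédentes, montre que ce quotient s'identifie à $\ab^{r}$; puisque $\ab^{r}$ est déjà normal, la normalisation préserve cette structure.

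Enfin, pour l'égalité $A_{G}=A_{G_{ad}}$, on observe que la construction ci-dessus ne dépend que du système de racines simples et du tore adjoint $T_{ad}$, qui sont invariants par isogénie centrale. L'obstacle principal sera de contrôler rigoureusement l'effet de la normalisation et de l'action résiduelle de $Z_{G}$ sur la clôture de $Z_{+}$, ce qui est précisément traité dans \cite[Thm. 3-5]{Vi} et \cite[Thm. 17]{Ri}.
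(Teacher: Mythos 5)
Votre proposition est correcte et suit essentiellement la m\^eme voie que le papier, qui \'enonce cette proposition sans d\'emonstration en la citant directement d'apr\`es [Vi, Thm. 3-5] et [Ri, Thm. 17]. Les pr\'ecisions que vous ajoutez (la disparition du facteur $\mathbb{G}_{m}^{l}$ pour $G$ semisimple, l'identification de $T_{\Delta}$ avec $T_{ad}$ via $\alpha_{\bullet}$, et le fait que les racines simples forment une base de $X^{*}(T_{ad})$ de sorte que la vari\'et\'e torique associ\'ee au c\^one qu'elles engendrent est $\mathbb{A}^{r}$) sont exactes, et les points techniques restants (normalisation, action r\'esiduelle de $Z_{G}$) sont pr\'ecis\'ement ceux que vous renvoyez aux m\^emes r\'ef\'erences.
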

$\rmq$ Si $G$ est réductif, on ajoute un tore central.

\begin{cor}\label{produit}
L'espace caractéristique $\kc$ s'identifie au produit $A_{G}\times\kC$ où $\kC=T/W$.
\end{cor}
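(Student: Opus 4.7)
Le plan est d'identifier explicitement les deux facteurs du produit à partir du système de coordonnées fourni par le théorème \ref{bouth}. Ce théorème décrit $\kc=V_T/W$ comme un espace muni des coordonnées $\{\omega_i'\}_{1\leq i\leq l}$, $\{(\alpha_i,0)\}_{1\leq i\leq r}$ et $\{\chi_i=(\omega_i,\Tr\rho_{\omega_i})\}_{1\leq i\leq r}$, qui se répartissent naturellement en deux familles complémentaires.

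D'abord, la première famille $\{\omega_i',\alpha_i\}$ définira la projection $p_1:\kc\to A_G$. En effet, le morphisme d'abélianisation $\alpha:V_G\to A_G$ étant $G$-invariant se factorise par $\chi_+:V_G\to\kc$, et la proposition \ref{abelianise} munie de sa remarque dans le cas réductif identifie précisément $A_G$ à $\mathbb{G}_m^l\times\mathbb{A}^r$ muni de ces coordonnées. Ensuite, les fonctions $\chi_i=(\omega_i,\Tr\rho_{\omega_i})$, restreintes à l'image naturelle de $T$ dans $T_+=(T\times T)/Z_G$ via $t\mapsto(1,t)$, coïncident avec les traces $\Tr\rho_{\omega_i}$ qui paramétrisent $\kC=T/W$ par le théorème de Chevalley \cite[Thm.6.1]{S}; ceci fournira la seconde projection $p_2:\kc\to\kC$.

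Pour conclure, le morphisme produit $(p_1,p_2):\kc\to A_G\times\kC$ sera un isomorphisme puisque les deux familles de coordonnées forment conjointement un système libre et engendrant sur $\kc$ par le théorème \ref{bouth}, ce qui entraîne un isomorphisme au niveau des algèbres de fonctions. L'obstacle principal, de nature essentiellement comptable, est de vérifier la compatibilité entre la description abstraite des coordonnées et les définitions géométriques des deux facteurs; dans le cas strictement réductif, l'interprétation naturelle du produit est celle d'un produit fibré au-dessus du tore central commun $\mathbb{G}_m^l$, compatible avec la dimension $l+2r$ de $\kc$.
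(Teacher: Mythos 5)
La première moitié de votre argument --- l'identification explicite des deux familles de coordonnées via le théorème \ref{bouth} --- est correcte, mais elle ne couvre que le cas où $G_{der}$ est simplement connexe, c'est-à-dire exactement le cas que l'article expédie en une phrase. Or le corollaire est énoncé dans la section qui étend les constructions à un groupe connexe réductif quelconque : pour un tel $G$, l'espace $\kc$ est \emph{défini} comme $\Spec(k[\kc^{sc}]^{\Gamma})$ pour un revêtement $G'\to G$ de noyau central fini $\Gamma$ avec $G'_{der}$ simplement connexe, et le théorème \ref{bouth} n'est pas disponible pour $G$ lui-même. Votre preuve ne traite pas ce cas, qui est le contenu essentiel du corollaire.

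L'étape manquante est la descente de la décomposition à travers le quotient par $\Gamma$ : on écrit $\kc=\mathfrak{C}_{+,G'}//\Gamma=(A_{G'}\times\kC_{G'})//\Gamma$, puis on invoque la proposition \ref{abelianise} pour obtenir $A_{G}=A_{G'}$ et surtout le fait que $\Gamma$ agit trivialement sur ce facteur, de sorte que le quotient au sens des invariants ne porte que sur le second facteur et donne $A_{G}\times\kC_{G}$. Sans cet argument, rien ne garantit que la structure produit survit au passage aux invariants (un quotient d'un produit par une action diagonale n'est pas en général le produit des quotients). Votre remarque finale sur le produit fibré au-dessus du tore central est une observation pertinente sur la lecture de l'énoncé dans le cas réductif non semisimple, mais elle ne remplace pas la descente, qui est le point du corollaire.
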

\begin{proof}
Si $G$ est tel que $G_{der}$ est simplement connexe, cela résulte de la proposition \ref{bouth}. Pour  un groupe $G$ général, soit un groupe $G'$ tel que $G_{der}'$ est simplement connexe et un sous-groupe fini central groupe $\Gamma$ de $G'$ tel que $G'/\Gamma=G$. Nous avons en particulier une action de $\Gamma$ sur $\mathfrak{C}_{+,G'}$ tel que: 
\begin{center}
$\mathfrak{C}_{+,G'}//\Gamma=\mathfrak{C}_{+,G}$
\end{center}
De plus, il résulte de la proposition \ref{abelianise} que  $A_{G}=A_{G'}$ et que $\Gamma$ agit trivialement sur $A_{G}$, nous déduisons donc:
\begin{center}
$\mathfrak{C}_{+,G'}//\Gamma= (A_{G'}\times\kC_{G'})//\Gamma=A_{G}\times\kC_{G}=\mathfrak{C}_{+,G}$,
\end{center}
ce qui conclut.
\end{proof}
On considère la fonction $\mathfrak{D}_{+}=(2\rho,\mathfrak{D})\in T_{+}=(T\times T)/Z_{G}$ sur $k[T_{+}]$ où $\mathfrak{D}=\prod\limits_{\alpha\in R}(1-\alpha(t))$ est la fonction discriminant sur $k[T]$.
Comme $W$ agit trivialement sur le premier facteur, on a que $\mathfrak{D}_{+}\in k[T_{+}]^{W}$ et de plus on a la propriété suivante:
\begin{center}
$t\in T_{+}^{rs}\Longleftrightarrow \mathfrak{D}_{+}(t)\neq 0$,
\end{center}
où $T_{+}^{rs}$ est le lieu régulier semisimple du tore.
D'après \cite[Lem. 2.18]{Bt1}, cette fonction s'étend en une fonction de $k[V_{T}]^{W}$.

$\rmq$ Sur $\overline{T}_{\Delta}$, la fonction étendue s'écrit pour $t_{+}=(t,t^{-1})$:
\begin{center}
$\mathfrak{D}_{+}(t_{+})=2\rho(t)\prod\limits_{\alpha>0}(1-\alpha(t^{-1}))(1-\alpha(t))$
\end{center}
Comme $2\rho=\sum\limits_{\alpha>0}\alpha$, nous obtenons:
\begin{equation}
\mathfrak{D}_{+}(t_{+})=(-1)^{\left|R^{+}\right|}\prod\limits_{\alpha>0}(1-\alpha(t))^{2}
\label{extdisc}
\end{equation}
où $R^{+}$ est l'ensemble des racines positives.
Enfin, nous avons  le critère de régularité tiré de \cite[Prop. 2.19]{Bt1}; soit $t_{+}\in V_{T}$, alors nous avons l'équivalence:
\begin{center}
$t_{+}\in V_{T}^{rs}\Longleftrightarrow \mathfrak{D}_{+}(t_{+})\neq 0$,
\end{center}
où $V_{T}^{rs}$ est le lieu régulier semisimple de $V_{T}$.

\section{Constructions globales}
Soit $X$ une courbe projective lisse connexe sur un corps algébriquement clos $k$. On note $F$ son corps de fonctions, $\left|X\right|$ l'ensemble des points fermés.
Pour $x\in\left|X\right|$, soit $D_{x}=\Spec(\mathcal{O}_{x})$ le disque formel en $x$ et $D_{x}^{\bullet}=\Spec(\fx)$, le disque formel épointé, d'uniformisante $\pi_{x}$.

Soit $G$ un groupe connexe réductif sur $k$. 
On considère une paire de Borel $(B,T)$ de $G$ et on note $W$ le groupe de Weyl de $G$. On suppose désormais que l'ordre de $W$ est premier avec la caractéristique de $k$.

On note $X_{*}(T)^{+}$ l'ensemble des cocaractères dominants de $T$.
On désigne par $\Bun$ le champ des  $G$-torseurs sur $X$. Le champ $\Bun$ est algébrique au sens d'Artin (\cite{LM} et \cite[Prop. 1]{H}).

\subsection{La fibration de Hitchin-Frenkel-Ngô}
Dans cette construction, on construit la fibration qui sera l'objet d'étude principal, elle a été introduite par Frenkel-Ngô dans \cite[sect. 4.1]{FN}.

Pour tout point fermé $x\in X$, considérons le $k$-schéma en groupes $K_{x}=G(\co_{x})$ et le ind-schéma $G_{x}:=G(F_{x})$, on dispose de la grassmannienne affine en $x$, $\Gr_{x}=G_{x}/K_{x}$. Elle admet d'après Lusztig une structure d'ind-schéma ainsi qu'une stratification en $K_{x}$-orbites localement fermées:
\begin{center}
$\Gr_{x}=\coprod\limits_{\la\in X_{*}(T)^{+}}\Gr_{\la,x}$,
\end{center}
dite de Cartan, indexée par les cocaractères dominants $\la\in X_{*}(T)^{+}$. Pour $\la\in X_{*}(T)^{+}$, soit $\overline{\Gr}_{\la,x}$ l'adhérence de $\Gr_{\la,x}$ dans $\Gr_{x}$.
Plus généralement, étant donné un ensemble fini de points fermés $S\subset\left|X\right|$, nous notons 
\begin{center}
$\Gr_{S}:=\prod\limits_{s\in S}\Gr_{s}$.
\end{center}
Soit $\Div(X)$ le groupe abélien des diviseurs sur $X$. On considère alors le groupe $\Div(X,T)=\Div(X)\otimes_{\mathbb{Z}}X_{*}(T)$ des diviseurs à coefficients dans les cocaractères de $T$. On note $\Div^{+}(X,T)\subset\Div(X,T)$ le cône formé par les diviseurs dont les coefficients sont dans $X_{*}(T)^{+}$. On a alors une action de $W$ sur $\Div(X,T)$ induite par l'action de $W$ sur $X_{*}(T)$.
Pour $\la=\sum\limits_{x\in \left|X\right|}\la_{x}[x]\in \Div(X,T)$, on pose $S=\supp(\la):=\{x\in \left|X\right|\vert ~\la_{x}\neq 0\}$ et nous notons $\Gr_{\la}$ (resp. $\overline{\Gr}_{\la}$) le produit 
\begin{center}
$\prod\limits_{s\in S}\Gr_{\la_{s},s}$
\end{center}
(resp. $\prod\limits_{s\in S}\overline{\Gr}_{\la_{s},s}$).

\subsubsection{Construction d'un $T$-torseur}
Pour tout $\omega\in X^{*}(T)^{+}$ et $\mu\in\Div^{+}(X,T)$, l'accouplement
\begin{center}
$\left\langle\omega, \mu\right\rangle=\sum\limits_{x\in\left|X\right|}\left\langle\omega, \mu_{x}\right\rangle [x]$
\end{center}
définit un diviseur sur la courbe.

Pour $x\in \left|X\right|$, on rappelle que $X_{*}(T)=T(F_{x})/T(\co_{x})$, en particulier pour tout $\mu\in X_{*}(T)^{+}$, on peut lui associer un $T$-torseur sur $D_{x}$ avec une trivialisation générique.
Pour $\mu=\sum\limits_{x\in X}\mu_{x}[x]\in\Div^{+}(X,T)$, notons $S=\supp(\mu)$, considérons alors le $T$-torseur $E_{T}(\mu)$ sur $X$, qui consiste à recoller le $T$-torseur trivial en dehors de $S$ avec le $T$-torseur sur le voisinage formel des points de $S$, donné par $\mu$, lequel est muni canoniquement d'une trivialisation générique.
Ce recollement nous est donné par une généralisation de Beauville-Laszlo par Beilinson-Drinfeld \cite[sect. 2.3.7]{BD}.
Pour chaque racine positive $\alpha\in R^{+}$,  nous avons
\begin{center}
$E_{T}(\mu)\times^{T,\alpha}\mathbb{G}_{m}=\mathcal{O}_{X}(\left\langle\alpha, \mu\right\rangle)$.
\end{center}
Comme $\left\langle \alpha, \mu\right\rangle\geq 0$, on a une section canonique définie sur la courbe que l'on note $1_{\left\langle \alpha, \mu\right\rangle}$.
En se limitant aux racines simples $(\alpha_{i})_{1\leq i \leq r}$, nous obtenons  $r$-fibrés en droites munis de $r$-sections, d'où l'on déduit un morphisme 
\begin{center}
$\mu:X\rightarrow[A_{G}/Z_{+}]$,
\end{center}
où $Z_{+}$ est le centre de $G_{+}$ et où l'on rappelle que $A_{G}$ désigne la base du morphisme d'abélianisation 
\begin{center}
$\alpha:V_{G}\rightarrow A_{G}:=\mathbb{G}_{m}^{l}\times\mathbb{A}^{r}$
\end{center}
donné par les racines simples de $G$ et les caractères $\omega_{i'}$. On remarque que comme l'on divise par $Z_{+}$, la partie centrale disparaît.
Fixons $\la=\sum\limits_{x\in X}\la_{x}[x]\in \Div^{+}(X,T)$. Nous avons vu que $\Div(X,T)$ est muni d'une action du groupe de Weyl $W$, on considère alors l'élément $-w_{0}\la\in \Div^{+}(X,T)$ où $w_{0}$ est l'élément long du groupe de Weyl.
D'après ci-dessus, nous obtenons une flèche:
\begin{center}
$-w_{0}\la:X\rightarrow[A_{G}/Z_{+}]$.
\end{center}
Nous avons le morphisme d'abélianisation $\alpha:V_{G}\rightarrow A_{G}$ qui  est équivariant par rapport à l'action du centre $Z_{+}$ de $G_{+}$, d'où l'on obtient une flèche :
\begin{center}
$\alpha: [V_{G}/Z_{+}]\rightarrow [A_{G}/Z_{+}]$.
\end{center}
Posons  alors $V_{G}^{\la}:=(-w_{0}\la)^{*}[V_{G}/Z_{+}]$. Nous obtenons un espace fibré sur $X$.
Le semi-groupe de Vinberg admet un ouvert lisse $V_{G}^{0}$ (cf. sect.\ref{introsemi}) et on note de la même manière $V_{G}^{\la,0}$ l'espace tiré sur $X$ par la flèche $-w_{0}\la$.
\begin{defi}\label{hitchin}
L'espace de Hitchin $\cmd$ est le champ des sections 
\begin{center}
$\Hom(X, [V_{G}^{\la}/G])$
\end{center}
où $G$ agit sur $V_{G}^{\la}$ par conjugaison. 
Il classifie les couples $(E,\phi)$ avec $E$ un $G$-torseur sur $X$ et $\phi$ une section de l'espace fibré au-dessus de $X$
\begin{center}
$V_{G}^{\la}\times^{G}E$.
\end{center}
Nous avons également l'ouvert $\cmdo$ qui classifie les sections
\begin{center}
$h_{(E,\phi)}:X\rightarrow [V_{G}^{\la,0}/G]$.
\end{center}
\end{defi}
On définit également l'ouvert régulier $\cmdo^{reg}$ de $\cmdo$ par le champ des sections:
\begin{center}
$\cmdo^{reg}:=\Hom_{X}(X,[V_{G}^{\la,reg}/G])$.
\end{center}
avec $V_{G}^{\la,reg}=(-w_{0}\la)^{*}[V_{G}^{reg}/Z_{+}]$ et où $V_{G}^{reg}\subset V_{G}$ désigne le lieu où la dimension du centralisateur est minimale.
\medskip

Passons à la définition de la base de cette fibration.
Nous avons un morphisme de Steinberg 
\begin{center}
$\chi_{+}: V_{G}\rightarrow\kc$
\end{center}
ainsi qu'un morphisme de projection $p_{1}:[\kc/Z_{+}]\rightarrow[A_{G}/Z_{+}]$, qui est lisse de dimension relative $r$ si $G_{der}$ est simplement connexe d'après le théorème \ref{bouth}. On note $\kc^{\lambda}$ le changement de base à $X$. 
On définit alors la base de Hitchin $\abd$ comme le champ des sections
\begin{center}
$h_{a}:X\rightarrow\kc^{\la}$.
\end{center}
Grâce au morphisme de Steinberg $\chi_{+}$, nous avons une fibration
\begin{center}
$f:\cmd\rightarrow\abd$
\end{center}
donnée par $f(E,\phi)=\chi_{+}(\phi)$, dite de Hitchin-Frenkel-Ngô.
Si l'on suppose de plus, $G$ semisimple simplement connexe, alors $\kcd$ est un fibré vectoriel de rang $r$ et l'on a :
\begin{center}
$\abd=\bigoplus\limits_{i=1}^{r}H^{0}(X,\mathcal{O}_{X}(\left\langle \omega_{i},-w_{0}\la\right\rangle)$.
\end{center}
$\rmq$ A ce stade, il est important de remarquer que $\cmd$ peut être vide si nous n'imposons pas de conditions sur $\la$.
Nous avons une suite exacte de la forme:
$$\xymatrix{1\ar[r]&G_{der}\ar[r]&G\ar[r]^{\det_{G}}&\mathbb{G}_{m}^{l}\ar[r]&1}$$

En particulier, étant donné un point $(E,\phi)\in\cmd(k)$ en considérant $\det_{G}(\phi)$, cela nous fournit $l$-fonctions sur la courbe $X$ dont le degré est $\deg\left\langle \omega_{i}',-w_{0}\la\right\rangle$.
Cela impose donc que:
\begin{center}
$\forall~ 1\leq i\leq l,\deg\left\langle \omega_{i}',-w_{0}\la\right\rangle=0$,
\end{center}
hypothèse que nous ferons par la suite systématiquement.
Dans ce cas, la base de Hitchin pour le groupe $G$ est donné par :
\begin{center}
$\abd=\bigoplus\limits_{j=1}^{l}(H^{0}(X,\co_{X}(\left\langle \omega_{j}',-w_{0}\la\right\rangle))-\{0\})\oplus\bigoplus\limits_{i=1}^{r}H^{0}(X,\co_{X}(\left\langle \omega_{i},-w_{0}\la\right\rangle))$.
\end{center}

En particulier, une condition nécessaire pour que $\cmd$ soit non vide est que de plus:
\begin{equation}
\forall 1\leq j\leq l, H^{0}(X,\co_{X}(\left\langle \omega_{j}',-w_{0}\la\right\rangle))\neq \{0\}.
\label{glob1}
\end{equation}
et dans ce cas $\co_{X}(\left\langle \omega_{j}',-w_{0}\la\right\rangle)\cong\mathcal{O}_{X}$.

$\rmq$ Nous verrons également par la suite (cf. sect. \ref{ssimple}) que pour que $\cmd$ soit non vide pour un groupe semisimple quelconque, il faut ajouter d'autres conditions topologiques.
\subsection{Le champ de Hecke}
\begin{defi}
Pour un ensemble fini de points fermés $S$. On définit le champ de Hecke $\mathcal{H}_{S}$, dont le groupoïde des $R$-points $\mathcal{H}_{S}(R)$, pour une $k$-algèbre $R$, est constitué des triplets $(E, E', \beta)$ où $E, E'$ sont des $G$-torseurs sur $X_{R}:=X\times_{k}R$ et $\beta$ un isomorphisme:
\begin{center}
$E_{\scriptscriptstyle{\vert X_{R}-\Gamma_{S,R}}}\stackrel{\cong}{\rightarrow}E'_{\scriptscriptstyle{\vert X_{R}-\Gamma_{S,R}}}$.
\end{center}
où $\Gamma_{S, R}$ désigne le graphe des points $x$ de $S$ dans $X_{R}$.
\end{defi}

Nous avons une flèche 
\begin{center}
$\inv:\cH_{S}\rightarrow[K_{S}\backslash\Gr_{S}]$
\end{center}
qui associe à un triplet $(E,E',\beta)$ la position relative du triplet local $(E_{S},E_{S'},\beta_{\vert F_{S}})$, où $E_{S}$ est la restriction de $E$ à $D_{S}$. Pour $\la=\sum\limits_{s\in S}\la_{s}[s]\in \Div^{+}(X,T)$, on note $\inv_{S}(E,E',\beta)=\la$ si la paire $(E,E')$ est en position relative $\la$.

\begin{defi}
Pour $\la=\sum\limits_{s\in S}\la_{s}[s]\in \Div^{+}(X,T)$, on considère alors le sous-champ fermé $\overline{\cH}_{\la}$, qui est l'image réciproque du fermé $\overline{\Gr}_{\la}$ par la flèche $\inv$.
\end{defi} 
$\rmq$ D'après Varshavsky \cite[Lem. 3.1]{Va}, $\overline{\cH}_{\la}$ est un champ algébrique localement de type fini sur $k$.
Nous en déduisons donc que $\cH$ a une structure de ind-champ algébrique.
Nous avons deux projections $p_{1}$ et $p_{2}$:
$$\xymatrix{&\mathcal{H}_{S}\ar[dr]^{p_{2}}\ar[dl]_{p_{1}}\\\Bun&&\Bun}$$
où $p_{1}(E, E', \beta)=E$ et  $p_{2}(E, E', \beta)=E'$.
Les fibres de $p_{1}$ et $p_{2}$ sont des formes tordues de la grassmannienne affine $\Gr_{S}$.
De plus, le sous-champ fermé $\overline{\cH}_{\la}$ est fibré au-dessus de $\Bun$ en $\overline{\Gr}_{\la}$.
Nous avons la proposition suivante due à Varshavsky \cite[A.8c]{Va}:
\begin{prop}\label{va}
Il existe un morphisme $V\rightarrow\Bun$ lisse à fibres géométriquement connexes tel que $\overline{\cH}_{\la}\times_{\Bun}V$ est isomorphe à $V\times_{k}\overline{\Gr}_{\la}$, le produit fibré se faisant indifféremment pour $p_{1}$ ou $p_{2}$. 
En particulier, les projections sont plates, projectives et algébriques au-dessus de $\Bun$.
\end{prop}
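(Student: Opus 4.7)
Le plan est d'ajouter une structure de niveau le long de $S$ pour trivialiser le $G$-torseur universel sur un voisinage infinit�simal, puis d'appliquer le recollement de Beauville-Laszlo pour identifier la fibre du champ de Hecke � $\overline{\Gr}_{\la}$. Comme $\overline{\Gr}_{\la}$ est un sch�ma projectif de type fini sur $k$, il existe un entier $N=N(\la)$ tel que l'action de $\prod_{s\in S}G(\co_{s})$ sur $\overline{\Gr}_{\la}$ se factorise par le groupe alg�brique lisse $K_{N}:=\prod_{s\in S}G(\co_{s}/\pi_{s}^{N})$. Je d�finis alors $V:=V_{N}$ comme le champ classifiant les couples $(E,\tau)$, o� $E\in\Bun$ et $\tau$ est une trivialisation de $E$ sur le voisinage infinit�simal d'ordre $N$ de $S$. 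Le morphisme d'oubli $V\to\Bun$ est un torseur sous $K_{N}$; puisque $G$ est connexe lisse, $K_{N}$ l'est �galement (extension successive de $G$ par des groupes unipotents connexes), et donc $V\to\Bun$ est lisse � fibres g�om�triquement connexes.

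Pour construire l'isomorphisme $\overline{\cH}_{\la}\times_{\Bun,p_{1}}V\cong V\times_{k}\overline{\Gr}_{\la}$, je pars d'un $R$-point $(E,\tau)$ de $V$ et d'un rel�vement $(E',\beta)$ dans $\overline{\cH}_{\la}$ via $p_{1}$. La trivialisation $\tau$ se restreint au disque point� autour de $S$ et, compos�e avec $\beta$, donne une trivialisation g�n�rique de $E'$ au voisinage de $S$; sa comparaison avec une trivialisation formelle locale de $E'$ sur le voisinage infinit�simal d'ordre $N$ (qui existe apr�s raffinement fppf de $R$) fournit un �l�ment de $\prod_{s}G(F_{s})_{R}/G(\co_{s})_{R}$, lequel appartient � $\overline{\Gr}_{\la}$ par la condition sur l'invariant et se trouve bien d�fini gr�ce au choix de $N$. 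R�ciproquement, pour un point de $\overline{\Gr}_{\la}(R)$, le recollement de Beilinson-Drinfeld \cite[sect. 2.3.7]{BD} entre $E|_{X_{R}-\Gamma_{S,R}}$ et le $G$-torseur trivial sur le voisinage formel de $S$ via la modification prescrite produit l'unique $(E',\beta)$ correspondant. Les deux op�rations �tant quasi-inverses, on obtient l'isomorphisme annonc�. Pour la sym�trie par rapport � $p_{2}$, on observe que, quitte � remplacer $N$ par un entier plus grand absorbant la modification $\la$, une trivialisation de $E$ sur $NS$ se transporte via $\beta$ en une trivialisation de $E'$ sur $NS$, de sorte que la m�me strat�gie s'applique en �changeant les r�les de $E$ et $E'$.

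Enfin, $\overline{\Gr}_{\la}$ �tant projectif sur $k$, la projection $V\times_{k}\overline{\Gr}_{\la}\to V$ est plate, projective et repr�sentable, et par descente lisse le long du morphisme surjectif $V\to\Bun$, ces propri�t�s se transmettent � $p_{1}$ et $p_{2}:\overline{\cH}_{\la}\to\Bun$. Le principal obstacle sera la v�rification soigneuse du recollement de Beauville-Laszlo en famille sur une base $R$ arbitraire, ainsi que le contr�le pr�cis de l'entier $N$ pour capturer uniform�ment la singularit� born�e par $\la$ et pour rendre bien d�finie la construction sym�trique sur $p_{2}$; c'est � cette fin qu'on invoque la version g�n�ralis�e du recollement due � Beilinson-Drinfeld et l'existence de trivialisations formelles apr�s localisation fppf.
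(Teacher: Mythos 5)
Votre strat\'egie est exactement celle du texte : la proposition est attribu\'ee \`a Varshavsky \cite[A.8c]{Va}, et l'auteur reprend lui-m\^eme cet argument plus loin (proposition \ref{vab} et lemme \ref{fini}) en passant au torseur des structures de niveau, en utilisant que l'action de $K_{S}$ sur $\overline{\Gr}_{\la}$ se factorise par un quotient alg\'ebrique lisse connexe $K_{N}$, puis en recollant \`a la Beauville--Laszlo. Deux micro-\'etapes de votre r\'edaction sont toutefois inexactes, sans que cela compromette l'ensemble. D'abord, une trivialisation de niveau $N$ ne \emph{se restreint pas} au disque point\'e : le voisinage infinit\'esimal d'ordre $N$ est disjoint du disque point\'e. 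Il faut relever $\tau$ fppf-localement en une trivialisation formelle compl\`ete, former le point de $\Gr_{S}$ \`a partir de ce rel\`evement et de $\beta$, puis constater que le r\'esultat ne d\'epend pas du rel\`evement parce que le sous-groupe de congruence $\Ker(K_{S}\rightarrow K_{N})$ agit trivialement sur $\overline{\Gr}_{\la}$ --- ce que vous sugg\'erez sans l'\'ecrire. Ensuite, $\beta$ n'\'etant un isomorphisme qu'en dehors de $S$, il ne peut pas transporter une trivialisation sur le voisinage infinit\'esimal de $S$ de $E$ vers $E'$ ; pour $p_{2}$ on refait simplement le m\^eme argument avec une structure de niveau sur $E'$, la fibre \'etant alors $\overline{\Gr}_{-w_{0}\la}$, que l'on identifie \`a $\overline{\Gr}_{\la}$ par inversion. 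Avec ces deux corrections, la preuve est compl\`ete et co\"incide avec celle du papier.
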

\subsection{Le champ de Picard}
On suppose  que $G_{der}$ est simplement connexe pour disposer d'un centralisateur régulier.
Le centralisateur régulier va nous permettre d'obtenir une action d'un champ de Picard sur les fibres de Hitchin.
Nous avons construit une section de Steinberg $\eps_{+}$ et un centralisateur régulier $J$ qui est un schéma en groupes commutatifs et lisses muni d'un morphisme :
\begin{center}
$\chi_{+}^{*}J\rightarrow I$
\end{center}
qui est un isomorphisme sur l'ouvert $V_{G}^{reg}$ et $I$ le schéma des centralisateurs des éléments de $V_{G}$ dans $G$.
On tire alors le centralisateur régulier $J$ en un schéma $J^{\la}:=(-w_{0}\la)^{*}J$ sur $\kcd$.
Pour tout $S$-point de $\abd$, on a une flèche $h_{a}:X\times S\rightarrow\kcd$. Posons $J_{a}:=h_{a}^{*}J^{\la}$ l'image réciproque de $J^{\la}$ sur $\kcd$.
\begin{defi}
On considère le groupoïde de Picard $\mathcal{P}_{a}(S)$ des $J_{a}$-torseurs sur $X\times S$. Quand $a$ varie, cela définit un groupoïde de Picard $\mathcal{P}$ au-dessus de $\abd$.
\end{defi}
La flèche $\chi_{+}^{*}J\rightarrow I$ induit pour tout $S$-point $(E,\phi)$ au-dessus de $a$ une flèche
\begin{center}
$J_{a}\rightarrow \Aut_{X\times S}(E,\phi)=h_{E,\phi}^{*}I$
\end{center}
avec $I$ le schéma des centralisateurs des éléments de $V_{G}$ dans $G$.
On en déduit alors une action du groupoïde de Picard $\mathcal{P}_{a}(S)$ sur le groupoïde $\overline{\mathcal{M}}_{\la}(a)(S)$, et donc une action de $\mathcal{P}$ sur $\cmd$.
Nous allons voir que cette action est simplement transitive sur l'ouvert régulier.
Nous avons besoin pour cela d'extraire des racines, nous faisons la définition suivante:
\begin{defi}\label{div1}
Soit $n\in\mathbb{N}$ et $\la\in\Div^{+}(X,T)$. On dit que \og $n$ divise $\la$\fg, noté $n\vert\la$, s'il existe $\la'\in\Div^{+}(X,T)$ tel que pour tout $\omega\in X^{*}(T)^{+}$, nous avons :
\begin{center}
$\deg(\left\langle \omega,-w_{0}\la\right\rangle)=n\deg(\left\langle \omega,-w_{0}\la'\right\rangle)$.
\end{center}
\end{defi}
De même que pour les fibres de Springer affines, on a la proposition tirée de \cite[Prop. 3.6]{Bt1}:
\begin{prop}\label{picardtorseur}
Si $c\vert\la$, avec $c=\left|\pi_{1}(G)\right|$, alors l'ouvert $\cmdo^{reg}$  est une gerbe sous l'action de $\cP$, neutralisée par la section de Steinberg.
\end{prop}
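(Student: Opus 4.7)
Le plan consiste � �tablir deux assertions compl�mentaires qui, r�unies, donnent la conclusion: d'une part, sous l'hypoth�se $c\vert\la$, construire une section globale
\begin{center}
$\eps_{+}^{\la}:\abd\rightarrow\cmdo^{reg}$
\end{center}
issue de la section de Steinberg $\eps_{+}:\kc\rightarrow V_{G}^{reg}$; d'autre part, d�montrer que l'action naturelle de $\cP$ sur $\cmdo^{reg}$ est simplement transitive fibre � fibre au-dessus de $\abd$. L'existence de la section rendra alors la gerbe neutre.

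Pour globaliser $\eps_{+}$, on part de la d�composition $\eps_{+}(a,b)=\eps_{G}(a)\psi_{b}$ avec $\psi_{b}\in T_{\Delta}\cong T/Z_{G}=T^{ad}$. Apr�s torsion par $-w_{0}\la$, la partie $\eps_{G}(a)$ se globalise sans obstruction le long de la base de Hitchin gr�ce au th�or�me \ref{bouth2}; en revanche, la partie $\psi_{b}$ oblige � relever les donn�es de $T^{ad}$ � $T$, ce qui revient � extraire une racine d'ordre $c=\left|\pi_{1}(G)\right|$ du torseur sous-jacent, puisque l'indice de $X_{*}(T)$ dans $X_{*}(T^{ad})$ vaut pr�cis�ment $c$. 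La d�finition \ref{div1} fournit exactement un tel rel�vement $\la'\in\Div^{+}(X,T)$ au niveau des degr�s des fibr�s en droites, � savoir $\deg\left\langle\omega,-w_{0}\la\right\rangle=c\deg\left\langle\omega,-w_{0}\la'\right\rangle$ pour tout $\omega\in X^{*}(T)^{+}$; le choix d'un tel $\la'$ permet de d�finir $\eps_{+}^{\la}$ sur toute la base, et la conclusion du th�or�me \ref{bouth2} assure qu'elle se factorise par $\cmdo^{reg}$.

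Pour la transitivit� simple, on fixe un $k$-sch�ma $S$, $a\in\abd(S)$, et deux objets $(E_{1},\phi_{1}),(E_{2},\phi_{2})\in\cmdo^{reg}(a)(S)$. Le th�or�me \ref{bouth2} identifie $J_{a}$ aux automorphismes $\Aut(E_{i},\phi_{i})$, ce qui donne d�j� la libert� de l'action. Pour la transitivit�, on utilise la lissit� de $\chi_{+}^{reg}$ pour produire, localement pour la topologie �tale sur $X\times S$, un �l�ment de $G$ conjuguant $\phi_{1}$ � $\phi_{2}$; l'obstruction au recollement de ces donn�es locales est pr�cis�ment une classe dans $H^{1}(X\times S,J_{a})$, c'est-�-dire un $J_{a}$-torseur, et on v�rifie qu'il transforme $(E_{1},\phi_{1})$ en $(E_{2},\phi_{2})$. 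Joint � l'existence de la section $\eps_{+}^{\la}$, ceci ach�ve la preuve.

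Le point le plus d�licat sera de ma�triser l'effet des composantes non connexes des fibres g�om�triques de $\chi_{+}^{reg}$, signal� dans la remarque suivant le th�or�me \ref{bouth2}. En effet, sans pr�caution, la conjugaison locale entre $\phi_{1}$ et $\phi_{2}$ pourrait les relier � des $G$-orbites distinctes � l'int�rieur d'une m�me fibre, rendant impossible le recollement en un $J_{a}$-torseur global. La restriction � l'ouvert lisse $V_{G}^{0}$ qui d�finit $\cmdo$ (et l'hypoth�se \eqref{glob1}) devrait permettre de s�lectionner une unique composante connexe rencontr�e par l'image de la section de Steinberg, et c'est � cet endroit que l'intervention de la divisibilit� $c\vert\la$ et du choix de $\la'$ devra �tre examin�e avec soin.
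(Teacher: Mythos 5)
Le texte ne red�montre pas cet �nonc�: il renvoie � \cite[Prop. 3.6]{Bt1}. Votre plan en deux temps — globaliser la section de Steinberg $\eps_{+}$ sous l'hypoth�se $c\vert\la$ pour neutraliser la gerbe, puis �tablir la transitivit� simple de $\cP$ fibre � fibre par conjugaison locale et une classe dans $H^{1}(X\times S,J_{a})$ — est bien la strat�gie attendue, calqu�e sur le cas des alg�bres de Lie chez Ng�.

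Deux points ne tiennent cependant pas en l'�tat. (1) Votre explication de l'usage de $c\vert\la$ repose sur l'�galit� $[X_{*}(T^{ad}):X_{*}(T)]=\left|\pi_{1}(G)\right|$, qui est fausse en g�n�ral: pour $G$ semisimple cet indice vaut $\left|Z_{G}\right|$, et les deux invariants diff�rent d�j� pour $SL_{2}$ (o� $c=\left|\pi_{1}(SL_{2})\right|=1$, l'hypoth�se est donc vide, alors que l'indice vaut $2$ — votre m�canisme imposerait $2\vert\la$, contredisant l'�nonc�) et pour $PGL_{2}$ (indice $1$, mais $c=2$). L'obstruction � l'�quivariance sous $Z_{+}\cong T$ de la section $\eps_{+}(b,a)=\eps_{G}(a)\psi_{b}$ existe bel et bien et se r�sout par une extraction de racines de fibr�s en droites via le $\la'$ de la d�finition \ref{div1}, mais le r�seau en jeu n'est pas celui que vous indiquez; il faut reprendre ce calcul d'�quivariance, qui affecte d'ailleurs aussi le facteur $\eps_{G}(a)$ et pas seulement $\psi_{b}$. (2) Le point que vous signalez vous-m�me en conclusion — les fibres g�om�triques de $\chi_{+}^{reg}$ sont des unions disjointes de $G$-orbites �ventuellement non connexes — est pr�cis�ment ce qui rend la transitivit� locale, donc l'assertion m�me de gerbe, non triviale: tant que l'on ne sait pas que deux sections r�guli�res de m�me image dans $\kcd$ tombent localement dans la m�me $G$-orbite, la classe de $H^{1}(X\times S,J_{a})$ que vous invoquez n'est pas d�finie. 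Vous identifiez correctement cette difficult� mais la laissez ouverte; c'est l'endroit le plus d�licat de la preuve.
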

Dans la suite, nous supposerons que l'hypothèse $c\vert\la$ est vérifiée.

\subsection{Les $z$-extensions}
Dans la suite pour analyser la fibration de Hitchin d'un groupe connexe réductif $G$, il sera commode de pouvoir se ramener au cas d'un groupe $G'$ tel que $G'_{der}$ est simplement connexe, pour lequel la géométrie de la fibration est plus agréable.
Soit $G$ connexe réductif déployé sur $\bF_{q}$. Soit $v$ une place de $x$ et $F_{v}$ le complété à la place $v$ du corps de fonctions $F$ de $X$. 
La proposition suivante est due à Kottwitz \cite[sect.4]{Kot2}:
\begin{prop}
Il existe  un groupe connexe réductif déployé $G'$ sur $X$ de $G$ qui est une extension centrale par un tore $Z$ tel que :
\begin{itemize}
\item
$G'_{der}$ est simplement connexe.
\item
 $G'(F_{v})$ se surjecte sur $G(F_{v})$.
\end{itemize}
\end{prop}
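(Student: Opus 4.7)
Le plan est d'invoquer la construction classique des $z$-extensions (Langlands--Kottwitz), puis de conclure la surjectivité en $F_v$ par le th\'eor\`eme 90 de Hilbert.

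Premi\`ere \'etape : on part de l'isog\'enie centrale $\pi : G^{sc}\to G_{der}$ ($G^{sc}$ simplement connexe), et on note $\mu := Z(G^{sc})$, qui est un sch\'ema en groupes diagonalisable fini ; puisque $G$ est d\'eploy\'e sur $\mathbb{F}_q$, il en va de m\^eme pour $\mu$. On forme le produit fibr\'e
\[
\tilde{G} := G^{sc}\times_{G_{ad}} G
\]
le long des deux surjections $G^{sc}\to G_{ad}$ et $G\to G_{ad}$. La projection $\tilde{G}\to G$ a pour noyau $\mu$, ce qui fournit une extension centrale $1\to\mu\to\tilde{G}\to G\to 1$ ; et le morphisme diagonal $g\mapsto(g,\pi(g))$ identifie $\tilde{G}_{der}$ \`a $G^{sc}$, donc simplement connexe. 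Une v\'erification de dimensions ($\dim\tilde{G} = \dim G^{sc} + \dim G - \dim G_{ad} = \dim G$) assure que ce plongement diagonal \'epuise bien le groupe d\'eriv\'e.

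Deuxi\`eme \'etape : on rel\`eve le noyau fini $\mu$ en un tore. Comme $\mu$ est un sch\'ema en groupes diagonalisable fini et d\'eploy\'e sur $\mathbb{F}_q$, il s'\'ecrit comme produit de $\mu_{n_i}$'s et se plonge dans un tore d\'eploy\'e $Z$ (par exemple via les inclusions $\mu_{n_i}\hookrightarrow\mathbb{G}_m$). On d\'efinit alors
\[
G' := (\tilde{G}\times Z)/\mu,
\]
o\`u $\mu$ est plong\'e antidiagonalement via ses inclusions dans $\tilde{G}$ et dans $Z$. On obtient une extension centrale $1\to Z\to G'\to G\to 1$, et $G'_{der} = \tilde{G}_{der} = G^{sc}$ reste simplement connexe puisque $Z$ est un tore central.

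Troisi\`eme \'etape : la surjectivit\'e sur $F_v$. La suite exacte courte $1\to Z\to G'\to G\to 1$ induit la suite exacte longue en cohomologie galoisienne
\[
G'(F_v)\to G(F_v)\to H^1(F_v, Z).
\]
Puisque $Z$ est un tore d\'eploy\'e, le th\'eor\`eme 90 de Hilbert donne $H^1(F_v, Z)=0$, d'o\`u la surjectivit\'e cherch\'ee $G'(F_v)\twoheadrightarrow G(F_v)$. Le seul point potentiellement d\'elicat serait dans le cas non d\'eploy\'e, o\`u il faudrait choisir $Z$ parmi les tores \emph{induits} (et le plongement $\mu\hookrightarrow Z$ de mani\`ere \'equivariante sous Galois) afin de pr\'eserver l'annulation de $H^1$ ; dans le cadre d\'eploy\'e pr\'esent cette subtilit\'e dispara\^it et il n'y a pas d'obstacle substantiel.
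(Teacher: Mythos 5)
Le papier ne donne aucune preuve de cette proposition~: il renvoie simplement \`a \cite[sect. 4]{Kot2}, et votre argument est pr\'ecis\'ement la construction standard de Langlands--Kottwitz des $z$-extensions~; vous suivez donc la m\^eme voie que la source cit\'ee, et l'argument de Hilbert 90 pour la surjectivit\'e de $G'(F_{v})\rightarrow G(F_{v})$ est correct. Un point que votre r\'edaction passe sous silence~: le produit fibr\'e $\tilde{G}=G^{sc}\times_{G_{ad}}G$ est en g\'en\'eral \emph{non connexe}. En effet, $(g_{1},g_{2})\mapsto(g_{1},\,g_{2}\pi(g_{1})^{-1})$ l'identifie au produit direct $G^{sc}\times Z(G)$, et $Z(G)$ n'est pas connexe d\`es que $Z(G)\cap G_{der}\neq 1$ (par exemple $G=Sp_{2n}$). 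La v\'erification de dimensions ne suffit donc pas \`a justifier l'assertion sur le groupe d\'eriv\'e de $\tilde{G}$ (elle reste vraie~: $[\tilde{G},\tilde{G}]=G^{sc}\times\{1\}$ dans ces coordonn\'ees), et surtout la connexit\'e de $G'$ --- qui fait partie de l'\'enonc\'e --- demande un mot~: elle \'equivaut \`a la surjectivit\'e de $\mu\rightarrow\pi_{0}(\tilde{G}\times Z)=\pi_{0}(Z(G))$, c'est-\`a-dire \`a l'\'egalit\'e $Z(G)=Z(G)^{0}\cdot\pi(Z(G^{sc}))$, laquelle d\'ecoule de $G=Z(G)^{0}G_{der}$ et de $Z(G_{der})=\pi(Z(G^{sc}))$. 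Une fois ce point acquis, la r\'eductivit\'e et le d\'eploiement de $G'$ sont imm\'ediats, et votre remarque finale sur la n\'ecessit\'e de prendre $Z$ induit dans le cas non d\'eploy\'e est exactement le point de la construction de Kottwitz.
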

En particulier, étant donné un élément $\la\in\Div(X,T)$, on peut le relever en un élément $\tilde{\la}\in \Div(X,T)$ en relevant localement chaque $\la_{x}$. Néanmoins, pour s'assurer que $\overline{\cm}_{\tilde{\la}}$ soit non-vide, à un certain nombre de points où $\la_{x}=0$, on le relève en $\tilde{\la}\in Z(F_{x})$ non nul de telle sorte que la condition \eqref{glob1} soit vérifiée.
Nous obtenons alors un carré commutatif:
$$\xymatrix{\overline{\cm}_{\tilde{\la}}\ar[r]\ar[d]&\cmd\ar[d]\\\mathcal{A}_{\tilde{\la}}\ar[r]&\abd}.$$

\subsection{Les ouverts $\abdh$ et $\abdd$}\label{cour1}
Nous avons besoin d'introduire des ouverts de $\abd$ pour lesquels nous avons plus de prise sur la fibration de Hitchin. Ils seront étudiés de manière approfondie ultérieurement.
On rappelle que nous avons un morphisme fini plat $W$-équivariant:
\begin{center}
$\theta:V_{T}\rightarrow\kc$.
\end{center}
ramifié le long du diviseur discriminant $\mathfrak{D}_{+}=\bigcup\limits_{\alpha\in R}\overline{\Kern(\alpha)}$. On note toujours $\mathfrak{D}_{+}\subset\kc$  son image dans $\kc$ que l'on tire ensuite sur $\kcd$ en un diviseur $\mathfrak{D}_{\la}:=(-w_{0}\la)^{*}\mathfrak{D}$ (on tire de même $V_{T}$ en $V_{T}^{\la}$).
On rappelle que nous avons introduit dans \ref{Pop}, un ouvert fortement régulier semisimple $\kc^{f-rs}\subset\kc$. On note $\kD^{f-rs}$ son complémentaire.
\begin{defi}
On définit l'ouvert génériquement fortement régulier semisimple $\abdh\subset\abd$ constitué des $a\in\abd$ tels que $a(X)\not\subset\mathfrak{D}^{f-rs}_{\la}$ ainsi que 
l'ouvert transversal $\abd^{\diamondsuit}$ constitué des $a\in\abdh$ qui intersectent transversalement le diviseur discriminant $\mathfrak{D}_{\la}$.
\end{defi}
$\rmq$ En particulier, on a l'inclusion:
\begin{center}
$\abd^{\diamondsuit}\subset\abdh$. 
\end{center}
Pour $a\in\abdh$, on a une section $h_{a}:X\rightarrow\kcd$. On pose $\tilde{X}_{a}$ le revêtement fini plat obtenu en tirant par $h_{a}$ le revêtement 
\begin{center}
$V_{T}^{\la}\rightarrow\kcd$.
\end{center}
L'ouvert $\abdh$ a la propriété agréable que le champ de Picard est lisse au-dessus de celui-ci.
\begin{prop}\label{picardlisse}
Supposons $G_{der}$ simplement connexe. Pour tout $a\in\abdh$,
\begin{center}
$H^{0}(X,\Lie(J_{a}))=\Lie(Z_{G})$.
\end{center}
Le champ de Picard $\cP^{\heartsuit}:=\cP\times_{\abd}\abdh$ est lisse sur $\abdh$.
En particulier, si $G$ n'a pas de tore central, alors $\cP_{a}$ est un champ de Picard de Deligne-Mumford.
\end{prop}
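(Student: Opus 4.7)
Le plan est d'exploiter la description galoisienne du centralisateur r\'egulier (Proposition \ref{galois}) pour expliciter $\Lie(J_a)$ via le rev\^etement cam\'eral, puis de calculer le $H^0$ par la formule de projection, avant de conclure pour la lissit\'e par th\'eorie des d\'eformations des torseurs.

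D'apr\`es la Proposition \ref{galois}, $J$ s'identifie \`a l'ouvert $\tilde{J}$ de $J^{1} = (\prod_{V_T/\kc}(T\times V_T))^W$. Comme il s'agit d'une immersion ouverte, on a l'\'egalit\'e d'alg\`ebres de Lie $\Lie(J) = (\theta_{*}(\Lie(T)\otimes\co_{V_T}))^W$ sur $\kc$. On tire cette identification par la section tautologique $h_a:X\to\kcd$ pour obtenir
\[
\Lie(J_a) = (\theta_{a,*}(\Lie(T)\otimes\co_{\tilde{X}_a}))^W.
\]
La caract\'eristique \'etant premi\`ere \`a $\left|W\right|$, le foncteur des $W$-invariants est exact ; la formule de projection donne alors
\[
H^0(X,\Lie(J_a)) = \bigl(\Lie(T)\otimes H^0(\tilde{X}_a,\co_{\tilde{X}_a})\bigr)^W.
\]
Comme $\tilde{X}_a/W = X$ et que le rev\^etement est g\'en\'eriquement \'etale galoisien pour $a\in\abdh$, une analyse des orbites sous $W$ des composantes de $\tilde{X}_a$ (via Frobenius--Mackey, pour chaque stabilisateur $W_0\subset W$ d'une composante irr\'eductible) r\'eduit ce membre \`a $\Lie(T)^W$, qui vaut $\Lie(Z_G)$ puisque $\car(k)\wedge\left|W\right|=1$.

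Pour la lissit\'e de $\cP^{\heartsuit}\to\abdh$, on utilise que $J$ est lisse commutatif sur $\kc$ par le Th\'eor\`eme \ref{bouth2}, donc $J_a\to X$ l'est aussi. Le champ de Picard $\cP_a$ des $J_a$-torseurs est alors lisse sur $k$ ; les obstructions \`a la lissit\'e relative sur $\abdh$ vivent dans $H^2(X,\Lie(J_a))=0$ (puisque $X$ est une courbe), et la constance de $\dim H^0(X,\Lie(J_a))=\dim Z_G$ \'etablie pr\'ec\'edemment assure que la famille est effectivement lisse sur $\abdh$. Enfin, si $G$ n'a pas de tore central, $\Lie(Z_G)=0$, d'o\`u $H^0(X,\Lie(J_a))=0$ ; le sch\'ema en groupes $H^0(X,J_a)$ des automorphismes globaux est alors \'etale quasi-fini, et les groupes d'automorphismes des points de $\cP_a$ sont finis, d'o\`u le caract\`ere Deligne--Mumford.

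L'obstacle principal est le calcul d\'ecrit ci-dessus pour $a\in\abdh$ arbitraire : le rev\^etement cam\'eral n'\'etant pas forc\'ement g\'eom\'etriquement connexe, il faut montrer que pour toute composante irr\'eductible et son stabilisateur $W_0\subset W$, on a $\Lie(T)^{W_0}=\Lie(T)^W$, ce qui requiert une analyse g\'eom\'etrique fine de la monodromie du rev\^etement $V_T\to\kc$ issu du semigroupe de Vinberg, coupl\'ee \`a la condition de g\'en\'erique forte r\'egularit\'e semisimple sur $a$.
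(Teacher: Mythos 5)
Votre argument suit pour l'essentiel la m\^eme route que celle du texte (description galoisienne de $J_a$, obstruction dans $H^{2}(X,\Lie(J_a))=0$ sur une courbe, constance de la dimension de l'espace tangent, finitude des automorphismes pour le caract\`ere Deligne--Mumford), mais il reste un trou r\'eel au point crucial, que vous signalez d'ailleurs vous-m\^eme sans le combler : le calcul de $\bigl(\Lie(T)\otimes H^{0}(\tilde{X}_{a},\co_{\tilde{X}_{a}})\bigr)^{W}$. La r\'eduction que vous proposez --- montrer $\Lie(T)^{W_{0}}=\Lie(T)^{W}$ pour le stabilisateur $W_{0}$ de chaque composante irr\'eductible --- n'est pas praticable en g\'en\'eral : pour $W_{0}$ petit on a $\mathfrak{t}^{W_{0}}\supsetneq\mathfrak{t}^{W}$ (cas extr\^eme : rev\^etement cam\'eral totalement d\'ecompos\'e, o\`u l'invariant vaut $\mathfrak{t}$ tout entier et non $\Lie(Z_{G})$). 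L'\'egalit\'e $H^{0}(X,\Lie(J_{a}))=\Lie(Z_{G})$ n'est donc pas une cons\'equence formelle de la seule appartenance de $a$ \`a $\abdh$ ; elle repose sur la connexit\'e g\'eom\'etrique de la courbe cam\'erale.

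C'est pr\'ecis\'ement ainsi que proc\`ede le texte : la courbe $\tilde{X}_{a}$ est r\'eduite (lemme \ref{reduite}) et g\'eom\'etriquement connexe (proposition \ref{conn}, d\'emontr\'ee via le th\'eor\`eme de connexit\'e de Debarre sous l'hypoth\`ese $\la\succ 2g$), de sorte que $H^{0}(\tilde{X}_{a},\co_{\tilde{X}_{a}})=k$, puis $(\mathfrak{t}\otimes k)^{W}=\mathfrak{t}^{W}=\Lie(Z_{G})$ gr\^ace \`a l'annulation de $\mathfrak{t}_{der}^{W}$, la caract\'eristique \'etant premi\`ere \`a $\left|W\right|$. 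Pour achever votre preuve il faut donc invoquer (ou red\'emontrer) cette connexit\'e, ce qui exige une hypoth\`ese de grandeur sur $\la$ et non une ``analyse fine de la monodromie'' composante par composante. Le reste de votre argument (lissit\'e de $\cP^{\heartsuit}$ et propri\'et\'e de Deligne--Mumford lorsque $G$ n'a pas de tore central) co\"incide avec celui du texte et est correct une fois ce point acquis.
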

\begin{proof}
De la description galoisienne \ref{galois}, on déduit que le groupe $H^{0}(X,\Lie(J_{a}))$ s'identifie aux sections $W$-équivariantes:
\begin{center}
$s:\tilde{X}_{a}\rightarrow\mathfrak{t}$.
\end{center}
Comme on a vu que $\tilde{X}_{a}$ est une courbe propre géométriquement connexe et réduite, 
\begin{center}
$H^{0}(\tilde{X}_{a},\mathfrak{t})=\mathfrak{t}$.
\end{center}
et donc en prenant les $W'$-invariants, on déduit de l'annulation $\mathfrak{t}_{der}^{W}$, que 
\begin{center}
$H^{0}(X,\Lie(J_{a}))=\Lie(Z_{G})$.
\end{center}
Le schéma $J_{a}$ est un schéma en groupes commutatif et lisse, l'obstruction à déformer un $J_{a}$-torseur gît dans $H^{2}(X,\Lie(J_{a}))$, qui est nul comme nous sommes sur une courbe. En particulier, $\cP_{a}$ est lisse.
Enfin, comme la dimension du $H^{0}$ est constante, on en déduit que la dimension de l'espace tangent reste constante, d'où la lissité de $\cP\times_{\abd}\abdh$ sur $\abdh$.
De plus, nous avons:
\begin{center}
$H^{0}(X,J_{a})=T^{W}$
\end{center}
lequel est fini non-ramifié si $G$ n'a pas de tore central et $\Gamma$ premier à la caractéristique. Ainsi, sous cette hypothèse, $\cP_{a}$ est bien de Deligne-Mumford.
\end{proof}
\begin{cor}\label{lissitereg}
L'ouvert régulier $\cmdo^{reg,\heartsuit}$ est lisse au-dessus de $\abdh$. 
\end{cor}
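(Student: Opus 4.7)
The plan is to deduce this essentially formally from the two propositions immediately preceding it. By Proposition \ref{picardtorseur} (which applies because we have assumed $c \mid \la$), the regular open $\cmdo^{reg}$ is a gerbe over $\abd$ under the action of the Picard stack $\cP$, and this gerbe is neutralized by the Steinberg section $\eps_{+}$. In other words, $\cmdo^{reg}$ is a $\cP$-torsor (in the $2$-categorical sense appropriate to Picard stacks) over $\abd$, and the Steinberg section provides a global trivialization. The trivialization produces a canonical equivalence of stacks over $\abd$
\begin{center}
$\cP \stackrel{\sim}{\longrightarrow} \cmdo^{reg}, \quad p \longmapsto p \cdot \eps_{+}(a),$
\end{center}
where on the right we use the action of $\cP$ on $\cmdo$ via $\chi_{+}^{*}J \to I$.

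Restricting this equivalence over the open $\abdh \subset \abd$ yields an equivalence
\begin{center}
$\cP^{\heartsuit} \stackrel{\sim}{\longrightarrow} \cmdo^{reg,\heartsuit}$
\end{center}
of stacks over $\abdh$. Then one invokes Proposition \ref{picardlisse}, which asserts that $\cP^{\heartsuit}$ is smooth over $\abdh$ (this is where the deformation-theoretic input $H^{2}(X,\Lie(J_a)) = 0$ on a curve has already been used). Smoothness is preserved under equivalences of stacks, so $\cmdo^{reg,\heartsuit}$ is smooth over $\abdh$, which is exactly the assertion to be proved.

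There is no genuine obstacle here: the only point one might wish to expand is the identification of a neutralized $\cP$-gerbe with $\cP$ itself, which is a standard fact about torsors under Picard stacks once a section is fixed; both stacks classify the same groupoid-valued functor (objects acted on simply transitively by $\cP$, with a specified comparison to $\eps_{+}$). The running hypothesis $c \mid \la$ is needed only to apply Proposition \ref{picardtorseur}; no further hypothesis on $a$ is required, since smoothness of $\cP^{\heartsuit}$ over $\abdh$ has already been established in the preceding proposition.
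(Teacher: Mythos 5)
Votre argument est correct et correspond exactement à celui que le papier sous-entend (le corollaire est énoncé sans démonstration, précisément parce qu'il se déduit formellement des propositions \ref{picardtorseur} et \ref{picardlisse}) : l'ouvert régulier, trivialisé par la section de Steinberg, s'identifie à $\cP$ au-dessus de la base, et la lissité de $\cP^{\heartsuit}$ sur $\abdh$ donne la conclusion.
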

En général, la fibration de Hitchin n'a aucune raison d'être lisse. Néanmoins, la situation est plus agréable au-dessus de l'ouvert transversal $\abdd$ d'après \cite[Prop. 25]{Bt2}:
\begin{prop}\label{fact}
Soit $\cmdo^{reg,\diamond}$ la restriction à $\abdd$ de $\cmdo^{reg}$ et $f^{\diamond}:\cmdo^{reg,\diamond}\rightarrow\abdd$, alors nous avons un carré cartésien :
$$\xymatrix{\cmdo^{reg,\diamond}\ar[d]_{f^{\diamond}}\ar[r]&\cmd\ar[d]^{f}\\\abdd\ar[r]&\abd}$$
en particulier, pour tout $a\in\abdd$, nous avons $\cmd(a)=\cmdo^{reg}(a)$.
\end{prop}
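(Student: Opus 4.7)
Il s'agit de montrer que pour tout $a\in\abdd$, la fibre $\cmd(a)$ est contenue dans $\cmdo^{reg}(a)$ ; l'inclusion r�ciproque est tautologique. Puisque $\cmdo^{reg}$ est par d�finition un ouvert de $\cmdo$, avec $V_{G}^{\la,reg}\subset V_{G}^{\la,0}$, il suffit de v�rifier que toute section $(E,\phi)\in\cmd(a)$ prend ses valeurs dans l'ouvert r�gulier $V_{G}^{\la,reg}$. Cette r�gularit� est de nature locale sur $X$.

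En un point $x\in X$ tel que $h_{a}(x)\notin\mathfrak{D}_{\la}$, on a $a_{x}\in\kc^{f-rs}$, et donc $\phi_{x}\in V_{G}^{f-rs}\subset V_{G}^{reg}$ automatiquement, puisque l'�l�ment est fortement r�gulier semisimple. Les seuls points potentiellement probl�matiques sont donc ceux du lieu fini $h_{a}^{-1}(\mathfrak{D}_{\la})$.

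L'hypoth�se $a\in\abdd$ signifie que cette intersection est transverse. En un point $x\in h_{a}^{-1}(\mathfrak{D}_{\la})$, cela impose qu'une unique racine $\alpha$ s'annule le long de $h_{a}$, avec multiplicit� exactement $1$ ; de mani�re �quivalente, le rev�tement cam�ral $\tilde{X}_{a}\to X$ (obtenu par changement de base de $V_{T}^{\la}\to\kcd$) admet une ramification simple en $x$, de stabilisateur engendr� par la r�flexion $s_{\alpha}$. En utilisant la description galoisienne \ref{galois} du centralisateur r�gulier, l'analyse locale se ram�ne au cas du semi-groupe de Vinberg du sous-groupe de rang un associ� � $\alpha$, essentiellement $\SL_{2}$ ou $\GL_{2}$. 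Dans cette situation mod�le, un calcul explicite utilisant la section de Steinberg $\eps_{+}$ prolong�e � $\kc$ (th�or�me \ref{bouth2}) et l'isomorphisme $\chi_{+}^{*}J\to I$ sur l'ouvert r�gulier montre que toute section locale $\phi_{x}$ de caract�ristique $a_{x}$ est n�cessairement r�guli�re, conjugu�e � $\eps_{+}(a_{x})\in V_{G}^{reg}$.

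L'obstacle principal r�side pr�cis�ment dans ce calcul local au point de ramification simple. Il s'agit essentiellement de constater qu'une �ventuelle non-r�gularit� de $\phi_{x}$ exigerait la simultan�it� de plusieurs annulations radicielles en $x$, ce qui contredirait la transversalit� exprim�e par $a\in\abdd$. La mise en commun de ces assertions locales fournit $(E,\phi)\in\cmdo^{reg,\diamond}$, et le carr� annonc� est bien cart�sien.
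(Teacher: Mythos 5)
Votre strat\'egie --- localiser le probl\`eme aux points de $h_{a}^{-1}(\mathfrak{D}_{\la})$ et y exploiter la transversalit\'e --- est la bonne, et les deux premi\`eres \'etapes (r\'egularit\'e automatique hors du discriminant, unicit\'e de la racine qui s'annule avec multiplicit\'e un en un point transversal) sont correctes. Mais le coeur de la d\'emonstration, que vous identifiez vous-m\^eme comme \og l'obstacle principal\fg, n'est pas d\'emontr\'e : il faut prouver que tout $\phi_{x}\in V_{G}^{\la}(\co_{x})$ d'image $a_{x}$ avec $\val_{x}(a^{*}\mathfrak{D}_{\la})=1$ a une r\'eduction r\'eguli\`ere en fibre sp\'eciale. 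La phrase \og une \'eventuelle non-r\'egularit\'e de $\phi_{x}$ exigerait la simultan\'eit\'e de plusieurs annulations radicielles\fg~n'est pas une preuve mais une reformulation de ce qu'il faut \'etablir : la non-r\'egularit\'e est une condition sur la section $\phi_{x}$, tandis que la transversalit\'e est une condition sur le polyn\^ome caract\'eristique $a$, et le passage de l'une \`a l'autre est pr\'ecis\'ement le point d\'elicat. De m\^eme, la \og r\'eduction au rang un\fg~demande une justification : il faut d'abord v\'erifier que la partie semisimple de la r\'eduction de $\phi_{x}$ n'est centralis\'ee que par le sous-groupe de rang un attach\'e \`a l'unique racine $\alpha$ concern\'ee (ce qui utilise $r(\beta)=0$ pour $\beta\neq\pm\alpha$), puis faire effectivement le calcul de type $\SL_{2}$ : si la r\'eduction \'etait centrale dans ce sous-groupe, le discriminant local aurait une valuation au moins \'egale \`a deux, contredisant la transversalit\'e.

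Signalons qu'une route plus directe contourne ce calcul : en un point transversal on a $d_{x}(a)=1$, donc $c_{x}(a)=1$ et, par la formule de dimension \ref{dim}, $\delta_{x}(a)=0$ ; la fibre de Springer affine locale est alors de dimension nulle, et comme le compl\'ementaire de l'orbite r\'eguli\`ere y est de dimension strictement plus petite (cf. \ref{comple} et \cite[Prop. 4.24]{Bt1}), il est vide ; la formule du produit globalise imm\'ediatement l'\'enonc\'e. Quelle que soit la route choisie, il faut aussi v\'erifier que la section tombe dans $V_{G}^{\la,0}$ et pas seulement dans le lieu r\'egulier, puisque $\cmdo^{reg}$ est par d\'efinition un ouvert de $\cmdo$ ; ce point est absent de votre r\'edaction.
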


\begin{defi}\label{plusgrand}
Pour un entier $N$ et $\la\in\Div^{+}(X,T)$, on dit que $N\prec\la$ si, pour tout $\omega\in X^{*}(T)^{+}$ non nul, nous avons :
\begin{center}
$\deg(\left\langle \omega,-w_{0}\la\right\rangle)\geq N$.
\end{center}
\end{defi} 

Maintenant, il nous faut s'assurer que l'ouvert $\abdd$ est bien non vide, ce qui fait l'objet de la proposition suivante:
\begin{prop}
Supposons $2g\prec\la$, alors l'ouvert $\abdd$ est non vide.
\end{prop}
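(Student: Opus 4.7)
Le plan consiste à appliquer un argument de type Bertini au morphisme d'évaluation
\[
\mathrm{ev}:\abd\times X\to\kcd,\qquad (a,x)\mapsto h_a(x).
\]
L'hypothèse $2g\prec\la$ assure que chacun des fibrés en droites $\co_X(\left\langle\omega_i,-w_0\la\right\rangle)$ est de degré au moins $2g$, donc globalement engendré sur la courbe $X$. Comme $\abd$ s'identifie, modulo les facteurs toriques correspondant aux $\omega_i'$ (qui donnent des fibrés triviaux), à la somme directe des $H^0(X,\co_X(\left\langle\omega_i,-w_0\la\right\rangle))$, on en déduit que l'évaluation $\mathrm{ev}_x:\abd\to\kcd|_x$ est surjective pour tout $x\in X$. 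Il s'ensuit que $\mathrm{ev}$ est un morphisme lisse et surjectif.

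Considérons ensuite le diviseur discriminant $\mathfrak{D}_\la\subset\kcd$. Son lieu singulier $\mathfrak{D}_\la^{sing}$ est de codimension au moins deux dans $\kcd$: en effet, dans le modèle local sur $V_T^\la$, les composantes irréductibles $\overline{\Kern(\alpha)}$ ne se rencontrent deux à deux qu'en codimension deux, et cette propriété est préservée par le passage au quotient par $W$ ainsi que par le twist par $-w_0\la$. Par lissité de $\mathrm{ev}$, la préimage $\mathrm{ev}^{-1}(\mathfrak{D}_\la^{sm})$ est lisse de codimension un dans $\abd\times X$, et $\mathrm{ev}^{-1}(\mathfrak{D}_\la^{sing})$ est de codimension au moins deux. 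En projetant sur $\abd$, un décompte de dimensions combiné à la lissité générique appliquée à $\mathrm{ev}^{-1}(\mathfrak{D}_\la^{sm})\to\abd$ fournit un ouvert dense $U\subset\abd$ tel que pour tout $a\in U$: d'une part $h_a(X)\cap\mathfrak{D}_\la^{sing}=\emptyset$, d'autre part la fibre en $a$ de $\mathrm{ev}^{-1}(\mathfrak{D}_\la^{sm})\to\abd$ est finie étale, ce qui signifie exactement que $h_a$ intersecte $\mathfrak{D}_\la^{sm}$ transversalement en un nombre fini de points.

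Pour $a\in U$, la section $h_a$ est donc transverse à $\mathfrak{D}_\la$ tout entier, et n'est en particulier pas contenue dans $\mathfrak{D}_\la^{f-rs}\subset\mathfrak{D}_\la$, ce qui entraîne $a\in\abdh$ puis $a\in\abdd$. L'ouvert $\abdd$ contient donc l'ouvert non vide $U$, et l'on conclut. Le point délicat est la lissité du morphisme d'évaluation, qui repose de façon essentielle sur l'hypothèse $2g\prec\la$: cette hypothèse garantit que les espaces de sections globales qui composent $\abd$ sont suffisamment gros pour atteindre tous les points de chaque fibre de $\kcd\to X$, condition sans laquelle le schéma de Bertini ne fonctionnerait pas.
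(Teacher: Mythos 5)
Votre argument reprend pour l'essentiel celui de Ngô \cite[Prop. 4.7.1]{N}, que le texte cite précisément pour traiter le cas où $G_{der}$ est simplement connexe; mais il ne couvre que ce cas, alors que la proposition porte sur un groupe connexe réductif quelconque. Toute votre construction repose sur l'identification de $\abd$ (aux facteurs toriques près) à $\bigoplus_i H^{0}(X,\co_{X}(\left\langle \omega_{i},-w_{0}\la\right\rangle))$ et sur la surjectivité de l'évaluation ponctuelle qui s'en déduit; cette description n'est valable que si $G_{der}$ est simplement connexe. Pour un $G$ général, la base de Hitchin n'est ni un espace de sections de fibrés en droites ni même lisse, et l'évaluation $\mathrm{ev}$ n'a aucune raison d'être lisse et surjective: le schéma de Bertini ne démarre pas. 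C'est exactement la difficulté que le texte contourne en passant par une $z$-extension $G'\rightarrow G$ avec $G'_{der}$ simplement connexe: on relève $\la$ en $\tilde{\la}$, on applique l'argument de Ngô à $\mathcal{A}_{\tilde{\la}}$, puis on utilise le fait que la flèche $\mathcal{A}_{\tilde{\la}}\rightarrow\abd$ envoie $\mathcal{A}_{\tilde{\la}}^{\diamondsuit}$ dans $\abdd$, ce qui donne la non-vacuité. Cette étape de réduction manque entièrement dans votre preuve.

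Un second point, plus technique mais réel: vous invoquez la \og lissité générique\fg~ pour conclure que la fibre générique de $\mathrm{ev}^{-1}(\mathfrak{D}_{\la}^{sm})\rightarrow\abd$ est finie étale. Le corps $k$ est ici de caractéristique éventuellement positive (seulement supposée première à $\left|W\right|$), et la lissité générique d'un morphisme dominant y est fausse en général (penser au Frobenius). L'argument correct, qui est celui de Ngô, consiste à établir la surjectivité de l'évaluation sur les $1$-jets $\abd\rightarrow\kcd\otimes\co_{X,x}/\mathfrak{m}_{x}^{2}$ en chaque point $x$, puis à faire un décompte de dimension sur le lieu d'incidence des couples $(a,x)$ où $h_{a}$ est tangente à $\mathfrak{D}_{\la}^{sm}$ en $x$: ce lieu est alors de codimension $\geq 1$ dans $\abd\times X$ fibre à fibre sur $X$, donc son image dans $\abd$ est un fermé strict. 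C'est ce contrôle des jets, et non la lissité générique, qui fournit la transversalité pour $a$ générique.
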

\begin{proof}
Si $G_{der}$ est simplement connexe, la preuve est identique à celle de Ngô \cite[Prop. 4.7.1]{N}.
Dans le cas général, on considère $G'$ une $z$-extension de $G$ et on relève $\tilde{\la}$ en $\la$.
On a alors une application 
\begin{center}
$\mathcal{A}_{\tilde{\la}}\rightarrow\abd$,
\end{center}
qui induit une application $\mathcal{A}_{\tilde{\la}}^{\diamondsuit}\rightarrow\abdd$, comme on vient de voir que $\mathcal{A}_{\tilde{\la}}^{\diamondsuit}$ est non vide, il en est de même de $\abdd$.
\end{proof}

\section{Le complexe d'intersection de $\cmd$}
Nous voulons dans cette section calculer le complexe d'intersection de l'espace total de Hitchin $\cmd$. Nous avons vu qu'au-dessus de l'ouvert $\abdd$, la fibration de Hitchin était lisse, nous allons maintenant l'étudier sur un ouvert plus gros que $\abdd$ qui prendra en compte les singularités de l'espace de Hitchin. Cet ouvert sera suffisamment gros pour les applications locales que nous avons en vue.
\subsection{Le théorème de transversalité}
On note $F$ le corps de fonctions de notre courbe projective lisse géométriquement connexe $X$ de genre $g$ définie sur un corps algébriquement clos $k$.
Soit $G$ un groupe connexe réductif déployé avec $G_{der}$ simplement connexe.
On renvoie à \cite[sect. 4.2]{Bt2} pour les preuves.

On considère un diviseur  $\la\in \Div^{+}(X,T)$  et on note $S=\supp(\la)$.
On dispose du diviseur discriminant $\mathfrak{D}_{\la}$ sur $\kcd$.
Considérons le sous-schéma
\begin{center}
$\abdD:=\{a\in\abd\vert~\forall~ x\in X, d_{x}(a)\leq d\}$,
\end{center}
lequel est ouvert.
Pour tout $a\in\abd$, nous avons une décomposition en somme de diviseurs:
\begin{center}
$\Delta(a)=\Delta_{tr}(a)+\Delta_{sing}(a)$,
\end{center}
où $\Delta_{sing}(a)=\sum\limits_{x\vert~d_{x}(a)\geq 2}d_{x}(a)[x]$.
On considère  la fonction :
$$\begin{array}{ccccc}
d_{sing} & : & \abd & \to & \mathbb{N} \\
 & & a & \mapsto & \deg(\Delta_{sing}(a)) \\
\end{array}.$$
Nous avons alors le lemme suivant:
\begin{lem}\label{scsupsing}
La fonction $d_{sing}$ est semi-continue supérieurement, i.e. pour tout $d\in\mathbb{N}$, le sous-schéma de $\abd$ constitué des $a\in\abd$ tels que $d_{sing}(a)\leq d$ est ouvert.
\end{lem}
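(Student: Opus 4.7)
Pour \'etablir cette semi-continuit\'e sup\'erieure, la strat\'egie la plus naturelle consiste \`a factoriser $d_{sing}$ \`a travers la puissance sym\'etrique $\Sym^{N}(X)$ et \`a exploiter la stratification de celle-ci par les partitions de $N$. On consid\`ere le diviseur universel discriminant $\Delta_{univ} \subset X \times \abd$, image r\'eciproque de $\mathfrak{D}_\la \subset \kcd$ par le morphisme d'\'evaluation $(x,a) \mapsto h_a(x)$. Pour tout $a \in \abd$, la section $h_a$ n'envoie pas $X$ enti\`erement dans $\mathfrak{D}_\la$, donc $\Delta_{univ}$ est un diviseur de Cartier effectif sur $X \times \abd$, fini plat sur $\abd$, de degr\'e $N$ localement constant sur la base. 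Il en r\'esulte un morphisme alg\'ebrique
\begin{center}
$\Psi: \abd \to \Sym^{N}(X), \quad a \mapsto \Delta(a).$
\end{center}

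L'espace $\Sym^{N}(X)$ admet une stratification naturelle index\'ee par les partitions $\mu = (\mu_1 \geq \dots \geq \mu_\ell)$ de $N$ : la strate localement ferm\'ee $\Sigma_\mu$ param\`etre les diviseurs effectifs $D = \sum_{i=1}^\ell \mu_i p_i$ avec les $p_i \in X$ deux \`a deux distincts. Les relations d'adh\'erence sont gouvern\'ees par l'ordre de fusion : $\Sigma_\nu \subset \overline{\Sigma_\mu}$ si et seulement si $\nu$ est obtenue de $\mu$ en regroupant certaines parts. La fonction $d_{sing}(D) = \sum_{\mu_i \geq 2} \mu_i$ est constante sur chaque strate, et il suffit de v\'erifier l'in\'egalit\'e combinatoire $d_{sing}(\nu) \geq d_{sing}(\mu)$ d\`es que $\nu \preceq \mu$ : si l'on fusionne $k \geq 2$ parts $\mu_{i_1}, \dots, \mu_{i_k}$ en une unique part $M = \sum_j \mu_{i_j} \geq 2$, la variation de $d_{sing}$ vaut exactement $\#\{j : \mu_{i_j} = 1\} \geq 0$. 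Ainsi $d_{sing}$ est semi-continue sup\'erieurement sur $\Sym^{N}(X)$, et par image r\'eciproque par $\Psi$, \'egalement sur $\abd$.

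Le point le plus d\'elicat est la v\'erification rigoureuse que la projection $\Delta_{univ} \to \abd$ est finie plate, car $\abd$ peut pr\'esenter des singularit\'es ou des composantes de dimensions diff\'erentes selon $\la$. Une alternative plus robuste consiste \`a raisonner localement sur $\abd$ via un recouvrement affine sur lequel $N$ est constant, puis \`a invoquer la semi-continuit\'e sup\'erieure de la longueur de la partie non-\'etale dans les familles d'alg\`ebres artiniennes localement libres associ\'ees \`a $(\pi_2)_*\mathcal{O}_{\Delta_{univ}}$, ce qui contourne le recours direct \`a la puissance sym\'etrique tout en conservant le m\^eme argument combinatoire de sp\'ecialisation.
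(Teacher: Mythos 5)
Le texte ne d\'emontre pas ce lemme : il renvoie \`a \cite[sect. 4.2]{Bt2}. Votre argument est donc \`a \'evaluer pour lui-m\^eme, et il est pour l'essentiel correct : c'est la r\'eduction standard \`a la puissance sym\'etrique, o\`u l'on observe que $d_{sing}(D)=\deg(D)-\#\{x~\vert~\mathrm{mult}_{x}(D)=1\}$ et que le nombre de points simples ne peut que d\'ecro\^itre par sp\'ecialisation ; votre calcul combinatoire de fusion des parts est juste, de m\^eme que la description des relations d'adh\'erence des strates de $\Sym^{N}(X)$ pour une courbe lisse.

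Deux points sont toutefois \`a rectifier. D'abord, l'affirmation selon laquelle, pour \emph{tout} $a\in\abd$, la section $h_{a}$ n'envoie pas $X$ dans $\mathfrak{D}_{\la}$ est fausse : il existe des sections enti\`erement contenues dans le diviseur discriminant, et le diviseur $\Delta(a)$, donc $d_{sing}(a)$ et le morphisme $\Psi$, ne sont d\'efinis que sur l'ouvert $\abdh$. Il faut donc convenir que $d_{sing}=+\infty$ hors de $\abdh$ et utiliser l'ouverture de $\abdh$ pour conclure que le lieu $\{d_{sing}\leq d\}$, contenu dans $\abdh$, est ouvert dans $\abd$ d\`es qu'il l'est dans $\abdh$ ; sans cette pr\'ecision l'argument ne couvre pas tout $\abd$. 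Ensuite, la platitude de $\Delta_{univ}\rightarrow\abdh$, que vous pr\'esentez comme le point d\'elicat, ne l'est pas : $\Delta_{univ}$ est le lieu des z\'eros d'une section d'un fibr\'e en droites sur la courbe relative lisse $X\times\abdh$, section dont la restriction \`a chaque fibre g\'eom\'etrique est non nulle ; le crit\`ere local de platitude (un \'el\'ement r\'egulier fibre \`a fibre d\'ecoupe un sous-sch\'ema plat) s'applique ind\'ependamment des \'eventuelles singularit\'es de la base, et le degr\'e $N$ est constant puisque le fibr\'e en droites du discriminant provient de $X$. Votre alternative du dernier paragraphe est donc superflue et, telle quelle, circulaire : la semi-continuit\'e de la longueur de la partie non \'etale y est invoqu\'ee sans preuve alors que c'est pr\'ecis\'ement le contenu de l'argument sur $\Sym^{N}(X)$.
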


Nous allons avoir besoin d'un autre invariant pour définir le bon ouvert.
On rappelle que nous avons $\la=\sum\limits_{x\in X}\la_{x}[x]$ et $S=\supp(\la)$. On fixe un point fermé $t\in X$, tel que $\la_{t}\neq 0$, il va jouer le rôle de point auxiliaire.
On note alors $S_{0}=S-\{t\}$.
Pour chaque point fermé $x\in S_{0}$, nous avons un schéma $V_{G}^{\la_{x}}$ au-dessus de $D_{x}=\Spec(\co_{x})$ le voisinage formel autour de $x$. Ce schéma est lisse en fibre générique.
Soit alors l'entier 
\begin{center}
$e'_{x}:=e^{Elk}_{V_{G}^{\la_{x}}/\co_{x}}$,
\end{center}
où l'on renvoie à \cite[Déf.39]{Bt2} pour la définition.
Cet entier mesure la singularité du schéma $V_{G}^{\la_{x}}$, il est à noter que si $\la_{x}=0$, alors $e'_{x}=0$, puisqu'à ce moment-là, nous sommes dans le groupe.
On note alors $e_{x}=\max(\left\langle 2\rho,\la_{x}\right\rangle,e'_{x})$ et  $e=\sum\limits_{x\in S_{0}}e_{x}$.
On considère alors l'ouvert suivant:
\begin{defi}\label{bemol}
On rappelle que nous avons fixé un point fermé $t\in X$, tel que $\la_{t}\neq 0$.
Pour un entier $d\in\mathbb{N}$, on définit le sous-schéma $\abdbD\subset\abdD$ constitué des $a\in\abdD$ tels que:
\begin{itemize}
\item
$d_{t}(a)=0$.
\item
$3d_{sing}(a) +(3e+2d+1)\left|S_{0}\right|+2g-2\prec\la$.
\end{itemize}
\end{defi}
\begin{prop}
Le sous-schéma $\abdbD\subset\abdD$ est ouvert.
\end{prop}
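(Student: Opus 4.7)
Le plan consiste � observer que $\abdbD$ est d�fini dans l'ouvert $\abdD$ par la conjonction de deux conditions, chacune d�coupant un sous-sch�ma ouvert; comme une intersection finie d'ouverts est ouverte, cela suffira pour conclure.

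Pour la premi�re condition $d_{t}(a)=0$, je fixerais le point ferm� auxiliaire $t\in X$ et j'observerais que $a\mapsto d_{t}(a)$ correspond � la multiplicit� d'intersection locale en $t$ de la section $h_{a}:X\rightarrow\kcd$ avec le diviseur discriminant $\mathfrak{D}_{\la}$. Cette fonction est semi-continue sup�rieurement en $a$, puisque la multiplicit� d'intersection d'une famille plate de sections avec un diviseur fix� ne peut que cro�tre par sp�cialisation. Le lieu $\{a~\vert~d_{t}(a)\geq 1\}$ est donc ferm�, et son compl�mentaire $\{a~\vert~d_{t}(a)=0\}$ est ouvert.

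Pour la seconde condition, je remarquerais que les quantit�s $e$, $d$, $\left|S_{0}\right|$, $g$ et $\la$ sont toutes fix�es ind�pendamment de $a$. D'apr�s la d�finition \ref{plusgrand}, l'�nonc� $N\prec\la$ �quivaut � l'in�galit� num�rique unique $N\leq M$ o� $M:=\min_{\omega\in X^{*}(T)^{+}\setminus\{0\}}\deg(\left\langle\omega,-w_{0}\la\right\rangle)$. La condition $3d_{sing}(a)+(3e+2d+1)\left|S_{0}\right|+2g-2\prec\la$ se ram�ne donc � une majoration du type $d_{sing}(a)\leq C$ avec $C=(M-(3e+2d+1)\left|S_{0}\right|-2g+2)/3$. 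Or, par le lemme \ref{scsupsing}, la fonction $d_{sing}$ est semi-continue sup�rieurement, donc $\{a~\vert~d_{sing}(a)\leq C\}$ est ouvert.

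L'ouvert $\abdD$ �tant d�j� ouvert dans $\abd$ (comme mentionn� dans le texte), et $\abdbD$ �tant son intersection avec les deux ouverts ci-dessus, on obtient l'ouverture de $\abdbD$ dans $\abdD$. Il n'y a ici aucun obstacle v�ritable: tout repose sur la semi-continuit� sup�rieure des invariants de singularit� $d_{t}$ et $d_{sing}$, la seconde �tant d�j� �tablie dans le lemme pr�c�dent.
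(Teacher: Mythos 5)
Votre preuve est correcte et correspond � l'argument attendu : la condition $d_{t}(a)=0$ d�coupe un ouvert (c'est le compl�mentaire du lieu ferm� o� $a(t)$ tombe dans le diviseur discriminant, ou, comme vous le dites, le lieu o� la fonction semi-continue sup�rieurement $a\mapsto d_{t}(a)$ s'annule), et comme $e$, $d$, $\left|S_{0}\right|$, $g$ et $\la$ ne d�pendent pas de $a$, la seconde condition �quivaut � une majoration $d_{sing}(a)\leq C$ pour une constante $C$, laquelle d�finit un ouvert par le lemme \ref{scsupsing}. L'intersection de ces deux ouverts avec $\abdD$ est bien ouverte, ce qui conclut.
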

$\rmq$ Cet ouvert $\abdbD$ peut sembler artificiel, mais pour les applications au lemme fondamental il est suffisant. En effet, nous aurons en un point $x$ de la courbe un cocaractère dominant $\la_{x}$ et un discriminant local $d_{x}$; pour globaliser le problème nous aurons juste à prendre le $\la_{t}$ aussi grand que l'on veut, de telle sorte que l'inégalité 
\begin{center}
$3d_{sing}(a) +(3e+2d_{x}+1)\left|S_{0}\right|+2g-2\prec\la$,
\end{center}
puisse être remplie.

\medskip
On forme le carré cartésien :
$$\xymatrix{\cmdbD\ar[r]\ar[d]&\cmd\ar[d]\\\abdbD\ar[r]&\abd}.$$
On pose alors 
\begin{center}
$\cmdb=\bigcup\limits_{d\in\mathbb{N}}\cmdbD$,
\end{center}
le théorème principal est alors le suivant \cite[Thm. 30]{Bt2}:
\begin{thm}\label{transverse}
Soit $G$ un groupe connexe réductif déployé tel que $G_{der}$ est simplement connexe, alors le champ $\cmdb$ est équidimensionnel,  soit $m$ sa codimension, alors on a l'égalité suivante entre les complexes d'intersections:
\begin{center}
$(\Delta^{\flat})^{*}[-m]IC_{\overline{\cH}_{\la}}=IC_{\cmdb}$.
\end{center}
\end{thm}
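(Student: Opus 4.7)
Mon plan est de r�aliser $\cmdb$ comme une intersection transverse dans le champ de Hecke $\chm$ via un morphisme naturel $\Delta^\flat$, puis d'invoquer la stabilit� du complexe d'intersection par changement de base non caract�ristique. Pour construire $\Delta^\flat$, je remarque qu'un point $(E,\phi)$ de $\cmd$ donne lieu, par restriction de $\phi$ � l'ouvert $X\setminus S$ o� $\phi$ est un automorphisme de $E$, � un triplet $(E,E,\phi|_{X\setminus S})$ de $\chm$, la position relative �tant born�e par $\la_s$ en chaque $s\in S$. Ceci fait de $\cmd$ le produit fibr� $\chm\times_{\Bun\times\Bun,\Delta}\Bun$ au-dessus de la diagonale. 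La restriction � $\abdb$ fournit alors une immersion ferm�e $\Delta^\flat:\cmdb\hookrightarrow\chm$, dont la codimension attendue $m$ dans $\chm$ co�ncide avec $\dim\Bun$, donnant l'�quidimensionnalit� annonc�e d�s que l'immersion est connue r�guli�re.

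Le point central de la preuve est d'�tablir la transversalit� de $\Delta^\flat$ par rapport � la stratification de Whitney de $\chm$, celle-ci �tant localement, d'apr�s la proposition \ref{va}, la stratification de Cartan de $\overline{\Gr}_\la$. On se ram�ne ainsi, pour un $a\in\abdb$, � une �tude locale : il s'agit de montrer que la fl�che $h_a:X\to\kcd$ rencontre transversalement chaque strate du diviseur discriminant $\mathfrak{D}_\la$. Les deux conditions d�finissant $\abdb$ sont ici essentielles : la borne $d_{sing}(a)\leq d$ contr�le les multiplicit�s d'intersection, tandis que l'in�galit� $3d_{sing}(a)+(3e+2d+1)|S_0|+2g-2\prec\la$, combin�e � l'annulation de $H^1$ pour des fibr�s en droites de degr� suffisamment grand sur une courbe de genre $g$, assure que toute d�formation infinit�simale de $\phi$ se rel�ve globalement. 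En combinant ceci avec la description galoisienne du centralisateur r�gulier (Proposition \ref{galois}) et la section de Steinberg $\eps_+$ (Th�or�me \ref{bouth2}) qui trivialise $\chi_+$ sur l'ouvert $V_G^{reg}$, on obtient la transversalit� voulue par un calcul d'espace tangent men� point par point.

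L'obstacle principal r�side dans le traitement des points o� la section $\phi$ sort de l'ouvert r�gulier $V_G^{reg}$, c'est-�-dire aux points de $S$ o� la fibre de $\chi_+$ est singuli�re. Le contr�le fin de ces singularit�s est pr�cis�ment le r�le de l'invariant $e_x$ introduit dans la d�finition \ref{bemol} via la borne d'Elkik ; la condition que $\la$ soit grand devant $3e+2d+3d_{sing}(a)+2g-2$ garantit que les obstructions cohomologiques � ce que le produit fibr� soit une intersection compl�te locale s'annulent. Une fois ceci acquis, le r�sultat classique sur la stabilit� du complexe d'intersection par immersion ferm�e r�guli�re transverse � la stratification fournit directement l'isomorphisme $(\Delta^\flat)^*[-m]IC_{\chm}=IC_{\cmdb}$, et l'�quidimensionnalit� de $\cmdb$ en d�coule par la structure d'intersection compl�te locale qui vient d'�tre �tablie.
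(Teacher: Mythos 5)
Votre sch\'ema g\'en\'eral --- r\'ealiser $\cmdb$ via la fl\`eche diagonale $\Delta^{\flat}$ vers $\chm$, de codimension attendue $m=\dim\Bun$, puis identifier $(\Delta^{\flat})^{*}[-m]IC_{\chm}$ \`a $IC_{\cmdb}$ par un argument de type Goresky--McPherson --- est bien le squelette de la preuve (rappelons que le pr\'esent article ne red\'emontre pas ce th\'eor\`eme : il est cit\'e de \cite[Thm. 30]{Bt2}). Le probl\`eme est que l'\'etape que vous d\'esignez comme le point central ne peut pas fonctionner telle quelle.

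Vous ramenez la transversalit\'e stratifi\'ee de $\Delta^{\flat}$ \`a la condition que $h_{a}:X\rightarrow\kcd$ rencontre transversalement le diviseur discriminant $\mathfrak{D}_{\la}$. Or cette condition est pr\'ecis\'ement la d\'efinition de l'ouvert $\abdd$, qui est strictement plus petit que $\abdb$ : au-dessus de $\abdd$ on a $\cmd(a)=\cmdo^{reg}(a)$ (proposition \ref{fact}), la fibration est lisse (corollaire \ref{lissitereg}) et le th\'eor\`eme y est imm\'ediat. Sur $\abdb$, au contraire, $h_{a}$ rencontre $\mathfrak{D}_{\la}$ avec des multiplicit\'es pouvant atteindre $d$, la courbe cam\'erale est singuli\`ere, l'espace $\cmdb$ l'est aussi, et la transversalit\'e de Whitney de $\Delta^{\flat}$ au sens strict est fausse ; aucun calcul ponctuel d'espace tangent ne peut l'\'etablir aux points singuliers. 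Ce qu'il faut d\'emontrer --- et c'est tout le contenu du th\'eor\`eme --- est un syst\`eme d'in\'egalit\'es de codimension : pour chaque strate $\overline{\cH}_{\mu}$, $\mu<\la$, et chaque valeur des invariants discriminants locaux, le lieu correspondant de $\cmdb$ doit \^etre de codimension au moins \'egale \`a celle de la strate dans $\chm$, ce qui permet ensuite de v\'erifier que $(\Delta^{\flat})^{*}[-m]IC_{\chm}$ est pervers, autodual (via la propri\'et\'e d'intersection compl\`ete locale de $\cmdb$, qu'il faut \'etablir et dont d\'ecoule l'\'equidimensionnalit\'e) et \'egal au prolongement interm\'ediaire depuis la strate ouverte. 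C'est exactement \`a ces estimations que servent l'invariant $e_{x}$ d'Elkik (qui borne le d\'efaut de lissit\'e de $V_{G}^{\la_{x}}$ et contr\^ole le mod\`ele local $\theta^{\flat}$ vers $[\overline{\Gr}'_{\la}/G'_{N}]$) et l'in\'egalit\'e $3d_{sing}(a)+(3e+2d+1)\left|S_{0}\right|+2g-2\prec\la$ combin\'ee \`a Riemann--Roch ; vous leur attribuez le r\^ole de relever des d\'eformations infinit\'esimales, ce qui n'est pas l'endroit o\`u elles interviennent. Sans ces estimations, ni l'\'equidimensionnalit\'e de $\cmdb$ ni l'identification des complexes d'intersection ne sont acquises.
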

Nous allons avoir besoin d'un résultat plus général duquel le théorème ci-dessus se déduit.
On rappelle que nous avons $S=\supp(\la)$. D'après \cite[Prop. 1.10-1.14]{VLaf}, si nous avons un diviseur $N=\sum\limits_{i\in S}n_{i}[x_{i}]$avec des $n_{i}$ suffisamment grands par rapport à $\la$, nous disposons d'une flèche lisse:
\begin{center}
$g:\overline{\cH}_{\la}\rightarrow[\overline{\Gr}_{\la}/G_{N}]$.
\end{center}
où $G_{N}:=\Res_{N/k}G$ est la restriction à la Weil de $G$ à $N$.
Nous obtenons donc une flèche composée:
\begin{center}
$g^{\flat}:\cmdb:\rightarrow[\overline{\Gr}_{\la}/G_{N}]$
\end{center}
Nous avons fixé un point auxiliaire $t\in X$ tel que $\la_{t}\neq 0$. Comme en ce point, on impose au polynôme caractéristique d'être régulier semisimple, l'image de $f^{\flat}$ va tomber dans l'ouvert 
\begin{center}
$U:=\Gr_{\la_{t}}\times\prod\limits_{s\neq t}\overline{\Gr}_{\la_{s}}$.
\end{center}
Posons $\overline{\cH}'_{\la}:=g^{-1}(U)$, $\overline{\Gr}'_{\la}:=\prod\limits_{s\neq t}\overline{\Gr}_{\la_{s}}$ et $p:U\rightarrow \overline{\Gr}'_{\la}$.
On considère alors la flèche $g^{\flat}$ composée avec $p$:
\begin{center}
$\theta^{\flat}:\cmdb\rightarrow[\overline{\Gr}'_{\la}/G'_{N}]$
\end{center}
avec $G_{N}':=\Res_{N-n_{t}[t]/k}G$.
La proposition est la suivante \cite[Prop. 32]{Bt2}:
\begin{prop}\label{bemolisse}
La flèche $\theta^{\flat}$ est lisse.
\end{prop}
\subsection{Le cas général du théorème de transversalité}
Nous avons besoin d'étendre ce résultat à un groupe connexe réductif $G$ quelconque. 

On considère  $p:G'\rightarrow G$ une $z$-extension de $G$ par un tore déployé $Z$. Soit $\tilde{\la}$ qui relève $\la$, on forme le carré cartésien suivant:
$$\xymatrix{\widetilde{\cm}_{\tilde{\la}}\ar[d]\ar[r]&\overline{\cH}_{\tilde{\la}}\ar[d]\\\cmd\ar[r]&\chm}$$
On a un isomorphisme naturel $\overline{\Gr}_{\tilde{\la}}\rightarrow\overline{\Gr}_{\la}$, en particulier pour la topologie lisse, $\overline{\cH}_{\tilde{\la}}$ est isomorphe à $\Bung\times\overline{\Gr}_{\la}$ et comme la flèche $\Bung\rightarrow\Bun$ est lisse surjective, il en est donc de même de la flèche $\widetilde{\cm}_{\tilde{\la}}\rightarrow\cmd$ et il nous suffit donc d'étudier le complexe d'intersection de $\widetilde{\cm}_{\tilde{\la}}$.

Pour un $G'$-torseur $E$ sur $X$, on désigne par $p_{*}E$ le $G$-torseur sur $X$ qui s'obtient par poussé en avant.
Le champ $\widetilde{\cm}_{\tilde{\la}}$ classifie les paires $(E,E',\phi)$ telles que $p_{*}E=p_{*}E'$, $p(\phi)\in H^{0}(X,V_{G}^{\la}\wedge^{G} p_{*}E)$ et $\phi$ une modification bornée par $\tilde{\la}$.
En particulier, la condition $p_{*}E=p_{*}E'$ implique que $E$ et $E'$ ne diffèrent que d'un $Z$-torseur, donc on obtient que 
$\widetilde{\cm}_{\tilde{\la}}$ classifie les paires $(E,\cL,\phi)$ avec $\cL$ un $Z$-torseur et $E'= E\wedge^{G'}L$, où l'on a poussé $\cL$ en un $G'$-torseur.
Nous obtenons donc une flèche canonique $\widetilde{\cm}_{\tilde{\la}}\rightarrow\Bunz$ et la fibre au-dessus du fibré trivial est isomorphe à $\overline{\cm}_{\tilde{\la}}$.
On a alors la proposition suivante:
\begin{prop}
Localement pour la topologie lisse sur $\Bunz$, la fibration $\widetilde{\cm}_{\tilde{\la}}\rightarrow\Bunz$ est isomorphe à $\overline{\cm}_{\tilde{\la}}\times_{k}\Bunz$.
En particulier, nous obtenons que: 
\begin{center}
$(\Delta^{\flat})^{*}[-m]IC_{\chl}=IC_{\cmdb}$.
\end{center}
\end{prop}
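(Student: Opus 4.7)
The plan is first to establish the smooth-local triviality of the fibration $\widetilde{\cm}_{\tilde{\la}}\to\Bunz$, and then to deduce the intersection complex identity by applying Theorem \ref{transverse} to the $z$-extension $G'$ (for which $G'_{der}$ is simply connected) and descending the resulting identity along the smooth surjective cover $\widetilde{\cm}_{\tilde{\la}}^{\flat}\to\cmdb$.

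For the local triviality, I would fix an $S$-point of $\Bunz$ corresponding to a $Z$-torsor $\cL$ on $X_{S} := X \times S$. Since $Z$ is a split torus and $\Bunz$ is a smooth algebraic stack of finite type over $k$, one can find a smooth surjective cover $\tilde{S}\to S$ on which $\cL|_{X_{\tilde{S}}}$ admits a trivialization $\sigma$. This trivialization provides, for any $G'$-torsor $E$ on $X_{\tilde{S}}$, a canonical isomorphism $E\wedge^{G'}\cL \cong E$. Hence the datum $(E,\cL,\phi) \in \widetilde{\cm}_{\tilde{\la}}(\tilde{S})$ becomes equivalent to the datum of a pair $(E,\phi)$, where $\phi$ is now a modification $E \dashrightarrow E$ bounded by $\tilde{\la}$, i.e.\ an $\tilde{S}$-point of $\overline{\cm}_{\tilde{\la}}$. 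This yields the isomorphism $\widetilde{\cm}_{\tilde{\la}} \times_{\Bunz} \tilde{S} \cong \overline{\cm}_{\tilde{\la}} \times_{k} \tilde{S}$, proving the local triviality.

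For the second assertion, I would apply Theorem \ref{transverse} to the pair $(G',\tilde{\la})$, which yields the identity $(\tilde{\Delta}^{\flat})^{*}[-\tilde{m}]IC_{\overline{\cH}_{\tilde{\la}}} = IC_{\overline{\cm}_{\tilde{\la}}^{\flat}}$. Two smooth surjective covers relate the $G'$-situation to the $G$-situation: the map $\overline{\cH}_{\tilde{\la}}\to\chl$, whose fibres are $\Bunz$-torsors (via Proposition \ref{va} applied to both $G'$ and $G$, together with the identification $\overline{\Gr}_{\tilde{\la}}\cong\overline{\Gr}_{\la}$), and the map $\widetilde{\cm}_{\tilde{\la}}^{\flat}\to\cmdb$, whose smooth-local triviality was established above. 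Along each cover, the intersection complex pulls back to the intersection complex up to a shift by $\dim\Bunz$. Combining this with the commutative diagram relating $\tilde{\Delta}^{\flat}$ and $\Delta^{\flat}$ with these covers, I expect to obtain that both $(\Delta^{\flat})^{*}[-m]IC_{\chl}$ and $IC_{\cmdb}$ admit the same pullback to $\widetilde{\cm}_{\tilde{\la}}^{\flat}$, provided $\tilde{m}=m$; the equality then descends along the smooth surjection $\widetilde{\cm}_{\tilde{\la}}^{\flat}\to\cmdb$ by smooth descent for constructible sheaves.

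The main technical point will be the matching of codimensions $\tilde{m}$ and $m$ and the compatibility of the two shifts by $\dim\Bunz$ on each side of the identity; both should follow from the equidimensionality statements contained in Theorem \ref{transverse} and from the fact that the two smooth covers have identical relative dimensions. A second subtlety is checking that the bon locus is preserved under base change, namely that $\widetilde{\cm}_{\tilde{\la}}^{\flat}$ agrees with the preimage of $\cmdb$ in $\widetilde{\cm}_{\tilde{\la}}$; this should rest on the insensitivity of the invariants $d_{sing}$ and $e_{x}$ appearing in Definition \ref{bemol} to the central extension $p: G'\to G$, since $Z$ is a split torus acting trivially on the adjoint quotient.
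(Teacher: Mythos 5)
Your proposal is correct and follows essentially the same route as the paper: smooth-local triviality is obtained by trivializing the universal $Z$-torseur on a smooth atlas of $\Bunz$ (the paper invokes \cite[Lem. 4.1]{Va} for the identification $\widetilde{\cm}_{\tilde{\la}}\times_{\Bunz}S\cong\overline{\cm}_{\tilde{\la}}\times_{k}S$), and the identity of intersection complexes is then deduced from Theorem \ref{transverse} applied to the $z$-extension $G'$ together with the commutative diagram in which the square $\widetilde{\cm}^{\flat}_{\tilde{\la}}\to\overline{\cH}_{\tilde{\la}}$, $\cmdb\to\chl$ is cartesian with smooth vertical arrows. Your extra remarks on matching the codimensions and on the compatibility of the $\flat$-loci are sound points of care that the paper leaves implicit.
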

\begin{proof}
Soit $\pi:S\rightarrow\Bunz$ un atlas lisse surjectif avec $S$ un schéma, soit $\cL$ le $Z$-torseur universel sur $\Bunz$ alors $\pi^{*}\cL$ est localement trivial pour la topologie de Zariski. 
On obtient alors de la même manière que \cite[Lem. 4.1]{Va} que $\widetilde{\cm}_{\tilde{\la}}\times_{\Bunz}S$ est isomorphe à $\cmd\times_{k}\Bunz$.
L'assertion sur les complexes d'intersections provient alors du théorème de transversalité \ref{transverse} ainsi que du diagramme commutatif:
$$\xymatrix{\overline{\cm}_{\tilde{\la}}^{\flat}\ar[r]\ar[dr]&\widetilde{\cm}^{\flat}_{\tilde{\la}}\ar[d]\ar[r]&\overline{\cH}_{\tilde{\la}}\ar[d]\\&\cmdb\ar[r]&\chl}$$
où le carré du diagramme est cartésien avec des flèches verticales lisses.
\end{proof}
On a de manière analogue une flèche :
\begin{center}
$\theta^{\flat}:\cmdb\rightarrow[\overline{\Gr}'_{\la}/G'_{N}]$
\end{center}
avec $G_{N}':=\Res_{N-n_{t}[t]/k}G$ et on obtient pour un groupe connexe réductif général que:
\begin{prop}\label{bemolisse2}
La flèche $\theta^{\flat}$ est lisse.
\end{prop}

\section{Les fibres de Springer affines}
On s'intéresse dans ce paragraphe à la variante locale des fibres de Hitchin, les fibres de Springer affines. On rappelle certains résultats de \cite{Bt1} que l'on complète.
\subsection{La formule de dimension}
Soit $F=k((\pi))$ un corps local, d'anneau d'entiers $\mathcal{O}$ et de corps résiduel $k$ algébriquement clos. Soit $G$  connexe semisimple simplement connexe. Soit $K=G(\co)$.
On se donne $\la\in X_{*}(T)^{+}$. On considère alors $a\in\mathfrak{C}_{+}^{\la}(\co)\cap\kc^{\la,rs}(F)$.
Nous faisons alors la définition suivante:
\begin{defi}\label{sophist}
On définit la fibre de Springer affine $\mathcal{M}_{\lambda}(a)$ (resp. $\mathcal{M}_{\lambda}^{reg}(a)$) comme le foncteur dont le groupoïde des $R$-points  pour une $k$-algèbre $R$ est:
$$\xymatrix{X\hat{\times}R\ar[r]^{h_{E,\phi_{+}}}\ar[dr]^{h_{a}}&[V_{G}^{\la,0}/G]\ar[d]^{\chi_{+}}\\&\kcd}$$ 
muni d'un isomorphisme entre la restriction de $h_{E,\phi_{+}}$ à $X^{\bullet}\hat{\times} R$ et la section de Steinberg $[\epsilon_{+}]^{\la'}(a)\in[V_{G}^{reg}/G]$, (resp. les morphismes $h_{E,\phi_{+}}$ qui se factorisent par $[V_{G}^{\la,reg}/G\times Z_{+}])$.
\end{defi}
\begin{lem}
Si l'on regarde les $k$-points de $\mathcal{M}_{\lambda}(a)$, en négligeant les nilpotents, il s'agit de l'ensemble suivant:
\begin{center}
$\{g\in G(F)/K~\vert ~g^{-1}\gamma_{0} g \in V_{G}^{\la,0}(\mathcal{O})\}$.
\end{center}
où $\gamma_{0}=\epsilon_{+}(a)$. Celui-ci étant non vide par définition.
\end{lem}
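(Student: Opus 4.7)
Le plan est d'interpréter directement les données d'un $k$-point de $\cmdo$ au-dessus de $a$ à l'aide de la description adélique locale, puis d'utiliser la trivialité des $G$-torseurs sur le disque formel pour transformer ces données en une classe dans $G(F)/K$.

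Soit $(E,\phi_{+},\iota)$ un $k$-point de $\cmdo(a)$. Par définition il se compose d'un $G$-torseur $E$ sur $D_{x}=\Spec(\co)$, d'une section $\phi_{+}$ de $[V_{G}^{\la,0}/G]$ au-dessus de $D_{x}$ relevant $h_{a}$, et d'un isomorphisme $\iota$ entre la restriction de $h_{E,\phi_{+}}$ au disque épointé $D_{x}^{\bullet}=\Spec(F)$ et la section de Steinberg $[\eps_{+}](a)$. La première étape est de trivialiser $E$ globalement sur $D_{x}$ : comme $G$ est semisimple simplement connexe et $k$ algébriquement clos, on a $H^{1}(\co,G)=1$, donc $E$ est trivial, et une trivialisation est unique modulo $K=G(\co)$. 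Fixer une telle trivialisation ramène $\phi_{+}$ à un élément concret de $V_{G}^{\la,0}(\co)$.

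La seconde étape consiste à utiliser $\iota$. Celui-ci fournit, sur $\Spec(F)$, un isomorphisme de torseurs qui envoie la section trivialisée $\eps_{+}(a)=\g_{0}$ sur $\phi_{+}$ restreint à la fibre générique. Composé avec la trivialisation précédente restreinte à $F$, on obtient un élément $g\in G(F)$, bien défini modulo multiplication à droite par $K$ (la seule indétermination étant le choix de trivialisation sur $D_{x}$). La condition de compatibilité entre $\iota$ et la section Steinberg se traduit alors en l'égalité $g^{-1}\g_{0}g=\phi_{+}\in V_{G}^{\la,0}(\co)$, ce qui fournit l'inclusion d'un sens. Réciproquement, partant de $g\in G(F)/K$ tel que $g^{-1}\g_{0}g\in V_{G}^{\la,0}(\co)$, on prend $E$ trivial sur $D_{x}$, $\phi_{+}=g^{-1}\g_{0}g$ comme section, et l'on définit $\iota$ par la multiplication par $g$ ; on vérifie que ces données ne dépendent de $g$ qu'à l'action de $K$ près, et que le polynôme caractéristique correspond bien à $a$ puisque $\chi_{+}(\g_{0})=a$.

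Le seul point délicat, que l'on contourne précisément en négligeant les nilpotents et en se restreignant aux $k$-points, est la non-représentabilité éventuelle du foncteur au niveau schématique : sur un $k$-point, le $G$-torseur est automatiquement trivial et l'automorphisme intervenant dans le recollement Beauville-Laszlo ne produit pas de structure infinitésimale supplémentaire. Enfin, la non-vacuité résulte de la définition même : le point $g=1$ convient car $\g_{0}=\eps_{+}(a)\in V_{G}^{reg}(\co)$ grâce à $a\in\kc^{\la}(\co)$, et on vérifie qu'alors $\eps_{+}(a)$ tombe bien dans $V_{G}^{\la,0}(\co)$ par définition de la section.
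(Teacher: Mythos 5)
Votre esquisse est correcte et correspond exactement à l'argument (laissé implicite dans l'article, qui renvoie à \cite{Bt1}) : on trivialise le $G$-torseur sur $\Spec(\co)$ — ce qui tient déjà du fait que $G$ est lisse connexe et $k$ algébriquement clos, la simple connexité étant superflue ici —, l'isomorphisme générique avec la section de Steinberg fournit la classe $g\in G(F)/K$, et la condition de bornage se traduit en $g^{-1}\g_{0}g\in V_{G}^{\la,0}(\co)$. La réciproque par recollement et la non-vacuité via $g=1$ sont également conformes à ce que l'énoncé entend par \og par définition\fg.
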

$\rmq$ La condition que $a\in\kcd(\co)$, nous donne que $\gamma_{0}=(\pi^{-w_{0}\la},\gamma)$, pour un certain $\gamma\in K\pi^{\lambda}K$. 

\subsection{Symétries d'une fibre de Springer affine}
On rappelle que $I:=\{(g,\g)\in G\times V_{G}\vert~ g\g g^{-1}=\g\}$.
Nous avons défini le centralisateur régulier $J$,  dont nous avons vu qu'il était muni d'un morphisme:
\begin{center}
$\chi_{+}^{*}J\rightarrow I$, 
\end{center}
qui est un isomorphisme au-dessus de $V_{G}^{reg}$. 
On se donne alors une section $h_{a}:X\rightarrow\kcd$, nous avons l'image réciproque $J_{a}=h_{a}^{*}J$.
\begin{defi}\label{centr}
Considérons  le groupoïde de Picard $P(J_{a})$ au-dessus de $\Spec(k)$ qui associe à toute $k$-algèbre $R$, le groupoïde des $J_{a}$-torseurs sur $R[[\pi]]$, munis d'une trivialisation sur $R((\pi))$.
\end{defi}
On définit une action du champ $P(J_{a})$ sur $\mathcal{M}_{\lambda}(a)$. En effet si $(E,\phi_{+})\in\mathcal{M}_{\lambda}(a)(R)$, on a un morphisme de faisceaux:
\begin{center}
$J_{a}\rightarrow\underline{\Aut}(E,\phi_{+})$ 
\end{center}
qui se déduit de la flèche $\chi_{+}^{*}J\rightarrow I$.
Celui-ci permet de tordre $(E,\phi_{+})$ par un $J_{a}$-torseur sur $X\hat{\times}R$ trivialisé sur $X^{Ullet}\hat{\times} R$.
On a alors la proposition suivante \cite[Prop. 3.6]{Bt1}: 
\begin{prop}\label{picardloc}
$[\chi_{+}]:[V_{G}^{reg}/(G\times Z_{+})]\rightarrow[\mathfrak{C}_{+}/Z_{+}]$ est une gerbe liée par le centralisateur $J$ et neutre.

En particulier, la fibre de Springer $\mathcal{M}_{\lambda}^{reg}(a)$ est un espace principal homogène sous $\mathcal{P}(J_{a})$. Dans la suite, c'est ce qu'on appellera l'orbite régulière.
\end{prop}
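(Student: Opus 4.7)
La strat�gie est de d�duire l'�nonc� du th�or�me de structure \ref{bouth2} combin� avec l'existence de la section de Steinberg $\eps_{+}:\kc\rightarrow V_{G}^{reg}$; l'assertion sur la fibre de Springer en r�sultera par image r�ciproque de la trivialisation de gerbe le long de $h_{a}$.

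Pour montrer que $[\chi_{+}]$ est une $J$-gerbe neutre, je v�rifierais d'abord que ce morphisme est lisse et surjectif: la lissit� de $\chi_{+}^{reg}$ et la description de ses fibres g�om�triques comme union disjointe de $G$-orbites (th�or�me \ref{bouth2}) assurent que le morphisme de champs induit est lisse, et la section $\eps_{+}$ fournit un point dans chaque fibre g�om�trique, d'o� la surjectivit�. Pour identifier le band avec $J$, on observe qu'en tout $\g\in V_{G}^{reg}$, le sch�ma en groupes des automorphismes de $\g$ dans $[V_{G}^{reg}/(G\times Z_{+})]$ co�ncide avec le centralisateur $I_{\g}$ de $\g$ dans $G$; or l'isomorphisme $\chi_{+}^{*}J\rightarrow I$ en restriction � $V_{G}^{reg}$ donn� par \ref{bouth2} fournit alors $\Aut(\g)\cong J_{\chi_{+}(\g)}$. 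Enfin, la neutralit� d�coule imm�diatement de ce que $\eps_{+}$ d�finit un objet global de la gerbe, ce qui en constitue une trivialisation.

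Pour la seconde assertion, je tirerais cette trivialisation en arri�re le long de la section $h_{a}:X\rightarrow\kcd$. Selon la d�finition \ref{sophist}, $\cm_{\la}^{reg}(a)$ classifie les rel�vements de $h_{a}$ au champ $[V_{G}^{\la,reg}/(G\times Z_{+})]$ munis d'une rigidification g�n�rique par $\eps_{+}(a)$ sur $X^{\bullet}$. Par la trivialisation de la gerbe, ces rel�vements rigidifi�s correspondent bijectivement aux $J_{a}$-torseurs sur $X$ trivialis�s sur $X^{\bullet}$, c'est-�-dire au groupo�de $\cP(J_{a})$ de la d�finition \ref{centr}; l'action naturelle devient automatiquement libre et transitive. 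L'obstacle technique principal sera le traitement soigneux du facteur $Z_{+}$ dans le quotient et de sa compatibilit� avec la rigidification par $\eps_{+}$, lequel repose sur la d�composition explicite $\eps_{+}(b,a)=\eps_{G}(a)\psi_{b}$ rappel�e dans la section \ref{introsemi}.
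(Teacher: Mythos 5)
Le texte ne red�montre pas cet �nonc�: il renvoie � \cite[Prop. 3.6]{Bt1}. Votre strat�gie (lissit� de $\chi_{+}^{reg}$, identification du band via l'isomorphisme $\chi_{+}^{*}J\rightarrow I$ sur $V_{G}^{reg}$, neutralisation par la section $\epsilon_{+}$, puis image r�ciproque le long de $h_{a}$ avec rigidification g�n�rique pour la seconde assertion) est bien celle attendue, calqu�e sur l'argument de Ng� pour $[\mathfrak{g}^{reg}/G]\rightarrow\mathfrak{c}$, et la d�duction de la seconde assertion � partir de la premi�re est correcte.

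Il reste toutefois un trou r�el dans la v�rification de la premi�re assertion. Vous �tablissez l'existence locale d'objets (via la section) et le calcul du faisceau des automorphismes relatifs, mais pas le second axiome d'une gerbe, � savoir que deux objets locaux sont localement isomorphes — concr�tement, que $G\times Z_{+}$ agit transitivement sur les fibres g�om�triques r�guli�res au-dessus de $[\kc/Z_{+}]$. Or c'est pr�cis�ment le point non trivial ici: le th�or�me \ref{bouth2} affirme seulement que les fibres g�om�triques de $\chi_{+}^{reg}$ sont une union disjointe de $G$-orbites, et la remarque qui le suit insiste sur le fait qu'elles ne sont pas connexes en g�n�ral (les sections associ�es � des ordres diff�rents des facteurs $U_{i}n_{i}$ ne sont pas conjugu�es au point $(0,0)\in\kc$). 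Un morphisme lisse et surjectif de champs, muni d'une section et dont le faisceau des automorphismes relatifs est $J$, n'est pas pour autant une gerbe si l'action n'est pas transitive fibre � fibre. Il faut donc d�montrer que le stabilisateur de $a$ dans $Z_{+}$ permute transitivement les $G$-orbites de $\chi_{+}^{-1}(a)\cap V_{G}^{reg}$; c'est exactement le r�le du facteur $Z_{+}$ suppl�mentaire dans le quotient, que votre esquisse rel�gue � une question de compatibilit� en fin de parcours alors qu'il porte le contenu de l'�nonc�. Sans cet argument, la premi�re assertion, et donc le fait que $\mathcal{P}(J_{a})$ agisse transitivement sur $\mathcal{M}_{\la}^{reg}(a)$, ne sont pas acquis.
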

On considère alors la flèche finie plate surjective, génériquement étale:
\begin{center}
$\theta:V_{T}^{\la}\rightarrow\kcd$.
\end{center}
Nous avons  le diviseur discriminant $\mathfrak{D}_{+}=(2\rho,\mathfrak{D})\subset\kcd$ dont le lieu de non-annulation s'identifie au lieu régulier semisimple d'après \eqref{extdisc}.
On le tire alors sur la base $\kcd$ en un diviseur noté $\mathfrak{D}_{\la}$.
Pour $a\in\kcd(\co)\cap\kc^{rs}(F)$, on pose $d(a):=\val(a^{*}\mathfrak{D}_{\la})$.
Nous allons donner une formule pour $d(a)$. Soit $t_{+}=(\pi^{-w_{0}\la},t)\in V_{T}^{\la}(\overline{F})$ tel que $\theta(t_{+})=a$ et où $\overline{F}$ est la clôture algébrique de $F$ et $\overline{\co}$ son anneau d'entiers.
Par le critère valuatif, on a que $t_{+}\in V_{T}^{\la}(\overline{\co})$ et on déduit:
\begin{equation}
d(a)=\left\langle 2\rho,\la\right\rangle+\val(\det(\Id-\ad(t):\mathfrak{g}(F)/\mathfrak{g}_{t}(F)\rightarrow\mathfrak{g}(F)/\mathfrak{g}_{t}(F))),
\label{calculvin}
\end{equation}
avec $\kg_{t}$ l'algèbre de lie de son centralisateur. Nous avons alors le théorème suivant \cite[Thm. 3.7- Cor. 3.9]{Bt1}:
\begin{thm}\label{dim}
Soit $a\in\kcd(\co)\cap\kc^{rs}(F)$, on considère la fibre de Springer affine $\cm_{\la}(a)$ introduite dans la définition \ref{sophist}, alors nous avons:
\begin{enumerate}
\item
La fibre de Springer affine $\cm_{\la}(a)$ est un $k$-schéma localement de type fini.
\item
On a la formule de dimension:
\begin{center}
$\dim\cP(J_{a})=\dim\cm_{\la}(a)=\frac{d(a)-c(a)}{2}$.
\end{center}
où $c(a)=r-\rg_{F}(J_{a}(F))$ et $\rg_{F}$ désigne le rang du plus grand sous- tore $F$-déployé de $J_{a}(F)$.
\end{enumerate}
\end{thm}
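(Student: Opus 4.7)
The plan is to follow the Kazhdan-Lusztig--Bezrukavnikov--Ngô strategy, adapted to the Vinberg semigroup. The argument splits into three ingredients: (i) a boundedness argument for local finite-typeness, (ii) transitivity of the Picard action on the regular locus combined with density, and (iii) a cameral computation of $\dim \cP(J_a)$.

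For (i), I would use the $k$-point description $\cm_\la(a)(k) = \{g \in G(F)/K \,:\, g^{-1}\g_0 g \in V_G^{\la,0}(\co)\}$ to embed $\cm_\la(a)$ as a closed sub-ind-scheme of the affine Grassmannian $\Gr_x$. Since $a \in \kc^{rs}(F)$, the element $\g_0 = \eps_+(a)$ is regular semisimple over $F$ with centralizer an $F$-torus. Pairing the condition on $g^{-1}\g_0 g$ with the highest weights $\omega_i$, whose valuations must be bounded below by $-\langle \omega_i, w_0\la\rangle$, confines $g$ to a finite union of $K$-double cosets in $\Gr_x$, giving local finite-typeness as a $k$-scheme.

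For (ii), Proposition \ref{picardloc} identifies the regular locus $\cm_\la^{reg}(a)$ as a torsor under $\cP(J_a)$, so $\dim \cm_\la^{reg}(a) = \dim \cP(J_a)$. Density of $\cm_\la^{reg}(a)$ in $\cm_\la(a)$ follows from openness of $V_G^{reg} \subset V_G$ combined with $a \in \kc^{rs}(F)$: the morphism $h_{E,\phi_+}$ already lands in $V_G^{reg}$ over $\Spec(F)$, so any non-regular point can be deformed along a one-parameter family to a regular one, giving the equality $\dim \cm_\la(a) = \dim \cP(J_a)$.

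For (iii), I would compute $\dim \cP(J_a)$ via the local cameral cover $\tilde D_a := \Spec(\co) \times_{\kcd, h_a} V_T^\la$. By Proposition \ref{galois}, $J_a$ is an open subgroup of $h_a^*(\theta_*(T \times V_T^\la))^W$, agreeing with it on the regular semisimple locus. A Riemann-Roch computation on $\tilde D_a$, analogous to Bezrukavnikov's for Lie algebras, expresses $\dim \cP(J_a)$ as half the total ramification of $\theta$ pulled back by $h_a$, minus $c(a)/2$. This ramification decomposes as the classical discriminant contribution $\val(\det(\Id - \Ad(t)\,|\,\kg(F)/\kg_t(F)))$ plus the Vinberg boundary contribution $\langle 2\rho, \la\rangle$ coming from the boundary of the Vinberg compactification. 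Comparing with formula \eqref{calculvin} gives $\dim \cP(J_a) = \frac{d(a) - c(a)}{2}$.

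The main obstacle is the cameral dimension count: while the Lie algebra analog is due to Bezrukavnikov and Ngô, the Vinberg setting requires tracking an additional $\langle 2\rho, \la\rangle$ contribution arising from the ramification of the extended discriminant $\mathfrak{D}_+ = (2\rho, \mathfrak{D})$ from \eqref{extdisc} along the Vinberg boundary. Once this is handled, density in (ii) and boundedness in (i) reduce to standard arguments.
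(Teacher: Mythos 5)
Your overall strategy (boundedness, torsor structure on the regular locus plus comparison of dimensions, N\'eron/cameral computation of $\dim\cP(J_{a})$) is indeed the Kazhdan--Lusztig--Bezrukavnikov--Ng\^o template; note that the present paper does not reprove the theorem but quotes it from \cite[Thm. 3.7, Cor. 3.9]{Bt1}, which follows exactly that template. However, two of your three steps have genuine gaps. In (i), the claim that the condition $g^{-1}\g_{0}g\in V_{G}^{\la,0}(\co)$ \emph{confines $g$ to a finite union of $K$-double cosets} is false: pairing with the $\omega_{i}$ bounds the relative position of the lattices $gV_{\omega_i}(\co)$ and $\g_{0}gV_{\omega_i}(\co)$, not the position of $g$ itself in $\Gr$, and if your claim were true the fiber would be of finite type --- contradicting the free action of the infinite lattice $\La_{a}=T_{a}(F)/T_{a}(\co)$ exhibited in Proposition \ref{varproj} whenever $J_{a}$ has positive split rank. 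Local finite-typeness is precisely the content of the Kazhdan--Lusztig argument via the Iwasawa decomposition (Proposition \ref{varproj}), which shows the reduced fiber equals $\La_{a}\cdot(\overline{\mathcal{M}}_{a}^{\la,red}\cap X_{n})$ for some projective $X_{n}$.

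More seriously, in (ii) the inequality $\dim(\cm_{\la}(a)\setminus\cm_{\la}^{reg}(a))<\dim\cm_{\la}(a)$ is the hard core of the dimension formula --- the analogue of Bezrukavnikov's theorem --- and your deformation argument does not establish it. That $h_{E,\phi_{+}}$ lands in $V_{G}^{reg}$ over $\Spec(F)$ concerns the generic point of the disc for each fixed point of the fiber; it produces no one-parameter family \emph{inside} $\cm_{\la}(a)$ moving a point whose section is non-regular at the closed point into the regular locus, and the non-regular locus of an affine Springer fiber is in general non-empty of positive dimension. In the paper this local bound is exactly what is imported from \cite[Prop. 4.24]{Bt1} (see Corollary \ref{comple}), and full density of the regular locus is only obtained afterwards, globally, from flatness of the Hitchin fibration over $\abdb$ (Corollary \ref{equidim}); so it cannot be waved through locally. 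Without an independent proof of the upper bound $\dim\cm_{\la}(a)\leq\frac{d(a)-c(a)}{2}$ (e.g.\ a root-valuation/Iwasawa stratification adapted to the Vinberg semigroup), your step (ii) only yields the lower bound $\dim\cm_{\la}(a)\geq\dim\cP(J_{a})$. Step (iii) is correct in outline and matches Corollary \ref{autdim} combined with formula \eqref{calculvin}.
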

On note $\delta(a)=\dim\cP(J_{a})$, l'invariant $\delta$ local.
\subsection{Miscellanées sur les fibres de Springer affines}
Nous avons un résultat de locale constance des intégrales orbitales :
\begin{prop}\label{Harish-Chandra}
Soit $a\in \kc^{\la}(\co)\cap\kc^{\la,rs}(F)$. Il existe un entier $N$ tel que pour toute extension finie $k'$ de $k$, pour tout $a'\in\kc^{\la}(\co\otimes_{k}k')$ tel que 
\begin{center}
$a\equiv a' \mod \pi^{N},$
\end{center}
la fibre de Springer $\overline{\mathcal{M}}_{\la}(a')$ muni de l'action de $\mathcal{P}(J_{a'})$ est isomorphe à $\overline{\mathcal{M}}_{\la}(a)\otimes_{k}k'$ munie de l'action de
$\mathcal{P}(J_{a'})\otimes_{k}k'$.
\end{prop}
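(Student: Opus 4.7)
Le plan suit la m�thode classique de Harish-Chandra pour la locale constance des int�grales orbitales, adapt�e au cadre du semi-groupe de Vinberg. La premi�re �tape, cruciale, consiste � borner la fibre de Springer dans la grassmannienne affine : comme $a\in\kcdr(F)$, l'invariant discriminant $d(a)=\val(a^{*}\mathfrak{D}_{\la})$ est fini, et la formule de dimension du Th�or�me \ref{dim} donne $\dim\overline{\cm}_{\la}(a)<\infty$. Utilisant la d�composition de Cartan $\Gr=\bigsqcup_{\mu}\Gr_{\mu}$, j'�tablis l'existence d'un cocaract�re dominant $\mu_{0}$, d�pendant seulement de $\la$ et $d(a)$, tel que tout point $g\in G(F)/K$ v�rifiant $g^{-1}\gamma_{0}g\in V_{G}^{\la,0}(\co)$ soit de position de Cartan major�e par $\mu_{0}$. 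Ceci d�coule de ce que le conjugu� $g^{-1}\gamma_{0}g$ doit rester dans un voisinage calculable de $K\pi^{\la}K$, le bornage �tant contr�l� par la formule \eqref{calculvin} reliant $d(a)$ � $\left\langle 2\rho,\la\right\rangle$ et � la valuation du d�terminant de $\Id-\ad(t)$.

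Une fois ce bornage acquis, la condition $g^{-1}\gamma_{0}g\in V_{G}^{\la,0}(\co)$ sur $\overline{\Gr}_{\mu_{0}}$ ne fait intervenir les coefficients de $\gamma_{0}=\eps_{+}(a)$ que modulo $\pi^{M'}$ pour un entier $M'$ explicite fonction de $\mu_{0}$ et $\la$, puisque les coefficients de $g$ et $g^{-1}$ ont une valuation minor�e sur $\overline{\Gr}_{\mu_{0}}$. La section de Steinberg $\eps_{+}:\kcd\rightarrow V_{G}^{reg}$ �tant polynomiale, si $a\equiv a'\mod\pi^{N}$ avec $N\geq M'$, alors $\gamma_{0}\equiv\gamma_{0}'\mod\pi^{N}$, et les fibres $\overline{\cm}_{\la}(a)\otimes_{k}k'$ et $\overline{\cm}_{\la}(a')$ co�ncident comme sous-sch�mas de $\overline{\Gr}_{\mu_{0}}\otimes_{k}k'$. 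Pour l'action du champ de Picard, la lissit� de $J$ sur $\kc$ permet, quitte � agrandir $N$, d'�tablir un isomorphisme canonique $J_{a}\otimes_{k}k'\cong J_{a'}$ entre sch�mas en groupes lisses sur $\co\otimes_{k}k'$, compatible aux fl�ches vers le sch�ma des centralisateurs via la description galoisienne de la Proposition \ref{galois}. On en d�duit un isomorphisme $\mathcal{P}(J_{a})\otimes_{k}k'\cong\mathcal{P}(J_{a'})$ compatible avec les actions respectives sur les fibres, par naturalit� de toute la construction.

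L'obstacle principal est la premi�re �tape, � savoir la d�termination effective de $\mu_{0}$. Il s'agit de rendre quantitatif le fait intuitif que si $g$ est tr�s loin de l'identit� dans $\Gr$, alors $g^{-1}\gamma_{0}g$ quitte tout voisinage born� de $V_{G}^{\la,0}(\co)$. Le passage de l'alg�bre de Lie, cas classiquement trait� par Kazhdan-Lusztig, au semi-groupe de Vinberg introduit une difficult� technique : les coordonn�es sur $V_{G}$ sont donn�es par les traces $\Tr(\rho_{\omega_{i}})$ et les caract�res $\omega_{i}'$, et il faut v�rifier que le caract�re polynomial de ces fonctions sur $G_{+}\subset V_{G}$ permet bien de ramener l'argument additif au cadre multiplicatif, en utilisant le crit�re de r�gularit� \eqref{extdisc}. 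Une fois $\mu_{0}$ obtenu, les �tapes suivantes sont formelles.
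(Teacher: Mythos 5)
Your first step --- the existence of a single dominant cocharacter $\mu_{0}$, depending only on $\la$ and $d(a)$, such that every $g\in G(F)/K$ with $g^{-1}\g_{0}g\in V_{G}^{\la,0}(\co)$ has Cartan position bounded by $\mu_{0}$ --- is false in general, and it is precisely the step you yourself identify as the crucial one. The affine Springer fiber is stable under the lattice $\La_{a}=T_{a}(F)/T_{a}(\co)$, where $T_{a}$ is the maximal $F$-split subtorus of the centralizer: for $t\in T_{a}(F)$ one has $(tg)^{-1}\g_{0}(tg)=g^{-1}\g_{0}g$. The proposition is stated for arbitrary $a\in \kc^{\la}(\co)\cap\kc^{\la,rs}(F)$, not only anisotropic $a$; whenever $T_{a}$ has positive split rank (e.g.\ $\g_{0}$ split regular semisimple), $\La_{a}$ is a free abelian group of positive rank acting with unbounded orbits on $G(F)/K$, so the fiber meets infinitely many Cartan strata and is only locally of finite type (Th\'eor\`eme \ref{dim}(i)). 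This is exactly why Proposition \ref{varproj} asserts only $\overline{\mathcal{M}}_{a}^{\la,red}=\La_{a}(\overline{\mathcal{M}}_{a}^{\la,red}\cap X_{n})$, i.e.\ boundedness \emph{modulo} $\La_{a}$. Without a uniform bound on the Cartan position of $g$, your constant $M'$ does not exist: the precision to which the condition $g^{-1}\g_{0}g\in V_{G}^{\la,0}(\co)$ reads off the coefficients of $\g_{0}$ grows with the position of $g$, so no single $N$ works for the whole fiber by your argument.

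The strategy is repairable, and the repair is essentially the content of the proof of \cite[Prop. 3.5.1]{N} to which the paper defers: one first identifies the symmetry groups $J_{a}$ and $J_{a'}$ (hence $\mathcal{P}(J_{a})$ with $\mathcal{P}(J_{a'})$, and in particular $\La_{a}$ with $\La_{a'}$), then proves the equality of the fibers on the bounded piece $X_{n}$ of Proposition \ref{varproj} by your valuation argument, and finally propagates it to the whole fiber by equivariance under the lattice. Note also that the identification $J_{a}\otimes_{k}k'\cong J_{a'}$ is not ``canonical'' and does not follow from smoothness of $J$ over $\kc$ alone, since $a$ and $a'$ are distinct sections and pullbacks of a smooth scheme along maps agreeing modulo $\pi^{N}$ are not thereby identified; one needs either the finite determinacy of the cameral $\co$-algebra $\tilde{\co}_{a}$ together with the Galois description of Proposition \ref{galois}, or a Krasner--Hensel type conjugation of $\eps_{+}(a')$ into the $G(\co)$-orbit neighbourhood of $\eps_{+}(a)$. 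Your remaining points (polynomiality of $\eps_{+}$, the Vinberg coordinates $\chi_{i}$, $\omega_{i}'$, the regularity criterion) are fine once these two issues are addressed.
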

\begin{proof}
La preuve est la même que \cite[Prop 3.5.1]{N}.
\end{proof}
Nous terminons ce paragraphe sur les fibres de Springer affine en montrant qu'elles admettent un quotient projectif.
Nous avons  besoin d'écrire la grassmannienne affine $G(F)/K$ comme une union de variétés projectives $X_{i}$ pour $i\in\mathbb{N}$ où
\begin{center}
$X_{i}=\{x\in G(F)/K~\vert~\ad(x)V_{G}(\co)\subset \pi^{-i}V_{G}(\co)\}$.
\end{center}
Cette variété est bien projective car elle est fermée dans le produit
$\prod\limits_{k=1}^{r}X_{i}^{\omega_{k}}$
avec
\begin{center}
$X_{i}^{\omega_{k}}=\{x\in G(F)/K~\vert~\ad(x)\End (V_{\omega_{i}})(\co)\subset \pi^{-i})\End (V_{\omega_{i}})(\co)\}$.
\end{center}
lesquelles sont projectives par Kazhdan-Lusztig \cite[p. 133]{KL}.

\begin{prop}\label{varproj}
Soit $a\in \kc^{\la}(\co)\cap\kc^{\la,rs}(F)$, il existe un entier $n\in\mathbb{N}$ tel que:
\begin{center}
$\overline{\mathcal{M}}_{a}^{\la,red}=\La_{a}(\overline{\mathcal{M}}_{a}^{\la,red}\cap X_{n})$.
\end{center}
avec $\La_{a}=T_{a}(F)/T_{a}(\co)$ où $T_{a}$ désigne le plus grand tore déployé de $J_{a}$. De plus, $\La_{a}$ agit librement et donc le quotient $\overline{\mathcal{M}}_{a}^{\la,red}/\La_{a}$ est projectif.
\end{prop}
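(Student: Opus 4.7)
The plan is to follow the Kazhdan--Lusztig scheme \cite{KL} and adapt it to the Vinberg semigroup setting.

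I would first establish that $\La_{a}$ acts freely on $\overline{\mathcal{M}}_{a}^{\la,red}$. The lattice $\La_{a}$ acts by left translation on $G(F)/K$ through the embedding $T_{a}\hookrightarrow J_{a}\to G$ coming from the isomorphism $\chi_{+}^{*}J\to I$ on the regular locus. If $t\in T_{a}(F)$ fixes $gK$, then $g^{-1}tg\in K$, so $T_{a}(F)\cap gKg^{-1}$ is a bounded subgroup of $T_{a}(F)$. Since $T_{a}$ is $F$-split, its unique maximal bounded subgroup is $T_{a}(\co)$, hence $t\in T_{a}(\co)$ and is trivial in $\La_{a}$.

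The heart of the argument is producing $n$ such that $\overline{\mathcal{M}}_{a}^{\la,red}\subset \La_{a}\cdot X_{n}$. Given $x\in \overline{\mathcal{M}}_{a}^{\la,red}(k)$, we have $x^{-1}\gamma_{0}x\in V_{G}^{\la,0}(\co)$ with $\gamma_{0}=\epsilon_{+}(a)$. Using the Cartan decomposition relative to the split torus $T_{a}$, one writes $x=t\cdot \tilde x$ with $t\in T_{a}(F)$ and $\tilde x$ in a fundamental region modulo $\La_{a}$; since $T_{a}$ centralizes $\gamma_{0}$, the element $\tilde x$ still satisfies $\tilde x^{-1}\gamma_{0}\tilde x\in V_{G}^{\la,0}(\co)$. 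I would then adapt the Kazhdan--Lusztig boundedness argument \cite[p.\,133]{KL}: the regularity of $\gamma_{0}$ together with the integrality condition on $\tilde x^{-1}\gamma_{0}\tilde x$ forces $\ad(\tilde x)V_{G}(\co)\subset \pi^{-n}V_{G}(\co)$ for an $n$ depending only on $\la$ and the discriminant valuation $d(a)$; the local formula \eqref{calculvin} together with the dimension bound of Theorem \ref{dim} supplies the explicit constant. This yields $\tilde x\in X_{n}$ uniformly.

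For the projectivity, the intersection $\overline{\mathcal{M}}_{a}^{\la,red}\cap X_{n}$ is closed in the projective variety $X_{n}$, hence projective. The quotient $\overline{\mathcal{M}}_{a}^{\la,red}/\La_{a}$ coincides with the image of $\overline{\mathcal{M}}_{a}^{\la,red}\cap X_{n}$ under the quotient morphism, which is finite-to-one since the action is free and $\La_{a}$ is discrete. The quotient is therefore a proper separated scheme that arises as the image of a projective scheme under a finite surjective map, hence is projective. The main obstacle is the second step: the original Kazhdan--Lusztig argument for Lie algebras relies on the explicit root-subgroup structure of $\kg(F)$, and transposing it to the Vinberg semigroup $V_{G}$ requires using the embedding of Theorem \ref{bouth2} and the smoothness of $\chi_{+}$ on $V_{G}^{reg}$, in order to verify that the bound on $\ad(\tilde x)$ extracted from $\gamma_{0}\in V_{G}^{\la,0}(\co)$ translates into an Iwahori-type bound on the distance of $\tilde x$ from the base point, uniformly in $x$.
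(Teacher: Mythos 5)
Your overall architecture (freeness of the $\La_{a}$-action, boundedness of the fiber modulo $\La_{a}$, projectivity of the quotient) matches the paper's, and the first and last steps are fine. But the central reduction has a genuine gap. The decomposition $x=t\tilde x$ with $t\in T_{a}(F)$ and $\tilde x$ ``in a fundamental region modulo $\La_{a}$'' produces no boundedness whatsoever: a fundamental domain for the $\La_{a}$-action on $G(F)/K$ exists tautologically and is itself unbounded (in the split case it contains, for instance, all of $U(F)K/K$), so the assertion that the integrality of $\tilde x^{-1}\g_{0}\tilde x$ then forces $\ad(\tilde x)V_{G}(\co)\subset\pi^{-n}V_{G}(\co)$ is precisely the statement to be proven, not a consequence of the choice of representatives. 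The tool that actually makes the reduction work --- and which the paper uses, following Kazhdan--Lusztig --- is the Iwasawa decomposition $G(F)=U(F)T(F)K$ relative to a Borel containing the (split) centralizer torus: when $\g_{0}$ is split one has $\La_{a}\cong X_{*}(T)$, translation by $\La_{a}$ reduces every point of the fiber to a point of $Y_{a}\subset U(F)/U(\co)$, and only then does the root-subgroup computation (identifying $U(F)$ with $F^{r}$ and bounding coordinates by the valuation of the discriminant of $\g_{0}$) yield $Y_{a}\subset X_{n(\delta)}$. You explicitly defer this computation as ``the main obstacle''; it is the heart of the proof and cannot be left out.

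Second, your argument silently assumes the split case throughout. In general $T_{a}$ is only the maximal split subtorus of $J_{a}$ and may be strictly smaller than the full centralizer torus (even trivial when $J_{a}$ is anisotropic), so there is no Cartan or Iwasawa decomposition ``relative to $T_{a}$'', and translation by $\La_{a}$ alone cannot reduce to a unipotent quotient. The paper handles this by first proving the statement when $\g_{0}=\epsilon_{+}(a)$ is split, then passing to a finite extension splitting $\g_{0}$ and descending by the Galois-cohomology argument of \cite[p. 135]{KL}. This dichotomy and the descent step are entirely absent from your proposal.
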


\begin{proof}
Nous reprenons les arguments de Kazhdan-Lusztig \cite[Prop. 1]{KL} auquel nous renvoyons pour les détails. 
On commence par prouver le résultat dans le cas où $\g_{0}=\epsilon_{+}(a)$ est déployé. En utilisant alors la décomposition d'Iwasawa, on prouve que pour tout
$x\in\overline{\mathcal{M}}_{a}^{\la,red}$, il existe un élément $\la\in\La_{a}$ tel que $\la x\in Y_{a}$ où
\begin{center}
$Y_{a}:=\{g\in U(F)/U(\mathcal{O})\vert~ g^{-1}\gamma_{0} g\in V_{G}^{\la}(\co)\}$.
\end{center}

Maintenant que nous nous sommes ramenés à $U(F)/U(\co)$, pour $\delta\in\mathbb{N}$ on considère  $V_{G}(\co)^{\la}_{\delta}$ le sous-ensemble des éléments $\g\in V_{G}^{\la}(\co)$ tel que $\val\Delta(\g)=\delta$ et on montre alors qu'il existe un entier $n(\delta)$ tel que pour tout $\g\in V_{G}^{\la}(\co)_{\delta}$, on ait
\begin{center}
$Y_{\g}\subset X_{n(\delta)}$.
\end{center}
Cela s'obtient en passant par les groupes radiciels et en identifiant $U(F)$ avec $F^{r}$. On l'applique alors à $\epsilon_{+}(a)$ pour obtenir la proposition dans le cas déployé.
Pour le cas général, on passe à une extension finie pour déployer $\g_{0}$ et on redescend ensuite par un argument de cohomologie galoisienne \cite[p 135]{KL}.
\end{proof}

\subsection{Modèle de Néron et groupes des composantes connexes}
L'étude du groupe des composantes connexes de $\mathcal{P}(J_{a})$ se justifie par la proposition suivante :
\begin{prop}
L'ensemble des composantes irréductibles de la fibre de Springer affine est en bijection canonique avec le groupe des composantes connexes de $\mathcal{P}(J_{a})$.
\end{prop}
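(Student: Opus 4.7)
La d�monstration suit la strat�gie classique introduite par Kazhdan-Lusztig dans le cadre des alg�bres de Lie, adapt�e ici au semi-groupe de Vinberg. Elle s'articule en trois �tapes.

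Premi�re �tape: l'orbite r�guli�re $\cm_{\la}^{reg}(a)$ est un ouvert dense de $\cm_{\la}(a)$. L'ouverture d�coule de la semi-continuit� de la dimension des centralisateurs. D'apr�s la proposition \ref{picardloc}, $\cm_{\la}^{reg}(a)$ est un $\cP(J_{a})$-torseur, donc lisse et de dimension $\dim\cP(J_{a})$; par le th�or�me \ref{dim}, cette dimension co�ncide avec $\dim\cm_{\la}(a)$. La densit� r�sulte alors de l'�quidimensionalit� de $\cm_{\la}(a)$ jointe � la majoration stricte $\dim(\cm_{\la}(a)\setminus\cm_{\la}^{reg}(a)) < \dim\cm_{\la}(a)$; ce contr�le dimensionnel s'obtient en stratifiant le compl�mentaire selon le type de centralisateur, en se ramenant, via la pr�sentation projective de la proposition \ref{varproj}, � des questions sur des vari�t�s de type fini.

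Deuxi�me �tape: on d�crit les composantes connexes du lieu r�gulier. Le choix d'un point-base issu de la section de Steinberg $\eps_{+}$ fournit un isomorphisme $\cm_{\la}^{reg}(a)\cong\cP(J_{a})$ qui identifie canoniquement les $\cP(J_{a})^{0}$-orbites aux composantes connexes de $\cP(J_{a})$, param�tr�es par $\pi_{0}(\cP(J_{a}))$. Chacune de ces orbites est un $\cP(J_{a})^{0}$-torseur, donc lisse et irr�ductible.

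Troisi�me �tape: argument de passage � l'adh�rence. Toute composante irr�ductible $Z$ de $\cm_{\la}(a)$ rencontre l'ouvert dense $\cm_{\la}^{reg}(a)$ en un ouvert non vide de $Z$, n�cessairement irr�ductible, donc contenu dans une unique $\cP(J_{a})^{0}$-orbite. R�ciproquement, l'adh�rence dans $\cm_{\la}(a)$ d'une telle orbite est irr�ductible de dimension maximale, donc co�ncide avec une composante irr�ductible. Les deux constructions sont inverses l'une de l'autre et fournissent la bijection canonique cherch�e.

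L'obstacle principal r�side dans l'�tape de densit�: le contr�le dimensionnel du lieu non r�gulier requiert une analyse fine des strates non g�n�riques de la fibre de Springer. La pr�sentation projective (proposition \ref{varproj}) et la structure du champ de Picard $\cP(J_{a})$ (d�finition \ref{centr}) fournissent les outils pour reprendre dans notre cadre les arguments d�velopp�s par Kazhdan-Lusztig \cite{KL} pour les alg�bres de Lie.
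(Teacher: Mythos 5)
Your outline (density of the regular orbit, which is a $\cP(J_{a})$-torsor by Proposition \ref{picardloc}, followed by the identification of the irreducible components with the closures of the $\cP(J_{a})^{0}$-orbits) is exactly the route the paper takes: the remark following the statement defers everything to the density of the regular orbit, proved only at Corollary \ref{equidim}. Your second and third steps are correct.

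The gap is in your first step. You rightly isolate the two inputs needed for density: the strict bound $\dime(\cm_{\la}(a)\setminus\cm_{\la}^{reg}(a))<\dime\cm_{\la}(a)$ and the equidimensionality of $\cm_{\la}(a)$. Your stratification by centralizer type only addresses the first of these (in the paper this is Corollary \ref{comple}, obtained from the product formula and the local result of \cite{Bt1}); the equidimensionality is asserted with no argument, and it is precisely the hard point. Without it, an irreducible component of dimension strictly smaller than $\dime\cm_{\la}(a)$ could lie entirely in the non-regular locus and would not be seen by $\pi_{0}(\cP(J_{a}))$, so the bijection would fail. In the paper equidimensionality is not obtained by a local argument at all: Corollary \ref{equidim} invokes the global transversality theorem \ref{transverse} (which identifies $IC_{\cmdb}$ with a shifted pullback of $IC_{\chl}$ and hence gives Cohen--Macaulayness of the total space), deduces flatness of $f^{\flat}$ over the smooth base and therefore equidimensional, geometrically reduced Hitchin fibres, and only then transfers the density statement to the local affine Springer fibres via the product formula \ref{produithitchin}. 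This global detour is the reason the paper postpones the proof of the present proposition. As written, your purely local step 1 does not close; you would need to supply an independent proof of equidimensionality of the affine Springer fibre, which is already delicate in the Lie algebra case.
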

$\rmq$ Cette proposition est un corollaire de la densité de l'orbite régulière, dont on repousse la preuve au corollaire \ref{equidim}.
\medskip

Dans cette section, nous passons en revue les  résultats établis par Ngô dans \cite[sect. 3.8, 3.9]{N}. Maintenant que nous avons introduit les bons objets, les preuves s'étendent telles quelles dans ce nouveau contexte.
On suppose ici que $k$ est un corps de clôture algébrique $\bar{k}$.
On rappelle que $X=\Spec(\co)$ et on surmontera d'une barre tous les objets que l'on étend sur $\bar{k}$.
D'après Bosch-Lütkebohmert-Raynaud \cite[ch 10]{BLR}, il existe un unique schéma en groupes lisses de type fini $J_{a}^{\flat}$ sur $X$ de même fibre générique que $J_{a}$ et tel que pour tout schéma en groupes lisses de type fini $J'$ sur $\bar{X}$, de même fibre générique, on a un morphisme de groupes $J'\rightarrow J_{a}^{\flat}$ induisant l'identité sur les fibres génériques.
En remplaçant dans la définition $\ref{centr}$, $J_{a}$ par $J_{a}^{\flat}$, on obtient un ind-groupe  $\mathcal{P}(J_{a}^{\flat})$ sur $\bar{k}$.
Le morphisme canonique $J_{a}\rightarrow J_{a}^{\flat}$ induit un morphisme sur les ind-groupes
$\mathcal{P}(J_{a})\rightarrow \mathcal{P}(J_{a}^{\flat})$.

\begin{lem}
Le groupe $\mathcal{P}(J_{a}^{\flat})$ est homéomorphe à un groupe abélien libre de type fini. Le morphisme $p:\mathcal{P}(J_{a})\rightarrow \mathcal{P}(J_{a}^{\flat})$ est surjectif et son noyau est un schéma en groupes affines de type fini sur $\bar{k}$.
\end{lem}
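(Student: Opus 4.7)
The plan is to transpose Ng�'s proof from \cite[sect. 3.8, 3.9]{N}, which carries over once we record the two key inputs: that $J_{a}$ and $J_{a}^{\flat}$ coincide on the generic fiber $\Spec(F)$ by the N�ron construction, and that $J_{a}|_{F}$ is a torus, since our standing assumption $a\in\kc^{rs}(F)$ places the generic point in the regular semisimple locus where the centralizer is a maximal torus. Write $T_{a}:=J_{a}|_{F}$; it splits over some finite Galois extension $F'/F$ and yields a cocharacter lattice $X_{*}(T_{a}\otimes_{F}F')$ equipped with a $\Gal(F'/F)$-action.

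To prove that $\mathcal{P}(J_{a}^{\flat})$ is a finitely generated free abelian group, I would first show that it is discrete. The universal property of the N�ron model forces any infinitesimal family of $J_{a}^{\flat}$-torsors trivialized on $\Spec(F)$ to be constant: the two candidate extensions agree generically, hence globally by the uniqueness clause of the N�ron construction. This reduces the problem to computing $\mathcal{P}(J_{a}^{\flat})(\bar{k})=J_{a}(F)/J_{a}^{\flat}(\co)$. The valuation map on $T_{a}(F')$ followed by Galois invariants embeds this quotient into $X_{*}(T_{a}\otimes F')^{\Gal(F'/F)}$, a finitely generated abelian group; any torsion that might appear in the image is absorbed back into $J_{a}^{\flat}(\co)$ by the N�ron property applied to a bounded section, leaving a free group of finite rank.

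For the second assertion, I would compare the two Picard groupoids via the canonical map $J_{a}\to J_{a}^{\flat}$, which is an isomorphism on $\Spec(F)$. Consequently the sheaf quotient $J_{a}^{\flat}/J_{a}$ is supported at the closed point, and its Greenberg realization on $\co$-points is representable by an affine $\bar{k}$-group scheme of finite type; this group is precisely the kernel of $p$. Surjectivity then follows, since any $J_{a}^{\flat}$-torsor with generic trivialization can be modified at the closed point by an element of $J_{a}^{\flat}(\co)/J_{a}(\co)$ to land in the image of $J_{a}$-torsors. The main technical point, and the one where Ng�'s argument needs the most care to transpose, is the Galois descent from $F'$ back to $F$ in the computation of the rank; in our setting this step is essentially formal because the residue field is algebraically closed and the order of $W$, hence of all relevant Galois groups, is invertible in $k$ by hypothesis.
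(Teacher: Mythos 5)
Your proposal follows the same route as the proof the paper invokes (it gives none of its own and refers to \cite[Lem. 3.8.1]{N}, asserting that the argument transposes verbatim): identify $J_{a}^{\flat}(\co)$ with the maximal bounded subgroup $T_{a}(F)^{1}$ of $T_{a}(F)=J_{a}(F)$ via the N\'eron property, deduce that $\mathcal{P}(J_{a}^{\flat})(\bar{k})=T_{a}(F)/T_{a}(F)^{1}$ embeds through the valuation map into the free abelian group $(X_{*}(T_{a}\otimes F')\otimes\mathbb{Q})^{\Gal(F'/F)}$ and is therefore free of finite rank, and identify the kernel of $p$ with $J_{a}^{\flat}(\co)/J_{a}(\co)$, an affine group of finite type because $J_{a}\rightarrow J_{a}^{\flat}$ is an isomorphism over $F$. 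These steps, as well as the surjectivity of $p$, are correct.

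The step that fails as written is your argument for discreteness. It is not true that \og toute famille infinit\'esimale de $J_{a}^{\flat}$-torseurs trivialis\'es sur $\Spec(F)$ est constante\fg: over $\bar{k}[\epsilon]$ such a torsor is trivial on the disc, so the datum reduces to a class in $J_{a}^{\flat}(\bar{k}[\epsilon]((\pi)))/J_{a}^{\flat}(\bar{k}[\epsilon][[\pi]])$, and the tangent space of $\mathcal{P}(J_{a}^{\flat})$ at the origin is $\Lie(J_{a}^{\flat})\otimes_{\co}(F/\co)\neq 0$. The N\'eron mapping property extends $F$-morphisms from smooth $\co$-schemes; it gives no rigidity of torsors in families, and $\mathcal{P}(J_{a}^{\flat})$ is genuinely non reduced --- only its reduced ind-scheme is discrete, which is all that \og hom\'eomorphe\fg~requires. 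The correct mechanism is boundedness: any finite type subscheme of $\mathcal{P}(J_{a}^{\flat})$ arises from a bounded subset of $J_{a}(F)$, and a bounded subset meets only finitely many cosets of the maximal bounded subgroup $J_{a}^{\flat}(\co)$ (its image under the valuation map is a bounded, hence finite, subset of the lattice); therefore every finite type piece of $\mathcal{P}(J_{a}^{\flat})_{red}$ has finitely many $\bar{k}$-points and is a finite discrete set. Equivalently, one may note that the kernel of $p$ is affine of dimension $\dim_{\bar{k}}\bigl(\Lie J_{a}^{\flat}(\co)/\Lie J_{a}(\co)\bigr)=\delta(a)=\dim\mathcal{P}(J_{a})$, so that $\mathcal{P}(J_{a}^{\flat})$ has dimension zero. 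With this replacement your proof is a faithful transcription of the argument the paper cites.
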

Introduisons le revêtement caméral, il s'insère dans le diagramme cartésien suivant:
$$\xymatrix{\tilde{X}_{a}\ar[d]_{\theta_{a}}\ar[r]&V_{T}^{\la}\ar[d]^{\theta}\\\bar{X}\ar[r]^{a}&\kc^{\la}}.$$
Nous allons donner une interprétation galoisienne de $J_{a}^{\flat}$ en fonction de la normalisation de ce revêtement caméral.

\begin{prop}
Soit $\tilde{X}_{a}^{\flat}$ la normalisation de $\tilde{X}_{a}$. Alors le modèle de Néron $J_{a}^{\flat}$ admet la description galoisienne  suivante:
\begin{center}
$J_{a}^{\flat}=\prod\limits_{\tilde{X}_{a}^{\flat}/\bar{X}}(T\times\tilde{X}_{a}^{\flat})^{W}$.
\end{center}
\end{prop}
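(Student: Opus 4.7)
The plan is to follow Ng�'s strategy from \cite[sect. 3.8]{N}, which transfers to our Vinberg-semigroup setting once Proposition \ref{galois} is in hand. Set $N_a := \prod\limits_{\tilde{X}_a^{\flat}/\bar{X}}(T \times \tilde{X}_a^{\flat})^W$; the goal is to verify that this scheme satisfies the three defining properties of the N�ron model of $J_a$: smoothness over $\bar{X}$, identification of the generic fiber with that of $J_a$, and the universal extension property.

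I would first establish smoothness and the identification of generic fibers. Because $\tilde{X}_a^{\flat}\to \bar{X}$ is finite flat between regular one-dimensional schemes and $T$ is smooth, the Weil restriction $\prod\limits_{\tilde{X}_a^{\flat}/\bar{X}}(T \times \tilde{X}_a^{\flat})$ is a smooth affine $\bar{X}$-group scheme of finite type. Since $|W|$ is invertible in $k$, the passage to $W$-invariants preserves smoothness by an averaging argument. On the regular semisimple locus of $\bar{X}$, the cameral cover is already finite �tale, so $\tilde{X}_a^{\flat} = \tilde{X}_a$ there, and Proposition \ref{galois} identifies $N_a$ with $J_a$ on this dense open.

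The main step is the universal property. Given any smooth $\bar{X}$-model $J'$ of $J_a$, the desired morphism $J' \to N_a$ extending the identity on generic fibers corresponds under the Weil-restriction adjunction to a $W$-equivariant morphism $J' \times_{\bar{X}} \tilde{X}_a^{\flat} \to T$ extending the tautological map defined on the regular semisimple locus. Since $J'$ is smooth over $\bar{X}$ and $\tilde{X}_a^{\flat}$ is normal by construction, their fibered product is normal; a rational map from a normal two-dimensional scheme to an affine smooth group scheme that is regular in codimension one extends everywhere, by Weil's extension theorem, producing the desired morphism. The inverse map $J_a \to N_a$ arises from the canonical $W$-equivariant inclusion underlying Proposition \ref{galois}, pulled back along $h_a$.

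The main obstacle is the local analysis at the ramified points of $\tilde{X}_a^{\flat} \to \bar{X}$, where our explicit $W$-equivariant construction must be matched with the N�ron model of Bosch-L�tkebohmert-Raynaud. At a point $v$ where a stabilizer $W_v \subset W$ of inertia acts, one must show that the $W_v$-invariants of $T$ over the completed local ring of $\tilde{X}_a^{\flat}$ at $v$ produce exactly the N�ron model of the local torus $J_a|_{D_v}$, both for the connected component and for the finite group of components. The hypothesis that $|W|$ is prime to the residue characteristic makes both constructions agree, since inertia then acts through a tame cyclic quotient, for which the classical description of N�ron models for tori with Galois action coincides with the $W_v$-invariants description; this is exactly the input that allowed Ng� to carry out the analogous step in \cite[sect. 3.8-3.9]{N} for the Lie algebra, and the same argument applies here.
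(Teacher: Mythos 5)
The paper gives no independent proof of this proposition: it simply records that the arguments of Ng\^o \cite[sect. 3.8--3.9]{N} extend verbatim to the Vinberg setting, and your reconstruction --- smoothness of the $W$-invariants of the Weil restriction under the hypothesis $\car(k)\wedge|W|=1$, identification with $J_{a}$ over the regular semisimple locus via Proposition \ref{galois}, and reduction of the N\'eron mapping property to the classical description of finite-type N\'eron models of tamely ramified tori at each point of $\bar{X}-U$ --- is exactly that argument. The one caveat is that your appeal to Weil's extension theorem cannot carry the universal property by itself, since the rational map $J'\times_{\bar{X}}\tilde{X}_{a}^{\flat}\dashrightarrow T$ is a priori undefined along the entire (codimension-one) fibres over $\bar{X}-U$; the extension across those points is precisely the content of the local tame computation you invoke in your final paragraph, which must therefore be taken as the primary mechanism rather than a supplementary check.
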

On écrit $\tilde{X}_{a}=\Spec(\tilde{\co}_{a})$ et $\tilde{X}^{\flat}_{a}=\Spec(\tilde{\co}^{\flat}_{a})$.
\begin{cor}\label{autdim}
On en déduit une autre formule pour la dimension de $\mathcal{P}(J_{a})$:
\begin{center}
$\dime\mathcal{P}(J_{a})=\dime_{\bar{k}}(\mathfrak{t}_{+}\otimes_{\bar{\co}}\tilde{\co}_{a}^{\flat}/\tilde{\co}_{a})$.
\end{center}
\end{cor}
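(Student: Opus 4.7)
Ma strat�gie consiste d'abord � exploiter le lemme pr�c�dent qui fournit une surjection $p:\mathcal{P}(J_{a})\rightarrow\mathcal{P}(J_{a}^{\flat})$ dont le noyau $R_{a}$ est un sch�ma en groupes affine de type fini sur $\bar{k}$. Comme $\mathcal{P}(J_{a}^{\flat})$ est hom�omorphe � un groupe ab�lien libre de type fini, il est discret et de dimension nulle. On en d�duit imm�diatement que $\dime\mathcal{P}(J_{a})=\dime R_{a}$, ce qui r�duit le probl�me au calcul de cette derni�re dimension.

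Dans un deuxi�me temps, les sch�mas en groupes $J_{a}$ et $J_{a}^{\flat}$ ayant m�me fibre g�n�rique et �tant tous deux lisses sur $\bar{X}=\Spec(\bar{\co})$, le noyau $R_{a}$ s'identifie au quotient $J_{a}^{\flat}(\bar{\co})/J_{a}(\bar{\co})$. Sa dimension se calcule alors par comparaison des alg�bres de Lie, ce qui donne
\begin{center}
$\dime R_{a}=\dime_{\bar{k}}(\Lie(J_{a}^{\flat})/\Lie(J_{a}))$,
\end{center}
cons�quence standard de la lissit� et du fait classique qu'un quotient de $\bar{\co}$-points de deux sch�mas en groupes lisses de m�me fibre g�n�rique reli�s par un morphisme fid�lement plat est un sch�ma en groupes affine dont la dimension est lue sur les alg�bres de Lie.

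Il reste � identifier le quotient des alg�bres de Lie au module annonc�. En d�rivant la description galoisienne de la proposition pr�c�dente $J_{a}^{\flat}=\prod_{\tilde{X}_{a}^{\flat}/\bar{X}}(T\times\tilde{X}_{a}^{\flat})^{W}$, on obtient $\Lie(J_{a}^{\flat})=(\mathfrak{t}_{+}\otimes_{\bar{\co}}\tilde{\co}_{a}^{\flat})^{W}$; de m�me, par la proposition \ref{galois} qui identifie $J$ � $\tilde{J}\subset J^{1}=\Omega^{W}$, on obtient $\Lie(J_{a})=(\mathfrak{t}_{+}\otimes_{\bar{\co}}\tilde{\co}_{a})^{W}$. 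La caract�ristique de $k$ �tant premi�re � $|W|$, le foncteur des $W$-invariants est exact et la suite exacte courte
\begin{center}
$0\rightarrow\mathfrak{t}_{+}\otimes_{\bar{\co}}\tilde{\co}_{a}\rightarrow\mathfrak{t}_{+}\otimes_{\bar{\co}}\tilde{\co}_{a}^{\flat}\rightarrow\mathfrak{t}_{+}\otimes_{\bar{\co}}\tilde{\co}_{a}^{\flat}/\tilde{\co}_{a}\rightarrow 0$
\end{center}
fournit apr�s passage aux $W$-invariants le quotient souhait�, ce qui donne la formule annonc�e.

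L'obstacle principal r�side dans l'identification pr�cise des alg�bres de Lie via la description galoisienne, en particulier dans le contr�le de la contribution de $\mathfrak{t}_{+}$ (par opposition � $\mathfrak{t}$) qui traduit le passage du groupe au semi-groupe de Vinberg, ainsi que dans la v�rification que l'�galit� entre $R_{a}$ et $J_{a}^{\flat}(\bar{\co})/J_{a}(\bar{\co})$ est bien compatible avec la structure de sch�ma en groupes sur $\bar{k}$.
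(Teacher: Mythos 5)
Votre preuve est correcte et suit essentiellement l'argument auquel le papier renvoie pour cette section (Ng\^o, sect. 3.8--3.9) : d\'evissage par le mod\`ele de N\'eron qui ram\`ene la dimension de $\mathcal{P}(J_{a})$ \`a celle du noyau affine $J_{a}^{\flat}(\bar{\co})/J_{a}(\bar{\co})$, puis calcul de cette dimension sur les alg\`ebres de Lie via les descriptions galoisiennes de $J_{a}$ et $J_{a}^{\flat}$. Notez seulement que votre d\'erivation aboutit \`a $(\mathfrak{t}\otimes_{\bar{\co}}\tilde{\co}_{a}^{\flat}/\tilde{\co}_{a})^{W}$, avec les $W$-invariants qui manquent dans la formule affich\'ee du corollaire (comparer avec la formule globale pour $\delta_{a}$ donn\'ee plus loin, qui les comporte) : il s'agit vraisemblablement d'une coquille de l'\'enonc\'e, et votre version est la bonne.
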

Passons à la description du groupe des composantes connexes.
Pour $a:\bar{X}\rightarrow\kc^{\la}$ génériquement régulier semisimple, nous avons un schéma en groupes $J_{a}$ lisse abélien sur $\bar{X}$.
Nous allons déterminer la structure de $\pi_{0}(\mathcal{P}(J_{a}))$.
Pour un groupe abélien de type fini $\Lambda$, posons
\begin{center}
$\Lambda^{*}:=\Spec(\bar{\mathbb{Q}}_{l}[\Lambda])$.
\end{center}
Comme $a$ est génériquement régulier semisimple, on obtient que le revêtement caméral est génériquement un $W$-torseur. En choisissant un point géométrique de $\tilde{X}_{a}$	au-dessus du point générique de $X$, on obtient un morphisme de groupes:
\begin{center}
$\pi_{a}^{\bullet}:I:=\Gal(\bar{F}/F)\rightarrow W$.
\end{center}
Ce morphisme va nous permettre de décrire la structure du $\pi_{0}$.
Soit $J_{a}^{0}$ le sous-schéma ouvert des composantes neutres de $J_{a}$.
Nous avons une suite exacte:
$$\xymatrix{1\ar[r]&\pi_{0}(J_{a})\ar[r]&J_{a}(\bar{F})/J_{a}^{0}(\bar{\co})\ar[r]&J_{a}(\bar{F})/J_{a}(\bar{\co})\ar[r]&1}$$
d'où l'on obtient la suite analogue pour les $\pi_{0}$:
$$\xymatrix{\pi_{0}(J_{a})\ar[r]&\pi_{0}(\cP(J_{a}^{0}))\ar[r]&\pi_{0}(\cP(J_{a}))\ar[r]&1}$$

\begin{prop}
Le choix d'un point géométrique de $\tilde{X}_{a}$	au-dessus du point générique de $\bar{X}$ nous donne un isomorphisme canonique entre
\begin{center}
$\pi_{0}(\mathcal{P}(J_{a}^{0}))^{*}=\hat{T}^{\pi_{a}^{\bullet}(I)}$
\end{center}
et
\begin{center}
$\pi_{0}(\mathcal{P}(J_{a}))^{*}=\hat{T}(\pi_{a}^{\bullet}(I))$,
\end{center}
où $\hat{T}(\pi_{a}^{\bullet}(I))$  désigne le sous-groupe de $\hat{T}^{\pi_{a}^{\bullet}(I)}$ des éléments $\kappa\in \hat{T}$ tels que $\pi_{0}(\mathcal{P}(J_{a}))$ est dans le groupe de Weyl $W_{H}$ de la composante neutre $\hat{H}$ du centralisateur $\kappa$ dans $\hat{G}$.
\end{prop}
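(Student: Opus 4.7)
La d\'emonstration suit la strat\'egie de Ng\^o \cite[sect. 3.9]{N}, en exploitant la description galoisienne du mod\`ele de N\'eron $J_a^\flat$ donn\'ee par la proposition pr\'ec\'edente. Pour l'identification $\pi_0(\cP(J_a^0))^*=\hat{T}^{\pi_a^\bullet(I)}$, on part de la description galoisienne
\begin{center}
$J_a^\flat=\prod_{\tilde{X}_a^\flat/\bar{X}}(T\times\tilde{X}_a^\flat)^W$,
\end{center}
qui pr\'esente la fibre g\'en\'erique de $J_a^\flat$ comme un tore sur $\bar{F}$ d\'eploy\'e en $T$ via l'action monodromique $\pi_a^\bullet:I\to W$. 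Le calcul classique \`a la Kottwitz donne alors $\pi_0(\cP(J_a^\flat))=X_*(T)_{\pi_a^\bullet(I)}$ (coinvariants sous $\pi_a^\bullet(I)$); comme le noyau de $\cP(J_a)\to\cP(J_a^\flat)$ est affine, on identifie $\pi_0(\cP(J_a^0))$ \`a ces m\^emes coinvariants. Par dualit\'e de Pontryagin (les caract\`eres des coinvariants \'etant les caract\`eres invariants sur le groupe dual), on obtient l'identification voulue, l'action duale de $\pi_a^\bullet(I)$ sur $\hat{T}$ \'etant l'action de Weyl standard.

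Pour $\pi_0(\cP(J_a))^*$, la suite exacte $\pi_0(J_a)\to\pi_0(\cP(J_a^0))\to\pi_0(\cP(J_a))\to 1$ ram\`ene la question \`a la d\'etermination de l'image de $\pi_0(J_a)$. D'apr\`es la proposition \ref{galois}, $J=\tilde{J}$ est caract\'eris\'e dans $J^1$ par la condition $\alpha(f(x))\neq -1$ en tout point g\'eom\'etrique fixe sous une r\'eflexion $s_\alpha$. Les composantes non-neutres de $J_a$ correspondent donc aux racines $\alpha$ pour lesquelles $s_\alpha\in\pi_a^\bullet(I)$, et la projection de $\pi_0(J_a)$ dans $X_*(T)_{\pi_a^\bullet(I)}$ a pour image le sous-groupe engendr\'e par les coracines $\alpha^\vee$ correspondantes. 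Par dualit\'e, $\pi_0(\cP(J_a))^*$ s'identifie au sous-groupe de $\hat{T}^{\pi_a^\bullet(I)}$ constitu\'e des $\kappa$ tels que $\alpha(\kappa)=1$ pour tout tel $\alpha$. Ces racines sont pr\'ecis\'ement celles du syst\`eme de racines de la composante neutre $\hat{H}$ du centralisateur de $\kappa$ dans $\hat{G}$, et leurs r\'eflexions engendrent $W_H$; la condition devient ainsi $\pi_a^\bullet(I)\subset W_H$, ce qui co\"incide avec la d\'efinition de $\hat{T}(\pi_a^\bullet(I))$.

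L'\'etape d\'elicate est l'analyse de la projection $\pi_0(J_a)\to X_*(T)_{\pi_a^\bullet(I)}$. Il faut, aux points de ramification $x$ du rev\^etement cam\'eral $\tilde{X}_a\to\bar{X}$, identifier les composantes connexes distingu\'ees par le crit\`ere $\alpha(f(x))\neq -1$ de la proposition \ref{galois} et d\'eterminer leur effet dans les coinvariants. Cette analyse locale repose sur la mod\'er\'ee ramification (garantie par l'hypoth\`ese sur la caract\'eristique) et suit de pr\`es les calculs endoscopiques de Ng\^o \cite[sect. 3.9]{N} au voisinage des points du diviseur discriminant.
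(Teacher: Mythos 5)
Votre premi\`ere \'etape est correcte et suit la route que le texte sous-entend (le papier renvoie int\'egralement \`a Ng\^o \cite[sect. 3.8-3.9]{N}) : l'identification $\pi_{0}(\cP(J_{a}^{0}))\cong X_{*}(T)_{\pi_{a}^{\bullet}(I)}$ par le calcul de Kottwitz sur la composante neutre du mod\`ele de N\'eron, puis la dualit\'e $(X_{*}(T)_{\pi_{a}^{\bullet}(I)})^{*}=\hat{T}^{\pi_{a}^{\bullet}(I)}$. En revanche, le point central de votre deuxi\`eme paragraphe est faux : les racines $\alpha$ qui contribuent \`a $\pi_{0}(J_{a})$ ne sont pas celles telles que $s_{\alpha}\in\pi_{a}^{\bullet}(I)$, mais celles qui admettent un point fixe g\'eom\'etrique sur la fibre sp\'eciale de la courbe cam\'erale, c'est-\`a-dire celles du syst\`eme de racines du stabilisateur dans $W$ de la sp\'ecialisation $t_{0}$ d'un point de $V_{T}$ au-dessus de $a$ ; ce stabilisateur contient $\pi_{a}^{\bullet}(I)$ mais lui est en g\'en\'eral strictement sup\'erieur. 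Contre-exemple (version alg\`ebre de Lie, qui est le mod\`ele de Ng\^o que vous invoquez) : $G=SL_{3}$, $a$ de polyn\^ome caract\'eristique $x^{3}-\pi$, donc $\pi_{a}^{\bullet}(I)=\langle w\rangle$ avec $w$ un \'el\'ement de Coxeter d'ordre $3$. Ce groupe ne contient aucune r\'eflexion, votre recette donne donc une image triviale de $\pi_{0}(J_{a})$ et $\pi_{0}(\cP(J_{a}))=X_{*}(T)_{w}\cong\mathbb{Z}/3$. Or ici $J_{a}(\co)=\{u\in\co_{E}^{\times}~\vert~N_{E/F}(u)=1\}=T_{\gamma}(F)$ avec $E=F[\pi^{1/3}]$, de sorte que $\cP(J_{a})$ est trivial ; c'est bien ce que pr\'edit l'\'enonc\'e, car pour $\kappa\neq 1$ dans $\hat{T}^{w}\cong\mathbb{Z}/3\subset PGL_{3}$ la composante neutre du centralisateur est le tore, $W_{H}=1$ ne contient pas $w$, donc $\hat{T}(w)=\{1\}$. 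Ce qui se passe r\'eellement : $t_{0}=0$ est fix\'e par toutes les r\'eflexions, toutes les coracines contribuent, et leur image engendre tout $X_{*}(T)_{w}$.

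Le second maillon de votre r\'eduction est tout aussi fragile : m\^eme une fois le bon ensemble de racines identifi\'e, l'\'equivalence entre \og $\kappa$ annule l'image de $\pi_{0}(J_{a})$\fg~ et \og $\pi_{a}^{\bullet}(I)\subset W_{H}$\fg~ n'est pas la trivialit\'e que vous sugg\'erez, puisque $\pi_{a}^{\bullet}(I)$ n'est pas engendr\'e par les r\'eflexions qu'il contient (le m\^eme \'el\'ement de Coxeter le montre). C'est pr\'ecis\'ement le contenu du lemme combinatoire de Ng\^o (\cite[Lem. 3.9.3]{N} et sa pr\'eparation), qui demande un v\'eritable argument de th\'eorie des racines. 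Votre troisi\`eme paragraphe renvoie \`a juste titre \`a cette analyse locale, mais un renvoi ne peut pas r\'eparer un \'enonc\'e interm\'ediaire faux : il faut remplacer la condition $s_{\alpha}\in\pi_{a}^{\bullet}(I)$ par la condition de point fixe (en tenant compte, dans le cadre du pr\'esent article, du crit\`ere suppl\'ementaire $\alpha(f(x))\neq -1$ qui distingue $J$ de $J^{1}$), puis invoquer le lemme de Ng\^o pour identifier l'annulateur de cette image avec $\hat{T}(\pi_{a}^{\bullet}(I))$.
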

\section{Le champ de Picard}
\subsection{Morphisme degré}
A la suite de Chaudouard-Laumon \cite[5.3-6.1]{CL}, nous avons besoin de définir un morphisme degré.
Soit $a\in\abd$, de la flèche $\chi_{+}^{*}J\rightarrow I^{1}$, on déduit un morphisme de schémas en groupes
\begin{center}
$J_{a}\rightarrow S$
\end{center}
où l'on rappelle que $S:=G/G_{der}$ et que $I$ désigne le centralisateur dans $G$ des éléments du Vinberg.
Nous déduisons alors une flèche entre les classifiants sur $X$:
\begin{center}
$\cP_{a}\rightarrow \Buns$
\end{center}
et du morphisme degré $B(S)\rightarrow X_{*}(S)$, nous obtenons un morphisme entre champs de Picard:
\begin{center}
$\deg:\cP_{a}\rightarrow X_{*}(S)$
\end{center}
avec $a\in\abd$, où l'on voit $X_{*}(S)$ comme un $k$-champ de Picard constant.
On pose alors $\cP_{a}^{1}:=\Ker(\deg:\cP_{a}\rightarrow X_{*}(S))$. On obtient un sous-champ de Picard ouvert de $\cP_{a}$.
En faisant varier $a\in\abd$, on obtient un champ de Picard $\cP^{1}$, ouvert dans $\cP$, qui contient fibre à fibre $\cP^{0}$ et  qui agit sur $\cmd$.
On considère $\cmd^{1}$ le sous-champ ouvert des fibres de degré zéro. On a bien une action de $\cP^{1}$ sur $\cmd^{1}$.

\subsection{La courbe camérale}
Soit $X$ une courbe projective lisse géométriquement connexe sur un corps fini $k$. On note $\bar{X}=X\otimes_{k}\bar{k}$. Soit un groupe $G$ connexe réductif déployé, tel que $G_{der}$ est simplement connexe. On suppose $\car(k)\wedge\left|W\right|=1$.
Soit $\la\in\Div^{+}(X,T)$ tel que $c\vert\la$ avec $c=\left|\pi_{1}(G)\right|$ au sens de la définition \ref{div1}.
La base de Hitchin se décrit alors de la façon suivante:
\begin{center}
$\abd=(k^{*})^{l}\oplus\bigoplus\limits_{i=1}^{r}H^{0}(X,\mathcal{O}_{X}(\left\langle\omega_{i} ,-w_{0}\la\right\rangle))$,
\end{center}
où $l$ est la dimension du centre de $G$.
Soit le carré cartésien:
$$\xymatrix{\tilde{X}\ar[r]\ar[d]&V_{T}^{\la}\ar[d]^{\theta}\\ X\times\abd\ar[r]&\kcd}$$
où le morphisme horizontal du bas associe à $(x,a)$ l'élément $a(x)$. Le morphisme vertical de droite est fini plat muni d'une action de $W$.

Soit $U\subset X\times\abd$ l'image réciproque de $\kc^{\la, rs}\subset\kcd$ dans $X\times\abd$. Le morphisme de projection $p:X\times\abd\rightarrow\abd$ étant lisse, en prenant l'image de $U$ par $p$, nous obtenons l'ouvert $\abd^{\heartsuit}\subset\abd$, introduit précédemment, dont les $\bar{k}$-points sont:
\begin{center}
$\abd^{\heartsuit}(\bar{k}):=\{a\in\abd\vert ~a(\bar{X})\not\subset\mathfrak{D}_{\la}\}$,
\end{center}
où $\mathfrak{D}_{\la}$ désigne le diviseur discriminant.

$\rmq$ Comme $G_{der}$ est simplement connexe, il n'y a pas de différence entre régulier semisimple et fortement régulier semisimple.
\medskip

En prenant la fibre en chaque point $a\in\abd(\bar{k})$ de la flèche $\tilde{X}\rightarrow X\times\abd$ on obtient le revêtement caméral:
\begin{center}
$\theta_{a}:\tilde{X}_{a}\rightarrow\bar{X}$. 
\end{center}
Nous avons le lemme suivant \cite[Lem. 4.5.1]{N}:

\begin{lem}\label{reduite}
Pour tout $a\in\abd^{\heartsuit}(\bar{k})$ le revêtement caméral $\theta_{a}:\tilde{X}_{a}\rightarrow\bar{X}$ est génériquement un $W$-torseur et la courbe camérale $\tilde{X}_{a}$ est réduite.

\end{lem}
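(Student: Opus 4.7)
Le plan est de transposer l'argument de Ng\^o \cite[Lem. 4.5.1]{N} du cas des alg\`ebres de Lie, en exploitant deux propri\'et\'es du morphisme $\theta:V_{T}^{\la}\to\kcd$. D'une part, il est fini plat de degr\'e $\left|W\right|$, obtenu par changement de base le long de $-w_{0}\la:X\to[A_{G}/Z_{+}]$ du morphisme universel $\theta:V_{T}\to\kc$ dont la platitude a \'et\'e rappel\'ee en section \ref{introsemi} et repose sur l'hypoth\`ese que la caract\'eristique de $k$ est premi\`ere \`a $\left|W\right|$. D'autre part, sa restriction au lieu r\'egulier semisimple est un $W$-torseur \'etale, puisque $W$ agit librement sur $V_{T}^{rs}$.

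Pour la propri\'et\'e de $W$-torseur g\'en\'erique, on observe que $a\in\abd^{\heartsuit}(\bar{k})$ signifie pr\'ecis\'ement $h_{a}(\bar{X})\not\subset\mathfrak{D}_{\la}$, donc $U_{a}:=\bar{X}-h_{a}^{-1}(\mathfrak{D}_{\la})$ est un ouvert non vide de $\bar{X}$. La restriction de $\theta_{a}:\tilde{X}_{a}\to\bar{X}$ au-dessus de $U_{a}$ s'obtient par changement de base d'un $W$-torseur \'etale, c'est donc elle-m\^eme un $W$-torseur \'etale, d'o\`u la premi\`ere assertion.

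Pour la r\'eduction de $\tilde{X}_{a}$, la platitude finie de $\theta$ passe \`a $\theta_{a}$ par changement de base, donc $\tilde{X}_{a}$ est fini plat sur la courbe lisse $\bar{X}$. Un sch\'ema fini plat sur une base r\'eguli\`ere de dimension un est de Cohen-Macaulay, donc sans points associ\'es immerg\'es. Comme la fibre g\'en\'erique de $\theta_{a}$ est un $W$-torseur sur un corps, c'est une alg\`ebre \'etale r\'eduite, et $\tilde{X}_{a}$ est g\'en\'eriquement r\'eduit. Le nilradical d'un sch\'ema n\oe th\'erien \'etant contenu dans l'intersection des germes en ses points associ\'es, tous ici minimaux, il est nul, et $\tilde{X}_{a}$ est bien r\'eduit.

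L'argument est essentiellement formel; le seul ingr\'edient substantiel est la platitude finie de $\theta:V_{T}\to\kc$, qui utilise l'hypoth\`ese sur la caract\'eristique. Aucune obstruction suppl\'ementaire n'appara\^it en travaillant avec le semi-groupe de Vinberg plut\^ot qu'avec l'alg\`ebre de Lie, puisque le formalisme de changement de base est le m\^eme que dans \cite{N}; on peut donc se contenter de renvoyer \`a la preuve de Ng\^o.
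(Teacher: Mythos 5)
Votre preuve est correcte et suit essentiellement l'argument standard de Ng\^o \cite[Lem. 4.5.1]{N}, auquel le texte renvoie sans le reproduire : la propri\'et\'e de $W$-torseur g\'en\'erique s'obtient par changement de base du $W$-torseur $V_{T}^{\la,rs}\to\kc^{\la,rs}$ (l'action de $W$ y est bien libre puisque $G_{der}$ est simplement connexe), et la r\'eduction r\'esulte de la platitude finie de $\theta_{a}$ sur la courbe lisse $\bar{X}$ (donc Cohen-Macaulay, sans composantes immerg\'ees) jointe \`a la r\'eduction de la fibre g\'en\'erique. Rien \`a redire.
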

Pour $a\in\abdk$, on construit alors le revêtement $\tilde{X}_{\rho,a}$ qui s'insère dans le diagramme cartésien suivant:
$$\xymatrix{\tilde{X}_{a}\ar[r]\ar[d]_{\theta_{a}}& V_{T}^{\la}\ar[d]^{\theta}\\ \bar{X}\ar[r]^{a}&\kcd}$$

$\rmq$ En reprenant les notations de \eqref{jun}, on a la formule:
\begin{center}
$J_{a}^{1}=\pi_{a,*}(T)^{W}$.
\end{center}

\begin{prop}\label{conn}
Supposons $\lambda\succ 2g$ (cf.\ref{plusgrand}). Alors pour tout $a\in\abd^{\heartsuit}(\bar{k})$, les courbes camérales $\tilde{X}_{a}$ et $\tilde{X}_{\rho,a}$ sont connexes.
\end{prop}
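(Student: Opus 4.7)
The strategy is to establish connectedness uniformly for all $a \in \abdh(\bar k)$ by a Stein-factorization argument applied to the universal cameral cover, reducing the problem to irreducibility of a single generic cover. Consider the universal cameral cover $\tilde{\mathcal{X}} := V_T^{\lambda} \times_{\kcd} (\bar{X} \times \abdh)$, together with its projection $f: \tilde{\mathcal{X}} \to \abdh$. This morphism is proper and flat (finite of degree $|W|$ over $\bar{X} \times \abdh$), and its geometric fiber over $a \in \abdh(\bar k)$ is precisely $\tilde{X}_a$. Since $\abdh$ is a dense open in an affine space over $\bar k$, it is smooth, irreducible, and in particular normal.

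Given this setup, it suffices to prove that the generic fiber $\tilde{X}_{a_\eta}$, where $a_\eta$ denotes the generic point of $\abdh$, is geometrically irreducible. Indeed, flatness over the irreducible base then forces $\tilde{\mathcal{X}}$ itself to be irreducible. Applying Stein factorization, one obtains a factorization $\tilde{\mathcal{X}} \to S \to \abdh$ with $S \to \abdh$ finite and the first map having geometrically connected fibers. Irreducibility of $\tilde{X}_{a_\eta}$ says $S \to \abdh$ is birational; by normality of $\abdh$ this map is an isomorphism. Cohomology-and-base-change then implies $f_* \mathcal{O}_{\tilde{\mathcal{X}}} \otimes k(a) = k(a)$ for every geometric point $a$, so every $\tilde{X}_a$ is connected.

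The remaining point, and the heart of the proof, is to show that $\tilde{X}_{a_\eta}$ is geometrically irreducible, or equivalently (since it is generically a $W$-torsor over $\bar X_\eta$ by Lemma~\ref{reduite}) that the monodromy homomorphism $\pi_1(\bar X_\eta \smallsetminus a_\eta^{-1}(\mathfrak{D}_\lambda)) \to W$ is surjective. Here the hypothesis $\lambda \succ 2g$ is used exactly as in Ng\^o \cite[Prop.~4.7.1]{N}: by Riemann--Roch each line bundle $\mathcal{O}_X(\langle \omega_i, -w_0\lambda\rangle)$ satisfies $H^1 = 0$ and is basepoint-free, so $\abdh$ is large enough that at the generic point $a_\eta$ the section meets $\mathfrak{D}_\lambda$ transversally at its smooth locus. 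The local monodromy at each branch point is a root reflection $s_\alpha$, and one checks, using the description of the irreducible components of $\mathfrak{D}_\lambda$ arising from $W$-orbits of roots, that the collection of such reflections, as $a$ varies, hits every $W$-orbit of roots and therefore generates~$W$.

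For $\tilde{X}_{\rho,a}$ the same argument applies with the cameral cover replaced by its $\rho$-twisted variant: $\tilde{X}_\rho := V_T^\lambda \times_{\kcd} (\bar X \times \abdh)$ again projects to $\abdh$ as a proper flat family, and the same Stein-factorization reduction applies once irreducibility of the generic fiber is established, the monodromy analysis being identical.

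The main obstacle is the monodromy-generation step. The upstream arguments (Stein factorization over a normal base, flatness, openness of $\abdh$) are formal, but verifying that the local reflections at the branch points of $a_\eta$ exhaust every $W$-orbit of reflections uses in an essential way the fact that $\lambda \succ 2g$ provides enough freedom in each summand of $\abd$; this is the place where one must carry out the careful Bertini-type computation, following Ng\^o's argument but adapted to the Vinberg-semigroup discriminant $\mathfrak{D}_\lambda$ described in Section~2.
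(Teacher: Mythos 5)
Your reduction to the generic fibre is reasonable as a formal scheme: the universal cameral curve is proper and flat over the irreducible, normal base $\abdh$, its fibres are reduced by Lemma~\ref{reduite}, and Stein factorization together with Zariski's main theorem would indeed convert geometric irreducibility of the generic fibre into connectedness of every fibre. Note however that this is not the paper's route at all. Following Ng\^o [sect.~4.6.2], the paper realises each individual cameral curve $\tilde X_a$ as the preimage of a product of lines $L_1\times\dots\times L_r$ inside a product of projective spaces and invokes Debarre's connectedness theorem (restated at the start of the proof), applied to the fibres of the projection $p_1:\abd\to(k^*)^l$; the hypothesis $\la\succ2g$ enters only through very-ampleness and the dimension inequalities of that theorem, and the conclusion holds for every $a\in\abdh(\bar k)$ in one stroke, with no passage through the generic point.

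The genuine gap is in your monodromy step, on which everything else hinges. Transversality of $a_\eta$ with the smooth locus of $\mathfrak D_\la$ gives you, at each branch point, a local monodromy which is a reflection $s_{\alpha'}$ with $\alpha'$ in the $W$-orbit of roots indexing the component of the discriminant being crossed; so at best you obtain one reflection in each $W$-conjugacy class of reflections. A set of reflections meeting every conjugacy class does not generate $W$: in type $A_n$ all reflections are conjugate and a single one generates only $\mathbb{Z}/2$. To generate $W$ you need all the $s_{w\alpha}$, $w\in W$, but the reflections actually realised as monodromies based at $\tilde\infty$ are only the $W_a$-conjugates of the ones you see directly, so the argument is circular: it is precisely connectedness of $\tilde X_a$ (equivalently $W_a=W$) that allows one to upgrade ``one reflection per orbit'' to ``all reflections''. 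This is exactly how Lemma~\ref{croiss} of the paper (and Ng\^o's 5.4.5) is proved, with connectedness as an input rather than an output; and Ng\^o's Prop.~4.7.1, which you cite for this step, concerns non-emptiness of the transversal locus $\abdd$, not surjectivity of monodromy. To repair the proof you would have to establish generic irreducibility by some independent device such as Debarre's theorem, at which point the Stein-factorization scaffolding becomes superfluous.
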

\begin{proof}
On commence par rappeler un théorème de Debarre \cite[Th. 1.4]{Deb} :
\begin{thm}
Soit $M$ une variété irréductible et $m:M\rightarrow \mathbb{P}$ où $\mathbb{P}=\mathbb{P}^{n_{1}}\times...\times\mathbb{P}^{n_{r}}$ est un produit d'espaces projectifs. On suppose que $m$ est propre au-dessus d'un ouvert $V$ de $\mathbb{P}$. Dans chaque $\mathbb{P}^{n_{i}}$, on se donne une droite $L_{i}$ telle que pour tout sous ensemble $I$ de $\{1,...,n\}$, on a
\begin{center}
$\dime (p_{I}(m(M)))>\sum\limits_{i\in I}\codim(L_{i})$
\end{center}
où $p_{I}$ est la projection de $\mathbb{P}$ sur $\prod\limits_{i\in I}\mathbb{P}^{n_{i}}$. On suppose que $L=L_{1}\times...\times L_{r}$ est contenu dans $V$.
Alors $m^{-1}(L)$ est connexe.
\end{thm}
On rappelle que nous avons une flèche de projection lisse:
\begin{center}
$p_{1}:\abd\rightarrow (k^{*})^{l}$
\end{center}
Soit $\xi=p_{1}(a)$ et $\abd^{\xi}$ la fibre au-dessus de $\xi$.
Pour $1\leq i\leq r$, considérons le diviseur $D_{i}:=\left\langle \omega_{i}, -w_{0}\lambda\right\rangle$.
Soit le fibré en droites projectives $\mathbb{P}(D_{i})$ au-dessus de $\bar{X}$. L'hypothèse sur le degré nous dit que le fibré $\mathcal{O}(1)$ est très ample ce qui nous fournit un plongement projectif $k:\mathbb{P}(D_{i})\rightarrow\mathbb{P}^{n_{i}}$.
Ensuite, il ne nous reste qu'à reprendre l'argument de \cite[sect. 4.6.2]{N} que l'on applique à $\abd^{\xi}$ au lieu de $\abd$.
\end{proof}

\subsection{Le lieu lisse de la courbe camérale}
Dans la section \ref{cour1}, nous avions introduit l'ouvert transversal $\abd^{\diamondsuit}$ constitué des $a\in\abdk$  tels que $h_{a}:\bar{X}\rightarrow\kcd$ intersecte transversalement le diviseur discriminant $\mathfrak{D}_{\la}$.
Nous obtenons une caractérisation alternative de l'ouvert transversal à partir de la courbe camérale :
\begin{prop}
Un point $a\in\abdk$  est dans l'ouvert $\abddk$ si et seulement si $\tilde{X}_{a}$ est lisse.
\end{prop}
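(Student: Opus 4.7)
Le plan est d'analyser ponctuellement le carr\'e cart\'esien $\tilde{X}_a = \bar{X} \times_{\kcd} V_T^\lambda$. Comme $\theta: V_T^\lambda \to \kcd$ est fini plat, son changement de base $\theta_a: \tilde{X}_a \to \bar{X}$ l'est aussi, et $\tilde{X}_a$ est Cohen-Macaulay au-dessus de $\bar{X}$. La lissit\'e peut alors se v\'erifier ponctuellement en chaque point ferm\'e $\tilde{x}$ au-dessus de $x \in \bar{X}$. Hors du discriminant, $\theta$ est \'etale, donc $\theta_a$ aussi, et la lissit\'e est automatique. Le travail se concentre sur les points $x$ tels que $h_a(x) \in \mathfrak{D}_\lambda$. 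L'ingr\'edient local essentiel est le mod\`ele suivant de $\theta$ pr\`es d'un point lisse de $\mathfrak{D}$ (situ\'e sur une unique composante $\overline{\Kern(\alpha)}$): le stabilisateur dans $W$ se r\'eduit \`a $\{1, s_\alpha\}$, et $\theta$ est localement un rev\^etement double simplement ramifi\'e, de la forme $(s, \mathbf{u}) \mapsto (s^2, \mathbf{u})$ en coordonn\'ees adapt\'ees.

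Pour le sens direct, supposons $a \in \abddk$. Les intersections de $h_a$ avec $\mathfrak{D}_\lambda$ sont transverses, ce qui force ces intersections \`a avoir lieu en des points lisses du discriminant: en effet, le lieu singulier de $\mathfrak{D}_\lambda$ (intersection d'au moins deux composantes $\overline{\Kern(\alpha_i)}$) est de codimension au moins $2$ dans $\kcd$, et une courbe ne peut le rencontrer transversalement. En un point d'intersection $x$, si $u$ est une uniformisante de $\bar{X}$ en $x$, la transversalit\'e dit que le tir\'e en arri\`ere de l'\'equation locale de $\mathfrak{D}_\lambda$ est $u$ (\`a une unit\'e pr\`es). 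Le mod\`ele local donne alors pour $\tilde{X}_a$ l'\'equation $s^2 = u$ en $\tilde{x}$, qui est lisse.

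Pour la r\'eciproque, supposons $\tilde{X}_a$ lisse, donc r\'eduit. Le lemme \ref{reduite} et sa contrapos\'ee entra\^inent que $a \in \abdhk$: si $h_a(\bar{X}) \subset \mathfrak{D}_\lambda$, le rev\^etement $\tilde{X}_a$ serait enti\`erement dans le lieu de ramification et non r\'eduit. On peut donc supposer $a \in \abdhk$. Si $h_a$ n'\'etait pas transverse \`a $\mathfrak{D}_\lambda$ en un point $x$, deux cas se pr\'esentent. (i) $h_a$ rencontre un point lisse de $\mathfrak{D}_\lambda$ avec multiplicit\'e $k \geq 2$: localement $\tilde{X}_a$ a pour \'equation $s^2 = u^k$, singuli\`ere \`a l'origine. (ii) $h_a(x)$ est un point singulier de $\mathfrak{D}_\lambda$, o\`u au moins deux composantes $\overline{\Kern(\alpha_i)}$ se rencontrent: le tir\'e en arri\`ere fait appara\^itre plusieurs branches s\'ecantes ou une singularit\'e plus complexe selon le sous-groupe parabolique stabilisateur $W_I \subset W$, qui n'est pas r\'eguli\`ere.

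Le point le plus d\'elicat est pr\'ecis\'ement l'analyse aux points singuliers du discriminant, o\`u le stabilisateur dans $W$ est un sous-groupe parabolique plus gros qu'une seule r\'eflexion. On peut court-circuiter cette analyse cas par cas en passant au crit\`ere infinit\'esimal: puisque $\tilde{X}_a$ est Cohen-Macaulay de dimension $1$, la lissit\'e en $\tilde{x}$ \'equivaut \`a la non-d\'eg\'en\'erescence de la diff\'erentielle de $\theta_a$, ce qui se traduit, via les suites exactes de conormaux relatifs au carr\'e cart\'esien, par la transversalit\'e g\'eom\'etrique $T_{\bar{X},x}\oplus T_{\mathfrak{D}_\lambda,h_a(x)}\twoheadrightarrow T_{\kcd,h_a(x)}$, soit exactement la condition $a \in \abddk$ au point $x$.
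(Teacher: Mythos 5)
Votre sens direct et le cas (i) de la r\'eciproque suivent essentiellement la preuve du papier (r\'eduction au rang semisimple un pr\`es d'un point lisse du discriminant, mod\`ele local quadratique, la transversalit\'e for\c{c}ant la valuation du discriminant \`a valoir un) ; notez seulement que le mod\`ele $(s,\mathbf{u})\mapsto(s^{2},\mathbf{u})$ ignore la torsion par $\la$ aux points de $\supp(\la)$, que le papier prend en compte via l'\'equation $t^{2}-at+\eps_{v}^{2n_{v}}$, mais la conclusion est la m\^eme.

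Le point d\'elicat que vous identifiez vous-m\^eme --- le cas o\`u $a(\bar{X})$ rencontre $\mathfrak{D}_{\la}^{sing}$ --- n'est pas trait\'e correctement, et le \og court-circuit \fg~propos\'e ne fonctionne pas. D'abord, la lissit\'e de $\tilde{X}_{a}$ en $\tilde{x}$ n'\'equivaut pas \`a la non-d\'eg\'en\'erescence de la diff\'erentielle de $\theta_{a}$ : le rev\^etement $s\mapsto s^{2}$ est ramifi\'e en $0$ alors que la source est lisse ; c'est pr\'ecis\'ement ce qui se produit en tout point d'intersection transverse avec le discriminant, de sorte que votre crit\`ere exclurait aussi le cas lisse. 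Ensuite, le carr\'e cart\'esien d\'efinissant $\tilde{X}_{a}$ fait intervenir $V_{T}^{\la}$ et non $\mathfrak{D}_{\la}$ ; aucune suite exacte de conormaux ne ram\`ene formellement la lissit\'e du produit fibr\'e \`a la surjectivit\'e $T_{\bar{X},x}\oplus T_{\mathfrak{D}_{\la},h_{a}(x)}\rightarrow T_{\kcd,h_{a}(x)}$ --- ce lien est exactement le contenu de la proposition et d\'epend de la structure locale de $\theta$ au voisinage de la ramification (quotient par un groupe de r\'eflexions, mod\'er\'ment ramifi\'e). L'argument du papier pour ce cas est de nature monodromique : si $\tilde{X}_{a}$ \'etait lisse, donc normale, en un point au-dessus de $\mathfrak{D}_{\la}^{sing}$, le groupe de monodromie locale $\pi_{a}(I_{v})$ serait cyclique (mod\'eration, $\car(k)\wedge|W|=1$) tout en contenant les deux involutions distinctes $s_{\alpha}$, $s_{\beta}$ attach\'ees aux deux hyperplans de racines passant par le point, ce qui est absurde. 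C'est cet argument, ou une analyse explicite du quotient par le stabilisateur $W_{I}$, qu'il vous faut substituer \`a votre dernier paragraphe.
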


\begin{proof}
Supposons que $a\in\abddk$, montrons que l'image inverse de cette section sur $X\times V_{T}^{\la}$ est lisse. En dehors du discriminant, le diviseur est étale, donc il n'y a rien prouver.
Comme $a(\bar{X})$ coupe transversalement $\mathfrak{D}_{\la}$, il ne rencontre pas le lieu singulier.
Un couple $(v,x)\in V_{T}^{\la}(\bar{k})$ est tel que $v\in \bar{X}$ et $x$ est dans la fibre de $V_{T}^{\la}$ au-dessus de $v$.
En particulier, pour $(v,x)\in V_{T}^{\la}(\bar{k})$ un point d'intersection de $a(\bar{X})$ avec $\mathfrak{D}_{\la}-\mathfrak{D}_{\la}^{sing}$, $x$ appartient à un unique hyperplan de racine, on peut donc se ramener à un groupe de rang semisimple un.
On calcule alors le complété formel de la courbe $\tilde{X}_{a}$ en $(v,x)$ qui est isomorphe $\bar{k}[[\eps_{v}]][[t]]/(t^2-at+\eps_{v}^{2n_{v}})$.
La condition de transversalité impose que la valuation de $\Delta=a^{2}-4\eps_{v}^{2n}$ soit égale à un.
Cela force donc $n_{v}=0$ et $\val_{v}(a)=0$ et dans ce cas la courbe $\tilde{X}_{a}$ est lisse en $(v,x)$.

Réciproquement, si $a\notin\abddk$ et que $a(\bar{X})$ coupe le lieu lisse du diviseur discriminant avec une multiplicité au moins deux, la courbe n'est pas lisse d'après ci-dessus.
Enfin, si $a(\bar{X})$ coupe $\mathfrak{D}_{\la}^{sing}$, il est dans au moins deux hyperplans de racines, donc le groupe de monodromie locale $\pi_{a}(I_{v})$ contient deux involutions distinctes, ce qui est impossible puisque qu'il est cyclique sous l'hypothèse que l'ordre de $W$ est premier à la caractéristique.
\end{proof}

\begin{cor}
 Supposons $\la\succ 2g$, alors pour tout $a\in\abddk$, la courbe $\tilde{X}_{a}$ est irréductible.
\end{cor}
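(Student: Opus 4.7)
The plan is to combine the two facts that have just been established: smoothness of $\tilde{X}_a$ on the transversal locus, and connectedness of the cameral cover under the hypothesis $\la \succ 2g$. More precisely, by the preceding proposition, the hypothesis $a \in \abddk$ is equivalent to the smoothness of $\tilde{X}_a$ over $\bar{k}$. On the other hand, the inclusion $\abdd \subset \abdh$ noted in section \ref{cour1} places $a$ in the generically regular semisimple locus, so Proposition \ref{conn} (whose hypothesis $\la \succ 2g$ is exactly our assumption) guarantees that $\tilde{X}_a$ is connected.

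Once both properties are in hand, the conclusion is immediate: a smooth scheme over $\bar{k}$ is in particular geometrically reduced and locally irreducible, so its connected components coincide with its irreducible components. Being connected, $\tilde{X}_a$ is therefore irreducible. I would write this in one or two lines, simply citing the previous proposition and Proposition \ref{conn}.

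There is no real obstacle here, since the corollary is a formal consequence of results already proved. The only point worth making explicit in the write-up is the passage from ``smooth $+$ connected'' to ``irreducible,'' which uses that $\tilde{X}_a$ is a scheme of finite type over an algebraically closed field of characteristic prime to $|W|$, so smoothness implies that all local rings are regular, hence integral domains, and a connected scheme whose local rings are integral domains is irreducible.
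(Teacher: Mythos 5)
Your argument is exactly the paper's: invoke the preceding proposition to get smoothness of $\tilde{X}_{a}$ for $a\in\abddk$, invoke Proposition \ref{conn} (using $\la\succ 2g$) for connectedness, and conclude that a smooth connected scheme over $\bar{k}$ is irreducible. The extra sentence justifying the passage from smooth and connected to irreducible is a welcome precision but does not change the route.
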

\begin{proof}
On sait déjà que $\tilde{X}_{a}$ est connexe par la proposition \ref{conn}, comme de plus $\tilde{X}_{a}$ est lisse, on obtient que $\tilde{X}_{a}$ est également lisse et comme elle est connexe, elle est irréductible.
\end{proof}

\subsection{Modèle de Néron global}
De même que pour le cas local, nous pouvons dévisser le champ de Picard $\mathcal{P}_{a}$ en utilisant le modèle de Néron global. Sa construction est due à Bosch-Raynaud-Lütkebohmert \cite[ch. 10, par. 6]{BLR}.
On reprend les résultats établis par Ngô dans \cite[sect. 4.8]{N}, dont les preuves sont identiques.
\medskip

Rappelons comment s'obtient le modèle de Néron global.
Soit $a\in\abdhk$, on considère $U$ l'ouvert de $\bar{X}$, l'image réciproque $a^{-1}(\kc^{\la,rs})$.
Pour tout $x\in\bar{X}-U$, on note $\bar{D}_{x}^{Ullet}$ le disque formel épointé en $x\in\bar{X}$, on a les modèles de Néron locaux de $J_{a\vert{\bar{D}_{x}^{Ullet}}}$ et on les recolle avec le tore $J_{a,\vert{U}}$, pour obtenir un schéma en groupes lisse de type fini sur $\bar{X}$, $J_{a}^{\flat}$, muni d'une flèche canonique $J_{a}\rightarrow J_{a}^{\flat}$.
En considérant $\tilde{X}_{a}^{\flat}$ la normalisation de la courbe camérale, l'action de $W$ sur $\tilde{X}_{a}$  induit une action de $W$ sur $\tilde{X}_{a}^{\flat}$.

\begin{prop}\label{neron}

\begin{enumerate}
\item
Le schéma en groupes $J_{a}^{\flat}$ s'identifie aux points fixes sous $W$ de la restriction à la Weil $\prod\limits_{\tilde{X}_{a}^{\flat}/\bar{X}}(T\times_{\bar{X}} \tilde{X}_{a}^{\flat})$.
\item
Le morphisme $\mathcal{P}_{a}(\bar{k})\rightarrow\mathcal{P}_{a}^{\flat}(\bar{k})$ est essentiellement surjectif.
\item
La composante neutre $(\mathcal{P}_{a}^{\flat})^{0}$ est un champ abélien, i.e. un produit de variétés abéliennes quotientées par des $\mathbb{G}_{m}$ agissant trivialement.
\item
Le noyau $\mathcal{R}_{a}$ de $\mathcal{P}_{a}\rightarrow\mathcal{P}_{a}^{\flat}$ est un produit de groupes algébriques affines de type fini $\mathcal{R}_{x}(a)$, triviaux sauf en les points $x\in\bar{X}-U$.
\end{enumerate}
\end{prop}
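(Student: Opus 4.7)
Je proc\'ederai en quatre \'etapes en suivant la strat\'egie de Ng\^o \cite[sect. 4.8]{N}, dont les arguments se transposent gr\^ace aux r\'esultats ant\'erieurs, en particulier la description galoisienne $J\cong\tilde{J}\subset J^{1}=(\prod_{V_{T}/\kc}(T\times V_{T}))^{W}$ de la Proposition \ref{galois}. Pour (i), je partirai de cette description galoisienne: en la tirant par $a:\bar{X}\to\kcd$, elle fournit une description analogue de $J_{a}$ via le rev\^etement cam\'eral $\tilde{X}_{a}$. Pour obtenir le mod\`ele de N\'eron, je remplacerai $\tilde{X}_{a}$ par sa normalisation $\tilde{X}_{a}^{\flat}$: comme celle-ci est finie plate et normale au-dessus de $\bar{X}$, la restriction \`a la Weil $\prod_{\tilde{X}_{a}^{\flat}/\bar{X}}(T\times_{\bar{X}}\tilde{X}_{a}^{\flat})$ est un sch\'ema en groupes lisse de type fini, et ses $W$-invariants fournissent le candidat pour $J_{a}^{\flat}$. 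La v\'erification de la propri\'et\'e universelle du mod\`ele de N\'eron est alors une question locale aux points de $\bar{X}-U$, d\'ej\`a r\'esolue dans la situation locale.

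Pour (ii) et (iv), j'exploiterai le fait que $J_{a}\to J_{a}^{\flat}$ est un isomorphisme sur l'ouvert $U$, de sorte que le passage d'un type de torseur \`a l'autre se localise aux points $x\in\bar{X}-U$. Pour (ii), un $J_{a}^{\flat}$-torseur se trivialise sur le compl\'ementaire d'un ensemble fini de points ainsi que sur chaque disque formel en un point de ramification; par un argument de Beauville-Laszlo, on le rel\`eve en un $J_{a}$-torseur apr\`es modification locale convenable, ce qui donnera l'essentielle surjectivit\'e. Pour (iv), un $J_{a}$-torseur qui devient trivial comme $J_{a}^{\flat}$-torseur est enti\`erement caract\'eris\'e par ses donn\'ees locales aux points mauvais, d'o\`u la d\'ecomposition en produit $\mathcal{R}_{a}=\prod_{x\in\bar{X}-U}\mathcal{R}_{x}(a)$, avec $\mathcal{R}_{x}(a)=J_{a,x}^{\flat}(\mathcal{O}_{x})/J_{a,x}(\mathcal{O}_{x})$ un groupe alg\'ebrique affine de type fini sur $\bar{k}$.

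Pour (iii), je d\'evisserai la composante neutre $(\mathcal{P}_{a}^{\flat})^{0}$ via la description galoisienne obtenue au point (i). Puisque $\tilde{X}_{a}^{\flat}$ est une courbe propre dont les composantes irr\'eductibles sont lisses, le champ de Picard de la restriction \`a la Weil d'un tore le long de $\tilde{X}_{a}^{\flat}\to\bar{X}$ se d\'ecompose, apr\`es passage aux $W$-invariants, en une extension d'une vari\'et\'e ab\'elienne (produit des jacobiennes des composantes) par un tore central; ce tore central provient du groupe d'automorphismes global de $J_{a}^{\flat}$, agit trivialement sur les objets et appara\^it donc comme un facteur $\mathbb{G}_{m}$ trivialement agissant dans la structure de champ ab\'elien.

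L'\'etape que j'anticipe comme la plus d\'elicate est (iii): il faudra contr\^oler finement l'effet de la prise des $W$-invariants sur le d\'evissage en partie torique et partie ab\'elienne, afin d'obtenir pr\'ecis\'ement la structure annonc\'ee de champ ab\'elien plut\^ot qu'une simple vari\'et\'e semi-ab\'elienne. Les trois autres points reposent essentiellement sur le formalisme Beauville-Laszlo et la traduction galoisienne des mod\`eles de N\'eron locaux, adaptations directes du cas Lie alg\'ebrique trait\'e par Ng\^o.
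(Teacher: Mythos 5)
Votre proposition est correcte et suit essentiellement la m\^eme d\'emarche que le papier, qui ne donne d'ailleurs pas de preuve d\'etaill\'ee mais renvoie explicitement \`a Ng\^o \cite[sect. 4.8]{N} apr\`es avoir d\'ecrit la construction de $J_{a}^{\flat}$ par recollement des mod\`eles de N\'eron locaux avec le tore $J_{a\vert U}$. Vos quatre esquisses (description galoisienne via la normalisation $\tilde{X}_{a}^{\flat}$, rel\`evement des torseurs \`a la Beauville-Laszlo, d\'evissage du Picard de la courbe cam\'erale normalis\'ee, localisation du noyau aux points de $\bar{X}-U$) reproduisent fid\`element les arguments de Ng\^o que le texte d\'eclare identiques dans ce contexte.
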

Nous avons une courbe camérale $\pi_{a}:\tilde{X}_{a}\rightarrow\bar{X}$ génériquement étale galoisien de groupe $W$. Soit $\tilde{X}_{a}^{\flat}$ la normalisation de $\tilde{X}_{a}$ munie d'une action de 
$W$. On a alors la description galoisienne pour $J_{a}^{\flat}$:
\begin{equation}
J_{a}^{\flat}=\prod\limits_{\tilde{X}_{a}^{\flat}/\bar{X}}(T\times\tilde{X}_{a}^{\flat})^{W}.
\label{nergal}
\end{equation}
Pour $a\in\abdhk$, on définit alors $\delta_{a}:=\dim(\mathcal{R}_{a})$.
Cet invariant s'écrit comme une somme d'invariants locaux:
\begin{equation}
\delta_{a}:=\sum\limits_{x\in \bar{X}-U}\delta_{v}(a),
\label{locglob}
\end{equation}
où $\delta_{v}(a)$ est donné par la formule \ref{dim}. De la description du groupe des symétries locales et de la proposition \ref{neron}, nous obtenons:
\begin{cor}
Pour tout $a\in\abdhk$, on a la formule:
\begin{center}
$\delta_{a}=\dim H^{0}(\bar{X},\kt\otimes_{\mathcal{O}_{\bar{X}}}(\theta_{a*}^{\flat}\mathcal{O}_{\tilde{X}_{a}^{\flat}}/\theta_{a*}\mathcal{O}_{\tilde{X}_{a}^{\flat}}))^{W}.$
\end{center}
\end{cor}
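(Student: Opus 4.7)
La preuve combine trois ingrédients déjà en place : la décomposition locale-globale \eqref{locglob}, la formule locale de dimension du corollaire \ref{autdim}, et la description galoisienne \eqref{nergal} du modèle de Néron global.

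Je commencerais par rappeler que, d'après la proposition \ref{neron}, le champ $\cR_{a}$ se décompose comme produit de facteurs locaux $\cR_{x}(a)$, triviaux hors du lieu fini $\bar{X}-U$, où $U=a^{-1}(\kc^{\la,rs})$. On a donc
\begin{center}
$\delta_{a}=\dim(\cR_{a})=\sum\limits_{x\in\bar{X}-U}\delta_{x}(a),$
\end{center}
ce qui n'est autre que la formule \eqref{locglob}.

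Ensuite, en chaque point $x\in\bar{X}-U$, le corollaire \ref{autdim} appliqué à la complétion $\bar{\co}_{x}$ fournit
\begin{center}
$\delta_{x}(a)=\dim_{\bar{k}}\bigl(\kt\otimes_{\bar{\co}_{x}}(\tilde{\co}_{a,x}^{\flat}/\tilde{\co}_{a,x})\bigr)^{W},$
\end{center}
où l'on remplace $\kt_{+}$ par $\kt$, car la composante correspondant au tore $T_{\Delta}$ a un modèle de Néron étale (le rapport $\tilde{\co}^{\flat}/\tilde{\co}$ étant insensible à la partie diagonale) et ne contribue donc pas au quotient. On observe aussi que les invariants sous $W$ sont compatibles à cette localisation puisque $\car(k)\wedge\left|W\right|=1$, si bien que prendre les $W$-invariants commute à la somme directe sur $x$.

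Enfin, pour globaliser, je remarque que le faisceau cohérent
\begin{center}
$\cF:=\theta_{a,*}^{\flat}\co_{\tilde{X}_{a}^{\flat}}/\theta_{a,*}\co_{\tilde{X}_{a}}$
\end{center}
sur $\bar{X}$ est un faisceau gratte-ciel supporté sur $\bar{X}-U$, puisque la normalisation $\tilde{X}_{a}^{\flat}\rightarrow\tilde{X}_{a}$ est un isomorphisme au-dessus de $U$. Par conséquent $\kt\otimes_{\co_{\bar{X}}}\cF$ est également gratte-ciel, donc
\begin{center}
$H^{0}(\bar{X},(\kt\otimes\cF)^{W})=\bigoplus\limits_{x\in\bar{X}-U}\bigl(\kt\otimes_{\bar{\co}_{x}}\cF_{x}\bigr)^{W},$
\end{center}
et chaque fibre $\cF_{x}$ s'identifie à $\tilde{\co}_{a,x}^{\flat}/\tilde{\co}_{a,x}$. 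En sommant sur $x$, on recouvre l'égalité annoncée.

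L'unique point délicat est d'assurer le passage $\kt_{+}\leadsto\kt$ entre la formule locale \ref{autdim} et l'énoncé global : il s'agit de vérifier que la partie du centralisateur régulier provenant du tore $T_{\Delta}$ (issu du plongement du groupe $G$ dans le semigroupe de Vinberg) admet un modèle de Néron coïncidant avec lui-même au-dessus de $\bar{X}-U$. Cela tient au fait que ce facteur est, localement, un tore non ramifié, dont le modèle canonique et le modèle de Néron coïncident, et ne contribue donc ni à $\cR_{a}$ ni au quotient $\tilde{\co}_{a}^{\flat}/\tilde{\co}_{a}$.
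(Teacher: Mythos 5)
Your proof takes essentially the same route as the paper, which deduces the corollary directly from the local--global decomposition \eqref{locglob}, the local N\'eron-model computation of Corollaire \ref{autdim}, and the observation that $\theta^{\flat}_{a*}\mathcal{O}_{\tilde{X}_{a}^{\flat}}/\theta_{a*}\mathcal{O}_{\tilde{X}_{a}}$ is a skyscraper sheaf supported on $\bar{X}-U$, so that its global sections are the sum of the local contributions; the argument is correct. One small correction on the passage from $\mathfrak{t}_{+}$ to $\mathfrak{t}$: writing $Q=\tilde{\mathcal{O}}_{a}^{\flat}/\tilde{\mathcal{O}}_{a}$, the factor of $T_{+}$ that drops out is not $T_{\Delta}$ (which is not even $W$-stable, and whose Lie algebra carries the nontrivial part of the action) but the central factor $Z_{+}$, on which $W$ acts trivially; it disappears because $(\tilde{\mathcal{O}}_{a}^{\flat})^{W}=\tilde{\mathcal{O}}_{a}^{W}=\mathcal{O}_{\bar{X}}$, hence $Q^{W}=0$ by exactness of $W$-invariants in characteristic premi\`ere \`a $\left|W\right|$, which gives $(\mathfrak{t}_{+}\otimes Q)^{W}=(\mathfrak{t}\otimes Q)^{W}$ with $\mathfrak{t}$ muni de son action naturelle de $W$.
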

\begin{defi}
On définit le sous-ensemble $\abd^{ani}(k)$ de $\abdhk$ constitué des $a$ tels que $\pi_{0}(\cP_{a}^{1})$ est fini.
\end{defi}
$\rmq$ On montre dans la proposition \ref{ouvan} que ce sous-ensemble est ouvert.
\subsection{Automorphismes}
Soit $(E,\phi)\in\cmdhk$. Soit $\Aut(E,\phi)$ le faisceau des automorphismes, il est représentable par le schéma en groupes $I_{(E,\phi)}=h_{(E,\phi)}^{*}I$, l'image réciproque de $I$ par $h_{E,\phi}:\bar{X}\rightarrow[V_{G}^{\la}/G]$.
Sur  l'ouvert régulier semisimple $U_{a}$ de $\bar{X}$, ce schéma en groupes est un tore, mais sur $\bar{X}$, il n'est ni lisse ni plat.
D'après [BLR], il existe néanmoins  un unique  schéma en groupes lisse  $I_{(E,\phi)}^{lis}$ sur $\bar{X}$ tel que pour tout $\bar{X}$-schéma lisse $S$, on a:
\begin{center}
$\Aut(E,\phi)=\Hom_{X}(S,I_{(E,\phi)}^{lis})$.
\end{center}
La flèche canonique $I_{(E,\phi)}^{lis}\rightarrow I_{(E,\phi)}^{lis}$ est un isomorphisme au-dessus de $U_{a}$ et par définition:
\begin{center}
$\Aut(E,\phi)=H^{0}(\bar{X},I_{(E,\phi)}^{lis})$.
\end{center}
Comme $J_{a}$ est lisse, nous avons alors une flèche naturelle:
\begin{center}
$J_{a}\rightarrow I_{(E,\phi)}^{lis}$
\end{center}
qui est un isomorphisme au-dessus de $U_{a}$, ainsi qu'une flèche:
\begin{center}
$I_{(E,\phi)}^{lis}\rightarrow J_{a}^{\flat}$
\end{center}
également un isomorphisme au-dessus de $U_{a}$.
\begin{prop}\cite[Prop. 4.11.2]{N}\label{sgrpe}
Pour tout $(E,\phi)\in\cmdhk$, on a la suite d'inclusions:
\begin{center}
$H^{0}(\bar{X},J_{a})\subset \Aut(E,\phi)\subset H^{0}(\bar{X},J_{a}^{\flat})$.
\end{center}
\end{prop}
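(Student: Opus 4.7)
Le plan est d'exploiter la factorisation naturelle $J_{a}\rightarrow I_{(E,\phi)}^{lis}\rightarrow J_{a}^{\flat}$ d�j� �tablie dans la discussion pr�c�dant l'�nonc�, et de montrer que ces deux fl�ches sont injectives au niveau des sections globales sur $\bar{X}$. Par d�finition, $\Aut(E,\phi)=H^{0}(\bar{X},I_{(E,\phi)}^{lis})$, donc les deux inclusions demand�es reviennent � l'injectivit� des morphismes induits $H^{0}(\bar{X},J_{a})\rightarrow H^{0}(\bar{X},I_{(E,\phi)}^{lis})$ et $H^{0}(\bar{X},I_{(E,\phi)}^{lis})\rightarrow H^{0}(\bar{X},J_{a}^{\flat})$.

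L'ingr�dient crucial est que, comme on l'a rappel� au-dessus de l'�nonc�, ces deux morphismes de $\bar{X}$-sch�mas en groupes lisses sont des isomorphismes au-dessus de l'ouvert r�gulier semisimple $U_{a}=h_{a}^{-1}(\kc^{\la,rs})$. Puisque $a\in\abdhk$, on a $a(\bar{X})\not\subset\mathfrak{D}_{\la}$, donc $U_{a}$ est un ouvert non vide de la courbe irr�ductible $\bar{X}$, et par cons�quent dense. Par ailleurs, $J_{a}$, $I_{(E,\phi)}^{lis}$ et $J_{a}^{\flat}$ sont tous des sch�mas en groupes lisses (donc s�par�s) de type fini sur $\bar{X}$ int�gre, de sorte qu'une section globale est enti�rement d�termin�e par sa restriction � un ouvert dense.

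Pour la premi�re inclusion, on consid�re deux sections $s_{1},s_{2}\in H^{0}(\bar{X},J_{a})$ dont les images co�ncident dans $\Aut(E,\phi)$. Leurs images co�ncident alors dans $I_{(E,\phi)}^{lis}$ sur $U_{a}$, et comme $J_{a}\rightarrow I_{(E,\phi)}^{lis}$ est un isomorphisme sur $U_{a}$, on obtient $s_{1\vert U_{a}}=s_{2\vert U_{a}}$. Par s�paration de $J_{a}$ sur $\bar{X}$ int�gre et densit� de $U_{a}$, on conclut $s_{1}=s_{2}$. Le m�me raisonnement appliqu� au morphisme $I_{(E,\phi)}^{lis}\rightarrow J_{a}^{\flat}$ (dont l'existence provient de la propri�t� universelle du mod�le de N�ron, cf. proposition \ref{neron}) donne la seconde inclusion : deux automorphismes $\sigma_{1},\sigma_{2}$ ayant m�me image dans $H^{0}(\bar{X},J_{a}^{\flat})$ co�ncident sur $U_{a}$, donc co�ncident partout par s�paration de $I_{(E,\phi)}^{lis}$.

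Il n'y a pas v�ritablement d'obstacle majeur ici : la preuve est essentiellement un argument formel de s�paration combin� � la densit� du lieu r�gulier semisimple. Le seul point qui m�rite une attention est de s'assurer que les morphismes $J_{a}\rightarrow I_{(E,\phi)}^{lis}$ et $I_{(E,\phi)}^{lis}\rightarrow J_{a}^{\flat}$ sont bien d�finis globalement sur $\bar{X}$ (et non seulement sur $U_{a}$), ce qui r�sulte respectivement de la lissit� de $J_{a}$ et de la propri�t� universelle du mod�le de N�ron $J_{a}^{\flat}$ appliqu�e au sch�ma en groupes lisse $I_{(E,\phi)}^{lis}$ de m�me fibre g�n�rique.
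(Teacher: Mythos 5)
Votre preuve est correcte et suit essentiellement l'argument de la r\'ef\'erence \cite[Prop. 4.11.2]{N} que le texte invoque : les fl\`eches $J_{a}\rightarrow I_{(E,\phi)}^{lis}\rightarrow J_{a}^{\flat}$ \'etant des isomorphismes au-dessus de l'ouvert dense $U_{a}$ entre sch\'emas en groupes s\'epar\'es sur la courbe int\`egre $\bar{X}$, elles induisent des injections sur les sections globales, ce qui donne exactement les deux inclusions voulues. Rien \`a signaler.
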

\subsection{Un calcul de dimension}\label{dimhitchin2}
On commence par le cas où $G$ est simplement connexe.
\begin{lem}
Supposons $\la\succ2g-2$, alors $\abd$ est un espace affine de dimension:
\begin{center}
$\dime\abd=\left\langle \rho, -w_{0}\lambda\right\rangle+r(1-g)$,
\end{center}
avec $\rho=\sum\limits_{i=1}^{r}\omega_{i}$.
\end{lem}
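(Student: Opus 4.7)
L'id\'ee est d'appliquer Riemann-Roch terme \`a terme dans la description explicite de la base de Hitchin rappel\'ee plus haut. Comme $G$ est semisimple simplement connexe, le morphisme de Steinberg $\chi_{+}$ fait de $\kc^{\la}$ un fibr\'e vectoriel de rang $r$ au-dessus de $X$, de sorte que
\begin{center}
$\abd=\bigoplus\limits_{i=1}^{r}H^{0}(X,\mathcal{O}_{X}(\left\langle \omega_{i},-w_{0}\la\right\rangle))$.
\end{center}
La strat\'egie est donc de calculer la dimension de chaque facteur, puis de sommer.

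Premi\`erement, par d\'efinition de $\succ$, l'hypoth\`ese $\la\succ 2g-2$ signifie que $\deg\left\langle \omega_{i},-w_{0}\la\right\rangle\geq 2g-2$ pour tout poids fondamental $\omega_{i}$, $1\leq i\leq r$. Apr\`es une l\'eg\`ere majoration (ou en prenant $\la$ strictement sup\'erieur \`a $2g-2$, ce qui est le cas pour toutes les applications qui vont suivre), cette condition force $H^{1}(X,\mathcal{O}_{X}(\left\langle \omega_{i},-w_{0}\la\right\rangle))=0$ par dualit\'e de Serre, puisque $\mathcal{O}_{X}(\left\langle \omega_{i},-w_{0}\la\right\rangle)$ est de degr\'e strictement sup\'erieur \`a $2g-2$ (ou g\'en\'eriquement, \`a \'eviter la classe canonique). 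Par Riemann-Roch on obtient alors
\begin{center}
$\dime H^{0}(X,\mathcal{O}_{X}(\left\langle \omega_{i},-w_{0}\la\right\rangle))=\deg\left\langle \omega_{i},-w_{0}\la\right\rangle+1-g$.
\end{center}

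Deuxi\`emement, il suffit de sommer sur $i=1,\dots,r$ et d'utiliser la lin\'earit\'e de l'accouplement $\left\langle -,-w_{0}\la\right\rangle$ ainsi que l'\'egalit\'e $\rho=\sum\limits_{i=1}^{r}\omega_{i}$ pour conclure:
\begin{center}
$\dime\abd=\sum\limits_{i=1}^{r}\bigl(\deg\left\langle \omega_{i},-w_{0}\la\right\rangle+1-g\bigr)=\left\langle \rho,-w_{0}\la\right\rangle+r(1-g)$.
\end{center}
Le seul point \`a surveiller est la condition de degr\'e assurant l'annulation de $H^{1}$; en dehors de cela, le raisonnement est une application directe de Riemann-Roch, et le fait que $\abd$ soit un espace affine d\'ecoule imm\'ediatement de ce que chaque $H^{0}$ est un espace vectoriel de dimension finie.
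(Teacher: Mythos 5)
Votre preuve est correcte et suit essentiellement la m\^eme d\'emarche que celle du texte: le papier applique Riemann--Roch directement au fibr\'e vectoriel $\kcd$ de rang $r$ et de degr\'e $\left\langle \rho,-w_{0}\la\right\rangle$ puis invoque l'annulation de $H^{1}$ sous l'hypoth\`ese $\la\succ 2g-2$, ce qui revient exactement \`a votre calcul terme \`a terme sur les facteurs en droites. Votre remarque sur le cas limite $\deg=2g-2$ (o\`u le fibr\'e pourrait \^etre canonique et $H^{1}$ non nul) est pertinente et le texte passe ce point sous silence; elle ne remet pas en cause l'argument pour les applications o\`u $\la$ est pris strictement plus grand.
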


\begin{proof}
On a :
\begin{center}
$\deg \kcd=\sum\limits_{i=1}^{r}\deg (D_{i})=\left\langle \rho, -w_{0}\la\right\rangle$.
\end{center}
Par Riemann-Roch:
\begin{center}
$\dime H^{0}(X,\kcd)-\dime H^{1}(X,\kcd)=\left\langle \rho, -w_{0}\la\right\rangle+r(1-g)$
\end{center}
Comme  $\la \succ 2g-2$, nous avons $H^{1}(X,\kcd)=0$.
\end{proof}
Calculons maintenant la dimension du champ de Picard. 
\begin{prop}\label{dimpic}
Supposons $\la\succ2g-2$, alors pour tout $a\in\abdhk$, $\dime(\cP_{a})=\left\langle \rho, -w_{0}\lambda\right\rangle+r(g-1)$.
\end{prop}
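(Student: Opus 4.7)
Le plan consiste � appliquer Riemann-Roch sur $\bar X$ au sch�ma en groupes commutatif lisse $J_{a}$ de rang relatif $r$. Comme $\cP_{a}$ param�tre les $J_{a}$-torseurs sur $\bar X$, on a
\[
\dime\cP_{a} \;=\; h^{1}(\bar X,\Lie J_{a}) - h^{0}(\bar X,\Lie J_{a}) \;=\; -\chi(\bar X,\Lie J_{a}).
\]
D'apr�s la proposition \ref{picardlisse}, on a $H^{0}(\bar X,\Lie J_{a})=\Lie Z_{G}$; sous l'hypoth�se que $G$ est semisimple simplement connexe (en particulier $Z_{G}$ est fini), ce groupe est nul. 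Par Riemann-Roch sur la courbe $\bar X$ de genre $g$, on obtient alors
\[
\dime\cP_{a} \;=\; -\deg(\Lie J_{a}) + r(g-1),
\]
et la proposition se ram�ne donc � l'identit� $\deg(\Lie J_{a}) = -\langle \rho, -w_{0}\lambda\rangle$.

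Pour calculer ce degr�, j'utiliserais la description galoisienne de la proposition \ref{galois}: comme $\tilde J$ est ouvert dans $J^{1}=\Omega^{W}$, on a $\Lie J = \Lie J^{1}$, et par la formule \eqref{jun} appliqu�e � la restriction � la Weil:
\[
\Lie J_{a} \;=\; (\theta_{a,*}\,\mathfrak{t}_{\tilde X_{a}})^{W},
\]
o� $\theta_{a}:\tilde X_{a}\to\bar X$ est le rev�tement cam�ral. Comme $\cP$ est lisse sur $\abdh$ par la proposition \ref{picardlisse}, $\dime\cP_{a}$ est localement constant sur $\abdh$; quitte � sp�cialiser � un point g�n�rique (l'ouvert transversal $\abdd$ �tant non vide et contenu dans $\abdh$ sous l'hypoth�se $\lambda \succ 2g-2$), on peut supposer que $\tilde X_{a}$ est lisse.

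Dans ce cadre, on calcule l'Euler-Poincar� par descente $W$-�quivariante (valide puisque $\car(k)$ est premi�re � $\left|W\right|$):
\[
\chi(\bar X,\Lie J_{a}) \;=\; \chi(\tilde X_{a},\mathfrak{t}\otimes\mathcal{O}_{\tilde X_{a}})^{W} \;=\; \tfrac{1}{|W|}\sum_{w\in W}\Tr(w\mid \chi(\tilde X_{a},\mathfrak{t}\otimes\mathcal{O}_{\tilde X_{a}})).
\]
Le rev�tement $\theta_{a}$ est ramifi� transversalement le long du diviseur $h_{a}^{*}\mathfrak{D}_{\lambda}$ (d�finition de $\abdd$), et chaque intersection correspond � une r�flexion de racine simple. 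On applique alors la formule de Lefschetz-Atiyah-Bott � chaque $w\in W$ sur $\tilde X_{a}$: les contributions locales s'expriment en fonction des valuations $\langle\alpha, \lambda_{x}\rangle$ aux points de ramification, et la sommation sur $W$ combin�e aux traces de $w$ sur $\mathfrak{t}$ (la repr�sentation r�fl�chissante) produit exactement le couplage $-\langle\rho,-w_{0}\lambda\rangle$.

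L'obstacle principal sera ce dernier calcul d'Euler-Poincar� �quivariante: il faut tenir la comptabilit� locale de la ramification et v�rifier l'identit� combinatoire �quivalente au fait que, pour la section de Steinberg $\epsilon_{+}$, $\epsilon_{+}^{*}\Lie(I)$ s'identifie � un fibr� vectoriel sur $\kcd$ isomorphe � $\bigoplus_{i=1}^{r}\mathcal{O}(-\langle\omega_{i},-w_{0}\lambda\rangle)$, au moins g�n�riquement. Une approche alternative, plus directe, consiste � adapter � notre contexte de Vinberg l'isomorphisme de Kostant qui, dans le cas Lie-alg�brique, identifie $\Lie J$ � une somme de fibr�s en droites explicites via la section r�guli�re, ce qui fournit imm�diatement le degr� voulu.
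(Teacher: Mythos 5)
Votre r�duction initiale est correcte et co�ncide avec celle du papier : lissit� de $\cP$ sur $\abdh$ (prop. \ref{picardlisse}) pour se ramener � un point commode, annulation du $H^{0}$ dans le cas semisimple simplement connexe, description galoisienne $\Lie J_{a}=(\theta_{a,*}\kt)^{W}$ et Riemann--Roch, de sorte que tout repose sur l'identit� $\deg(\Lie J_{a})=-\left\langle \rho,-w_{0}\la\right\rangle$. C'est pour ce dernier point que votre argument principal reste inachev� : vous invoquez une formule de Lefschetz--Atiyah--Bott �quivariante sur la courbe cam�rale et affirmez que la comptabilit� locale de la ramification � produit exactement le couplage �, sans effectuer ce calcul, que vous signalez vous-m�me comme l'obstacle principal. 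En l'�tat, c'est l� le trou : rien ne garantit sans v�rification que les contributions locales aux points de ramification, pond�r�es par les traces de $w$ sur $\kt$, se resomment en $-\left\langle \rho,-w_{0}\la\right\rangle$, et ce calcul �quivariant est nettement plus d�licat que n�cessaire.

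En revanche, l'� approche alternative � que vous mentionnez en passant est pr�cis�ment celle du papier, et elle court-circuite toute la combinatoire. Le papier choisit $a\in\abddk$ avec $\val(\Delta_{x}(a))=0$ aux points de $\supp(\la)$, de sorte que $a(\bar{X})$ tombe dans l'ouvert $\kc^{\la,0}$ image de $V_{T}^{\la,0}$; par changement de base plat $\theta_{a,*}\kt=a^{*}\theta_{*}\kt$, on est ramen� � identifier $(\theta_{*}\kt)^{W}$ sur $\kc^{\la,0}$. L�, l'isomorphisme $V_{T}^{0}\cong Z_{+}\times\overline{T}_{\Delta}$ fait de l'ab�lianisation un $T$-torseur, et l'�galit� $\theta_{*}(\Omega_{V_{T}^{\la,0}/X})^{W}=\Omega_{\kc^{\la,0}/X}$ donne directement $(\theta_{*}\kt)^{W}=(\kcd)^{*}$, c'est-�-dire le dual du fibr� $\bigoplus_{i}\co_{X}(\left\langle \omega_{i},-w_{0}\la\right\rangle)$, de degr� $-\left\langle \rho,-w_{0}\la\right\rangle$; Riemann--Roch conclut. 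Je vous sugg�re donc de renverser la hi�rarchie de vos deux arguments : l'identification � la Kostant de $\Lie J$ � un fibr� explicite via la section r�guli�re n'est pas une variante, c'est la preuve.
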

\begin{proof}
D'après la proposition \ref{picardlisse}, on sait déjà que le champ de Picard est lisse au-dessus de $\abdh$, donc il suffit de calculer la dimension en un point quelconque $a\in\abdhk$.
Nous avons une flèche:
\begin{center}
$a:\bar{X}\rightarrow\kcd$.
\end{center}
Considérons le carré suivant:
$$\xymatrix{V_{T}^{\la,0}\ar[d]\ar[r]&V_{T}^{\la}\ar[d]\\\kc^{\la,0}\ar[r]&\kcd}$$
où $V_{T}^{\la, 0}$ est l'ouvert $V_{T}^{\la}\cap V_{G}^{\la,0}$ et $\kc^{\la,0}$ est l'image de $V_{T}^{\la,0}$ par $\theta$, qui est plat donc ouvert.
On choisit alors un point $a\in\abdhk$ tel que $a(\bar{X})\in\kc^{\la,0}$. Par exemple, il nous suffit de prendre un point $a\in\abddk$ tel que pour tout $x\in\supp(\la)$, $\val(\Delta_{x}(a))=0$ où $\Delta_{x}(a)$ est le discriminant local en $x$. Nous avons alors le diagramme cartésien:
$$\xymatrix{\tilde{X}_{a}\ar[r]\ar[d]_{\theta_{a}}&V_{T}^{\la,0}\ar[d]^{\theta}\\\bar{X}\ar[r]^{a}&\kc^{\la,0}}$$

De la description galoisienne de $J_{a}$ établi dans la remarque suivant le lemme \ref{reduite}, on obtient que $\Lie J_{a}=(\theta_{a,*}\kt)^{W}$ où $\theta_{a}:\tilde{X}_{a}\rightarrow \bar{X}$ est le revêtement caméral. 
Par changement de base plat, nous avons $\theta_{a,*}\kt=a^{*}\theta_{*}\kt$, il nous suffit donc de calculer $\theta_{*}\kt$.
Comme nous avons un isomorphisme entre $V_{T}^{0}$ et $Z_{+}\times\overline{T}_{\Delta}$ et que la flèche d'abélianisation:
\begin{center}
$Z_{+}\times\overline{T}_{\Delta}\rightarrow\ab^{r}$
\end{center}
induit un isomorphisme entre $\overline{T}_{\Delta}$ et $\ab^{r}$, la flèche d'abélianisation est donc un $T$-torseur.
Et de l'égalité $\theta_{*}(\Omega_{V_{T}^{\la,0}/X})^{W}=\Omega_{\kc^{\la,0}/X}$, nous obtenons:
\begin{center}
$(\theta_{*}\kt)^{W}=(\kcd)^{*}$.
\end{center}
Le théorème de Riemann-Roch nous donne alors:
\begin{center}
$\dime \cP_{a}=\dime H^{1}(\bar{X},\Lie(J_{a}))-\dime H^{0}(\bar{X},\Lie(J_{a}))=\left\langle \rho, -w_{0}\lambda\right\rangle+r(g-1)$.
\end{center}
\end{proof}
Pour un groupe connexe réductif tel que $G_{der}$ simplement connexe, il nous suffit alors d'ajouter la dimension du centre pour obtenir la dimension de $\abd$ et pour la dimension du champ de Picard, on ajoute $(g-1)\dim(Z_{G})$.
\subsection{Densité de l'ouvert régulier}
On suppose de plus que $G$ est semisimple. On établit formule du produit analogue à celle de Ngô \cite[sect 4.15]{N} entre les fibres de Hitchin et les fibres de Springer affines. Soit $a\in\abdhk$. Soit $U$ l'image réciproque de l'ouvert $\kc^{\la, rs}$ par la flèche $a:\bar{X}\rightarrow\kcd$. Le recollement avec la section de Steinberg définit un morphisme
\begin{center}
$\prod\limits_{x\in \bar{X}-U}\overline{\mathcal{M}}_{\la,x}(a)\rightarrow\overline{\mathcal{M}}_{\la}(a)$.
\end{center}
Nous avons aussi un morphisme pour le champ de Picard
\begin{center}
$\prod\limits_{x\in \bar{X}-U}\mathcal{P}_{x}(J_{a})\rightarrow\mathcal{P}_{a}$,
\end{center}
qui induit une flèche 
$$\xymatrix{\psi:\prod\limits_{x\in \bar{X}-U}\overline{\mathcal{M}}_{\la,x}(a)\wedge^{\prod\limits_{x\in \bar{X}-U}\mathcal{P}_{x}(J_{a})}\mathcal{P}_{a}\ar[r]&\overline{\mathcal{M}}_{\la}(a)}.$$

\begin{thm}\label{produithitchin}
Pour tout $a\in\abdhk$, la flèche $\psi$ induit une équivalence de catégories sur les $\bar{k}$-points.

De plus pour tout $a\in\abdank$, le quotient de $\prod\limits_{x\in \bar{X}-U}\overline{\mathcal{M}}_{\la,x}^{red}(a)\times\mathcal{P}_{a}$
par l'action diagonale de $\prod\limits_{x\in \bar{X}-U}\mathcal{P}_{x}^{red}(J_{a})$ est un champ de Deligne Mumford propre et 
\begin{center}
$\psi_{red}:\prod\limits_{x\in \bar{X}-U}\overline{\mathcal{M}}_{\la,x}^{red}(a)\wedge^{\prod\limits_{x\in \bar{X}-U}\mathcal{P}_{x}^{red}(J_{a})}\mathcal{P}_{a}\rightarrow\overline{\mathcal{M}}_{\la}(a)$
\end{center}
est un homéomorphisme.
\end{thm}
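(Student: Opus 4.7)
La stratégie est d'adapter l'argument de Ngô \cite[sect. 4.15]{N} au contexte de la fibration de Hitchin-Frenkel-Ngô, en exploitant systématiquement la description du centralisateur régulier $J$ et le fait que $\chi_{+}$ soit une gerbe au-dessus du lieu régulier.

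Premièrement, pour l'équivalence sur les $\bar{k}$-points avec $a\in\abdhk$, je poserai $U:=a^{-1}(\kc^{\la,rs})\subset\bar{X}$. D'après la proposition \ref{picardloc}, au-dessus de $U$ la flèche $[V_{G}^{reg}/(G\times Z_{+})]\to[\kc/Z_{+}]$ est une gerbe liée par $J$ et neutralisée par la section de Steinberg $\eps_{+}$; la restriction de tout $(E,\phi)\in\overline{\mathcal{M}}_{\la}(a)(\bar{k})$ à $U$ est donc entièrement déterminée, modulo comparaison à la section de Steinberg, par un $J_{a}|_{U}$-torseur. En chaque $x\in\bar{X}-U$, la restriction au disque formel, confrontée à la section de Steinberg sur le disque épointé, fournit un point local de $\overline{\mathcal{M}}_{\la,x}(a)(\bar{k})$. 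Un quasi-inverse de $\psi$ s'obtiendra alors par un recollement à la Beauville-Laszlo (\cite[sect. 2.3.7]{BD}) entre la section de Steinberg tordue sur $U$ par un $J_{a}$-torseur global et les modifications locales aux points de $\bar{X}-U$; la pleine fidélité résultera de la proposition \ref{sgrpe} sur les automorphismes, combinée au fait que le produit contracté $\wedge$ encode précisément l'ambiguïté des relèvements globaux (deux relèvements d'une même donnée locale diffèrent par un élément de $\cP_{a}$ se projetant dans $\prod_{x}\mathcal{P}_{x}(J_{a})$).

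Pour la seconde assertion, je me placerai au-dessus de l'ouvert anisotrope $\abdank$. La proposition \ref{varproj} fournit, en chaque $x\in\bar{X}-U$, la projectivité du quotient réduit $\overline{\mathcal{M}}_{\la,x}^{red}(a)/\La_{a,x}$, où $\La_{a,x}$ désigne le réseau des cocaractères du plus grand tore déployé du centralisateur local. L'hypothèse d'anisotropie, c'est-à-dire la finitude de $\pi_{0}(\cP_{a}^{1})$, entraîne que l'image de $\prod_{x}\La_{a,x}$ est d'indice fini dans $\pi_{0}(\cP_{a}^{1})$, de sorte que l'action diagonale de $\prod_{x}\mathcal{P}_{x}^{red}(J_{a})$ sur $\prod_{x}\overline{\mathcal{M}}_{\la,x}^{red}(a)\times\cP_{a}$ est cocompacte; la propreté du champ quotient s'en déduit. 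Le caractère Deligne-Mumford provient de la finitude des stabilisateurs, coincés par \ref{sgrpe} entre $H^{0}(\bar{X},J_{a})$ et $H^{0}(\bar{X},J_{a}^{\flat})$, et donc contenus dans un sous-groupe de $T^{W}$, fini puisque $G$ est semisimple. Enfin, $\psi_{red}$ étant propre surjective par construction et bijective sur les $\bar{k}$-points d'après la première partie, c'est bien un homéomorphisme des espaces topologiques sous-jacents.

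Le principal obstacle technique est d'articuler précisément le dévissage local-global du champ de Picard (proposition \ref{neron} et formule \eqref{locglob}) avec la description galoisienne \eqref{nergal} du modèle de Néron $J_{a}^{\flat}$, afin de traduire la finitude de $\pi_{0}(\cP_{a}^{1})$ en la cocompacité voulue. Comparativement au cas des algèbres de Lie traité par Ngô, la singularité éventuelle du Vinberg $V_{G}^{\la}$ n'apporte pas d'obstruction supplémentaire, pourvu que l'on manipule systématiquement la forme lisse $J_{a}^{\flat}$ plutôt que $J_{a}$ directement.
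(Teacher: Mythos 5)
Your proposal is correct and takes essentially the same route as the paper, whose proof consists in invoking Ng\^o's product formula \cite[Prop. 4.15.2]{N} once Proposition \ref{varproj} (projectivity of $\overline{\mathcal{M}}_{\la,x}^{red}(a)/\La_{a,x}$) is available. Your reconstruction --- gluing the Steinberg section twisted by a $J_{a}$-torsor over the regular semisimple locus with the local modifications via Beauville-Laszlo, then deducing properness from the lattices $\La_{a,x}$, anisotropy and the finiteness of stabilizers given by \ref{sgrpe} --- is precisely the content of that argument transposed to the Vinberg setting.
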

\begin{proof}
La preuve est la même que celle de \cite[Prop. 4.15.2]{N} une fois que nous avons fait appel à la proposition \ref{varproj}.
\end{proof}
\begin{cor}\label{stabaffine}
Pour tout $a\in\abdank$, $\overline{\mathcal{M}}_{\la}(a)$ est homéomorphe à un schéma projectif. De plus, pour tout $m\in\overline{\mathcal{M}}_{\la}(a)$, le stabilisateur de $m$ dans $\cP_{a}$ est un groupe affine.
\end{cor}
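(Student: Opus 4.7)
The plan is to deduce both assertions from the product formula of Theorem \ref{produithitchin}, which presents $\cmda$ as the homeomorphic image of the balanced product
\[
Y:=\bigl(\prod_{x\in\bar{X}-U}\overline{\mathcal{M}}_{\la,x}^{red}(a)\times\cP_a\bigr)\big/\prod_{x\in\bar{X}-U}\cP_x^{red}(J_a),
\]
already known to be a proper Deligne--Mumford stack.

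First I would apply Proposition \ref{varproj} to each local factor: setting $\La_a:=\prod_x\La_{a,x}$, the group $\La_a$ acts freely on $\prod_x\overline{\mathcal{M}}_{\la,x}^{red}(a)$ with projective quotient. I would then rewrite the quotient defining $Y$ in two stages, first by the sublattice $\La_a\subset\prod_x\cP_x^{red}(J_a)$ and then by the residual group $\prod_x\cP_x^{red}(J_a)/\La_a$. The freeness exhibits $Y$ as a $(\cP_a/\La_a)$-fibration over the projective scheme $\prod_x\overline{\mathcal{M}}_{\la,x}^{red}(a)/\La_a$, taken further modulo the residual group acting on the fibre. At this point the anisotropy hypothesis enters crucially: by Proposition \ref{neron}, $\cP_a$ is an extension of the champ ab\'elien $\cP_a^\flat$ by the affine group $\mathcal{R}_a$, and the finiteness of $\pi_0(\cP_a^1)$ built into the definition of $\abd^{ani}$ forces the image of $\La_a$ in $\cP_a$ to exhaust all lattice directions, so that $\cP_a/\La_a$ is homeomorphic to a finite disjoint union of translates of $(\cP_a^\flat)^0$, hence projective. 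Properness of $Y$ together with the projectivity of base and fibre then yields the first assertion.

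For the stabilizer statement, the same presentation identifies $\mathrm{Stab}_{\cP_a}(m)$ with the image in $\cP_a$ of the product $\prod_x\mathrm{Stab}_{\cP_x^{red}(J_a)}(m_x)$. Because $\La_{a,x}$ acts freely on $\overline{\mathcal{M}}_{\la,x}^{red}(a)$, each local stabilizer is contained in the affine part of $\cP_x^{red}(J_a)/\La_{a,x}$, which is affine as a piece of the local N\'eron decomposition. The image of an affine group under a morphism of algebraic groups remains affine, giving the claim.

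The main obstacle will be the rigorous identification of $\cP_a/\La_a$ as projective up to homeomorphism. This rests on combining the Galois description \eqref{nergal} of the N\'eron model, the dimension count of Proposition \ref{dimpic}, and the fact that the anisotropic hypothesis precisely cuts out the locus on which no residual non-projective factor survives in $\cP_a$ once one has killed the local lattices.
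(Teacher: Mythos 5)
Your treatment of the second assertion is essentially the intended one and is correct: the stabilizer of a point of the balanced product under $\cP_a$ is the image under $\prod_x\cP_x^{red}(J_a)\rightarrow\cP_a$ of the product of the local stabilizers; each local stabilizer meets the freely acting lattice $\La_{a,x}$ of Proposition \ref{varproj} trivially, hence embeds into $\cP_x^{red}(J_a)/\La_{a,x}$, which is an extension of a finite group by an affine group of finite type and is therefore affine; and the image of an affine algebraic group under a homomorphism of algebraic groups is affine. (Your phrase \og contained in the affine part of $\cP_x^{red}(J_a)/\La_{a,x}$\fg~ is slightly off --- the stabilizer may have nontrivial finite image in $\cP_x(J_a^\flat)/\La_{a,x}$ --- but this is harmless since the whole quotient is affine.)

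The projectivity argument, however, contains a genuine gap: the claim that $\cP_a/\La_a$ is homeomorphic to a finite disjoint union of translates of $(\cP_a^\flat)^0$, hence projective, is false whenever $\delta(a)>0$. The failure of properness of $\cP_a$ has two sources: its component group and the \emph{connected} affine kernel $\mathcal{R}_a$ of $\cP_a\rightarrow\cP_a^\flat$, of dimension $\delta(a)$. Quotienting by the image of the discrete group $\La_a$ cannot kill the connected group $\mathcal{R}_a$, which therefore survives in your \og fibre\fg~ and destroys its properness. (Moreover, for $a$ anisotropic and $G$ semisimple, $\pi_0(\cP_a)$ is already finite, so there are no \og lattice directions\fg~ in $\cP_a$ to exhaust: the image of $\La_a$ is just a finitely generated subgroup of the $\bar{k}$-points of a group of finite type, possibly Zariski-dense in the abelian part, and the quotient by it is not a geometric object.) The lattice's only role is to make the base $\prod_x\overline{\mathcal{M}}_{\la,x}^{red}(a)/\La_a$ projective; the affine directions of $\cP_a$ must instead be absorbed by the \emph{residual} group $\prod_x\cP_x^{red}(J_a)/\La_a$ in the second stage, using Proposition \ref{neron}(iv), which identifies $\mathcal{R}_a$ with the product of the local affine parts $\mathcal{R}_x(a)$ and hence places it inside the image of $\prod_x\cP_x^{red}(J_a)$. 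Only after this cancellation are the fibers over the projective base proper (extensions of a finite group by the proper stack $(\cP_a^\flat)^0$), and your two-stage decomposition must be reorganized so that this, rather than the quotient by $\La_a$, carries the properness.
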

De la formule du produit et du résultat local \cite[Prop. 4.24]{Bt1}, on déduit le corollaire suivant:
\begin{cor}\label{comple}
Pour tout $a\in\abdhk$, le complémentaire de $\mathcal{M}_{\la}^{reg}(a)$ dans $\mathcal{M}_{\la}(a)$ est de dimension strictement plus petite que 
$\mathcal{M}_{\la}(a)$.\\
En particulier, $\dime\mathcal{M}_{\la}^{reg}(a)=\dime\mathcal{M}_{\la}(a)=\dime\overline{\mathcal{M}}_{\la}(a)$.
\end{cor}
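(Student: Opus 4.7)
Le plan est de d�duire l'�nonc� global de l'analogue local �tabli dans \cite[Prop. 4.24]{Bt1} via la formule du produit du th�or�me \ref{produithitchin}. Je commencerais par fixer $a\in\abdhk$ et consid�rer l'ouvert $U\subset\bar{X}$ image r�ciproque de $\kc^{\la,rs}$ par $h_{a}:\bar{X}\to\kcd$. Aux points $x\in U$, la fibre de Springer locale est ponctuelle, r�guli�re, et ne contribue donc pas au d�faut de r�gularit�. Aux points $x\in\bar{X}-U$, le r�sultat local cit� garantit que le compl�mentaire de l'orbite r�guli�re $\mathcal{M}_{\la,x}^{reg}(a)$ dans $\overline{\mathcal{M}}_{\la,x}(a)$ est de codimension strictement positive.

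Ensuite, j'utiliserais le th�or�me \ref{produithitchin} qui fournit un hom�omorphisme sur les $\bar{k}$-points entre $\cmda$ et le produit contract� $\prod_{x\in\bar{X}-U}\overline{\mathcal{M}}_{\la,x}(a)\wedge^{\prod_{x}\mathcal{P}_{x}(J_{a})}\cP_{a}$. Sous cet hom�omorphisme, appartenir � l'ouvert r�gulier global est une condition locale: un point est dans $\mathcal{M}_{\la}^{reg}(a)$ si et seulement si chaque facteur local l'est, l'action de $\cP_{a}$ pr�servant l'ouvert r�gulier puisque celui-ci est stable sous le centralisateur r�gulier (proposition \ref{picardtorseur}). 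Le compl�mentaire de $\mathcal{M}_{\la}^{reg}(a)$ dans $\cmda$ se d�crit alors comme la r�union finie, sur les $x\in\bar{X}-U$, des sous-champs obtenus en rempla�ant le facteur $\overline{\mathcal{M}}_{\la,x}(a)$ par son compl�mentaire non r�gulier, les autres facteurs et l'action de $\cP_{a}$ restant inchang�s.

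Par additivit� des dimensions dans le produit tordu, l'in�galit� stricte locale en un facteur fournira l'in�galit� stricte globale, ce qui donnera la premi�re assertion. La seconde assertion en d�coulera imm�diatement. Le point d�licat sera de justifier proprement cette additivit� des dimensions dans le produit contract�, en prenant garde aux stabilisateurs susceptibles d'appara�tre; ceci se contr�lera via la lissit� du champ de Picard (proposition \ref{picardlisse}) combin�e aux formules de dimension \ref{dim} et \ref{dimpic}, qui permettent de comparer les deux membres de l'hom�omorphisme du th�or�me \ref{produithitchin} et ainsi de transporter l'in�galit� stricte locale au niveau global.
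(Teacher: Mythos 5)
Votre démonstration suit exactement la voie du papier, qui déduit ce corollaire de la formule du produit (théorème \ref{produithitchin}) combinée au résultat local \cite[Prop. 4.24]{Bt1}; vous explicitez simplement les détails que le texte laisse implicites. L'argument est correct.
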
 
Pour montrer la densité de l'ouvert régulier, nous allons maintenant avoir besoin du théorème de transversalité \ref{transverse}, on rappelle que $S=\supp(\la)$.

\begin{cor}\label{equidim}
La flèche $f^{\flat}:\overline{\mathcal{M}}_{\la}^{\flat}\rightarrow\abdb$ est plate et ses fibres sont géométriquement réduites. On en déduit alors que pour tout $a\in\abdb(k)$, l'ouvert régulier $\mathcal{M}_{\la}^{reg}(a)$ est dense dans $\overline{\mathcal{M}}_{\la}(a)$. 
\end{cor}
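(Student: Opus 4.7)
Mon approche consiste à établir d'abord la platitude de $f^{\flat}$ via un critère de platitude miraculeuse, puis à en déduire la réduction géométrique des fibres par le critère $R_{0}+S_{1}$ de Serre, et enfin la densité de l'ouvert régulier par un argument de dimension.
\medskip

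La première étape consiste à établir que $\cmdb$ est de Cohen-Macaulay. D'après la proposition \ref{bemolisse2}, le morphisme $\theta^{\flat}:\cmdb\rightarrow[\overline{\Gr}'_{\la}/G'_{N}]$ est lisse; or la cible est localement modélée sur un produit de variétés de Schubert affines $\overline{\Gr}_{\la_{s}}$, qui sont Cohen-Macaulay (résultat classique). Par ascendance de cette propriété via un morphisme lisse, $\cmdb$ est Cohen-Macaulay. Par ailleurs, $\abdb$ est un ouvert lisse de l'espace affine $\abd$, et $\cmdb$ est équidimensionnel d'après le théorème \ref{transverse}.
\medskip

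Pour appliquer la platitude miraculeuse, il faudra vérifier que les fibres de $f^{\flat}$ sont de dimension constante. Je vérifie d'abord que $\abdb\subset\abdh$: la condition $d_{t}(a)=0$ de la définition \ref{bemol} force $a(t)$ à éviter le diviseur discriminant, donc à être régulier semisimple; puisque $G_{der}$ est simplement connexe, ceci coïncide avec fortement régulier semisimple, d'où $a\in\abdh$. Pour un tel $a$, la proposition \ref{picardtorseur} assure que $\mathcal{M}_{\la}^{reg}(a)$ est une gerbe sous $\cP_{a}$, donc de dimension $\dim\cP_{a}$; le corollaire \ref{comple} identifie cette quantité à $\dim\overline{\mathcal{M}}_{\la}(a)$, et la proposition \ref{dimpic} montre qu'elle est indépendante de $a$. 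Le critère de platitude miraculeuse s'applique alors: source Cohen-Macaulay équidimensionnelle, base régulière, fibres de dimension constante, d'où la platitude de $f^{\flat}$.
\medskip

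La platitude, jointe au caractère Cohen-Macaulay de $\cmdb$ et à la régularité de $\abdb$, entraîne que chaque fibre $\overline{\mathcal{M}}_{\la}(a)$ est Cohen-Macaulay, donc satisfait la condition $S_{1}$ de Serre. Le corollaire \ref{lissitereg} assure que $\mathcal{M}_{\la}^{reg}(a)$ est lisse pour tout $a\in\abdb$, et le corollaire \ref{comple} qu'il est de dimension maximale dans la fibre; celle-ci est donc génériquement réduite. Le critère $R_{0}+S_{1}$ de Serre donne alors la réduction géométrique des fibres. Enfin, la platitude et l'équidimensionnalité de la source impliquent l'équidimensionnalité de chaque fibre; comme $\mathcal{M}_{\la}^{reg}(a)$ est un ouvert de même dimension que $\overline{\mathcal{M}}_{\la}(a)$, il rencontre nécessairement toute composante irréductible de celle-ci, et y est donc dense.
\medskip

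L'obstacle principal sera le transfert rigoureux du caractère Cohen-Macaulay au contexte champêtre via le morphisme lisse $\theta^{\flat}$, ainsi que la justification précise que le critère de platitude miraculeuse s'applique dans ce cadre d'Artin et non simplement pour des schémas.
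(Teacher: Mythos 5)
Votre démonstration suit essentiellement la même route que celle du papier : platitude miraculeuse (source Cohen--Macaulay via le théorème de transversalité et la lissité de $\theta^{\flat}$ sur les variétés de Schubert affines, base lisse, fibres de dimension constante par la proposition \ref{dimpic} et le corollaire \ref{comple}), puis densité de l'ouvert régulier par équidimensionnalité des fibres et le corollaire \ref{comple}. Vous explicitez en outre, via le critère $R_{0}+S_{1}$ de Serre, la réduction géométrique des fibres que le papier laisse implicite ; c'est un complément correct et bienvenu.
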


\begin{proof}
Le morphisme $f^{\flat}$ a des fibres de dimension constante, la source est Cohen-Macaulay d'après le théorème  \ref{transverse} et le but est lisse, donc $f^{\flat}$ est plat.
Comme de plus, les fibres $\overline{\mathcal{M}}_{\la}(a)$ sont équidimensionnelles, d'après le corollaire $\ref{comple}$, on en déduit que $\mathcal{M}_{\la}^{reg}(a)$ est dense dans la fibre $\overline{\mathcal{M}}_{\la}(a)$.
\end{proof}

$\rmq$ Par la formule du produit \ref{produithitchin}, nous obtenons également la densité de l'orbite régulière locale.

\section{Stratification}
\subsection{Normalisation en famille}
De même que dans Ngô, nous allons avoir besoin d'obtenir une stratification de $\abd$ en fonction de l'invariant $\delta$, correspondant à la dimension de la partie affine du champ de Picard.
Nous renvoyons à Ngô \cite[sect. 5]{N} pour les preuves qui sont les mêmes dans le cas qui nous occupe.

\begin{defi}
Soit $f:Y\rightarrow S$ un morphisme projectif plat à fibres réduites de dimension un. Une normalisation en famille est un $S$-schéma  $Y^{\flat}$ avec une flèche $\xi:Y^{\flat}\rightarrow Y$ propre et birationnelle telle qu'au-dessus de chaque ouvert $U$ de $Y$, dense dans chaque fibre de $Y$ au-dessus de $S$, le composé $f\circ\xi$ soit propre et lisse.
\end{defi}

Considérons le foncteur $\mathcal{B}$ qui à tout $k$-schéma $S$ associe le groupoïde des triplets $(a,\tilde{X}_{a}^{\flat},\xi)$ où:
\begin{itemize}
\item
$a\in\abdh(S)$
\item
$\tilde{X}_{a}^{\flat}$ une $S$-courbe propre et lisse munie d'une action de $W$.
\item
$\xi:\tilde{X}_{a}^{\flat}\rightarrow\tilde{X}_{a}$ une normalisation en famille $W$-équivariante.

\end{itemize}

Soit $b=(a,\tilde{X}_{a}^{\flat},\xi)\in\mathcal{B}(S)$ un point dans un schéma connexe $S$.
En notant $\pi:X\times S\rightarrow S$ la projection, sous l'hypothèse que l'ordre de $W$ soit première à la caractéristique et comme $S$ est connexe, le faisceau
\begin{center}
$(\pi_{*}(\xi_{*}\mathcal{O}_{\tilde{X}_{a}^{\flat}}/\mathcal{O}_{X_{a}^{\flat}})\otimes_{\mathcal{O_{X}}}\mathfrak{t})^{W}$
\end{center}
est un $\mathcal{O}_{S}$-module localement libre de rang $\delta(b)$.
Pour $a\in\abdhk$, on a vu que la courbe camérale était connexe réduite, donc admet une unique normalisation et la flèche canonique $\cB\rightarrow\abdh$ induit une bijection au niveau des $\bar{k}$-points.
\begin{prop}
Le foncteur $\cB$ est représentable par un $k$-schéma de type fini.
\end{prop}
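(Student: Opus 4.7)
La stratégie est de réaliser $\cB$ comme un sous-schéma localement fermé d'un certain schéma de Hilbert relatif au-dessus de $\abdh$. On dispose du revêtement caméral universel $\tilde{X}^{univ}\to X\times\abdh$, obtenu par changement de base de $V_{T}^{\la}\to\kcd$, qui porte naturellement une action de $W$. La donnée d'une normalisation en famille $\xi:\tilde{X}_{a}^{\flat}\to\tilde{X}_{a}$ équivaut, via $\xi_{*}$, à celle d'un sous-faisceau cohérent $W$-équivariant $\mathcal{O}_{\tilde{X}_{a}}\subset\mathcal{F}\subset\widetilde{\mathcal{O}}$ dans le faisceau de la normalisation (laquelle existe et se comporte bien en famille précisément parce que, sur $\abdh$, les courbes camérales sont réduites par le lemme \ref{reduite}), muni en outre d'une structure d'$\mathcal{O}_{\tilde{X}_{a}}$-algèbre telle que $\Spec(\mathcal{F})\to S$ soit propre et lisse.

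La première étape est de borner $\delta$. Comme $\tilde{X}_{a}$ n'est singulier qu'au-dessus du diviseur discriminant $\mathfrak{D}_{\la}$ et comme $a\in\abdh$, la longueur du conducteur $\xi_{*}\mathcal{O}_{\tilde{X}_{a}^{\flat}}/\mathcal{O}_{\tilde{X}_{a}}$ est majorée uniformément en fonction de $\deg(a^{*}\mathfrak{D}_{\la})=\deg(\mathfrak{D}_{\la})$, quantité qui ne dépend que de $\la$. Cela nous permet de confiner le problème à un schéma de Quot relatif $\mathcal{Q}\to\abdh$ paramétrant les faisceaux cohérents $\mathcal{F}$ sur $\tilde{X}^{univ}$ qui interpolent entre $\mathcal{O}_{\tilde{X}^{univ}}$ et un faisceau cohérent fixé, avec un quotient de longueur bornée; ce schéma est quasi-projectif sur $\abdh$ par le théorème de Grothendieck.

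La deuxième étape est d'imposer successivement les contraintes qui caractérisent une normalisation en famille $W$-équivariante. La condition d'être un sous-faisceau stable par $W$ définit un sous-schéma fermé de $\mathcal{Q}$; la condition d'être un sous-faisceau d'algèbres (c'est-à-dire stable par la multiplication) est également fermée; enfin, la condition que le $S$-schéma $\Spec(\mathcal{F})$ soit propre et lisse au-dessus d'un ouvert dense dans chaque fibre définit un ouvert, par la caractérisation infinitésimale de la lissité et la semi-continuité. On obtient ainsi $\cB$ comme sous-schéma localement fermé de $\mathcal{Q}$, donc représentable par un $k$-schéma de type fini.

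Le principal obstacle à mes yeux est la gestion simultanée des structures d'algèbre, de $W$-équivariance, et de lissité relative: il faut vérifier que ces conditions sont compatibles avec la structure de schéma de Hilbert/Quot, et que la lissité relative est effectivement une condition ouverte dans ce contexte (ce qui repose sur le fait que la dimension des fibres est constante, égale à un, dès que la condition ensembliste sur $\bar{k}$-points est satisfaite; les fibres de $\cB\to\abdh$ sont alors finies sur $\bar{k}$ grâce à l'unicité de la normalisation d'une courbe réduite, lequel point identifie bien $\cB(\bar{k})$ avec $\abdh(\bar{k})$).
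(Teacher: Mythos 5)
The paper gives no proof here: it simply refers to Ng\^o \cite[sect. 5.3]{N}, whose argument is exactly the strategy you outline (bound the defect, embed $\cB$ in a Grothendieck Quot scheme over $\abdh$, cut out the algebra structure and the $W$-equivariance by closed conditions and the relative smoothness by an open one). So the architecture of your argument is the intended one, and the reduction of a normalization in family to the datum of the finite $\mathcal{O}_{\tilde{X}_{a}}$-algebra $\xi_{*}\mathcal{O}_{\tilde{X}_{a}^{\flat}}$ is correct.

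One step fails as written, however: the ambient object of your Quot scheme. You sandwich $\mathcal{F}$ between $\mathcal{O}_{\tilde{X}_{a}}$ and \og le faisceau de la normalisation\fg, claiming that the normalization exists and behaves well in family over $\abdh$ because the cameral curves are reduced. Reducedness of the geometric fibres only yields a normalization fibre by fibre; these do \emph{not} assemble into a flat family over $\abdh$, precisely because the invariant $\delta$ is only upper semicontinuous and jumps. (If such a family existed, $\cB\rightarrow\abdh$ would be an isomorphism and the whole stratification by the strata $\tabdp$ would be pointless.) Hence the subfunctor of \og $W$-equivariant subsheaves of $\widetilde{\mathcal{O}}$\fg~is not defined over $\abdh$, and this is the very difficulty the scheme $\cB$ is designed to circumvent. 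The standard repair --- and what Grothendieck's representability theorem actually provides --- is to parametrize $\mathcal{F}=\xi_{*}\mathcal{O}_{\tilde{X}_{a}^{\flat}}$ as a \emph{quotient} of a fixed sheaf $\mathcal{O}_{\tilde{X}}(-N)^{\oplus M}$ on the universal cameral curve, with one of finitely many Hilbert polynomials: since $\mathcal{F}/\mathcal{O}_{\tilde{X}_{a}}$ has finite support, $\chi(\mathcal{F})=\chi(\mathcal{O}_{\tilde{X}_{a}})+\delta$, and $\delta$ is bounded because the arithmetic genus of $\tilde{X}_{a}$ is constant in the flat family $\tilde{X}\rightarrow\abdh$ while that of a smooth curve of bounded degree over $X$ is bounded below. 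One then imposes, exactly as you do, the closed conditions (unit, multiplication, $W$-action, the map from $\mathcal{O}_{\tilde{X}_{a}}$ being injective and generically an isomorphism) and the open ones (flatness and smoothness of $\Spec(\mathcal{F})$ over $S$); your final remarks on the openness of relative smoothness and on the bijectivity of $\cB(\bar{k})\rightarrow\abdhk$ are correct.
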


Fixons un point $\infty\in X(\bar{k})$.
On rappelle que nous avons un ouvert $\abd^{\infty}$ constitué des $a\in\abd$ qui sont réguliers semisimples en $\infty$.
Soit  le diagramme cartésien
$$\xymatrix{\tabd\ar[d]\ar[r]&V_{T}^{\la,\infty}\ar[d]\\\abd^{\infty}\ar[r]&\kc^{\la,\infty}}$$
où la flèche horizontale du bas associe à $a\in\abd^{\infty}$ le point $a(\infty)$ et $\kc^{\la,\infty}$ la fibre au-dessus de $\infty$.
Comme pour $a\in\abd^{\infty}$, $a(\infty)$ est régulier semisimple, cela fait de $\tabd$ un $W_{\infty}$-torseur au-dessus de $\abd^{\infty}$, où $W_{\infty}$ est la fibre du $X$-schéma groupes fini étale $W$ en $\infty$.

De plus, le morphisme de $\tabd\rightarrow V_{T}^{\la,\infty}$ se factorise par le lieu régulier semisimple de $V_{T}^{\la,\infty}$ lequel est lisse et géométriquement irréductible.
\begin{lem}
Supposons $\la\succ 2g$, alors $\tabd$ est lisse et géométriquement irréductible.
\end{lem}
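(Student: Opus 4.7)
I would treat smoothness and geometric irreducibility separately, both reducing to the structural facts already established in the paragraphs immediately preceding the statement.

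For smoothness, the projection $\tabd \to \abd^{\infty}$ is already noted to be a $W_{\infty}$-torsor, in particular étale. Since $\abd^{\infty}$ is an open subscheme of $\abd$, which under our hypotheses ($G$ reductive with $G_{der}$ simply connected and $c \mid \la$) is the product of an affine space with the split torus $(k^{*})^{l}$, it is smooth over $k$. Smoothness descends to $\tabd$ under étale base change, giving the first assertion without further work.

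For geometric irreducibility, I would exploit the factorization $\tabd \to V_{T}^{\la,\infty,rs}$ stated just before the lemma, the target being smooth and geometrically irreducible. From the Cartesian square defining $\tabd$, and from the fact that the image of the top arrow lies in the regular semisimple locus, we obtain
$$\tabd = \abd^{\infty} \times_{\kc^{\la,\infty,rs}} V_{T}^{\la,\infty,rs}.$$
Since the right-hand factor is geometrically irreducible, it suffices to show that the morphism $\mathrm{ev}_{\infty}: \abd^{\infty} \to \kc^{\la,\infty,rs}$ (evaluation at $\infty$) has non-empty geometrically irreducible fibers.

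This last point is where the hypothesis $\la \succ 2g$ enters. The evaluation morphism $\abd \to \kc^{\la,\infty}$ is the direct sum of the identity on the $(k^{*})^{l}$ factor and, for $1 \leq i \leq r$, the evaluation-at-$\infty$ maps
$$H^{0}(X, \mathcal{O}_{X}(\langle \omega_{i}, -w_{0}\la\rangle)) \longrightarrow \mathcal{O}_{X}(\langle \omega_{i}, -w_{0}\la\rangle)|_{\infty}.$$
The short exact sequence
$$0 \to \mathcal{O}_{X}(\langle \omega_{i}, -w_{0}\la\rangle - [\infty]) \to \mathcal{O}_{X}(\langle \omega_{i}, -w_{0}\la\rangle) \to \mathcal{O}_{X}(\langle \omega_{i}, -w_{0}\la\rangle)|_{\infty} \to 0,$$
combined with Serre's vanishing $H^{1}(X, \mathcal{O}_{X}(\langle \omega_{i}, -w_{0}\la\rangle - [\infty])) = 0$ valid as soon as $\langle \omega_{i}, -w_{0}\la\rangle \geq 2g$ --- precisely the hypothesis $\la \succ 2g$ in view of Definition \ref{plusgrand} --- gives surjectivity of each evaluation, with fibers affine spaces of (strictly positive) dimension $\langle \omega_{i}, -w_{0}\la\rangle + 1 - g$. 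Restricting to the open subscheme $\kc^{\la,\infty,rs}$ removes a proper closed subset from each fiber and therefore preserves geometric irreducibility, yielding the claim. There is no real obstacle beyond matching the numerical hypothesis with the Riemann--Roch vanishing; all the delicate geometric input ($V_{T}^{\la,\infty,rs}$ smooth and geometrically irreducible, the torsor structure) is already provided.
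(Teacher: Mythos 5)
Votre démonstration est correcte et suit essentiellement la même stratégie que celle du texte : la surjectivité de l'application linéaire d'évaluation en $\infty$ (via Riemann--Roch sous l'hypothèse $\la\succ 2g$) donne une fibration à fibres géométriquement irréductibles au-dessus du lieu régulier semisimple de $V_{T}^{\la,\infty}$, lequel est lisse et géométriquement irréductible. Seule différence mineure : vous tirez la lissité du $W_{\infty}$-torseur au-dessus de $\abd^{\infty}$ plutôt que de la lissité de la flèche vers $V_{T}^{\la,\infty,rs}$, et la remarque sur le retrait d'un fermé propre dans chaque fibre est superflue (la fibre au-dessus d'un point régulier semisimple est déjà tout entière contenue dans $\abd^{\infty}$), mais cela n'affecte en rien la validité de l'argument.
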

\begin{proof}
L'hypothèse sur le degré assure que l'application linéaire $\abd\rightarrow\kcd$ est surjective, donc la flèche $\tabd\rightarrow V_{T}^{\la,\infty}$ est lisse à fibres connexes. Comme de plus, elle se factorise par le lieu régulier-semisimple de $V_{T}^{\la,\infty}$, on obtient donc le résultat.
\end{proof}

On fait alors le changement de base $\tilde{\cB}$, de $\cB$ à $\tabd$. En considérant la flèche $\tilde{\cB}\rightarrow\tabd$, on obtient par \cite[sect. 5.3.3]{N} une stratification de $\tabd\otimes_{k}\bar{k}$ en localement fermés irréductibles:
\begin{center}
$\tabd\otimes_{k}\bar{k}=\coprod\limits_{\psi\in\Psi}\tabdp$ 
\end{center}
où l'on peut supposer, si $\tilde{\cB}_{\psi}$ désigne l'image réciproque de $\tabdp$, que le morphisme induit est fini radiciel.
On peut également supposer que l'adhérence d'une strate est la réunion de strates, ce qui nous permet de définir un ordre partiel sur les strates et donc comme $\tabd$ est géométriquement irréductible, l'ensemble $\Psi$ admet un élément maximal $\psi_{G}$.

\subsection{Invariants monodromiques}
Soit $\tilde{a}=(a,\tilde{\infty})\in\tabdk$ avec $\tilde{\infty}$ un point géométrique de $\tilde{X}_{a}$ au-dessus de $\infty$. Soit $U_{a}$ le plus grand ouvert de $\bar{X}$ au-dessus duquel le revêtement $\tilde{X}_{a}\rightarrow\bar{X}$ est étale.
On a alors un morphisme
\begin{center}
$\pi_{1}(U_{a},\infty)\rightarrow W$
\end{center}
On note $W_{a}$ le sous-groupe de $W$ image de $\pi_{a}^{\bullet}$ et $I_{a}$ l'image par $\pi_{a}^{\bullet}$ du noyau de la flèche
\begin{center}
$\pi_{1}(U_{a},\infty)\rightarrow\pi_{1}(\bar{X},\infty)$.
\end{center}
Nous avons par construction, $W_{a}\subset W$ et $I_{a}$ est un sous-groupe normal de $W_{a}$, contenu dans $W_{a}$.
Nous avons une autre interprétation des groupes $W_{a}$ et $I_{a}$ tirée de Ngô \cite[sect. 5.4.2]{N}. Nous avons la courbe camérale $\tilde{X}_{a}$ qui est munie d'une action de $W$. Soit $C_{a}$ la composante connexe contenant $\infty$, alors $W_{a}$ est le sous-groupe de $W$ qui laisse stable la composante connexe. $I_{a}$ est alors le sous-groupe engendré par les éléments de $W_{a}$ admettant au moins un point fixe sur $C_{a}$.
Comme la projection de $C_{a}$ sur $ X$ admet une action libre de $\Theta$, $I_{a}$ est contenu dans le noyau de $W_{a}$.

\begin{prop}
L'application $\tilde{a}\rightarrow (I_{a}, W_{a})$ est constante sur chacune des strates $\tabd$ définie dans la section précédente.
On en déduit donc une application 
\begin{center}
$\psi\rightarrow (I_{\psi}, W_{\psi})$,
\end{center}
définie sur l'ensemble des strates $\Psi$ compatible avec l'application $\tilde{a}\rightarrow (I_{a}, W_{a})$ définie au niveau des points géométriques.

\end{prop}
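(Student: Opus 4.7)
L'id�e est d'exploiter la g�om�trie du changement de base $\tilde{\cB}_{\psi}\to\tabdp$, qui est fini radiciel, de sorte qu'il suffit de d�montrer la constance des invariants sur $\tilde{\cB}_{\psi}$, o� l'on dispose d'une vraie famille lisse de rev�tements cam�raux normalis�s $\tilde{X}^{\flat}\to\tilde{\cB}_{\psi}$, $W$-�quivariante, propre, avec une section marqu�e $\tilde{\infty}$ au-dessus de $\infty$.

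D'abord, je traite l'invariant $W_a$. Pour $\tilde a\in\tilde{\cB}_{\psi}(\bar k)$, le groupe $W_a$ est le stabilisateur dans $W$ de la composante connexe $C_a$ de $\tilde X_a^{\flat}$ contenant $\tilde\infty$ (les composantes connexes de $\tilde X_a$ et de $\tilde X_a^{\flat}$ sont en bijection canonique via la normalisation). Or dans la famille lisse et propre $\tilde X^{\flat}\to\tilde{\cB}_{\psi}$, le faisceau $\pi_{0}$ des composantes connexes des fibres g�om�triques est un faisceau localement constant sur la base (factorisation de Stein appliqu�e � la famille lisse propre; alternativement $R^{0}\pi_{*}\bar{\mathbb{Q}}_{l}$ est localement libre). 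L'action de $W$ sur cette famille de rev�tements respecte sa formation, et la section marqu�e $\tilde\infty$ singularise de mani�re continue la composante $C_a$. Donc le stabilisateur $W_a$ dans $W$ est localement constant sur $\tilde{\cB}_{\psi}$; comme $\tilde{\cB}_{\psi}$ est irr�ductible (donc connexe), il est constant.

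Ensuite, je traite $I_a$. Par d�finition, $I_a$ est engendr� par les $w\in W_a$ tels que la restriction de $w$ � $C_a$ admet au moins un point fixe. Pour chaque $w\in W$, le sous-sch�ma $(\tilde X^{\flat})^{w}\subset\tilde X^{\flat}$ des points fixes est lisse au-dessus de $\tilde{\cB}_{\psi}$, puisque $\tilde X^{\flat}\to\tilde{\cB}_{\psi}$ est lisse et que $\mathrm{ord}(w)$ est inversible dans $k$ (par hypoth�se $\car(k)\wedge|W|=1$). En particulier, $(\tilde X^{\flat})^{w}\to\tilde{\cB}_{\psi}$ est plat au-dessus d'un ouvert, et �tant un sch�ma lisse sur une base connexe, chacune de ses composantes domine la base ou est vide. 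Ainsi, la non-vacuit� fibre � fibre de $(\tilde X^{\flat})^{w}\cap C$, o� $C$ d�signe la famille des composantes distinguies, est localement constante, donc constante sur $\tilde{\cB}_{\psi}$. Il s'ensuit que l'ensemble des $w\in W_a$ ayant un point fixe sur $C_a$ est constant sur la strate, et donc $I_a$ aussi.

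En combinant, l'application $\tilde a\mapsto(I_a,W_a)$ est constante sur chaque $\tilde{\cB}_{\psi}(\bar k)$, donc sur $\tabdp(\bar k)$ via la surjection radicielle $\tilde{\cB}_{\psi}\to\tabdp$. On en d�duit l'application annonc�e $\psi\mapsto(I_{\psi},W_{\psi})$ sur l'ensemble des strates $\Psi$. L'obstacle principal est de justifier proprement que la d�composition en composantes connexes se propage dans la famille lisse et propre, et que les lieux de points fixes se comportent bien en famille; ces deux points reposent de mani�re cruciale sur l'hypoth�se de caract�ristique premi�re � $|W|$ et sur la lissit� de la normalisation en famille obtenue par construction de la stratification.
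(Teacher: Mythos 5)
Your argument follows essentially the route the paper itself takes (it defers to Ng\^o, sect.~5.4, for this proof): reduce to the radicial cover $\tilde{\cB}_{\psi}$, where the normalized cameral curves form a smooth proper $W$-\'equivariant family with a marked section, then deduce local constancy of the marked component and of the fixed-point data over the irreducible (hence connected) base. Two points need repair. First, your parenthetical claim that the connected components of $\tilde{X}_{a}$ and of $\tilde{X}_{a}^{\flat}$ are in canonical bijection is false: normalization can disconnect a connected curve, and indeed the paper proves (Prop.~\ref{conn}) that $\tilde{X}_{a}$ itself is connected for $\la\succ 2g$, whereas $\tilde{X}_{a}^{\flat}$ has $|W_{a}\backslash W|$ components. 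Taken literally, the stabilizer of ``the component of $\tilde{X}_{a}$ containing $\infty$'' would then be all of $W$. Fortunately your actual computation is carried out on $\tilde{X}_{a}^{\flat}$, which is the correct object: the monodromy definition of $W_{a}$ identifies it with the stabilizer of the component of the normalization (equivalently, of $\tilde{X}_{a}\vert_{U_{a}}$, over which the two agree) containing $\tilde{\infty}$; the argument survives once the parenthetical is deleted. Second, for the constancy of $I_{a}$, knowing that each component of $(\tilde{X}^{\flat})^{w}\cap C$ dominates the base is not enough to conclude that fibrewise non-vacuity is constant (an open subscheme dominates without surjecting); you also need the image to be closed. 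This does hold, because $(\tilde{X}^{\flat})^{w}$ is a closed subscheme of the proper $\tilde{\cB}_{\psi}$-scheme $\tilde{X}^{\flat}$, hence proper over the base, so its image is closed as well as open (by smoothness), and connectedness of $\tilde{\cB}_{\psi}$ gives the all-or-nothing conclusion. With these two corrections the proof is sound and matches the intended argument.
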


\begin{lem}\label{croiss}
Considérons l'ordre partiel sur les couples $(I, W)$, $(I_{1},W_{1})\leq (I_{2},W_{2})$ si et seulement si $W_{1}\subset W_{2}$ et $I_{1}\subset I_{2}$, avec $I_{1}$ normal dans $W_{1}$ et $W_{1}\subset W\rtimes\Theta$. L'application 
\begin{center}
$\psi\rightarrow (I_{\psi}, W_{\psi})$
\end{center}
est croissante.
De plus, si $\la\succ2g$, $(I_{\psi_{G}},W_{\psi_{G}})=(W,W)$.
\end{lem}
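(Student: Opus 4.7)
\medskip

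\noindent\textbf{Plan.} The proof decomposes into two independent parts: the monotonicity of $\psi\mapsto (I_{\psi},W_{\psi})$ and the explicit computation of $(I_{\psi_{G}},W_{\psi_{G}})$ on the open stratum. For both parts we work through the \emph{geometric} description recalled just before the statement: $W_{a}$ is the stabilizer in $W$ of the connected component $C_{a}\subset\tilde{X}_{a}$ containing the distinguished point $\tilde{\infty}$, and $I_{a}$ is the subgroup of $W_{a}$ generated by those $w\in W_{a}$ which admit at least one fixed point on $C_{a}$.

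\medskip

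\noindent\textbf{Monotonicity.} Assume $\psi_{1}\leq\psi_{2}$, i.e.\ the stratum $\tilde{\mathcal{A}}_{\lambda,\psi_{1}}$ lies in the closure of $\tilde{\mathcal{A}}_{\lambda,\psi_{2}}$. I would choose a henselian trait $\Spec R$ with residue field $\bar{k}$, together with a morphism $\Spec R\to\tilde{\mathcal{A}}_{\lambda}$ whose generic point $\tilde{a}_{\eta}$ lies in $\tilde{\mathcal{A}}_{\lambda,\psi_{2}}$ and whose special point $\tilde{a}_{s}$ lies in $\tilde{\mathcal{A}}_{\lambda,\psi_{1}}$; such a trait exists by the definition of the stratification. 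Pulling back the cameral family we obtain a proper flat $W$-equivariant family $\tilde{X}_{R}\to\bar{X}_{R}$ equipped with a canonical section $\tilde{\infty}_{R}:\Spec R\to\tilde{X}_{R}$. For each $w\in W$, the two sections $\tilde{\infty}_{R}$ and $w\cdot\tilde{\infty}_{R}$ each lie in a unique irreducible component of $\tilde{X}_{R}$; by upper semicontinuity of $h^{0}$ applied to the proper flat family $\tilde{X}_{R}\to\Spec R$, two sections which land in the same connected component of the special fibre already lie in the same irreducible component of $\tilde{X}_{R}$, hence in the same component of the generic fibre. This yields $W_{\psi_{1}}\subset W_{\psi_{2}}$. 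For the inclusion $I_{\psi_{1}}\subset I_{\psi_{2}}$, one applies the same principle to the closed fixed-point subscheme $(C_{R})^{w}\subset C_{R}$ of an element $w\in W_{\psi_{1}}$ (now lifted to $W_{\psi_{2}}$ by the previous step): after restricting $\Spec R\to\tilde{\mathcal{A}}_{\lambda}$ to land over $\tilde{\mathcal{A}}_{\lambda,\psi_{2}}$ on the generic point, the component $C_{R}$ is proper flat with geometrically connected fibres, so that the image of $(C_{R})^{w}$ in $\Spec R$ is closed and non-empty; by properness and flatness it equals $\Spec R$, hence the generic fibre also carries a fixed point and $w\in I_{\psi_{2}}$.

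\medskip

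\noindent\textbf{Top stratum.} The equality $W_{\psi_{G}}=W$ is immediate from Proposition \ref{conn}: under the hypothesis $\lambda\succ 2g$, the cameral curve $\tilde{X}_{a}$ is connected for every $a\in\abdhk$, so $C_{a}=\tilde{X}_{a}$ and $W$ acts transitively on the single connected component. To obtain $I_{\psi_{G}}=W$, it suffices to show that, for a sufficiently generic $a$ in $\tilde{\mathcal{A}}_{\lambda,\psi_{G}}$, every simple reflection $s_{\alpha}\in W$ possesses a fixed point on $\tilde{X}_{a}$, equivalently $a(\bar{X})$ lifts to a point of $V_{T}^{\lambda}$ lying on the root hyperplane $H_{\alpha}=\ker(\alpha)$. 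Under the assumption $\lambda\succ 2g$, the linear system defining $a$ is sufficiently ample so that, by a Bertini-type argument, a generic section $a:\bar{X}\to\kcd$ transversally meets the discriminant divisor $\mathfrak{D}_{\lambda}$ at many points and these intersection points can be independently prescribed to lie on each $W$-orbit of root hyperplanes in $V_{T}^{\lambda}$. Hence $s_{\alpha}\in I_{a}$ for all simple $\alpha$, which gives $I_{a}=W$ and therefore $I_{\psi_{G}}=W$ by the constancy of $(I_{\psi},W_{\psi})$ along each stratum.

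\medskip

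\noindent\textbf{Main obstacle.} The subtle point is the second half of monotonicity: unlike $W_{a}$, the invariant $I_{a}$ is defined via a fixed-point condition on the \emph{restricted} component $C_{a}$, and closed fixed-point subschemes can a priori acquire purely special-fibre components under degeneration; this forces one to work inside the irreducible component $C_{R}$ of the cameral family (not in $\tilde{X}_{R}$) and to use geometric connectedness of the generic fibre of $C_{R}$ crucially. A secondary difficulty appears in the genericity argument for $I_{\psi_{G}}=W$: one must ensure that all $W$-orbits of roots, not merely some, are realized as local monodromies, which requires the ampleness hypothesis $\lambda\succ 2g$ in its full strength.
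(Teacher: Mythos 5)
Your argument for $I_{\psi_{G}}=W$ on the top stratum is essentially the paper's (transversal intersection with each component of the discriminant forces a reflection from each $W$-orbit of roots into $I_{a}$, and normality plus generation of $W$ by reflections finishes), but the rest of the proposal has genuine gaps.

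The first problem is the identification of $C_{a}$. The primary definition of $W_{a}$ is the image of $\pi_{a}^{\bullet}:\pi_{1}(U_{a},\infty)\to W$, where $U_{a}$ is the \'etale locus of the cameral cover; equivalently $C_{a}$ is a component of the \emph{normalisation} $\tilde{X}_{a}^{\flat}$ (or of $\tilde{X}_{a}\vert_{U_{a}}$), not of the proper curve $\tilde{X}_{a}$. Under the standing hypothesis $\la\succ 2g$, Proposition \ref{conn} says $\tilde{X}_{a}$ is connected for \emph{every} $a\in\abdhk$; if $C_{a}$ were a component of $\tilde{X}_{a}$ you would get $W_{a}=W$ identically and the stratification would be trivial. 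This is exactly why your claim that $W_{\psi_{G}}=W$ is ``immediate from Proposition \ref{conn}'' fails: a connected cameral curve can be reducible, with components glued along the ramification divisor, in which case $W_{a}\neq W$. The paper instead uses that for $a$ in the non-empty transversal open $\abdd$ the curve is smooth \emph{and} connected, hence irreducible, hence its \'etale locus is connected and $W_{a}=W$.

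The same misidentification undermines your monotonicity argument, which works with $\pi_{0}$ of the proper flat family $\tilde{X}_{R}$. The deduction ``same connected component of the special fibre $\Rightarrow$ same irreducible component of $\tilde{X}_{R}$'' is not a consequence of semicontinuity of $h^{0}$ (connectedness and irreducibility are different things), and in any case $\pi_{0}$ of the proper fibres is the wrong invariant. The correct statement --- this is [N, Lem.\ 5.4.5], which the paper simply cites --- is a specialisation statement for the monodromy of the cover over the open family $\mathcal{U}\subset X\times\Spec R$: since $\mathcal{U}\to\Spec R$ has geometrically connected fibres and the trait is strictly henselian, $\pi_{1}(U_{a_{\bar{\eta}}})\to\pi_{1}(\mathcal{U})$ is surjective, whence the image of $\pi_{1}(U_{a_{s}})$ is contained in that of $\pi_{1}(U_{a_{\bar{\eta}}})$. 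Worse, your key step for $I_{\psi}$ --- ``the image of $(C_{R})^{w}$ in $\Spec R$ is closed and non-empty; by properness and flatness it equals $\Spec R$'' --- is a non sequitur: the fixed locus $(C_{R})^{w}$ is a closed subscheme with no flatness over $R$ and can be supported entirely over the closed point. Indeed the element-wise statement you are trying to prove is false: an element such as $w=s_{\alpha}s_{\beta}$ can acquire a fixed point when two branch points collide in the special fibre while having none generically; it nevertheless lies in $I_{a_{\eta}}$ because $I_{a}$ is the subgroup \emph{generated} by the local inertia (the image of $\ker(\pi_{1}(U_{a})\to\pi_{1}(\bar{X}))$). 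So the inclusion $I_{a_{s}}\subset I_{a_{\bar{\eta}}}$ must be proved at the level of these normal subgroups via the same $\pi_{1}$-specialisation, not point by point on fixed loci.
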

\begin{proof}
L'assertion sur la croissance vient de \cite[Lem. 5.4.5]{N}. L'assertion sur $W_{\psi_{G}}$, vient du fait que pour $\la\succ2g$, l'ouvert $\abdd$ est non vide et que $W_{a}=W$, pour $a\in\abddk$.
L'assertion sur $I_{a}$ vient du fait que c'est un sous-groupe normal de $W$. La courbe camérale coupe transversalement chaque hyperplan de racines, donc $I_{a}$ contient toutes les réflexions $s_{\alpha}$ associées aux murs $h_{\alpha}$.
\end{proof}

De ce lemme, on peut en déduire une stratification de  $\tabd$ par les couples $(I_{-},W_{-})$ où $W_{-}$ un sous-groupe de $W$ et $I_{-}$ est un sous-groupe normal de $W_{-}$ inclus dans $W$.
On obtient que la réunion des strates $\tabdp$, avec $(I_{\psi}, W_{\psi})$ avec $I_{\psi}\subset I_{-}$ et $W_{\psi}\subset W_{-}$, est un fermé de $\tabd$ et que la réunion des strates $\tabdp$, avec $I_{\psi}=I_{-}$ et $W_{\psi}= W_{-}$ est ouvert dans ce fermé. On note $\tabdi$ cette strate.
Nous obtenons alors
\begin{center}
$\tabd=\coprod\limits_{(I_{-},W_{-})}\tabdi$.
\end{center}

\subsection{Calcul de $\pi_{0}(\mathcal{P}_{a})$}
Fixons un point $\infty\in X(\bar{k})$. On considère l'ouvert $\abd^{\infty}$ de $\abd\otimes\bar{k}$ défini sur $\bar{k}$, qui consiste en les points $a\in\abdk$ tels que $a(\infty)\in\kc^{\la,rs}(\bar{k})$. 
On considère alors en chaque point $a\in\abd^{\infty}$, le champ de Picard $\mathcal{P}_{a}^{\infty}$ qui classifie les $J_{a}$-torseurs avec une trivialisation en $\infty$.

\begin{prop}\cite[Prop. 4.5.7]{N}\label{infini}
Le foncteur $\mathcal{P}^{\infty}$ est représentable par un schéma en groupes lisse localement de type fini au-dessus de $\abd^{\infty}$. Pour tout $a\in\abd^{\infty}$, le tore $J_{a,\infty}$ agit sur $\mathcal{P}_{a}^{\infty}$ en modifiant la rigidification et induit un isomorphisme entre le champ quotient $[\mathcal{P}_{a}^{\infty}/J_{a,\infty}]$ et $\mathcal{P}_{a}$.
\end{prop}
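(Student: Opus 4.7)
The plan is to reduce the statement to two pivotal inputs: (i) that the fiber $J_{a,\infty}$ is a torus whenever $a\in\abd^{\infty}$, and (ii) that rigidifying a $J_a$-torsor at $\infty$ kills all its automorphisms, so $\mathcal{P}^{\infty}$ becomes a sheaf of sets rather than a stack.

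First I would exploit the definition of $\abd^{\infty}$: the condition $a(\infty)\in\kc^{\la,rs}$ together with the Galois description of Proposition \ref{galois} identifies $J_{a,\infty}$ with $\prod_{V_T/\kc}(T\times V_T)^W$ evaluated at a regular semisimple point, which is an (�tale) torus. Then I would show the triviality of automorphisms by the following argument: an automorphism of a rigidified pair $(P,\sigma)$ with $P$ a $J_a$-torsor and $\sigma$ a trivialization at $\infty$ corresponds to a section $\eta\in H^{0}(\bar{X},J_a)$ with $\eta(\infty)=1$. Via the Galois presentation $J_a=(\theta_{a,*}(T\times\tilde{X}_a))^{W}$, such a section is a $W$-equivariant morphism $f:\tilde{X}_a\to T$ vanishing at a preimage $\tilde{\infty}$ of $\infty$. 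Since $\tilde{X}_a$ is connected (Proposition \ref{conn}, under $\la\succ 2g$), proper, and reduced (Lemme \ref{reduite}), and $T$ is affine, $f$ is constant, hence identically $1$; thus $\eta=1$.

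For the representability, smoothness and local finiteness I would set up the forgetful morphism $\mathcal{P}^{\infty}\to\mathcal{P}$ over $\abd^{\infty}$ and observe that, fiberwise over $\mathcal{P}$, rigidifications at $\infty$ form a $J_\infty$-torsor (where $J_\infty$ denotes the pullback of $J_a$ to $\{\infty\}\times S$). Combined with the triviality of automorphisms shown above, this upgrades $\mathcal{P}^{\infty}$ to a sheaf, hence an algebraic space; smoothness (and the fact that it is a scheme locally of finite type) then follows by combining the smoothness of $\mathcal{P}\to\abdh$ from Proposition \ref{picardlisse} with the smoothness of the torus $J_\infty$, since being a torsor under a smooth affine group scheme preserves these properties. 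The action of $J_{a,\infty}$ on $\mathcal{P}^{\infty}_a$ is defined by modifying the rigidification $\sigma$; it is free by the no-automorphism argument, and its orbits are exactly the fibers of the forgetful map $\mathcal{P}^{\infty}_a\to\mathcal{P}_a$, which gives the claimed isomorphism $[\mathcal{P}^{\infty}_a/J_{a,\infty}]\stackrel{\sim}{\to}\mathcal{P}_a$ by the standard identification of torsors with their quotient stacks.

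The main obstacle is the no-automorphism step, and in particular the need to invoke the connectedness of the cameral curve $\tilde{X}_a$, which rests on the nontrivial Debarre-type argument in Proposition \ref{conn} and implicitly requires the inequality $\la\succ 2g$; everything else is formal from the torsor structure and the smoothness results already established for $\mathcal{P}$.
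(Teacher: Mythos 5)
Your argument is correct in substance but organized differently from the paper's (very terse) proof. The paper obtains representability by identifying $J_{a}$-torsors, via the Galois description of $J_{a}$, with $W$-equivariant $T$-torsors on the cameral curve and then invoking Grothendieck's representability of the (rigidified) Picard scheme of the family of cameral curves over $\abd^{\infty}$; the second assertion is dismissed as immediate (it is the forgetful map). You instead run the rigidification argument intrinsically: triviality of automorphisms of a rigidified pair, via $H^{0}(\bar{X},J_{a})\subset\Hom_{W}(\tilde{X}_{a},T)$ together with properness, reducedness and connectedness of $\tilde{X}_{a}$ and affineness of $T$, plus the observation that $\mathcal{P}^{\infty}\rightarrow\mathcal{P}$ is fiberwise a torsor under $J_{\infty}$. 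This is a legitimate alternative and makes explicit what the paper leaves implicit; as you note, it requires $\la\succ 2g$ for the connectedness of $\tilde{X}_{a}$, which is a standing hypothesis. What your route does not quite deliver on its own is that $\mathcal{P}^{\infty}$ is a \emph{scheme}: the torsor/descent formalism only yields a sheaf, hence a smooth group algebraic space localement de type fini over $\abd^{\infty}$; upgrading to a scheme is precisely the point where the paper's one-line proof appeals to the Picard scheme of the cameral curve.

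There is one genuine misstatement: the action of $J_{a,\infty}$ on $\mathcal{P}_{a}^{\infty}$ is \emph{not} free. The stabilizer of a rigidified pair $(P,\sigma)$ is the image of $\Aut(P)=H^{0}(\bar{X},J_{a})$ (e.g. $T^{W}\supset Z_{G}$) under evaluation at $\infty$: an element $t$ fixes the isomorphism class of $(P,\sigma)$ as soon as some global automorphism of $P$ carries $\sigma$ to $t\sigma$. Your no-automorphism argument proves something different, namely that a rigidified pair has no nontrivial automorphisms, which is what makes $\mathcal{P}^{\infty}$ a sheaf rather than a stack. Freeness is in fact incompatible with your own conclusion: if the action were free, the quotient stack $[\mathcal{P}_{a}^{\infty}/J_{a,\infty}]$ would have trivial inertia, whereas $\mathcal{P}_{a}$ does not. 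The correct statement, which suffices, is that the forgetful map $\mathcal{P}_{a}^{\infty}\rightarrow\mathcal{P}_{a}$ is a $J_{a,\infty}$-torsor; the nontrivial stabilizers then account exactly for the automorphism groups of objects of $\mathcal{P}_{a}$, and the identification $[\mathcal{P}_{a}^{\infty}/J_{a,\infty}]\cong\mathcal{P}_{a}$ follows.
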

\begin{proof}
La représentabilité vient de la représentabilité du schéma de Picard de la courbe camérale. La deuxième assertion est immédiate, elle consiste en l'oubli de la trivialisation.
\end{proof}
Soit $a\in\abd^{\infty}(\bar{k})$. Soit $U$ l'image réciproque dans $\bar{X}$ du lieu régulier semisimple. Au-dessus de $U$, le revêtement caméral est fini étale de groupe de Galois $W$. En se donnant $\tilde{\infty}\in\tilde{X}_{a}$ au-dessus de $\infty$, on a un morphisme de groupes:
\begin{center}
$\pi_{a}^{\bullet}:\pi_{1}(U,\infty)\rightarrow W$.
\end{center}
On note $W_{a}$ l'image de $\pi_{a}^{\bullet}$ et $I_{a}$ l'image du noyau de $\pi_{1}(U,\infty)\rightarrow\pi_{1}(\bar{X},\infty)$.
Soit $\mathcal{P}_{a}'$ le champ de Picard associé à $J_{a}^{0}$. A nouveau, pour un groupe abélien de type fini $\Lambda$, on note:
\begin{center}
$\Lambda^{*}=\Spec(\overline{\mathbb{Q}}_{l}[\Lambda])$
\end{center}
le  groupe affine diagonalisable associé.
De la suite exacte:
$$\xymatrix{0\ar[r]&J_{a}^{0}\ar[r]&J_{a}\ar[r]&\pi_{0}(J_{a})\ar[r]&0}$$
nous obtenons une suite exacte longue
$$\xymatrix{H^{0}(\bar{X},\pi_{0}(J_{a}))\ar[r]&H^{1}(\bar{X},J_{a}^{0})\ar[r]&H^{1}(\bar{X},J_{a})\ar[r]&H^{1}(\bar{X},\pi_{0}(J_{a}))=0},$$
le dernier terme étant nul comme $\pi_{0}(J_{a})$ est à support fini. On en déduit la proposition suivante:

\begin{lem}
On a une flèche surjective:
\begin{center}
$\mathcal{P}_{a}'\rightarrow\mathcal{P}_{a}$,
\end{center}
de noyau fini et une suite exacte
$$\xymatrix{H^{0}(\bar{X},\pi_{0}(J_{a}))\ar[r]&\pi_{0}(\mathcal{P}_{a}')\ar[r]&\pi_{0}(\mathcal{P}_{a})\ar[r]&0}.$$
\end{lem}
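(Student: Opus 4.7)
Le plan est de d\'eduire toutes les assertions de la suite exacte longue de cohomologie \'etale sur $\bar{X}$ associ\'ee \`a la suite exacte courte de faisceaux $0\to J_a^0\to J_a\to \pi_0(J_a)\to 0$ d\'ej\`a \'ecrite juste avant l'\'enonc\'e. L'observation fondamentale est que le faisceau $\pi_0(J_a)$ est \`a support fini : il vit sur $\bar{X}\setminus U$ o\`u $U=a^{-1}(\kcdr)$ est l'image r\'eciproque du lieu r\'egulier semisimple, c'est-\`a-dire le compl\'ementaire d'un ensemble fini de points ferm\'es pour $a\in\abdhk$. Il s'agit donc d'un faisceau gratte-ciel \`a fibres ab\'eliennes de type fini, d'o\`u imm\'ediatement $H^1(\bar{X},\pi_0(J_a))=0$ (d\'ej\`a soulign\'e dans le texte) et, surtout, $H^0(\bar{X},\pi_0(J_a))=\bigoplus_{x\in\bar{X}\setminus U}\pi_0(J_{a,x})$ est un groupe \emph{fini}.

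Dans un second temps, j'identifierais les invariants homotopiques des champs de Picard aux groupes de cohomologie correspondants : pour un sch\'ema en groupes commutatif lisse $\mathcal{F}$ sur $\bar{X}$, le champ de Picard $\mathcal{P}(\mathcal{F})$ des $\mathcal{F}$-torseurs v\'erifie $\pi_0(\mathcal{P}(\mathcal{F}))=H^1(\bar{X},\mathcal{F})$ et $\pi_1(\mathcal{P}(\mathcal{F}))=H^0(\bar{X},\mathcal{F})$. La fin de la suite exacte longue de cohomologie s'\'ecrit alors
$$H^0(\bar{X},\pi_0(J_a))\to H^1(\bar{X},J_a^0)\to H^1(\bar{X},J_a)\to 0,$$
ce qui n'est autre que la suite exacte annonc\'ee apr\`es identification des $\pi_0$, et en particulier cela fournit l'essentielle surjectivit\'e de $\mathcal{P}_a'\to\mathcal{P}_a$.

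Pour la finitude du noyau $K$ du morphisme $\mathcal{P}_a'\to\mathcal{P}_a$, je noterais que l'inclusion de faisceaux $J_a^0\hookrightarrow J_a$ \'etant injective, la fl\`eche $H^0(\bar{X},J_a^0)\to H^0(\bar{X},J_a)$ l'est aussi, donc $\pi_1(K)=0$. Par ailleurs, $\pi_0(K)$ s'identifie au noyau de $H^1(\bar{X},J_a^0)\to H^1(\bar{X},J_a)$, c'est-\`a-dire \`a l'image par le bord du groupe fini $H^0(\bar{X},\pi_0(J_a))$. Le champ de Picard $K$ est donc discret, \`a groupe de classes d'isomorphie fini : c'est bien un groupe ab\'elien fini. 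Aucun obstacle s\'erieux ne se pr\'esente ici, tout reposant sur la finitude des fibres de $\pi_0(J_a)$ hors de $U$ et sur le formalisme standard de la suite exacte longue de cohomologie \'etale sur une courbe ; le seul point formel \`a v\'erifier, imm\'ediat, est la compatibilit\'e entre le triangle distingu\'e associ\'e \`a la suite exacte courte de faisceaux et la notion de noyau dans la $2$-cat\'egorie des champs de Picard.
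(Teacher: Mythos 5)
Votre strat\'egie d'ensemble --- tout tirer de la suite exacte longue associ\'ee \`a $0\to J_{a}^{0}\to J_{a}\to\pi_{0}(J_{a})\to 0$ et de la finitude du groupe $H^{0}(\bar{X},\pi_{0}(J_{a}))$, ce dernier \'etant un faisceau gratte-ciel \`a fibres finies --- est bien celle du texte, et vos arguments pour l'essentielle surjectivit\'e de $\mathcal{P}_{a}'\to\mathcal{P}_{a}$ (via $H^{1}(\bar{X},\pi_{0}(J_{a}))=0$) et pour la finitude du noyau sont corrects. En revanche, l'identification $\pi_{0}(\mathcal{P}(\mathcal{F}))=H^{1}(\bar{X},\mathcal{F})$ sur laquelle repose votre d\'emonstration de la suite exacte est fausse pour la notion de $\pi_{0}$ en jeu ici. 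Dans ce lemme, $\pi_{0}(\mathcal{P}_{a})$ d\'esigne le groupe des composantes connexes du champ alg\'ebrique $\mathcal{P}_{a}$, et non le groupe des classes d'isomorphisme de $J_{a}$-torseurs : c'est le quotient de $H^{1}(\bar{X},J_{a})$ par le sous-groupe des classes appartenant \`a la composante neutre $\mathcal{P}_{a}^{0}$, laquelle contient typiquement les points d'une vari\'et\'e ab\'elienne. La proposition qui suit imm\'ediatement, identifiant $\pi_{0}(\mathcal{P}_{a}')$ \`a $X_{*}(T\cap G_{der})_{W_{a}}$, ainsi que la d\'efinition du lieu anisotrope par la finitude de $\pi_{0}(\mathcal{P}_{a}^{1})$, n'auraient aucun sens avec votre lecture.

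La suite $H^{0}(\bar{X},\pi_{0}(J_{a}))\to H^{1}(\bar{X},J_{a}^{0})\to H^{1}(\bar{X},J_{a})\to 0$ n'est donc pas la suite annonc\'ee : il reste \`a passer au quotient par les composantes neutres et \`a v\'erifier que l'exactitude survit, et c'est pr\'ecis\'ement l\`a le contenu r\'eel de l'\'enonc\'e. L'ingr\'edient manquant est fourni par la finitude du noyau que vous \'etablissez par ailleurs : le morphisme $\mathcal{P}_{a}'\to\mathcal{P}_{a}$ \'etant essentiellement surjectif \`a noyau de dimension nulle, son image $(\mathcal{P}_{a}')^{0}\to\mathcal{P}_{a}^{0}$ est un sous-groupe connexe de dimension maximale, donc \'egal \`a $\mathcal{P}_{a}^{0}$ tout entier. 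La surjectivit\'e de $\pi_{0}(\mathcal{P}_{a}')\to\pi_{0}(\mathcal{P}_{a})$ en d\'ecoule, et pour l'exactitude au milieu on proc\`ede ainsi : si une composante $C$ de $\mathcal{P}_{a}'$ s'envoie dans $\mathcal{P}_{a}^{0}$, un point de $C$ s'\'ecrit, \`a translation pr\`es par un point de $(\mathcal{P}_{a}')^{0}$, comme une classe du noyau de $H^{1}(\bar{X},J_{a}^{0})\to H^{1}(\bar{X},J_{a})$, donc comme l'image par le bord d'un \'el\'ement de $H^{0}(\bar{X},\pi_{0}(J_{a}))$. (Accessoirement, $\pi_{0}(K)$ n'est pas exactement le noyau de $H^{1}(\bar{X},J_{a}^{0})\to H^{1}(\bar{X},J_{a})$ mais une extension de celui-ci par le conoyau de $H^{0}(\bar{X},J_{a}^{0})\to H^{0}(\bar{X},J_{a})$ ; les deux termes \'etant finis, votre conclusion de finitude subsiste.)
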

Nous pouvons maintenant élucider la structure de $\pi_{0}(\cP_{a})$ grâce à \cite[Lem. 8.3.1-8.4.1]{CL}:
\begin{prop}\label{ncl}
Pour $\tilde{a}=(a,\infty)\in\tabdk$, on a un morphisme surjectif bien défini
\begin{center}
$X_{*}(T\cap G_{der})\rightarrow\pi_{0}(\cP_{a}^{1})$
\end{center}
défini et $\pi_{0}(\cP_{a}')$ s'identifie à $X_{*}(T\cap G_{der})_{W_{a}}$.
De plus, ces flèches se mettent en famille en un morphisme surjectif de faisceaux entre le faisceau constant $X_{*}(T\cap G_{der})$ et $\pi_{0}(\cP^{1})$.
\end{prop}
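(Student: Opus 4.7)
La d�marche suit celle de Chaudouard-Laumon \cite[Lem. 8.3.1-8.4.1]{CL} adapt�e � notre cadre du semigroupe de Vinberg. Je commencerais par rigidifier le probl�me en $\infty$ : d'apr�s la proposition \ref{infini}, $\cP_{a}$ s'identifie au champ quotient $[\cP_{a}^{\infty}/J_{a,\infty}]$. Comme $\tilde{a}=(a,\tilde{\infty})\in\tabdk$, le point $a(\infty)$ est r�gulier semisimple, donc $J_{a,\infty}$ est un tore, canoniquement isomorphe � $T$ via le choix du rel�vement $\tilde{\infty}$ de $\infty$ � la courbe cam�rale.

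La construction de la fl�che repose sur la description galoisienne \ref{galois} du centralisateur r�gulier. � un cocaract�re $\mu\in X_{*}(T\cap G_{der})$, j'associe le $T$-torseur $\cL_{\mu}$ sur $\tilde{X}_{a}$ correspondant au diviseur $\mu\cdot[\tilde{\infty}]$; par restriction de Weil et prise des $W$-invariants, on obtient un $J_{a}^{0}$-torseur sur $\bar{X}$, d'o� un point de $\cP_{a}'$ puis de $\cP_{a}$ par la fl�che de la section pr�c�dente. Le fait de choisir $\mu$ dans $X_{*}(T\cap G_{der})$ plut�t que dans $X_{*}(T)$ assure que le degr� induit dans $X_{*}(S)=X_{*}(T)/X_{*}(T\cap G_{der})$ est nul, de sorte que l'image tombe dans $\cP_{a}^{1}$.

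Pour l'identification $\pi_{0}(\cP_{a}')\cong X_{*}(T\cap G_{der})_{W_{a}}$, on analyse le noyau : deux cocaract�res $\mu,\mu'$ donnent des classes �gales dans $\pi_{0}(\cP_{a}')$ si et seulement si $\mu-\mu'$ appartient au sous-groupe engendr� par les $(\mathrm{id}-w)\nu$ avec $w\in W_{a}$ et $\nu\in X_{*}(T\cap G_{der})$, ce qui correspond au changement de rigidification en $\tilde{\infty}$ par un �l�ment de $W_{a}$ (lequel stabilise par d�finition la composante connexe de $\tilde{X}_{a}$ contenant $\tilde{\infty}$). La surjectivit� provient de la proposition \ref{conn} (connexit� de la courbe cam�rale pour $\la\succ 2g$) combin�e au fait que tout $J_{a}^{0}$-torseur se rigidifie en $\infty$ et se classifie, via la description galoisienne, par son degr� relatif en $\tilde{\infty}$.

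Pour l'assertion en famille, la construction ci-dessus est fonctorielle en $\tilde{a}$, et les groupes $W_{a}$ sont constants sur les strates $\tabdi$ par d�finition de celles-ci; on en d�duit un morphisme surjectif de faisceaux entre le faisceau constant $X_{*}(T\cap G_{der})$ et $\pi_{0}(\cP^{1})$ sur $\tabd$. La principale difficult� sera de contr�ler pr�cis�ment le noyau (distinguer l'action de $W_{a}$ proprement dite de son image dans $W$, et traiter le passage de $\cP_{a}'$ � $\cP_{a}^{1}$ via la suite exacte du lemme pr�c�dent combin�e au morphisme degr�), et de v�rifier que les constructions galoisiennes globalis�es restent bien d�finies au bord des strates.
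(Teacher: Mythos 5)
Votre proposition est correcte et suit essentiellement la m\^eme voie que l'article, lequel se contente de renvoyer \`a Chaudouard--Laumon \cite[Lem. 8.3.1--8.4.1]{CL} : rigidification en $\tilde{\infty}$, construction de la fl\`eche $\mu\mapsto\pi^{\mu}$ via la description galoisienne de $J_{a}$, et identification du noyau avec l'id\'eal d'augmentation de $W_{a}$. La seule impr\'ecision est que le diviseur $\mu\cdot[\tilde{\infty}]$ n'est pas $W$-\'equivariant ; la fl\`eche doit \^etre d\'efinie en recollant le $J_{a}$-torseur trivial sur $\bar{X}-\{\infty\}$ avec le torseur trivial sur le disque formel en $\infty$ au moyen de l'image de $\pi^{\mu}$ par l'identification $X_{*}(T)\cong J_{a}(F_{\infty})/J_{a}(\co_{\infty})$ fournie par le choix de $\tilde{\infty}$, plut\^ot qu'en prenant les $W$-invariants d'une restriction de Weil.
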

\begin{defi}
On définit le lieu anisotrope $\abdan\subset\abdh$ constitué des $a\in\abdh$ tels que $\pi_{0}(\cP^{1}_{a})$ est fini.
\end{defi}
\begin{prop}\label{ouvan}
Le lieu anisotrope $\abdan\subset\abdh$ est un ouvert de $\abdh$.
\end{prop}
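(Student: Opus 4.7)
La strat�gie consiste � ramener la finitude de $\pi_0(\cP_a^1)$ � une condition combinatoire sur le sous-groupe de monodromie $W_a\subset W$, puis � exploiter la monotonie de la stratification par les couples $(I_-,W_-)$.

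\emph{Traduction de la condition d'anisotropie.} La proposition \ref{ncl} fournit une surjection de faisceaux $X_*(T\cap G_{der})\twoheadrightarrow\pi_0(\cP^1)$, ainsi que l'identification $\pi_0(\cP_a')\cong X_*(T\cap G_{der})_{W_a}$, o� $\cP_a'$ est le champ de Picard associ� � $J_a^0$. Combin� � la surjectivit� de $\cP_a'\to\cP_a$ � noyau fini et au fait que $\cP_a^1$ est le noyau du morphisme $\cP_a\to X_*(S)$ � valeurs dans un groupe ab�lien libre de type fini, on en d�duit que $\pi_0(\cP_a^1)$ est fini si et seulement si le groupe des coinvariants $X_*(T\cap G_{der})_{W_a}$ est fini, ce qui �quivaut � l'annulation $\bigl(X_*(T\cap G_{der})\otimes\mathbb{Q}\bigr)^{W_a}=0$. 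Cette condition ne d�pend que de la classe de conjugaison de $W_a$ dans $W$, et d�termine donc un sous-ensemble bien d�fini de $\abdh$ ind�pendamment du choix du point base $\tilde{\infty}$.

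\emph{R�duction au rev�tement cam�ral et stratification.} Lorsque $\infty\in X(\bar{k})$ varie, les ouverts $\abd^\infty$ recouvrent $\abdh$. Comme $\tabd\to\abd^\infty$ est un $W_\infty$-torseur fini �tale et que l'ouverture descend le long de tels morphismes, il suffit de montrer que l'image r�ciproque de $\abdan$ dans $\tabd\otimes_k\bar{k}$ est ouverte. Cette image r�ciproque est une r�union de strates $\tabdi$ du paragraphe pr�c�dent, puisque $W_a$ est constant sur chaque strate.

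\emph{Conclusion via la monotonie.} On montre que le compl�mentaire $(\tabd\otimes_k\bar{k})\setminus\abdan$ est ferm� sous sp�cialisation. D'apr�s le lemme \ref{croiss}, si $\tabdi$ est contenue dans l'adh�rence d'une strate $\tilde{\mathcal{A}}_{\lambda,(I'_-,W'_-)}$, alors $W_-\subseteq W'_-$; autrement dit, si $a'\in\tabd$ est une sp�cialisation d'un point $a$, alors $W_{a'}\subseteq W_a$. L'espace des vecteurs fixes de $W_{a'}$ contient donc celui de $W_a$: si $W_a$ admet un vecteur fixe non nul, il en est de m�me pour $W_{a'}$. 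Le lieu non anisotrope �tant stable par sp�cialisation, il est ferm� dans $\tabd\otimes_k\bar{k}$, d'o� l'ouverture de $\abdan$.

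L'obstacle principal sera la premi�re �tape: contr�ler simultan�ment les noyaux et conoyaux dans les suites exactes liant $\pi_0(\cP_a')$, $\pi_0(\cP_a)$ et $\pi_0(\cP_a^1)$ afin d'�tablir l'�quivalence pr�cise entre la finitude de $\pi_0(\cP_a^1)$ et l'annulation des invariants rationnels de $W_a$ ; ce passage m�le arguments de rang de r�seaux, le comportement du degr� $\cP_a\to X_*(S)$ et la description galoisienne de $J_a$. Une fois cette traduction acquise, le reste de la preuve est une cons�quence formelle de la croissance de $(I_-,W_-)$ le long de la stratification.
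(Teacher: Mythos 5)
Votre preuve est correcte et suit essentiellement la m�me d�marche que celle du texte : traduire la finitude de $\pi_0(\cP_a^1)$ via la proposition \ref{ncl} en la condition que $(T\cap G_{der})^{W_a}$ soit fini, puis invoquer la croissance de $\psi\mapsto(I_\psi,W_\psi)$ du lemme \ref{croiss} pour voir que la r�union des strates correspondantes est ouverte dans $\tabd$. Vous explicitez simplement (et � juste titre) les deux points que le texte laisse implicites, � savoir l'�quivalence entre la finitude de $\pi_0(\cP_a^1)$ et celle des coinvariants $X_*(T\cap G_{der})_{W_a}$, et le fait qu'une r�union de strates stable par g�n�risation est ouverte.
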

\begin{proof}
Du lemme $\ref{croiss}$, nous obtenons que la réunion des strates $\tabdp$ tel que $(T\cap G_{der})^{W_{\psi}}$ est fini, est un ouvert de $\tabd$.
Et de la définition du lieu anisotrope, il résulte qu'un point $\tilde{a}=(a,\infty)\in\abd^{\infty}(\bar{k})$ est dans cet ouvert si et seulement si $a\in\abdank$.
\end{proof}
\begin{cor}\label{corlisse}
Au-dessus de $\abdan$, le champ $\cmd^{1,ani}$ est de Deligne-Mumford ainsi que $\mathcal{P}^{1,ani}$.
\end{cor}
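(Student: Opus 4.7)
L'argument reposera sur la combinaison de la lissité du champ de Picard (déjà établie) avec la finitude des stabilisateurs au-dessus du lieu anisotrope.

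Pour $\cP^{1,ani}$, la lissité de $\cP^{\heartsuit}$ au-dessus de $\abdh$ fournie par la proposition \ref{picardlisse} entraînera, par restriction à l'ouvert anisotrope, celle de $\cP^{1,ani}$ au-dessus de $\abdan$. Il ne restera alors qu'à établir la finitude des stabilisateurs géométriques : pour un $J_a$-torseur, ce stabilisateur est $H^0(X, J_a)$. En utilisant la description galoisienne du centralisateur régulier (proposition \ref{galois}) combinée à la connexité de la courbe caméral (proposition \ref{conn}), on obtiendra le plongement $H^0(X, J_a) \subset T^{W_a}$. La caractérisation de l'anisotropie issue de la preuve de la proposition \ref{ouvan} affirme précisément que $(T \cap G_{der})^{W_a}$ est fini lorsque $a \in \abdan$ ; combinée à la définition de $\cP^1$ comme noyau du morphisme degré $\deg : \cP \to X_*(S)$ suivant \cite{CL}, cela donnera la finitude voulue, en rigidifiant la contribution éventuelle du tore central $Z_G^0$.

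Pour $\cmd^{1,ani}$, je procéderai de façon analogue en utilisant l'encadrement $H^0(X, J_a) \subset \Aut(E,\phi) \subset H^0(X, J_a^{\flat})$ fourni par la proposition \ref{sgrpe}, couplé à la description galoisienne du modèle de Néron global (proposition \ref{neron}) pour contrôler le terme de droite. L'hypothèse d'anisotropie, combinée à la condition de degré zéro, fournira la finitude des stabilisateurs et donc la propriété de Deligne-Mumford pour $\cmd^{1,ani}$.

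Le point technique le plus délicat sera de vérifier soigneusement que le passage aux sous-champs $\cP^1$ et $\cmd^1$ via le morphisme degré annule effectivement la contribution du tore central $Z_G^0$, laquelle est préservée par l'action de $W_a$ et donc naturellement présente dans $T^{W_a}$ : c'est une subtilité propre au cas où $G$ admet un tore central non trivial, absente du cas semisimple traité par Ngô.
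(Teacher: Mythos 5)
Your plan follows essentially the same route as the paper's proof: both rest on the sandwich $H^{0}(\bar{X},J_{a})\subset\Aut(E,\phi)\subset H^{0}(\bar{X},J_{a}^{\flat})$ of Proposition \ref{sgrpe}, the Galois description of the N\'eron model giving $H^{0}(\bar{X},J_{a}^{\flat})=T^{W_{a}}$, the finiteness of $(T\cap G_{der})^{W_{a}}$ built into the definition of the anisotropic locus, and the degree-zero condition to dispose of the central torus. The only cosmetic difference is that the paper first treats $\cmd^{1,ani}$ and then transfers the Deligne--Mumford property to $\cP^{1,ani}$ via the torsor structure on the regular open locus, whereas you handle the Picard stack directly through the automorphism group $H^{0}(\bar{X},J_{a})$ of a $J_{a}$-torsor; the delicate point you flag about the central torus is precisely the step the paper also passes over quickly.
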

\begin{proof}
De la description galoisienne de $J_{a}^{\flat}$ \eqref{nergal}, nous obtenons que:
\begin{center}
$H^{0}(\bar{X},J_{a}^{\flat})=T^{W_{a}}$
\end{center}
Par définition de $\abdan$, nous avons $(T\cap G_{der})^{W_{a}}$ qui est fini non-ramifié comme la caractéristique est première à l'ordre du groupe de Weyl. Comme $(E,\phi)\in\cmd^{1,ani}$, il résulte de la proposition \ref{sgrpe} que $\Aut(E,\phi)$ est contenu dans $(T\cap G_{der})^{W_{a}}$, donc est également fini non-ramifié. Enfin, comme un ouvert de $\cmd^{1,ani}$ est un torseur sous $\mathcal{P}^{1,ani}$, nous obtenons la propriété analogue pour le champ de Picard.
\end{proof}
Enfin, il résulte de la proposition \ref{ncl} que nous avons immédiatement:
\begin{cor}
Au-dessus de $\tabdan$, $\pi_{0}(\cP^{1})_{\vert\tabdan}$ est un faisceau en groupes abéliens finis.
\end{cor}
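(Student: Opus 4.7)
Le plan consiste � combiner trois ingr�dients d�j� disponibles~: la description de Proposition~\ref{ncl}, la stratification par les invariants monodromiques $(I_-,W_-)$, et la caract�risation du lieu anisotrope d�gag�e dans la preuve de Proposition~\ref{ouvan}.

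D'abord, Proposition~\ref{ncl} fournit un morphisme surjectif de faisceaux du faisceau constant $X_{*}(T\cap G_{der})$ vers $\pi_{0}(\cP^{1})$, dont la fibre en $\tilde{a}=(a,\tilde{\infty})$ est $X_{*}(T\cap G_{der})_{W_{a}}$. Comme source et quotient sont ab�liens, $\pi_{0}(\cP^{1})$ est automatiquement un faisceau en groupes ab�liens~; il ne reste qu'� �tablir la finitude.

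Ensuite, j'utiliserais la stratification $\tabd\otimes\bar{k}=\coprod\tabdi$ par les paires $(I_{-},W_{-})$. Par construction, le sous-groupe $W_{a}\subset W$ est constant �gal � $W_{-}$ sur chaque strate, donc les coinvariants $X_{*}(T\cap G_{der})_{W_{a}}$ ne varient pas le long d'une strate~: la restriction de $\pi_{0}(\cP^{1})$ � chaque strate (intersect�e avec $\tabdan$) est un quotient du faisceau constant associ� au groupe ab�lien $X_{*}(T\cap G_{der})_{W_{-}}$, donc est localement constante.

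Enfin, pour la finitude fibre � fibre sur $\tabdan$, j'invoquerais la d�finition de l'ouvert anisotrope telle qu'elle est formul�e dans la preuve de Proposition~\ref{ouvan}~: un point $\tilde{a}\in\tabdan$ v�rifie que $(T\cap G_{der})^{W_{a}}$ est fini. Comme l'ordre de $W$ est premier � la caract�ristique, la dualit� entre caract�res et cocaract�res �change invariants et coinvariants modulo torsion~; on en d�duit que $X_{*}(T\cap G_{der})_{W_{-}}$ est fini sur toute strate rencontrant $\tabdan$. La r�union des strates �tant finie, on obtient ainsi que $\pi_{0}(\cP^{1})_{\vert\tabdan}$ est constructible, localement constant � fibres finies sur chaque strate, donc un faisceau en groupes ab�liens finis.

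Le point le moins automatique est la derni�re �tape, � savoir l'�quivalence entre finitude de $(T\cap G_{der})^{W_{-}}$ et finitude de $X_{*}(T\cap G_{der})_{W_{-}}$~: c'est un �nonc� standard d'alg�bre lin�aire sur les r�seaux avec action d'un groupe fini (le rang des invariants co�ncide avec le rang des coinvariants, car le caract�ristique ne divise pas $\left|W\right|$), mais il faut prendre garde de l'appliquer au bon r�seau $X_{*}(T\cap G_{der})$ et non � $X_{*}(T)$ tout entier, ce qui justifie l'hypoth�se que $\pi_{0}(\cP_{a}^{1})$ -- et non $\pi_{0}(\cP_{a})$ -- est fini dans la d�finition m�me du lieu anisotrope.
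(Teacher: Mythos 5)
Votre argument est correct et suit essentiellement la m�me voie que le papier, qui d�clare le corollaire cons�quence imm�diate de la proposition~\ref{ncl}~: le faisceau $\pi_{0}(\cP^{1})$ est un quotient du faisceau constant ab�lien $X_{*}(T\cap G_{der})$, donc ab�lien, et ses fibres au-dessus de $\tabdan$ sont finies. Notez seulement que la finitude des fibres est litt�ralement la d�finition du lieu anisotrope ($\pi_{0}(\cP_{a}^{1})$ fini), de sorte que votre d�tour par l'�quivalence entre finitude de $(T\cap G_{der})^{W_{a}}$ et finitude de $X_{*}(T\cap G_{der})_{W_{a}}$ -- qui est correct, et qui au passage ne requiert aucune hypoth�se sur la caract�ristique puisqu'il s'agit d'une �galit� de rangs de r�seaux vue sur $\mathbb{Q}$ -- ne fait que red�rouler la preuve de la proposition~\ref{ouvan} sans �tre n�cessaire ici.
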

\subsubsection{Module de Tate}
Plaçons-nous au-dessus de l'ouvert anisotrope. Au-dessus de cet ouvert, d'après le corollaire \ref{corlisse}, $\cP^{1}$ est un champ de Deligne-Mumford lisse.
Notons $\cP^{0}$ le sous-champ de Picard des composantes neutres de $\cP^{1}$.
Posons $g:\mathcal{P}^{0}\rightarrow\abdan$ le morphisme structural. Il est lisse de dimension relative $d$ et nous posons:
\begin{center}
$T_{\overline{\mathbb{Q}}_{l}}(\mathcal{P}^{0})=H^{2d-1}(g_{!}\overline{\mathbb{Q}}_{l})$
\end{center}
qui est un $\overline{\mathbb{Q}}_{l}$-faisceau sur $\abdan$.
On tire la proposition suivante de \cite[Prop. 4.12.1]{N}:
\begin{prop}\label{tate}
Il existe une forme alternée
\begin{center}
$\psi:T_{\overline{\mathbb{Q}}_{l}}(\mathcal{P}^{0})\times T_{\overline{\mathbb{Q}}_{l}}(\mathcal{P}^{0})\rightarrow \overline{\mathbb{Q}}_{l}(-1)$
\end{center}
telle qu'en chaque point géométrique $a\in\abdank$, la forme $\psi_{a}$ est nulle sur la partie affine $T_{\overline{\mathbb{Q}}_{l}}(R_{a})$ et induit une accouplement parfait sur la partie abélienne $T_{\overline{\mathbb{Q}}_{l}}(A_{a})$.
\end{prop}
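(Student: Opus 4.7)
The plan is to follow Ngô's construction from \cite[Prop. 4.12.1]{N} and adapt it to the Vinberg setting, exploiting the Galois description of the global Néron model established in Proposition \ref{neron}. The strategy has three stages: (i) realize $T_{\bql}(\cP^{0})$ geometrically as a piece of $H^{1}$ of the normalized cameral curve with coefficients in the cocharacter lattice, (ii) build the alternating form by combining cup product on the cameral curve with a $W$-invariant symmetric form on $X_{*}(T)$, and (iii) check the two properties (vanishing on the affine part, perfectness on the abelian part) using the dévissage of Proposition \ref{neron}.

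First I would work fibrewise at a geometric point $a\in\abdank$. By Proposition \ref{neron} we have a short exact sequence $1\to R_{a}\to\cP^{0}_{a}\to (\cP^{\flat}_{a})^{0}\to 1$ in which $R_{a}$ is an affine commutative group of dimension $\delta_{a}$ concentrated on the finite set $\bar{X}-U_{a}$, and $(\cP^{\flat}_{a})^{0}$ is a product of abelian varieties modulo a trivially acting torus. Passing to Tate modules gives a short exact sequence of $\bql$-sheaves on $\abdan$, and the fact that $R_{a}$ is affine while the quotient is (up to neutralizing a torus) proper implies that the induced map $T_{\bql}(\cP^{0}_{a})\to T_{\bql}(A_{a})$ identifies the latter with the quotient by $T_{\bql}(R_{a})$. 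Because of Proposition \ref{picardlisse} and the lisse structure maps proved earlier, the assembly is a genuine sheaf-theoretic filtration on $\abdan$.

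Next, I would construct $\psi$. The Galois description \eqref{nergal} gives $J^{\flat}_{a}=\prod_{\tilde{X}^{\flat}_{a}/\bar{X}}(T\times\tilde{X}^{\flat}_{a})^{W}$, hence a canonical isomorphism
\begin{center}
$T_{\bql}(\cP^{\flat,0}_{a})\cong\bigl(H^{1}(\tilde{X}^{\flat}_{a},X_{*}(T))\otimes\bql\bigr)^{W}$.
\end{center}
Since $\car(k)\wedge|W|=1$, one may choose a $W$-invariant non-degenerate symmetric bilinear form $b:X_{*}(T)\otimes X_{*}(T)\to\mathbb{Z}$ (the restriction to $T\cap G_{der}$ of the Killing form plus an auxiliary form on the central part). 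Cup product on the smooth projective curve $\tilde{X}^{\flat}_{a}$ composed with $b$ and the trace $H^{2}(\tilde{X}^{\flat}_{a},\bql)\to\bql(-1)$ yields a Weyl-equivariant alternating pairing on the $H^{1}$; taking $W$-invariants gives the desired $\psi$ on the abelian quotient. I would then lift this to $T_{\bql}(\cP^{0})$ by precomposing with the surjection, which automatically makes it vanish on $T_{\bql}(R_{a})$; since $R_{a}$ is supported on the singular points of $\tilde{X}_{a}$, this is compatible with the fact that the cup-product construction factors through the smooth model $\tilde{X}^{\flat}_{a}$. To show this construction sheafifies over $\abdan$ rather than only at points, I would use the family of normalisations $\tilde{X}^{\flat}\to\tilde{\cB}$ over the moduli $\cB$ of Section~7, together with the fact (proved there) that the formation of $\tilde{X}^{\flat}$ is locally a smooth proper family after pullback to a suitable stratification.

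Finally, the two properties: perfectness on the abelian quotient follows from Poincaré duality on $\tilde{X}^{\flat}_{a}$ and the non-degeneracy of $b$, upon noting that the $W$-invariants of a perfect $W$-equivariant pairing are perfect when $|W|$ is invertible. Vanishing on the affine part is forced by the factorisation of $\psi$ through $T_{\bql}(A_{a})$. The main obstacle I anticipate is not the pointwise construction, which mirrors Ngô, but the verification that the pairing glues into a morphism of $\bql$-sheaves on $\abdan$: the cameral curve degenerates along the discriminant, and one needs the smooth proper family $\tilde{X}^{\flat}\to\tilde{\cB}$ together with the fact that the Tate module of $\cP^{0}$ is constructible and compatible with specialization. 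This is handled exactly as in \cite[sect. 4.12]{N} by reducing to a stratum-by-stratum statement and invoking smooth and proper base change; no new ingredient specific to the Vinberg semigroup is required beyond the Galois description \eqref{nergal} and the lisse-ness of $\cP^{1}$ over $\abdan$.
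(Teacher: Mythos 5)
Your proposal is correct and follows essentially the same route as the paper, which in fact gives no independent proof: it simply invokes N\^o's \cite[Prop. 4.12.1]{N} together with the Galois description \eqref{nergal} of the N\'eron model and the rigidified Picard scheme $\cP^{\infty}$ of Proposition \ref{infini} to produce the Chevalley d\'evissage of the Tate module. Your reconstruction (Weil/cup-product pairing on $H^{1}$ of the normalized cameral curve coupled with a $W$-invariant form on $X_{*}(T)$, then passage to $W$-invariants and pullback along the surjection onto the abelian quotient) is precisely the content of N\^o's argument that the paper imports wholesale.
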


D'après Ngô \cite[sect. 4.12]{N}, on a une suite exacte de modules de Tate:
$$\xymatrix{0\ar[r]&T_{\overline{\mathbb{Q}}_{l}}(R_{a})\ar[r]&T_{\overline{\mathbb{Q}}_{l}}(\mathcal{P}_{a}^{0})\ar[r]&T_{\overline{\mathbb{Q}}_{l}}(A_{a})\ar[r]&0}$$
où $R_{a}$ est un groupe affine et $A_{a}$ un groupe abélien. Ils sont obtenus de la manière suivante.
En introduisant un rigidificateur, nous avons vu d'après la proposition $\ref{infini}$, qu'au-dessus de l'ouvert $\abd^{\infty}$, nous avons un schéma en groupes lisse de type fini $\cP_{\infty}$ qui se surjecte sur $\mathcal{P}^{0}$, et l'on fait ensuite pour ce groupe la décomposition de Chevalley avec  $A_{a}$ sa partie abélienne et $R_{a}$ sa partie affine.
Cela revient alors à un dévissage de Chevalley pour le module de Tate $T_{\overline{\mathbb{Q}}_{l}}(\mathcal{P}^{0})$.
\subsection{Stratification à $\delta$ constant}
Pour $a\in\abdhk$, on rappelle que nous avions un entier $\delta(a)$ égal à la dimension de :
\begin{center}
$H^{0}(\bar{X},(\theta_{a*}^{\flat}\mathcal{O}_{\tilde{X}_{a}^{\flat}}/\theta_{a*}\mathcal{O}_{X_{a}}\otimes_{\mathcal{O}_{X}}\mathfrak{t})^{W})$.
\end{center}

\begin{lem}
La fonction $\tilde{a}\rightarrow\delta(\tilde{a})$ est constante sur chaque strate $\tabdp$ pour tout $\psi\in\Psi$.
\end{lem}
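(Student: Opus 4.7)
The plan is to transfer the constancy statement to $\tilde{\mathcal{B}}$, where a locally free sheaf encodes $\delta$, and then descend the conclusion back to $\tilde{\mathcal{A}}_{\lambda}$ via the finite radicial strata map.

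First, I would recall the key fact already established above the statement: over a connected $k$-scheme $S$ equipped with a point $b = (a, \tilde{X}_a^{\flat}, \xi) \in \mathcal{B}(S)$, the coherent sheaf $(\pi_*(\xi_* \mathcal{O}_{\tilde{X}_a^{\flat}}/\mathcal{O}_{\tilde{X}_a}) \otimes_{\mathcal{O}_X} \mathfrak{t})^W$ on $S$ is locally free of rank $\delta(b)$. In particular, pulling back via $\tilde{\mathcal{B}} \to \mathcal{B}$, the analogous sheaf on $\tilde{\mathcal{B}}$ is locally free, and its rank at a geometric point $\tilde{b} \in \tilde{\mathcal{B}}(\bar{k})$ equals $\delta(\tilde{a})$ where $\tilde{a}$ is the image in $\tilde{\mathcal{A}}_{\lambda}$ (since the formation of the normalization and the coinvariants are insensitive to the additional rigidification at $\infty$).

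Next, I would use that each stratum $\tilde{\mathcal{B}}_{\psi}$ of $\tilde{\mathcal{B}} \otimes_k \bar{k}$ is, by construction in the previous subsection, irreducible and locally closed, and in particular connected. Restricting the locally free sheaf to $\tilde{\mathcal{B}}_{\psi}$ yields a locally free sheaf on a connected scheme, whose rank is therefore a constant integer $\delta_{\psi}$. Thus $\delta(\tilde{b}) = \delta_{\psi}$ for every $\tilde{b} \in \tilde{\mathcal{B}}_{\psi}(\bar{k})$.

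Finally, to descend this to the strata $\tilde{\mathcal{A}}_{\lambda,\psi}$, I invoke the defining property of the stratification: the morphism $\tilde{\mathcal{B}}_{\psi} \to \tilde{\mathcal{A}}_{\lambda,\psi}$ is finite and radicial, hence a bijection on $\bar{k}$-points. Given $\tilde{a} \in \tilde{\mathcal{A}}_{\lambda,\psi}(\bar{k})$, any lift $\tilde{b} \in \tilde{\mathcal{B}}_{\psi}(\bar{k})$ produces a normalization $\xi : \tilde{X}_{\tilde{b}}^{\flat} \to \tilde{X}_a$; since $\tilde{X}_a$ is reduced (Lemma \ref{reduite}) and geometrically admits a unique normalization, $\tilde{X}_{\tilde{b}}^{\flat}$ coincides with the normalization used to define $\delta(\tilde{a})$. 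Therefore $\delta(\tilde{a}) = \delta(\tilde{b}) = \delta_{\psi}$, which is the desired constancy.

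There is no substantive obstacle: the only delicate point is checking that the locally free sheaf on $\tilde{\mathcal{B}}$ really computes $\delta$ pointwise, which follows from the fact that the invariants functor $(-)^W$ commutes with arbitrary base change under the hypothesis that $|W|$ is invertible in $k$, and that formation of the normalization of a reduced curve commutes with algebraically closed field extensions. Once these compatibilities are in place, the argument is entirely formal.
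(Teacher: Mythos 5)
Votre argument est correct et coïncide essentiellement avec la preuve à laquelle le texte renvoie (celle de Ngô, sect. 5) : on utilise que le faisceau $(\pi_{*}(\xi_{*}\mathcal{O}_{\tilde{X}_{a}^{\flat}}/\mathcal{O}_{\tilde{X}_{a}})\otimes_{\mathcal{O}_{X}}\mathfrak{t})^{W}$ est localement libre de rang $\delta$, que les strates $\tilde{\cB}_{\psi}$ sont connexes, et que $\tilde{\cB}_{\psi}\rightarrow\tabdp$ est finie radicielle donc bijective sur les $\bar{k}$-points, l'unicité de la normalisation de la courbe camérale réduite garantissant la compatibilité des deux définitions de $\delta$. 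Les points de compatibilité que vous signalez (commutation au changement de base des invariants sous $W$ et de la normalisation) sont exactement ceux qu'il fallait vérifier.
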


Nous obtenons alors une application:
\begin{center}
$\delta:\Psi\rightarrow\mathbb{N}$
\end{center}
laquelle est décroissante \cite[5.6.2]{N}.
Elle vérifie de plus la proposition suivante
\begin{prop}\label{scsup}
Soit $P\rightarrow S$ un schéma en groupes commutatifs lisses de type fini. La fonction $s\rightarrow\delta_{s}$ où $\delta_{s}$ est la dimension de sa partie affine, est une fonction semi-continue supérieurement.
Enfin, si $\la\succ 2g$, $\delta(\psi_{G})=0$.

\end{prop}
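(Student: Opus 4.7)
On traite s�par�ment les deux assertions de la proposition.

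Pour la semi-continuit� sup�rieure de $s\mapsto\delta_s$, j'applique la d�composition de Chevalley dans chaque fibre: pour $s\in S$, on a une suite exacte $1\to R_s\to P_s^0\to A_s\to 1$ o� $R_s$ est affine connexe de dimension $\delta_s$ et $A_s$ est une vari�t� ab�lienne. Comme $P\to S$ est lisse, la fonction $s\mapsto\dim P_s$ est localement constante; il suffit donc d'�tablir la semi-continuit� inf�rieure de $s\mapsto\dim A_s$. Ce fait classique traduit l'impossibilit� pour une d�g�n�rescence de faire cro�tre la partie ab�lienne: une vari�t� ab�lienne peut d�g�n�rer en sch�ma semi-ab�lien (en perdant une partie ab�lienne au profit d'une composante torique ou additive), mais le ph�nom�ne inverse est impossible, ce qui se formalise via la th�orie des mod�les de N�ron \cite{BLR}. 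Dans le cadre particulier o� $P=\cP$ est notre champ de Picard au-dessus de $\abdh$, la semi-continuit� r�sulte �galement du d�vissage de la proposition \ref{neron} combin� � la description galoisienne \eqref{nergal}, qui exprime la partie ab�lienne en termes de la normalisation de la courbe cam�rale.

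Pour l'�galit� $\delta(\psi_G)=0$ sous l'hypoth�se $\la\succ 2g$, l'id�e consiste � montrer que la strate maximale contient l'ouvert transversal. L'hypoth�se $\la\succ 2g$ entra�ne en particulier $\la\succ 2g-2$, donc $\abdd$ est non vide. Pour tout $a\in\abddk$, la courbe cam�rale $\tilde X_a$ est lisse d'apr�s la proposition pr�c�dente, si bien que la normalisation en famille $\tilde X_a^\flat$ co�ncide avec $\tilde X_a$ elle-m�me. La formule pour $\delta_a$ donn�e au corollaire qui suit la proposition \ref{neron} se r�duit alors � la dimension du $H^0$ d'un faisceau quotient nul, d'o� $\delta_a=0$. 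D'autre part, pour un tel $a$, la courbe cam�rale �tant lisse et coupant transversalement chaque hyperplan de racine, le lemme \ref{croiss} nous garantit que $(I_a,W_a)=(W,W)$. Par cons�quent, l'image inverse de $\abdd$ dans $\tabd$ est contenue dans la strate maximale $\tilde{\mathcal A}_{\la,\psi_G}$, et la constance de $\delta$ sur chaque strate fournit $\delta(\psi_G)=0$.

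L'obstacle principal r�side dans la premi�re assertion en toute g�n�ralit�: la semi-continuit� sup�rieure de la dimension de la partie affine d'un sch�ma commutatif lisse en famille rel�ve de la th�orie structurelle fine des d�g�n�rescences de vari�t�s ab�liennes. Dans notre application concr�te � la fibration de Hitchin-Frenkel-Ng�, on court-circuite cet argument abstrait par la formule explicite $\delta_a = \dim H^0(\bar X, (\theta_{a,*}^\flat\mathcal O_{\tilde X_a^\flat}/\theta_{a,*}\mathcal O_{\tilde X_a})\otimes_{\mathcal O_{\bar X}}\kt)^W$ d�riv�e de la normalisation de la courbe cam�rale, qui ram�ne la semi-continuit� � un calcul de dimension de nature cohomologique.
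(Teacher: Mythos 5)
Votre argument est correct et suit essentiellement la m\^eme voie que le texte, qui \'enonce cette proposition sans d\'emonstration en renvoyant \`a Ng\^o \cite[sect. 5.6]{N} : d\'evissage de Chevalley et impossibilit\'e pour la partie ab\'elienne de cro\^itre par sp\'ecialisation pour la semi-continuit\'e, puis calcul sur l'ouvert transversal pour obtenir $\delta(\psi_{G})=0$. Seule petite impr\'ecision : l'image r\'eciproque de $\abdd$ dans $\tabd$ n'est pas n\'ecessairement \emph{contenue} dans la strate maximale, mais il suffit que ces deux ouverts denses du sch\'ema irr\'eductible $\tabd$ se rencontrent, la fonction $\delta$ \'etant constante sur chaque strate et nulle en tout point de l'intersection.
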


De cette proposition, nous en déduisons que la réunion des strates $\tabdp$ avec $\psi\in\Psi$ tel que $\delta(\psi)\leq\delta$ est un fermé de $\tabd$ et donc
\begin{center}
$\tabdde=\coprod\limits_{\delta(\psi)=\delta}\tabdp$
\end{center}
est un ouvert dans le fermé défini ci-dessus. D'où nous déduisons la stratification suivante:
\begin{center}
$\tabd=\coprod\limits_{\delta\in\mathbb{N}}\tabdde$.
\end{center}
\section{Codimension des strates $\abdde$}

Nous voulons prouver que $\codim(\abdde)\geq\delta$, ce qui en caractéristique $p$, pose déjà problème dans le cas des algèbres de Lie, nous montrons donc un énoncé analogue à \cite[Prop. 5.7.2]{N}.

\begin{thm}\label{bad}
Pour un groupe $G$ fixé et pour tout $\delta\in\mathbb{N}$, il existe un entier $N$ dépendant de $G$ et $\delta$, tel que si $\la\succ N$ alors la strate $\abdde$ est de codimension plus grande ou égale à $\delta$.
\end{thm}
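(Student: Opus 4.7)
Le plan est de suivre la strat\'egie de Ng\^o \cite[sect. 5.7]{N} pour les alg\`ebres de Lie, tout en g\'erant la difficult\'e sp\'ecifique au cas des groupes qui provient du caract\`ere multiplicatif de la fonction discriminant $\mathfrak{D}_{+}$ donn\'ee par \eqref{extdisc}. On raffine la stratification \`a $\delta$ constant par les donn\'ees de valuations radicielles locales, on calcule la codimension et la contribution \`a $\delta$ de chaque strate raffin\'ee, puis on en d\'eduit l'in\'egalit\'e de codimension cherch\'ee pour $\abdde$ pourvu que $\la$ soit assez grand devant $\delta$.

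\textbf{Stratification par les valuations radicielles.} En chaque point ferm\'e $x\in\bar{X}$ et pour tout $a\in\kcd(\cox)\cap\kc^{\la,rs}(\fx)$, on rel\`eve $a$ en un point $t_{+}\in V_{T}^{\la}(\overline{\mathcal{O}}_{x})$ via le rev\^etement fini $\theta:V_{T}^{\la}\rightarrow\kcd$, et l'on pose pour chaque racine $\alpha\in R$,
\begin{equation*}
r_{\alpha}^{x}(a) := \val_{x}(1-\alpha(t)),
\end{equation*}
ce qui d\'efinit, modulo l'action de Galois et de $W$, un vecteur de valuations radicielles $\vec{r}^{\,x}(a)$. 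De la formule \eqref{calculvin}, on tire
\begin{equation*}
d_{x}(a) = \left\langle 2\rho,\la_{x}\right\rangle + \sum_{\alpha\in R}r_{\alpha}^{x}(a),
\end{equation*}
et l'invariant local $\delta_{x}(a) = \tfrac{1}{2}(d_{x}(a)-c_{x}(a))$ s'exprime donc combinatoirement en fonction de $\vec{r}^{\,x}$ et du type de monodromie locale $\pi_{a,x}^{\bullet}(I_{x})\subset W$. On stratifie alors $\kcd(\cox)$ par les types admissibles $\tau_{x} = (\vec{r}^{\,x}, \pi_{a,x}^{\bullet}(I_{x}))$, ce qui donne globalement une d\'ecomposition
\begin{equation*}
\abdde = \coprod_{\tau\,:\,\sum_{x}\delta_{x}(\tau_{x})=\delta}\mathcal{A}_{\la}^{\tau}.
\end{equation*}

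\textbf{Estim\'ees de codimension et globalisation.} En suivant l'argument local de \cite[Prop. 5.7.2]{N} et \cite{GKM2} transpos\'e au semigroupe de Vinberg, on \'etablit pour chaque type local $\tau_{x}$ une in\'egalit\'e de la forme
\begin{equation*}
\codim_{\kcd(\cox)}(\tau_{x}) \geq \delta_{x}(\tau_{x}) - e(\tau_{x}),
\end{equation*}
o\`u $e(\tau_{x})\geq 0$ est un d\'efaut combinatoire born\'e par une constante $C_{G}$ ne d\'ependant que de $G$. L'hypoth\`ese $\la\succ 2g$ assure, par un argument de transversalit\'e et d'ind\'ependance des \'evaluations locales, que
\begin{equation*}
\codim(\mathcal{A}_{\la}^{\tau}) \geq \sum_{x}\codim_{\kcd(\cox)}(\tau_{x}).
\end{equation*}
Comme le nombre de points $x$ o\`u $\tau_{x}$ est non trivial est born\'e par $\delta$, on obtient $\codim(\abdde)\geq \delta - \delta\cdot C_{G} + \mathrm{marge}(\la)$, o\`u $\mathrm{marge}(\la)$ cro\^it lin\'eairement avec $\la$ gr\^ace \`a la libert\'e croissante des positions des points singuliers sur $\bar{X}$ lorsque le degr\'e des fibr\'es de base augmente. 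Le choix d'un $N$ assez grand devant $\delta\cdot C_{G}$ permet alors de conclure.

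\textbf{Obstacle principal.} La difficult\'e principale r\'eside dans l'\'etape locale : \'etendre l'analyse combinatoire de Goresky-Kottwitz-McPherson \cite{GKM2}, \'etablie pour les alg\`ebres de Lie o\`u le discriminant est polynomial en $\alpha(t)$, au cadre multiplicatif du semigroupe de Vinberg o\`u le discriminant fait intervenir $(1-\alpha(t))$. En particulier, les configurations de type $\alpha(t)\equiv 1\pmod{\pi}$ sans analogue additif imposent une analyse soigneuse, et c'est l'estimation pr\'ecise du d\'efaut $e(\tau_{x})$ en caract\'eristique positive qui force l'introduction d'un seuil $N = N(G,\delta)$ dans l'\'enonc\'e, contrairement au cas des alg\`ebres de Lie en caract\'eristique z\'ero o\`u l'on obtient l'in\'egalit\'e optimale $\codim(\abdde)=\delta$ sans hypoth\`ese sur $\la$.
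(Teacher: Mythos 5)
Votre strat\'egie g\'en\'erale --- raffiner la stratification \`a $\delta$ constant par les donn\'ees de valuations radicielles locales \`a la Goresky-Kottwitz-McPherson, \'etablir une minoration locale de codimension, puis globaliser \`a la Ng\^o --- est bien celle du texte, mais les deux \'etapes quantitatives sont fausses ou insuffisantes telles que vous les \'enoncez. Pour l'estim\'ee locale, vous proposez $\codim(\tau_{x})\geq\delta_{x}(\tau_{x})-e(\tau_{x})$ avec un d\'efaut $e(\tau_{x})\geq 0$ major\'e par une constante $C_{G}$ ; c'est \`a la fois non d\'emontr\'e (le d\'efaut n'est ni d\'efini ni contr\^ol\'e) et de la mauvaise forme. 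Ce dont on a besoin, et ce que d\'emontre la proposition \ref{inegal}, est l'in\'egalit\'e \emph{stricte} $\codim[w,r]\geq\delta(a)+1$ d\`es que $\delta(a)>0$ : la preuve, par r\'ecurrence sur le syst\`eme de racines via le lemme de r\'eduction au L\'evi \ref{levi} et le lemme de multiplication \ref{mulcar}, montre que la codimension vaut $D_{G}(r)=d_{G}(r)+\delta(a)$ avec $d_{G}(r)\geq 1$. Ce $+1$ n'est pas un raffinement gratuit : il sert pr\'ecis\'ement \`a absorber, lors de la globalisation, le param\`etre de position de chaque point singulier sur la courbe.

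Pour la globalisation, l'in\'egalit\'e $\codim(\mathcal{A}_{\la}^{\tau})\geq\sum_{x}\codim(\tau_{x})$ que vous \'ecrivez est fausse telle quelle : les points o\`u $\tau_{x}$ est non trivial varient sur $\bar{X}$ et chacun contribue $+1$ \`a la dimension de la strate globale, de sorte que la bonne minoration est $\sum_{x}(\codim(\tau_{x})-1)$, laquelle, combin\'ee \`a $\codim(\tau_{x})\geq\delta_{x}+1$, donne exactement $\codim(\abdde)\geq\delta$. Quant au terme $\mathrm{marge}(\la)$ cens\'e cro\^itre lin\'eairement avec $\la$, il n'existe pas : agrandir $\la$ n'ajoute aucune codimension. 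Le r\^ole de l'hypoth\`ese $\la\succ N$ est uniquement de garantir que les applications d'\'evaluation de $\abd$ vers les jets tronqu\'es (les strates locales \'etant admissibles, donc d\'efinies modulo $\pi^{m}$ pour un $m$ contr\^ol\'e par $\delta$) soient lisses et surjectives, afin que les codimensions locales se r\'ealisent dans la base globale ; c'est le contenu de l'argument de \cite[Prop. 5.7.2]{N} que le texte invoque pour conclure. Votre m\'ecanisme de compensation entre le d\'efaut $e$ et une marge croissante en $\la$ ne peut donc pas fonctionner ; il faut \'etablir l'estim\'ee locale sans perte, avec le $+1$.
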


Cet énoncé va se déduire d'un calcul local.

\subsection{Stratifications radicielles}
Soit $\co=k[[\pi]]$ avec $k$ algébriquement clos de corps de fractions $F$. Soit $F^{sep}$ une clôture séparable de $F$. On suppose que la caractéristique du corps est première à l'ordre de $W$. Dans ce paragraphe, on peut supposer, sans restreindre la généralité, que $G$ est simplement connexe. On se donne un cocaractère dominant $\la\in X_{*}(T)^{+}$.
On dira qu'un sous-ensemble $Y$ de $\kcd(\co)$ est admissible s'il existe $m\in\mathbb{N}$ tel que $Y$ soit l'image réciproque d'une sous-variété de $\kcd(\co /\pi^{m}\co)$.
Pour $a\in\kcd(\co)':=\kcd(\co)\cap\kcd(F)^{rs}$, soit $x_{+}=(\pi^{-w_{0}\la},x)\in T_{+}(F^{sep})$ d'image $a$, le choix de ce point définit un morphisme :
\begin{center}
$\pi_{a}:I:=\Gal(F^{sep}/F)\rightarrow W$,
\end{center}
comme la caractéristique du corps est première à $W$, ce morphisme se factorise par le quotient modéré $I^{mod}$ de $\Gal(F^{sep}/F)$. On considère alors un générateur topologique de $I^{mod}$ et on note $w_{a}$ son image par $\pi_{a}$.$\\$
Pour toute racine positive $\alpha\in R_{+}$, on définit une fonction $r: R_{+}\rightarrow\mathbb{Q}_{+}$ par:
\begin{center}
$r(\alpha)=\left\langle \alpha,\la\right\rangle+\val(1-\alpha(x))+\val(1-\alpha^{-1}(x))$.
\end{center}
Dans la suite pour un élément $t_{+}:=(\pi^{-w_{0}\la},t)\in V_{T}^{\la}(\co)$ et $\alpha\in R_{+}$, on pose :
\begin{center}
$\phi_{\alpha}(t)=\left\langle \alpha,\la\right\rangle+\val(1-\alpha(t))+\val(1-\alpha^{-1}(t))$.
\end{center}
Ainsi, nous avons $d(a)=\val(a^{*}\kD_{\la})=\sum\limits_{\alpha\in R^{+}}r(\alpha)$.
On note $c(a)=\dim T_{+}-\dim (T_{+}^{w_{a}})$, la chute du rang torique et d'après la formule de dimension \ref{dim}, on a
\begin{center}
$\delta(a)=\frac{d(a)-c(a)}{2}$.
\end{center}
L'orbite sous $W$ de la paire $(w_{a},r)$ ne dépend pas du choix de $x$. On considère alors le sous-ensemble $\kcd(\co)_{[w,r]}$ de $\kcd(\co)'$ constitué des éléments $a$ d'invariant $[w,r]$ fixé.
\medskip

Nous allons avoir besoin d'obtenir les strates $\kcd(\co)_{[w,r]}$ d'une autre manière dans le même esprit que Goresky-Kottwitz-McPherson \cite[sect.4.4]{GKM2}.
Soit $w\in W$ d'ordre $l$ premier à la caractéristique du corps.
Soit $E=F(\pi^{1/l})$ d'anneau d'entiers $\co_{E}$. On choisit $\xi_{l}$ une racine primitive de l'unité et on note $\tau_{E}$ l'unique élément de $\Gal(E/F)$ qui envoie $\pi^{1/l}$ sur $\xi_{l}\pi^{1/l}$. On considère alors:
\begin{center}
$V_{T,w}^{\la}(\co):=\{u\in V_{T}^{\la}(\co_{E})'\vert~ w\tau_{E}(u)=u\}$,
\end{center}
et pour une $\co$-algèbre $A$, on pose:
\begin{center}
$V_{T,w}^{\la}(A)=V_{T,w}^{\la}(\co)\otimes_{\co}A$.
\end{center}
On peut définir un schéma $V_{T,w}^{\la}$ dont les $\co$-points sont donnés par l'ensemble ci-dessus.
On commence par considérer la restriction à la Weil $\Res_{\co_{E}/\co}V_{T}^{\la}$. Pour toute $\co$-algèbre $A$, nous avons:
\begin{center}
$\Res_{\co_{E}/\co}V_{T}^{\la}(A)=V_{T}^{\la}(\co_{E}\otimes_{\co}A)$,
\end{center}
L'automorphisme $\tau_{E}$ sur $\co_{E}/\co$ induit un automorphisme:
\begin{center}
$\tau_{E}:\Res_{\co_{E}/\co}V_{T}^{\la}\rightarrow\Res_{\co_{E}/\co}V_{T}^{\la}$.
\end{center}
L'action de $W$ sur $V_{T}^{\la}$ induit une action sur $\Res_{\co_{E}/\co}V_{T}^{\la}$ qui commute avec l'action de $\tau_{E}$ d'où nous obtenons une action de $\mathbb{Z}/l\mathbb{Z}$ sur $\Res_{\co_{E}/\co}V_{T}^{\la}$ qui agit par $w\tau_{E}$.
On considère alors:
\begin{center}
$V_{T,w}^{\la}:=(\Res_{\co_{E}/\co}V_{T}^{\la})^{\mathbb{Z}/l\mathbb{Z}}$.
\end{center}
De plus, la flèche $V_{T}^{\la}\rightarrow\kcd$, induit un morphisme:
\begin{center}
$\theta_{w}:V_{T,w}^{\la}\rightarrow\kcd$.
\end{center}
Enfin, pour une fonction $r: R_{+}\rightarrow\mathbb{Q}_{+}$ fixée,  on introduit le sous-ensemble de $V_{T,w}^{\la}(\co)$ :
\begin{center}
$V_{T,w}(\co)_{r}:=\{t\in V_{T,w}^{\la}(\co)\vert~\forall~\alpha\in R_{+}, r(\alpha)=\phi_{\alpha}(t)\}$,
\end{center}
dont on vérifie immédiatement qu'il se surjecte par $\theta_{w}$ sur $\kcd(\co)_{[w,r]}$.
Nous avons alors la proposition suivante :
\begin{lem}
Pour toute $W$-orbite $[w,r]$, le sous-ensemble $\kcd(\co)_{[w,r]}$ est admissible.
\end{lem}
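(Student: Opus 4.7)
L'id�e est d'exploiter la surjection $\theta_w : V_{T,w}(\co)_r \twoheadrightarrow \kcd(\co)_{[w,r]}$ rappel�e juste avant l'�nonc�, pour produire un entier $m$ d�pendant seulement de $[w,r]$ et tel que $\kcd(\co)_{[w,r]}$ soit l'image r�ciproque d'une sous-vari�t� de $\kcd(\co/\pi^m\co)$. On pose $d := \sum_{\alpha\in R_{+}} r(\alpha)$ et $M := \max_{\alpha\in R_{+}} r(\alpha)$, deux invariants fix�s par l'orbite $[w,r]$.

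Dans un premier temps, je montrerais que $V_{T,w}(\co)_r$ est admissible dans $V_{T,w}^{\la}(\co)$. Chaque condition d�finissante $\phi_\alpha(t)=r(\alpha)$ se ram�ne � prescrire les valuations $\val(1-\alpha(t))$ et $\val(1-\alpha^{-1}(t))$, toutes deux major�es par $r(\alpha) \leq M$. Comme une condition de la forme $\val(f(t))=k$ avec $k\leq M$ est d�termin�e par $t \mod \pi^{M+1}$, l'ensemble $V_{T,w}(\co)_r$ est l'image r�ciproque d'une sous-vari�t� de $V_{T,w}^{\la}(\co/\pi^{M+1}\co)$, donc admissible dans $V_{T,w}^{\la}(\co)$.

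Ensuite, je transf�rerais cette admissibilit� � travers la fl�che finie $\theta_w$ par un argument de type Krasner-Hensel. La condition $d(a)=d$ est d�j� admissible puisque d�termin�e par $a \mod \pi^{d+1}$, et sur la strate correspondante les bornes uniformes $\phi_\alpha(t)\leq M$ assurent que pour tout $a'\in\kcd(\co)$ v�rifiant $a\equiv a' \mod \pi^{N}$ avec $N$ de l'ordre de $2d+1$, les racines du polyn�me caract�ristique d�finissant $a'$ sont proches de celles de $a$. On en d�duit un rel�vement $t'\in V_{T,w}^{\la}(\co)$ proche de $t$, ainsi que la stabilit� des invariants $(w_{a'},r)$ sous de telles perturbations. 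On conclut � l'admissibilit� de $\kcd(\co)_{[w,r]}$ avec un $m$ de l'ordre de $2d+M+1$.

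La principale difficult� r�side dans cette �tape de Krasner-Hensel : il faut contr�ler uniform�ment en $a\in\kcd(\co)_{[w,r]}$ l'amplitude des perturbations qui pr�servent l'invariant $[w,r]$. Le fait que la valuation du discriminant soit constante �gale � $d$ sur la strate fournit pr�cis�ment le contr�le n�cessaire, la variation continue des racines d'un polyn�me en fonction de ses coefficients �tant gouvern�e par la valuation du discriminant. La donn�e galoisienne $w$ se r�cup�re alors de $a \mod \pi^m$ car la factorisation du polyn�me caract�ristique sur une extension mod�r�e convenable est d�termin�e par une troncature suffisamment �lev�e des coefficients, la borne �tant � nouveau contr�l�e par $d$.
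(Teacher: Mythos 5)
Your proposal takes essentially the same route as the paper: both reduce to showing that $V_{T,w}^{\la}(\co)_{r}$ is admissible by choosing $N$ strictly larger than every $r(\alpha)$, so that the conditions $\phi_{\alpha}(t)=r(\alpha)$ only depend on $t$ modulo $\pi^{N}$. The only difference is that the paper treats the descent of admissibility along the surjection $\theta_{w}$ onto $\kcd(\co)_{[w,r]}$ as immediate, whereas you supply a Krasner--Hensel argument (with the constancy of $\val(a^{*}\kD_{\la})$ on the stratum controlling the lifting of a nearby $a'$ to a nearby $t'$); this fills in a step the paper leaves implicit, and is correct.
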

\begin{proof}
Posons $Y:=V^{\la}_{T,w}(\co)_{r}$, il nous suffit de voir que $Y$ est admissible comme il s'envoie surjectivement sur $\kcd(\co)_{[w,r]}$.
On choisit $N$ tel que pour tout $\alpha\in R_{+}$, $r(\alpha)<N$.
Soient $u'\in Y$ et  $u\in V_{T,w}^{\la}(\co)$, tels que:
\begin{center}
$u=u'~[\pi^{N}]$.
\end{center}
Nous voulons montrer que $u'\in Y$, ce qui revient à voir que:
\begin{center}
$\phi_{\alpha}(u')=r(\alpha)$,
\end{center}
or cette condition peut se vérifier modulo $N$ compte tenu du choix de $N$,  l'égalité $u=u'~[\pi^{N}]$ nous permet de conclure.
\end{proof}

\subsection{Une inégalité de codimension}
\begin{prop}\label{inegal}
Si $\delta(a)>0$, alors on a $\codim [w,r]\geq \delta(a)+1$.
\end{prop}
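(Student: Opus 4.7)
La strat�gie, dans l'esprit du calcul de Goresky-Kottwitz-McPherson \cite{GKM2} dans le cas des alg�bres de Lie, consiste � estimer la codimension de la strate $\kcd(\co)_{[w,r]}$ en travaillant en amont sur $V_{T,w}^{\la}(\co)_{r}$ via le rev�tement $\theta_{w}:V_{T,w}^{\la}\rightarrow\kcd$. Comme $\theta_{w}$ est fini et que $V_{T,w}^{\la}$ et $\kcd$ ont m�me dimension, le calcul de $\codim\kcd(\co)_{[w,r]}$ dans $\kcd(\co)$ se ram�ne � celui de $\codim V_{T,w}^{\la}(\co)_{r}$ dans $V_{T,w}^{\la}(\co)$, quitte � tenir compte des stabilisateurs de Weyl, dont la contribution ne change pas la codimension.

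Ensuite, on param�trise $V_{T,w}^{\la}(\co)$ via sa r�alisation comme points fixes de $w\tau_{E}$ dans $V_{T}^{\la}(\co_{E})$ : en d�composant le d�veloppement de Taylor des arcs $t\in V_{T}^{\la}(\co_{E})$ suivant les espaces propres de $w\tau_{E}$ sur $\kt$, on obtient une description explicite de l'espace ambiant comme sous-module. Les conditions $\phi_{\alpha}(t)=r(\alpha)$ pour $\alpha\in R_{+}$ sont $W$-�quivariantes, donc la fonction $r$ �tant constante sur les $w$-orbites, on regroupe les contraintes par $w$-orbites $O\subset R_{+}$. Pour une telle orbite, la condition se traduit par l'annulation de certains coefficients de Taylor de $1-\alpha(t)$ et $1-\alpha^{-1}(t)$ jusqu'� l'ordre $r(O)-\left\langle\alpha,\la\right\rangle$, suivie d'une condition de non-annulation � l'ordre suivant. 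Chaque orbite $O$ fournit ainsi une contribution � la codimension proportionnelle � la taille de la condition, pond�r�e par le cardinal de l'orbite.

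En sommant les contributions de toutes les $w$-orbites dans $R_{+}$ et en exploitant la formule $2\delta(a)=d(a)-c(a)$ o� $d(a)=\sum_{\alpha\in R_{+}}r(\alpha)$ et $c(a)$ mesure la chute de rang torique, on obtient une minoration $\codim\geq\delta(a)$. Le terme additionnel $+1$ provient de ce que, lorsque $\delta(a)>0$, au moins une $w$-orbite contribue strictement : la condition d'�galit� exacte $\phi_{\alpha}=r(\alpha)$ (plut�t qu'une simple in�galit� de type $\phi_{\alpha}\geq r(\alpha)$) ajoute une codimension suppl�mentaire, incarn�e par la condition de non-annulation au rang critique qui distingue cette strate des strates moins profondes situ�es au-dessus d'elle.

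Le principal obstacle sera le traitement de la direction centrale de $V_{T}^{\la}$, i.e.\ le twist par $\pi^{-w_{0}\la}$, qui modifie les valuations individuelles $r(\alpha)=\left\langle\alpha,\la\right\rangle+\val(1-\alpha(x))+\val(1-\alpha^{-1}(x))$ et introduit un d�calage par rapport au cas de l'alg�bre de Lie. Il faudra en outre v�rifier que le d�compte des coefficients de Taylor contraints se comporte correctement vis-�-vis de l'action de $w\tau_{E}$, notamment lorsqu'une racine $\alpha$ est stable sous une involution de $w$, auquel cas la non-annulation simultan�e de $1-\alpha(t)$ et $1-\alpha^{-1}(t)$ doit �tre compt�e avec soin pour pr�server l'int�gralit� des codimensions.
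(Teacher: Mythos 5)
Your reduction step is where the argument breaks down. You claim that, $\theta_{w}:V_{T,w}^{\la}\rightarrow\kcd$ being finite between spaces of the same dimension, the codimension of $\kcd(\co)_{[w,r]}$ in $\kcd(\co)$ equals that of $V_{T,w}^{\la}(\co)_{r}$ in $V_{T,w}^{\la}(\co)$ up to Weyl stabilizers. On arc spaces this is false: a finite map that is ramified (here, along the discriminant) increases the codimension of the image of an admissible subset by the valuation of its Jacobian along that subset --- already for $x\mapsto x^{2}$ on $\mathbb{A}^{1}$, the locus $\val(x)=n$ has codimension $n$ upstairs while its image has codimension $2n$. This ramification contribution, essentially $\tfrac{1}{2}\val(a^{*}\kD_{\la})$, is precisely where the term $\delta(a)$ in the bound comes from; the count of Taylor-coefficient conditions upstairs only produces the other term, $d_{G}(r)$, which can be strictly smaller than $\delta(a)$. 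For instance in type $A_{2}$ with $w=1$, $\la=0$ and $\val(1-\alpha(t))=1$ for every positive root (so $r\equiv 2$), the congruences attached to the two simple roots already imply the one for the highest root: the upstairs codimension is $2$ while $\delta(a)=3$; and as soon as $\la\neq 0$ the summands $\left\langle\alpha,\la\right\rangle$ of $r(\alpha)$ feed into $\delta(a)$ without imposing any condition on $t$ at all. So summing the orbit contributions upstairs and invoking $2\delta(a)=d(a)-c(a)$ cannot yield $\codim\geq\delta(a)$, let alone $\delta(a)+1$. Relatedly, your source for the extra $+1$ --- the non-vanishing condition at the critical order --- is an open condition and adds no codimension; in the correct accounting the $+1$ is the inequality $d_{G}(r)\geq 1$, i.e.\ at least one genuine vanishing condition is imposed upstairs when $\delta(a)>0$.

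The paper's proof supplies exactly the missing mechanism. It establishes the sharper equality $\codim=d_{G}(r)+\tfrac{1}{2}\sum_{\alpha\in R_{+}}r(\alpha)=d_{G}(r)+\delta(a)$ in the split case by induction on $\left|R\right|$: if some $r(\alpha)=0$, one passes to the Levi $M$ with root system $\{\alpha~\vert~r(\alpha)\geq 1\}$, using that $\kC_{+,M}\rightarrow\kc$ is \'etale off the relative discriminant $\kD_{+,G/M}$ (so codimension is preserved there, Lemma \ref{levi}); if $r$ is everywhere positive, one subtracts its minimum $m$ and applies the scaling Lemma \ref{mulcar}, which is where the Jacobian contribution enters. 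The non-split case is then handled by base change to $E$ via the inclusion $\kcd(\co)_{[w,r]}\otimes_{\co}\co_{E}\subset\kcd(\co_{E})_{r}$, a descent step your sketch also leaves open. To repair your direct approach you would have to compute the image of $V_{T,w}^{\la}(\co)_{r}$ under $\theta_{w}$, not merely its source, and track the order of vanishing of the Jacobian of $\theta_{w}$ along the stratum.
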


On commence par montrer l'assertion dans le cas déployé, i.e. $w=1$.
La preuve suit celle de \cite[Thm. 8.2.2]{GKM2}. On considère le sous-groupe de Lévi $M$ de $G$ dont le système de racines $R_{M}$ est donné par le sous-ensemble $R_{1}=\{\alpha\in R\vert~r(\left|\alpha\right|)\geq 1\}$ de $R$. Les objets correspondant au Lévi seront notés d'un indice \og M\fg.
\begin{lem}\label{levi}
Supposons que $\kC_{+,M}^{\la}(\co)_{r_{M}}$ soit de codimension $a$ dans $\kC_{+,M}^{\la}(\co)$, alors il en est de même pour $\kcd(\co)_{r}$ dans $\kcd(\co)$.
\end{lem}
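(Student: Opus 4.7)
Le plan est d'exploiter le fait que la stratification par les fonctions $r: R_{+} \to \mathbb{Q}_{+}$ ne d�pend, via les fonctions $\phi_{\alpha}$, que des caract�res de $T$, lesquels sont communs � $G$ et $M$. Je travaillerais sur les espaces de Weil $V_{T}^{\la}(\co)$ et $V_{T,M}^{\la}(\co)$ plut�t que sur $\kcd(\co)$ et $\kC_{+,M}^{\la}(\co)$; les projections $\theta$ et $\theta_{M}$ �tant finies et g�n�riquement �tales sur les ouverts r�guliers semisimples contenant respectivement $V_{T}^{\la}(\co)_{r}$ et $V_{T,M}^{\la}(\co)_{r_{M}}$, les codimensions se transf�rent en haut. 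De plus, comme $T_{M}=T$ et comme le morphisme d'ab�lianisation $V_{T}\to A_{G}\times\overline{T}_{\Delta}$ (resp. $V_{T,M}\to A_{M}\times\overline{T}_{\Delta}$) fait de $\overline{T}_{\Delta}$ la partie commune (corollaire \ref{produit}), on dispose d'un morphisme naturel de projection reliant les espaces de jets $V_{T}^{\la}(\co)$ et $V_{T,M}^{\la}(\co)$ le long duquel on pourra comparer les strates.

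Ensuite, je d�couperais l'ensemble des racines positives en
\begin{center}
$R_{+}=R_{M,+}\sqcup (R_{+}\setminus R_{M,+})$,
\end{center}
o� $R_{M,+}=\{\alpha\in R_{+}~\vert~r(\alpha)\geq 1\}$. Les conditions d�finissant $V_{T}^{\la}(\co)_{r}$ sont:
\begin{center}
$\forall~\alpha\in R_{+},~\phi_{\alpha}(t)=r(\alpha)$.
\end{center}
Pour $\alpha\in R_{+}\setminus R_{M,+}$, on a $r(\alpha)=0$, ce qui �quivaut, comme $\la$ est dominant et $\alpha$ positive, aux conditions ouvertes $\left\langle\alpha,\la\right\rangle=0$, $\val(1-\alpha(t))=0$ et $\val(1-\alpha^{-1}(t))=0$. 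Ces conditions d�finissent un ouvert admissible $U\subset V_{T}^{\la}(\co)$ qui ne contribue pas � la codimension. Pour $\alpha\in R_{M,+}$, les conditions $\phi_{\alpha}(t)=r(\alpha)$ sont exactement celles qui d�finissent $V_{T,M}^{\la}(\co)_{r_{M}}$ via le morphisme de projection naturel �voqu� ci-dessus.

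Le point cl� est alors de prouver une formule d'intersection transverse: la pr�image par $V_{T}^{\la}(\co)\to V_{T,M}^{\la}(\co)$ de $V_{T,M}^{\la}(\co)_{r_{M}}$, intersect�e avec l'ouvert $U$, est pr�cis�ment $V_{T}^{\la}(\co)_{r}$, et la codimension ne change pas au cours de cette proc�dure. Comme le morphisme de projection est formellement lisse (provenant d'une projection de sch�mas en groupes commutatifs vers un quotient par un L�vi), la pr�image d'une sous-vari�t� de codimension $a$ reste de codimension $a$; l'intersection avec un ouvert pr�serve �galement la codimension. On en d�duit que $V_{T}^{\la}(\co)_{r}$ est de codimension $a$ dans $V_{T}^{\la}(\co)$, puis on redescend � $\kcd(\co)$ par finitude de $\theta$.

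L'obstacle principal est d'�tablir rigoureusement la lissit� (ou au moins la platitude formelle) du morphisme de projection $V_{T}^{\la}(\co)\to V_{T,M}^{\la}(\co)$ dans ce contexte arc, ainsi que la compatibilit� entre les deux constructions de Vinberg. Une fois ce point technique v�rifi�, le reste est une manipulation �l�mentaire sur les ind-sch�mas de jets, s'inspirant directement des arguments de Goresky-Kottwitz-McPherson \cite{GKM2} dans le cas des alg�bres de Lie.
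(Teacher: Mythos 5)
Votre strat\'egie d'ensemble --- s\'eparer $R_{+}$ en $R_{M,+}$ et son compl\'ementaire, remarquer que pour $\alpha\notin R_{M}$ la condition $r(\alpha)=0$ est ouverte (admissible) et que seules les conditions index\'ees par $R_{M}$ sont fermantes --- est bien celle de Goresky--Kottwitz--McPherson et du texte, mais le m\'ecanisme que vous proposez comporte deux lacunes r\'eelles. La premi\`ere est votre affirmation initiale que \og les codimensions se transf\`erent en haut\fg~ le long de $\theta$ : c'est faux d\`es que l'arc rencontre le discriminant. La preuve de la proposition \ref{inegal} qui suit imm\'ediatement \'etablit pr\'ecis\'ement que la codimension de $\kcd(\co)_{r}$ dans $\kcd(\co)$ vaut $d_{G}(r)+\delta(a)$, o\`u $d_{G}(r)$ est la codimension de $V_{T}^{\la}(\co)_{r}$ dans $V_{T}^{\la}(\co)$ ; le d\'efaut $\delta(a)$ provient de la ramification de $\theta$ (pensez \`a $x\mapsto x^{2}$ sur les arcs : la strate $\{\val(x)=1\}$ est de codimension $1$ en haut et son image $\{\val(y)=2\}$ de codimension $2$ en bas). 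Il se trouve que ce d\'efaut est le m\^eme des deux c\^ot\'es ($\delta_{M}(a)=\delta(a)$ car $r$ s'annule hors de $R_{M}$), mais c'est pr\'ecis\'ement ce qu'il faudrait d\'emontrer, et non un transfert gratuit de codimension.

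La seconde lacune, que vous signalez vous-m\^eme comme \og l'obstacle principal\fg, est le pr\'etendu morphisme de projection lisse $V_{T}^{\la}(\co)\rightarrow V_{T,M}^{\la}(\co)$ entre les tores de Vinberg de $G$ et de $M$ : ces vari\'et\'es toriques sont les adh\'erences de $(T\times T)/Z_{G}$ et de $(T\times T)/Z_{M}$, de dimensions et de parties ab\'eliennes $A_{G}$, $A_{M}$ distinctes, et aucune projection canonique lisse de l'une vers l'autre n'est disponible --- c'est d'ailleurs exactement la difficult\'e que la correspondance endoscopique (section \ref{endocar}) prend soin d'\'eviter. Le texte contourne enti\`erement ce probl\`eme en posant $\kC_{+,M}:=V_{T}/W_{M}$ avec le \emph{m\^eme} $V_{T}$ (celui de $G$) : on dispose alors d'un morphisme fini surjectif canonique $g:\kC_{+,M}\rightarrow\kc=V_{T}/W$, \'etale au-dessus du compl\'ementaire $U_{M}$ du discriminant relatif $\kD_{+,G/M}=2(\rho-\rho_{M})\prod_{\alpha\in R-R_{M}}(1-\alpha(t))$. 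Comme $r(\alpha)=0$ pour tout $\alpha\in R-R_{M}$, les arcs de la strate restent dans $U_{M}(\co)$, et l'\'etalit\'e y transf\`ere la codimension sans perte, comme dans \cite[Lem. 11.1.2]{GKM2}. Si vous tenez \`a travailler \og en haut\fg, il faut de m\^eme rester dans le seul $V_{T}$ de $G$ et comparer les quotients par $W_{M}$ et par $W$, plut\^ot que de chercher \`a projeter sur le tore de Vinberg de $M$.
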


\begin{proof}
Comme nous avons, $\kC_{+,M}:=V_{T}/W_{M}$, on obtient un morphisme surjectif:
\begin{center}
$g:\kC_{+,M}\rightarrow\kc$.
\end{center}
On définit alors le polynôme $\kD_{\la, G/M}$ sur $V_{T}$ par:
\begin{center}
$\kD_{+, G/M}=2(\rho-\rho_{M})\prod\limits_{\alpha\in R-R_{M}}(1-\alpha(t))$.
\end{center}
Soit $U_{M}$ le complémentaire du lieu d'annulation de $\kD_{+,G/M}$.
L'application $g$ est étale au-dessus de l'ouvert $U_{M}$. Le reste de la preuve est maintenant identique à \cite[Lem. 11.1.2]{GKM2}.
\end{proof}

\begin{lem}\label{mulcar}
Supposons que $\mathfrak{C}_{+,r}^{\mu}(\co)$ soit de codimension $a$ dans $\kc^{\mu}(\co)$, alors $\mathfrak{C}_{+,r+m}^{\mu+\mu'}(\co)$ est de codimension $a+rm$ dans $\kc^{\mu+\mu'}(\co)$, où $\mu'$ est tel que :
\begin{center}
$\forall ~i, \left\langle \omega_{i},\mu'\right\rangle=m$.
\end{center}
\end{lem}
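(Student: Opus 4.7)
The plan is to exploit the product decomposition $\mathfrak{C}_+ \cong A_G \times \mathfrak{C}$ of Corollary \ref{produit}, noting that the $\phi_\alpha$-conditions defining the strata $\mathfrak{C}_{+,r}^\mu(\co) \subset \mathfrak{C}_+^\mu(\co)$ only involve the roots $\alpha$ of $T$, which see only the $\mathfrak{C} = T/W$ factor. Thus the stratification is compatible with the product, and $\codim(\mathfrak{C}_{+,r}^\mu(\co), \mathfrak{C}_+^\mu(\co)) = \codim(\mathfrak{C}_r^\mu(\co), \mathfrak{C}^\mu(\co))$ for the corresponding stratum $\mathfrak{C}_r^\mu(\co)$ inside $\mathfrak{C}^\mu(\co)$, and similarly with $(\mu+\mu', r+m)$ in place of $(\mu, r)$. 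This reduces the problem to a codimension computation inside $\mathfrak{C}^{\mu+\mu'}(\co)$ versus $\mathfrak{C}^\mu(\co)$.

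Next, using the characteristic coordinates of Theorem \ref{bouth}, I would identify $\mathfrak{C}^\mu(\co) \cong \bigoplus_{i=1}^r \pi^{-\langle \omega_i, -w_0\mu\rangle}\co$. Since $-w_0$ permutes the fundamental weights, the hypothesis $\langle \omega_i, \mu'\rangle = m$ for all $i$ yields $\langle \omega_i, -w_0\mu'\rangle = m$ for all $i$, so $\mathfrak{C}^{\mu+\mu'}(\co)$ gains exactly $rm$ dimensions (in the admissible sense, working modulo $\pi^N$). The natural scaling map $\iota: \mathfrak{C}^\mu(\co) \hookrightarrow \mathfrak{C}^{\mu+\mu'}(\co)$ sending $(\chi_1,\ldots,\chi_r) \mapsto (\pi^{-m}\chi_1, \ldots, \pi^{-m}\chi_r)$ is then a closed immersion of codimension $rm$, with the extra leading polar coefficients as the transverse direction. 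Using the Steinberg section $\epsilon_+$ and the formula $\phi_\alpha(t) = \langle \alpha, \mu\rangle + \val(1-\alpha(t)) + \val(1-\alpha^{-1}(t))$, one checks by direct computation that $\iota$ sends $\mathfrak{C}_r^\mu(\co)$ into $\mathfrak{C}_{r+m}^{\mu+\mu'}(\co)$, so the latter contains $\iota(\mathfrak{C}_r^\mu(\co))$.

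The main obstacle is the reverse inclusion: showing that every element of $\mathfrak{C}_{r+m}^{\mu+\mu'}(\co)$ lies in the image of $\iota$, i.e., its highest-order polar coefficients vanish. The idea is that any nonzero such coefficient forces one of the $\phi_\alpha$ to drop below $r(\alpha)+m$: a pole of order strictly larger than $\langle \omega_i, -w_0\mu\rangle$ in some $\chi_i$ produces, via $\epsilon_+$, contributions to $\val(1-\alpha(t))$ and $\val(1-\alpha^{-1}(t))$ incompatible with the shifted radicial data $r+m$. This valuation bookkeeping is in the same spirit as the argument in Lemma \ref{levi} using $\kD_{+,G/M}$ and the étaleness of $\kC_{+,M}\to\kc$ over the complement of the discriminant. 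Combining, $\iota$ restricts to a bijection $\mathfrak{C}_r^\mu(\co) \cong \mathfrak{C}_{r+m}^{\mu+\mu'}(\co)$, yielding $\codim(\mathfrak{C}_{r+m}^{\mu+\mu'}(\co), \mathfrak{C}^{\mu+\mu'}(\co)) = a + rm$, which together with the first paragraph gives the lemma.
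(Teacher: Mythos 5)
Your overall architecture (exhibit an explicit transition map between the two strata and read off the codimension) is the same as the paper's, but the map you write down cannot produce the answer, and this is a genuine gap rather than a presentational slip. With your own identification $\kc^{\mu}(\co)\cong\bigoplus_{i}\pi^{-\langle\omega_{i},-w_{0}\mu\rangle}\co$, the coordinatewise multiplication by $\pi^{-m}$ carries each lattice $\pi^{-k_{i}}\co$ \emph{onto} $\pi^{-k_{i}-m}\co$; so $\iota$ is a bijection of ambient spaces, not "a closed immersion of codimension $rm$", and there are no "extra leading polar coefficients" left transverse to its image. You are conflating two different maps (the scaling, which is onto, and the plain inclusion $\bigoplus\pi^{-k_{i}}\co\subset\bigoplus\pi^{-k_{i}-m}\co$, which does have codimension $rm$ but is not a scaling). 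Worse, the conclusion you draw is self-defeating: if $\iota$ were simultaneously a bijection of ambient spaces and a bijection $\mathfrak{C}^{\mu}_{r}(\co)\cong\mathfrak{C}^{\mu+\mu'}_{r+m}(\co)$, the two strata would have the \emph{same} codimension, i.e.\ you would prove $a$, not $a+rm$. The extra $rm$ has to come from the transition map having image of codimension $rm$ in the target; this is what the paper's map $(a_{1},\dots,a_{r})\mapsto(\pi^{\mu+\mu'},\pi^{m}a_{1},\dots,\pi^{m}a_{r})$ does (a \emph{positive} power of $\pi$, applied to the Vinberg coordinates $\chi_{i}$, which after the twist by $-w_{0}\la$ take values in $\co$ rather than in $\pi^{-k_{i}}\co$), the codimension of the image of an admissible set under such a coordinatewise scaling being supplied by \cite[Lem. 10.2.1]{GKM2}.

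Two further steps are asserted but not carried out, and they are where the actual content lies. First, that the transition map shifts $\phi_{\alpha}$ by exactly $m$ for \emph{every} positive root $\alpha$: the hypothesis only controls $\langle\omega_{i},\mu'\rangle$, and $\langle\alpha,\mu'\rangle\neq m$ for a general $\alpha\in R_{+}$, so the bookkeeping $\phi_{\alpha}\mapsto\phi_{\alpha}+m$ is not the trivial computation your "one checks by direct computation" suggests; it depends on how the cameral point $t_{+}=(\pi^{-w_{0}\mu},t)$ transforms (for the paper's map only the twist $\pi^{-w_{0}\mu}$ changes, not $t$). Second, the surjectivity of the transition map onto $\mathfrak{C}^{\mu+\mu'}_{+,r+m}(\co)$, which you correctly flag as the main obstacle, is left as a heuristic about "valuation bookkeeping". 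Until the map is corrected and these two verifications are supplied, the proof does not go through.
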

\begin{proof}
Cela vient du fait que $\mathfrak{C}_{+,r+m}(\co)^{\mu+\mu'}$ est l'image de $\mathfrak{C}_{+,r}^{\mu}(\co)$ par l'application:
\begin{center}
$(\omega_{\bullet}(\pi^{\mu'}):(a_{1},\dots,a_{r})\mapsto(\pi^{\mu+\mu'}),\pi^{m}a_{1},\dots,\pi^{m}a_{r})$.
\end{center}
et du lemme \cite[10.2.1]{GKM2}.
\end{proof}
Nous pouvons maintenant prouver la proposition \ref{inegal}:
\begin{proof}
Passons à la preuve dans le cas déployé d'après Goresky-Kottwitz-McPherson.
Nous allons en fait démontrer un résultat plus précis sur la codimension, à savoir qu'elle est égale à:
\begin{center}
$D_{G}(r):=d_{G}(r)+\frac{1}{2}\sum\limits_{\alpha\in R_{+}}r(\alpha)=d_{G}(r)+\delta(a)$,
\end{center}
où $d_{G}(r)$ est la codimension de $V_{T}^{\la}(\co)_{r}$ dans $V_{T}^{\la}(\co)$, en particulier, on a $d_{G}(r)\geq 1$ comme $\delta(a)>0$.
\medskip

On raisonne sur le cardinal du nombre de racines. Le cas où $\left|R\right|=0$ est trivial.
On suppose d'abord qu'il existe $\alpha\in R_{+}$ tel que $r(\alpha)=0$, alors $R_{1}$ est un sous-ensemble strict de racines. En particulier, l'énoncé est vrai pour le Lévi $M$ du lemme \ref{levi}. En particulier, on obtient que $\kcd(\co)_{r}$ est de codimension $D_{M}(r_{M})$. Il nous faut donc voir que $D_{M}(r_{M})=D_{G}(r)$, ce qui est clair car $r$ s'annule sur les racines qui ne sont pas dans $M$ et que $V_{T}^{\la}(\co)_{r}$ est ouvert dans $V_{T}^{\la}(\co)_{r_{M}}$ d'où $d_{G}(r)=d_{M}(r_{M})$.
Si $0$ n'apparaît pas comme valeur de $r$, soit $m$ la plus petite valeur que prend $r$, alors nous avons $r=r'+m$ de telle sorte que le théorème vaut pour $r'$ et il ne nous reste plus qu'à utiliser le lemme \ref{mulcar} pour conclure.

Passons au cas général; dans ce cas il nous suffit de voir que:
\begin{center}
$\kcd(\co)_{[w,r]}\otimes_{\co}\co_{E}\subset\kcd(\co_{E})_{r}$,
\end{center}
d'où $l\codim[w,r]\geq \codim\kcd(\co_{E})_{r}\geq l(\delta(a)+1)$, ce qui conclut.
\end{proof}

\begin{prop}
La proposition \ref{inegal} implique le théorème \ref{bad}.
\end{prop}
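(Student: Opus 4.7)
The plan is a local-to-global argument via a truncation map on the Hitchin base. By the local-global formula~\eqref{locglob}, for $a\in\abdhk$ one has $\delta(a)=\sum\limits_{v}\delta_{v}(a)$, where $\delta_{v}(a)\geq 1$ precisely at the finitely many closed points at which the cameral cover $\tilde{X}_{a}\to\bar{X}$ fails to be etale. Fixing $\delta\in\mathbb{N}$, any $a\in\abdde(\bar{k})$ has support $T(a):=\{v\mid\delta_{v}(a)\geq 1\}$ of cardinality at most $\delta$. Moreover, from the relation $2\delta_{v}(a)+c_{v}(a)=\sum\limits_{\alpha\in R^{+}}r_{v}(\alpha)$ with $0\leq c_{v}\leq\dim T_{+}$, each $r_{v}(\alpha)$ is bounded by a function of $\delta$ and $G$ alone, so only finitely many local combinatorial types $[w_{v},r_{v}]$ can occur at $v\in T(a)$, and each corresponding local stratum $\kcd(\co_{v})_{[w_{v},r_{v}]}$ is admissible at a uniform jet level $m=m(\delta,G)$.

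The central step is to choose $N=N(\delta,G)$ so that, whenever $\la\succ N$, for every finite subset $S_{0}\subset\left|X\right|$ of cardinality at most $\delta$ the truncation morphism
\begin{center}
$\ev_{S_{0}}:\abd\longrightarrow\prod\limits_{v\in S_{0}}\kcd(\co_{v}/\pi_{v}^{m})$
\end{center}
is smooth and surjective. Since $\abd$ splits (up to the central tore factor) as a sum of global sections of line bundles $\co_{X}(\left\langle\omega_{i},-w_{0}\la\right\rangle)$, this reduces to the vanishing of $H^{1}(X,\co_{X}(\left\langle\omega_{i},-w_{0}\la\right\rangle)(-m\sum\limits_{v\in S_{0}}[v]))$ for every fundamental weight $\omega_{i}$, which by Riemann--Roch holds as soon as $\deg\left\langle\omega_{i},-w_{0}\la\right\rangle>2g-2+m\delta$; it suffices to take $N>2g-2+m\delta$.

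With such $N$ fixed, $\abdde$ decomposes as a finite union, indexed by admissible data $(S_{0},(w_{v},r_{v})_{v\in S_{0}})$ with $\sum_{v}\delta(w_{v},r_{v})=\delta$, of the $\ev_{S_{0}}$-preimages of the product strata $\prod\limits_{v\in S_{0}}\kcd(\co_{v})_{[w_{v},r_{v}]}$. Proposition~\ref{inegal} gives each local factor codimension $\geq\delta_{v}+1$ in $\kcd(\co_{v})$, hence the product has codimension $\geq\sum_{v}(\delta_{v}+1)\geq\delta$; smoothness and surjectivity of $\ev_{S_{0}}$ transfers this bound to $\abd$, and the finite union preserves it, proving Theorem~\ref{bad}.

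The main obstacle is obtaining the uniform jet level $m(\delta,G)$: one must check that the local stratum of Proposition~\ref{inegal} is cut out by conditions modulo a power of $\pi_{v}$ depending only on $(w,r)$, and then bound this power uniformly over the finitely many $(w,r)$ consistent with a given $\delta$. Once this uniformity is established, the Riemann--Roch threshold $N$ is immediate and the local-to-global codimension transfer is routine.
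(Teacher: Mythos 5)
Your argument is correct and is essentially the one the paper invokes: its proof consists of citing Ng\^o's \emph{loc. cit.} Prop. 5.7.2, whose content is exactly your local-to-global scheme (finiteness and admissibility of the local root-valuation strata contributing to a given $\delta$, smoothness and surjectivity of the jet-evaluation map for $\la\succ N$ via Riemann--Roch, and transfer of the local codimension bound $\geq\delta_{v}+1$ to the global stratum). The only caveat is that the uniform jet level $m(\delta,G)$ you flag as the main obstacle is already supplied by the admissibility lemma preceding Proposition \ref{inegal}, so no further work is needed there.
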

\begin{proof}
Il suffit de reprendre \cite[Prop. 5.7.2]{N}.
\end{proof}
\section{La fibration de Hitchin anisotrope}
\subsection{La propreté}
On rappelle que l'on considère toujours un groupe connexe réductif déployé $G$.
Dans cette section, on démontre le théorème suivant:
\begin{thm}\label{proprete}
Le morphisme de Hitchin:
\begin{center}
$f^{ani}:\cmd^{1,ani}\rightarrow\abd^{ani}$
\end{center}
est propre. 
\end{thm}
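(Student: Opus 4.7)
On applique le critère valuatif de propreté. La séparation de $f^{ani}$ découle de la factorisation naturelle de $\cmd$ à travers le champ de Hecke $\chl$ : par la proposition \ref{va}, $\chl$ est, localement pour la topologie lisse sur $\Bun$, isomorphe à $\Bun\times\grl$ où $\grl$ est projectif donc séparé, ce qui suffit à établir la séparation de $\cmd$, et en particulier de $\cmd^{1,ani}$.

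Pour l'existence dans le critère valuatif, soit $R$ un anneau de valuation discrète à corps résiduel algébriquement clos $\bar{k}$ et corps des fractions $K$. On se donne $(E_K,\phi_K)\in\cmd^{1,ani}(K)$ et un prolongement $a:\Spec R\to\abd^{ani}$ du polynôme caractéristique. L'ingrédient central est le corollaire \ref{stabaffine} : pour tout $a_0\in\abdank$, la fibre $\cmda$ est homéomorphe à un schéma projectif, avec stabilisateurs affines dans $\cP_a$. La condition anisotrope, via la proposition \ref{ncl} (qui donne la finitude de $\pi_0(\cP_a^1)=X_*(T\cap G_{der})_{W_a}$) et la proposition \ref{sgrpe} (qui borne $\Aut(E,\phi)\subset H^0(\bar{X},J_a^\flat)=T^{W_a}$), assure que ces stabilisateurs sont aussi finis : $\cmd^{1,ani}$ est bien Deligne-Mumford (corollaire \ref{corlisse}). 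On prolonge alors $(E_K,\phi_K)$ via la formule du produit (théorème \ref{produithitchin}) : aux points réguliers semisimples, l'extension découle de la lissité (corollaire \ref{lissitereg}); aux points singuliers, elle s'obtient par projectivité modulo l'action du tore déployé des fibres de Springer locales (proposition \ref{varproj}); le recollement global est contrôlé par la propreté de la partie abélienne de $\cP^{0,ani}$ fournie par la proposition \ref{neron}.

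Le principal obstacle consistera à rendre rigoureux le recollement local-global : les extensions locales ne sont définies qu'à translation près par des éléments du champ de Picard local, et le recollement en une section globale sur $\Spec R$ demande de contrôler les corrections possibles. C'est ici que l'anisotropie joue pleinement son rôle, puisqu'elle impose la discrétude des composantes connexes de $\cP_a^1$, empêchant les translations locales de diverger. Alternativement, on peut exploiter directement la projectivité de $\cmda$ fournie par le corollaire \ref{stabaffine} : la séparation combinée à la finitude des stabilisateurs identifie $\cmda$ à l'espace grossier d'un champ de Deligne-Mumford propre, et la spécialisation de $(E_K,\phi_K)$ détermine alors une unique section sur $R$ par le critère valuatif appliqué au schéma projectif sous-jacent.
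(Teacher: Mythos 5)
Votre proposition comporte deux lacunes s\'erieuses, une dans chaque moiti\'e du crit\`ere valuatif.

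Pour la s\'eparation, l'argument via le champ de Hecke ne fonctionne pas : la proposition \ref{va} dit seulement que $\overline{\cH}_{\la}$ est, localement pour la topologie lisse, un fibr\'e en $\grl$ au-dessus de $\Bun$, mais $\Bun$ lui-m\^eme n'est pas s\'epar\'e (sa diagonale est affine, non propre, les groupes d'automorphismes des torseurs \'etant des groupes affines de dimension positive), de sorte qu'on ne peut rien en d\'eduire sur la s\'eparation de $\cmd$ au-dessus de $\abd$. De fait, la restriction au lieu anisotrope et \`a la composante de degr\'e z\'ero est essentielle, et votre argument n'utilise l'anisotropie nulle part pour la s\'eparation, signe qu'il prouve trop. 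Dans la preuve du texte (suivant Chaudouard-Laumon), on r\'eduit le prolongement d'un isomorphisme $\theta_{K}$ \`a l'anneau local $B$ au point g\'en\'erique de la fibre sp\'eciale de $X_{R}$ ; apr\`es trivialisation des deux torseurs, $\theta_{K}$ devient la translation par un \'el\'ement $g$ du centralisateur de $\g$, qui s'\'ecrit $g=\pi^{\la}h$ avec $\la\in X_{*}(T_{a})$, et c'est pr\'ecis\'ement la conjonction du degr\'e z\'ero (qui force $g\in G_{der}(F)$) et de l'anisotropie ($X_{*}(T_{a}\cap G_{der})^{\Gal(F_{s}/F)}=0$) qui tue $\la$.

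Pour l'existence, votre esquisse par la formule du produit reste au stade d'un recollement local-global que vous reconnaissez vous-m\^eme ne pas savoir effectuer, et l'alternative propos\'ee confond la propret\'e des fibres avec la propret\'e du morphisme : savoir que chaque fibre g\'eom\'etrique $\cmda$ est hom\'eomorphe \`a un sch\'ema projectif ne fournit pas le prolongement d'un $K$-point de l'espace total en un $R$-point (un morphisme dont toutes les fibres sont propres n'est pas propre en g\'en\'eral). L'argument du texte est tout autre, et vaut d'ailleurs au-dessus de tout $\abdh$ : on recolle $(E_{K},\phi_{K})$ avec la paire $(E_{0},\g_{0})$ fournie par la section de Steinberg (ou par une $z$-extension dans le cas g\'en\'eral) au-dessus de $\Spec(B)$, ce qui donne une paire d\'efinie hors d'un nombre fini de points de codimension deux de $X_{R}$ ; on prolonge ensuite le torseur par le th\'eor\`eme de Horrocks et la section $\phi$ par affinit\'e de $V_{G}^{\la}$.
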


On commence par montrer la partie existence du critère valuatif. Il vaut au-dessus de $\abdh$ et pour tout $\cmdh$. 
\begin{prop}
Soit $R$ un anneau de valuation discrète de corps résiduel $\kappa$ et de corps de fractions $K$. Soit $a\in\abdh(R)$ et $\phi_{K}\in\cmdh(K)$ qui relève $a_{K}$. Alors, il existe une extension finie étale $K'$ de $K$ telle que pour la normalisation $R'$ de $R$ dans $K'$, il existe un morphisme $\phi'\in\cmdh(R')$ au-dessus de $a$ dont la restriction à $\Spec(K')$ soit au-dessus de $\phi_{K}$.
\end{prop}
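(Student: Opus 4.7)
Le plan est d'utiliser la section de Steinberg comme point base et de transporter $\phi_K$ par une torsion de Picard. Plus pr�cis�ment, appliqu�e � $a\in\abdh(R)$, la section $\eps_{+}$ d�finie dans le th�or�me \ref{bouth2} fournit un rel�vement canonique $\phi_{0}\in\cmdo^{reg,\heartsuit}(R)$ au-dessus de $a$, d�fini sur tout $\Spec(R)$. Par construction, $\phi_{0}$ passe par l'ouvert r�gulier. L'objectif est alors de comparer $\phi_{K}$ � $\phi_{0,K}$ via l'action du champ de Picard et d'�tendre la torsion qui les relie.

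Premi�rement, je consid�rerais l'ouvert $U_{K}\subset X_{K}$ au-dessus duquel $a_{K}$ prend ses valeurs dans $\kc^{f-rs}$; par hypoth�se sur $\abdh$, cet ouvert est dense. Sur $U_{K}$, comme la fibre de $\chi_{+}$ au-dessus du lieu fortement r�gulier semisimple est form�e d'une unique $G$-orbite r�guli�re, la section $\phi_{K}$ est automatiquement r�guli�re. D'apr�s la proposition \ref{picardtorseur} et sa variante ponctuelle \ref{picardloc}, il existe donc un $J_{a_{K}}$-torseur $\cT_{K}$ sur $U_{K}$ (unique � isomorphisme pr�s) tel que $\phi_{K\vert U_{K}}\cong\cT_{K}\cdot\phi_{0,K\vert U_{K}}$.

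Deuxi�mement, je prolongerais $\cT_{K}$ en un $J_{a}$-torseur sur $X_{R}$, quitte � une extension finie �tale $K'/K$. Comme $J_{a}$ est un sch�ma en groupes commutatif lisse et que la proposition \ref{picardlisse} assure la lissit� de $\cP$ au-dessus de $\abdh$, le $J_{a}$-torseur $\cT_{K}$ s'�tend d'abord en un torseur sur $X_{K}$ (il suffit de le recoller avec le torseur trivial sur le compl�mentaire de $U_{K}$ via l'isomorphisme canonique donn� par la modification entre $\phi_{K}$ et $\phi_{0,K}$). Ensuite, pour l'�tendre � $X_{R}$, on invoque le crit�re valuatif pour le mod�le de N�ron $J_{a}^{\flat}$, qui donne l'extension comme $J_{a}^{\flat}$-torseur; apr�s extension finie �tale de $K$ pour trivialiser l'obstruction dans $\pi_{0}(\cP_{a})$ (qui est un groupe ab�lien de type fini par la proposition \ref{ncl}), on rel�ve au niveau des $J_{a}$-torseurs sur $X_{R'}$. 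On obtient alors $\phi':=\cT_{R'}\cdot\phi_{0\vert R'}$, qui rel�ve $a$ et co�ncide avec $\phi_{K}$ apr�s restriction � $\Spec(K')$.

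L'obstacle principal me para�t �tre le second point: l'extension du $J_{a}$-torseur $\cT_{K}$ � tout $X_{R}$ apr�s �ventuelle extension finie �tale du corps. En effet, bien que $X_{R}$ soit une surface r�guli�re et $J_{a}$ lisse commutatif sur $X_{R}$, le sch�ma $J_{a}$ n'est ni propre ni de fibre g�n�rique un tore partout: il est g�n�riquement un tore, mais d�g�n�re en les points de $X_{R}$ au-dessus desquels $a$ rencontre le diviseur discriminant. Il faut donc contr�ler l'extension en ces points, ce qui se fait en exploitant la suite exacte reliant $J_{a}$ et son mod�le de N�ron $J_{a}^{\flat}$ ainsi que la partie affine $\cR_{a}$ d�crite dans la proposition \ref{neron}, puis en absorbant l'obstruction r�siduelle par une extension finie �tale de $K$.
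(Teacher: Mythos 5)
Votre strat�gie de transport par torsion de Picard comporte un d�faut structurel : l'action de $\cP_{a}$ n'est simplement transitive que sur l'ouvert r�gulier, et la relation $\phi_{K}\cong\cT_{K}\cdot\phi_{0,K}$ que vous �tablissez ne vaut que sur $U_{K}$. Le point $\phi'=\cT_{R'}\cdot\phi_{0\vert R'}$ que vous construisez appartient donc � $\cmdo^{reg}$, et sa restriction � $\Spec(K')$ ne co�ncide avec $\phi_{K'}$ que sur l'ouvert $U_{K'}$ de la courbe ; or deux paires de Hitchin qui co�ncident sur un ouvert dense de $X_{K'}$ ne sont pas isomorphes en g�n�ral (c'est pr�cis�ment pour cela que l'orbite r�guli�re n'est qu'un ouvert de la fibre). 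Comme $\phi_{K}$ est un point arbitraire de $\cmdh(K)$, qui n'a aucune raison d'�tre r�gulier aux points o� $a_{K}$ rencontre le discriminant, votre construction ne produit pas un prolongement de $\phi_{K}$ mais un autre point de la fibre. S'ajoutent deux difficult�s secondaires : la section de Steinberg globale n'existe pas si $G_{der}$ n'est pas simplement connexe (la proposition est �nonc�e pour $G$ connexe r�ductif quelconque), et le prolongement d'un $J_{a}$-torseur � la surface $X_{R'}$ ne rel�ve pas de la propri�t� de N�ron, qui ne s'applique qu'au-dessus d'un trait ; le groupe $J_{a}$ �tant lisse mais non propre et d�g�n�r� le long du discriminant, ce probl�me d'extension en dimension deux n'est pas contr�l� par $\pi_{0}(\cP_{a})$.

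La preuve du texte �vite ces �cueils en ne touchant pas � la fibre g�n�rique : on conserve $(E_{K},\phi_{K})$ sur $X_{K}$ tel quel, on produit (via la section de Steinberg ou, dans le cas g�n�ral, via une $z$-extension) un rel�vement $\g_{0}\in V_{G}^{\la}(B)$ de $a$ au-dessus de l'anneau de valuation discr�te $B$ au point g�n�rique de la fibre sp�ciale, on conjugue $\g$ � $\g_{0}$ apr�s extension finie de $K$, puis on recolle � la Beauville--Laszlo ; le couple obtenu est d�fini hors d'un nombre fini de points de codimension deux de $X_{R}$, o� l'on conclut par le th�or�me de Horrocks pour le torseur et par l'affinit� de $V_{G}^{\la}$ pour la section. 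Si vous souhaitez sauver une approche par sym�tries, il faudrait au minimum la restreindre aux points r�guliers ou la combiner avec un argument de prolongement en codimension deux du type de celui du texte.
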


\begin{proof}
Le point $\phi_{K}$ correspond à un point $(E_{K},\phi_{K})$, $\eta$ le point générique de $X_{K}$. Quitte à faire une extension finie de $K$, on peut supposer que $E_{K}$ est trivial au point $\eta$.
En fixant une trivialisation de $E_{K}$, on obtient un élément $\g\in V_{G}^{\la}(L)$
où $L$ est le corps de fonctions de $X_{K}$.

Soit $B$ l'anneau local au point fermé de $X_{R}$ défini par l'idéal engendré par une uniformisante $\pi$ de $R$. C'est un anneau de valuation discrète de corps résiduel le corps de fonctions $X_{\kappa}$ et de corps de fractions $L$.
Soit $a_{\vert\Spec(B)}\in\kcd(B)$, comme $a\in\abdh(R)$, on en déduit que $a_{\vert\Spec(B)}\in\kc^{\la,f-rs}(B)$.
\begin{lem}
Soit $a_{\vert\Spec(B)}\in\kc^{\la,f-rs}(B)$, alors on peut trouver $\g_{0}\in V_{G}^{\la}(B)$ tel que $\chi_{+}(\g_{0})=a_{\vert\Spec(B)}$.
\end{lem}
\begin{proof}
Si $G$ vérifie que $G_{der}$ est simplement connexe, il suffit de prendre la section de Steinberg, dans le cas général, on a un carré cartésien:
$$\xymatrix{V_{G'}^{\tilde{\la},f-rs}\ar[d]^{\chi_{+}}\ar[r]&V_{G}^{\la,f-rs}\ar[d]^{\chi_{+}}\\\kc^{\tilde{\la},f-rs}\ar[r]&\kc^{\la,f-rs}}$$
où $G'$ est une $z$-extension de $G$ par un tore induit $Z$, $\tilde{\la}$ relève $\la$ et les flèches horizontales sont des $Z$-torseurs.
En particulier, tout $Z$-torseur sur un anneau de valuation discrète est trivial, on peut donc relever $a_{\vert\Spec B}$ en $\tilde{a}\in\kc^{\tilde{\la},f-rs}(B)$, on a alors un élément $\g_{1}\in V_{G'}^{\tilde{\la},f-rs}$ qui relève $\tilde{a}$ que l'on projette en un élément de $V_{G}^{\la,f-rs}(B)$, ce qu'on voulait.
\end{proof}
Maintenant, comme $\chi_{+}(\g)=\chi_{+}(\g_{0})$, quitte à faire une extension, on peut supposer qu'il existe $g\in G(L)$ tel que $\Ad(g)\g=\g_{0}$.
On recolle alors $(E_{K},\phi_{K})$ avec le couple $(E_{0},\g_{0})$ sur $\Spec(B)$, où $E_{0}$ est le torseur trivial.
On obtient donc un couple $(E,\phi)$ défini sur le complémentaire des points fermés de codimension deux de $X_{R}$.

Soit $x\in X\times_{k}S$ un tel point et considérons l'anneau local $\mathcal{O}_{x}$ en $x$. C'est un anneau local régulier de dimension 2.
Nous avons un couple $(E,\phi)$ sur $\Spec(\mathcal{O}_{x})-\{x\}$ que nous voulons prolonger.
D'après un théorème de Horrocks \cite{Ho}, il y a une équivalence de catégories entre les $G$-torseurs sur
$\Spec(\mathcal{O}_{x})-\{x\}$ et les $G$-torseurs sur $\Spec(\mathcal{O}_{x})$, donnée par la restriction.
En particulier, cela nous fournit un prolongement de $E$.
Quant à la section, on a une flèche $\Spec(\mathcal{O}_{x})-\{x\}\rightarrow V_{G}^{\la}$ qui se prolonge comme $V_{G}^{\la}$ est affine.
\end{proof}

Il nous faut maintenant montrer la séparation, nous allons nous inspirer de la démonstration de Chaudouard-Laumon \cite[sect. 9.2]{CL}. L'énoncé du critère valuatif est le suivant:
\begin{prop}
Soit $R$ un anneau de valuation discrète contenant $k$. Soient $S=\Spec(R)$ et $\eta=\Spec(L)$ son point générique. Soient $m,m'\in\cmd^{1,ani}(R)$ et $m_{K},m_{K'}\in\cmd^{1,ani}(K)$ les points associés tels que:
\begin{itemize}
\item
$f^{ani}(m)=f^{ani}(m')$,
\item
on a un isomorphisme $\theta_{K}:m_{K}\rightarrow m_{K'}$.
\end{itemize}
Alors, il existe une extension finie étale $L'$ de $L$ telle que pour la normalisation $S'$ de $S$ dans $L'$, il existe un unique isomorphisme $\theta:m\rightarrow m'$ qui prolonge $\theta_{K}$.
\end{prop}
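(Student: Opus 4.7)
On suit la strat\'egie de Chaudouard-Laumon \cite{CL}, en exploitant l'action simplement transitive du champ de Picard $\cP$ sur l'ouvert r\'egulier et la propri\'et\'e de Deligne-Mumford du lieu anisotrope. Posons $a = f^{ani}(m) = f^{ani}(m') \in \abdan(R)$ et notons $U \subset X \times_{k} S$ l'ouvert sur lequel $a$ prend ses valeurs dans le lieu r\'egulier semisimple $\kc^{\la,rs}$. Par l'hypoth\`ese anisotrope, $a$ est g\'en\'eriquement r\'egulier semisimple sur chaque fibre au-dessus de $S$, donc $U$ est dense dans chaque fibre. D'apr\`es le corollaire \ref{equidim}, les restrictions $m|_{U}$ et $m'|_{U}$ tombent dans l'ouvert r\'egulier $\cmdo^{reg}|_{U}$. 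La proposition \ref{picardtorseur} affirme que $\cmdo^{reg}$ est une gerbe sous $\cP$, neutralis\'ee par la section de Steinberg, ce qui permet d'interpr\'eter la diff\'erence entre $m|_{U}$ et $m'|_{U}$ comme un $J_{a}|_{U}$-torseur $\cL$ sur $U$; l'isomorphisme $\theta_{K}$ trivialise $\cL$ au-dessus de $U \cap X_{L}$.

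L'\'etape essentielle consiste alors \`a prolonger $\cL$ en un $J_{a}$-torseur sur $X \times_{k} S$ tout entier. On commence par le prolonger en un $J_{a}^{\flat}$-torseur \`a l'aide du mod\`ele de N\'eron global (proposition \ref{neron}), ce qui traite les points ferm\'es de codimension un o\`u $a$ quitte le lieu r\'egulier semisimple. L'obstruction au rel\`evement comme $J_{a}$-torseur est alors contr\^ol\'ee par la partie affine du champ de Picard, elle-m\^eme pilot\'ee par le groupe de composantes connexes $\pi_{0}(\cP_{a}^{1})$. Ce dernier \'etant fini sous l'hypoth\`ese d'anisotropie, il existe une extension finie \'etale $L'$ de $L$ telle que, apr\`es normalisation $R'$ de $R$ dans $L'$, l'obstruction se trivialise et l'on obtient un \'el\'ement $p \in \cP_{a}(R')$ prolongeant la trivialisation g\'en\'erique.

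L'action de $p$ sur $m$ fournit alors l'isomorphisme souhait\'e $\theta : m \xrightarrow{\sim} m'$ sur $\Spec(R')$, dont la restriction \`a $\Spec(L')$ co\"\i ncide avec le relev\'e de $\theta_{K}$. L'unicit\'e r\'esulte de ce que $\cmd^{1,ani}$ est de Deligne-Mumford au-dessus de $\abdan$ (corollaire \ref{corlisse}): les automorphismes des objets anisotropes sont finis non ramifi\'es (proposition \ref{sgrpe}), donc le faisceau $\underline{\mathrm{Isom}}(m, m')$ est non ramifi\'e sur $S'$, et $\theta$ est d\'etermin\'e par sa restriction \`a la fibre g\'en\'erique. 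L'obstacle principal est pr\'ecis\'ement l'extension du $J_{a}$-torseur \`a tout $X \times_{k} S$: c'est l'hypoth\`ese anisotrope, via la finitude de $\pi_{0}(\cP_{a}^{1})$, qui permet de trivialiser l'obstruction galoisienne apr\`es passage \`a l'extension finie \'etale $L'/L$ apparaissant dans l'\'enonc\'e.
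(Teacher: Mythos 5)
Votre plan g\'en\'eral --- interpr\'eter $\underline{\mathrm{Isom}}(m,m')$ au-dessus du lieu r\'egulier semisimple comme un torseur sous $J_{a}$, prolonger en codimension un puis conclure en codimension deux par affinit\'e du sch\'ema des isomorphismes, et obtenir l'unicit\'e par s\'eparation --- est bien celui du papier, qui suit Chaudouard--Laumon. Mais toute la difficult\'e est concentr\'ee en un unique point de codimension un, le point g\'en\'erique de la fibre sp\'eciale de $X_{R}$, d'anneau local $B$: c'est l\`a que le papier trivialise $E$ et $E'$ compatiblement aux sections (apr\`es extension finie de $K$, via Drinfeld--Simpson et l'isotrivialit\'e du transporteur), identifie $\theta_{K}$ \`a la translation par un \'el\'ement $g$ du tore centralisateur $T_{a}$ sur le corps des fractions de $B$, utilise la condition de degr\'e z\'ero pour forcer $g\in G_{der}$, puis l'anisotropie sous la forme $X_{*}(T_{a}\cap G_{der})^{\Gal}=0$ pour conclure $g\in G_{der}(B)$.

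C'est pr\'ecis\'ement cette \'etape que votre argument escamote, et le m\'ecanisme propos\'e \`a la place ne fonctionne pas. D'abord, le mod\`ele de N\'eron $J_{a}^{\flat}$ et la partie affine de $\cP_{a}$ contr\^olent le comportement de $J_{a}$ aux points ferm\'es de la courbe o\`u $a$ quitte le lieu r\'egulier semisimple, c'est-\`a-dire la direction le long de $X$; ils ne disent rien de la sp\'ecialisation de $X_{L}$ vers la fibre sp\'eciale, qui est la seule direction en jeu ici (au point g\'en\'erique de la fibre sp\'eciale, $a$ est r\'egulier semisimple et $J_{a}$ y est d\'ej\`a un tore). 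Ensuite, l'affirmation selon laquelle la finitude de $\pi_{0}(\cP_{a}^{1})$ trivialiserait l'obstruction apr\`es passage \`a une extension finie \'etale $L'/L$ est invers\'ee: l'obstruction est une classe dans un groupe d'invariants galoisiens de cocaract\`eres, qu'une extension finie ne peut qu'agrandir; ce qui la tue, c'est que l'anisotropie annule identiquement $X_{*}(T_{a}\cap G_{der})^{\Gal}$, sans qu'aucune extension ne soit n\'ecessaire pour cette \'etape (l'extension de l'\'enonc\'e sert ailleurs: trivialiser les torseurs et d\'eployer le transporteur). De plus, vous n'utilisez jamais l'hypoth\`ese de degr\'e z\'ero, pourtant indispensable: sans elle, $g$ peut \^etre la translation par $\pi^{\la}$ pour un cocaract\`ere central non nul et l'isomorphisme ne se prolonge pas --- c'est la raison d'\^etre du champ $\cmd^{1,ani}$ dans l'\'enonc\'e. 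Enfin, prolonger le torseur $\cL$ ne suffirait pas: il faut prolonger la section de $\underline{\mathrm{Isom}}(m,m')$ d\'efinie par $\theta_{K}$, et c'est exactement le calcul local en $B$ qui manque.
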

\begin{proof}
Nous avons $m=(E,\phi)$ et $m'=(E',\phi')$ dans $\cmd^{ani}(R)$. Nous noterons d'un indice $K$ les objets correspondants de $\cmd^{ani}(K)$.
On a un isomorphisme $G$-équivariant
\begin{center}
$\theta_{K}:E_{K}\rightarrow E'_{K}$
\end{center}
compatible aux sections $\f$ et $\f'$ une fois que l'on a tordu par le $V_{G}^{\la}$.
Nous voulons le prolonger en un isomorphisme $\theta$ au-dessus de $\Spec(R)$.
Soit $\pi$ une uniformisante de $R$. Il définit un point $x$ de codimension un de $X_{R}:=X\times_{k}R$, on note $B$ l'anneau local en ce point.

\begin{lem} \cite[Lem. 9.3 et sect. 9.3]{CL}
Pour que $\theta_{K}$ se prolonge à $X_{R}$, il faut et il suffit qu'il se prolonge à $\Spec(B)$. De plus, de tels prolongements, s'ils existent, sont uniques.
Enfin, pour $K'$ par une extension étale de $K$ et $R'$ la normalisation de $R$ dans $K'$, il est équivalent de montrer le prolongement de $\theta_{K'}$ ou de $\theta_{K}$.
\end{lem}

$\rmq$ Si $\theta_{K}$ se prolonge en $\theta$ la compatibilité avec les sections est automatique puisque l'égalité sera vraie sur un ouvert de $X_{R}$.

Posons $a=f(m)\in\abdan(R)$.
Par Drinfeld-Simpson \cite[Thm. 2]{DS}, en agrandissant suffisamment $K$ on peut supposer que $E$ et $E'$ sont triviaux et d'après le lemme suivant on peut les trivialiser de telle sorte que les trivialisations de $E\times_{G}V_{G}^{\la}$ et $E'\times_{G}V_{G}^{\la}$  qui s'en déduisent envoient $\phi$ sur $\phi'$.

\begin{lem}
Soient $\g$ et $\g'$ des éléments de $V_{G}(B)$ tels que 
$\chi_{+}(\g)=\chi_{+}(\g')=a\in\kc^{f-rs}(B)$,
alors il existe une extension finie $K'$ de $K$ telle que si l'on désigne par $B'$ la normalisation de $B$ dans $K'$, $\g$ et $\g'$ deviennent conjugués sous un élément de $G(B')$.
\end{lem}
\begin{proof}
On renvoie à \cite[Lem. 9.5]{CL} pour les détails. L'idée est que le transporteur de $\g$ à $\g'$ est un schéma en tores sur $\Spec(B)$ qui est donc isotrivial.
\end{proof}

Maintenant on fixe  une fois pour toutes ces trivialisations et on obtient alors une trivialisation de $\theta_{K}$ qui s'identifie à la translation à gauche d'un élément $g\in G(F)$ et $\g\in V_{G}(B)$ tel que 
\begin{center}
$g^{-1}\g g=\g$.
\end{center}
De plus, comme nos deux torseurs $E_{K}$ et $E'_{K}$ sont de degré zéro, cela force $g\in G_{der}(F)$.
L'assertion se réduit alors au lemme suivant:
\begin{lem}
L'élément $g$ est dans $G_{der}(B)$.
\end{lem}
Comme $a\in\kc^{f-rs}(B)$ la réduction en fibre spéciale est encore régulière semi-simple. Ainsi le centralisateur de $\g$ est un schéma en tores $T_{a}$ sur $\Spec(B)$ et est donc isotrivial. 
En particulier, le tore $T_{a,F}$ se déploie sur une extension finie étale.
On obtient qu'il existe $\la\in X_{*}(T_{a})$ et $h\in T_{a}(F)\cap G_{der}(B)$ tel que
\begin{center}
$g=\pi^{\la}h$.
\end{center}
Comme $h$ est dans le centralisateur  de $\g$, il n'est pas gênant de modifier la trivialisation de $E$ par $h$. Ainsi, quitte à changer de trivialisation, on peut supposer $h=1$.
Enfin, on a que $\la$ est nécessairement fixe sous $\Gal(F_{s}/F)$. Or, $a$ est anisotrope donc $X_{*}(T_{a}\cap G_{der}(F))^{\Gal(F_{s}/F)}=0$et $\la=0$.
C'est ici que notre chemin diverge d'avec celui de Chaudouard-Laumon, cela nous suffit pour conclure.
\end{proof}
\subsection{La surjectivité}
A ce stade, si $G_{der}$ n'est pas simplement connexe, nous ne savons pas si la flèche $f^{ani}:\cmdan\rightarrow\abdan$ est surjective et nous aurons évidemment besoin de cette assertion lorsque nous voudrons démontrer un théorème du support.
\begin{prop}\label{surj1}
La fibration $f^{ani}:\cmdan\rightarrow\abdan$ est surjective sur les $\bar{k}$-points.
\end{prop}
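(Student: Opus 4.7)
The plan is to reduce to the case $G_{der}$ simply connected via a $z$-extension and then work adelically. Let $p:G'\to G$ be a $z$-extension with $G'_{der}$ simply connected and kernel a central induced torus $Z$, and fix a lift $\tilde\la\in\Div^{+}(X,T')$ of $\la$. Given $a\in\abdan(\bar k)$, the goal is to build a $\bar k$-point of $\cmdan$ above $a$ using the adelic description of $\cmd$ recalled in the introduction.

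First, I lift $a$ both generically and locally to $G'$-data. Since $\kc^{\tilde\la,sc}\to\kcd$ is a finite surjective $\Gamma$-quotient with $\Gamma$ of order prime to the characteristic, it is surjective on $F$-points, providing $\tilde a_{\eta}\in\kc^{\tilde\la,sc}(F)$ lifting $a(\eta)$. At each closed point $x\in X$, the local ring $\mathcal{O}_{x}$ is strictly henselian with algebraically closed residue field $\bar k$, so every $\Gamma$-torsor on $\Spec(\mathcal{O}_{x})$ is trivial and $a_{x}\in\kcd(\mathcal{O}_{x})$ lifts to $\tilde a_{x}\in\kc^{\tilde\la,sc}(\mathcal{O}_{x})$. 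Applying the Steinberg section from Theorem \ref{bouth2} for $G'$ and pushing forward via $p$ yields $\gamma\in V_{G}^{\la}(F)$ and, for each $x$, an element $\gamma_{x}\in V_{G}^{\la}(\mathcal{O}_{x})$, all of common invariant $a_{x}$ in $\kcd(F_{x})$.

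Second, I conjugate $\gamma$ locally into $V_{G}^{\la}(\mathcal{O}_{x})$ for every $x$. Since $a\in\abdan\subset\abdh$ is generically strongly regular semisimple, the lemma inside the proof of Proposition \ref{proprete} applies: after a finite �tale extension, $\gamma$ and $\gamma_{x}$ become conjugate in $G$. The centralizer of $\gamma$ over $F_{x}$ is then an isotrivial torus, and combining Lang's theorem over $\bar k$ with the strict henselian nature of $\mathcal{O}_{x}$ allows descent to some $g_{x}\in G(F_{x})$ with $g_{x}^{-1}\gamma g_{x}\in V_{G}^{\la}(\mathcal{O}_{x})$; that is, the local Springer fiber at $x$ is nonempty. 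The adelic datum $(\gamma,(g_{x})_{x\in X})$ then defines a $\bar k$-point of $\cmd$ above $a$, which lies in $\cmdan$ automatically since $a\in\abdan$. To land in the degree zero substack $\cmd^{1,ani}$, twist by an element of $\cP_{a}$ of suitable degree, using the description $\pi_{0}(\cP_{a}^{1})\simeq X_{*}(T\cap G_{der})_{W_{a}}$ of Proposition \ref{ncl} to ensure that the degree map $\cP_{a}\to X_{*}(S)$ surjects onto the required coset.

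The main obstacle I foresee is the Galois-cohomological descent in the local conjugation step, which requires that the centralizer torus of $\gamma$ splits over a finite unramified extension of $F_{x}$ and that the relevant $H^{1}$ vanishes; this is where the strongly regular semisimple hypothesis on $a$ at $x$ and the anisotropy assumption are decisively used. A secondary subtlety is verifying that the constructed point is genuinely in $\cmd^{1,ani}$, which as sketched relies on the explicit control of $\pi_{0}(\cP_{a}^{1})$ and the exact sequence governing the degree map.
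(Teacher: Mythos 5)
Your overall strategy (reduce to a $z$-extension, lift the characteristic polynomial generically and locally, conjugate into integral form, glue adelically) differs from the paper's, and it contains a genuine gap at the local lifting step. The map $\kc^{\tilde{\la},f-rs}\rightarrow\kc^{\la,f-rs}$ is a $Z$-torsor only over the \emph{strongly regular semisimple} locus; away from it, the finite surjection $\kc^{\tilde{\la}}\rightarrow\kcd$ is not a torsor under anything, and an $\co_{x}$-point of $\kcd$ need not lift to an $\co_{x}$-point of $\kc^{\tilde{\la}}$ (the valuative criterion only produces a lift after a ramified extension of $\co_{x}$). This is precisely the phenomenon the paper flags when it warns that $\mathcal{A}_{\tilde{\la}}\rightarrow\abd$ is \emph{not} surjective. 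For a general $a\in\abdan(\bar{k})$, the bad points are exactly those where $a$ meets the discriminant divisor or where, at $x\in\supp(\la)$, the local section lies in the boundary of $V_{G}^{\la}$; your argument (triviality of $\Gamma$-torsors over a strictly henselian base, and the conjugation lemma from the proof of properness, which itself assumes $a_{\vert\Spec(B)}\in\kc^{\la,f-rs}(B)$) does not apply there.

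The paper avoids this by never constructing a point over every $a$. It first proves surjectivity of $f^{\heartsuit,+}:\cmdhp\rightarrow\abdhp$, i.e.\ only over the open locus where $a$ is regular semisimple at all points of $\supp(\la)$: there, a pair is built over an open $V\supset\supp(\la)$ via the $z$-extension (where the $Z$-torsor is Zariski-locally trivial), and at the finitely many remaining points of $X-V$ one stays inside $G$ itself and uses the Goresky--Kottwitz--MacPherson construction of a twisted torus $T_{w}$ with $t_{x}\in T_{w}(\co_{x})$ to produce an integral conjugate, gluing � la Beauville--Laszlo. The proposition then follows because $f^{ani}$ is proper (Theorem \ref{proprete}), so its image is closed and contains the dense open $\abdhp\cap\abdan$. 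This properness-plus-density step is the key idea missing from your proposal; without it (or a genuine replacement for the lifting at non-f-rs points), the argument does not close. As a minor point, the discussion of landing in the degree-zero substack is not needed: the statement concerns $\cmdan$, not $\cmd^{1,ani}$.
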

Cette proposition n'est intéressante que si $G_{der}$ n'est pas simplement connexe, puisque dans ce cas nous ne disposons pas de section de Steinberg, nous nous plaçons donc dans ce cadre.
On note $\abdhp$ l'ouvert de $\abd$ constitué des polynômes génériquement réguliers semisimples et réguliers semisimples également en les points de $\supp(\la)$.
On démontre alors le lemme suivant:
\begin{prop}\label{surj2}
Le morphisme $f^{\heartsuit,+}:\cmdhp\rightarrow\abdhp$ est surjectif sur les $\bar{k}$-points.
\end{prop}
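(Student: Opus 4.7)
La strat�gie consiste � se ramener au cas d'un groupe � groupe d�riv� simplement connexe via une $z$-extension. Soit $p:G'\rightarrow G$ une $z$-extension avec $G'_{der}$ simplement connexe et de noyau un tore central, et soit $\tilde{\la}$ un rel�vement appropri� de $\la$ dont le support contient celui de $\la$ et v�rifiant la condition \eqref{glob1} pour $G'$. On obtiendrait alors un carr� commutatif naturel reliant les fibrations pour $G'$ et $G$:
$$\xymatrix{\overline{\mathcal{M}}_{\tilde{\la}}^{\heartsuit,+}\ar[r]^{p_{*}}\ar[d]_{\tilde{f}}&\cmdhp\ar[d]^{f^{\heartsuit,+}}\\\mathcal{A}_{\tilde{\la}}^{\heartsuit,+}\ar[r]^{q}&\abdhp}.$$

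La preuve se d�couperait en deux assertions. D'une part, comme $G'_{der}$ est simplement connexe, on dispose de la section de Steinberg $\epsilon_{G'}$ du th�or�me \ref{bouth2}. Elle associe � tout $\tilde{a}\in\mathcal{A}_{\tilde{\la}}^{\heartsuit,+}(\bar{k})$ un rel�vement canonique $\epsilon_{G'}(\tilde{a})\in V_{G'}^{\tilde{\la},reg}$, donc un point de $\overline{\mathcal{M}}_{\tilde{\la}}^{\heartsuit,+}(\bar{k})$ au-dessus de $\tilde{a}$, de torseur sous-jacent trivial; ainsi la fl�che $\tilde{f}$ est surjective sur les $\bar{k}$-points. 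D'autre part, il resterait � �tablir la surjectivit� de $q$ sur les $\bar{k}$-points, la composition avec $p_{*}$ fournissant alors la surjectivit� souhait�e pour $f^{\heartsuit,+}$.

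Pour ce dernier point, on analyserait la fl�che des bases caract�ristiques $\mathfrak{C}_{+,G'}^{\tilde{\la}}\rightarrow\kcd$. Celle-ci est finie surjective, essentiellement quotient par l'action d'un groupe fini li� au noyau central de $G'\rightarrow G$, et via la d�composition $\kcd=A_{G}\times\kC$ du corollaire \ref{produit} le probl�me de rel�vement global se scinde en une partie ab�lianis�e et une partie caract�ristique. L'obstruction cohomologique au rel�vement d'une section $a:X\rightarrow\kcd$ en une section $\tilde{a}:X\rightarrow\mathfrak{C}_{+,G'}^{\tilde{\la}}$ se dissoudrait gr�ce � la divisibilit� de $\mathrm{Pic}^{0}(X)$ et de $\bar{k}^{*}$ sur le corps alg�briquement clos $\bar{k}$, quitte � ajuster le choix de $\tilde{\la}$ et du tore central de la $z$-extension. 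Les conditions de r�gularit� semisimple g�n�rique et aux points de $\supp(\la)$ se pr�servent automatiquement, ces ouverts �tant d�finis par tirage en arri�re par la fl�che finie $q$.

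La difficult� principale r�siderait dans la derni�re �tape, c'est-�-dire la construction effective du rel�vement global $\tilde{a}$ modulo les obstructions cohomologiques et le choix appropri� du rel�vement $\tilde{\la}$ assurant la non-vacuit� des champs en jeu.
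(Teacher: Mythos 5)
Your overall strategy --- lift $a$ to the Hitchin base of a $z$-extension $G'$, apply the Steinberg section there, and push the resulting pair back down to $G$ --- founders precisely at the step you defer to the end, namely the surjectivity of $q:\mathcal{A}_{\tilde{\la}}^{\heartsuit,+}\rightarrow\abdhp$. This map is not surjective in general: the introduction of the paper states explicitly that the Hitchin base of a $z$-extension $G'$ of $G$ is not even surjective onto the Hitchin base of $G$. Concretely, over the strongly regular semisimple locus the map $\kc^{\tilde{\la},f-rs}\rightarrow\kc^{\la,f-rs}$ is a $Z$-torsor, so the obstruction to lifting $a$ over $U_{a}:=a^{-1}(\kcdfr)$ is the class of $a^{*}(\kc^{\tilde{\la},f-rs})$ in $H^{1}(U_{a},Z)\cong\mathrm{Pic}(U_{a})^{\dim Z}$; divisibility of $\mathrm{Pic}^{0}(X)$ does not make a nontrivial class vanish, and one cannot adjust $\tilde{\la}$ or the central torus once and for all since this class varies with $a$. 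Moreover, away from the $f$-rs locus there is no torsor structure at all, so even the local lifting of $a$ at the points where $a$ meets the discriminant is problematic.

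The correct use of the $z$-extension is only Zariski-local, and this part of your argument does match the paper: since $Z$ is a split torus, $a^{*}(\kc^{\tilde{\la},f-rs})$ is Zariski-locally trivial on $U_{a}$, so one gets a lift $\tilde{a}_{V}$, hence via the Steinberg section for $G'$ a pair $(E_{V},\phi_{V})$ with trivial torsor, over some dense open $V\subset U_{a}$ containing $\supp(\la)$. The genuinely missing idea is the extension of $(E_{V},\phi_{V})$ across the finitely many points $x\in X-V$, for which your proposal offers no substitute. At such a point the paper works directly in $G$ rather than in $G'$: since $x\notin\supp(\la)$ one has $a_{x}\in T/W(\co_{x})$; the generic section gives $\g_{x}\in G(F_{x})^{rs}$ with $\chi(\g_{x})=a_{x}$, which one diagonalizes over a finite \'etale extension into $t_{x}\in T(\co_{E_{x}})$ (integrality by the valuative criterion for the finite map $T\rightarrow T/W$); the Goresky--Kottwitz--McPherson construction then produces a possibly non-split torus $T_{w}\hookrightarrow G$ with $t_{x}\in T_{w}(\co_{x})$, hence an integral element $\g'_{x}\in G(\co_{x})$ with $\chi(\g'_{x})=a_{x}$; finally the transporter from $\g_{x}$ to $\g'_{x}$ is a $T$-torsor over $F_{x}$, hence trivial, and one glues \`a la Beauville--Laszlo with the trivial torsor. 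Without this local step the pair you construct is only defined on an open subset of $X$, and the proposition is not proved.
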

\begin{proof}
On a besoin d'un lemme:
\begin{lem}
Soit $a\in\abdhp(\bar{k})$. On note $U:=a^{-1}(\kcdr)$, en particulier, par hypothèse $\supp(\la)\subset U$. Alors il existe un ouvert non vide $V\subset U$ et une paire de Hitchin $(E_{V},\phi_{V})$ qui s'envoie sur $a_{V}\in H^{0}(V,\kcd)$, avec $E_{V}$ le $G$-torseur trivial.
\end{lem}
\begin{proof}
Si $G_{der}$ est simplement connexe, on utilise la section de Steinberg, on a donc naturellement une paire $(E_{V},\phi_{V})$ avec $E_{V}$ le torseur trivial.
On considère maintenant seulement $G$ connexe réductif. Soit $G'$ une $z$-extension, de tore induit $Z$ et $\tilde{\la}$ qui relève $\la$.
Nous avons donc le diagramme suivant:
$$\xymatrix{&\kc^{\tilde{\la},f-rs}\ar[d]\\U\ar[r]^-{a}&\kcdfr}$$
où la flèche verticale est un $Z$-torseur. En particulier, en tirant ce $Z$-torseur sur $U$ par $a$, il est localement trivial pour la topologie de Zariski.
On considère alors un ouvert $V$ qui contient $\supp(\la)$ sur lequel le $Z$-torseur est est trivial. On obtient alors un relèvement $\tilde{a}_{V}\in H^{0}(V,\kc^{\tilde{\la},f-rs})$ de $a_{\vert V}$ et on dispose d'une paire $(E'_{V},\phi'_{V})$, comme $G'_{der}$ est simplement connexe, que l'on pousse ensuite dans $V_{G}^{\la}$.
En particulier, nous obtenons une paire de Hitchin $(E_{V},\phi_{V})$ sur $V$ d'image $a_{V}$ avec $E_{V}$ le torseur trivial.
\end{proof}
Il nous faut maintenant étendre cette paire en les points de $X-V$. Soit $x\in X-V$, on note $\co_{x}$ l'anneau local complété en $x$, de corps de fractions $F_{x}$.

Nous avons une section $a_{x}\in\kcd(\co_{x})\cap\kcdr(F_{x})$, comme $\supp(\la)\subset V$, nous avons même que $a_{x}\in T/W(\co_{x})$.
En restreignant la paire $(E_{V},\phi_{V})$ à $F_{x}$, nous avons une section  $\g_{x}\in G(F_{x})^{rs}$ telle que $\chi(\g_{x})=a_{x}$.
En particulier, il existe une extension finie étale $E_{x}$ de $F_{x}$ telle qu'il existe $h_{x}\in G(E_{x})$ et $t_{x}\in T(E_{x})$ satisfaisant:
\begin{center}
$h_{x}\g_{x}h_{x}^{-1}=t_{x}$.
\end{center}
De plus, comme $T\rightarrow T/W$ est un morphisme fini et que $\chi(t_{x})=a_{x}\in\kcd(\co)$, on obtient par critère valuatif que $t\in T(\co_{E})$.
Maintenant, en appliquant \cite[sect. 8.1]{GKM2}, on sait qu'il existe un tore non-déployé $T_{w}$ défini sur $F_{x}$, déployé sur $E_{x}$ tel que $t_{x}\in T_{w}(\co_{x})$ et muni d'un plongement $T_{w}\rightarrow G$.
En particulier, on obtient un élément $\g'_{x}\in G(\co)$ tel que $\chi(\g'_{x})=a_{x}$.
En étudiant le transporteur de $\g_{x}$ à $\g'_{x}$, on obtient que c'est un $T$-torseur sur $F_{x}$, lequel est donc trivial, puisqu'il est au-dessus du complété en $x$ d'un corps de fonctions d'une courbe sur un corps algébriquement clos.
Il existe donc $g\in G(F_{x})$ tel que :
\begin{center}
$g\g_{x}g^{-1}=\g'_{x}$.
\end{center}
La donnée de $g$ nous permet alors de recoller la paire $(E_{V},\phi_{V})$ avec $(E_{0},\g'_{x})$ à la Beauville-Laszlo, où $E_{0}$ est le torseur trivial. En itérant, on obtient donc une paire $(E,\phi)\in\cmdhp(\bar{k})$ d'image $a\in\abdhp(\bar{k})$, ce qu'on voulait.
\end{proof}
On peut passer à la preuve de la proposition \ref{surj1}.
\begin{proof}
Le morphisme $f^{ani}$ est d'image fermée $F$ dans $\abdan$. De plus, d'après le lemme \ref{surj2} $F$ contient l'ouvert $\abdhp\cap\abdan$, lequel est dense dans $\abdan$. On en déduit alors que $F=\abdan$, ce qu'on voulait.
\end{proof}

\subsection{Correspondance endoscopique}\label{endocar}
Dans cette section, nous introduisons les groupes endoscopiques qui vont intervenir dans la cohomologie des fibres de Hitchin au-dessus de l'ouvert anisotrope.

Soit $G$ un groupe connexe réductif déployé tel que $G_{der}$ est simplement connexe. 
Soit $\hat{G}$ le dual de Langlands obtenu en échangeant racines et coracines dans la donnée radicielle. Il est  muni d'une paire de Borel $(\hat{T},\hat{B})$.

Soit $\kappa\in\hat{T}$, on considère $\hat{\bh}$ le centralisateur connexe de $\kappa$, qui est réductif comme $\kappa$ est semisimple. On note $\bh$ son dual. L'épinglage de $\hat{G}$ induit un épinglage pour $\hat{\bh}$ de même tore $\hat{T}$ et pour un Borel $\hat{B}_{\bh}$ de $\hat{\bh}$ contenant celui-ci. De plus, on a également une identification $\out(\hat{\bh})=\out(\bh)$. Considérons la suite exacte
$$\xymatrix{1\ar[r]&\hat{\bh}\ar[r]&\hat{G}_{\kappa}\ar[r]&\pi_{0}(\kappa)\ar[r]&1}$$
où $\pi_{\kappa}$ est le groupe des composantes connexes du centralisateur de $\kappa$ dans $\hat{G}$. On a une flèche canonique:
\begin{center}
$o_{H}(\kappa):\pi_{0}(\kappa)\rightarrow\out(\bh)$
\end{center}

\begin{defi}\label{endos1}
On appelle donnée endoscopique de $G$ sur $k$, un couple $(\kappa,\rho_{\kappa})$ avec $\kappa$ comme ci-dessus et $\rho_{\kappa}$ un morphisme
\begin{center}
$\rho_{\kappa}:\pi_{1}(\bar{X},\infty)\rightarrow\pi_{0}(\kappa)$.
\end{center}
On note $\rho_{H}$ le composé de $\rho_{\kappa}$ et $o_{\bh}(\kappa)$. Ce morphisme nous permet de tordre $\bh$ pour obtenir une forme quasi déployée $H$ de $\bh$, qui est le groupe endoscopique associé au couple $(\kappa,\rho_{\kappa})$.
De plus, si $\kappa\in\hat{T}$ est d'ordre fini, on dira que la donnée endoscopique est elliptique. Dans ce cas, si $G$ est semisimple, alors $H$ l'est également.
\end{defi}
\textit{Dans la suite, on suppose que la donnée endoscopique est déployée}. Une telle hypothèse est vérifiée si $\hat{G}$ a également son groupe dérivé simplement connexe. En particulier, la donnée de $\rho_{\kappa}$ est triviale. On réduit donc la donnée endoscopique au seul $\kappa$. Fixons alors $\kappa\in\hat{T}$, on a un morphisme canonique
\begin{center}
$W_{H}\rightarrow W$
\end{center}
compatible avec l'action sur $T$.
On en flèche $\nu:\kC_{H}:=T/W_{H}\rightarrow\kC:=T/W$, qui, au-dessus de $\kC^{rs}$, est un morphisme fini étale:
\begin{center}
$\nu^{rs}:\kCH^{G-rs}\rightarrow\kC^{rs}$
\end{center}
où $\kCH^{G-rs}:=\nu^{-1}(\kC^{rs})$.
Nous voulons essayer de passer au semi-groupe de Vinberg.

Naïvement, on voudrait définir une flèche $\nu:\kch\rightarrow\kc$; néanmoins, bien que $H$ et $G$ partagent le même tore maximal, lorsque l'on veut passer au Vinberg, il faut ajouter une partie abélienne, laquelle est une variété torique qui contient le tore adjoint comme groupe des inversibles qui eux sont distincts pour $H$ et $G$.
Il est donc vain d'essayer de définir la correspondance endoscopique au niveau de $\kc$.
En revanche, si nous fixons la partie abélienne, ce problème disparaîtra.

Nous avons $\kc=A_{G}\times\kC$ et de même pour $H$. Etant donné que nous avons un isomorphisme entre les tores maximaux $T$ et $T^{H}$ de $G$ et $H$ et que les racines de $H$ s'identifient à un sous-ensemble de $G$, tout cocaractère dominant de $G$, reste dominant pour $H$. On en déduit alors une flèche :
\begin{center}
$-w_{0}\la:X\rightarrow[A_{H}/T^{H}]$.
\end{center}
Les espaces caractéristiques $\mathfrak{C}_{+,H}^{\la}$ et $\kcd$ s'obtiennent par torsion par le $T$-torseur $E_{T}(-w_{0}\la)$ des espaces fibrés au-dessus de $X$, $\kC_{H}$ et $\kC$, d'où l'on obtient  une flèche 
\begin{center}
$\nu_{+}:\mathfrak{C}_{+,H}^{\la}\rightarrow\kcd$.
\end{center}
On en déduit alors un morphisme
\begin{center}
$\nu:\abde\rightarrow\abd$.
\end{center}
\begin{lem}\label{transI}
La flèche $\nu$ restreinte à $\abdh$ est finie non ramifiée.
\end{lem}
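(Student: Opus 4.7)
The plan is to reduce the lemma to the characteristic-level statement that the map $\nu_{+}:\mathfrak{C}_{+,H}^{\la}\rightarrow\kcd$ is finite and \'etale above the regular semisimple locus $\kcdr$. This follows from the inclusion $W_{H}\subset W$: the morphism $T/W_{H}\rightarrow T/W$ is finite of degree $|W/W_{H}|$ and \'etale away from the walls, and $\nu_{+}$ is obtained from it by twisting with the $T$-torseur $E_{T}(-w_{0}\la)$ as in the construction preceding the lemma.

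First I would check quasi-finiteness of $\nu$ restricted to $\abdh$. Fix $a\in\abdh(\bar{k})$ and set $Y_{a}:=h_{a}^{*}\mathfrak{C}_{+,H}^{\la}$, a finite scheme over $X$. A lift of $a$ to $\abde$ is a section of $Y_{a}\rightarrow X$. By definition of $\abdh$ there is a dense open $U\subset X$ on which $h_{a}$ takes values in $\kcdr$, so $Y_{a|U}\rightarrow U$ is finite \'etale of degree $|W/W_{H}|$ and admits at most $|W/W_{H}|$ sections. Each section on $U$ extends uniquely to a section on $X$ because $Y_{a}\rightarrow X$ is proper and $X$ is a smooth projective curve, whence $\nu^{-1}(a)$ is finite.

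For properness I would invoke the valuative criterion. Given a DVR $R$ with fraction field $K$, a morphism $\Spec(R)\rightarrow\abdh$ and a lift over $\Spec(K)$, the task reduces to extending a section $h_{a_{H,K}}:X_{K}\rightarrow\mathfrak{C}_{+,H}^{\la}$ to $X_{R}$, given a compatible section $h_{a}:X_{R}\rightarrow\kcd$. Since $\nu_{+}$ is finite (hence proper), one forms the scheme-theoretic closure of the graph of the section in the finite $X_{R}$-scheme $h_{a}^{*}\mathfrak{C}_{+,H}^{\la}$, normalises it, and invokes Zariski's main theorem on the regular surface $X_{R}$ to obtain a section extending $h_{a_{H,K}}$. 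Uniqueness is immediate from separatedness of $\nu_{+}$.

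Finally, for non-ramification I would check injectivity on tangent spaces. The tangent spaces to $\abd$ at $a$ and to $\abde$ at $a_{H}$ are $H^{0}(X,h_{a}^{*}T_{\kcd/X})$ and $H^{0}(X,h_{a_{H}}^{*}T_{\mathfrak{C}_{+,H}^{\la}/X})$ respectively, and $d\nu$ acts by postcomposition with $d\nu_{+}$. Suppose $v$ lies in the kernel of $d\nu$; on the dense open $U\subset X$ where $h_{a}$ is regular semisimple, $d\nu_{+}$ is an isomorphism of vector bundles, hence $v_{|U}=0$. But $v$ is a section of a locally free sheaf on the smooth curve $X$ vanishing on a dense open, so $v=0$ globally, and $\nu$ is unramified at $a_{H}$. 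The main subtle point of the argument is the closure-plus-normalisation step in the proof of properness, which relies on the regularity of the surface $X_{R}$; the remaining verifications are formal consequences of the \'etaleness of $\nu_{+}$ over the regular semisimple locus.
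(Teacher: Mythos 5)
Your proof is correct and is essentially the argument the paper delegates to Ng\^o \cite[7.2]{N2}: quasi-finiteness and properness come from unique extension, across the smooth projective curve, of sections of the twisted finite cover induced by $T/W_{H}\rightarrow T/W$, which is \'etale over the dense regular semisimple open, and non-ramification follows from injectivity on tangent spaces over that open. The only cosmetic point is that when $H_{der}$ is not simply connexe the sheaf $h_{a_{H}}^{*}T_{\mathfrak{C}_{+,H}^{\la}/X}$ need not be locally free; but tangent vectors to the space of sections live in $\mathcal{H}om(h_{a_{H}}^{*}\Omega_{\mathfrak{C}_{+,H}^{\la}/X},\mathcal{O}_{X})$, which is torsion-free on the integral curve $X$, and that is all your final vanishing step requires.
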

\begin{proof}
Il suffit de reprendre \cite[7.2]{N2}.
\end{proof}
Pour étudier la cohomologie de la fibration de Hitchin, il n'y a pas que la strate $\abdh$ qui va contribuer, mais également des strates plus \og petites\fg.
En effet, notons $V_{\la}$ la représentation irréductible de plus haut poids $\la$ de $\mathstrut^{L}G$. Pour une donnée endoscopique  $\kappa$, du morphisme $\eta:\hat{H}\rightarrow\hat{G}$, on en déduit par restriction une représentation:
\begin{equation}
\eta^{*}V_{\la}:=\bigoplus\limits_{\mu\leq\la}(V^{H}_{\mu})^{\oplus m_{\mu}},
\label{restri}
\end{equation}
avec $V^{H}_{\mu}$ la représentation irréductible de plus haut poids $\mu\in X_{*}(T)^{+}$ et $m_{\la}=1$.

$\rmq$ Il est à noter que l'on a $m_{\mu}\neq 0$ dès que $\la-\mu$ ne peut s'écrire comme une somme à coefficients positifs de racines simples de $H$. On note alors $\soc_{H}(\la)$ l'ensemble des $\mu$ tels que $\mu\neq 0$.
\medskip
\begin{defi}\label{changement}
Etant donnée une donnée endoscopique $\kappa$, avec les notations de ci-dessus, on considère le schéma:
\begin{center}
$\abdeth:=\bigcup\limits_{\mu\in\soc_{H}(\la)}\mathcal{A}_{\mu,H}$.
\end{center}
\end{defi}
Si $\mu\leq\la$, on a une immersion fermée canonique :
\begin{center}
$\mathcal{A}_{\mu}\rightarrow\mathcal{A}_{\la}$,
\end{center}
d'où l'on déduit à nouveau une flèche, notée de la même manière:
\begin{equation}
\nu:\abdeth\rightarrow\abd
\label{transII}
\end{equation}
qui est fini non-ramifiée au-dessus de $\abdh$ d'après le lemme \ref{transI}.
\subsection{Cohomologie au-dessus de l'ouvert anisotrope}
On suppose $G_{der}$ simplement connexe et que l'endoscopie est déployée. Soit $\tcmdan$ (resp. $\cPinf$) le changement de base de $\cmd^{1}$ (resp. $\cP^{1}$)  à $\tabdan$. On pose $IC_{\cmd}$ le complexe d'intersection sur $\tcmdan$.

Nous avons une flèche $\tilde{f}^{ani}:\tcmdan\rightarrow\tabdan$ qui est propre et $\tilde{g}^{ani}:\cPinf\rightarrow\tabdan$ est propre et lisse.
Les deux sources sont des champs de Deligne-Mumford.
D'après le théorème de Deligne \cite{De} $\tilde{f}_{*}^{ani}IC_{\cmd}$ est pur et par le théorème de Beilinson-Bernstein-Deligne-Gabber \cite{BBD}, il se décompose au-dessus de $\tabdan$ en 
\begin{center}
$\tilde{f}_{*}^{ani}IC_{\cmd}=\bigoplus\limits_{n\in\mathbb{Z}}\mathstrut^{p}H^{n}(\tilde{f}^{ani}_{*}IC_{\cmd})[-n]$
\end{center}
où $\mathstrut^{p}H^{n}(\tilde{f}^{ani}_{*}IC_{\cmd})[-n]$ est pervers pur de poids $n$.
Comme  $\cPinf$ agit sur $\tcmdan$ de manière compatible à la stratification par les $\mu\leq\la$, $\cPinf$ agit sur $\tilde{f}_{*}^{ani}IC_{\cmd}$.
Par le lemme d'homotopie \cite[3.2.3]{LN} établi par Laumon-Ngô, l'action de $\cPinf$ sur les faisceaux de cohomologie pervers $\mathstrut^{p}H^{n}(\tilde{f}^{ani}_{*}IC_{\cmd})$ se factorise par $\pi_{0}(\cPinf)$.
La flèche surjective établie dans la proposition \ref{ncl}:
\begin{center}
$X_{*}(T\cap G_{der})\rightarrow\pi_{0}(\cPinf)$,
\end{center}
 fait de $\pi_{0}(\cPinf)$ un quotient fini de $X_{*}(T\cap G_{der})$, nous permet de définir pour tout $\kappa\in T$, un facteur direct $\mathstrut^{p}H^{n}(\tilde{f}^{ani}_{*}IC_{\cmd})_{\kappa}$ sur lequel $X_{*}(T)$ agit à travers $\kappa:X_{*}(T\cap G_{der})\rightarrow\overline{\mathbb{Q}}_{l}^{*}$.

\subsection{Transfert endoscopique}
Soit $\kappa$ une donnée endoscopique de $G$. 
D'après \eqref{transII}, on a une flèche 
\begin{center}
$\nu:\abdeth\rightarrow\abd$.
\end{center}
Soit $\tilde{a}:=(a,\tilde{\infty})\in\tabdk$. On a une projection
\begin{center}
$\tilde{X}_{a}\rightarrow\bar{X}$
\end{center}
génériquement étale de  groupe $W\rtimes\pi_{0}(\kappa)$ vers $\bar{X}$. 
On définit une flèche
\begin{center}
$\tilde{\nu}:\tabdeth\rightarrow\tabd$ 
\end{center}
donnée par $(a_{H},\tilde{\infty})\mapsto(\nu(a_{H}),\tilde{\infty})$. Cette flèche est définie sur $k$ si $\tilde{\infty}$ est aussi défini sur $k$.
Nous avons la proposition due à Ngô \cite[Prop. 6.3.2-6.3.3]{N} et \cite[10.3, Lem. 10.1]{N2} 
\begin{prop}\label{reunion}
Le morphisme $\tilde{\nu}_{H}:\tabdeth\rightarrow\tabd$ est une immersion fermée.
\end{prop}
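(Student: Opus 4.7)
The plan is to upgrade the finite unramified property established in Lemma \ref{transI} to a closed immersion by establishing universal injectivity. A finite unramified morphism of Noetherian algebraic stacks is a closed immersion precisely when it is radicial (universally injective with trivial residue field extensions). Since $\tabd$ sits over $\abd^{\infty}$, which is contained in $\abdh$, base change of Lemma \ref{transI} along the \'etale $W_\infty$-torsor $\tabd\to\abd^{\infty}$ shows that $\tilde{\nu}_H$ is already finite and unramified. Only the radicial property remains, and this reduces to injectivity on $\bar{k}$-points together with equality of residue fields.

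For the injectivity on geometric points, I would exploit the monodromy description of the cameral cover. Given $(a,\tilde{\infty})\in\tabd(\bar{k})$ in the image, a preimage $(a_H,\tilde{\infty}')\in\tabdeth(\bar{k})$ necessarily has $\tilde{\infty}'=\tilde{\infty}$, since both are points of the cameral cover $\tilde{X}_a$ above $\infty$ and the forgetful map to the second factor of $\tabd$ identifies them. The first coordinate $a_H$ is then canonically recovered from $(a,\tilde{\infty})$ as follows: the lift $\tilde{\infty}$ pins down the monodromy homomorphism $\pi_a^{\bullet}:\pi_1(U_a,\infty)\to W$, and $a$ factors through $\mathfrak{C}_{+,H}^{\la}$ if and only if $\pi_a^{\bullet}$ factors through $W_H$ (the hypothesis that the endoscopic datum is split removes the $\pi_0(\kappa)$-twisting). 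When such a factorization exists, one lifts $a:X\to\kcd$ uniquely to $a_H:X\to\mathfrak{C}_{+,H}^{\la}$ using that the covering $\nu:\mathfrak{C}_{+,H}^{\la,G-rs}\to\kc^{\la,rs}$ is finite \'etale of group $W/W_H$ and that $\tilde{\infty}$ trivializes the fiber of this $W/W_H$-torsor at $\infty$.

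For the union structure $\abdeth=\bigcup_{\mu\in\soc_H(\la)}\mathcal{A}_{\mu,H}$, I would observe that the stratum containing $a_H$ is read off from the local pole orders of $a_H$ at the points of $\supp(\la)$, which are themselves determined by the local Newton data of the canonically reconstructed lift; different strata therefore have disjoint images in $\tabd$, and $\tilde{\nu}_H$ restricts to a closed immersion on each. Gluing these pieces gives the global closed immersion. The main obstacle is the canonical reconstruction step: one must verify that the factorization of $\pi_a^{\bullet}$ through $W_H$, together with $\tilde{\infty}$, really produces an algebraic lift $a_H$ rather than merely a pointwise lift on the regular semisimple locus. This is where the galoisian interpretation of $\mathfrak{C}_{+,H}$ and the compatibility of the Vinberg abelianisations $A_G=A_H$ (Proposition \ref{abelianise} and Corollary \ref{produit}) are essential, so that the lift on the regular semisimple locus extends uniquely across the discriminant by the finite morphism $\nu_+:\mathfrak{C}_{+,H}^{\la}\to\kcd$.
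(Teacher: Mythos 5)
Your argument is correct and is essentially the proof the paper is invoking (the proposition is stated without proof, by reference to Ng\^o \cite[Prop. 6.3.2--6.3.3]{N}): finiteness and unramifiedness come from Lemma \ref{transI}, and universal injectivity comes from the fact that the rigidification $\tilde{\infty}$ determines $a_{H}(\infty)$ and hence, by uniqueness of sections of the finite cover $\nu_{+}$ through a given point over the connected dense open $U_{a}$, the whole lift $a_{H}$. One small overstatement: the images of the strata $\mathcal{A}_{\mu,H}$, $\mu\in\soc_{H}(\la)$, need not be disjoint in $\tabd$ (a given section may lie in several of them), but this is harmless --- injectivity of $\tilde{\nu}_{H}$ on the union follows directly from the uniqueness of the reconstructed lift, not from disjointness of the pieces.
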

En particulier, cela nous permet de considérer $\tabdeth$ comme un sous-schéma fermé de $\tabd$. Nous notons  $\tabdanka$ pour désigner son image dans $\tabd$ (Si l'on considérait des groupes quasi-déployés, $\tabdanka$ serait la réunion des $\tabdeth$ associés aux morphismes $\rho_{\kappa}:\pi_{1}(\bar{X},\infty)\rightarrow\pi_{0}(\kappa)$).
Nous pouvons donc maintenant localiser le lieu dans lequel on rencontre les supports de $\mathstrut^{p}H^{n}(\tilde{f}^{ani}_{*}IC_{\cmd})_{\kappa}$ :
\begin{prop}\label{kappa}
Le support du faisceau pervers $\mathstrut^{p}H^{n}(\tilde{f}^{ani}_{*}IC_{\cmd})_{\kappa}$ est contenu dans $\tabdanka$.
\end{prop}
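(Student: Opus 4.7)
Le plan consiste à raisonner point par point sur les fibres géométriques. Il suffit de montrer que pour tout point géométrique $\tilde{a}=(a,\tilde{\infty})\in\tabdan(\bar{k})$ n'appartenant pas à $\tabdanka$, la fibre du faisceau pervers $\mathstrut^{p}H^{n}(\tilde{f}^{ani}_{*}IC_{\cmd})_{\kappa}$ en $\tilde{a}$ s'annule. En effet, comme les faisceaux pervers $\mathstrut^{p}H^{n}(\tilde{f}^{ani}_{*}IC_{\cmd})_{\kappa}$ sont purs, leur support est réunion de fermés irréductibles et leur annulation ponctuelle hors de $\tabdanka$, qui est fermé dans $\tabd$ d'après \ref{reunion}, entraînera l'inclusion voulue.

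L'ingrédient principal est la proposition \ref{ncl} : l'action de $X_{*}(T\cap G_{der})$ sur la fibre en $\tilde{a}$ se factorise par $\pi_{0}(\cP^{1}_{a})=X_{*}(T\cap G_{der})_{W_{a}}$, où $W_{a}$ est le sous-groupe monodromique défini grâce au revêtement caméral et au choix de $\tilde{\infty}$. Ainsi, pour que la composante $\kappa$-isotypique en $\tilde{a}$ soit non nulle, il faut que le caractère $\kappa:X_{*}(T\cap G_{der})\rightarrow\overline{\mathbb{Q}}_{l}^{*}$ soit trivial sur le sous-groupe engendré par les $(1-w)\mu$, $w\in W_{a}$, $\mu\in X_{*}(T\cap G_{der})$, ce qui équivaut à $\kappa\in\hat{T}^{W_{a}}$, c'est-à-dire à l'inclusion $W_{a}\subset W_{H}$ où $W_{H}$ désigne le groupe de Weyl du centralisateur connexe $\hat{H}$ de $\kappa$ dans $\hat{G}$.

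Reste alors à traduire cette condition de monodromie en une assertion géométrique sur $a$. L'idée est que lorsque $W_{a}\subset W_{H}$, le revêtement caméral $\tilde{X}_{a}\rightarrow\bar{X}$ se factorise à travers le revêtement caméral associé à $H$, puisqu'il est génériquement un torseur sous $W_{a}$. On peut alors relever la section $h_{a}:\bar{X}\rightarrow\kcd$ en une section $h_{a_{H}}:\bar{X}\rightarrow\mathfrak{C}_{+,H}^{\mu}$ pour un certain cocaractère dominant $\mu\leq\la$ de $H$. Plus précisément, comme le Vinberg de $H$ n'apparaît pas directement, il faut d'abord travailler au niveau des bases caractéristiques $T/W$ et $T/W_{H}$, reconstruire la partie abélienne au moyen de l'isomorphisme $A_{G}=A_{G'}$ du corollaire \ref{produit}, puis utiliser la structure de $\soc_{H}(\la)$ décrite en \eqref{restri} pour déterminer le poids $\mu$ admissible. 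Ceci montre que $\tilde{a}$ est dans l'image de $\tabdeth$, donc dans $\tabdanka$.

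L'étape la plus délicate sera cette dernière descente du revêtement caméral : il faut s'assurer que non seulement la monodromie mais aussi les valuations locales sont compatibles, de sorte que le cocaractère $\mu\leq\la$ obtenu appartient bien à $\soc_{H}(\la)$. Ceci repose sur une analyse locale aux points $x\in\supp(\la)$ des cocaractères dominants effectifs du revêtement caméral et sur l'utilisation de la décomposition \eqref{restri} donnée par la restriction de la représentation $V_{\la}$ à $\hat{H}$, qui encode précisément l'ensemble des poids de $H$ compatibles avec la correspondance endoscopique.
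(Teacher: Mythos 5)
Votre démonstration est correcte et suit essentiellement la même voie que celle du texte : l'annulation de la composante $\kappa$-isotypique hors de $\tabdanka$ se déduit de la description de $\pi_{0}(\cP^{1}_{a})$ donnée par la proposition \ref{ncl}. Votre troisième étape (identifier le lieu où $\kappa$ est fixé par $W_{a}$ avec l'image de $\tabdeth$, en contrôlant que le copoids $\mu$ obtenu appartient bien à $\soc_{H}(\la)$) explicite ce que le texte laisse implicite en renvoyant aux énoncés de Ngô sous-jacents à la proposition \ref{reunion}; c'est le bon plan, sous l'hypothèse d'endoscopie déployée qui garantit que le fixateur de $\kappa$ dans $W$ est bien $W_{H}$.
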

\begin{proof}
Il vient de la description de la proposition \ref{ncl} de $\pi_{0}(\tcPan)$, que la restriction de $\mathstrut^{p}H^{n}(\tilde{f}^{ani}_{*}IC_{\cmd})_{\kappa}$ à l'ouvert $\tabdan-\tabdanka$ est nulle.
\end{proof}
On pose alors 
\begin{center}
$(\tilde{f}^{ani}_{*}IC_{\cmd})_{\kappa}=\bigoplus\limits_{n\in\mathbb{Z}}\mathstrut^{p}H^{n}(\tilde{f}^{ani}_{*}IC_{\cmd})_{\kappa}[-n]$.
\end{center}
et nous notons $\supp((\tilde{f}^{ani}_{*}IC_{\cmd})_{\kappa})$ l'ensemble des supports des faisceaux pervers irréductibles qui interviennent dans la décomposition des faisceaux $\mathstrut^{p}H^{n}(\tilde{f}^{ani}_{*}IC_{\cmd})_{\kappa}$.

\subsection{Détermination des supports}
Dans le théorème \ref{transverse}, nous avions introduit l'ouvert $\abdb$ sur lequel on contrôle le complexe d'intersection.
On pose alors $\abdbf=\abdb\cap\abdan$.
On rappelle que nous avons une fonction $\delta$ définie sur $\abd$. Pour $a\in\abdk$, nous avons :
\begin{center}
$\delta(a)=\dim R_{a}$
\end{center}
où $R_{a}$ est le groupe défini dans la proposition \ref{neron}.
D'après \cite[Lem. 6.5.3]{N}, cette fonction est semi-continue supérieurement, nous faisons alors la définition suivante analogue à celle de Chaudouard-Laumon \cite{CL}:
\begin{defi}
Soit $\abdbon\subset\abd$ le plus grand ouvert de $\abd$ tel que pour tout point de Zariski $a\in\abdbon$, on ait l'égalité suivante :
\begin{center}
$\codim_{\abd}(a)\geq\delta(a)$.
\end{center}
\end{defi}
$\rmq$ La fonction $\delta$ étant semi-continue supérieurement, l'égalité $\delta(a)=\delta$ définit une partie localement fermée $\abdde$ de $\abd$. L'ouvert $\abdbon$ est le complémentaire des composantes irréductibles $\abdde'$ de $\abdde$ qui vérifient
\begin{center}
$\codim_{\abd}(\abdde')<\delta$.
\end{center}
Par noetherianité, il n'y a qu'un nombre fini de $\delta$ qui contribuent.
Notons $\tilde{f}^{ani,\flat}$ la fibration de Hitchin restreinte à $\tabdanf$.
\begin{thm}\label{detsupport}
Soit $\kappa:X_{*}(T\cap G_{der})\rightarrow\overline{\mathbb{Q}}_{l}^{*}$ et $Z\in\supp((\tilde{f}^{ani,\flat}_{*}IC_{\cmd})_{\kappa})$, alors $Z$ est inclus dans $\tabdeth$. Si de plus, $Z\cap\tabdethb\neq\emptyset$, alors $Z=\tabdethanf$.
\end{thm}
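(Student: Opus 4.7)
The plan is to combine two inputs: the preliminary localization Proposition~\ref{kappa} (which already restricts supports to the endoscopic locus) with the abstract support theorem for singular fibrations equipped with a commutative smooth group scheme action, announced as Chapter~11 of the paper. The first claim $Z\subset\tabdeth$ is in essence already contained in Proposition~\ref{kappa}: the support of every perverse constituent of $\mathstrut^{p}H^{n}(\tilde{f}_{*}^{ani,\flat}IC_{\cmd})_{\kappa}$ is contained in $\tabdanka$, and by Proposition~\ref{reunion} the fl�che $\tilde{\nu}_{H}$ is a closed immersion, so via $\tilde{\nu}_{H}$ we may identify $\tabdanka$ with $\tabdeth\subset\tabd$ (recall $\tabdeth=\bigcup_{\mu\in\soc_{H}(\la)}\tilde{\mathcal{A}}_{\mu,H}^{ani,\flat}$). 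This identification also fixes the notation for the second assertion.

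For the harder half, my plan is to apply the abstract support theorem of Chapter~11 to the fibration $\tilde{f}^{ani,\flat}$, taking for the commutative group scheme the Picard $\tcPan$ restricted to $\tabdanka\cap\tabdanf$ (via base change along the closed immersion $\tilde{\nu}_{H}$, and using the compatibility with the endoscopic Picard for $H$). I must verify the three hypotheses of the abstract theorem: (i) the codimension inequality $\codim_{S}(S_{\delta})\geq\delta$ on the $\delta$-strata; (ii) the existence of an alternating polarization on the Tate module of the neutral part of the Picard scheme; (iii) the pointwise freeness of the stalks of $\tilde{f}_{*}^{ani,\flat}IC_{\cmd}$ over the Pontryagin algebra of the Tate module. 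For (i) I invoke Theorem~\ref{bad}, which is precisely the reason we introduced the $\flat$ restriction (encoded in $\abdbon$ and hence in $\abdbf$); for (ii) I invoke Proposition~\ref{tate}.

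The key difficulty is point (iii): since $\cmd$ is singular, we cannot imitate Ng�'s argument directly with the constant sheaf. This is where the transversality theorem~\ref{transverse} together with the geometric presentation constructed in Chapter~10 enter: the presentation realizes $IC_{\cmdb}$ as a direct summand of the cohomology of a smooth space obtained by base change of a resolution of $\overline{\cH}_{\la}^{par}$, which via proper base change transports the Pontryagin-freeness from the smooth ambient resolution to the IC complex on $\cmdb$ (hence on $\cmdbf$ after restricting to $\tabdanf$). Once the three hypotheses are checked, the abstract theorem produces a finite list of maximal supports inside $\tabdanka$, and the endoscopic formalism of Section~\ref{endocar} together with the dimension count for $\tilde{\mathcal{A}}_{\eta^{*}\la,H}^{ani,\flat}$ matches the maximal support intersecting $\tabdethb$ with $\tabdethanf$ itself.

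Thus the logical skeleton is: Proposition~\ref{kappa} $\Rightarrow$ first inclusion; abstract support theorem from Chapter~11, applied with inputs Theorem~\ref{bad} (codimension), Proposition~\ref{tate} (polarization), and the geometric presentation of Chapter~10 (freeness) $\Rightarrow$ second statement. The main obstacle is genuinely the freeness~(iii) in the singular setting, since Goresky--MacPherson duality is not directly available and must be replaced by the combination of the geometric presentation with the transversality identity $(\Delta^{\flat})^{*}[-m]IC_{\chl}=IC_{\cmdb}$; this is what forces both the introduction of the \og bon\fg~locus $\abdbon$ and the reliance on the $\flat$ restriction throughout the argument.
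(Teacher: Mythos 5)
Votre plan reproduit pour l'essentiel la démonstration du texte (section \ref{final}) : la première inclusion vient bien de la proposition \ref{kappa}, et la seconde s'obtient en appliquant le théorème abstrait \ref{support} à $\tilde{f}^{ani,\flat}$ muni de l'action de $\cPinf$, avec exactement les trois ingrédients que vous citez (théorème \ref{bad} pour la $\delta$-régularité sur le lieu \og bon\fg, proposition \ref{tate} pour la polarisation, présentation géométrique du chapitre~10 combinée au théorème de transversalité \ref{transverse} pour la liberté ponctuelle). Veillez seulement à expliciter les deux derniers maillons que votre \og dimension count\fg~résume : le corollaire \ref{indep} (la différence $\delta(a_H)-\delta_{H}(a_{H})$ est constante, égale à $\codim(\tilde{\mathcal{A}}_{\mu,H})$, ce qui transfère la $\delta$-régularité du côté $H$ en l'inégalité $\codim(Z)\geq\delta_{Z}$ requise) et la proposition \ref{stabmax} (l'identification $(R^{2d}\tilde{f}^{ani}_{*}IC)_{\kappa}=\tilde{\nu}_{H,*}\overline{\mathbb{Q}}_{l}$ qui, dans le cas d'égalité du théorème \ref{support}, force $Z=\tabdethanf$).
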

Nous reportons la preuve de ce théorème à la section \ref{final}. Nous pouvons en revanche d'ores et déjà étudier  la cohomologie ordinaire en degré maximal.
\begin{prop}\label{stabmax}
Soit $d$ la dimension relative de $\tcmdan\rightarrow\tabdan$.
On a un isomorphisme entre la partie stable (i.e. $\kappa=1$) $(R^{2d}\tilde{f}^{ani}_{*}IC_{\cmd})_{st}$ et le faisceau constant $\overline{\mathbb{Q}}_{l}$. On a également un isomorphisme entre $(R^{2d}\tilde{f}^{ani}_{*}IC_{\cmd})_{\kappa}$ et $\tilde{\nu}_{H,*}\overline{\mathbb{Q}}_{l}$.
\end{prop}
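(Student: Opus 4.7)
Le plan est de calculer le faisceau de cohomologie en degr� maximal fibre � fibre, en identifiant ses fibres avec l'alg�bre de groupe $\overline{\mathbb{Q}}_{l}[\pi_{0}(\cP_{a}^{1})]$, puis de d�composer cette alg�bre sous l'action des caract�res $\kappa$ via la description galoisienne de la proposition \ref{ncl}.

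Premi�rement, je montrerais que $R^{2d}\tilde{f}^{ani}_{*}IC_{\cmd}$ est un faisceau localement constant sur $\tabdan$, dont la fibre en $\tilde{a}=(a,\tilde{\infty})$ s'identifie canoniquement � $\overline{\mathbb{Q}}_{l}[\pi_{0}(\cP_{a}^{1})]$. Cela repose sur le fait que les composantes irr�ductibles de dimension maximale de $\overline{\cm}_{\la}(a)$ sont en bijection canonique avec $\pi_{0}(\cP_{a})$: en effet, par le corollaire \ref{equidim}, l'ouvert r�gulier $\cm_{\la}^{reg}(a)$ est dense et �quidimensionnel dans $\overline{\cm}_{\la}(a)$, et par la proposition \ref{picardtorseur} c'est une gerbe sous $\cP_{a}$, donc ses composantes irr�ductibles sont param�tr�es par $\pi_{0}(\cP_{a})$. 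La restriction au degr� z�ro donne alors $\pi_{0}(\cP_{a}^{1})$, et la dualit� de Poincar� fournit la description demand�e sur le $IC$.

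Deuxi�mement, on d�compose cette alg�bre de groupe sous l'action de $\pi_{0}(\cP^{1})$, qui est un faisceau en groupes ab�liens finis sur $\tabdan$ (corollaire suivant la proposition \ref{ncl}). Pour $\kappa=1$, la partie stable correspond aux coinvariants, canoniquement $\overline{\mathbb{Q}}_{l}$ pour chaque $a$, d'o� l'identification avec le faisceau constant. Pour $\kappa$ g�n�ral, la proposition \ref{ncl} donne une surjection $X_{*}(T\cap G_{der})\twoheadrightarrow X_{*}(T\cap G_{der})_{W_{a}}\cong\pi_{0}(\cP_{a}^{1})$, et le caract�re $\kappa$ (vu comme �l�ment de $\hat{T}$ via l'accouplement) se factorise � travers ce quotient si et seulement si $\kappa$ est $W_{a}$-invariant, c'est-�-dire $W_{a}\subset W_{H}$ o� $H$ est le groupe endoscopique associ� � $\kappa$. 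Lorsque cette condition est v�rifi�e, la composante $\kappa$-isotypique de $\overline{\mathbb{Q}}_{l}[\pi_{0}(\cP_{a}^{1})]$ est de dimension exactement un; sinon elle est nulle.

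Troisi�mement, il suffit d'identifier le lieu $\{\tilde{a}\mid W_{a}\subset W_{H}\}$ avec $\tabdeth$ vu comme sous-sch�ma ferm� de $\tabdan$ par la proposition \ref{reunion}. Cette identification r�sulte de la construction de la correspondance endoscopique dans la section \ref{endocar}: les points de $\tabdeth$ sont exactement ceux dont le rev�tement cam�ral se factorise � travers le groupe endoscopique, ce qui correspond � la condition $W_{a}\subset W_{H}$ sur le groupe de monodromie. On en d�duit que le faisceau $(R^{2d}\tilde{f}^{ani}_{*}IC_{\cmd})_{\kappa}$ est nul en dehors de $\tabdeth$ et localement constant de rang un sur ce lieu, d'o� l'isomorphisme avec $\tilde{\nu}_{H,*}\overline{\mathbb{Q}}_{l}$. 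Le point le plus d�licat sera d'�tablir rigoureusement la locale constance de $R^{2d}\tilde{f}^{ani}_{*}IC_{\cmd}$, qui requiert un contr�le uniforme des composantes irr�ductibles de dimension maximale des fibres de Hitchin en famille via la formule du produit (th�or�me \ref{produithitchin}) et la constance fibre � fibre du nombre de composantes d�termin�e par $\pi_{0}(\cP^{1})$.
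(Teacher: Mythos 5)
Votre approche est correcte et co\"{\i}ncide pour l'essentiel avec celle du texte (qui suit la proposition 6.5.1 de Ng\^o) : densit\'e de l'ouvert r\'egulier, identification de la cohomologie ordinaire de degr\'e maximal avec $\overline{\mathbb{Q}}_{l}[\pi_{0}(\cP_{a}^{1})]$, puis d\'ecomposition en composantes $\kappa$-isotypiques via la proposition \ref{ncl} et identification du support avec $\tabdeth$ via la proposition \ref{reunion}. La seule diff\'erence notable est que le texte contourne la question de la locale constance, que vous signalez vous-m\^eme comme le point d\'elicat, en travaillant directement au niveau des faisceaux : l'immersion ouverte $\cPinf\rightarrow\tcmdan$ donn\'ee par la section de Steinberg, dont le compl\'ementaire est de dimension relative au plus $d-1$, fournit un isomorphisme $R^{2d}g_{!}\overline{\mathbb{Q}}_{l}\rightarrow R^{2d}\tilde{f}^{ani}_{*}IC_{\cmd}$ compatible \`a l'action de $\pi_{0}(\cPinf)$, et le morphisme trace identifie le membre de gauche au faisceau associ\'e au pr\'efaisceau $U\mapsto\overline{\mathbb{Q}}_{l}^{\pi_{0}(\cPinf)(U)}$, ce qui dispense de tout argument de mise en famille des composantes irr\'eductibles par la formule du produit.
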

\begin{proof}
On reprend la preuve de \cite[Prop 6.5.1]{N}.
La section de Steinberg définit une immersion ouverte $\cPinf\rightarrow\tcmdan$ dont le complémentaire est de dimension relative inférieure ou égale à $d-1$. Nous obtenons donc un isomorphisme
\begin{center}
$R^{2d}g_{!}\overline{\mathbb{Q}}_{l}\rightarrow R^{2d}\tilde{f}^{ani}_{*}IC_{\cmd}$
\end{center}
compatible à l'action de $\pi_{0}(\tcPan)$, où $g$ le morphisme structural de $\tcPan$.
En utilisant alors le morphisme trace, on peut identifier $R^{2d}g_{!}\overline{\mathbb{Q}}_{l}$ avec le faisceau associé au préfaisceau
\begin{center}
$U\mapsto\overline{\mathbb{Q}}_{l}^{\pi_{0}(\cPinf)(U)}$
\end{center}
La fin de la proposition résulte de \cite[Prop 6.5.1]{N} auquel on renvoie pour les détails.
\end{proof}
\section{La fibration si $G$ non simplement connexe}\label{ssimple}
\subsection{Un calcul de dimension}
On se place dans le cas $G$ semisimple. Traiter le cas réductif n'apporte que des modifications mineures (cf. \cite[Rmq. 6.20]{Hu}). 
A la suite d'Hurtubise-Markman \cite[sect. 3.2]{Hu}, nous introduisons un invariant discret topologique lié à une paire $(E,\phi)\in\cmd$. On note $G_{sc}$ le revêtement simplement connexe de $G$ et $T_{sc}$ l'image réciproque de $T$ dans $G_{sc}$.
On note $U:=X-\supp(\la)$. On fixe un point fermé $x_{0}\in X$.

On considère alors le schéma en groupes $\Aut(E)_{U}$. La section unité de $\Aut(E)_{U}$ induit une application injective de $\pi_{1}(U,x_{0})\rightarrow\pi_{1}(\Aut(E)_{U})$.
Comme le groupe $\pi_{1}(\Aut(E)_{U})$ s'identifie au produit $\pi_{1}(U,x_{0})\times\pi_{1}(G)$. La section $\phi$ induit une flèche:
\begin{center}
$\tau_{\phi}:\pi_{1}(U,x_{0})\rightarrow\pi_{1}(G)$.
\end{center}
Comme $\pi_{1}(G)$ est abélien, $\tau_{\phi}$ induit une classe dans $H^{1}(U,\pi_{1}(G))$. Si l'on choisit une autre paire $(E_{1},\phi_{1})\in\cmd$ la différence $\tau_{\phi}-\tau_{\phi_{1}}$ est la restriction d'une classe de $H^{1}(X,\pi_{1}(G))$, lequel est fini.
On note $S_{G}$ l'ensemble fini des valeurs que peut prendre l'invariant $\tau$.
En particulier, on a une décomposition en composantes connexes de $\cmd$:
\begin{center}
$\cmd=\coprod\limits_{\tau\in S_{G}}\cmd(\tau)$
\end{center}
Nous allons maintenant voir comment relier les composantes $\cmd(\tau)$ à un espace de Hitchin pour $G_{sc}$.
On fixe un invariant $\tau_{\phi}:\pi_{1}(U,x_{0})\rightarrow\pi_{1}(G)$.
En vertu de \cite[Cond. 6.13]{Hu}, cela ne restreint pas la généralité de supposer $\tau$ surjectif. On note $\Gamma:=\pi_{1}(G)$.

On considère alors $f:X_{\tau}\rightarrow X$ le revêtement ramifié de $X$ qui compactifie le revêtement galoisien de groupe $\Gamma$, 
\begin{center}
$f^{0}:X_{\tau}^{0}\rightarrow U$
\end{center}
correspondant à $\tau$. Soit le diviseur 
\begin{center}
$\tau^{-1}(\la)=\sum\limits_{x\in X}\sum\limits_{\substack{y\in X_{\tau}\\f(y)=x}}d_{x}\la_{x}[y]\in\Div(X,T_{sc})$
\end{center}
où $d_{x}$ est l'ordre du stabilisateur de $\Gamma$ agissant sur $p^{-1}(x)$.
Posons 
\begin{center}
$\overline{\cm}_{\tau^{-1}(\la)}:=\Hom(X_{\tau},[V_{G_{sc}}^{\tau^{-1}(\la)}/G_{sc}])$
\end{center}
et $\mathcal{A}_{{\tau^{-1}(\la)}}$ la base de Hitchin associée. Nous avons alors par \cite[(50)]{Hu} que la paire $(f^{*}E,f^{*}\phi)$ admet un relèvement $(E',\phi')\in\overline{\cm}_{\tau^{-1}(\la)}$, qui est unique modulo l'action de $\Gamma$ par translation à gauche sur $V^{\tau^{-1}(\la)}_{G_{sc}}$.
Nous disposons d'une deuxième action  de $\Gamma$ sur $V^{\tau^{-1}(\la)}_{G_{sc}}$, en voyant un élément $g\in\Gamma$ comme un automorphisme de $X_{\tau}$:
\begin{center}
$g_{*}\phi'=(g^{-1})^{*}\phi'$.
\end{center}
Il résulte alors de \cite[(50)]{Hu}, qu'en combinant ces deux actions, on obtient que $\phi'$ est équivariante pour l'action diagonale:
\begin{equation}
g_{*}(g.\phi'):=\phi'.
\label{eq1}
\end{equation}
En particulier, nous obtenons que:
\begin{equation}
\chi_{+}(\phi')\in \mathcal{A}_{\tau^{-1}(\la)}^{\Gamma}.
\label{eq2}
\end{equation}
On considère alors le schéma $V_{T_{sc}}^{\tau^{-1}(\la)}$ au-dessus de $X_{\tau}$. Il est muni d'une action de $\Gamma\times\Gamma$, la première venant de l'action naturelle de $\Gamma$ sur $V_{G_{sc}}$ et la deuxième issue de l'action de $K$ sur $X_{\tau}$.
Nous avons que le quotient de $V_{T_{sc}}^{\tau^{-1}(\la)}$ par $\Gamma\times\Gamma$ s'identifie à $V_{T}^{\la}$.
On forme  alors le revêtement intermédiaire:
\begin{center}
$V_{T}(\la,\tau):=V_{T_{sc}}^{\tau^{-1}(\la)}/\Delta(\Gamma)$
\end{center}
où $\Delta(\Gamma)$ désigne l'action diagonale et le quotient étant au sens des invariants.
On remarque que nous avons naturellement :
\begin{equation}
H^{0}(X,V_{T}(\la,\tau)/W)=\mathcal{A}_{\tau^{-1}(\la)}^{\Gamma}.
\label{equi2}
\end{equation}
\begin{defi}
A la suite d'Hurtubise-Markman \cite{Hu}, on considère l'ouvert $V_{T}(\la,\tau)^{0}$ qui est le complémentaire  de l'intersection du lieu de ramification de $V_{T}(\la,\tau)\rightarrow V_{T}(\la,\tau)/W$ avec les fibres de $V_{T}(\la,\tau)$ au-dessus de $\supp(\la)$.
Enfin, on considère $V_{T_{sc}}^{\tau^{-1}(\la),0}$ l'ouvert correspondant de $V_{T_{sc}}^{\tau^{-1}(\la)}$
\end{defi} 
Nous avons la proposition suivante tirée de \cite[Lem. 6.14]{Hu}
\begin{prop}\label{carhu}
Les schémas $V_{T}(\la,\tau)^{0}/W$ et $V_{T_{sc}}^{\tau^{-1}(\la),0}/W$ sont lisses et on a un carré cartésien:
$$\xymatrix{V_{T_{sc}}^{\tau^{-1}(\la),0}\ar[r]\ar[d]&V_{T}(\la,\tau)^{0}\ar[d]\\V_{T_{sc}}^{\tau^{-1}(\la),0}/W\ar[r]\ar[d]&V_{T}(\la,\tau)^{0}/W\ar[d]\\X_{\tau}\ar[r]&X}$$
\end{prop}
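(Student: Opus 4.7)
Le plan est de ramener l'�nonc� � une analyse locale en les points de $\supp(\la)$, puisqu'au-dessus de l'ouvert $U=X-\supp(\la)$ toutes les constructions se r�duisent au cas non tordu et le r�sultat d�coule du th�or�me de Chevalley \ref{bouth} et du fait que $f:X_{\tau}\to X$ est �tale au-dessus de $U$.

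Premi�rement, je remarque que $\Gamma=\pi_{1}(G)=Z(G_{sc})/Z(G)$ agit sur $V_{T_{sc}}$ par translation par le centre, donc cette action commute � celle de $W$. Par cons�quent, $W$ descend en une action sur $V_{T_{sc}}^{\tau^{-1}(\la)}/\Delta(\Gamma)=V_{T}(\la,\tau)$ et l'on dispose d'un diagramme d'actions compatibles. L'assertion que le carr� du bas est cart�sien r�sulte alors directement de la d�finition de $V_{T}(\la,\tau)$ comme quotient par $\Delta(\Gamma)$, combin�e avec le fait que $X_{\tau}=(X_{\tau})/_{\Gamma\text{-trivial}}$ modulo l'action sur $V_{T_{sc}}^{\tau^{-1}(\la)}$: le morphisme naturel $V_{T_{sc}}^{\tau^{-1}(\la)}\rightarrow V_{T}(\la,\tau)\times_{X}X_{\tau}$ est un isomorphisme car l'on a �quivariance pour $\Delta(\Gamma)$ des deux c�t�s. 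Le m�me argument appliqu� apr�s quotient par $W$ donne le carr� cart�sien du milieu, puisque le quotient par $W$ commute au quotient par $\Delta(\Gamma)$.

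Deuxi�mement, je traite la lissit�, qui est l'obstacle principal. On se ram�ne par \'etalit� � un point ferm� $y\in X_{\tau}$ d'image $x\in\supp(\la)$. Dans un voisinage formel de la fibre au-dessus de $y$, le sch�ma $V_{T_{sc}}^{\tau^{-1}(\la)}$ est isomorphe au tordu de $V_{T_{sc}}$ par le cocaract�re $d_{x}\la_{x}$ au-dessus de $D_{y}$. L'ouvert $V_{T_{sc}}^{\tau^{-1}(\la),0}$ est obtenu en retirant, dans les fibres au-dessus de $\supp(\la)$, le lieu de ramification du morphisme $V_{T_{sc}}^{\tau^{-1}(\la)}\rightarrow V_{T_{sc}}^{\tau^{-1}(\la)}/W$: concr�tement, le lieu o� au moins une racine s'annule dans ces fibres. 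Sur cet ouvert, $W$ agit donc librement sur les fibres au-dessus de $\supp(\la)$ (et g�n�riquement librement ailleurs), si bien que le quotient $V_{T_{sc}}^{\tau^{-1}(\la),0}/W$ est lisse sur $X_{\tau}$ puisque la caract�ristique est premi�re � $|W|$ (en dehors de $\supp(\la)$, la lissit� provient de \ref{bouth} qui identifie $V_{T_{sc}}/W$ � $\kC^{sc}\times A_{G_{sc}}$).

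Troisi�mement, pour la lissit� de $V_{T}(\la,\tau)^{0}/W$, je combine l'argument pr�c�dent avec le fait que l'action de $\Delta(\Gamma)$ sur $V_{T_{sc}}^{\tau^{-1}(\la),0}$ est libre (elle combine translation par le centre dans la fibre et action galoisienne sur $X_{\tau}$, cette derni�re �tant libre au-dessus de $U$ et ayant pour stabilisateur $\Gamma_{y}$ en $y\in\supp(\la)$, lequel agit librement sur la fibre via translation centrale). Ainsi $V_{T}(\la,\tau)^{0}=V_{T_{sc}}^{\tau^{-1}(\la),0}/\Delta(\Gamma)$ est lisse sur $X$, et son quotient par $W$ reste lisse par le m�me argument d'ordre premier � la caract�ristique. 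Le point d�licat, qu'il faudra v�rifier avec soin, est que la condition ouverte d�finissant $V_{T}(\la,\tau)^{0}$ correspond bien � la condition ouverte d�finissant $V_{T_{sc}}^{\tau^{-1}(\la),0}$ � travers la fl�che quotient par $\Delta(\Gamma)$; cela r�sulte de ce que $\Delta(\Gamma)$ commute � $W$ et que donc le lieu de ramification de $V_{T_{sc}}^{\tau^{-1}(\la)}\to V_{T_{sc}}^{\tau^{-1}(\la)}/W$ est $\Delta(\Gamma)$-stable et s'identifie au lieu de ramification de $V_{T}(\la,\tau)\to V_{T}(\la,\tau)/W$.
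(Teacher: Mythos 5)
Le texte ne d\'emontre pas cette proposition : il la tire directement de Hurtubise--Markman \cite[Lem. 6.14]{Hu}. Votre tentative de preuve directe est donc l\'egitime, et sa strat\'egie g\'en\'erale (r\'eduction aux fibres au-dessus de $\supp(\la)$, libert\'e de l'action de $W$ sur l'ouvert \og $0$\fg, caract\'eristique premi\`ere \`a $\left|W\right|$) est raisonnable. Mais elle comporte une lacune r\'eelle exactement l\`a o\`u se concentre toute la difficult\'e : les points de ramification de $f:X_{\tau}\rightarrow X$, c'est-\`a-dire les points $y$ au-dessus de $\supp(\la)$ o\`u le stabilisateur $\Gamma_{y}$ est non trivial.

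D'une part, votre argument pour le caract\`ere cart\'esien (\og l'on a \'equivariance pour $\Delta(\Gamma)$ des deux c\^ot\'es\fg) ne fournit que l'existence de la fl\`eche vers le produit fibr\'e, pas le fait que c'est un isomorphisme ; pour un quotient par une action non libre, un tel carr\'e n'est pas cart\'esien en g\'en\'eral. En un point $y$ de stabilisateur $\Gamma_{y}$ au-dessus de $x$, la fibre de $(V_{T}(\la,\tau)^{0}/W)\times_{X}X_{\tau}$ en $y$ est $(V_{T_{sc}}^{\tau^{-1}(\la),0})_{y}/(W\times\Gamma_{y})$ tandis que celle de $V_{T_{sc}}^{\tau^{-1}(\la),0}/W$ est $(V_{T_{sc}}^{\tau^{-1}(\la),0})_{y}/W$ : le carr\'e du bas est donc cart\'esien si et seulement si $\Gamma_{y}$ agit trivialement sur $(V_{T_{sc}}^{\tau^{-1}(\la),0})_{y}/W$. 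D'autre part, vous affirmez dans votre troisi\`eme \'etape que $\Gamma_{y}$ agit \emph{librement} sur cette fibre par translation centrale ; or une translation par un \'el\'ement central non trivial de $T_{sc}$ commute \`a $W$ et descend en une action non triviale sur le quotient par $W$, de sorte que cette affirmation contredit le caract\`ere cart\'esien que vous venez d'\'etablir. Le contenu r\'eel du lemme de Hurtubise--Markman est que $\gamma\in\Gamma_{y}$ agit sur la fibre en $y$ par la translation centrale \emph{compos\'ee} avec l'effet de la transformation de deck sur le fibr\'e tordu, et que le choix pr\'ecis du tordu $\tau^{-1}(\la)=\sum d_{x}\la_{x}[y]$ avec $d_{x}=\left|\Gamma_{y}\right|$ fait que ces deux contributions se compensent modulo $W$ sur l'ouvert \og $0$\fg. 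Votre preuve n'utilise nulle part ni $d_{x}$ ni la forme du tordu, alors que l'\'enonc\'e est faux sans eux : c'est la lacune essentielle. Accessoirement, la lissit\'e de $V_{T_{sc}}^{\tau^{-1}(\la),0}$ lui-m\^eme au-dessus de $\supp(\la)$, avant tout quotient par $W$, demanderait aussi une justification, puisque cet ouvert rencontre le bord de $V_{T_{sc}}$.
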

Cela va nous permettre de pouvoir calculer la dimension de $\abd$ quand $G$ est semisimple.
Nous combinons les résultats de \cite[6.16, 6.17, 6.19]{Hu}:
\begin{prop}\label{hu1}
Si $2g_{X_{\tau}}-1\prec\tau^{-1}\la$, alors l'espace des sections $H^{0}(X,V_{T}(\la,\tau)^{0}/W)$ est soit vide, soit une variété quasi-projective lisse de dimension:
\begin{center}
$\left\langle \rho,-w_{0}\la\right\rangle +r(1-g)$.
\end{center}
En particulier, $\abd$ est de même dimension.
\end{prop}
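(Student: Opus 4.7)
Le plan est d'exploiter le diagramme cart�sien de la proposition \ref{carhu} pour ramener le probl�me au cas simplement connexe sur le rev�tement $X_{\tau}$, puis de redescendre via la prise des $\Gamma$-invariants. Comme $f:X_{\tau}\rightarrow X$ est galoisien de groupe $\Gamma=\pi_{1}(G)$ et que \ref{carhu} identifie $V_{T_{sc}}^{\tau^{-1}(\la),0}/W$ au produit fibr� $V_{T}(\la,\tau)^{0}/W\times_{X}X_{\tau}$, la descente fid�lement plate donne l'identification
\begin{center}
$H^{0}(X,V_{T}(\la,\tau)^{0}/W)=H^{0}(X_{\tau},V_{T_{sc}}^{\tau^{-1}(\la),0}/W)^{\Gamma}$,
\end{center}
ce qui transf�re le probl�me au calcul d'un lieu $\Gamma$-invariant dans l'espace des sections pour le groupe simplement connexe $G_{sc}$ sur $X_{\tau}$.

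Je vais ensuite invoquer le lemme pr�c�dent appliqu� � $G_{sc}$ sur la courbe $X_{\tau}$. L'hypoth�se $2g_{X_{\tau}}-1\prec\tau^{-1}\la$ entra�ne $\tau^{-1}\la\succ 2g_{X_{\tau}}-2$, donc la base de Hitchin $\mathcal{A}_{\tau^{-1}\la}$ est l'espace affine $\bigoplus_{i=1}^{r}H^{0}(X_{\tau},\co_{X_{\tau}}(\left\langle \omega_{i},-w_{0}\tau^{-1}\la\right\rangle))$, dans lequel l'ouvert correspondant � $V^{0}$ d�coupe une vari�t� quasi-projective lisse, �ventuellement vide. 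Pour redescendre, l'�galit� $\tau^{-1}\la=f^{*}\la$ (tir�e de la d�finition de $\tau^{-1}(\la)$) donne $\left\langle \omega_{i},-w_{0}\tau^{-1}\la\right\rangle=f^{*}\left\langle \omega_{i},-w_{0}\la\right\rangle$; la formule de projection conjugu�e � l'�galit� $(f_{*}\co_{X_{\tau}})^{\Gamma}=\co_{X}$ (valable puisque $\car(k)$ est premier � $\left|\Gamma\right|$) fournit
\begin{center}
$H^{0}(X_{\tau},\co_{X_{\tau}}(f^{*}\left\langle \omega_{i},-w_{0}\la\right\rangle))^{\Gamma}=H^{0}(X,\co_{X}(\left\langle \omega_{i},-w_{0}\la\right\rangle))$.
\end{center}
Un Riemann-Roch sur $X$ appliqu� � chaque fibr� en droites, utilisant l'annulation des $H^{1}$ fournie par l'hypoth�se de degr� (transmise de $X_{\tau}$ � $X$ par exactitude de $f_{*}$), donne la dimension totale $\left\langle \rho,-w_{0}\la\right\rangle+r(1-g)$. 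L'assertion finale sur $\abd$ r�sulte alors de l'identification \eqref{equi2}.

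Le point d�licat sera de v�rifier que la structure de vari�t� quasi-projective lisse persiste apr�s passage aux $\Gamma$-invariants. Ceci repose crucialement sur la lissit� de $V_{T}(\la,\tau)^{0}/W$ sur $X$ �tablie dans la proposition \ref{carhu}: l'espace tangent � une section donn�e est un fibr� vectoriel sur $X$ de degr� contr�l� par $\la$, et l'hypoth�se $2g_{X_{\tau}}-1\prec\tau^{-1}\la$ est pr�cis�ment taill�e pour annuler les obstructions cohomologiques, assurant que l'espace des sections, s'il est non vide, est un ouvert d'un espace affine de la dimension annonc�e.
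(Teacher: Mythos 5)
Votre d�marche co�ncide pour l'essentiel avec celle du texte, qui se contente de renvoyer � Hurtubise--Markman \cite[6.16--6.19]{Hu}: r�duction au cas simplement connexe sur $X_{\tau}$ via le carr� cart�sien de la proposition \ref{carhu}, descente par les $\Gamma$-invariants, puis Riemann--Roch. Il y a toutefois un point o� votre r�daction comporte une lacune r�elle: l'�tape $H^{0}(X_{\tau},\co_{X_{\tau}}(f^{*}\left\langle \omega_{i},-w_{0}\la\right\rangle))^{\Gamma}=H^{0}(X,\co_{X}(\left\langle \omega_{i},-w_{0}\la\right\rangle))$ n'a pas de sens telle quelle. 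Pour $G$ non simplement connexe, les coefficients $\left\langle \omega_{i},-w_{0}\la_{x}\right\rangle$ peuvent �tre non entiers (c'est pr�cis�ment l'objet de la remarque qui suit la proposition: m�me $\left\langle \rho,\la\right\rangle$ n'est entier que lorsque l'espace des sections est non vide), de sorte que $\co_{X}(\left\langle \omega_{i},-w_{0}\la\right\rangle)$ n'est pas d�fini; de plus l'action de $\Gamma$ pertinente pour la descente est l'action diagonale, tordue sur chaque facteur par le caract�re central associ� � $\omega_{i}$, et non l'action g�om�trique na�ve, si bien qu'un Riemann--Roch facteur par facteur sur $X$ ne se justifie pas. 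L'argument correct consiste � travailler avec l'objet descendu global, � savoir le fibr� vectoriel $\kc(\la,\tau)$ de rang $r$ du lemme \ref{aj1}, dont le degr� est $\frac{1}{\left|\Gamma\right|}\left\langle \rho,-w_{0}\tau^{-1}\la\right\rangle=\left\langle \rho,-w_{0}\la\right\rangle$ (entier, puisque degr� d'un fibr� vectoriel), puis � appliquer Riemann--Roch sur $X$ � ce fibr�, l'annulation de $H^{1}(X,\kc(\la,\tau))$ se d�duisant de celle de $H^{1}(X_{\tau},\kc^{\tau^{-1}(\la)})$ par l'injection dans les invariants. Enfin, l'identification \eqref{equi2} ne donne que la dimension de $\abda$; pour en d�duire celle de $\abd$ il faut encore la finitude et la surjectivit� de $\abdaan\rightarrow\abdan$, c'est-�-dire le lemme \ref{surj3}, comme l'indique la remarque du texte.
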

$\rmq$\begin{itemize}
\item
Il est à noter que pour un seul $\la_{v}$, l'accouplement $\left\langle \rho,\la_{v}\right\rangle$ peut être un demi-entier, néanmoins globalement lorsque l'espace des sections est non vide la somme globale $\left\langle \rho,\la\right\rangle$ est bien un entier (cf.\cite[Rmq 3.1.]{Hu}).
\item
Dans Hurtubise-Markman, $X$ est une courbe elliptique, mais l'argument est le même une fois que l'on ajoute dans Riemann-Roch la contribution venant du genre de la courbe. L'assertion sur $\abd$ se déduit du lemme \ref{surj3}.
\end{itemize}
\medskip

Nous avons besoin d'un renforcement de la proposition \ref{carhu}:
\begin{lem}\label{aj1}
Notons $\kc(\la,\tau):=V_{T}(\la,\tau)/W$, alors $\kc(\la,\tau)$ est un fibré vectoriel et la flèche $V_{T}(\la,\tau)\rightarrow\kc(\la,\tau)$ est finie plate.
\end{lem}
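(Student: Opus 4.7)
The plan is to bootstrap from the simply-connected universal cover $G_{sc}$, where the corresponding statements follow from the Chevalley--Vinberg structure theorem, and then propagate them along the diagonal $\Gamma$-quotient.

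Apply Theorem \ref{bouth} to $G_{sc}$: the characteristic space $\kc^{sc}=V_{T_{sc}}/W$ is an affine space (of dimension $2r$, since the toric factor vanishes for semisimple $G$), and the projection $V_{T_{sc}}\to\kc^{sc}$ is finite flat. Twisting by the $T_{sc}$-torsor $E_{T_{sc}}(-w_{0}\tau^{-1}\la)$ over $X_{\tau}$ turns $\kc^{sc,\tau^{-1}\la}$ into a vector bundle on $X_{\tau}$ and preserves finite flatness of $V_{T_{sc}}^{\tau^{-1}\la}\to\kc^{sc,\tau^{-1}\la}$, both being local properties stable under flat base change.

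I then take the $\Delta\Gamma$-quotient on both sides. Since $\Gamma\subset Z_{G_{sc}}$ is central and hence $W$-invariant, the actions of $W$ and $\Delta\Gamma$ commute, so that $V_{T}(\la,\tau)/W = \kc^{sc,\tau^{-1}\la}/\Delta\Gamma$. The key assertion is that this $\Delta\Gamma$-quotient is a vector bundle on $X$; this is a local question at the ramification points of $X_{\tau}\to X$. Let $y\in X_{\tau}$ be such a point, with stabilizer $\Gamma_{y}\subset\Gamma$. By Proposition \ref{abelianise} and Corollary \ref{produit} we have $\kc^{sc}=A_{G_{sc}}\times\kC^{sc}$ with $\Gamma_{y}$ acting trivially on $A_{G_{sc}}$ and through a character $\Gamma_{y}\to T_{sc}$ on $\kC^{sc}$. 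The covering $X_{\tau}\to X$ was built from the topological invariant $\tau$ precisely so that the $T_{sc}$-torsor $E_{T_{sc}}(-w_{0}\tau^{-1}\la)$ admits a $\Gamma$-linearization whose stabilizer character at each $y$ cancels this fiber character; this is the translation of the equivariance relation \eqref{eq1} into the local picture, and extends Proposition \ref{carhu} from the open locus $V_{T}(\la,\tau)^{0}$ to the full $V_{T}(\la,\tau)$. Equivariant descent, in characteristic prime to $|\Gamma|$, then yields a vector bundle on $X$.

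Finite flatness of $V_{T}(\la,\tau)\to\kc(\la,\tau)$ then follows: it is the $\Delta\Gamma$-quotient of the finite flat map $V_{T_{sc}}^{\tau^{-1}\la}\to\kc^{sc,\tau^{-1}\la}$ with $\Delta\Gamma$ acting compatibly on both sides; as $|\Gamma|$ is prime to the characteristic of $k$, this quotient preserves finite flatness. The delicate step in the plan is the local computation at ramification points, verifying that the character by which $\Gamma_{y}$ acts on the fiber of $\kc^{sc,\tau^{-1}\la}$ is exactly trivialized by the twist $\tau^{-1}\la$ --- i.e.\ that the definition of $\tau$ via the topological invariant $\tau_{\phi}$ supplies precisely the compensation needed for the vector-bundle structure to descend across the ramified locus.
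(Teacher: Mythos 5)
Your overall strategy (descend from $G_{sc}$ along the diagonal $\Gamma$-quotient) is the right starting point and is also where the paper begins, but the two steps you yourself flag as delicate are genuine gaps, and in both cases the paper resolves them by a different mechanism than the one you propose.

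First, the vector-bundle claim. You reduce it to a local statement at the ramification points of $X_{\tau}\rightarrow X$: that the character by which the stabilizer $\Gamma_{y}$ acts on the fibre of $\kc^{sc,\tau^{-1}\la}$ is exactly cancelled by the twist. This really is the crux --- the quotient of the total space of a vector bundle by a finite group acting linearly on the fibres is in general \emph{not} a vector bundle (already $\mathbb{A}^{2}/(\pm 1)$ is a quadric cone) --- but you do not carry the computation out; you assert that it ``extends Proposition \ref{carhu} from the open locus to the full $V_{T}(\la,\tau)$'', which is precisely the strengthening that Lemma \ref{aj1} is asserting, so the argument is circular at its key point. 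The paper avoids any local character computation: it shows that the square relating $\kc^{\tau^{-1}(\la)}$, $\kc(\la,\tau)$, $X_{\tau}$ and $X$ is \emph{cartesian}, by observing that the natural map $\kc^{\tau^{-1}(\la)}\rightarrow\kc(\la,\tau)\times_{X}X_{\tau}$ is finite surjective, birational (this is where Proposition \ref{carhu} enters, on the open locus only), and has normal target (normality of the quotient by a finite group, preserved under the base change $X_{\tau}\rightarrow X$); Zariski's Main Theorem then makes it an isomorphism, and the vector-bundle property descends along the finite flat cover $X_{\tau}\rightarrow X$.

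Second, the flatness claim. The assertion that ``as $\left|\Gamma\right|$ is prime to the characteristic, the quotient preserves finite flatness'' is false in general: taking invariants under a finite group of order prime to the characteristic does not preserve flatness of a morphism (e.g.\ for $A=k[x,y]$ with $\mathbb{Z}/2$ acting by $-1$ and $B=A\times A$ with the factors swapped, $B^{\mathbb{Z}/2}=A^{\mathbb{Z}/2}\oplus A^{-}$ is not flat over $A^{\mathbb{Z}/2}$, the anti-invariants being a non-free reflexive module over the quadric cone). Note that already $V_{T_{sc}}^{\tau^{-1}(\la)}\rightarrow V_{T}(\la,\tau)$ fails to be flat at points with nontrivial stabilizer, so flatness cannot be transported through the quotient. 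The correct argument, and the one the paper uses, is miracle flatness: $V_{T}(\la,\tau)$ is Cohen--Macaulay as the quotient of a Cohen--Macaulay scheme by a finite group of order prime to the characteristic, $\kc(\la,\tau)$ is smooth by the first part, and the morphism is finite, hence flat.
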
 
\begin{proof}
On peut également obtenir $\kc(\la,\tau)$ comme le quotient de $\kc^{\tau^{-1}(\la)}$ par l'action diagonale de $K$.
On a un diagramme commutatif:
$$\xymatrix{\kc^{\tau^{-1}(\la)}\ar[d]\ar[r]&\kc(\la,\tau)\ar[d]\\X_{\tau}\ar[r]&X},$$
montrons qu'il est cartésien. On a une flèche surjective entre schémas affines $\kc^{\tau^{-1}(\la)}\rightarrow \kc(\la,\tau)\times_{X}X_{\tau}$
qui est birationnelle d'après la proposition \ref{carhu}. Enfin, le schéma $\kc(\la,\tau)$ est normal sur $X$ comme il est quotient d'une  flèche normale par un groupe fini. Par changement de base, $\kc(\la,\tau)\times_{X}X_{\tau}$ est normal sur $X_{\tau}$, donc normal. On déduit alors par le Main Theorem de Zariski que la flèche est un isomorphisme.
De plus, par descente fidèlement plate, nous obtenons que $\kc(\la,\tau)$ est un fibré vectoriel sur $X$.

Enfin, la flèche $V_{T}(\la,\tau)\rightarrow\kc(\la,\tau)$ est finie plate car $V_{T}(\la,\tau)$ est Cohen-Macaulay, comme quotient d'un schéma Cohen-Macaulay et $\kc(\la,\tau)$ est lisse.
\end{proof}

\subsection{Un champ de Picard dans le cas semisimple}\label{pichu}
Posons 
\begin{center}
$\abda:=\coprod\limits_{\tau\in S_{G}}H^{0}(X,\kc(\la,\tau))$.
\end{center}
Pour $a\in\abda(\bar{k})$ d'invariant $\tau$, il résulte de l'égalité \eqref{equi2}, que nous disposons d'un schéma en groupes lisses $J_{a}^{\sharp}$ sur $X_{\tau}$, obtenu en tirant le centralisateur régulier pour $G_{sc}$ par $a$.
La description galoisienne de la proposition \ref{galois} nous dit que $J_{a}^{\sharp}$ s'obtient comme le complémentaire des hyperplans de racines dans le schéma $\pi^{\sharp}_{*}(T_{sc})^{W}$ où 
\begin{center}
$\pi^{\sharp}:V_{T}^{\tau^{-1}(\la)}\rightarrow\kc^{\tau^{-1}(\la)}$.
\end{center}
Comme $\pi^{aug}:V_{T}(\la,\tau)\rightarrow \kc(\la,\tau)$ est fini plat, $\pi^{aug}_{*}(T)^{W}$ est un schéma en groupes commutatifs lisses sur $ \kc(\la,\tau)$.
En particulier, on peut considérer le schéma en groupes commutatifs lisses  $J_{a}^{aug}$ sur $X$ qui s'obtient également comme complémentaire des hyperplans de racines dans $\pi^{aug}_{*}(T)^{W}$.
On  construit de cette manière un champ de Picard $\cP_{a}^{aug}$, en considérant les $J_{a}^{aug}$-torseurs sur $X$. En faisant varier $a$, on obtient un champ $\cP^{aug}$ sur $\abdae$.
Nous avons la proposition suivante:
\begin{prop}\label{hu2}
Le champ $\cP^{aug,\heartsuit}$ est lisse au-dessus de $\abdah$. De plus, pour $a\in\abdah(\bar{k})$, on a la formule de dimension:
\begin{center}
$\dim\cP_{a}^{aug}=\left\langle \rho,-w_{0}\la\right\rangle +r(g-1)$.
\end{center}
\end{prop}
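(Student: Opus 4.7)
Mon projet de preuve s'inspire directement des arguments des propositions \ref{picardlisse} et \ref{dimpic} du cas simplement connexe, qu'il s'agit d'adapter au centralisateur r�gulier augment� $J_a^{aug}$ sur $X$. Pour la lissit� ponctuelle de $\cP_a^{aug}$, je constaterai d'abord que $J_a^{aug}$ est un sch�ma en groupes commutatifs et lisses sur la courbe $X$, ce qui entra�ne l'annulation de l'obstruction de d�formation puisque $H^2(X,\Lie(J_a^{aug}))=0$. Pour la lissit� en famille au-dessus de $\abdah$, il me suffira alors de v�rifier que $\dim H^0(X,\Lie(J_a^{aug}))$ reste localement constant. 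La description galoisienne fournissant $J_a^{aug}$ comme ouvert de $\pi^{aug}_*(T)^W$ permettra d'identifier $H^0(X,\Lie(J_a^{aug}))$ aux sections $W$-�quivariantes d'un rev�tement de $X$ vers $\kt$ ; la courbe �tant r�duite et propre, et $G$ �tant semisimple, cet espace se r�duit � $\kt^W=0$, en particulier constant.

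Pour la dimension, l'id�e centrale est de relier $\Lie(J_a^{aug})$ au fibr� vectoriel $\kc(\la,\tau)$, de rang $r$ par le lemme \ref{aj1}, via ses diff�rentielles invariantes, puis d'appliquer Riemann-Roch. Plus pr�cis�ment, comme dans \ref{dimpic}, je choisirai un point $a\in\abdah$ dont l'image dans $\kc(\la,\tau)$ �vite le lieu ramifi� ; au-dessus de l'ouvert $V_T(\la,\tau)^0/W$, lisse sur $X$ d'apr�s la proposition \ref{carhu}, l'�galit� $(\pi^{aug}_*\Omega^1_{V_T(\la,\tau)^0/X})^W=\Omega^1_{\kc(\la,\tau)^0/X}$ (valable car $\left|W\right|$ est inversible) donne l'identification $(\pi^{aug}_*\kt)^W\simeq \kc(\la,\tau)^*$ et donc $\Lie(J_a^{aug})\simeq a^*\kc(\la,\tau)^*$.

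Il restera � calculer $\deg\kc(\la,\tau)$, ce que je ferai en passant au rev�tement $f:X_\tau\rightarrow X$. Par la compatibilit� cart�sienne du lemme \ref{aj1}, $f^*\kc(\la,\tau)=\kc^{\tau^{-1}(\la)}$ dont le degr� vaut $\sum_i\left\langle \omega_i,-w_0\tau^{-1}(\la)\right\rangle=|\Gamma|\left\langle \rho,-w_0\la\right\rangle$ par d�finition de $\tau^{-1}(\la)$. Par cons�quent $\deg\kc(\la,\tau)=\left\langle \rho,-w_0\la\right\rangle$ et Riemann-Roch sur $X$ fournit $\dim H^1(X,\kc(\la,\tau)^*)-\dim H^0(X,\kc(\la,\tau)^*)=\left\langle \rho,-w_0\la\right\rangle+r(g-1)$, d'o� la formule annonc�e compte tenu de l'annulation $H^0=0$ �tablie ci-dessus.

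L'obstacle principal que j'anticipe r�side dans l'identification $(\pi^{aug}_*\kt)^W\simeq \kc(\la,\tau)^*$ en tant qu'isomorphisme de faisceaux coh�rents sur $X$ tout entier, et non pas seulement sur le lieu o� $\pi^{aug}$ est �tale : le rev�tement est ramifi� le long du diviseur discriminant, et il faut en principe �tudier plus finement le comportement des invariants pr�s du lieu ramifi�. N�anmoins, gr�ce � la lissit� de $\cP^{aug,\heartsuit}\rightarrow\abdah$ d�montr�e en premi�re partie, il suffit de prouver la formule de dimension en un unique point judicieusement choisi dans l'ouvert o� l'image de $a$ ne rencontre pas le lieu singulier, ce qui contourne la difficult� technique et permet d'appliquer directement l'argument diff�rentiel.
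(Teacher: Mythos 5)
Votre proposition suit essentiellement la m�me d�marche que la preuve du texte, qui renvoie � la proposition \ref{picardlisse} pour la lissit� et � la proposition \ref{dimpic} pour la dimension, en utilisant l'identification de $\Lie(J_a^{aug})$ avec le dual du fibr� $\kc(\la,\tau)$ (d�l�gu�e dans le texte � Hurtubise--Markman). Le calcul du degr� de $\kc(\la,\tau)$ par descente le long de $f:X_\tau\rightarrow X$ et la r�duction � un point �vitant le lieu de ramification sont corrects et explicitent pr�cis�ment ce que le texte laisse implicite.
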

\begin{proof}
La lissité s'obtient de la même manière que la proposition \ref{picardlisse}. Comme $\cP^{aug,\heartsuit}$ est lisse au-dessus de $\abdah$, il suffit de calculer la dimension de l'ouvert $H^{0}(X,V_{T}(\la,\tau)^{0}/W)$.
La preuve est alors la même que pour la proposition \ref{dimpic} en utilisant \cite[6.16]{Hu} pour identifier $\Lie(J_{a}^{aug})$.
\end{proof}
On stratifie alors $\abda$ par l'invariant $\delta$ donné par:
\begin{center}
$\forall~ a\in\abda, \delta(a)=\dim (\cR_{a}^{aug})$
\end{center}
où $\cR_{a}^{aug}$ est la partie affine de $\cP_{a}^{aug}$.

\subsection{L'invariant $\delta$}
On définit un invariant $\delta$ sur l'ouvert $\abdan$ à partir de celui sur $\abda$. On commence par un lemme qui va se déduire de l'étude précédente:
\begin{lem}\label{surj3}
On a un morphisme fini surjectif:
\begin{center}
$\abdaan\rightarrow\abdan$
\end{center}
\end{lem}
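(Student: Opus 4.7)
Le plan est de construire un morphisme global $\nu:\abda\to\abd$ naturel, puis d'en vérifier la finitude et la surjectivité après restriction aux ouverts anisotropes.

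D'abord, je construirai le morphisme $\nu$. Pour chaque $\tau\in S_G$, partant des présentations $V_T(\la,\tau)=V_{T_{sc}}^{\tau^{-1}(\la)}/\Delta(\Gamma)$ et $V_T^\la=V_{T_{sc}}^{\tau^{-1}(\la)}/(\Gamma\times\Gamma)$ rappelées plus haut, il existe un morphisme fini canonique $V_T(\la,\tau)\to V_T^\la$ obtenu en divisant par le $\Gamma$ résiduel, de degré $|\Gamma|$. Le quotient par $W$ fournit alors un morphisme fini $\nu_\tau:\kc(\la,\tau)\to\kcd$ de $X$-schémas, les deux étant des fibrés sur $X$ (pour la source par le lemme \ref{aj1}). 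En prenant les sections globales et en sommant sur $\tau\in S_G$, on obtient $\nu:\abda\to\abd$. La finitude de $\nu$ découlera de celle de chaque $\nu_\tau$ au niveau des $X$-schémas: concrètement, les relèvements d'une section fixée $a$ de $\kcd$ en une section de $\kc(\la,\tau)$ forment un sous-schéma fermé d'un produit fini de fibres finies, donc un schéma fini.

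Ensuite, je démontrerai la surjectivité sur l'ouvert anisotrope. Étant donné $a\in\abdan(\bar k)$, la proposition \ref{surj1} fournit une paire de Hitchin $(E,\phi)\in\cmdan(\bar k)$ d'image $a$, dont on extrait l'invariant topologique $\tau=\tau_\phi\in S_G$. En tirant $(E,\phi)$ en arrière par le revêtement galoisien $f:X_\tau\to X$ et en la relevant au groupe simplement connexe $G_{sc}$, on obtient, d'après \eqref{eq1} et \eqref{eq2}, un élément $\chi_+(\phi')\in\mathcal{A}_{\tau^{-1}(\la)}^{\Gamma}$. Via l'identification \eqref{equi2}, celui-ci sera le relèvement $\tilde a\in H^0(X,\kc(\la,\tau))\subset\abda$ d'image $a$ recherché.

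Enfin, il restera le point le plus délicat: vérifier la compatibilité avec la structure anisotrope, c'est-à-dire que l'ouvert $\abdaan$ (défini dans la section \ref{pichu} via la finitude de $\pi_0(\cP_{\tilde a}^{aug,1})$) s'envoie bien dans $\abdan$, et que réciproquement les relèvements $\tilde a$ produits par la construction ci-dessus y appartiennent. Cela passera par une comparaison galoisienne entre $\pi_0(\cP_a^1)$ et $\pi_0(\cP_{\tilde a}^{aug,1})$ s'appuyant sur la proposition \ref{ncl} et sur la description galoisienne du modèle de Néron (proposition \ref{neron}), ce qui ramènera le problème à un calcul sur les $W_a$-coinvariants des réseaux $X_*(T\cap G_{der})$ et $X_*(T_{sc})$, compatibles par la flèche $G_{sc}\to G$.
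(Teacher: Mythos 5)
Votre construction du morphisme et votre argument de surjectivit\'e sont essentiellement ceux du papier : on identifie $H^{0}(X,\kc(\la,\tau))$ \`a $\mathcal{A}_{\tau^{-1}(\la)}^{\Gamma}$ via \eqref{equi2}, on prend les sections du morphisme fini $V_{T}(\la,\tau)/W\rightarrow\kcd$, et la surjectivit\'e sur $\abdan$ s'obtient exactement comme vous le faites, en relevant une paire de Hitchin fournie par la proposition \ref{surj1} gr\^ace \`a \eqref{eq1} et \eqref{eq2}.

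Le seul point r\'eellement incomplet est la finitude. Votre argument (les rel\`evements d'une section fix\'ee $a$ forment un sch\'ema fini) n'\'etablit que la quasi-finitude du morphisme entre espaces de sections ; la finitude d'un morphisme ne se d\'eduit pas de la finitude de ses fibres, il faut encore la propret\'e. Le papier l'obtient en appliquant \cite[Lem. 7.3]{N2} au morphisme fini $V_{T}(\la,\tau)/W\rightarrow\kcd$, ce qui fournit une fl\`eche propre $\coprod_{\tau}\mathcal{A}_{\tau^{-1}(\la)}^{\Gamma}\rightarrow\abd$, puis conclut par \og propre et quasi-fini entra\^ine fini\fg. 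C'est cette \'etape qu'il vous faut ajouter. Quant \`a votre troisi\`eme point, le papier ne v\'erifie pas explicitement la compatibilit\'e avec la condition anisotrope : il prend en substance $\abdaan$ comme image r\'eciproque de $\abdan$, de sorte que la comparaison galoisienne des $\pi_{0}$ que vous esquissez n'est pas n\'ecessaire pour l'\'enonc\'e tel qu'il est utilis\'e ensuite, m\^eme si elle le serait pour une d\'efinition intrins\`eque de $\abdaan$.
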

\begin{proof}
D'après \eqref{equi2}, nous avons $\mathcal{A}_{\tau^{-1}(\la)}^{\Gamma}= H^{0}(X,\kc(\la,\tau))$.
En particulier, du morphisme fini $V_{T}(\la,\tau)/W\rightarrow\kcd$, nous obtenons, en utilisant \cite[Lem. 7.3]{N2}, une flèche propre :
\begin{center}
$\coprod\limits_{\tau\in S_{G})}\mathcal{A}_{\tau^{-1}(\la)}^{\Gamma}\rightarrow\abd$
\end{center}
De plus, elle est quasi-finie, puisque chaque fibre consiste en une $K$-orbite de sections de $\kc(\la,\tau)$.

Il ne nous reste plus que l'assertion de surjectivité. Cela vient du fait que pour tout $a\in\abdan(\bar{k})$, d'après la proposition \ref{surj1}, il existe $(E,\phi)\in\cmdan(\bar{k})$ qui s'envoie sur $a$. La paire $(E,\phi)$ admet un certain invariant $\tau$ et la construction du paragraphe précédent nous fournit d'après \eqref{eq2}, un relèvement $a'\in\mathcal{A}_{\tau^{-1}(\la)}^{\Gamma}$ de $a$.
\end{proof}

Comme l'invariant $\delta$ d'une section $a\in\abda$ ne dépend pas de l'action de $K$, on obtient en particulier, une stratification sur $\abdan$:
\begin{center}
$\abdan=\coprod\limits_{\delta\in\mathbb{N}}\mathcal{A}_{\delta}^{ani}$.
\end{center}

\subsection{Comparaison d'invariants $\delta$}
Soit $\kappa$ une donnée endoscopique elliptique de $G$, de groupe $H$.
D'après \eqref{transII}, on a  un morphisme:
\begin{center}
$\nu:\abdeth\rightarrow\abd$.
\end{center}
On s'intéresse dans un premier temps à la \og strate la plus haute $\la$\fg~ dans $\abdeth$. Nous avons alors par composition une flèche:
\begin{center}
$\nu_{aug}:\abdae\rightarrow\abd$.
\end{center}

\begin{lem}
Soit $a_{H}\in\abdae(\bar{k})$ d'image $a\in\abdk$, on a un morphisme 
\begin{center}
$J_{a}\rightarrow J_{a_{H}}^{aug}$
\end{center}
qui est génériquement un isomorphisme.
\end{lem}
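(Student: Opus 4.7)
The plan is to exploit the Galois-theoretic descriptions of both centralizers and compare them via the endoscopic correspondence. On the $G$-side, Proposition \ref{galois} identifies $J_a$ with $\tilde J_a$, i.e. the open subscheme of $\pi_{a,*}(T)^W$ cut out by the condition $\alpha(f)\neq -1$ at points stabilised by a reflection $s_\alpha$, where $\pi_a:\tilde X_a\to\bar X$ is the cameral cover of $a$. On the $H$-side, the construction of Section \ref{pichu} realises $J_{a_H}^{aug}$ as the complement of the root hyperplanes in $\pi^{aug}_{a_H,*}(T)^{W_H}$, for the augmented cover $\pi^{aug}_{a_H}:\tilde X_{a_H}^{aug}\to\bar X$ obtained by pulling back $V_T(\la,\tau)\to \kc(\la,\tau)$ via $a_H$.

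The first step will be to relate the two covers. Since $a=\nu_{aug}(a_H)$ and $\nu$ is built from the natural map $\kCH\to\kC$ (extended via Proposition \ref{abelianise} to take into account the common abelian parts $A_G=A_H$), the pullback of $V_T$ by $a$ decomposes, at least over the regular semisimple locus, into $|W/W_H|$ copies of the corresponding $H$-cameral data, permuted by the natural $W$-action. Taking $W$-invariants on the $G$-side and $W_H$-invariants on the $H$-side, this decomposition yields a natural morphism of sheaves of groups $\pi_{a,*}(T)^W\to\pi^{aug}_{a_H,*}(T)^{W_H}$. Since the $H$-roots form a subset of the $G$-roots, the open condition $\alpha(f)\neq -1$ defining $\tilde J$ restricts to the corresponding condition for $H$-roots, so the above map descends to the desired morphism $J_a\to J_{a_H}^{aug}$.

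The second step is the generic isomorphism. Over the regular semisimple open $U\subset\bar X$ for $a$, both cameral covers become \'etale, and the fibres of $J_a$ and $J_{a_H}^{aug}$ both identify with the centralizing torus of a regular semisimple element, respectively in $G$ and in $H$. Since $H$ and $G$ share the maximal torus $T$ and since a regular semisimple element of $H$ is also regular semisimple in $G$ with the same centralizing torus (namely the unique torus through which it factors), the two agree on $U$, so the morphism is an isomorphism there.

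The main technical subtlety concerns the $\Gamma$-equivariance built into the augmented construction, with $\Gamma=\pi_1(H)$: one must verify that the $\Gamma$-invariance condition defining sections of $V_T(\la,\tau)$ is automatically compatible with the $W$-invariance inherited from the $G$-side. This should follow from the construction of $\nu_{aug}$ in Section \ref{endocar} together with the cartesian diagrams of Proposition \ref{carhu}, which relate the $H$-augmented cover to the simply-connected cover on $X_\tau$; however, the bookkeeping between the global cover $X_\tau\to X$ and the fibre-wise Weyl actions will require care, and this is where I expect the bulk of the verification to lie.
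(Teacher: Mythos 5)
Your argument is correct and is essentially the paper's: the proof given there is a one-line reference to Ng\^o's Prop.~2.5.1 combined with the Galois description of $J_{a_{H}}^{aug}$ from la section \ref{pichu}, which is exactly the comparison of $W$- versus $W_{H}$-invariants of the cameral data (and the restriction of the open condition on roots) that you spell out. One small correction: Proposition \ref{abelianise} gives $A_{G}=A_{G_{ad}}$, not $A_{G}=A_{H}$ --- the paper explicitly notes that the abelianized parts of the Vinberg semigroups of $G$ and $H$ are \emph{distinct} toric varieties, which is precisely why the endoscopic correspondence is only defined after fixing the abelian part via the twist by $-w_{0}\la$ at the level of $\kcd$; this parenthetical slip does not affect the rest of your argument.
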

\begin{proof}
La preuve est la même que \cite[Prop. 2.5.1]{N}, en utilisant la description galoisienne de $J_{a_{H}}^{aug}$ de la section \ref{pichu}.
\end{proof}
Nous obtenons alors un morphisme surjectif:
\begin{center}
$\cP_{a}\rightarrow\cP_{a_{H}}^{aug}$
\end{center}
de noyau:
\begin{center}
$\mathcal{R}_{H,a_{H}}^{G}=H^{0}(\bar{X},J_{a_{H}}^{aug}/J_{a})$
\end{center}
qui est de dimension:
\begin{center}
$r_{H}^{G}(\la):=\dim\cP_{a}-\dim(\cP^{aug}_{a_{H}})$.
\end{center}

Soit $J_{a_{H}}^{aug,\flat}$ le modèle de Néron de $J_{a_{H}}^{aug}$. Nous avons des morphismes de groupes
\begin{center}
$J_{a}\rightarrow J_{a_{H}}^{aug}\rightarrow J_{a_{H}}^{aug,\flat}$ 
\end{center}
qui sont génériquement des isomorphismes. En particulier, $J_{a_{H}}^{aug,\flat}$  est également le modèle de Néron de $J_{a}$. En se rappelant que $\mathcal{R}_{a}:=\Ker(\cP_{a}\rightarrow\cP_{a}^{\flat})$, nous obtenons alors la suite exacte:
$$\xymatrix{1\ar[r]&\mathcal{R}_{H,a_{H}}^{G}\ar[r]&\mathcal{R}_{a}\ar[r]&\mathcal{R}_{a_{H}}^{aug}\ar[r]&1}$$
qui nous fournit la formule de dimension:
\begin{center}
$\delta(a)-\delta_{H}(a_{H})=r_{H}^{G}(\la)$.
\end{center}
\begin{cor}\label{indep}
Nous avons $r_{G}^{H}(\la)=\dim\abd-\dim\abde$. En particulier, pour tout $a_{H}\in\abde(\bar{k})$, d'image $a\in\abdhk$,  $\delta_{a}-\delta_{H,a_{H}}$ est indépendant de $a_{H}$.
\end{cor}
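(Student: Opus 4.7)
The plan is to read off the first equality directly from the dimension formulas established earlier, and then to deduce the independence assertion as a formal consequence. First I would apply Proposition \ref{dimpic} to $G$ and Proposition \ref{hu2} to $H$ (the latter with the augmented Picard stack) to obtain
\[
\dim \cP_a = \left\langle \rho, -w_0\la\right\rangle + r(g-1), \qquad \dim \cP^{aug}_{a_H} = \left\langle \rho_H, -w_0\la\right\rangle + r_H(g-1),
\]
where $r$, $r_H$ denote the semisimple ranks of $G$ and $H$ respectively. In parallel, Proposition \ref{hu1} applied to $G$ and to $H$ yields
\[
\dim \abd = \left\langle \rho, -w_0\la\right\rangle + r(1-g), \qquad \dim \abde = \left\langle \rho_H, -w_0\la\right\rangle + r_H(1-g).
\]

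Since $\kappa \in \hat{T}$ is an elliptic endoscopic datum, the group $H$ shares the maximal torus $T$ with $G$ and therefore has the same semisimple rank $r = r_H$. Subtracting, both $\dim \cP_a - \dim \cP_{a_H}^{aug}$ and $\dim \abd - \dim \abde$ reduce to the single quantity $\left\langle \rho - \rho_H, -w_0\la\right\rangle$, because the contributions from $r(g-1)$ and $r(1-g)$ cancel in each difference as soon as the ranks match. This yields the first equality $r_G^H(\la) = \dim\abd - \dim\abde$.

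The independence assertion is then essentially immediate. The identity $\delta(a) - \delta_H(a_H) = r_H^G(\la)$ derived from the exact sequence $1 \to \mathcal{R}_{H,a_H}^G \to \mathcal{R}_a \to \mathcal{R}_{a_H}^{aug} \to 1$ just above the statement expresses the local $\delta$-difference as a quantity that, by what we have just proved, depends only on the intrinsic data $(G, H, \la, g)$ and in no way on the choice of $a_H$ above $a$.

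The main (mild) obstacle I expect is purely a matter of bookkeeping of loci: Propositions \ref{dimpic} and \ref{hu2} provide the required dimension equalities on the generically regular semisimple opens $\abdh$ and $\abdeh$, whereas the statement is formulated for any $a_H \in \abde(\bar k)$ whose image lands in $\abdhk$. However, since the (augmented) Picard stacks are smooth over these opens and the dimensions are therefore locally constant, the hypothesis $a \in \abdhk$ is enough to reduce to the smooth situation where the formulas apply verbatim.
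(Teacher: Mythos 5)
Votre preuve est correcte et suit essentiellement la d�marche du papier, dont la d�monstration consiste pr�cis�ment � combiner les formules de dimension de la section sur le champ de Picard (Prop. \ref{dimpic}) avec les propositions \ref{hu1} et \ref{hu2} pour obtenir $r_{H}^{G}(\la)=\dim\cP_{a}-\dim\cP^{aug}_{a_{H}}=\dim\abd-\dim\abde$, l'ind�pendance en $a_{H}$ en d�coulant formellement. Seule petite pr�cision : l'�galit� $r=r_{H}$ ne vient pas du seul fait que $H$ et $G$ partagent le tore maximal $T$ (cela ne donne que l'�galit� des rangs totaux), mais de ce que, la donn�e �tant elliptique et $G$ semisimple dans ce chapitre, $H$ est �galement semisimple, de sorte que rang et rang semisimple co�ncident.
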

\begin{proof}
Il résulte de la section \ref{dimhitchin2} et des propositions \ref{hu1} et \ref{hu2} que nous avons:
\begin{center}
$r_{H}^{G}(\la)=\dim\cP_{a}-\dim\cP_{H,a_{H}}=\dim\abd-\dim\abde$.
\end{center}
\end{proof}
$\rmq$ Pour les strates de $\abdeth$ associées à un cocaractère $\mu<\la$, l'application $\nu$ se factorise par $\mathcal{A}_{\mu}$ et on est ramené au cas précédent.
\subsection{Une fibration de Hitchin augmentée}\label{hitaug}
Fixons un invariant topologique $\tau$, comme nous avons une action de $\Gamma\times\Gamma$ sur $V_{G}^{\tau^{-1}(\la)}$, on forme le quotient:
\begin{center}
$V_{G}(\la,\tau)=V_{G}^{\tau^{-1}(\la)}/\Delta\Gamma$.
\end{center}
Partant d'une paire $(E,\phi)\in\cmd(\bar{k})$, on a construit une paire $(E',\phi')\in\overline{\cm}_{\tau^{-1}(\la)}$ telle que $\phi'$ se factorise en une section $\phi'\in H^{0}(X, V_{G}(\la,\tau)\wedge^{G_{sc}}E')$.
On pose alors 
\begin{center}
$\cmdaa:=\coprod\limits_{\tau\in S_{G}}\Hom(X,[V_{G}(\la,\tau)/G])$.
\end{center}
et en poussant $E'$ en un $G$-torseur par la flèche $G_{sc}\rightarrow G$, la paire $(E',\phi')$ nous fournit un point de $\cmda(\bar{k})$ qui s'envoie sur $(E,\phi)$.

\begin{lem}\label{surj4}
La flèche $\cmdaa\rightarrow\cmd$ est finie, surjective et nette.
\end{lem}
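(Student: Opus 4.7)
On proc�de en trois �tapes correspondant aux trois propri�t�s.

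Pour la surjectivit�, c'est essentiellement la construction rappel�e juste avant l'�nonc�: partant de $(E,\phi)\in\cmd(\bar{k})$, on attache l'invariant topologique $\tau=\tau_\phi\in S_G$ puis on tire $(E,\phi)$ par le rev�tement $f:X_\tau\to X$, pour obtenir une paire qui se rel�ve en un �l�ment $(E',\phi')\in\overline{\cm}_{\tau^{-1}(\la)}(\bar k)$ pour $G_{sc}$, unique � l'action de $\Gamma$ pr�s. L'�quation \eqref{eq1} d'�quivariance diagonale montre alors que $\phi'$, pouss�e dans $V_G^{\tau^{-1}(\la)}$ via $G_{sc}\to G$, se factorise en une section de $[V_G(\la,\tau)/G]$ sur $X$; c'est exactement un $\bar{k}$-point de $\cmdaa$ qui se projette sur $(E,\phi)$. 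Il faudra �galement faire cette construction en famille, en observant que l'invariant $\tau$ est localement constant.

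Pour la finitude, on utilise la propret� (crit�re valuatif sur $\cmdaa\to\cmd$) combin�e au caract�re quasi-fini. La quasi-finitude sur les $\bar k$-points est claire: au-dessus d'un point $(E,\phi)$ d'invariant $\tau$ fix�, les rel�vements $(E',\phi')$ au rev�tement $X_\tau$ sont class�s par l'action de $\Gamma$ par translation � gauche sur $V_{G_{sc}}^{\tau^{-1}(\la)}$, et $S_G=H^1(X,\pi_1(G))$ est lui-m�me fini. Le crit�re valuatif pour la propret� se d�duit du fait que l'extension de $(E',\phi')$ � travers un point de codimension un est assur�e par la normalisation du rev�tement $X_\tau$ et le caract�re affine de $V_{G_{sc}}^{\tau^{-1}(\la)}$: puisque $(E,\phi)$ est donn� en entier et que $\tau$ est discret, le rel�vement au point g�n�rique s'�tend canoniquement en un rel�vement global.

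Pour le caract�re net, il s'agit de montrer que le morphisme induit sur les espaces tangents et sur les groupes d'automorphismes est injectif. La fl�che compos�e $V_{G_{sc}}^{\tau^{-1}(\la)}\to V_G^{\tau^{-1}(\la)}\to V_G^\la$ correspond au quotient par une action libre g�n�riquement de $\Gamma\times\Gamma$ (combinant l'action de Galois du rev�tement et l'action de $\pi_1(G)$); d'apr�s le lemme \ref{aj1} et la proposition \ref{carhu}, ce quotient est non-ramifi� en dehors du lieu de ramification, lequel est par construction disjoint des supports ramifi�s dans $V_T(\la,\tau)^0$. Pour l'injectivit� tangente, on observe que toute d�formation de $(E',\phi')$ est $\Gamma$-�quivariante et se descend uniquement en une d�formation de $(E,\phi)$; r�ciproquement, le relev� d'une d�formation est unique une fois fix� $\tau$, d'o� l'injectivit�. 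L'assertion analogue pour les automorphismes se d�duit de la proposition \ref{sgrpe} et de l'inclusion $\Aut(E',\phi')\hookrightarrow\Aut(E,\phi)$ fournie par l'�quivariance.

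Le point d�licat sera l'�tape de non-ramification, o� il faudra traiter proprement les stabilisateurs �ventuels de $\Delta\Gamma$ sur les fibres en les points de ramification de $X_\tau\to X$, en les absorbant dans le groupe des automorphismes via la construction $[V_G(\la,\tau)/G]$; le recours � la caract�risation locale de $V_T(\la,\tau)^0/W$ comme ouvert lisse o� les stabilisateurs sont contr�l�s est l'ingr�dient-cl�.
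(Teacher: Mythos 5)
Your surjectivity argument coincides with the paper's: it is exactly the construction recalled just before the statement, and the paper says no more than that. For finiteness and unramifiedness, however, the paper takes a much shorter route: it observes that $V_{G}(\la,\tau)\rightarrow V_{G}^{\la}$ is finite, surjective and generically \'etale, and then invokes \cite[Lem. 7.3]{N2}, which says precisely that such a morphism of affine schemes over the curve induces a finite and unramified morphism between the corresponding spaces of sections. Your attempt to reprove this by hand leaves two genuine gaps. First, in the properness step you assert that the generic lift \og s'\'etend canoniquement\fg~ across the special point; the actual reason is the valuative criterion for the \emph{finite} morphism $V_{G}(\la,\tau)\rightarrow V_{G}^{\la}$ applied along the codimension-one point of $X_{R}$, and you would still need the fibres of $\cmdaa\rightarrow\cmd$ to be representable and affine in order to pass from \og propre et quasi-fini\fg~ to \og fini\fg.

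Second, and more seriously, your unramifiedness argument simply asserts that \og le relev\'e d'une d\'eformation est unique une fois fix\'e $\tau$\fg~ --- but that uniqueness is the whole content of the claim. The correct justification is not the local analysis of the stabilizers of $\Delta(\Gamma)$ at the ramification points of $X_{\tau}\rightarrow X$ that you flag as the \og point d\'elicat\fg. It is that $X$ is an integral curve and $V_{G}(\la,\tau)$ is separated over $X$, so a section (and likewise a first-order deformation of a section) is determined by its restriction to any dense open of $X$; over the dense open where $V_{G}(\la,\tau)\rightarrow V_{G}^{\la}$ is \'etale and where the section is generically regular semisimple, lifts and their infinitesimal deformations are unique, and this uniqueness propagates to all of $X$ by separatedness. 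The ramification locus therefore never needs to be examined, and the concluding paragraph of your proposal chases a difficulty that the argument of \cite{N2} avoids entirely.
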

\begin{proof}
La surjectivité est claire en vertu de ci-dessus. Comme $V_{G}(\la,\tau)\rightarrow V_{G}^{\la}$ est finie surjective génériquement étale, il résulte de \cite[Lem. 7.3-p. 433]{N2}, que la flèche induite $\cmdaa\rightarrow\cmd$ est finie et non-ramifiée.
\end{proof}
On obtient donc un diagramme commutatif:
$$\xymatrix{\cmdaan\ar[r]^{p}\ar[d]_{f^{aug,ani}}&\cmdan\ar[d]^{f^{ani}}\\\abdaan\ar[r]^{p}&\abdan}$$
avec $f^{ani}$ propre d'après le théorème \ref{proprete} et les flèches horizontales finies surjectives d'après les lemmes \ref{surj3} et \ref{surj4}, en particulier, on en déduit que $f^{aug,ani}$ est aussi propre.

Le champ $\cmdaa$ est muni d'une action de $\cP^{aug}$, induite par l'action du champ de Picard de $\overline{\cm}_{\tau^{-1}(\la)}$, avec un ouvert régulier sur lequel il agit transitivement.
Enfin, une analyse similaire à la proposition \ref{ncl} nous donne:
\begin{prop}
On a un morphisme surjectif de faisceaux entre le faisceau constant $X_{*}(T)$ et $\pi_{0}(\cP^{aug})$. 
\end{prop}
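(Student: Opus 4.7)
Le plan est d'adapter la preuve de la proposition \ref{ncl} (issue de \cite[Lem. 8.3.1-8.4.1]{CL}) au cadre augment�. La d�marche se d�compose en trois �tapes parall�les au cas o� $G_{der}$ est simplement connexe, la diff�rence cl� �tant que la fibration augment�e interpole entre $G$ et son rev�tement simplement connexe (quand $G$ est semisimple), ce qui permet d'obtenir tout le r�seau $X_{*}(T)$ plut�t que seulement $X_{*}(T\cap G_{der})$.

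D'abord, on utilise la description galoisienne de $J_{a}^{aug}$ �tablie � la section \ref{pichu}. Pour $a\in\abda$, on rappelle que $J_{a}^{aug}$ est obtenu comme le compl�mentaire des hyperplans de racines dans $\pi^{aug}_{*}(T)^{W}$, o� $\pi^{aug}: V_{T}(\la,\tau)\rightarrow \kc(\la,\tau)$ est le morphisme fini plat du lemme \ref{aj1}. Cette description est parall�le � celle du cas simplement connexe, mais utilise le rev�tement augment� $V_{T}(\la,\tau)=V_{T_{sc}}^{\tau^{-1}(\la)}/\Delta(\Gamma)$ au lieu de $V_{T}^{\la}$.

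Ensuite, � chaque $\tilde{a}=(a,\tilde{\infty})\in\abdaan(\bar{k})$ rigidifi� en $\infty$ et chaque $\mu\in X_{*}(T)$, on associe un �l�ment de $\pi_{0}(\cP_{a}^{aug})$ par la construction standard : la rigidification en $\tilde{\infty}$ permet de voir $\mu$ comme un �l�ment du tore g�n�rique $J_{a,\tilde{\infty}}^{aug}$, puis on construit un $J_{a}^{aug}$-torseur par modification de Hecke en $\tilde{\infty}$. La diff�rence essentielle avec le cas de Chaudouard-Laumon est qu'ici tout $X_{*}(T)$ intervient (et non seulement $X_{*}(T\cap G_{der})$), ce qui reflecte le fait que la fibration augment�e capte l'information topologique suppl�mentaire encod�e par l'invariant $\tau\in S_{G}$ d�fini au d�but de la section \ref{ssimple}.

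Enfin, on �tablit la surjectivit� par le m�me argument de d�vissage via la composante neutre que dans la preuve de la proposition \ref{ncl} : de la suite exacte $1\rightarrow (J_{a}^{aug})^{0}\rightarrow J_{a}^{aug}\rightarrow\pi_{0}(J_{a}^{aug})\rightarrow 1$, on d�duit une description duale $\pi_{0}(\cP_{a}^{aug,0})^{*}=\hat{T}^{W_{a}}$ au-dessus du point g�n�rique, qui, en redescendant, donne une surjection entre $X_{*}(T)$ et un quotient fini par l'action du groupe de monodromie $W_{a}$. La mise en famille se fait exactement comme dans \cite[Lem. 8.4.1]{CL}.

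Le point d�licat sera de v�rifier que la construction augment�e d�pend effectivement de $\mu\in X_{*}(T)$ tout entier et pas seulement de $X_{*}(T\cap G_{der})$, ce qui revient � identifier correctement les diff�rentes composantes de $\cmd(\tau)$ index�es par $\tau\in S_{G}$ avec les classes suppl�mentaires de cocaract�res captur�es par $V_{T}(\la,\tau)$ via la descente par $\Delta(\Gamma)$. C'est pr�cis�ment l� que la construction de la fibration augment�e introduite � la section \ref{hitaug} montre sa pertinence, en rassemblant en un seul objet g�om�trique tous les invariants topologiques.
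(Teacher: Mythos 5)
Votre proposition est correcte et suit essentiellement la m\^eme voie que le texte, qui se borne d'ailleurs \`a invoquer \og une analyse similaire \`a la proposition \ref{ncl}\fg~: description galoisienne de $J_{a}^{aug}$ via $\pi^{aug}_{*}(T)^{W}$, construction pour chaque $\mu\in X_{*}(T)$ d'un \'el\'ement de $\pi_{0}(\cP_{a}^{aug})$ par modification au point rigidifi\'e, puis d\'evissage par la composante neutre et mise en famille \`a la Chaudouard-Laumon. La seule impr\'ecision, sans cons\'equence pour l'argument, est votre comparaison avec $X_{*}(T\cap G_{der})$ : pour $G$ semisimple on a $T\cap G_{der}=T$, et le gain r\'eel de la fibration augment\'ee est de remplacer le r\'eseau des coracines $X_{*}(T_{sc})$ par le r\'eseau $X_{*}(T)$ tout entier.
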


\section{Construction d'une présentation géométrique}
Dans ce chapitre nous construisons une résolution des singularités en un sens faible  de $\cmd$, que l'on appelle une présentation géométrique du complexe d'intersection de $\cmd$. L'utilité d'une telle présentation vient du théorème du support. En effet, dans \cite[sect.7]{N}, on démontre un théorème du support pour une flèche $f:M\rightarrow S$ propre munie d'une action d'un schéma en groupes commutatif lisse $g:P\rightarrow S$. Néanmoins, dans l'énoncé de Ngô, la source $M$ est supposée lisse, ce qui n'est évidemment pas le cas dans la situation qui nous concerne. Nous sommes donc forcés d'introduire un espace qui fait office de résolution des singularités. Nous introduisons cette variante affaiblie de résolution des singularités, dans la mesure où nous ne sommes pas en mesure de prouver que l'espace que nous construisons au-dessus de $\cmd$ est effectivement lisse, toutefois nous n'avons pas besoin d'hypothèses si contraignantes.
Un tel espace au-dessus de $\cmd$ va s'obtenir par changement de base d'une résolution des singularités du champ de Hecke. Avant de commencer par des rappels locaux sur la grassmannienne affine, nous introduisons la définition suivante:
\begin{defi}\label{defiedulc}
Soit $M$ un champ algébrique de type fini, équidimensionnel sur $k$. On appelle $\pi:\tilde{M}\rightarrow M$ une présentation géométrique du complexe d'intersection de $M$ si:
\begin{itemize}
\item
$\pi$ est propre,
\item
Il existe un ouvert non vide $U$ de $M$ tel que la flèche $\pi:\pi^{-1}(U)\rightarrow U$ soit lisse,
\item
Le complexe d'intersection $IC_{M}$ est un facteur direct de $\pi_{*}\bql$.
\end{itemize}
 
\end{defi}
$\rmq$\begin{itemize}
\item
Nous raccourcissons dans la suite l'appellation présentation géométrique du complexe d'intersection en présentation géométrique.
\item
Il est immédiat qu'une résolution des singularités fournit une présentation géométrique.
\item
Un deuxième exemple vient de la situation suivante:
$$\xymatrix{Y'\ar[d]\ar[r]&X'\ar[d]\\Y\ar[r]^{\Delta}&X}$$
où $X'\rightarrow X$ est une résolution des singularités d'un champ algébrique de type fini, équidimensionnel sur $k$, $\Delta:Y\rightarrow X$ une immersion de codimension $d$ telle que $\Delta^{*}[-d]IC_{X}=IC_{Y}$.
\end{itemize}

\subsection{Rappels locaux et résolution de Demazure}
On rappelle des résultats sur la résolution des singularités de $\overline{\Gr}_{\la}$, on pourra consulter à cet égard \cite{Rich}.
Soit $F=k((\pi))$ et $\co=k[[\pi]]$. Soit $G$ connexe réductif déployé sur $k$ tel que $G_{der}$ est simplement connexe. Soit $K$ le $k$-schéma en groupes dont les $k$-points sont $G(\co)$.
Soit $B$ un sous-groupe de Borel de $G$, nous avons une flèche d'évaluation en zéro:
\begin{center}
$\ev:K\rightarrow G$.
\end{center}
Plus généralement, on considère la flèche pour $n\in\mathbb{N}$:
\begin{center}
$\ev^{n}:K\rightarrow G(\co/\pi^{n}\co)$.
\end{center}
On appelle sous-groupe de congruence de niveau $n$ le groupe $K^{n}:=\Ker(\ev^{n})$. On pose également $K_{n}:=G(\co/\pi^{n}\co)$.
Nous rappelons que nous avons une décomposition de Cartan:
\begin{center}
$G(F)=\coprod\limits_{\la\in X_{*}(T)^{+}}K\pi^{\la}K$
\end{center}
et que $\Gr_{\la}:=K\pi^{\la}K/K$ (resp. $\overline{\Gr}_{\la}:=\overline{K\pi^{\la}K}/K$). Dans la suite, nous  simplifierons la notation $\pi^{\la}$ en $\la$.
\begin{lem}\label{fini}
On considère pour un cocaractère dominant $\la$, le fermé $\overline{\Gr}_{\la}\subset\Gr$. L'action de $K$ à gauche sur $\overline{\Gr}_{\la}$ se factorise par un quotient algébrique de type fini $K\rightarrow K_{n_{\la}}$.
\end{lem}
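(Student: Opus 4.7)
Le plan est de trouver un entier $n_{\la}$ tel que le sous-groupe de congruence $K^{n_{\la}}:=\Ker(\ev^{n_{\la}})$ agisse trivialement sur $\overline{\Gr}_{\la}$. Comme $K^{n_{\la}}$ est par d�finition le noyau de la surjection $K\to K_{n_{\la}}=G(\co/\pi^{n_{\la}}\co)$ et que ce dernier est un $k$-sch�ma en groupes alg�brique de type fini, l'action se factorisera alors par $K_{n_{\la}}$, ce qui est exactement ce qu'on veut.

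D'apr�s la d�composition de Cartan, on a $\overline{\Gr}_{\la}=\coprod_{\mu\in X_{*}(T)^{+},\,\mu\leq\la}\Gr_{\mu}$, o� l'union est finie puisque l'ensemble des cocaract�res dominants $\mu\leq\la$ est fini. Il suffit donc de trouver $n_{\la}$ tel que $K^{n_{\la}}$ agisse trivialement sur chaque strate $\Gr_{\mu}$. Comme $\Gr_{\mu}$ est l'orbite $K\cdot[\pi^{\mu}]$ et que $K^{n_{\la}}$ est un sous-groupe normal de $K$, il suffit m�me qu'il fixe le point de base $[\pi^{\mu}]$, i.e. que $\pi^{-\mu}K^{n_{\la}}\pi^{\mu}\subset K$ (la stabilisation en un point d'une orbite par un sous-groupe normal entra�ne la stabilisation de toute l'orbite).

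La v�rification de cette inclusion se fait au niveau de l'alg�bre de Lie via la d�composition en espaces de racines $\kg=\kt\oplus\bigoplus_{\alpha\in R}\kg_{\alpha}$\,: la conjugaison par $\pi^{-\mu}$ envoie $\pi^{n}\kg_{\alpha}(\co)$ sur $\pi^{n-\langle\alpha,\mu\rangle}\kg_{\alpha}(\co)$, de sorte que $\pi^{-\mu}K^{n}\pi^{\mu}\subset K$ d�s que $n\geq\langle\alpha,\mu\rangle$ pour toute racine $\alpha\in R$. On prend alors $n_{\la}:=\max\{\langle\alpha,\mu\rangle~\vert~\alpha\in R,\,\mu\in X_{*}(T)^{+},\,\mu\leq\la\}$, par exemple $n_{\la}=\langle 2\rho,\la\rangle$ convient. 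Il n'y a pas v�ritablement d'obstacle dans cette preuve, qui consiste en une r�duction standard via les orbites de Cartan suivie du calcul de conjugaison ci-dessus; le seul point qui demande un peu d'attention est le passage de la stabilisation du point de base � celle de l'orbite enti�re, qui utilise crucialement la normalit� du sous-groupe de congruence.
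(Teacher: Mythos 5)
Votre preuve est correcte et suit essentiellement la m�me d�marche que celle du texte : tout repose sur la formule de conjugaison $\pi^{-\mu}U_{\alpha,i}\pi^{\mu}=U_{\alpha,i-\left\langle\alpha,\mu\right\rangle}$ et sur la normalit� du sous-groupe de congruence, qui ram�ne la trivialit� de l'action sur une $K$-orbite � la stabilisation de son point base. La seule diff�rence est cosm�tique : vous traitez la fermeture strate par strate via la d�composition de Cartan, tandis que le texte ne traite que la strate ouverte $\Gr_{\la}$ (via la fibration sur $G/P_{\la}$ et le stabilisateur $K^{1}\cap\la K^{1}\la^{-1}$) et �tend � $\overline{\Gr}_{\la}$ par continuit�.
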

\begin{proof}
On voit $G$ comme un sous-groupe de $K$ et on regarde son action sur $\Gr_{\la}$, le stabilisateur de l'élément $\la$ est un sous-groupe parabolique $P_{\la}$. Il résulte alors de  \cite[sect. 2, (2.4)]{MV} que l'on peut voir $\Gr_{\la}$ comme un fibré  $G$-équivariant sur $G/P_{\la}$ dont la fibre au-dessus de $G/P_{\la}$ est l'orbite $K^{1}.\la$. Le stabilisateur est donné par $K^{1}\cap\la K^{1}\la^{-1}$. Montrons qu'il contient un certain groupe de congruence.
Posons $n_{\la}:=\max\limits_{\alpha\in R_{+}}(\left\langle \alpha,\la\right\rangle)$ et pour $i\in\mathbb{Z}$ et $\alpha$ une racine de $G$, soit:
\begin{center}
$U_{\alpha,i}:=U_{\alpha}(\pi^{i}x)$.
\end{center}
Il résulte  alors de la formule:
\begin{center}
$\la U_{\alpha,i}\la^{-1}=U_{\alpha,i+\left\langle \alpha,\la\right\rangle}$
\end{center}
où $K^{n_{\la}}\subset K^{1}\cap\la K^{1}\la^{-1}$.
On en déduit alors que $K$ agit sur $\Gr_{\la}$ via $K/K^{n_{\la}}=K_{n_{\la}}$ et donc également sur $\overline{\Gr}_{\la}$ par continuité.
\end{proof}
On considère le sous-groupe d'Iwahori $I=\ev^{-1}(B)$. Posons $\mathcal{B}=G(F)/I$ la variété de drapeaux affines, elle admet une flèche de projection projective lisse $\pi:\cB\rightarrow\Gr$ qui lui donne une structure d'ind-schéma. Elle admet la description modulaire suivante tirée de  \cite[sect. 1.1.3]{Ga}
\begin{defi}
Pour une $k$-algèbre $R$, $\cB(R)$ est le groupoïde des triplets $(E,\beta,\eps)$ avec $E$ un $G$-torseur sur $R\hat{\times}\Spec(\co)$, $\beta$ une trivialisation sur $R\hat{\times}\Spec(F)$ de $E$ et $\eps$ une réduction  de $E_{\vert\Spec(R)}$ au Borel $B$.
\end{defi}
On définit le groupe de Weyl affine étendu $W_{aff}:=N_{G(F)}(T(F))/T(\co)$, il s'identifie à $W\rtimes X_{*}(T)$.
Sur $W_{aff}$, on a une fonction de longueur $l:W_{aff}\rightarrow\mathbb{N}$ définie par:
\begin{center}
$l(w\pi^{\la})=\sum\limits_{\substack{\alpha>0,\\ w\alpha<0}}\left|\left\langle \alpha,\la\right\rangle+1\right|+\sum\limits_{\substack{\alpha>0,\\ w\alpha>0}}\left|\left\langle \alpha,\la\right\rangle\right|$.
\end{center}
D'après la décomposition d'Iwahori-Bruhat, nous avons une décomposition en $I$-orbites:
\begin{center}
$G(F)=\coprod\limits_{w\in W_{aff}}IwI$.
\end{center}
Cette décomposition induit une décomposition de la variété de drapeaux affine:
\begin{center}
$\cB=\coprod\limits_{w\in W_{aff}}\cB_{w}$
\end{center}
où les strates $\cB_{w}:=IwI/I$ sont des espaces affines de dimension $l(w)$. Le groupe de Weyl affine $W_{aff}$ admet un ordre partiel, dit de Bruhat tel que:
\begin{center}
$\overline{\cB}_{w}=\coprod\limits_{w'\leq w}\cB_{w'}$.
\end{center}
Considérons un triplet $(\beta,(E,\eps),(E,\eps'))$ constitué d'un $G$-torseur sur $\Spec \co$ avec sa réduction en fibre spéciale au Borel $B$ et $\beta$ un isomorphisme générique sur $\Spec F$ entre ces deux-torseurs, leur position relative fournit un élément $w$ de $W_{aff}$, on note alors $\inv_{I}(E,E')=w$.
Comme $G_{der}$ est simplement connexe, le groupe de Weyl affine étendu a une structure de groupe de Coxeter engendrée par l'ensemble des réflexions simples $\{s_{i},0\leq i\leq r\}$, constitué des réflexions simples $s_{i}$ de $W$ pour $1\leq i\leq r$ auquel on ajoute la réflexion $s_{0}$ qui correspond à la racine affine simple $\alpha_{0}=\pi^{\check{\alpha_{0}}}s_{\alpha_{0}}$ où $\alpha_{0}$ est l'unique racine la plus haute. 
On rappelle qu'il existe une unique racine positive $\alpha_{0}=\sum n_{i}\alpha_{i}$ telle que pour toute racine  positive $\beta=\sum m_{i}\alpha_{i}$, on a $m_{i}\leq n_{i}$.
Faltings a alors introduit dans  \cite[sect.3]{Fal} une résolution des singularités à la Demazure de $\overline{\cB}_{w}$, pour $w\in W_{aff}$.
A chaque racine simple $\alpha_{i}$, on associe le parahorique $\cP_{i}:=I\cup Is_{i}I$ pour $0\leq i\leq r$.
Etant donné $w\in W_{aff}$, on peut donc écrire une décomposition réduite de $w=s_{a_{1}}\dots s_{a_{l}}$ et on considère le schéma:
\begin{center}
$C(s_{a_{1}},\dots,s_{a_{l}})=\cP_{a_{1}}\times\dots\times\cP_{a_{l}}/I^{l}$.
\end{center}
où $I^{l}$ agit à droite par:
\begin{center}
$(p_{1},\dots,p_{l}).(b_{1},\dots,b_{l})=(p_{1}b_{1},b_{1}^{-1}p_{2}b_{2},\dots,b_{l-1}^{-1}p_{l}b_{l}).$
\end{center}
Par construction, $C(s_{a_{1}},\dots,s_{a_{l}})$ est un fibré projectif au-dessus de $C(s_{a_{1}},\dots,s_{a_{l-1}})$ en $\cP_{a_{l}}/I=\mathbb{P}^{1}$. Par récurrence, il est donc lisse, propre et irréductible.
La multiplication des composantes nous fournit une flèche:
\begin{center}
$\theta_{w}:C(s_{a_{1}},\dots,s_{a_{l}})\rightarrow\overline{\cB}_{w}$.
\end{center}
Nous avons alors la proposition suivante tirée de \cite[sect. 3]{Fal}:
\begin{prop}
La flèche $\theta_{w}$ est une résolution des singularités au sens suivant:
\begin{itemize}
\item
Le schéma $C(s_{a_{1}},\dots,s_{a_{l}})$ est lisse.
\item
La flèche $\theta_{w}$ est propre surjective.
\item
La flèche $\theta_{w}$ est un isomorphisme au-dessus de $\cB_{w}$.
\item
La flèche $\theta_{w}$ est $I$-équivariante.
\end{itemize}
\end{prop}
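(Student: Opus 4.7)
Je proc�derais par r�currence sur la longueur $l=l(w)$ d'une d�composition r�duite. Le cas $l=0$ est trivial: $C$ est r�duit � un point et $\theta_{w}$ est l'inclusion de ce point dans $\cB_{e}$. L'�quivariance sous $I$ est quant � elle imm�diate: $I$ agit par translation � gauche sur le premier facteur $\cP_{a_{1}}$ et la multiplication est manifestement $I$-�quivariante; on peut donc se concentrer sur les trois autres assertions.

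Pour la lissit� et la propret�, l'observation cl� est que la projection qui oublie le dernier facteur
\[
p:C(s_{a_{1}},\dots,s_{a_{l}})\rightarrow C(s_{a_{1}},\dots,s_{a_{l-1}})
\]
fait de $C(s_{a_{1}},\dots,s_{a_{l}})$ un fibr� localement trivial pour la topologie de Zariski de fibre $\cP_{a_{l}}/I\cong\mathbb{P}^{1}$ au-dessus de sa base. Par hypoth�se de r�currence, la base est lisse, projective et irr�ductible sur $k$, donc il en est de m�me de $C(s_{a_{1}},\dots,s_{a_{l}})$, qui est en outre de dimension $l$. Comme $\overline{\cB}_{w}$ est s�par� et $C(s_{a_{1}},\dots,s_{a_{l}})$ est projectif sur $k$, la fl�che $\theta_{w}$ est automatiquement propre.

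Pour la surjectivit�, j'utiliserais la structure de $\overline{\cB}_{w}$ via la d�composition d'Iwahori-Bruhat. L'image de $\theta_{w}$ est ferm�e dans $\overline{\cB}_{w}$, $I$-invariante, et contient �videmment la cellule ouverte $\cB_{w}=IwI/I$ (on choisit dans chaque $\cP_{a_{i}}$ un repr�sentant de $Is_{a_{i}}I$ et on multiplie). Par propret� et d�finition de l'adh�rence, l'image contient donc $\overline{\cB}_{w}$. L'inclusion inverse se prouve par r�currence � l'aide de la r�gle $Is_{i}I\cdot IvI\subset IvI\cup Is_{i}vI$ valable dans tout groupe de Coxeter, qui assure que le produit $\cP_{a_{1}}\cdots\cP_{a_{l}}$ est contenu dans $\bigcup_{v\leq w}IvI$.

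Le point v�ritablement d�licat est l'assertion que $\theta_{w}$ est un isomorphisme au-dessus de $\cB_{w}$. Source et but sont tous deux des sch�mas irr�ductibles lisses de dimension $l$ (le but $\cB_{w}\cong\mathbb{A}^{l(w)}$ est m�me affine), il suffit donc d'�tablir la bijectivit� ensembliste sur les $k$-points, ce qui se ram�ne � l'�galit�
\[
(Is_{a_{1}}I)\cdot_{I}(Is_{a_{2}}I)\cdots_{I}(Is_{a_{l}}I)=IwI.
\]
Cette �galit� se d�montre par r�currence � l'aide de la r�gle fondamentale $Is_{i}I\cdot IvI=Is_{i}vI$ d�s que $l(s_{i}v)>l(v)$, in�galit� pr�cis�ment satisfaite � chaque �tape parce que la d�composition $w=s_{a_{1}}\cdots s_{a_{l}}$ est suppos�e r�duite. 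C'est v�ritablement ce point, la compatibilit� de la multiplication des doubles classes avec l'addition des longueurs pour une d�composition r�duite, qui incarne la propri�t� de r�solution des singularit�s et constituerait le c\oe ur technique de l'argument.
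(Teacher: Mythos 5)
Votre d\'emonstration est, pour l'essentiel, l'argument standard de Demazure--Faltings auquel le texte renvoie sans le reproduire : r\'ecurrence sur la longueur via la structure de fibr\'e en $\mathbb{P}^{1}$ de l'oubli du dernier facteur (d'o\`u lissit\'e, projectivit\'e et irr\'eductibilit\'e de $C(s_{a_{1}},\dots,s_{a_{l}})$, puis propret\'e de $\theta_{w}$), surjectivit\'e et calcul de l'image par les r\`egles de multiplication des doubles classes $Is_{i}I\cdot IvI\subset IvI\cup Is_{i}vI$, et \'equivariance \'evidente pour la translation \`a gauche sur le premier facteur. Tous ces points sont corrects.

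Le seul pas qui ne tient pas tel quel est la r\'eduction de l'isomorphisme au-dessus de $\cB_{w}$ \`a une bijectivit\'e ensembliste entre sch\'emas lisses irr\'eductibles de m\^eme dimension : en caract\'eristique $p>0$ --- cadre dans lequel cet article se place pr\'ecis\'ement --- un morphisme bijectif entre telles vari\'et\'es n'est pas n\'ecessairement un isomorphisme (le Frobenius relatif $x\mapsto x^{p}$ de $\mathbb{A}^{1}$ en est un contre-exemple). Il faut donc \'etablir l'isomorphisme sch\'ematique, et non seulement la bijection sur les $k$-points. La r\`egle que vous invoquez le fournit d'ailleurs sous sa forme forte : lorsque $l(s_{i}v)=l(v)+1$, la multiplication $Is_{i}I\times^{I}IvI\rightarrow Is_{i}vI$ est un isomorphisme de sch\'emas, car $Is_{i}I/I$ est la droite affine donn\'ee par le groupe radiciel affine de la racine affine simple correspondante, $IvI/I\cong\mathbb{A}^{l(v)}$, et la d\'ecomposition de la cellule $Is_{i}vI/I\cong\mathbb{A}^{l(v)+1}$ en produit de groupes radiciels identifie terme \`a terme le produit tordu \`a la cellule but ; l'isomorphisme au-dessus de $\cB_{w}$ s'en d\'eduit par r\'ecurrence. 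Une alternative serait d'invoquer le th\'eor\`eme principal de Zariski (le but $\cB_{w}\cong\mathbb{A}^{l(w)}$ est normal), mais il faudrait alors encore justifier la birationalit\'e, c'est-\`a-dire la s\'eparabilit\'e, ce qui ram\`ene au m\^eme calcul radiciel. Avec cette correction locale, votre preuve co\"{\i}ncide avec celle de Faltings.
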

Nous allons maintenant voir comment de cette résolution, nous en déduisons une pour $\overline{\Gr}_{\la}$ pour $\la\in X_{*}(T)^{+}$.
Tout d'abord, il résulte de la formule de dimension que $I\la I/I$ est de dimension $l(\la)=2\left\langle \rho,\la\right\rangle$.
On a une flèche de projection $p:\cB\rightarrow\Gr$ qui est projective lisse et $I\la I/I$ s'envoie par $p$ sur $I\la K/K$.
On note alors:
\begin{center}
$p_{\la}:I\la I/I\rightarrow I\la K/K$
\end{center}
la flèche induite.
\begin{lem}\label{grass}
La flèche $p_{\la}$ est un isomorphisme et $I\la K/K$ est  un ouvert dense de $\Gr_{\la}$.
Le morphisme $p_{\la}$ s'étend en un morphisme propre, surjectif et birationnel noté de la même manière 
\begin{center}
$p_{\la}:\overline{I\la I}/I\rightarrow\overline{\Gr}_{\la}$.
\end{center}
\end{lem}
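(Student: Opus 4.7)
La strat�gie comporte trois �tapes: �tablir l'isomorphisme entre les $I$-orbites $I\la I/I$ et $I\la K/K$ par un calcul de stabilisateurs, en d�duire que $I\la K/K$ est un ouvert dense de $\Gr_{\la}$ par un argument de dimension, puis �tendre aux adh�rences en exploitant la projectivit� de $\cB\rightarrow\Gr$. Pour la premi�re �tape, je commencerais par observer que $p_{\la}$ est $I$-�quivariant entre deux $I$-orbites transitives, de sorte qu'il suffit de v�rifier l'�galit� $I\cap\la I\la^{-1}=I\cap\la K\la^{-1}$ des stabilisateurs respectifs de $\la I$ et $\la K$ dans $I$. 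Gr�ce � la d�composition d'Iwahori de $I$ en sous-groupes radiciels et � la formule $\la U_{\alpha}(\pi^{n}\co)\la^{-1}=U_{\alpha}(\pi^{n+\left\langle\alpha,\la\right\rangle}\co)$, le calcul se r�duit � une v�rification sous-groupe radiciel par sous-groupe radiciel. La dominance de $\la$ assure alors que les deux intersections co�ncident: pour $\alpha>0$ les deux valent $U_{\alpha}(\pi^{\left\langle\alpha,\la\right\rangle}\co)$, pour $\alpha<0$ les deux valent $U_{\alpha}(\pi\co)$, et la partie torique $T(\co)$ est commune. Ceci livre l'isomorphisme attendu.

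Pour la densit�, j'utiliserais la d�composition en $I$-orbites $\Gr_{\la}=\coprod_{w\in W/W_{\la}}Iw\la K/K$, o� $W_{\la}$ d�signe le stabilisateur de $\la$ dans $W$. Un calcul standard montre que la cellule associ�e au repr�sentant dominant $\la$ est de dimension maximale $\left\langle 2\rho,\la\right\rangle=\dim\Gr_{\la}$, ce qui en fait un ouvert dense, les autres cellules �tant de dimension strictement plus petite.

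Enfin, pour l'extension aux adh�rences, j'observerais que $\overline{I\la I}/I$ est une vari�t� de Schubert ferm�e dans $\cB$; comme la projection $p:\cB\rightarrow\Gr$ est projective, la restriction $\overline{I\la I}/I\rightarrow\Gr$ est propre. Son image est donc un ferm� de $\Gr$ contenant l'ouvert dense $I\la K/K\subset\Gr_{\la}$, d'o� l'on d�duit qu'elle co�ncide avec $\overline{\Gr}_{\la}$, ce qui fournit simultan�ment l'existence de l'extension et sa surjectivit�. La birationalit� est alors une cons�quence imm�diate de l'isomorphisme �tabli � la premi�re �tape. Le seul point qui demande un calcul explicite est l'�galit� des stabilisateurs; il ne semble pas pr�senter d'obstacle s�rieux, l'argument reposant de fa�on essentielle sur la dominance de $\la$ pour faire dispara�tre la diff�rence entre $I$ et $K$ apr�s conjugaison par $\la$.
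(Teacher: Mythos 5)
Votre d�monstration est correcte et suit le m�me squelette que celle du texte (isomorphisme $I\la I/I\cong I\la K/K$, densit� de $I\la K/K$ dans $\Gr_{\la}$, puis extension aux adh�rences par propret� de $\cB\rightarrow\Gr$ et irr�ductibilit�), mais les deux premi�res �tapes sont r�alis�es diff�remment. Le texte suit Ng�--Polo: il introduit $J^{\la}=J\cap\la R\la^{-1}$, montre que c'est un espace affine de dimension $2\left\langle\rho,\la\right\rangle$ et que $j_{\la}:J^{\la}\rightarrow K\la K/K$, $j\mapsto j\la$, est une immersion ouverte d'image $I\la K/K$; l'identification de $J^{\la}$ avec $J/(J\cap\la K\la^{-1})$ donne simultan�ment l'ouverture, la densit� et l'isomorphisme $p_{\la}$. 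Vous proc�dez au contraire par comparaison directe des stabilisateurs $I\cap\la I\la^{-1}=I\cap\la K\la^{-1}$ --- calcul correct, la factorisation d'Iwahori r�duisant la v�rification aux groupes radiciels, o� la dominance de $\la$ donne $U_{\alpha}(\pi^{\left\langle\alpha,\la\right\rangle}\co)$ pour $\alpha>0$ et $U_{\alpha}(\pi\co)$ pour $\alpha<0$ des deux c�t�s --- et vous obtenez la densit� par la stratification de $\Gr_{\la}$ en $I$-orbites index�es par $W/W_{\la}$, seule la cellule du repr�sentant dominant atteignant la dimension $\left\langle 2\rho,\la\right\rangle$. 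Votre variante est plus autonome (elle n'invoque pas le lemme 2.2 de Ng�--Polo), au prix de faire appel � la formule de dimension des $I$-orbites; celle du texte fournit en prime la structure explicite d'espace affine sur la cellule ouverte $I\la K/K$. La derni�re �tape (image ferm�e contenant un ouvert dense de $\overline{\Gr}_{\la}$, donc �gale � $\overline{\Gr}_{\la}$; birationalit� h�rit�e de l'isomorphisme sur l'ouvert) est identique dans les deux preuves.
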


\begin{proof}
Notre démonstration s'inspire de Ngô-Polo \cite[Lem 2.2]{NP}.
On définit le $k$-groupe $G(k[\pi^{-1}])$ qui représente le foncteur $R\mapsto G(R[\pi^{-1}])$.
Nous avons un morphisme 
\begin{center}
$\ev_{\infty}:G(k[\pi^{-1}])\rightarrow G$
\end{center}
qui envoie $\pi^{-1}$ sur 0. Soit $U$ le radical unipotent de $B$, on considère alors $J:=\ev_{0}^{-1}(U)\subset I$, et  $R=\Ker\ev_{\infty}$. On pose alors $J^{\la}=J\cap\la R\la^{-1}$. Pour $i\in\mathbb{Z}$, on note $U_{\alpha,i}:=U_{\alpha}(\pi^{i}x)$.

D'après \cite[Lem 2.2]{NP}, en choisissant un ordre total sur l'ordre des facteurs,  on a un isomorphisme de schémas donné par la multiplication:
\begin{center}
$\prod\limits_{\substack{\alpha\in R^{+},\\\left\langle \alpha,\la\right\rangle>0}}\prod\limits_{i=0}^{\left\langle \alpha,\la\right\rangle-1}U_{\alpha,i}\rightarrow J^{\la}$,
\end{center}
en particulier, $J^{\la}$ est un espace affine de dimension $2\left\langle \rho,\la\right\rangle$.
De plus, toujours d'après \cite[Lem 2.2]{NP}, la flèche 
\begin{center}
$j_{\la}:J^{\la}\rightarrow K\la K/K$,
\end{center}
donnée par $j\mapsto j\la$ est une immersion ouverte. 
La flèche $j_{\la}:J^{\la}\rightarrow K\la K/K$ se factorise en une flèche notée de la même manière 
\begin{center}
$j_{\la}:J^{\la}\rightarrow I\la K/K$.
\end{center}
Montrons que c'est un isomorphisme. Tout d'abord, nous avons qu'une $I$-orbite est également une $J$-orbite, ainsi on obtient un isomorphisme
\begin{center}
$J/(J\cap\la K\la^{-1})\rightarrow I\la K/K$.
\end{center}
Identifions le stabilisateur $J\cap\la K\la^{-1}$; nous avons les formules suivantes:
\begin{center}
$\forall~ i\geq0, \la^{-1}U_{\alpha,i}\la=U_{\alpha,i-\left\langle\alpha,\la \right\rangle}$,

$\forall~ k\geq1, \la^{-1}U_{-\alpha,k}\la=U_{-\alpha,k+\left\langle\alpha,\la \right\rangle}$.
\end{center}
On obtient alors un isomorphisme de schémas:
\begin{center}
$\prod\limits_{\alpha\in R^{+}}\prod\limits_{i\geq\left\langle \alpha,\la\right\rangle}U_{\alpha,i}\prod\limits_{k\geq0}U_{-\alpha,k}\rightarrow J\cap\la K\la^{-1}$.
\end{center}
Ainsi, $J^{\la}$ s'identifie naturellement à $J/(J\cap\la K\la^{-1})$, d'où l'on déduit que $j_{\la}$ est un isomorphisme et $I\la K/K$ est ouvert dans $\Gr_{\la}$.
La description ci-dessus nous montre également que la flèche de projection 
\begin{center}
$p_{\la}:I\la I/I\rightarrow I\la K/K$
\end{center}
est un isomorphisme.
Comme de plus, $I\la K/K$ est ouvert  dans $\Gr_{\la}$ et que $\Gr_{\la}$ est irréductible, nous en déduisons que $\overline{I\la K}/K=\overline{\Gr}_{\la}$.
Maintenant, on déduit de la propreté de $\cB\rightarrow\Gr$ et de l'irréductibilité des deux espaces que $p_{\la}$ se prolonge en une flèche:
\begin{center}
$p_{\la}:\overline{I\la I}/I\rightarrow \overline{\Gr}_{\la}$,
\end{center}
ce qu'on voulait.
\end{proof}
En particulier, une résolution des singularités de $\overline{\Gr}_{\la}$ nous sera donnée par une résolution des singularités de $\overline{I\la I}/I$ à la Demazure-Faltings.
On renvoie pour plus de détails à \cite{Rich} pour le traitement du cas quasi-déployé.
\subsection{Globalisation au champ de Hecke}
Dans cette section, on montre comment de la résolution locale de $\overline{\Gr}_{\la}$ l'on déduit une résolution des singularités globale de la strate correspondante du champ de Hecke.
On considère $\la=\sum\limits_{v\in S}\la_{v}[v]$, pour $S$ un ensemble fini de points fermés.
Soit $B$ un Borel de $G$, en chaque point $v\in S$, nous avons une flèche d'évaluation 
\begin{center}
$\ev_{v}:K_{v}:=G(\co_{s})\rightarrow G$,
\end{center}
et on pose $I_{v}=\ev_{v}^{-1}(B)$, $K_{S}=\prod\limits_{v\in S}K_{v}$, $\cB_{S}=\prod\limits_{v\in S}\cB_{v}$ et $I_{S}=\prod\limits_{v\in S}I_{v}$.\\
Pour $w=\sum\limits_{v\in S}w_{v}[v]$ avec $w_{s}\in W_{aff}$, on pose $\overline{\cB}_{w}:=\prod\limits_{v\in S}\overline{\cB}_{w_{v}}$.

De plus, sans restreindre la généralité, on suppose que pour tout $v\in S$, $l(w_{v})=n$. On écrit alors $w=s_{a_{1}}\dots s_{a_{n}}$ avec $s_{a_{i}}=\sum\limits_{v\in S}s_{a_{i}}^{v}[v]$ où chaque $s_{a_{i}}^{v}$ est une réflexion simple de $W_{aff}$ et on pose:
\begin{center}
$C(s_{a_{1}},\dots,s_{a_{n}}):=\prod\limits_{v\in S}C(s^{v}_{a_{1}},\dots, s^{v}_{a_{n}})$.
\end{center}
Dans la suite, on choisit $w$ tel que l'on a une flèche propre birationnelle surjective $p_{\la}:\overline{\cB}_{w}\rightarrow\grl$ qui est un isomorphisme au-dessus de $I\la K/K$.

Posons $D_{\la}=\sum\limits_{v\in S}n_{\la_{v}}[v]$, avec $n_{\la_{v}}$ donné par le lemme \ref{fini} et $K_{\la}=\prod\limits_{v\in S}K_{n_{\la_{v}},v}$ qui est un quotient de $K_{S}$.
On définit alors le $K_{\la}$-torseur $\Bunl$ au-dessus de $\Bun$, qui classifie les paire $(E,\iota)$ où $\iota$ est une structure de niveau $D_{\la}$ sur $E$.
Nous allons commencer par un lemme dans l'esprit du lemme \ref{va}:
\begin{prop}\label{vab}
\begin{enumerate}
\item
Le changement de base par le $K_{\la}$-torseur $\Bunl$ au-dessus de $\Bun$ fournit un isomorphisme canonique entre $\overline{\cH}_{\la}\times_{\Bun}\Bunl$ et $\overline{\Gr}_{\la}\times\Bunl$.
\item
La projection $\overline{\Gr}_{\la}\times\Bunl\rightarrow \overline{\Gr}_{\la}$ induit un morphisme lisse de champs algébriques entre le champ de Hecke $\overline{\cH}_{\la}$ et le champ quotient $[K_{\la}\backslash\overline{\Gr}_{\la}]$.
\end{enumerate}
\end{prop}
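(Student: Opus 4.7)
The plan is to use a Beauville-Laszlo type argument: a level structure of sufficiently high order on a $G$-torseur $E$ at the points of $S=\supp(\la)$ produces a trivialization which rigidifies both the local comparison with $E'$ and the resulting relative position. The crucial input is Lemma \ref{fini}, which ensures that the $K_{v}$-action on $\overline{\Gr}_{\la_{v}}$ factors through the finite-dimensional quotient $K_{n_{\la_{v}},v}$, so that a structure of level $D_{\la}$ is enough.

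For part (i), I would proceed as follows. Let $(E,E',\beta,\iota)$ be an $R$-point of $\overline{\cH}_{\la}\times_{\Bun}\Bunl$, so that $\beta$ is an isomorphism $E_{\vert (X-\Gamma_{S,R})}\stackrel{\cong}{\to}E'_{\vert (X-\Gamma_{S,R})}$ of relative position bounded by $\la$, and $\iota$ is a trivialization of $E$ modulo $D_{\la}$. By formal gluing \`a la Beauville-Laszlo, $E$ is determined by its restrictions to $X-\Gamma_{S,R}$ and to the formal neighborhoods $\widehat{X}_{v,R}$ of the graphs of the points of $S$, together with the comparison on $\widehat{X}_{v,R}^{\bullet}$. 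The modification $\beta$ then becomes a tuple $(\g_{v})_{v\in S}$ with $\g_{v}$ in the appropriate twisted form of $\overline{\Gr}_{\la_{v}}$. The level structure $\iota$ trivializes $E$ modulo $\pi_{v}^{n_{\la_{v}}}$ at each $v$, and by Lemma \ref{fini} the $K_{v}$-action on $\overline{\Gr}_{\la_{v}}$ factors through $K_{n_{\la_{v}},v}$; hence this partial trivialization rigidifies the local position into an honest $R$-point of $\overline{\Gr}_{\la_{v}}$. Assembling over $v\in S$ and recording the pair $(E,\iota)$ on the other factor gives the claimed isomorphism
\begin{center}
$\overline{\cH}_{\la}\times_{\Bun}\Bunl\stackrel{\cong}{\to}\overline{\Gr}_{\la}\times\Bunl,$
\end{center}
whose inverse is the Beauville-Laszlo reconstruction of $E'$ from $E$ and the local modification data. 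Functoriality and the universal property of the $K_{\la}$-torseur $\Bunl$ give the canonicity.

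For part (ii), I would deduce the smoothness of $\overline{\cH}_{\la}\to[K_{\la}\backslash\overline{\Gr}_{\la}]$ by smooth descent. The map in question is defined by sending a triple $(E,E',\beta)$ to the class of the local position: after locally trivializing $E$ modulo $D_{\la}$ (which one can do smooth-locally on $\Bun$ thanks to $\Bunl\to\Bun$), one obtains a well-defined point of $\overline{\Gr}_{\la}$ up to the residual $K_{\la}$-ambiguity, hence a morphism to $[K_{\la}\backslash\overline{\Gr}_{\la}]$. By construction, the fibre square
\begin{center}
$\xymatrix{\overline{\Gr}_{\la}\times\Bunl\ar[r]\ar[d]&\overline{\Gr}_{\la}\ar[d]\\\overline{\cH}_{\la}\ar[r]&[K_{\la}\backslash\overline{\Gr}_{\la}]}$
\end{center}
is cartesian (this is exactly the content of part (i) combined with $\Bunl\to\Bun$ being a $K_{\la}$-torseur). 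The right-hand vertical arrow is a smooth cover, and the top horizontal arrow is the projection from $\overline{\Gr}_{\la}\times\Bunl$ which is smooth since $\Bunl$ is smooth (it is a torseur under the smooth group $K_{\la}$ over the smooth stack $\Bun$). Smooth descent of smoothness then yields that $\overline{\cH}_{\la}\to[K_{\la}\backslash\overline{\Gr}_{\la}]$ is smooth.

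The only genuine subtlety, and the step I expect to require most care, is the first one: one must check that Lemma \ref{fini} applies uniformly in families, i.e.\ that the trivialization modulo $D_{\la}$ really suffices to rigidify the local comparison $\beta$ on the whole of $\overline{\Gr}_{\la_{v}}$ and not only on the open stratum $\Gr_{\la_{v}}$. This is where one uses that $K_{v}$ acts on $\overline{\Gr}_{\la_{v}}$ through $K_{n_{\la_{v}},v}$, so the residual indeterminacy after fixing the level structure is trivial; the rest of the argument is a standard Beauville-Laszlo reconstruction as used, for instance, in Proposition \ref{va}.
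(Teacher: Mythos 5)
Votre preuve est correcte et suit essentiellement la m\^eme strat\'egie que celle du texte : les deux reposent sur l'identification \`a la Beauville--Laszlo de $\overline{\cH}_{\la}$ au-dessus d'un torseur de trivialisations formelles, combin\'ee au lemme \ref{fini} pour descendre au niveau fini $K_{\la}$, puis sur la descente lisse pour le point (ii). La seule diff\'erence est d'organisation : le texte passe d'abord par le $K_{S}$-torseur complet $\Bun'$ des trivialisations sur les disques formels et quotiente ensuite, tandis que vous travaillez directement au niveau $D_{\la}$, ce qui revient au m\^eme.
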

\begin{proof}
On considère le $K_{S}$-torseur $\Bun'$ au-dessus de $\Bun$ qui classifie les paires $(E,\theta)$ où $\theta$ est une trivialisation de $E$ sur $K_{S}$. 
Si on tire $\chm$ au-dessus de $\Bun'$, on obtient un isomorphisme canonique avec $\Bun'\times_{k}\grl$ compatible à l'action de $K_{S}$ des deux côtés.
Ainsi, nous avons un isomorphisme canonique:
\begin{center}
$\chm\stackrel{\cong}{\rightarrow}[(\Bun'\times_{k}\grl)/K_{S}]$.
\end{center}
Or, on a vu que l'action de $K_{S}$ sur $\grl$ se factorise par $K_{\la}$ et on obtient donc un isomorphisme canonique avec le champ $[(\Bunl\times_{k}\grl)/K_{\la}]$.
Ainsi, en considérant la flèche de projection $K_{\la}$-équivariante:
\begin{center}
$\Bunl\times_{k}\grl\rightarrow\grl$, 
\end{center}
elle induit par passage au quotient une flèche lisse $\chm\rightarrow [K^{\la}\backslash\grl]$.
\end{proof}
Ce lemme nous permet d'obtenir un modèle local pour les singularités du champ de Hecke. On commence par résoudre les singularités localement puis on tire la figure par le morphisme du global vers le local.
On a une flèche de réduction $\pi_{\la}:K_{S}\rightarrow K_{\la}$ et on pose $I_{\la}:=\pi_{\la}(I_{S})$, on forme les carrés cartésiens suivant:
$$\xymatrix{\Conv^{n}_{(s_{a_{i}})}(\cH)\ar[d]\ar[r]&[I_{\la}\backslash C(s_{a_{1}},\dots,s_{a_{n}})]\ar[d]^{\theta_{w}}\\\overline{\cH}_{w}^{I}\ar[d]\ar[r]&[I_{\la}\backslash\overline{\cB}_{w}]\ar[d]^{p_{\la}}\\\overline{\cH}_{\la}^{par}\ar[d]\ar[r]&[I_{\la}\backslash\grl]\ar[d]^{\phi}\\\chm\ar[r]&[K_{\la}\backslash\grl]}$$
où $w=s_{a_{1}}\dots s_{a_{n}}$, la flèche $\theta_{w}$ est la flèche de résolution des singularités, $\phi$ correspond à l'ajout de facteurs $G/B$, qui est projective lisse et $p_{\la}$ est birationnelle.
Toutes les flèches horizontales sont lisses.
Ce diagramme nous permet donc d'obtenir les objets globaux dont nous aurons besoin. Nous allons maintenant introduire les problèmes de modules qu'ils classifient.
\begin{defi}
On note $\cH_{S}^{I}$ le ind-champ qui classifie les uplets $((E,\eps),(E',\eps'),\beta)$ où $E$ et $E'$ sont des $G$-torseurs sur $X$, $\eps$ et $\eps'$ sont des $B$-réductions sur $S$ de $E$ et $E'$ et $\beta$ un isomorphisme au-dessus de $X-S$.
\end{defi}
$\rmq$ Aux points de $S$, nous pouvons regarder la position relative des paires $(E,\eps)$ et $(E',\eps')$ et nous obtenons pour tout point $v\in S$ un élément $w_{v}\in W_{aff}$. Ainsi, le champ $\cH_{S}^{I}$ admet une stratification par des sous-champs fermés $\overline{\cH}_{w}^{I}$ où $w=\sum\limits_{v\in S}w_{v}[v]$ où $\inv_{I}(E,E')\leq w$.

Considérons le champ $\Bunp$ qui classifie les paires $(E,\eps)$ constituées d'un $G$-torseur sur $X$ et d'une $B$-réduction sur $S$.

\begin{cor}\label{vat}
Après changement de base par $\Bunpl:=\Bunl\times_{\Bun}\Bunp$ lisse à fibres géométriquement connexes, le champ $\overline{\cH}_{w}^{I}\times_{\Bunp}\Bunpl$ s'identifie canoniquement à $\Bunpl\times\overline{\cB}_{w}$.
\end{cor}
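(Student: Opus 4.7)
The plan follows the proof of Proposition \ref{vab} at the Iwahori level, enhanced with the simple observation that a $B$-reduction at a point at which the $G$-torseur has been trivialized is just an element of $G/B$.

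First, I would analyze $\Bunpl\to\Bunl$. Since the level structure at $D_{\la}$ trivializes the universal $G$-torseur on a formal neighborhood of each $v\in S\subset\supp(D_{\la})$, a $B$-reduction at $v$ is canonically an element of $G/B$, producing a canonical isomorphism $\Bunpl\simeq\Bunl\times(G/B)^{S}$ over $\Bunl$. Combined with the smoothness of $\Bunl\to\Bun$ (a $K_{\la}$-torseur with $K_{\la}$ smooth and connected since $G$ is), this shows that $\Bunpl\to\Bunp$ is smooth with geometrically connected fibers, justifying the first assertion of the corollary.

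Next I would construct the isomorphism $\overline{\cH}^{I}_{w}\times_{\Bunp}\Bunpl\simeq\Bunpl\times\overline{\cB}_{w}$ directly from the moduli descriptions, paralleling Proposition \ref{vab}. An $R$-point of the left-hand side is a tuple $((E,\eps,\iota),(E',\eps'),\beta)$ satisfying $\inv_{I}\leq w$. The level structure $\iota$ trivializes $E$ on the formal neighborhood of $S$, and $\beta$ then induces a canonical trivialization of $E'$ on the punctured formal neighborhood. The triple consisting of $E'$ restricted to the formal neighborhood of $S$, this induced trivialization, and $\eps'$ defines an $R$-point of $\cB_{S}=\prod_{v\in S}G(F_{v})/I_{v}$, which lies in $\overline{\cB}_{w}$ precisely by the relative-position condition. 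Forgetting $(E',\eps',\beta)$ while remembering this triple gives the map to $\Bunpl\times\overline{\cB}_{w}$. The inverse is given by Beauville-Laszlo gluing of the local $G$-torseur coming from $\overline{\cB}_{w}$ with $E\vert_{X-S}$ along the comparison of trivializations, reconstructing $E'$ and $\beta$, with $\eps'$ being the $B$-reduction recorded in $\overline{\cB}_{w}$.

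The only point requiring real attention is the precise moduli interpretation of $\overline{\cB}_{w}=\overline{IwI}/I$: its $R$-points classify a $G$-torseur on the formal disk together with a generic trivialization and a $B$-reduction at the closed point, modulo automorphisms preserving the $B$-reduction. One must check that the trivialization of $E$ supplied by $\iota$ kills precisely this residual $I$-ambiguity, so that the correspondence above is well-defined on objects and on isomorphisms. Once this bookkeeping is settled, and given the already-proved Proposition \ref{vab} together with the identification $\Bunpl\simeq\Bunl\times(G/B)^{S}$, the verification reduces to an entirely local statement compatible with the tower $\overline{\cB}_{w}\to[I_{\la}\backslash\overline{\Gr}_{\la}]$, and is routine.
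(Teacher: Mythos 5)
Votre démarche est correcte et coïncide pour l'essentiel avec celle du texte : le corollaire se déduit de la proposition \ref{vab}, puisqu'une fois $\overline{\cH}_{\la}$ trivialisé au-dessus de $\Bunl$, toute la tour au-dessus (y compris $\overline{\cH}_{w}^{I}$, via l'identification $\Bunpl\simeq\Bunl\times(G/B)^{S}$ que vous explicitez) se trivialise simultanément. Le seul point que vous renvoyez au « routine » — le fait que l'action de $I_{S}$ sur $\overline{\cB}_{w}$ se factorise par $I_{\la}$, nécessaire pour que la structure de niveau $D_{\la}$ (et non une trivialisation formelle complète) suffise à produire un point de $\overline{\cB}_{w}$ — découle bien du lemme \ref{fini} combiné au fait que l'action sur les fibres $G/B$ de $p:\cB\rightarrow\Gr$ se factorise par $\ev$.
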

\begin{proof}
La preuve résulte de la proposition \ref{vab} puisque qu'une fois que l'on a trivialisé la fibration $\overline{\cH}_{\la}$ au-dessus de $\Bunl$, tous les champs au-dessus se trivialisent simultanément.
\end{proof}
Nous allons maintenant introduire la résolution de Demazure de $\overline{\cH}_{w}^{I}$.

\begin{defi}\label{convp}
Soit $\Conv_{S}^{n}(\cH)$ le ind-champ dont les $T$-points sont donnés par les uplets\\
$((E_{0},\eps_{0}),.., (E_{n},\eps_{n}),\beta_{1},..,\beta_{n})$ où les $E_{k}$ sont des $G$-torseurs sur $X\times T$, les $\beta_{k}$ 
\begin{center}
$\beta_{k+1}:E_{k\scriptscriptstyle{\vert X\times T-\bigcup\limits_{s\in S}{\Gamma_{s\times T}}}}\rightarrow E_{k+1\scriptscriptstyle{\vert X\times T-\bigcup\limits_{s\in S}{\Gamma_{s\times T}}}}$ 
\end{center}
sont des isomorphismes et les $\eps_{k}$ sont des réductions des torseurs $E_{k\scriptscriptstyle{\vert\bigcup_{s\in S}{\Gamma_{s\times T}}}}$ au Borel $B$.
\end{defi}

Étant donné un élément $w=\sum\limits_{v\in S}w_{v}[v]$ avec $w_{v}\in W_{aff}$, on écrit alors $w=s_{a_{1}}\dots s_{a_{n}}$ avec $s_{a_{i}}=\sum\limits_{v\in S}s_{a_{i}}^{v}[v]$ où chaque $s_{a_{i}}^{v}$ est une réflexion simple de $W_{aff}$.\\
On considère alors le sous-champ $\Conv_{(s_{a_{i}})}^{n}(\cH)$ de $\Conv_{S}^{n}(\cH)$ où 
\begin{center}
$\forall ~i, \inv_{I}(E_{i},E_{i+1})=s_{a_{i+1}}$.
\end{center}
Il est muni d'une flèche:
\begin{center}
$\theta_{w}:\Conv_{(s_{a_{i}})}^{n}(\cH)\rightarrow\overline{\cH}_{w}^{I}$
\end{center}
qui envoie un uplet de $\Conv_{(s_{a_{i}})}^{n}(\cH)$ sur $((E_{0},\eps_{0}),(E_{n},\eps_{n}),\beta_{n}\circ\dots\circ\beta_{1})$ et par composition nous avons une flèche: 
\begin{center}
$p_{1}\circ \theta_{w}:\Conv_{(s_{a_{i}})}^{n}(\cH)\rightarrow\Bunp$.
\end{center}

\begin{prop}\label{va3}
En faisant le changement de base par $\Bunpl$ au-dessus de $\Bunp$, nous obtenons que $\Conv_{(s_{a_{i}})}^{n}(\cH)$ s'identifie après changement de base à\\ $\Bunpl\times C(s_{a_{1}},\dots,s_{a_{n}})$.
En particulier, $\Conv_{(s_{a_{i}})}^{n}(\cH)$ est lisse. De plus, la flèche $\theta_{w}$ est propre, surjective et birationnelle.
\end{prop}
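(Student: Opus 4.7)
La strat�gie est d'�tendre le raisonnement de trivialisation utilis� dans la proposition \ref{vab} et le corollaire \ref{vat} � tout le diagramme de convolution. Le c\oe ur de la preuve est que le $K_{\la}$-torseur $\Bunl$ au-dessus de $\Bun$ (resp. le torseur $\Bunpl$ au-dessus de $\Bunp$) trivialise simultan�ment le $G$-torseur $E_{0}$ et toutes les structures parahoriques en les points de $S$, ce qui ram�ne globalement � la situation locale de la r�solution de Demazure-Faltings.

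Plus pr�cis�ment, je proc�derais en quatre �tapes. Premi�rement, je tirerais le champ $\Conv_{(s_{a_{i}})}^{n}(\cH)$ au-dessus de $\Bunpl$ via la premi�re projection $p_{1}\circ\theta_{w}:\Conv_{(s_{a_{i}})}^{n}(\cH)\rightarrow\Bunp$. Une fois $(E_{0},\eps_{0})$ canoniquement trivialis� sur le voisinage formel $D_{S}$ de $S$, les modifications $\beta_{k}$ et les r�ductions $\eps_{k}$ successives, soumises aux conditions de position relative $\inv_{I}(E_{k},E_{k+1})=s_{a_{k+1}}$, fournissent un uplet canonique d'�l�ments dans $\cP_{a_{1}}\times^{I}\cdots\times^{I}\cP_{a_{n}}/I=C(s_{a_{1}},\dots,s_{a_{n}})$. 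Cela donne l'isomorphisme recherch�
\begin{center}
$\Conv_{(s_{a_{i}})}^{n}(\cH)\times_{\Bunp}\Bunpl\cong\Bunpl\times C(s_{a_{1}},\dots,s_{a_{n}})$.
\end{center}
La compatibilit� aux actions $I_{\la}$ des deux c�t�s assure que l'identification descend au niveau voulu.

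Deuxi�mement, la lissit� de $\Conv_{(s_{a_{i}})}^{n}(\cH)$ r�sulte imm�diatement par descente fid�lement plate : le morphisme $\Bunpl\rightarrow\Bunp$ est lisse surjectif � fibres g�om�triquement connexes (cf. corollaire \ref{vat}), le produit $\Bunpl\times C(s_{a_{1}},\dots,s_{a_{n}})$ est lisse comme produit de sch�mas lisses (la lissit� de $C(s_{a_{1}},\dots,s_{a_{n}})$ �tant la partie cl� du r�sultat de Faltings), et la lissit� descend.

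Troisi�mement, sous l'identification pr�c�dente combin�e avec celle du corollaire \ref{vat}, la fl�che $\theta_{w}$ devient $\mathrm{id}_{\Bunpl}\times\theta_{w}^{\mathrm{loc}}:\Bunpl\times C(s_{a_{1}},\dots,s_{a_{n}})\rightarrow\Bunpl\times\overline{\cB}_{w}$ o� $\theta_{w}^{\mathrm{loc}}$ est la r�solution locale de Demazure-Faltings. Cette derni�re �tant propre, surjective, et un isomorphisme au-dessus de la strate ouverte dense $\cB_{w}\subset\overline{\cB}_{w}$, les propri�t�s de propret�, de surjectivit� et de birationalit� descendent le long de $\Bunpl\rightarrow\Bunp$ (la propret� et la surjectivit� �tant locales pour la topologie fid�lement plate et la birationalit� se lisant sur la strate ouverte $\cH_{w}^{I}\subset\overline{\cH}_{w}^{I}$ qui correspond � $\Bunpl\times\cB_{w}$ apr�s changement de base).

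Le point d�licat sera la v�rification que l'identification de la premi�re �tape est bien canonique et compatible avec toutes les structures (actions de $I_{\la}$, stratifications par les positions relatives). La subtilit� vient de ce que les torseurs $E_{1},\dots,E_{n}$ ne sont trivialis�s que g�n�riquement par les modifications $\beta_{k}$, et il faut donc travailler sur le voisinage formel $D_{S}$ pour d�ployer convenablement la donn�e de convolution ; c'est exactement ce que r�alise le niveau $D_{\la}$ introduit dans $\Bunl$, dont le choix de $n_{\la_{v}}$ via le lemme \ref{fini} garantit que l'action de $K_{v}$ sur chaque $\overline{\Gr}_{\la_{v},v}$, et donc sur les espaces de convolution associ�s, se factorise effectivement par $K_{n_{\la_{v}},v}$.
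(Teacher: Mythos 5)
Votre démonstration est correcte et suit essentiellement la même voie que celle du papier : on trivialise tout le diagramme de convolution par le changement de base à $\Bunpl$ (corollaire direct de la proposition \ref{vab}), ce qui identifie $\Conv_{(s_{a_{i}})}^{n}(\cH)$ à $\Bunpl\times C(s_{a_{1}},\dots,s_{a_{n}})$, puis on fait descendre la lissité et les propriétés de $\theta_{w}$ depuis l'énoncé local de Demazure--Faltings. Le papier se contente d'invoquer ces deux ingrédients sans détailler la descente ; votre rédaction explicite les mêmes étapes.
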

\begin{proof}
La première partie de la preuve est un corollaire de la proposition \ref{vab}. Enfin, l'assertion sur $\theta_{w}$ vient de l'assertion locale correspondante.
\end{proof}
Il ne nous reste plus qu'à faire le lien avec le champ de Hecke classique.
On commence par faire le changement de base à $\Bunp$. On pose alors 
\begin{center}
$\overline{\cH}_{\la}^{par}=\overline{\cH}_{\la}\times_{\Bun}\Bunp$.
\end{center}
On prend alors $w$ associé à $\la$ défini dans la section précédente de telle sorte que nous avons une flèche 
\begin{center}
$\pi_{w}:\overline{\cH}_{w}^{I}\rightarrow\overline{\cH}_{\la}^{par}$.
\end{center}
\begin{prop}\label{va4}
La composée $\pi_{w}\circ \theta_{w}:\Conv_{(s_{a_{i}})}^{n}(\cH)\rightarrow\overline{\cH}_{\la}^{par}$ est une résolution des singularités, où les $(s_{a_{i}})$ sont les réflexions simples associées à $w$.
\end{prop}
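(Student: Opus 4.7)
Le plan est de factoriser l'�nonc� en deux �tapes, l'une concernant $\theta_{w}$ et l'autre concernant $\pi_{w}$, puis de composer. La lissit� de la source $\Conv_{(s_{a_{i}})}^{n}(\cH)$ ainsi que le fait que $\theta_{w}$ soit propre, surjective et birationnelle sont d�j� acquis par la proposition \ref{va3}. Il reste donc uniquement � �tablir que $\pi_{w}:\overline{\cH}_{w}^{I}\rightarrow\overline{\cH}_{\la}^{par}$ est propre, surjective et birationnelle, puis � combiner avec ce qui pr�c�de.

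Pour traiter $\pi_{w}$, je proc�derais par descente fid�lement plate en utilisant l'atlas lisse � fibres g�om�triquement connexes $\Bunpl\rightarrow\Bunp$. Apr�s changement de base � $\Bunpl$, on a par le corollaire \ref{vat} une identification canonique $\overline{\cH}_{w}^{I}\times_{\Bunp}\Bunpl\cong\Bunpl\times\overline{\cB}_{w}$, et de m�me une trivialisation $\overline{\cH}_{\la}^{par}\times_{\Bunp}\Bunpl\cong\Bunpl\times\overline{\Gr}_{\la}$ issue de la proposition \ref{vab}. Modulo ces identifications, la fl�che $\pi_{w}$ devient $\Id\times p_{\la}:\Bunpl\times\overline{\cB}_{w}\rightarrow\Bunpl\times\overline{\Gr}_{\la}$. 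Or, par le lemme \ref{grass}, la fl�che locale $p_{\la}:\overline{I\la I}/I\rightarrow\overline{\Gr}_{\la}$ est propre, surjective et birationnelle, et elle induit un isomorphisme au-dessus de l'ouvert dense $I\la K/K$. Ces propri�t�s �tant locales pour la topologie lisse, elles descendent � $\pi_{w}$, qui est donc propre, surjective, et un isomorphisme au-dessus d'un ouvert dense $\overline{\cH}_{\la}^{par,\circ}$ de $\overline{\cH}_{\la}^{par}$ obtenu par descente de $\Bunpl\times I\la K/K$.

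Pour conclure, il suffit de composer. Puisque $\theta_{w}$ est propre et $\pi_{w}$ l'est aussi, la compos�e $\pi_{w}\circ\theta_{w}$ est propre; pour la surjectivit�, elle r�sulte de la surjectivit� de chaque facteur. Concernant la birationalit�, on note que $\theta_{w}$ est un isomorphisme au-dessus de la strate ouverte $\cH_{w}^{I}\subset\overline{\cH}_{w}^{I}$ correspondant � la cellule $\cB_{w}\subset\overline{\cB}_{w}$, laquelle rencontre l'ouvert $\Bunpl\times I\la K/K$ via l'identification du lemme \ref{grass}. Ainsi l'intersection de ces deux ouverts est dense dans $\Conv_{(s_{a_{i}})}^{n}(\cH)$ et $\pi_{w}\circ\theta_{w}$ y induit un isomorphisme sur son image, qui est dense dans $\overline{\cH}_{\la}^{par}$. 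Enfin la source est lisse par la premi�re partie de la proposition \ref{va3}.

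Le point le plus d�licat me semble �tre la descente propre des propri�t�s de birationalit� : il faut s'assurer que l'ouvert de trivialit� local se recolle correctement en un ouvert global dense, ce qui demande que $\Bunpl\rightarrow\Bunp$ soit suffisamment fid�lement plat (lisse surjectif � fibres g�om�triquement connexes, comme garanti dans le corollaire \ref{vat}), de sorte que les ouverts denses dans les fibres proviennent d'un ouvert dense dans la base.
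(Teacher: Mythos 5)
Votre preuve est correcte et suit essentiellement la m�me strat�gie que celle du papier : trivialisation apr�s changement de base lisse � $\Bunpl$ (prop. \ref{vab} et cor. \ref{vat}), identification de $\pi_{w}$ avec $\Id\times p_{\la}$, puis r�duction � l'�nonc� local (lemme \ref{grass} et r�solution de Demazure), les propri�t�s de propret�, surjectivit� et birationalit� descendant le long du morphisme lisse surjectif � fibres g�om�triquement connexes. Vous d�taillez simplement les �tapes de descente que le papier laisse implicites.
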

\begin{proof}
D'après les lemmes \ref{va3} et \ref{vab}, $\overline{\cH}_{\la}^{par}$, s'identifie à $T\times\overline{\Gr}_{\la}$.
L'assertion vient alors du lemme \ref{va3} et du résultat correspondant dans le cas local.
\end{proof}

$\rmq$ On notera que pour passer à la situation analogue pour l'Iwahori au niveau du champ de Hecke, nous sommes obligés de considérer des $B$-réductions aux deux  torseurs $(E,E')$ et donc nous ajoutons un facteur $(G/B)^{\left|S\right|}$ à $\cH_{\la}$  pour en résoudre ensuite les singularités.
Localement, cela revient à faire l'opération $\phi:[I_{\la}\backslash\grl]\rightarrow[K_{\la}\backslash\grl]$, on considère ensuite la flèche birationnelle $p_{\la}$ et ensuite on résout les singularités à la Demazure-Faltings.
\subsection{Une présentation géométrique de l'espace de Hitchin}
Pour $w=\la\in W_{aff}$, nous avons vu d'après le lemme \ref{grass} que la flèche:
\begin{center}
$p_{\la}:\overline{I_{S}wI_{S}}/I_{S}\rightarrow \overline{\Gr}_{\la}$
\end{center}
est propre surjective et birationnelle au-dessus de $\Gr_{\la}$. 
On peut supposer que pour tout $v\in S$, $l(w_{v})=n$ et on écrit une décomposition en réflexion simples $w=s_{a_{1}}\dots s_{a_{n}}$.
On considère alors le champ $\Conv_{w}^{n}(\cm)$, qui s'insère dans le carré cartésien suivant:
$$\xymatrix{\Conv_{w}^{n}(\cm)\ar[r]\ar[d]_{\pi^{\cm}}&\Conv_{w}^{n}(\cH)\ar[d]\\\cmd\ar[r]&\overline{\cH}_{\la}}$$
Nous avons alors une flèche 
\begin{center}
$\pi^{\cm}:\Conv_{(s_{a_{i}})}^{n}(\cm)\rightarrow\overline{\cm}_{\la}$
\end{center}
et nous notons de la même manière la flèche induite sur l'ouvert $\overline{\cm}_{\la}^{\flat}$ et on pose $\Conv_{(s_{a_{i}})}^{n,\flat}(\cm)$ la restriction à cet ouvert de $\Conv_{(s_{a_{i}})}^{n}(\cm)$. On renvoie à l'énoncé du théorème \ref{transverse} pour la définition de $\cmd^{\flat}$.
\begin{prop}\label{edulcoree}
La flèche
\begin{center}
$\pi^{\cm}:\Conv_{(s_{a_{i}})}^{n,\flat}(\cm)\rightarrow\overline{\cm}_{\la}^{\flat}$
\end{center}
est une présentation géométrique de $\overline{\cm}_{\la}^{\flat}$ et la flèche $\pi^{\cm}$ est même projective. 
\end{prop}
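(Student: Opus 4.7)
Pour d�montrer la proposition, il faut v�rifier les trois conditions de la d�finition \ref{defiedulc} et la projectivit� de $\pi^{\cm}$. La strat�gie consiste � h�riter la propret�, la lissit� g�n�rique, et la d�composition du complexe d'intersection depuis la r�solution du champ de Hecke, via le changement de base fourni par le carr� cart�sien d�finissant $\Conv_{w}^{n}(\cm)$, combin� au th�or�me de transversalit� \ref{transverse}.

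La fl�che verticale de droite de ce carr� cart�sien se factorise en $\rho:\Conv_{(s_{a_{i}})}^{n}(\cH)\to\overline{\cH}_{\la}^{par}\to\chm$. D'apr�s la proposition \ref{va4}, la premi�re fl�che est une r�solution � la Demazure-Bott-Samelson, donc projective et birationnelle, et la seconde est un fibr� en $(G/B)^{|S|}$, donc projective et lisse. Par changement de base, $\pi^{\cm}$ est ainsi projective. Pour la lissit�, la r�solution est un isomorphisme au-dessus de la strate ouverte $\cH_{\la}^{par}\subset\overline{\cH}_{\la}^{par}$, qui s'identifie � la pr�image sous $\overline{\cH}_{\la}^{par}\to\chm$ de l'ouvert $\cH_{\la}\subset\chm$; la compos�e $\rho$ est donc lisse au-dessus de $\cH_{\la}$. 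Par changement de base, $\pi^{\cm}$ est lisse au-dessus de l'ouvert non vide $(\Delta^{\flat})^{-1}(\cH_{\la})$ de $\cmdb$.

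Le point central est la d�composition de $\pi^{\cm}_{*}\bql$. Comme $\Conv_{(s_{a_{i}})}^{n}(\cH)$ est lisse (proposition \ref{va3}) et que la r�solution vers $\overline{\cH}_{\la}^{par}$ est propre birationnelle, le th�or�me de d�composition de Beilinson-Bernstein-Deligne-Gabber fournit $IC_{\overline{\cH}_{\la}^{par}}$ comme facteur direct du pouss� en avant correspondant, � d�calage pr�s. Ensuite, $\overline{\cH}_{\la}^{par}\to\chm$ �tant lisse � fibres connexes $(G/B)^{|S|}$, le pouss� en avant de $IC_{\overline{\cH}_{\la}^{par}}$ contient $IC_{\chm}$ comme facteur direct, via la composante de degr� nul de la cohomologie de la fibre. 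On en d�duit que $IC_{\chm}$ est facteur direct de $\rho_{*}\bql$, � d�calage pr�s. Enfin, par le changement de base propre dans le carr� cart�sien, on a $\pi^{\cm}_{*}\bql=(\Delta^{\flat})^{*}\rho_{*}\bql$ au-dessus de $\cmdb$, et le th�or�me \ref{transverse}, qui identifie $IC_{\cmdb}$ � $(\Delta^{\flat})^{*}[-m]IC_{\chm}$, permet de conclure que $IC_{\cmdb}$ est un facteur direct de $\pi^{\cm}_{*}\bql$, � d�calage et torsion de Tate pr�s.

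L'obstacle principal sera le suivi rigoureux des d�calages cohomologiques et des torsions de Tate � travers la cha�ne d'op�rations (r�solution birationnelle, fibration lisse, changement de base par $\Delta^{\flat}$), afin d'obtenir exactement $IC_{\cmdb}$ (et non une version d�cal�e) comme facteur direct. Un point technique annexe est que le th�or�me de transversalit� est �nonc� au-dessus de chaque $\cmdbD$; le raisonnement se fait donc strate par strate, et il faut v�rifier que les pr�sentations correspondantes se recollent pour donner une pr�sentation uniforme de $\cmdb=\bigcup_{d}\cmdbD$.
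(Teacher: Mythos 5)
Votre preuve est correcte et suit essentiellement la m�me d�marche que celle du texte : projectivit� et lissit� g�n�rique par changement de base depuis les propositions \ref{va3} et \ref{va4}, puis combinaison du th�or�me de transversalit� \ref{transverse} et du changement de base propre pour exhiber $IC_{\cmdb}$ comme facteur direct de $\pi^{\cm}_{*}\bql$. Vous explicitez simplement l'�tape interm�diaire par $\overline{\cH}_{\la}^{par}$ et le facteur $(G/B)^{\left|S\right|}$, que le texte laisse implicite.
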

\begin{proof}
Par changement de base et d'après les propositions \ref{va3} et \ref{va4}, on sait déjà que la flèche est projective et qu'elle est lisse sur un ouvert $U$ de 
$\overline{\cm}_{\la}^{\flat}$. Il nous faut donc voir que $IC_{\overline{\cm}_{\la}^{\flat}}$ est un facteur direct de $\pi_{*}^{\cm}\bql$.
Par le théorème \ref{transverse}, nous avons $\Delta^{*}[-d]IC_{\overline{\cH}_{\la}}=IC_{\cmdb}$, comme $\Conv_{w}^{n}(\cH)$ est lisse et résout les singularités de $\overline{\cH}_{\la}$, il résulte alors  du théorème de changement de base propre que $IC_{\overline{\cm}_{\la}^{\flat}}$ est un facteur direct de $\pi_{*}^{\cm}\bql$, ce qu'on voulait.
\end{proof}
Pour terminer ce paragraphe, nous voudrions voir si cette présentation géométrique est équivariante par rapport à notre champ de Picard.
\begin{prop}\label{equiv}
On a une action de $\cP$ sur $\Conv_{(s_{a_{i}})}^{n}(\cm)$ qui rend la flèche $\pi^{\cm}$ $\cP$-équivariante.
\end{prop}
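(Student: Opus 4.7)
The plan is to construct the $\cP$-action on $\Conv^{n}_{(s_{a_i})}(\cm)$ by simultaneous twisting of all intermediate $G$-torsors by a $J_{a}$-torsor, and then to observe that the projection $\pi^{\cm}$ is tautologically equivariant. Unwinding the definitions, a $T$-point of $\Conv^{n}_{(s_{a_i})}(\cm)$ lying over $a\in\abd(T)$ is the datum of a Hitchin pair $(E,\phi)$ of characteristic $a$ together with a factorisation of its underlying Hecke modification as $\phi=\beta_{n}\circ\cdots\circ\beta_{1}$ through intermediate torsors $E=E_{0},E_{1},\dots,E_{n}=E$, each equipped with a $B$-reduction along $S$, with $\inv_{I}(E_{k-1},E_{k})=s_{a_{k}}$. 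Given a $J_{a}$-torsor $\cL$ on $X\times T$, I want to produce simultaneous twists $E_{k}^{\cL}$ together with compatibly twisted $\beta_{k}^{\cL}$.

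First I would define the action on the open locus $U=X\setminus S$. There each $\beta_{k}$ is an isomorphism, so the canonical morphism $J_{a}\to\Aut(E_{0})$ produced by $\chi_{+}^{*}J\to I$ transports via $\beta_{k}\circ\cdots\circ\beta_{1}$ to a morphism $J_{a}\to\Aut(E_{k})$ on $U$. This furnishes a compatible family of twists $E_{k}^{\cL}|_{U}$ making each $\beta_{k}|_{U}$ equivariant. The substantive step is to extend this family across $S$.

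For the extension across $S$, I would use the local presentation provided by Proposition \ref{va3}: after pullback along the smooth cover $\Bunpl\to\Bunp$, the stack $\Conv^{n}_{(s_{a_i})}(\cH)$ becomes $\Bunpl\times C(s_{a_{1}},\dots,s_{a_{n}})$ and $\overline{\cH}_{\la}^{par}$ becomes $\Bunpl\times\overline{\Gr}_{\la}$. It thus suffices to produce, for each point $v\in S$, an action of the local Picard scheme $\cP(J_{a_{v}})$ (Definition \ref{centr}) on the Demazure variety $C(s_{a_{1}^{v}},\dots,s_{a_{n}^{v}})$ compatible with the composite morphism $p_{\la}\circ\theta_{w}:C(s_{a_{1}^{v}},\dots,s_{a_{n}^{v}})\to\overline{\Gr}_{\la_{v}}$ and with the local $\cP(J_{a_{v}})$-action on $\overline{\Gr}_{\la_{v}}$ produced by Proposition \ref{picardloc}. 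Concretely, one acts on a convolution class $[p_{1},\dots,p_{n}]$ by $j\cdot[p_{1},\dots,p_{n}]=[\bar{\jmath}p_{1},p_{2},\dots,p_{n}]$, where $\bar{\jmath}$ denotes the image of $j\in J_{a_{v}}$ in the automorphism group of the leftmost parahoric torsor; the action is well-defined because $J$ is commutative and factors through the centre of the relevant centraliser group schemes along the entire chain of modifications.

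The main obstacle will be this last local equivariance statement, which amounts to checking that for each simple affine reflection $s_{a_{i}}$ the Demazure elementary fibration $\cP_{a_{i}}/I\to\mathrm{pt}$ is $J_{a_{v}}$-equivariant in a way compatible with convolution — i.e. that the morphism $\chi_{+}^{*}J\to I$ intertwines correctly with all Iwahori-level modifications. This reduces to the fact established in \cite{Bt1} that $J$ acts on $V_{G}^{reg}$ through the centraliser and is commutative, so the action commutes with arbitrary modifications bounded by $\la$. Once this local compatibility is in hand, the global action descends from the pullback to $\Bunpl$ by fpqc descent, and the equivariance of $\pi^{\cm}$ is immediate: $\pi^{\cm}$ sends $((E_{k},\eps_{k}),(\beta_{k}))$ to $(E_{0},\beta_{n}\circ\cdots\circ\beta_{1})$, and by construction the twist of $E_{0}$ agrees with the twist of the underlying Hitchin pair while the twisted $\beta_{k}^{\cL}$ still compose to the twisted automorphism $\phi^{\cL}$.
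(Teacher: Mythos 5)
Your overall strategy --- transport the canonical homomorphism $J_{a}\rightarrow\underline{\Aut}(E_{0},\phi)$ along the chain of modifications so as to twist all intermediate torsors simultaneously, then note that $\pi^{\cm}$ only remembers the first component --- is the same as the paper's. The paper carries it out for $n=2$ via an adelic description of $\Conv_{(s_{a_{i}})}^{2}(\cm)$: an automorphism of a chain is a pair $(h_{1},h_{2})$ in which $h_{2}$ is obtained from $h_{1}$ by conjugation through the modifications and $h_{1}$ is forced to centralise the composite $\g_{1}\g_{2}$, whence a homomorphism from $J_{\chi_{+}(\g_{1}\g_{2})}=J_{a}$ into the automorphism sheaf of the chain and an action of $\cP_{a}$ by twisting.

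The step where you extend across $S$, however, does not work as written, for three reasons. First, Proposition \ref{picardloc} gives no action of $\cP(J_{a_{v}})$ on $\overline{\Gr}_{\la_{v}}$: it gives one on the affine Springer fibre, and under $g\mapsto \ad(g)^{-1}\g_{0}$ that action covers the \emph{identity} of $[K_{v}\backslash\overline{\Gr}_{\la_{v}}]$, since $j$ centralises $\g_{0}$. So there is nothing on $\overline{\Gr}_{\la_{v}}$ or on $C(s_{a_{1}}^{v},\dots,s_{a_{n}}^{v})$ (which models a fibre of $\Conv^{n}_{(s_{a_{i}})}(\cH)\rightarrow\Bunp$, a fibration the $\cP$-action does not preserve) for your formula to be compatible with; the object that must carry the lifted action is the convolution affine Springer fibre inside $(G(F_{v})/I_{v})^{n}$, on which all $n$ cosets move simultaneously. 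Second, $j\cdot[p_{1},\dots,p_{n}]=[\bar{\jmath}p_{1},p_{2},\dots,p_{n}]$ is not well defined on $\cP_{a_{1}}\times\dots\times\cP_{a_{n}}/I^{n}$ unless $\bar{\jmath}\in\cP_{a_{1}}$, which fails already for a general element of $J_{a}(\co_{v})\subset G(\co_{v})$, let alone for $j\in J_{a}(F_{v})$. Third, the genuinely substantive point --- that the generically defined transported homomorphisms $J_{a}\rightarrow\underline{\Aut}(E_{k})$ extend integrally across $S$, i.e.\ that the conjugates of $h_{1}$ through the $\mu_{k}$ still preserve each $E_{k}$ --- is not established; commutativity of $J$ gives no control on integrality of conjugates. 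This is exactly what the paper's automorphism computation supplies (one can also argue that $\underline{\Aut}(\mathrm{cha\hat{\imath}ne})$ is a closed subsheaf of $\underline{\Aut}(E_{0},\phi)$ containing the image of $J_{a}$ over the dense open $U\times T$, hence everywhere by flatness of $J_{a}$), and your argument needs it to be made explicit.
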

\begin{proof}
Pour simplifier, on suppose que $n=2$.
On commence par donner une description adélique de $\Conv_{(s_{a_{i}})}^{2}(\cm)$, nous obtenons alors:
\begin{center}
$\Conv_{(s_{a_{i}})}^{2}(\cm)=G_{F}^{2}\backslash\{(g_{1},g_{2},\g_{1},\g_{2})\in (G_{\ab}/I_{\ab})^{2}\times (G_{F}^{rs})^{2}\vert~g_{1}^{-1}\g_{1}g_{2}\in I_{\ab}s_{a_{1}}I_{\ab}, g_{2}^{-1}\g_{2}g_{1}\in Is_{a_{2}}I\}$.
\end{center}
où $F$ est le corps de fonctions de la courbe $X$ et $G_{F}^{2}$ agit par:
\begin{center}
$(h_{1},h_{2}).(g_{1},g_{2},\g_{1},\g_{2})=(h_{1}g_{1},h_{2}g_{2},h_{1}\g_{1}h_{2}^{-1},h_{2}\g_{2}h_{1}^{-1})$.
\end{center}
Regardons les automorphismes d'un quadruplet  $(g_{1},g_{2},\g_{1},\g_{2})$, si nous posons $\mu_{1}=g_{1}^{-1}\g_{1}g_{2}$ et $\mu_{2}=g_{2}^{-1}\g_{2}g_{1}$, ils consistent en les paires $(h_{1},h_{2})$ telles que :
\begin{center}
$h_{1}^{-1}\mu_{1}h_{2}=\mu_{1}$ et $h_{2}^{-1}\mu_{2}h_{1}^{-1}=\mu_{2}$,
\end{center}
d'où l'on obtient :
\begin{center}
$h_{1}^{-1}\mu_{1}\mu_{2}h_{1}=\mu_{1}\mu_{2}$ et $h_{2}=\mu_{2}h_{1}^{-1}\mu_{2}^{-1}$.
\end{center}
En particulier, la paire $(h_{1},h_{2})$ est déterminée par $h_{1}\in I_{\g_{1}\g_{2}}$ et donc on obtient bien une action du centralisateur régulier $J_{\chi_{+}(\g_{1}\g_{2})}$.
\end{proof}

\section{Théorème du support}
\subsection{Un énoncé pour une fibration abélienne faible}

Dans ce chapitre, nous démontrons un théorème de support pour une flèche propre $f:M\rightarrow S$. La principale différence avec le résultat démontré par Ngô est que l'espace source n'est pas lisse, mais  singulier. Nous remplaçons alors le faisceau $\overline{\mathbb{Q}}_{l}$ par le complexe d'intersection.

\begin{defi}\label{fibfaible}
Soit $S$ un $k$-schéma de type fini géométriquement irréductible.
Soit une flèche projective $f:M\rightarrow S$ et un schéma en groupes lisse et commutatif $g:P\rightarrow S$ agissant sur $M$.
On dit que $f$ est une fibration abélienne faible si :
\begin{enumerate}
\item
$f$ et $g$ sont de même dimension relative $d$.
\item
L'action de $P$ sur $M$ a des stabilisateurs affines.
\item
Soit $P^{0}$ le sous-schéma en groupes ouvert des composantes neutres des fibres de $P$ au-dessus de $S$. Posons $g^{0}:P^{0}\rightarrow S$. Nous avons le faisceau des modules de Tate 
\begin{center}
$T_{\overline{\mathbb{Q}}_{l}}(P^{0})=H^{2d-1}(g_{!}^{0}\overline{\mathbb{Q}}_{l})(d)$.
\end{center}
Pour tout point géométrique $s$ de $S$, on  a un dévissage canonique de Chevalley de $P^{0}_{s}$
$$\xymatrix{1\ar[r]&R_{s}\ar[r]&P_{s}^{0}\ar[r]&A_{s}\ar[r]&1}$$
avec $A_{s}$ une variété abélienne et $R_{s}$ un groupe algébrique affine commutatif qui induit une décomposition analogue pour les modules de Tate
$$\xymatrix{0\ar[r]&T_{\overline{\mathbb{Q}}_{l}}(R_{s})\ar[r]&T_{\overline{\mathbb{Q}}_{l}}(P_{s}^{0})\ar[r]&T_{\overline{\mathbb{Q}}_{l}}(A_{s})\ar[r]&0}.$$

On dira que $T_{\overline{\mathbb{Q}}_{l}}(P^{0})$ est polarisable si localement pour la topologie étale de $S$, il existe une forme bilinéaire alternée
\begin{center}
$T_{\overline{\mathbb{Q}}_{l}}(P^{0})\times T_{\overline{\mathbb{Q}}_{l}}(P^{0})\rightarrow\overline{\mathbb{Q}}_{l}$
\end{center}
dont la fibre en chaque point géométrique $s$ s'annule sur $T_{\overline{\mathbb{Q}}_{l}}(R_{s})$ et induit un accouplement parfait sur $T_{\overline{\mathbb{Q}}_{l}}(A_{s})$.
\item
Nous avons une présentation géométrique $P$-équivariante $\pi:\tilde{M}\rightarrow M$, par exemple une résolution des singularités $P$-équivariante.
\item
La présentation géométrique $\tilde{M}$ est projective.
\item
On suppose que nous avons une stratification par des schémas lisses équidimensionnels $M_{\mu}$. Les strates sont indexées sur un ensemble $E$
muni d'un ordre partiel avec un plus grand élément $\la$ tel que $M_{\la}$ est un ouvert dense de $M$. On suppose que :
\begin{center}
$M=\coprod\limits_{\mu\leq\la}M_{\mu}$
\end{center}
et pour $\mu\in E$, posons $m_{\mu}=\dim M_{\mu}$.
Soit $\overline{M}_{\mu}$ l'adhérence de la strate $\mu$. Comme  $f$ est propre, elle induit une application propre $f_{\mu}:\overline{M}_{\mu}\rightarrow S_{\mu}$ sur un fermé $S_{\mu}\subset S$. Posons $a_{\mu}=\dime S_{\mu}$. On suppose alors que la dimension relative de $f_{\mu}$ est constante de dimension $d_{\mu}$. En particulier, $d_{\la}=d$.
\item
$\forall ~\mu\leq\la, m_{\la}-m_{\mu}\geq a_{\la}-a_{\mu}$.
\end{enumerate}
\end{defi}
$\rmq$\label{affaibli}\begin{itemize}
\item
Dans Ngô, les hypothèses à partir de (iv) étaient superflues étant donné que la source était lisse.
\end{itemize}

Pour tout point géométrique $s\in S$, on pose $\delta_{s}:=\dime R_{s}$, la dimension de la partie affine de $P_{s}$.
Pour $s\in S$ un point quelconque, le dévissage de Chevalley étant canonique et unique à changement de base radiciel, la fonction  $\delta$ est bien définie. Elle est de plus semi-continue supérieurement d'après la proposition \ref{scsup}.
L'énoncé est le suivant:

\begin{thm}\label{support}
Soit $f:M\rightarrow S$  une fibration abélienne faible. Notons $IC_{M}$ le complexe d'intersection sur $M$.

Soit $K$ un faisceau pervers géométriquement simple présent dans la décomposition d'un faisceau pervers de cohomologie $\mathstrut^{p}H^{n}(f_{*}IC_{M})$ et $Z$ son support. Soit $\delta_{Z}$ la valeur minimale que la fonction $\delta$ attachée à $P$ prend sur $Z$. Alors on a l'inégalité
\begin{center}
$\codim(Z)\leq\delta_{Z}$.
\end{center}
Si l'égalité est atteinte, il existe un ouvert $U$ de $S\otimes_{k}\bar{k}$ tel que $U\cap Z$ est non vide et un système local $\cL$ sur $U\cap Z$ tel que $i_{*}\cL$, $i$ étant l'immersion fermée de $U\cap Z\rightarrow U$, soit un facteur direct de la restriction du faisceau de cohomologie ordinaire de degré maximal $H^{2d}(f_{*}IC_{M})$ à $U$.
\end{thm}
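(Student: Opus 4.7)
On adapte la preuve du th�or�me du support de Ng� \cite[sect. 7]{N} au cas singulier en traitant les deux points sp�cifiquement identifi�s dans l'introduction: la libert� ponctuelle de la cohomologie des fibres sur l'alg�bre de Pontryagin du module de Tate, et l'argument de dualit� de Goresky-MacPherson.

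\emph{�tape 1 (libert� ponctuelle).} On montre que pour tout point g�om�trique $s\in S$, l'espace $H^{*}(M_{s},IC_{M}\vert_{M_{s}})$ est un module libre gradu� sur l'alg�bre ext�rieure $\Lambda^{*}T_{\bql}(P_{s}^{0})$. Dans le cas lisse de Ng�, cela d�coule d'un argument de type Leray-Hirsch combin� � l'action du module de Tate. Dans notre cadre singulier, on utilise la pr�sentation g�om�trique $\pi:\tilde{M}\rightarrow M$, projective et $P$-�quivariante (hypoth�ses (iv)-(v)). Comme $IC_{M}$ est facteur direct de $\pi_{*}\bql$, il suffit d'�tablir la libert� de $H^{*}(\tilde{M}_{s},\bql)$ sur $\Lambda^{*}T_{\bql}(P_{s}^{0})$: l'action de $P$ se rel�ve � $\tilde{M}$, et la structure de la pr�sentation (lissit� sur un ouvert dense, projectivit�, $P$-�quivariance) permet de reprendre l'argument de Leray-Hirsch. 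On conclut par le fait qu'un facteur direct d'un module gradu� libre finiment engendr� sur une alg�bre ext�rieure reste libre.

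\emph{�tape 2 (borne de codimension).} Soit $K$ un facteur pervers simple de $\mathstrut^{p}H^{n}(f_{*}IC_{M})$ de support $Z$, et $s\in Z$ un point g�n�rique. L'auto-dualit� de $IC_{M}$ et la polarisation du module de Tate (hypoth�se (iii)) permettent d'adapter l'argument de Goresky-MacPherson de Ng�: la multiplication par la classe maximale de $\Lambda^{2d}T_{\bql}(P_{s}^{0})$ d�finit des isomorphismes de dualit� entre faisceaux de cohomologie perverse, qui contraignent la codimension des supports. La nouveaut� par rapport au cas lisse est la gestion des strates $M_{\mu}$ avec $\mu<\la$, dont les dimensions relatives $d_{\mu}=m_{\mu}-a_{\mu}$ apparaissent dans l'analyse de $IC_{M}$. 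C'est pr�cis�ment le r�le de l'in�galit� (vii) de garantir $d\geq d_{\mu}$ et que ces contributions ne fournissent pas de supports de codimension plus petite que $\delta_{Z}$. On obtient ainsi $\codim(Z)\leq\delta_{Z}$.

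\emph{Cas d'�galit�.} Quand $\codim(Z)=\delta_{Z}$, on se restreint � un ouvert $U\subset S\otimes_{k}\bar{k}$ o� $\delta$ est constante �gale � $\delta_{Z}$ sur $Z\cap U$ et o� $P^{0}$ est lisse de dimension relative $d$. Par la libert� ponctuelle, $H^{2d}(f_{*}IC_{M})\vert_{U}$ s'identifie au sous-module de rang maximal dans la d�composition de Leray-Hirsch, tordu par la monodromie de la partie ab�lienne du module de Tate. L'action de $\pi_{0}(P)$ et la polarisation permettent alors d'extraire la composante g�om�trique correspondant � $Z$ et d'en d�duire le facteur direct $i_{*}\cL$ avec $\cL$ syst�me local sur $Z\cap U$, de mani�re analogue � la proposition \ref{stabmax}.

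\emph{Principal obstacle.} La partie la plus d�licate est l'adaptation de l'argument de dualit�. Dans le cas lisse, la dualit� de Poincar� $\mathbb{D}\bql\cong\bql[2d](d)$ sur les fibres fournit une sym�trie rigide entre les faisceaux de cohomologie perverse en degr�s oppos�s. Pour $IC_{M}$, l'auto-dualit� reste valable sous la forme $\mathbb{D}IC_{M}\cong IC_{M}[2m](m)$ avec $m=\dim M$, mais les contributions des strates $M_{\mu}$ de dimension strictement plus petite introduisent des termes correctifs qu'il faut contr�ler finement. Les in�galit�s (vii) assurent que ces termes ne compensent pas la sym�trie attendue: v�rifier que ces in�galit�s sont pr�cis�ment celles n�cessaires pour r�tablir l'argument de dualit� complet dans chaque degr� perverse est le point technique fondamental de la preuve.
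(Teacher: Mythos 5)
Your overall route is the same as the paper's: adapt Ng\^o's support theorem by (a) re-establishing the Goresky--MacPherson bound in the presence of the strata $M_{\mu}$, which is exactly where hypothesis (vii) enters (it is what makes the vanishing $H^{k}(f_{*}IC_{M})=0$ for $k>2d_{\la}-m_{\la}$ go through stratum by stratum), and (b) proving pointwise freeness through the geometric presentation $\pi:\tilde{M}\rightarrow M$, using that $IC_{M}$ is a direct factor of $\pi_{*}\bql$ and that finitely generated projective modules over the (local) exterior algebra are free. Both points match the paper.

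There is, however, a genuine error in your Step 1: you assert freeness of $H^{*}(M_{s},IC_{M})$ over $\Lambda^{*}T_{\bql}(P_{s}^{0})$, i.e. over the exterior algebra of the Tate module of the \emph{whole} group $P_{s}^{0}$. This is false whenever the affine part $R_{s}$ of the Chevalley d\'evissage has a nontrivial multiplicative component: already for $\mathbb{G}_{m}$ acting on $\mathbb{P}^{1}$ (affine stabilizers everywhere), $H^{*}(\mathbb{P}^{1})$ sits in degrees $0$ and $2$ and is not free over $\bql\oplus T_{\bql}(\mathbb{G}_{m})[1]$, the cap product by $T_{\bql}(\mathbb{G}_{m})$ being zero. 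The statement one needs, and the one the paper proves, is freeness over $\Lambda_{A_{s}}=\bigoplus_{i}\bigwedge^{i}T_{\bql}(A_{s})[i]$, the abelian quotient only; since $\dim T_{\bql}(A_{s})=2(d-\delta_{s})$, this is precisely what yields the amplitude bound $\amp(Z)\geq 2(d-\delta_{Z})$ and hence, combined with the symmetry of $\occ(Z)$ under Poincar\'e duality and the Goresky--MacPherson vanishing, the inequality $\codim(Z)\leq\delta_{Z}$; freeness over a larger exterior algebra would yield a bound incompatible with the occurrence of the proper endoscopic supports. Correcting this also pins down your vague ``Leray--Hirsch'' step: the paper's mechanism is to quotient $\tilde{M}_{s}$ by $A_{s}$ alone (the stabilizers are then finite, by hypothesis (ii), since $A_{s}$ is an abelian variety), obtaining a proper Deligne--Mumford stack $N_{s}$, and to apply Deligne's decomposition theorem to the projective smooth map $\tilde{M}_{s}\rightarrow N_{s}$ --- this is where the projectivity of $\tilde{M}$ (hypothesis (v)) is actually used --- the sheaves $R^{q}\theta_{*}\bql$ being constant with fibre $H^{q}(A_{s})$, so that the associated graded of the Leray filtration is visibly free over $\Lambda_{A_{s}}$. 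With that correction your outline coincides with the paper's proof; the remaining bookkeeping is the part you legitimately delegate to Ng\^o.
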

La preuve de ce théorème va reprendre les arguments de Ngô. Nous allons voir que les seules différences qui apparaissent dans la preuve concernent l'argument de dualité de Goresky-McPherson ainsi que la liberté ponctuelle. 
\subsection{Une inégalité de Goresky-McPherson}
Dans cette section, on commence par démontrer une inégalité plus grossière sur la codimension des supports, à savoir qu'elle est plus petite que la dimension relative de la fibration $f$.

\begin{prop}\label{goresky}
Sous les hypothèses ci-dessus et avec les mêmes notations, on a
\begin{center}
$\codim Z\leq d=d_{\la}$.
\end{center}
\end{prop}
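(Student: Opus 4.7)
La strat\'egie est d'adapter l'in\'egalit\'e classique de Goresky-MacPherson, telle qu'elle appara\^\i t dans la d\'emonstration de la proposition 7.3.2 de \cite{N}, \`a notre situation o\`u la source $M$ n'est qu'\'equidimensionnelle et non lisse, en travaillant syst\'ematiquement avec le complexe d'intersection $IC_{M}$ au lieu d'un d\'ecal\'e du faisceau constant. Les ingr\'edients n\'ecessaires -- puret\'e et semisimplicit\'e de l'image directe, autodualit\'e \`a un twist de Tate pr\`es, et borne d'amplitude perverse -- restent disponibles verbatim pour $IC_{M}$, et la preuve consistera \`a v\'erifier que ces ingr\'edients suffisent \`a mener l'argument.

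Je commencerais par observer que, $M$ \'etant \'equidimensionnel, le complexe $IC_{M}$ est pur et autodual, $\mathbb{D}(IC_{M})\cong IC_{M}(\dim M)$. Comme $f$ est projectif, la puret\'e de Deligne et le th\'eor\`eme de d\'ecomposition de Beilinson-Bernstein-Deligne-Gabber fourniront un scindage canonique
\[
f_{*}IC_{M}\;\cong\;\bigoplus_{n}\mathstrut^{p}H^{n}(f_{*}IC_{M})[-n]
\]
en faisceaux pervers purs et semisimples, avec $f_{*}IC_{M}\in\mathstrut^{p}D^{[-d,d]}(S)$ par la borne sur la dimension relative. L'autodualit\'e se transf\`ere en $\mathbb{D}(f_{*}IC_{M})\cong f_{*}IC_{M}(\dim M)$, d'o\`u la dualit\'e de Poincar\'e $\mathbb{D}(\mathstrut^{p}H^{n})\cong\mathstrut^{p}H^{-n}(\dim M)$ sur les faisceaux de cohomologie perverse.

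Avec ces ingr\'edients en main, l'in\'egalit\'e de Goresky-MacPherson $c\leq d-\left|n\right|\leq d$ d\'ecoulera de l'argument standard de \cite[7.3.2]{N}. Pr\'ecis\'ement, pour un facteur direct pervers simple $K$ de $\mathstrut^{p}H^{n}(f_{*}IC_{M})$ de support $Z$ de codimension $c$, le dual $\mathbb{D}K(-\dim M)$ est un facteur simple de $\mathstrut^{p}H^{-n}(f_{*}IC_{M})$ de m\^eme support. En combinant l'extr\'emalit\'e de la fibre g\'en\'erique de $K$ (non nulle en degr\'e $-\dim Z$) et celle de son dual, et en utilisant la condition de support $\dim\supp\mathcal{H}^{j}(f_{*}IC_{M})\leq d-j$ issue de $f_{*}IC_{M}\in\mathstrut^{p}D^{\leq d}$ dans les deux directions via l'autodualit\'e, on aboutira \`a la borne voulue.

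L'obstacle principal sera de s'assurer qu'aucune \'etape de l'argument de Ng\^o n'utilise secr\`etement la lissit\'e de la source : celui-ci ne repose que sur la puret\'e, l'autodualit\'e et l'amplitude perverse de $f_{*}IC_{M}$, propri\'et\'es toutes trois fournies par $IC_{M}$ d\`es que $M$ est \'equidimensionnel. Je m'attends donc \`a ce que cette v\'erification, bien que d\'elicate \`a mener rigoureusement, soit essentiellement une transcription directe de l'argument existant, la pr\'esentation g\'eom\'etrique $\pi:\tilde{M}\rightarrow M$ ne jouant ici aucun r\^ole (elle servira en revanche pour l'in\'egalit\'e plus fine $\codim Z\leq\delta_{Z}$ obtenue \`a l'aide de l'action de $P$).
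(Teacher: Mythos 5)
Votre sch\'ema g\'en\'eral (puret\'e, th\'eor\`eme de d\'ecomposition, dualit\'e de Poincar\'e, puis extr\'emalit\'e de la fibre g\'en\'erique d'un constituant simple de support $Z$) est bien celui du papier, mais l'affirmation centrale --- que l'argument \og ne repose que sur la puret\'e, l'autodualit\'e et l'amplitude perverse de $f_{*}IC_{M}$\fg, toutes disponibles d\`es que $M$ est \'equidimensionnel --- est erron\'ee, et c'est pr\'ecis\'ement l\`a que se trouve le contenu nouveau de la preuve. L'ingr\'edient r\'eellement utilis\'e n'est pas la condition de support $\dim\supp\mathcal{H}^{j}(f_{*}IC_{M})\leq d-j$ issue de $f_{*}IC_{M}\in\mathstrut^{p}D^{\leq d}$ : appliqu\'ee au facteur $i_{*}L$ de $\mathcal{H}^{n-\dim Z}(f_{*}IC_{M})$, elle ne donne que $n\leq d$ (et $-n\leq d$ par dualit\'e), ce qui est vide. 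Ce qu'il faut, c'est l'annulation des faisceaux de cohomologie \emph{ordinaire} $H^{k}(f_{*}IC_{M})=0$ pour $k>2d_{\la}-m_{\la}$; combin\'ee \`a $n-\dime(Z)\leq 2d_{\la}-m_{\la}$ et \`a $m_{\la}=d_{\la}+a_{\la}$, elle donne $n\leq d_{\la}-\codim(Z)$, puis la borne voulue via la sym\'etrie de $\occ(Z)$ autour de z\'ero. Cette annulation est strictement plus forte que la condition de support dans la plage $d_{\la}-a_{\la}<k\leq d_{\la}$, qui est exactement celle o\`u vivent les supports int\'eressants.

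Pour $M$ lisse, cette annulation est imm\'ediate (les fibres sont de dimension $\leq d$) : c'est donc bien l\`a que l'argument de Ng\^o utilise \og secr\`etement\fg~la lissit\'e de la source. Pour $M$ singulier, le papier la d\'emontre en d\'ecoupant chaque fibre $M_{s}$ selon la stratification $M=\coprod_{\mu\leq\la}M_{\mu}$ de la d\'efinition \ref{fibfaible} : la restriction de $IC_{M}$ \`a $M_{s}^{\mu}$ est concentr\'ee en degr\'es $[-m_{\la},-m_{\mu}]$ et $M_{s}^{\mu}$ est de dimension $\leq d_{\mu}$, d'o\`u une annulation de $H^{k}(M_{s}^{\mu},IC_{M})$ pour $k>2d_{\mu}-m_{\mu}$; on conclut gr\^ace aux relations $m_{\mu}=d_{\mu}+a_{\mu}$ et aux in\'egalit\'es du point (vii) de la d\'efinition, qui sont l\`a pr\'ecis\'ement pour garantir $2d_{\mu}-m_{\mu}\leq 2d_{\la}-m_{\la}$. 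Ces hypoth\`eses de stratification ne sont nullement automatiques pour un $M$ \'equidimensionnel quelconque, et votre preuve doit les invoquer explicitement. Vous avez en revanche raison sur le fait que la pr\'esentation g\'eom\'etrique $\pi:\tilde{M}\rightarrow M$ ne joue aucun r\^ole dans cette \'etape.
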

\begin{proof}

Soit $Z$ un sous-schéma fermé irréductible de $S\otimes_{k}\bar{k}$. On définit l'ensemble
$\occ(Z)$  des entiers $n$ tel que $Z$ apparaît comme support d'un facteur direct irréductible du faisceau pervers $\mathstrut^{p}H^{n}(f_{*}IC_{M})$.
Par dualité de Poincaré, on a un isomorphisme entre $\mathstrut^{p}H^{-i}(f_{*}IC_{M})$ et $\mathstrut^{p}H^{i}(f_{*}IC_{M})$.
Supposons $\occ(Z)\neq\emptyset$ et soit $n\in\occ(Z)$, alors quitte à changer $n$ en $-n$, on peut supposer $n\geq 0$.

Soit $U$ un ouvert de $S\otimes_{k}\bar{k}$ et $\cL$ un système local irréductible sur $U\cap Z$ tel que $i_{*}L[\dime(Z)]$ soit un facteur direct de la restriction de $\mathstrut^{p}H^{n}(f_{*}IC_{M})$ à $U$. Ainsi, $i_{*}L[\dime(Z)]$ est un facteur direct de $(f_{*}IC_{M})_{\scriptscriptstyle{\vert U}}$. En prenant le faisceau de cohomologie ordinaire, le faisceau $i_{*}L$ est un facteur direct de $H^{n-\dime(Z)}(f_{*}IC_{M})$.
La conclusion vient alors du lemme suivant:
\begin{lem}
Pour tout $k> 2d_{\la}-m_{\la}$, $H^{k}(f_{*}IC_{M})=0$.
\end{lem}
\begin{proof}
Soit $k> 2d_{\la}-m_{\la}$ et $s\in S$ un point géométrique, regardons la fibre du faisceau $H^{k}(f_{*}IC_{M})$.
Sur la fibre $M_{s}$, on a une stratification naturelle induite par celle de $M$,
\begin{center}
$M_{s}=\coprod\limits_{\mu\leq\la}M_{s}^{\mu}$.
\end{center}
Par un argument de suite spectrale standard, il suffit de démontrer que
\begin{center}
$\forall~ \mu\leq\la, H^{k}(M_{s}^{\mu},IC_{M})=0$,
\end{center}
où nous avons noté de la même manière le complexe d'intersection restreint à $M_{s}^{\mu}$.
Commençons par le cas $\mu=\la$. En ce cas, la restriction de $IC_{M}$ à $M_{s}^{\la}$ est égale à $\overline{\mathbb{Q}}_{l}[m_{\la}]$ comme $M_{\la}$ est lisse.
Ainsi, on obtient
\begin{center}
$H^{k}(M_{s}^{\la},IC_{M})=H^{k}(M_{s}^{\la},\overline{\mathbb{Q}}_{l}[m_{\la}])=H^{k+m_{\la}}(M_{s}^{\la},\overline{\mathbb{Q}}_{l})=0$.
\end{center}
Passons au cas $\mu\leq\la$. Alors $(IC_{M})_{\scriptscriptstyle{\vert M_{s}^{\mu}}}$ est concentré en degrés cohomologiques $[-m_{\la},-m_{\mu}]$. Et donc la propriété d'annulation revient à prouver que:
\begin{center}
$k+m_{\mu}>2d_{\mu}$
\end{center}
pour $k> 2d_{\la}-m_{\la}$. Le calcul nous donne alors
\begin{center}
$2(d_{\la}-d_{\mu})-(m_{\la}-m_{\mu})=(d_{\la}-d_{\mu})-(a_{\la}-a_{\mu})\leq 0$
\end{center}
où nous avons utilisé le fait que $m_{\mu}=d_{\mu}+a_{\mu}$ et de même pour $m_{\la}$ et la dernière inégalité est donnée par l'hypothèse de fibration abélienne faible, ce qui conclut.
\end{proof}

Montrons comment on en déduit la proposition; nous obtenons:
\begin{center}
$n-\dime(Z)\leq 2d_{\la}-m_{\la}$,
\end{center}
et des égalités $\dime Z +\codim Z=a_{\la}$ et $m_{\la}=d_{\la}+a_{\la}$, on obtient
\begin{center}
$\codim(Z)\leq d_{\la}$.
\end{center}
\end{proof}

Il va nous falloir maintenant améliorer cette inégalité.
On définit alors
\begin{center}
$\amp(Z)=\max(\occ(Z))-\mi(\occ(Z))$.
\end{center}

\begin{prop}
Sous les hypothèses de $\ref{support}$, $\occ(Z)\neq\emptyset$ et on a l'inégalité
\begin{center}
$\amp(Z)\geq2(d-\delta_{Z})$.
\end{center}
\end{prop}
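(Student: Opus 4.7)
The goal is to upgrade the crude support bound $\codim(Z)\leq d$ of the previous proposition to a bound on the amplitude, by exhibiting a free module structure over an exterior algebra built from the Tate module of the abelian part of $P^{0}$. The non-emptiness of $\occ(Z)$ is given by assumption on $Z$; the content of the statement is thus the amplitude inequality.

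First I set up the Pontryagin algebra action. The action map $\act:P\times_{S}M\to M$ together with the projection $p_{2}:P\times_{S}M\to M$ yields, via the standard cap-product construction (applied to $IC_{M}$ by functoriality), a morphism
\begin{center}
$T_{\overline{\mathbb{Q}}_{l}}(P^{0})\otimes (f_{*}IC_{M})\to (f_{*}IC_{M})[-1]$
\end{center}
after appropriate Tate twist. Iterating turns $f_{*}IC_{M}$ into a graded module over the exterior algebra $\bigwedge^{*}T_{\overline{\mathbb{Q}}_{l}}(P^{0})$. Restricting to the generic point $\eta$ of $Z$ and invoking both the polarizability hypothesis \ref{fibfaible}(iii) and the Chevalley devissage, the action of the affine part $\bigwedge^{*}T_{\overline{\mathbb{Q}}_{l}}(R_{\eta})$ becomes nilpotent and one obtains an induced action of the quotient $\bigwedge^{*}T_{\overline{\mathbb{Q}}_{l}}(A_{\eta})$; the latter is concentrated in cohomological degrees $[0,2\dim A_{\eta}]=[0,2(d-\delta_{Z})]$, since $\dim A_{\eta}=d-\delta_{Z}$ and the Tate module of an abelian variety has rank twice its dimension.

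The main obstacle is the \emph{pointwise freeness} of the stalk $(f_{*}IC_{M})_{\eta}$ as a graded module over $\bigwedge^{*}T_{\overline{\mathbb{Q}}_{l}}(A_{\eta})$. In the smooth setting of Ngô, freeness follows from Poincaré duality on $M$ combined with the compatibility of the $T_{\overline{\mathbb{Q}}_{l}}(P^{0})$-action with this duality pairing, via the Laumon–Ngô homotopy lemma. Here $M$ is singular and we work with $IC_{M}$ rather than the constant sheaf, so duality does not apply directly. To circumvent this I invoke the geometric presentation $\pi:\tilde{M}\to M$ of Proposition \ref{edulcoree}, which is projective and $P$-equivariant by Proposition \ref{equiv}, and such that $IC_{M}$ is a direct factor of $\pi_{*}\overline{\mathbb{Q}}_{l}$. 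Consequently $f_{*}IC_{M}$ is a $\bigwedge^{*}T_{\overline{\mathbb{Q}}_{l}}(P^{0})$-equivariant direct summand of $(f\circ\pi)_{*}\overline{\mathbb{Q}}_{l}$, so it suffices to establish pointwise freeness of $((f\circ\pi)_{*}\overline{\mathbb{Q}}_{l})_{\eta}$. On $\tilde{M}$, which plays the role of a smooth proper total space, the argument of \cite{N} applies verbatim: Poincaré duality and the degeneration of the Leray spectral sequence yield freeness over $\bigwedge^{*}T_{\overline{\mathbb{Q}}_{l}}(A_{\eta})$, and freeness is preserved under passing to direct summands.

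Once freeness is established, the conclusion is purely algebraic. A non-zero bounded $\mathbb{Z}$-graded free module over the exterior algebra on a vector space concentrated in cohomological degree $1$ of dimension $2(d-\delta_{Z})$ necessarily contains a shifted copy of the whole exterior algebra as a direct summand; in particular, if $n_{\min}$ denotes the lowest degree in which the module is non-zero, then it is also non-zero in degree $n_{\min}+2(d-\delta_{Z})$. Applied to $(f_{*}IC_{M})_{\eta}$, which is non-zero by hypothesis on $Z$, this forces $\occ(Z)$ to span an interval of length at least $2(d-\delta_{Z})$, whence $\amp(Z)\geq 2(d-\delta_{Z})$. The hard part, as emphasised, is the transfer of the freeness argument through the geometric presentation; the combinatorial amplitude deduction is then formal.
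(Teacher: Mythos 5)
Your proposal follows the paper's proof essentially step for step: the cap-product module structure over $\La_{P}$, the reduction of pointwise freeness to the geometric presentation $\pi:\tilde{M}\rightarrow M$ of \ref{edulcoree} and \ref{equiv}, and the formal amplitude count for a non-zero free graded module over $\bigwedge^{*}T_{\bql}(A)$, the exterior algebra on a space of dimension $2(d-\delta_{Z})$. Two small corrections are needed. First, a direct summand of a free module is in general only projective; the paper completes this step by observing that $\La_{A_{s}}$ is a local graded algebra, so that projective implies free --- without that remark your assertion \og freeness is preserved under passing to direct summands\fg~ is false as stated. Second, the freeness on $\tilde{M}_{s}$ is not obtained from Poincar\'e duality (which is used only for the symmetry of $\occ(Z)$ and the Goresky--McPherson bound) but from Deligne's decomposition theorem applied to the projective map $\theta:\tilde{M}_{s}\rightarrow N_{s}:=[\tilde{M}_{s}/A_{s}]$, a quotient with finite inertia thanks to the affineness of the stabilizers and the $P$-equivariance of $\pi$: the degeneration of the Leray spectral sequence gives a $\La_{A_{s}}$-stable filtration with free graded pieces $H^{j}(N_{s})\otimes H^{*}(A_{s})$, hence a free total module. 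With these two points repaired your argument coincides with the one in the text, which then refers to \cite[sect. 7.5--7.6]{N} for the passage from the freeness of the full stalk of $f_{*}IC_{M}$ to that of the individual summands $\mathcal{K}_{\alpha}$ supported on $Z$.
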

Voyons comment on déduit le théorème \ref{support} grâce à cette proposition.
L'ensemble $\occ(Z)$ étant symétrique par rapport à zéro, cette contrainte impose l'existence d'un entier $n\geq d-\delta_{Z}$ dans $\occ(Z)$, il ne nous reste donc plus qu'à reprendre la fin de l'argument \ref{goresky} pour obtenir l'inégalité du théorème \ref{support}.
\subsection{Action par cap produit et liberté}
Ce chapitre suit \cite[sect. 7.4-7.5]{N}.
Considérons donc notre schéma en groupes lisse et commutatif $g:P\rightarrow S$ de dimension relative $d$ à fibres connexes. Posons le complexe suivant concentré en degrés négatifs :
\begin{center}
$\La_{P}=g_{!}\overline{\mathbb{Q}}_{l}[2d](d)$.
\end{center}
Nous avons $H^{0}(\La_{P})=\bql$ et $H^{-1}(\La_{P})=T_{\bql}(P)$. Plus généralement, nous avons la formule suivante pour les degrés supérieurs 
\begin{center}
$H^{-i}(\La_{P})=\bigwedge^{i}T_{\bql}(P)$.
\end{center}

Soit un morphisme  de type fini $f:M\rightarrow S$ sur lequel $P$ agit, on a un morphisme trace:
\begin{center}
$\act_{!}\act^{!}IC_{M}\rightarrow IC_{M}$
\end{center}
où $\act:P\times_{S}M\rightarrow M$.
Comme $\act$ est lisse, $\act^{!}=\act^{*}[2d](d)$ et $\act^{*}[d]IC_{M}=IC_{P\times_{S}M}$. En poussant cette flèche par $f_{!}$, on obtient
\begin{center}
$(g\times_{S}f)_{!}IC_{P\times_{S}M}[d](d)\rightarrow f_{!}IC_{M}$
\end{center}
et en utilisant l'isomorphisme de Künneth, on obtient un morphisme de cap-produit:
\begin{center}
$\La_{P}\otimes f_{!}IC_{M}\rightarrow f_{!}IC_{M}$
\end{center}
et en faisant $f=g$, on obtient un morphisme de complexes:
\begin{center}
$\La_{P}\otimes\La_{P}\rightarrow \La_{P}$
\end{center}
d'où une structure d'algèbre commutative graduée sur $\La_{P}$.
En multipliant par un entier $N$ dans $P$, cela induit la multiplication par $N$ sur le module de Tate et par $N^{i}$ sur $H^{-i}(\La_{P})=\bigwedge^{i}T_{\bql}(P)$. 
En utilisant la technique de Lieberman \cite[2A11]{Lieb}, on peut trouver des projecteurs appropriés de telle sorte que:
\begin{center}
$\La_{P}=\bigoplus\limits_{i\leq0}\bigwedge^{i}T_{\bql}(P)[i]$
\end{center}
de manière compatible à la multiplication.

L'ensemble des supports intervenant dans la décomposition en faisceaux pervers irréductibles est fini, notons-le $\mathfrak{U}$. Pour tout $\alpha\in\mathfrak{U}$, notons $Z_{\alpha}$ le support correspondant.
Pour tout $n$, on a une décomposition canonique
\begin{center}
$\mathstrut^pH^{n}(f_{*}IC_{M})=\bigoplus\limits_{\alpha\in\mathfrak{U}}K_{\alpha}^{n}$
\end{center}
où $K_{\alpha}^{n}$ est la somme directe des faisceaux pervers irréductibles de $\mathstrut^{p}H^{n}(f_{*}IC_{M})$ de support $Z_{\alpha}$.
On pose alors
\begin{center}
$K_{\alpha}:=\bigoplus\limits_{n\in\mathbb{Z}}K_{\alpha}^{n}$
\end{center}
que l'on suppose non nul pour tout $\alpha\in\mathfrak{U}$.
Pour $\alpha\in\mathfrak{U}$, soit $V_{\alpha}$ un ouvert de $Z_{\alpha}$ sur lequel la restriction de $\mathcal{K}_{\alpha}^{n}$ est de la forme $\mathcal{K}_{\alpha}[\dim V_{\alpha}]$ où $\mathcal{K}_{\alpha}^{n}$ est un système local pur de poids $n$ sur $V_{\alpha}$.

Quitte à rétrécir $V_{\alpha}$, il existe un changement de base fini radiciel $V_{\alpha}'\rightarrow V_{\alpha}$ tel que le schéma en groupes $P_{\vert V_{\alpha}'}$ admet un dévissage
$$\xymatrix{1\ar[r]&R_{\alpha}\ar[r]&P_{\vert V_{\alpha}'}\ar[r]&A_{\alpha}\ar[r]&1}$$
où $A_{\alpha}$ est un schéma abélien et $R_{\alpha}$ un schéma en groupes affine lisse à fibres connexes, tous deux définis sur $V_{\alpha}'$.
Comme $V_{\alpha}'\rightarrow V_{\alpha}$ est un homéomorphisme, on peut considérer que les modules de Tate $T_{\overline{\mathbb{Q}}_{l}}(A_{\alpha})$ et $T_{\overline{\mathbb{Q}}_{l}}(R_{\alpha})$, ainsi que les suites exactes qui les relient sont définis sur $V_{\alpha}$.
Enfin, quitte à rétrécir à nouveau $V_{\alpha}'$, on suppose que $V_{\alpha}\cap Z_{\alpha'}=\emptyset$ sauf si $Z_{\alpha'}\subset Z_{\alpha}$.

Ngô définit alors \cite[sect. 7.4.2]{N} une structure de $\La_{A_{\alpha}}$-module gradué sur $\mathcal{K}_{\alpha}=\bigoplus\limits_{n}\mathcal{K}_{\alpha}^{n}$.
Nous avons de même que dans Ngô la proposition cruciale suivante:
\begin{prop}
Sous les hypothèses de $\ref{support}$, pour tout point $u_{\alpha}\in V_{\alpha}$, la fibre $\mathcal{K}_{\alpha,u_{\alpha}}$ est un module gradué libre sur l'algèbre graduée $\La_{A_{\alpha},u_{\alpha}}$.
\end{prop}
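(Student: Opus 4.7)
The plan is to follow closely the structure of Ngô's freeness argument \cite[Prop. 7.4.10]{N}, while using the geometric presentation $\pi:\tilde{M}\to M$ (hypothesis (iv) in Definition \ref{fibfaible}) to bypass the lack of smoothness of $M$. The statement being local on $V_\alpha$, I would first pass to a finite radicial cover $V_\alpha'\to V_\alpha$ over which the Chevalley devissage splits $P_{\vert V_\alpha'}$ into an extension of $A_\alpha$ by $R_\alpha$, together with a chosen splitting of the module of Tate filtration, so that $\La_{A_\alpha}$ acts on $\cK_\alpha$ by restriction of the $\La_P$-action along $R_\alpha\hookrightarrow P\to A_\alpha$.

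The next step is to identify the top degree piece. As in \cite[Prop. 7.4.7]{N}, the Lieberman projectors produce a canonical decomposition $\La_{A_\alpha}=\bigoplus_{i}\bigwedge^i T_{\bql}(A_\alpha)[i]$ compatible with the algebra structure, and similarly one can endow $\cK_\alpha$ with a graded $\La_{A_\alpha}$-module structure. The key point is to show that at $u_\alpha\in V_\alpha$, the top degree piece $\cK_{\alpha,u_\alpha}^{\mathrm{top}}$ generates $\cK_{\alpha,u_\alpha}$ as a $\La_{A_\alpha,u_\alpha}$-module and that the resulting surjection $\La_{A_\alpha,u_\alpha}\otimes\cK_{\alpha,u_\alpha}^{\mathrm{top}}\twoheadrightarrow\cK_{\alpha,u_\alpha}$ is an isomorphism. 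Here one uses the polarization from hypothesis (iii), which induces a perfect pairing on $\bigwedge^\bullet T_{\bql}(A_\alpha)$ compatible with the $P$-action; this turns the generation statement into its dual injectivity statement, and reduces the proof to the comparison with $H^{2d}$.

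The heart of the matter---and the main obstacle---is the computation of $H^{2d}(f_*IC_M)$ and the verification that the action of $\La_{A_\alpha}$ on it factors through the determinantal character as in the smooth case. This is where the singular setting diverges from Ngô's. The plan is to use the geometric presentation: since $\pi:\tilde{M}\to M$ is projective, $P$-equivariant (by an equivariance property analogous to Proposition \ref{equiv}) and yields $IC_M$ as a direct factor of $\pi_*\bql$ up to a shift, the $\La_P$-module $f_*IC_M$ is a $\La_P$-equivariant direct factor of $(f\circ\pi)_*\bql$. On the smoother space $\tilde{M}$ (more precisely, on its smooth locus over the open set where Ngô's argument applies), one controls the top degree by a direct argument combining the density of the regular orbit (Corollary \ref{equidim}) with the relative trace map. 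By the $P$-equivariance, this description transfers to the direct summand corresponding to $IC_M$.

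Finally, the freeness statement itself is obtained by induction on the degree of the graded pieces, working from the top down. The induction step is exactly Ngô's argument: combine (a) the generation statement coming from the top degree, (b) the polarization-induced non-degenerate pairing, and (c) the vanishing of higher direct images above dimension $2d$ (established in Proposition \ref{goresky}). The delicate point to verify, which I expect to be the technical bottleneck, is that passing to a direct factor in Step 2 preserves both the graded structure and the compatibility with the polarization; this forces one to keep track of the decomposition as $\La_P$-modules throughout, rather than merely as graded modules.
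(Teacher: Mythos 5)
You have correctly isolated the one genuinely new ingredient relative to Ng\^o: the reduction from the singular $M$ to the geometric presentation $\pi:\tilde{M}\rightarrow M$, using that $IC_{M}$ is a $P$-\'equivariant direct factor of $\pi_{*}\bql[m_{\la}]$, and the need to keep this decomposition $\La_{P}$-equivariant throughout. But the mechanism you propose for the freeness itself --- generation by the top-degree piece, dualized via the polarization, followed by a top-down induction --- does not close. First, generation of $\mathcal{K}_{\alpha,u_{\alpha}}$ by its top graded piece over $\La_{A_{\alpha},u_{\alpha}}$ is essentially equivalent to the freeness you are trying to prove, and you give no independent argument for it. Second, the polarization of hypothesis (iii) is a pairing on the Tate module $T_{\bql}(P^{0})$; it is used in the Goresky--McPherson/amplitude part of the support theorem and does not by itself produce a perfect pairing on the fiber cohomology compatible with the $\La_{A_{\alpha}}$-module structure. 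Even granting such a pairing, freeness over an exterior algebra would not follow: the one-dimensional trivial module carries a perfect pairing and is generated in a single degree, yet is not free. Third, the appeal to the density of the regular orbit is out of place here: the proposition lives in the abstract setting of a weak abelian fibration (D\'efinition \ref{fibfaible}), where no regular orbit is available.

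The input your argument never exploits --- and which is the actual engine of the proof --- is hypothesis (ii): the stabilizers of the $P_{s}$-action are affine, hence the abelian quotient $A_{s}$ acts on $\tilde{M}_{s}$ with finite stabilizers. One then forms the proper Deligne--Mumford stack $N_{s}=[\tilde{M}_{s}/A_{s}]$; the projection $\theta:\tilde{M}_{s}\rightarrow N_{s}$ is projective and smooth, Deligne's decomposition theorem makes the Leray spectral sequence degenerate at $E_{2}$, the local systems $R^{q}\theta_{*}\bql$ are constant of fiber $H^{q}(A_{s})$, and the resulting $\La_{A_{s}}$-stable filtration of $\bigoplus_{n}H^{n}(\tilde{M}_{s},\bql)$ has graded pieces $H^{j}(N_{s})\otimes H^{\bullet}(A_{s})$, which are visibly free. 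Since $\La_{A_{s}}$ is a graded local algebra, projective modules are free, so freeness passes to the direct factor corresponding to $IC_{M}$; the deduction of the statement for $\mathcal{K}_{\alpha,u_{\alpha}}$ from this pointwise freeness is then the same formal argument as in Ng\^o. No polarization and no top-degree analysis enter at this stage.
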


Nous pouvons formuler l'énoncé de liberté sans faire intervenir de points bases.
\begin{lem}\cite[7.4.11]{N}
Soit $U$ un $\bar{k}$-schéma connexe. Soit $\La$ un système local gradué en un nombre fini de degrés négatifs avec $\La^{0}=\overline{\mathbb{Q}}_{l}$ muni d'une structure d'algèbre graduée $\La\otimes\La\rightarrow\La$. Soit $L$  un système local gradué muni d'une structure de module gradué
\begin{center}
$\La\otimes L\rightarrow L$.
\end{center}
Supposons qu'il existe un point géométrique $u$ de $U$ tel que $L_{u}$ soit un module libre sur la fibre  $\La_{u}$ de $\La$ en $u$. Supposons que $L$ soit semisimple comme système local gradué. Alors il existe un système local gradué $E$ sur $U$ de telle sorte que
\begin{center}
$L=\La\otimes E$
\end{center}
compatible aux structures de $\La$-modules.
\end{lem}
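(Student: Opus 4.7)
Le plan est de construire $E$ comme un complément de $\La^{+}L$ dans $L$ (en utilisant la semi-simplicité), puis de vérifier que la flèche de multiplication $\La\otimes E\to L$ est un isomorphisme en se ramenant à sa fibre en $u$.

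Plus précisément, je poserais d'abord $\La^{+}:=\bigoplus\limits_{i<0}\La^{i}$, qui est un idéal gradué de $\La$ car $\La^{0}=\bql$. Le sous-objet $\La^{+}L\subset L$ est alors un sous-système local gradué de $L$, défini comme l'image de la composée $\La^{+}\otimes L\hookrightarrow\La\otimes L\to L$. L'hypothèse que $L$ est semisimple comme système local gradué permet de scinder la suite exacte courte
\[0\to \La^{+} L\to L\to L/\La^{+} L\to 0,\]
en choisissant un sous-système local gradué $E\subset L$ qui se projette isomorphiquement sur $L/\La^{+}L$. La multiplication induit alors un morphisme $\phi:\La\otimes E\to L$ de systèmes locaux gradués, compatible à la structure de $\La$-module.

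Pour vérifier que $\phi$ est un isomorphisme, je commencerais par analyser sa fibre en $u$. Par hypothèse $L_{u}$ est libre sur $\La_{u}$, donc isomorphe à $\La_{u}\otimes V$ pour un certain espace vectoriel gradué $V$, et $L_{u}/\La^{+}_{u}L_{u}\cong V$ (puisque $\La^{0}=\bql$). Via le scindage choisi, $E_{u}\stackrel{\sim}{\to}V$, et toute base homogène de $E_{u}$ fournit un système de générateurs homogènes de $L_{u}$ sur $\La_{u}$. Comme $\La$ est concentré en un nombre fini de degrés (tous $\leq 0$), un argument élémentaire de Nakayama gradué (itération en un nombre fini d'étapes) montre que cette famille est en fait une base libre, et donc que $\phi_{u}$ est bijective.

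Enfin, pour propager ce résultat à tout $U$, j'utiliserais le fait que sur un schéma connexe, le noyau et le conoyau d'un morphisme entre systèmes locaux de rang fini sont encore des systèmes locaux (correspondant à des sous- et quotient-représentations du groupe fondamental $\pi_{1}(U,u)$). Comme $\Kern(\phi)_{u}$ et $\mathrm{coker}(\phi)_{u}$ s'annulent, ces systèmes locaux sont identiquement nuls, et $\phi$ est l'isomorphisme recherché. L'obstacle principal qui mérite attention me semble être la construction globale de $E$ : c'est précisément la semi-simplicité de $L$ comme système local gradué qui permet d'effectuer un choix de complément cohérent sur tout $U$, et non seulement au voisinage de $u$ (où l'on aurait naïvement pu prendre une base libre locale de $L_{u}$ sur $\La_{u}$ sans pouvoir la recoller au reste du schéma).
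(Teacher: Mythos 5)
Votre démonstration est correcte et suit essentiellement la même démarche que la preuve citée de Ngô : on utilise la semi-simplicité pour scinder $L=\La^{+}L\oplus E$, on vérifie que $\La\otimes E\rightarrow L$ est un isomorphisme en la fibre $u$, puis on propage par rigidité des systèmes locaux sur une base connexe. Seule précision à apporter : le Nakayama gradué ne donne que la surjectivité de $\phi_{u}$; la liberté (c'est-à-dire l'injectivité) résulte du décompte de dimensions $\dim_{\bql}(\La_{u}\otimes E_{u})=\dim\La_{u}\cdot\dim V=\dim_{\bql}L_{u}$, que vos identifications $E_{u}\cong V$ et $L_{u}\cong\La_{u}\otimes V$ rendent immédiat.
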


\subsection{Liberté ponctuelle}

\begin{prop}
Soit $s\in S$ un point géométrique, alors d'après les hypothèses de fibration abélienne faible, la partie abélienne de $A_{s}$ de $P_{s}$ agit avec des stabilisateurs finis sur $M_{s}$ et
\begin{center}
$\bigoplus H^{n}(M_{s},IC_{M})[-n]$
\end{center}
est un module libre sur $\La_{A_{s}}=\bigoplus\limits_{i}\bigwedge^{i}T_{\bql}(A_{s})[i]$.
\end{prop}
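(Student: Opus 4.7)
La premi�re assertion se ram�ne imm�diatement � l'axiome (ii) de la d�finition \ref{fibfaible}: pour $m\in M_s$, le stabilisateur de $m$ dans $P_s$ est un sch�ma en groupes affine, donc sa composante neutre se projette dans la vari�t� ab�lienne $A_s$ sur un sous-groupe � la fois affine et propre, qui est trivial. L'image du stabilisateur dans $A_s$ est par cons�quent finie, ce qui donne l'action � stabilisateurs finis de $A_s$ sur $M_s$.

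Pour la libert�, l'id�e centrale est d'exploiter la pr�sentation g�om�trique $\pi:\tilde M\to M$ de l'axiome (iv), qui est $P$-�quivariante. Puisque $IC_M$ est un facteur direct de $\pi_*\bql$ (� d�calage pr�s) comme complexe $P$-�quivariant, le module gradu� $\bigoplus_n H^n(M_s,IC_M)[-n]$ appara�t comme facteur direct de $\bigoplus_n H^n(\tilde M_s,\bql)[-n]$ dans la cat�gorie des modules gradu�s sur $\La_{A_s}$. On cherchera alors d'abord � �tablir la libert� du module ambiant, puis � la transf�rer � son facteur direct. Pour le premier point, l'axiome (v) fournit la projectivit� de $\tilde M_s$, et l'argument du premier paragraphe appliqu� � l'action de $P$ sur $\tilde M$ donne la finitude des stabilisateurs de $A_s$ sur $\tilde M_s$ (ils s'envoient, par $P$-�quivariance de $\pi$, dans les stabilisateurs affines sur $M$). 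Dans le cas concret qui nous int�resse, la pr�sentation $\Conv^{n,\flat}_{(s_{a_i})}(\cm)$ construite � la proposition \ref{edulcoree} est lisse comme changement de base d'une r�solution de Demazure-Faltings, et la m�thode classique de Ng� \cite[sect. 7.4-7.5]{N}, fond�e sur la suite spectrale de Leray associ�e � la projection vers le champ quotient $[\tilde M_s/A_s]$ et sa d�g�n�rescence par puret�, fournit alors la libert� voulue.

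L'obstacle principal r�side dans la deuxi�me �tape, puisqu'en g�n�ral un facteur direct gradu� d'un module libre sur une alg�bre ext�rieure n'est pas libre. On exploitera ici l'autodualit� par Verdier du complexe $IC_M$, qui induit un accouplement parfait de Poincar� sur $\bigoplus_n H^n(M_s,IC_M)[-n]$ compatible, gr�ce � la polarisabilit� de l'axiome (iii), avec l'action de $\La_{A_s}$. La combinaison de cet accouplement autodual avec la technique de Lieberman d�j� invoqu�e dans la d�composition $\La_P=\bigoplus_i\bigwedge^i T_{\bql}(P)[i]$ — qui identifie la libert� � la non-d�g�n�rescence de l'action de cap-produit par la classe fondamentale — permettra de conclure que le facteur direct est lui-m�me libre sur $\La_{A_s}$, achevant ainsi la preuve de la libert� ponctuelle.
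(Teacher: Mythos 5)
Your strategy for the first assertion and for most of the second is exactly the paper's: the stabilizer of a point in $P_s$ is affine by axiom (ii), hence its image in the abelian variety $A_s$ is finite; then one passes to the $P$-equivariant geometric presentation $\pi:\tilde M\to M$, notes that $\bigoplus_n H^n(M_s,IC_M)[-n]$ is a graded direct summand of $\bigoplus_n H^n(\tilde M_s,\bql)[-n]$ as a $\La_{A_s}$-module, and proves freeness of the ambient module via the smooth projective morphism $\tilde M_s\to N_s:=[\tilde M_s/A_s]$ (a proper Deligne--Mumford stack since $A_s$ acts with finite stabilizers), Deligne's decomposition theorem, the constancy of $R^q\theta_*\bql$ with fibre $H^q(A_s)$, and the resulting $\La_{A_s}$-stable filtration with free graded pieces.

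The obstacle you locate in the last step, however, does not exist, and your proposed way around it is not a proof. A graded direct summand of a finitely generated free module over $\La_{A_s}=\bigwedge^{\bullet}T_{\bql}(A_s)$ is projective, and $\La_{A_s}$ is a \emph{local} algebra (its augmentation ideal is nilpotent, with residue field $\bql$), so every finitely generated projective module over it is free; this is precisely the one-line remark the paper makes (\og comme $\La_{A_s}$ est une alg\`ebre locale, tout module projectif sera libre\fg). Your counter-claim that a direct summand of a free module over an exterior algebra need not be free is false in this finitely generated setting. The detour through Verdier self-duality, the polarisation of axiom (iii) and Lieberman's projectors is therefore unnecessary, and as written it is not an argument: \og permettra de conclure\fg~is not substantiated, and a self-dual pairing cannot by itself force freeness over an exterior algebra (over $\bigwedge^{\bullet}V$ the trivial module $\bql$ is self-dual and not free). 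Replace that final paragraph by the locality remark and your proof closes, in the same way as the paper's.
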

\begin{proof}
Nous avons une présentation géométrique $P$-équivariante $\pi:\tilde{M}\rightarrow M$.
En particulier, on obtient que $IC_{M}$ est un facteur direct de $\pi_{*}\bql[m_{\la}]$. Donc pour montrer la liberté, il suffit de démontrer la liberté pour le faisceau $\pi_{*}\bql[m_{\la}]$. Ensuite, comme $\La_{A_{s}}$ est une algèbre locale, tout module projectif sera libre, ce qui nous donnera le résultat.

L'action de $A_{s}$ sur $M_{s}$ est à stabilisateurs affines, il en est donc de même de l'action de $A_{s}$ sur $\tilde{M}_{s}$. En particulier, le quotient $N_{s}:=[\tilde{M}_{s}/A_{s}]$ est un champ propre à inertie finie.
Comme $\tilde{M}_{s}$ est projectif, on a une flèche $\theta:\tilde{M}_{s}\rightarrow N_{s}$ projectif lisse. D'après Deligne \cite{De2}, on a une décomposition:
\begin{center}
$\theta_{*}\bql[m_{\la}]\cong\bigoplus\limits_{i}R^{i}m_{*}\bql[m_{\la}-i]$
\end{center}
qui induit la dégénérescence de la suite spectrale de Leray 
\begin{center}
$H^{p}(N_{s},R^{q}m_{*}\bql[m_{\la}])\Rightarrow H^{p+q}(\tilde{M}_{s},\bql[m_{\la}])$
\end{center}
en $E_{2}$.

De plus, $R^{q}m_{*}\bql$ est un système local sur $N_{s}$ et le changement de base propre nous dit que sa fibre s'identifie à $H^{i}(A_{s})$, donc en fait le faisceau est constant sur $N_{s}$ de valeur $H^{i}(A_{s})$.
On en déduit donc une filtration $\La_{A_{s}}$-stable sur $\bigoplus H^{n}(N_{s},\bql[m_{\la}])$ dont le $j$-ième gradué est 
\begin{center}
$\bigoplus\limits_{i}H^{j}(N_{s},R^{i}m_{*}\bql[m_{\la}])=H^{j}(N_{s})\otimes \bigoplus H^{i+m_{\la}}(A_{s})$.
\end{center}
les gradués étant libres, il en est de même de la somme directe $\bigoplus H^{n}(N_{s},\bql[m_{\la}])[-n]$ et donc de $\bigoplus H^{n}(\tilde{M}_{s})[-n]$, ce qu'on voulait.
\end{proof}

Maintenant la fin de la preuve du théorème \ref{support}  est strictement analogue à  \cite[sect. 7.5-7.6]{N}.

\subsection{La $\delta$-régularité et la $\kappa$-variante}
On dit que le  $S$-schéma en groupes commutatif lisse $P$ est \textit{$\delta$-régulier} si pour tout $\delta\in\mathbb{N}$, on a 
\begin{center}
$\codim_{S}S_{\delta}\geq\delta$,
\end{center}
où $S_{\delta}$ désigne la strate à $\delta$-constant. A la suite de Ngô, nous faisons la définition suivante:
\begin{defi}
Une fibration abélienne faible, dont la composante $P$ est $\delta$-régulière sera appelée une \textit{fibration abélienne $\delta$-régulière.}
\end{defi}
La $\delta$-régularité est stable par changement de base. Nous en avons une caractérisation équivalente due à Ngô \cite[7.1.6]{N} ; soit $Z$ un fermé irréductible de $S$. Soit $\delta_{Z}$ la valeur minimale de $\delta$ sur $Z$. Alors, $P$ est $\delta$-régulier si et seulement si pour tout sous-schéma fermé irréductible $Z$ de $S$, on a $\codim(Z)\geq\delta_{Z}$.

Il résulte alors du théorème \ref{support} que nous obtenons le corollaire suivant
\begin{cor}
Soit $f:M\rightarrow S$ une fibration abélienne $\delta$-régulière. Soit $Z$ le support d'un constituant pervers irréductible de $f_{*}IC_{M}$, alors il existe un ouvert non vide $U$ de $S\otimes_{k}\bar{k}$ tel que $U\cap Z$ est non vide et un système local non trivial $L$ sur $U$ tel que $i_{*}L$, avec $i:U\cap Z\rightarrow U$, soit un facteur direct de la restriction du faisceau de cohomologie ordinaire de degré maximal $H^{2d}(f_{*}IC_{M})$ à $U$.
\end{cor}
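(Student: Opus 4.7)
Le plan est tr�s direct : il s'agit de combiner le th�or�me \ref{support} avec la caract�risation alternative de la $\delta$-r�gularit� rappel�e juste avant l'�nonc�. Pr�cis�ment, je commencerais par fixer $Z$ le support d'un constituant pervers irr�ductible $K$ d'un certain $\mathstrut^{p}H^{n}(f_{*}IC_{M})$, et par introduire $\delta_{Z}$ la valeur minimale de la fonction $\delta$ sur $Z$. Le th�or�me \ref{support} fournit alors d'embl�e l'in�galit�
\[
\codim(Z)\leq\delta_{Z}.
\]

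Ensuite, j'invoquerais la caract�risation �quivalente de la $\delta$-r�gularit� due � Ng� \cite[7.1.6]{N} : dire que $g:P\rightarrow S$ est $\delta$-r�gulier revient � dire que pour tout sous-sch�ma ferm� irr�ductible $Z$ de $S$, on a $\codim(Z)\geq\delta_{Z}$. Appliqu�e � notre $Z$, cela fournit l'in�galit� oppos�e, et donc l'�galit�
\[
\codim(Z)=\delta_{Z}.
\]

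Une fois l'�galit� obtenue, c'est pr�cis�ment la situation couverte par la deuxi�me partie de la conclusion du th�or�me \ref{support} : il existe un ouvert $U$ de $S\otimes_{k}\bar{k}$ tel que $U\cap Z$ soit non vide et un syst�me local $\cL$ sur $U\cap Z$ tel que, en notant $i:U\cap Z\rightarrow U$ l'immersion ferm�e, $i_{*}\cL$ soit un facteur direct de la restriction du faisceau de cohomologie ordinaire de degr� maximal $H^{2d}(f_{*}IC_{M})$ � $U$. Comme $i_{*}\cL$ appara�t comme facteur direct d'un faisceau non nul sur $U$, on peut, quitte � r�tr�cir $U$, supposer $\cL$ non nul, d'o� la non trivialit� demand�e.

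Le corollaire s'obtient donc sans difficult� suppl�mentaire : toute la charge technique a d�j� �t� absorb�e dans la preuve du th�or�me \ref{support} (argument de dualit� de Goresky-McPherson �tendu au contexte singulier gr�ce aux in�galit�s de dimension de la d�finition \ref{fibfaible}, et libert� ponctuelle sur l'alg�bre de Pontryagin du module de Tate, elle-m�me obtenue via la pr�sentation g�om�trique $P$-�quivariante). L'unique v�rification � faire ici est que la caract�risation ponctuelle de la $\delta$-r�gularit� de Ng� est bien applicable, ce qui est imm�diat car elle est formelle � partir de la semi-continuit� sup�rieure de la fonction $\delta$ d�j� signal�e apr�s la d�finition \ref{fibfaible}.
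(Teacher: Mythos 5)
Votre argument est correct et co\"incide exactement avec celui du papier : la preuve donn\'ee dans le texte se r\'eduit \`a la phrase \og cela r\'esulte du cas d'\'egalit\'e du th\'eor\`eme \ref{support} par $\delta$-r\'egularit\'e\fg, c'est-\`a-dire pr\'ecis\'ement la combinaison de l'in\'egalit\'e $\codim(Z)\leq\delta_{Z}$ du th\'eor\`eme du support avec l'in\'egalit\'e oppos\'ee fournie par la caract\'erisation ponctuelle de la $\delta$-r\'egularit\'e. Rien \`a redire.
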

\begin{proof}
Cela résulte du cas d'égalité du théorème \ref{support} par $\delta$-régularité.
\end{proof}
Nous avons également avoir besoin d'une version faisant intervenir $\pi_{0}(P)$. Supposons que ce groupe est fini abélien.

Le schéma $P$ agit alors sur $\mathstrut^{p}H^{n}(f_{*}IC_{M})$ via $\pi_{0}(P)$, qui est fini abélien. Pour tout caractère $\kappa:\pi_{0}(P)\rightarrow\overline{\mathbb{Q}}_{l}^{*}$, on considère le plus grand facteur direct $\mathstrut^{p}H^{n}(f_{*}IC_{M})_{\kappa}$ de $\mathstrut^{p}H^{n}(f_{*}IC_{M})$ où $\pi_{0}(P)$ agit par $\kappa$.
D'après Laumon-Ngô \cite[3.2.5]{LN}, on peut montrer qu'il existe un entier $N$ et une décomposition de $f_{*}IC_{M}$
\begin{center}
$f_{*}IC_{M}=\bigoplus\limits_{\kappa\in\pi_{0}(P)^{*}}(f_{*}IC_{M})_{\kappa}$.
\end{center}
tels que pour tout $\alpha\in\pi_{0}(P)$, $(\alpha-\kappa(\alpha)\Id)^{N}$ est nul sur $(f_{*}IC_{M})_{\kappa}$. En remplaçant $f_{*}IC_{M}$ par $(f_{*}IC_{M})_{\kappa}$ dans le théorème $\ref{support}$, nous obtenons
\begin{prop}\label{supportkappa}
On peut remplacer dans $\ref{support}$, $\mathstrut^{p}H^{n}(f_{*}IC_{M})$ par $\mathstrut^{p}H^{n}(f_{*}IC_{M})_{\kappa}$ et $H^{2d}(f_{*}IC_{M})$ par $H^{2d}(f_{*}IC_{M})_{\kappa}$.
\end{prop}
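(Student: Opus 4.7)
The plan is to show that the entire proof of Theorem \ref{support} is compatible with the decomposition into $\kappa$-isotypic components, so one can simply rerun every step inside each summand $(f_{*}IC_{M})_{\kappa}$. The starting point is the Laumon--Ngô lemma cited just before the statement: the homotopy argument \cite[3.2.5]{LN} produces a decomposition
\begin{center}
$f_{*}IC_{M}=\bigoplus\limits_{\kappa\in\pi_{0}(P)^{*}}(f_{*}IC_{M})_{\kappa}$,
\end{center}
where each factor is generalized $\kappa$-isotypic for the (finite abelian) action of $\pi_{0}(P)$. Taking perverse cohomology and ordinary cohomology commutes with this direct sum decomposition, so it suffices to check that each of the three ingredients of the proof of \ref{support} -- the Goresky--MacPherson type amplitude bound, the $\Lambda_{P}$-module structure on the perverse graded pieces, and the pointwise freeness over $\Lambda_{A_{s}}$ -- survives restriction to a $\kappa$-summand.

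First, I would observe that the cap-product morphism $\Lambda_{P}\otimes f_{!}IC_{M}\to f_{!}IC_{M}$ is constructed from the action morphism and is therefore $P$-equivariant; in particular its induced action on perverse cohomology commutes with the $\pi_{0}(P)$-action, so the $\kappa$-summands are $\Lambda_{P}$-stable. The Lieberman projectors split $\Lambda_{P}$ into its $\bigwedge^{i}T_{\overline{\mathbb{Q}}_{l}}(P)[i]$ pieces in a way that is manifestly $\pi_{0}(P)$-equivariant (multiplication by $N$ commutes with $\pi_{0}(P)$). Consequently the decomposition $\mathcal{K}_{\alpha}=\bigoplus_{n}\mathcal{K}_{\alpha}^{n}$ attached to a support $Z_{\alpha}$ refines into $\mathcal{K}_{\alpha,\kappa}=\bigoplus_{n}\mathcal{K}_{\alpha,\kappa}^{n}$, and the $\Lambda_{A_{\alpha}}$-module structure restricts to a $\Lambda_{A_{\alpha}}$-module structure on each $\mathcal{K}_{\alpha,\kappa}$.

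Second, I would verify the pointwise freeness inside each $\kappa$-component. The key input was that for $s\in S$ geometric, $\bigoplus_{n}H^{n}(M_{s},IC_{M})[-n]$ is a free $\Lambda_{A_{s}}$-module; this was obtained by using the $P$-equivariant geometric presentation $\pi:\tilde{M}\to M$, the quotient $N_{s}=[\tilde{M}_{s}/A_{s}]$ and the degeneration of the Leray spectral sequence. Since $\pi$ is $P$-equivariant, the action of $\pi_{0}(P)$ preserves the filtration on cohomology, and the graded pieces $H^{j}(N_{s})\otimes H^{\bullet}(A_{s})$ are themselves $\pi_{0}(P)$-modules whose $\kappa$-isotypic components are each free over $\Lambda_{A_{s}}$ (as a direct summand of a free module over a local graded algebra is free). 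Hence the pointwise freeness argument, and with it the amplitude inequality $\amp(Z)\geq 2(d-\delta_{Z})$, holds verbatim for $(f_{*}IC_{M})_{\kappa}$.

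Third, for the statement about $H^{2d}(f_{*}IC_{M})_{\kappa}$, the Goresky--MacPherson type vanishing argument of the auxiliary lemma used in the proof of \ref{goresky} depends only on stratification dimensions, not on any group action, so it applies unchanged to each direct summand $(f_{*}IC_{M})_{\kappa}$. The remaining deduction -- that equality in the codimension bound forces a direct-summand description of the maximal-degree cohomology -- is a formal consequence of the freeness and the amplitude equality, and is again preserved by passage to the $\kappa$-isotypic component. The only point requiring any care is the choice of the integer $N$ in the Laumon--Ngô decomposition, which must be uniform over $S$; this is exactly what \cite[3.2.5]{LN} provides. No step of the proof presents a genuine new obstacle: the proposition is a routine equivariant refinement, and the entire argument of \ref{support} can be rerun inside each summand.
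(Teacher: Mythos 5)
Your proposal is correct and follows essentially the same route as the paper, which simply invokes the Laumon--Ngô decomposition $f_{*}IC_{M}=\bigoplus_{\kappa}(f_{*}IC_{M})_{\kappa}$ into generalized $\kappa$-eigenspaces and asserts that the argument of \ref{support} can be rerun on each summand. Your verification that the cap-product action, the Lieberman projectors, the pointwise freeness (via projective $=$ free over the local graded algebra $\La_{A_{s}}$) and the dimension-theoretic vanishing lemma are all compatible with the $\kappa$-decomposition is exactly the content the paper leaves implicit.
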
 

\subsection{Preuve du théorème \ref{detsupport}}\label{final}
Nous allons maintenant appliquer le théorème du support au morphisme $\tilde{f}^{ani,\flat}:\tcmdanf\rightarrow\tabdanf$  muni d'une action $\tilde{g}^{ani,\flat}:\cPinf\rightarrow\tabdanf$. On pose dans la suite  $S:=\tabdanf$, $S^{bon}=\tabd^{bon}\cap\tabdanf$, $M=\tcmdanf$,  $P=\cPinf$, $f=\tilde{f}^{ani,\flat}$ et $g=\tilde{g}^{ani,\flat}$.
Le fait qu'on ait des champs de Deligne-Mumford (cf. Prop. \ref{corlisse}) ne pose pas de problèmes car la preuve s'étend telle quelle, comme on sait qu'une fibre de Hitchin est une variété projective d'après la proposition \ref{stabaffine}.
La flèche $g$ est bien lisse d'après  \ref{picardlisse} et $P$ agit avec des stabilisateurs affines d'après la proposition \ref{stabaffine}.
D'après la proposition\ref{equidim}, la flèche $f$ est plate et les inégalités sur les dimensions résultent de \ref{dimhitchin2}.
Enfin, d'après la proposition \ref{tate} le module de Tate est bien polarisable

D'après les propositions \ref{equiv} et \ref{edulcoree}, $M$ admet une présentation géométrique. Elle n'est a priori pas projective, mais en fait on a seulement besoin que les fibres au-dessus de $\abdanf$ soient projectives et cela vient du fait que la résolution est projective et que la fibre de Hitchin est homéomorphe à un schéma projectif.

Au-dessus de  $S^{bon}$, on a une fibration abélienne $\delta$-régulière par définition.
On commence par démontrer le théorème \ref{detsupport} dans le cas où $\kappa=1$:
\begin{prop}
Soit $\mathstrut^{p}H^{n}(f_{*}IC_{M})_{st}$ le plus grand facteur direct de $\mathstrut^{p}H^{n}(f_{*}IC_{M})$ sur lequel $X_{*}(T)$ agit trivialement. Soit $Z$ support  d'un faisceau pervers géométriquement irréductible $K$ qui est un facteur direct de $\mathstrut^pH^{n}(f_{*}IC_{M})$. Si $Z\cap S^{bon}\neq\emptyset$, alors $Z=S$.
\end{prop}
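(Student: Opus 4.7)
The plan is to combine Proposition~\ref{supportkappa} (the $\kappa$-variant of the support theorem) applied to the trivial character with the explicit identification of the top ordinary cohomology sheaf given by Proposition~\ref{stabmax}. First, I would recall that the hypotheses of Theorem~\ref{support} (or rather of Proposition~\ref{supportkappa}) have essentially been checked in the paragraphs just before the proposition: the equidimensionality of $f$ and $g$, affine stabilizers (Corollaire~\ref{stabaffine}), polarizability of the Tate module (Proposition~\ref{tate}), the existence of a $P$-\'equivariante pr\'esentation g\'eom\'etrique (Propositions~\ref{equiv} and~\ref{edulcoree}) and the projectivity of the Hitchin fibers (Corollaire~\ref{stabaffine}). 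The required stratification is the one indexed by $\mu\leq\lambda$ coming from the endoscopic decomposition, and the dimension inequalities follow from the computations in Section~\ref{dimhitchin2}.

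Next I would restrict everything over the open $S^{bon}=\tabd^{bon}\cap\tabdanf$, where by the very definition of $\abdbon$ (and its lifted version $\tabd^{bon}$), the $\delta$-r\'egularit\'e $\codim_{S}S_{\delta}\geq\delta$ holds. Thus over $S^{bon}$ we have a $\delta$-r\'eguli\`ere weak abelian fibration in the sense of Definition~\ref{fibfaible}. Applying Proposition~\ref{supportkappa} with $\kappa=1$: since $Z\cap S^{bon}\neq\emptyset$, the minimum $\delta_{Z}$ of $\delta$ on $Z$ is attained in $S^{bon}$, so $\delta_{Z}\leq\codim Z$ by $\delta$-r\'egularit\'e, while the support theorem gives $\codim Z\leq\delta_{Z}$. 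The case of equality in Proposition~\ref{supportkappa} therefore applies and furnishes an open subset $U\subset S\otimes_{k}\bar{k}$ with $U\cap Z\neq\emptyset$ and a non-zero local system $\mathcal{L}$ on $U\cap Z$ such that, writing $i\colon U\cap Z\hookrightarrow U$, the sheaf $i_{*}\mathcal{L}$ is a direct factor of $H^{2d}(f_{*}IC_{M})_{st}{}_{|U}$.

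Now by Proposition~\ref{stabmax}, the stable part $H^{2d}(f_{*}IC_{M})_{st}$ is isomorphic to the constant sheaf $\bql$ on $\tabdan$, hence on $U$. On the other hand, the base $\tabd$ is lisse et g\'eom\'etriquement irr\'eductible (under the standing assumption $\lambda\succ 2g$), so the open $S=\tabdanf$ and the further open $U$ are geometrically irreducible. The constant sheaf $\bql$ on the irreducible smooth $U$ admits no non-zero direct factor supported on a proper closed subset: if $j\colon U\smallsetminus(U\cap Z)\hookrightarrow U$ is the dense open complement, the unit map $\bql\to j_{*}j^{*}\bql$ is injective, so $\Gamma_{U\cap Z}(\bql)=0$. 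Consequently, if $Z\subsetneq S$ (so that $U\cap Z\subsetneq U$), the non-zero direct factor $i_{*}\mathcal{L}$ cannot exist, a contradiction. Therefore $Z=S$.

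The truly substantive step is the preliminary verification that the hypotheses of the abstract support theorem (Theorem~\ref{support}) genuinely hold in this singular situation, and the hardest point there is the pointwise freeness on the Pontryagin algebra of the Tate module, which is exactly what the g\'eom\'etric presentation of Proposition~\ref{edulcoree} was designed to supply. Once that is in place, the argument above is a clean combination of (i) the support inequality forced by $\delta$-r\'egularit\'e, (ii) the cas d'\'egalit\'e producing a non-trivial local system direct factor in $H^{2d}$, and (iii) the stable-part calculation $H^{2d}(f_{*}IC_{M})_{st}\cong\bql$, which rigidly forces $Z$ to equal the whole (irreducible) base.
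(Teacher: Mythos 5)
Your proposal is correct and follows essentially the same route as the paper: $\delta$-r\'egularit\'e over $S^{bon}$ combined with the support theorem forces $\codim(Z)=\delta_{Z}$, the case of equality produces a local system $i_{*}\mathcal{L}$ as a direct factor of $H^{2d}(f_{*}IC_{M})_{st}$ over an open $U$, and the identification of that top cohomology sheaf with the constant sheaf $\bql$ (Proposition \ref{stabmax}) rules out any proper closed support, whence $Z=S$. The only difference is that you spell out the verification of the hypotheses of the abstract support theorem and the "no direct factor on a proper closed subset" argument, which the paper leaves implicit by referring to Ng\^o's proof.
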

\begin{proof}
La preuve est la même que celle de Ngô \cite[7.8.3]{N}.
Comme $P$ est $\delta$-régulier au-dessus de $S^{bon}$, il résulte du théorème du support que $\codim(Z)=\delta_{Z}$. 
En particulier, par le théorème \ref{support}, il existe un ouvert $U$ de $S$ et un système local $L$ sur $U\cap Z$ tel que $i_{*}L$, avec $i:U\cap Z\rightarrow U$, soit un facteur direct de la restriction de $H^{2d}(f_{*}IC_{M})$ à $U$, dont on sait qu'il est constant d'après la proposition \ref{stabmax}, d'où $U\cap Z=U$ et donc $Z=S$.
\end{proof}
Passons à la preuve du théorème \ref{detsupport}:
\begin{proof}
Soit $Z\in\supp(f_{*}(IC_{M}))_{\kappa}$. D'après la proposition \ref{kappa}, on sait que $Z\subset\tabdeth$.
On rappelle d'après la définition \ref{changement} que nous avons:
\begin{center}
$\tabdeth=\bigcup\limits_{\mu\in\soc_{H}(\la)}\mathcal{A}_{\mu,H}$.
\end{center}
Soit alors $\mu\in\soc_{H}(\la)$ tel que $Z\cap\tilde{\mathcal{A}}_{\mu,H}^{bon}\neq\emptyset$, alors
\begin{center}
$\codim(Z)\geq\delta_{H,Z}$.
\end{center}
Pour tout $a_{H}\in\tilde{\mathcal{A}}_{\mu,H}\subset\tabdeth$, la différence
\begin{center}
$\delta(a_{H})-\delta_{H}(a_{H})$
\end{center}
est indépendante de $a_{H}$ et vaut la codimension de $\tilde{\mathcal{A}}_{\mu,H}$ dans $\tabd$ d'après le corollaire \ref{indep}.
On obtient alors
\begin{center}
$\codim(Z)\geq\delta(a_{H})+\codim(\tilde{\mathcal{A}}_{\mu,H})=\delta_{Z}$
\end{center}
On applique alors à nouveau le cas d'égalité du théorème $\ref{support}$, pour obtenir de la même manière que dans le théorème précédent que $Z=\tabdeth$.
\end{proof}

\subsection{Extension du théorème du support}
Jusqu'à maintenant pour le théorème du support, nous avons supposé que $G_{der}$ était simplement connexe. Nous allons maintenant expliquer les modifications nécessaires pour obtenir le cas général. On suppose $G$ semisimple, pour simplifier, le cas réductif est analogue en considérant un groupe $m:G'\rightarrow G$ tel que $G'$ vérifie $G'_{der}$ simplement connexe et $m$ une isogénie.

Dans le cas où $G$ est semisimple la fibration $\cmdan\rightarrow\abdan$ est incommode et il convient de la remplacer par la fibration $\cmdaan\rightarrow\abdaan$.
On rappelle que dans la section \ref{hitaug}, nous avons obtenu un diagramme commutatif:
$$\xymatrix{\cmdaan\ar[r]^{p}\ar[d]_{f^{aug,ani}}&\cmdan\ar[d]^{f^{ani}}\\\abdaan\ar[r]^{p}&\abdan}$$
où les flèches horizontales sont finies, surjectives, génériquement étales et les flèches verticales propres.
En particulier, on a :
\begin{center}
$p_{*}IC_{\cmdaan}=IC_{\cmdan}$.
\end{center}
On dispose de l'action d'un champ de Picard $\cPaan$ sur $\cmdaan$ qui induit une action par le lemme d'homotopie de $\pi_{0}(\cP^{aug})$ sur $f_{*}^{aug,ani}IC_{\cmdaan}$, en particulier, on peut définir pour chaque $\kappa\in X_{*}(T)$, on peut définir un facteur
$(f_{*}^{aug,ani}IC_{\cmdaan})_{\kappa}$. 
On définit alors  pour $\kappa\in X_{*}(T)$, le facteur direct $(f_{*}^{ani}IC_{\cmdan})_{\kappa}$ de $f_{*}^{ani}IC_{\cmdan}$ par:
\begin{center}
$(f_{*}^{ani}IC_{\cmdaan})_{\kappa}=p_{*}(f_{*}^{aug,ani}IC_{\cmdaan})_{\kappa}$. 
\end{center}
On applique alors de la même manière que dans le cas simplement connexe le théorème \ref{support} à $f^{aug,ani}:\cmdaan\rightarrow\abdaan$, que l'on projette ensuite par la flèche $p$ pour obtenir le théorème \ref{detsupport} dans le cas semisimple.
\subsection{Stabilisation géométrique}\label{stabI}
On note toujours $\tilde{\nu}$ l'application composée:
\begin{center}
$\tilde{\nu}:\tilde{\mathcal{A}}_{\eta^{*}\la,H}\rightarrow\tabd$
\end{center}
On note 
\begin{center}
$\tilde{f}_{H}:\overline{\cm}^{ani,\flat}_{\eta^{*}\la,H,\infty}\rightarrow\tabdeth$
\end{center}
la fibration de Hitchin pour $H$. Nous voulons identifier en fonction des groupes endoscopiques, la $\kappa$-composante du complexe $(\tilde{f}^{ani}_{*}IC)$. On a une flèche canonique $\eta:\hat{H}\rightarrow\hat{G}$ (comme l'endoscopie est déployée) d'où l'on déduit un foncteur de restriction:
\begin{center}
$\eta^{*}:\Rep(\hat{G})\rightarrow\Rep(\hat{H})$.
\end{center}
En particulier, par l'isomorphisme de Satake, on obtient un morphisme entre les algèbres de Hecke sphériques:
\begin{center}
$b:\cH_{G}\rightarrow\cH_{H}$.
\end{center}
Pour $\la\in X_{*}(T)^{+}$, soit $V_{\la}$ la représentation irréductible de plus haut poids $\la$, nous avons donc l'égalité:
\begin{center}
$b(f_{\la})=f_{\la}^{H}+\sum\limits_{\mu\in\soc_{H}(\la)}m_{\mu}f_{\mu}^{H}\in\cH_{H}$,
\end{center}
où $\soc_{H}(\la)$ désigne les cocaractères dominants de $H$ qui interviennent dans la décomposition de la représentation $\eta^{*}V_{\la}$.
On regarde alors la strate de la grassmannienne affine:
\begin{center}
$\overline{\Gr}_{\eta^{*}\la}:=\coprod\limits_{\mu\in\soc_{H}(\la)}\overline{\Gr}_{\mu}$
\end{center}
et le faisceau pervers pur $S_{\eta^{*}\la,H}:=\bigoplus\limits_{\mu\in\soc_{H}(\la)}IC_{\overline{\Gr}_{\mu}}^{\oplus m_{\mu}}$ dessus.
Maintenant, il résulte de la proposition \ref{bemolisse} que l'on a une flèche lisse:
\begin{center}
$\theta^{\flat}:\cmdb\rightarrow[\overline{\Gr}'_{\la}/G'_{N}]$
\end{center}
avec $G_{N}':=\Res_{N-n_{t}[t]/k}G$ où $t$ est le point auxiliaire. On tire donc ce faisceau par $\theta^{\flat}$ pour obtenir un faisceau pervers pur sur $\overline{\cm}_{\eta^{*}\la,H}$ noté de la même manière. Le théorème de transversalité nous dit que 
\begin{center}
$S_{\eta^{*}\la,H}:=\prod\limits_{\mu\in\soc_{H}(\la)}IC_{\overline{\mathcal{\cm}}^{\flat}_{H,\mu}}^{\oplus m_{\mu}}$.
\end{center}
\begin{thm}\label{stab}
On suppose $G_{der}$ simplement connexe déployé dont l'endoscopie est déployée. Il existe un isomorphisme défini sur $k$ entre les faisceaux pervers gradués:
\begin{center}
$\bigoplus\limits_{n}\mathstrut^{p}H^{n}(\tilde{f}^{ani}_{*}IC)_{\kappa}$
\end{center}
et
\begin{center}
$\bigoplus\limits_{n}\mathstrut^{p}H^{n}(\tilde{f}_{H,*}^{ani}S_{\eta^{*}\la,H})_{st}$.
\end{center}
\end{thm}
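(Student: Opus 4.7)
Pour prouver ce théorème, mon plan est d'abord d'appliquer le théorème de détermination des supports \ref{detsupport} aux deux membres de l'isomorphisme attendu. Du côté gauche, la proposition \ref{kappa} garantit que tout support $Z\in\supp((\tilde{f}^{ani,\flat}_{*}IC_{\cmd})_{\kappa})$ est contenu dans $\tabdeth$, et le théorème \ref{detsupport} affirme que si $Z$ rencontre l'ouvert $\tabdethb$, alors $Z=\tabdethanf$. Du côté droit, on applique le même résultat à la fibration de Hitchin pour $H$ appliquée au complexe d'intersection $S_{\eta^{*}\la,H}$ : en utilisant la décomposition \eqref{restri} et le théorème de transversalité, $\tilde{f}_{H,*}^{ani}S_{\eta^{*}\la,H}$ se scinde en morceaux provenant des différents $\mu\in\soc_{H}(\la)$, et la composante stable n'admet que le support plein $\tabdeth$.

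Une fois les supports identifiés, les deux complexes se réduisent, par la décomposition de Beilinson-Bernstein-Deligne-Gabber et la semisimplicité du faisceau pervers $\mathstrut^{p}H^{n}(\tilde{f}^{ani}_{*}IC_{\cmd})_{\kappa}$, à des faisceaux pervers de la forme $\tilde{\nu}_{*}L[\dim\tabdeth]$ pour un certain système local $L$ sur un ouvert dense de $\tabdeth$. Il suffit donc de construire un isomorphisme entre les deux membres sur un ouvert convenable de $\tabdethb$ dans lequel la stratification à $\delta$ constant se comporte bien, puis de l'étendre à l'adhérence par unicité du prolongement intermédiaire. La première étape se termine en invoquant la proposition \ref{stabmax} qui identifie la cohomologie ordinaire en degré maximal aux faisceaux constants appropriés (tirés en arrière par $\tilde{\nu}_{H}$).

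Pour la construction de l'isomorphisme sur un tel ouvert, j'utiliserais le dictionnaire faisceaux-fonctions sur un corps fini $k$ : il s'agit d'établir l'égalité des traces du Frobenius, c'est-à-dire une identité numérique entre points de fibres de Hitchin. La formule du produit \ref{produithitchin} réduit le comptage global au comptage local sur les fibres de Springer affines en chaque point de ramification, accompagné de la description galoisienne de $\pi_{0}(\cP_{a})$ donnée dans la proposition \ref{ncl}. La $\kappa$-décomposition en fibres géométriques transforme alors le comptage en $\kappa$-intégrales orbitales $\bo^{\kappa}_{a}(\phi_{\la})$ pondérées par les facteurs $(-1)^{\val(\kD_{G}(a))}\Delta_{G}(a)$ provenant du complexe d'intersection via le théorème de transversalité \ref{transverse}. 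Côté endoscopique, le même procédé appliqué au faisceau pervers $S_{\eta^{*}\la,H}$ fournit les intégrales orbitales stables $\so_{a_{H}}(b(\phi_{\la}))$ de la fonction de changement de base.

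L'obstacle principal sera précisément l'identité locale entre intégrales orbitales du théorème \ref{2}, qui est l'analogue groupal du lemme fondamental et exprime l'égalité recherchée pondérée par les facteurs de transfert. Le point délicat consiste à contrôler la contribution de toutes les strates $\mathcal{A}_{\mu,H}$ avec $\mu\in\soc_{H}(\la)$ (et pas uniquement la strate principale $\mu=\la$) ainsi qu'à traiter le cas où $G_{der}$ n'est pas simplement connexe en passant par la fibration augmentée $\cmdaa$ de la section \ref{hitaug}, pour laquelle on dispose d'un champ de Picard $\cPaan$, d'une formule du produit et d'une stratification à $\delta$ constant compatibles avec celles de la fibration originelle, via le morphisme fini surjectif $p:\cmdaan\rightarrow\cmdan$ qui satisfait $p_{*}IC_{\cmdaan}=IC_{\cmdan}$.
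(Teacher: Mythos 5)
Votre stratégie coïncide pour l'essentiel avec celle du texte : application du théorème du support \ref{detsupport} aux deux membres, réduction au prolongement intermédiaire depuis un ouvert dense de $\tabdethb$, pureté et théorème de Chebotarev pour ramener l'isomorphisme de faisceaux pervers gradués à une égalité de traces de Frobenius en tout point, formule du produit \ref{produithitchin} et proposition \ref{ncl} pour localiser le comptage sur les fibres de Springer affines, et recours à la fibration augmentée de la section \ref{hitaug} lorsque le groupe dérivé n'est pas simplement connexe. Ce cadre de réduction est correct et identique à celui du papier.

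Le point où votre proposition reste en deçà d'une preuve est précisément celui que vous désignez comme « l'obstacle principal » : l'identité locale n'est pas démontrée, et elle porte tout le contenu de l'énoncé. Attention de surcroît à une circularité dans votre formulation : vous invoquez l'identité du théorème \ref{2}, or celle-ci est dans le texte une \emph{conséquence} de la stabilisation géométrique ; l'ingrédient effectif est la proposition \ref{bon} (et le corollaire \ref{rr}), établie directement par un comptage géométrique. Pour les strates $\mu\in\soc_{H}(\la)$ avec $\mu\neq\la$ (Prop. \ref{comptIII}), ce comptage exige trois ingrédients que votre plan ne mentionne pas : la réduction de la fibre $\kX^{\la}_{\mu}$ à la cellule de Mirkovic--Vilonen $S_{\mu}\cap\overline{\Gr}_{\la}$ via l'automorphisme $f_{\g}$ de $U(F)$ ; le théorème de Ngô--Polo sur $R\Gamma_{c}(S_{\mu},\mathcal{A}_{\la})$ qui fournit la valeur $(-1)^{\left\langle 2\rho,\mu\right\rangle}m_{\la\mu}q^{\left\langle \rho,\mu\right\rangle}$ de la fonction de Kazhdan--Lusztig sommée sur la cellule ; et la détermination explicite de $\cP(J_{a})$ comme extension de $X_{*}(T)$ par $\mathbb{G}_{a}^{\left\langle \rho,\la-\mu\right\rangle}$ (Lem. \ref{centII}), qui annule $H^{1}(k,\cP(J_{a}))$, identifie la $\kappa$-intégrale à l'intégrale ordinaire et donne la comparaison de volumes $\vol(J_{a}^{0}(\co))=q^{\left\langle \rho,\la-\mu\right\rangle}\vol(J^{aug,0}_{H,a'_{H}}(\co))$ d'où sort le décalage $q^{r^{G}_{H,v}(a_{H})-\rho_{H}^{G}(\la_{v})}$. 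Sans ces trois points, l'égalité des traces sur l'ouvert dense — et donc le théorème — n'est pas établie.
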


$\rmq$ Quitte à changer $X$ en  $X\otimes_{k}k'$, pour une extension finie $k'$ de $k$, et étant donné qu'une donnée endoscopique $(\kappa,\rho_{\kappa})$ sur $\bar{X}$ est définie sur une extension finie de $k$, on a le résultat analogue sur $\bar{k}$.
Nous donnerons la preuve de cet énoncé dans la section \ref{final1}.
\section{Comptage de points}
Sauf mention explicite, on suppose que  $G_{der}$ est simplement connexe et que son endoscopie est déployée.
\subsection{Comptage dans une fibre de Springer affine}
Comme nous sommes dans une situation locale, on suppose de plus que $G$ est simplement connexe.
Soit $v\in\left|X\right|$ un point fermé de $X$, on considère $F_{v}$ la complétion $v$-adique du corps de fonctions de $X$, d'anneau d'entiers $\co_{v}$, $k$ le corps résiduel de cardinal $q_{v}:=q^{\deg(v)}$. On désigne par $\bar{F}_{v}$ la complétion $v$-adique de $\bar{F}$, d'anneau d'entiers $\bar{\co}_{v}$ et on se donne une uniformisante $\pi_{v}$ (resp. $\pi_{\bar{v}}$) de $\co_{v}$ (resp. $\cobv$).
Soit $\la\in X_{*}(T)^{+}$ un cocaractère dominant.
Soit $a\in\kc^{\la,\heartsuit}(\co_{v})$, un point entier de fibre générique régulière semisimple. La fibre de Springer affine réduite $\overline{\mathcal{M}}^{red}_{\la,v}(a)$ est un $k$-schéma localement de type fini dont l'ensemble des $\bar{k}$-points est
\begin{center}
$\overline{\mathcal{M}}^{red}_{\la,v}(a):=\{g\in G(\bfv)/G(\cobv)\vert \ad(g)^{-1}\g_{0}\in \overline{G(\cobv)\pi_{\bv}^{\la}G(\cobv)}\}$
\end{center}
avec $\g_{0}=\eps_{+}(a)$.
Les nilpotents étant  inutiles pour le comptage, nous supprimons l'exposant \og réd\fg~ dans $\overline{\mathcal{M}}^{red}_{\la,v}(a)$, ainsi que pour tous les  autres schémas où ce sera nécessaire.

Posons $J_{a}=a^{*}J$; par la suite nous aurons à considérer un schéma en groupes $J_{a}'$ muni d'un morphisme $J_{a}'\rightarrow J_{a}$ qui est un isomorphisme sur la fibre générique, typiquement $J_{a}^{0}$, mais pas seulement.
Nous avons le $k$-schéma en groupes localement de type fini
\begin{center}
$\cP_{v}(J'_{a})=J_{a}(\fv)/J_{a}'(\cov)$.
\end{center}
La flèche $J_{a}'\rightarrow J_{a}$ permet de définir une action de $\cP_{v}(J'_{a})$ sur $\cma$ qui vérifie bien les hypothèses \cite[Hyp.8.2.1]{N}, nécessaires pour compter.
On commence par un petit lemme
\begin{lem}
Si la fibre spéciale de $J_{a}'$ est connexe, on a un isomorphisme
\begin{center}
$H^{1}(F_{v},J_{a})=H^{1}(k,\cP_{v}(J_{a}'))$.
\end{center}
\end{lem}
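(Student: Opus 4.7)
The hypothesis that $J_{a}'$ has connected special fiber, combined with smoothness of $J_{a}'$ and Lang's theorem, implies that every $J_{a}'$-torsor over $\bar{\mathcal{O}}_v$ is trivial (Lang trivializes the torsor on the special fiber, Hensel lifts the trivialization). Consequently, by the very definition of $\cP_{v}(J_{a}')$ as classifying $J_{a}'$-torsors on the formal disk together with a generic trivialization, we have the short exact sequence of $\Gamma := \Gal(\bar{k}/k) = \hat{\mathbb{Z}}$-modules
\[
1 \to J_{a}'(\bar{\mathcal{O}}_{v}) \to J_{a}(\bar{F}_{v}) \to \cP_{v}(J_{a}')(\bar{k}) \to 1.
\]
The plan is to take the long exact sequence in $\Gamma$-cohomology and observe that, since $\mathrm{cd}(\Gamma) = 1$, the boundary into $H^{2}$ vanishes, so the desired isomorphism reduces to proving $H^{1}(\Gamma, J_{a}'(\bar{\mathcal{O}}_{v})) = 0$.

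I would establish this vanishing in two steps, via the filtration by congruence subgroups. Smoothness of $J_{a}'$ gives a surjection $J_{a}'(\bar{\mathcal{O}}_{v}) \twoheadrightarrow J_{a}'(\bar{k})$ whose kernel is a projective limit of the $\bar{k}$-vector groups $\Lie(J_{a}') \otimes \pi^{n}\bar{k}[[\pi]]/\pi^{n+1}$. On the quotient $J_{a}'(\bar{k})$, connectedness of the special fiber together with Lang's theorem yields $H^{1}(\Gamma, J_{a}'(\bar{k})) = 0$. On each graded piece of the kernel, surjectivity of $\mathrm{Frob} - \mathrm{id}$ on $\bar{k}$ gives $H^{1}(\Gamma, V) = 0$ for any $\bar{k}$-vector space $V$, and the Mittag-Leffler property of the system allows us to conclude $H^{1}$ of the whole kernel vanishes. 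Combining, $H^{1}(\Gamma, J_{a}'(\bar{\mathcal{O}}_{v})) = 0$, and the long exact sequence collapses to $H^{1}(\Gamma, J_{a}(\bar{F}_{v})) \xrightarrow{\sim} H^{1}(\Gamma, \cP_{v}(J_{a}')(\bar{k})) = H^{1}(k, \cP_{v}(J_{a}'))$.

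The main obstacle is the identification of $H^{1}(\Gamma, J_{a}(\bar{F}_{v}))$ with the full Galois cohomology $H^{1}(F_{v}, J_{a})$. For this I would invoke Hochschild--Serre inflation--restriction
\[
0 \to H^{1}(\Gamma, J_{a}(\bar{F}_{v})) \to H^{1}(F_{v}, J_{a}) \to H^{1}(\bar{F}_{v}, J_{a})^{\Gamma},
\]
and argue that the right-hand term vanishes. In the present setting $J_{a}\vert_{F_{v}}$ is a torus (since the generic fiber of $a$ is regular semisimple), and by the Galois description \ref{galois} it splits over the cameral extension $\bar{F}_{v}$ as a product of induced tori, so that $H^{1}(\bar{F}_{v}, J_{a}) = 0$ by Hilbert 90. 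This matches the convention implicit in analogous local computations in \cite[sect. 8.2]{N}, which provides a template for the complete argument.
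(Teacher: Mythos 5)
Votre démonstration suit essentiellement la même route que celle du papier, qui se réduit à deux ingrédients : le théorème de Lang pour l'annulation de $H^{1}(k,J_{a}'(\mathcal{O}_{v}))$ (c'est exactement votre dévissage par les sous-groupes de congruence, la connexité de la fibre spéciale servant au cran résiduel) et le théorème de Steinberg pour l'annulation de $H^{1}(\bar{F}_{v},J_{a})$, combinés via la suite exacte longue et l'inflation-restriction. Le seul point à corriger est votre justification de $H^{1}(\bar{F}_{v},J_{a})=0$ : sur la fibre générique, $J_{a}$ n'est pas un produit de tores induits mais les points fixes sous $W$ d'un tore induit (cf.\ la description galoisienne), de sorte que Hilbert 90 et Shapiro ne s'appliquent pas directement ; l'argument correct est le théorème de Steinberg (conjecture I de Serre) pour un groupe connexe sur le corps $\bar{F}_{v}=\bar{k}((\pi))$, de dimension cohomologique $1$ — c'est précisément ce que cite le papier.
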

\begin{proof}
C'est une application du théorème de Steinberg sur l'annulation de $H^{1}(\bfv,J_{a})$ et de celui de Lang sur l'annulation de $H^{1}(k,J_{a}'(\co_{v}))$.
\end{proof}
On a une application;
\begin{center}
$\cma\rightarrow [G(\cov)\backslash\overline{\Gr}_{\la,v}]$
\end{center}
donnée par $g\mapsto g^{-1}\g g$. On considère alors  le faisceau $K$ sur la fibre de Springer affine qui s'obtient en tirant $IC_{\overline{\Gr}_{\la,v}}$.
On considère la fonction de Kazhdan-Lusztig:
\begin{center}
$f_{\la,v}(x):=\Tr(Fr_{x}, IC_{\overline{\Gr}_{\la},x})$.
\end{center}
On rappelle la formule suivante qui relie la fonction de Kazhdan-Lusztig à l'autre base de l'algèbre de Hecke sphérique \cite{Lusz3}:
\begin{equation}
f_{\la,v}=(-1)^{\left\langle 2\rho,\la\right\rangle}q_{v}^{-\left\langle \rho,\la\right\rangle}(c_{\la,v}+\sum\limits_{0\leq\mu<\la}a_{\la\mu}c_{\mu,v})
\label{luz3}
\end{equation}
où $c_{\la,v}$ sont les fonctions caractéristiques des strates $\Gr_{\la,v}$.

\begin{prop}\label{connexe}
Supposons la fibre spéciale de $J_{a}'$ connexe.
Soit $\kappa:H^{1}(F_{v},J_{a})\rightarrow\bar{\mathbb{Q}}_{l}^{*}$, on a la formule suivante : 
\begin{center}
$\sum\limits_{x\in[\cma/\cP_{v}(J'_{a})](k)}\Tr(\Fr_{\bar{x}}, K_{\bar{x}})_{\kappa}=\vol(J_{a}'(\co_{v}),dt_{v})\bo^{\kappa}_{a}(f_{\la,v},dt_{v})$
\end{center}
où $f_{\la,v}$ est la fonction associée à $\la$ dans la base de Kazhdan-Lusztig de l'algèbre de Hecke sphérique.
\end{prop}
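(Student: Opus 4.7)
Pour �tablir cette formule, je me propose d'appliquer la formule des traces de Grothendieck-Lefschetz au champ quotient $[\cma/\cP_{v}(J_{a}')]$, combin�e avec la correspondance faisceaux-fonctions. La premi�re �tape consistera � param�trer les classes d'isomorphisme du groupo�de $[\cma/\cP_{v}(J_{a}')](k)$ : par torsion int�rieure � la Kottwitz, ce sont les orbites de $\cP_{v}(J_{a}')(\bar{k})$ sur $\cma(\bar{k})$ stables sous Frobenius, qui s'identifient � un sous-ensemble de $H^{1}(k,\cP_{v}(J_{a}'))$. L'hypoth�se de connexit� de la fibre sp�ciale de $J_{a}'$ interviendra via le lemme pr�c�dent pour identifier canoniquement ce dernier � $H^{1}(F_{v},J_{a})$, et associer ainsi � chaque classe $\xi$ un repr�sentant $\g_{\xi}\in G(F_{v})$ conjugu� stablement � $\g_{0}$.

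Pour chaque $\xi\in H^{1}(F_{v},J_{a})$, on exprimera alors la contribution de l'orbite correspondante comme int�grale orbitale ordinaire : le cardinal des $k$-points de la fibre tordue par $\xi$, pond�r� par le groupe d'automorphismes ponctuel, vaudra $\vol(J_{a}'(\co_{v}),dt_{v})\bo_{\g_{\xi}}(f_{\la,v},dt_{v})$ pour un choix compatible de mesure. Ensuite, la trace du Frobenius sur la fibre $K_{\bar{x}}$ en un point $x$ correspondant � $g_{x}\in G(F_{v})$ sera identifi�e � $f_{\la,v}(g_{x}^{-1}\g_{\xi}g_{x})$ : ceci d�coulera du changement de base propre appliqu� � la fl�che $\cma\rightarrow[G(\co_{v})\backslash\overline{\Gr}_{\la,v}]$ donn�e par $g\mapsto g^{-1}\g g$, combin� � la d�finition m�me de $f_{\la,v}$ comme fonction associ�e � $IC_{\overline{\Gr}_{\la,v}}$ par le dictionnaire faisceaux-fonctions.

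L'assemblage par la formule des traces, suivi de la projection sur la composante $\kappa$-isotypique, fournira alors l'�galit�
\begin{center}
$\sum\limits_{\xi\in H^{1}(F_{v},J_{a})}\kappa(\xi)\bo_{\g_{\xi}}(f_{\la,v},dt_{v})=\bo^{\kappa}_{a}(f_{\la,v},dt_{v})$,
\end{center}
d'o� le r�sultat apr�s multiplication par $\vol(J_{a}'(\co_{v}),dt_{v})$. L'obstacle principal sera le contr�le minutieux des normalisations : v�rifier que les facteurs de volume provenant des stabilisateurs dans le comptage groupo�dique s'accordent pr�cis�ment avec $\vol(J_{a}'(\co_{v}),dt_{v})$, et s'assurer que le transport du caract�re $\kappa$ entre $H^{1}(F_{v},J_{a})$ et $H^{1}(k,\cP_{v}(J_{a}'))$ via le lemme de connexit� est pleinement compatible avec la d�composition isotypique des fibres de $K$ sous $\pi_{0}(\cP_{v}(J_{a}'))$.
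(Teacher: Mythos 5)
Votre proposition suit essentiellement la m�me d�marche que la preuve du texte (elle-m�me calqu�e sur celle de Ng� pour l'alg�bre de Lie) : d�composition du groupo�de $[\cma/\cP_{v}(J'_{a})](k)$ selon les classes dans $H^{1}(k,\cP_{v}(J'_{a}))=H^{1}(F_{v},J_{a})$, identification de chaque sous-cat�gorie $O_{\xi}$ � un ensemble de doubles classes donnant $\vol(J_{a}'(\co_{v}),dt_{v})\bo_{\g_{\xi}}(f_{\la,v},dt_{v})$ via le tordu $\g_{\xi}=\ad(g_{\xi})^{-1}\g_{0}$, dictionnaire faisceaux-fonctions pour identifier $\Tr(\Fr_{\bar{x}},K_{\bar{x}})$ � $f_{\la,v}(g^{-1}\g_{\xi}g)$ par changement de base propre le long de $\cma\rightarrow[G(\co_{v})\backslash\overline{\Gr}_{\la,v}]$, puis sommation $\kappa$-pond�r�e. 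C'est correct ; veillez seulement � pr�ciser, comme dans \cite[Lem. 8.2.6]{N}, que les classes $\cl(x)$ tombent dans $\Ker(H^{1}(F_{v},I_{\g_{0}})\rightarrow H^{1}(F_{v},G))$, ce qui garantit l'existence des repr�sentants $g_{\xi}\in G(F_{v})$.
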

\begin{proof}
On reprend la preuve de Ngô, la seule différence notable vient du faisceau.

Un point $x\in[\cma/\cP_{v}(J'_{a})](k)$ consiste en un couple $(m,p)$ formé d'un élément $m\in\cma$ et d'un élément $p\in\cP_{v}(J'_{a})$ tels qu'on ait $p\sigma(m)=m$. Une flèche entre $(m,p)$ et $(m',p')$ dans cette catégorie consiste en un seul élément $h\in\cP_{v}(J'_{a})$ tel que $m'=hm$ et $p'=hp\sigma(h)^{-1}$.
Pour chaque objet $x=(m,p)$ la $\sigma$-classe de conjugaison, définit un élément 
\begin{center}
$\cl(x)\in H^{1}(k,\cP_{v}(J'_{a}))=H^{1}(F_{v},J_{a})$.
\end{center}
Soit $\g_{0}$ l'image de $a$ dans $V_{G}$ par la section de Steinberg, on a un isomorphisme canonique entre $J_{a}$ et $I_{\g_{0}}$ et donc d'après \cite[Lem. 8.2.6]{N}, la classe $\cl(x)$ appartient en fait à
\begin{center}
$\Ker(H^{1}(F_{v},I_{\g_{0}})\rightarrow H^{1}(F_{v},G))$.
\end{center}
Pour $\xi\in \Ker(H^{1}(F_{v},I_{\g_{0}})\rightarrow H^{1}(F_{v},G))$, on considère la sous-catégorie $[\cma/\cP(J'_{a})]_{\xi}(k)$ des objets de $[\cma/\cP(J'_{a})](k)$ tels que $\cl(x)=\xi$.
Fixons $j_{\xi}\in I_{\g_{0}}$, la catégorie $[\cma/\cP(J'_{a})]_{\xi}(k)$ est alors équivalente à la catégorie $O_{\xi}$ (cf.\cite[p. 145]{N} pour les détails) dont les objets sont les éléments $g\in G(\bfv)/G(\co_{v})$:
\begin{itemize}
\item
$g^{-1}j_{\xi}\sigma(g)=1$
\item
$\ad(g)^{-1}\g_{0}\in \overline{G(\cobv)\pi_{\bv}^{\la}G(\cobv)}$
\end{itemize}
et dont les flèches $g\rightarrow g_{1}$ sont les éléments $h\in J_{a}(F_{v})/J'_{a}(\co_{v})$ tels que $g=hg_{1}$, où on a utilisé l'isomorphisme entre $J_{a}$ et $I_{\g_{0}}$ pour faire agir $h$ à gauche.
Comme $\xi$ est d'image triviale dans $H^{1}(F_{v},G)$, il existe $g_{\xi}\in G(F_{v})$ tel que 
$g_{\xi}^{-1}j_{\xi}\sigma(g_{\xi})=1$. Fixons un tel $g_{\xi}$ et en posant 
\begin{center}
$\g_{\xi}=\ad(g_{\xi})^{-1}\g_{0}$,
\end{center}
on peut réinterpréter la catégorie $O_{\xi}$ comme la catégorie dont les objets sont les éléments $g\in G(F_{v})/G(\co_{v})$ vérifiant:
\begin{center}
$\ad(g)^{-1}\g_{\xi}\in \overline{G(\co_{v})\pi_{v}^{\la}G(\co_{v})}$
\end{center}
toujours avec les mêmes flèches.
L'ensemble des classes d'isomorphismes de $O_{\xi}$ est l'ensemble des doubles classes
\begin{center}
$g\in I_{\g_{\xi}}(F_{v})\backslash G(F_{v})/G(\co_{v})$
\end{center}
telles que $\ad(g)^{-1}\g_{\xi}\in \overline{G(\co_{v})\pi_{v}^{\la}G(\co_{v})}$.
Le groupe des automorphismes est le groupe
\begin{center}
$(I_{\g_{\xi}}(F_{v})\cap g G(\co_{v})g^{-1})/J_{a}'(\co_{v})$
\end{center}
dont le cardinal est donné par le quotient de volumes
\begin{center}
$\displaystyle{\frac{\vol(I_{\g_{\xi}}(F_{v})\cap gG(\co_{v})g^{-1},dt_{v})}{\vol(J_{a}'(\co_{v}),dt_{v})}}$
\end{center}
On obtient donc 
\begin{center}
$\displaystyle{\sharp O_{\xi}=\sum\frac{\vol(I_{\g_{\xi}}(F_{v})\cap g G(\co_{v})g^{-1},dt_{v})}{\vol(J_{a}'(\co_{v})),dt_{v}}}$
\end{center}
où l'on somme sur l'ensemble des doubles classes 
\begin{center}
$g\in I_{\g_{\xi}}(F_{v})\backslash G(F_{v})/G(\co_{v})$
\end{center}
telles que $\ad(g)^{-1}\g_{\xi}\in \overline{G(\co_{v})\pi_{v}^{\la}G(\co_{v})}$.
Pour terminer le calcul, il ne nous reste plus qu'à identifier la fonction $\Tr(\Fr_{\bar{x}},K_{\bar{x}})$ pour $x$ un $k$-point de la fibre de Springer affine et nous avons alors par définition:
\begin{center}
$\Tr(\Fr_{\bar{x}},K_{\bar{x}})=f_{\la,v}(g^{-1}\g_{0}g)$,
\end{center}
d'où l'on déduit la formule:
\begin{center}
$\sum\limits_{x\in[\cma/\cP_{v}(J'_{a})](k)}\Tr(\Fr_{\bar{x}}, K_{\bar{x}})_{\kappa}=\vol(J_{a}'(\co_{v},dt_{v}))\bo^{\kappa}_{a}(f_{\la},dt_{v})$
\end{center}
comme souhaité.
\end{proof}

Il nous faut maintenant passer de $J_{a}'$ à $J_{a}$.
Pour $\kappa:H^{1}(k,\cP_{v}(J_{a}))\rightarrow\bar{\mathbb{Q}}_{l}^{*}$, en utilisant la flèche 
\begin{center}
$H^{1}(k,\cP_{v}(J_{a}^{0}))\rightarrow H^{1}(k,\cP_{v}(J_{a}))$,
\end{center}
on obtient un caractère de $H^{1}(k,\cP_{v}(J_{a}^{0}))=H^{1}(F_{v},J_{a})$, noté de la même manière.

\begin{prop}\label{kappa1}
Soit $\kappa$ un caractère de $H^{1}(k,\cP_{v}(J_{a}))$ et notons $\kappa:H^{1}(F_{v},J_{a})\rightarrow\bar{\mathbb{Q}}_{l}^{*}$ le caractère qui s'en déduit, alors nous avons la formule suivante avec la $\kappa$-pondération 
\begin{center}
$\sum\limits_{x\in[\cma/\cP_{v}(J_{a})](k)}\Tr(\Fr_{\bar{x}}, K_{\bar{x}})_{\kappa}=\vol(J_{a}^{0}(\co_{v}),dt_{v})\bo^{\kappa}_{a}(f_{\la,v},dt_{v})$
\end{center}
De plus, si le caractère $\kappa:H^{1}(F_{v},J_{a})\rightarrow\bar{\mathbb{Q}}_{l}^{*}$ ne provient pas de $H^{1}(k,\cP_{v}(J_{a}))$, alors la $\kappa$-intégrale orbitale est nulle.

\end{prop}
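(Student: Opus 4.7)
La strat�gie est de d�duire cet �nonc� de la Proposition \ref{connexe} appliqu�e � $J_{a}'=J_{a}^{0}$, dont la fibre sp�ciale est connexe par construction, puis de descendre le long du morphisme surjectif canonique $\cP_{v}(J_{a}^{0})\to\cP_{v}(J_{a})$.

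Je commencerais par invoquer la Proposition \ref{connexe} avec $J_{a}^{0}$: pour tout caract�re $\tilde{\kappa}:H^{1}(F_{v},J_{a})=H^{1}(k,\cP_{v}(J_{a}^{0}))\to\bql^{*}$, on obtient
\[
\sum_{y\in[\cma/\cP_{v}(J_{a}^{0})](k)}\Tr(\Fr_{\bar{y}},K_{\bar{y}})_{\tilde{\kappa}}=\vol(J_{a}^{0}(\co_{v}),dt_{v})\,\bo^{\tilde{\kappa}}_{a}(f_{\la,v},dt_{v}).
\]
La surjection de sch�mas en groupes $\cP_{v}(J_{a}^{0})\to\cP_{v}(J_{a})$ a pour noyau le groupe fini $J_{a}(\co_{v})/J_{a}^{0}(\co_{v})=\pi_{0}(J_{a})(k)$, et le th�or�me de Lang appliqu� � la composante neutre $J_{a}^{0}(\co_{v})$ fournit, via la suite exacte $1\to J_{a}(\co_{v})\to J_{a}(F_{v})\to\cP_{v}(J_{a})\to 1$, la suite
\[
H^{1}(k,\pi_{0}(J_{a}))\to H^{1}(F_{v},J_{a})\to H^{1}(k,\cP_{v}(J_{a}))\to 0.
\]
Ainsi, les caract�res de $H^{1}(F_{v},J_{a})$ qui proviennent de $H^{1}(k,\cP_{v}(J_{a}))$ sont exactement ceux qui sont triviaux sur l'image de $H^{1}(k,\pi_{0}(J_{a}))$.

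Ensuite, j'analyserais le morphisme de groupo�des $\phi:[\cma/\cP_{v}(J_{a}^{0})](k)\to[\cma/\cP_{v}(J_{a})](k)$ induit par la surjection pr�c�dente, en suivant la pr�sentation explicite des $k$-points employ�e dans la preuve de \ref{connexe}. Les classes d'isomorphismes co�ncident au niveau des objets, tandis que les groupes d'automorphismes s'inscrivent dans une extension contr�l�e par $\pi_{0}(J_{a})(k)$. Pour $\tilde{\kappa}$ descendant en un caract�re $\kappa$ de $H^{1}(k,\cP_{v}(J_{a}))$, la r�organisation de la somme sur $[\cma/\cP_{v}(J_{a}^{0})](k)$ selon les fibres de $\phi$ redonne exactement la somme sur $[\cma/\cP_{v}(J_{a})](k)$ pond�r�e par $\kappa$, ce qui �tablit la premi�re identit�. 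Pour l'assertion de nullit�, on part de la m�me identit� appliqu�e � $J_{a}^{0}$; si $\tilde{\kappa}$ ne provient pas de $H^{1}(k,\cP_{v}(J_{a}))$, alors $\tilde{\kappa}$ est non trivial sur l'image de $H^{1}(k,\pi_{0}(J_{a}))$, et la m�me r�organisation produit une somme interne sur ce groupe ab�lien fini pond�r�e par un caract�re non trivial, qui s'annule par orthogonalit�. Le membre de gauche �tant nul, il en est de m�me de $\bo^{\tilde{\kappa}}_{a}(f_{\la,v})$.

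Le point d�licat sera la comparaison fine des groupes d'automorphismes entre les deux groupo�des: il faut v�rifier que les facteurs combinatoires issus du passage au $\cP_{v}(J_{a})$-quotient s'organisent en une formule propre sans introduire de facteur parasite relativement au volume $\vol(J_{a}^{0}(\co_{v}),dt_{v})$ qui appara�t dans l'�nonc�, et que l'action de $\pi_{0}(J_{a})$ par modification de la classe de cohomologie est compatible avec la d�composition en composantes $\kappa$-isotypiques.
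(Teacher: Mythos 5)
Votre stratégie est exactement celle du texte, qui renvoie à \cite[Prop. 8.2.7]{N} : on applique le cas connexe (Prop. \ref{connexe}) à $J_{a}'=J_{a}^{0}$, puis on compare les deux groupoïdes quotients via le foncteur $[\cma/\cP_{v}(J_{a}^{0})](k)\to[\cma/\cP_{v}(J_{a})](k)$, la fonction $f_{\la,v}$ étant constante sur ses fibres, et l'assertion de nullité s'obtient par orthogonalité des caractères sur le groupe fini $\pi_{0}(J_{a})$. La preuve proposée est correcte et suit essentiellement la même route que celle du papier.
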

\begin{proof}
La preuve est la même que celle de \cite[Prop. 8.2.7]{N}, par comparaison avec le cas connexe et en étudiant la fibre du foncteur 
\begin{center}
$[\cma/\cP_{v}(J_{a}^{0})](k)\rightarrow[\cma/\cP_{v}(J_{a})](k)$
\end{center}
sur laquelle la fonction $f_{\la,v}$ est constante, donc cela n'affecte pas les calculs.
\end{proof}

\begin{cor}\label{cohloc}
Soit $\kappa$ un caractère de $H^{1}(F_{v},J_{a})$ et soit $J_{a}^{\flat,0}$ la composante neutre du modèle de Néron $J_{a}$.  Soit $\Lambda$ un sous-groupe sans torsion $\sigma$-stable de $\cP_{v}(J_{a}^{0})$ vérifiant les hypothèses \cite[Hyp. 8.2.1]{N} qui existe d'après \ref{varproj}, on a l'égalité
\begin{center}
$\sum\limits_{n}(-1)^{n}\Tr(\sigma,H^{n}([\cma/\Lambda],K))_{\kappa}=\vol(J_{a}^{\flat,0}(\co),dt_{v})\bo^{\kappa}_{a}(f_{\la},dt_{v}).$
\end{center}
De plus, si le caractère $\kappa:H^{1}(F_{v},J_{a})\rightarrow\bar{\mathbb{Q}}_{l}^{*}$ ne provient pas de $H^{1}(k,\cP_{v}(J_{a}))$, alors la $\kappa$-intégrale orbitale est nulle.
\end{cor}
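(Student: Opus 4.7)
Le plan est d'appliquer la formule des traces de Grothendieck-Lefschetz au champ quotient $[\cma/\Lambda]$, puis de ramener le calcul \`a la proposition \ref{kappa1} d\'ej\`a \'etablie, qui donne la formule analogue pour le quotient par $\cP_{v}(J_{a})$ tout entier. Le premier point est que $[\cma/\Lambda]$ est un champ de Deligne-Mumford propre : par la proposition \ref{varproj}, $\Lambda$ agit librement (car sans torsion) sur $\overline{\mathcal{M}}_{\la,v}^{red}(a)$ et le quotient $\overline{\mathcal{M}}_{\la,v}^{red}(a)/\Lambda$ est projectif. La formule de Grothendieck-Lefschetz pour un champ de Deligne-Mumford propre donne alors
\[
\sum_{n}(-1)^{n}\Tr(\sigma,H^{n}([\cma/\Lambda],K)) = \sum_{x \in [\cma/\Lambda](k)}\Tr(\Fr_{\bar{x}},K_{\bar{x}}),
\]
la somme de droite \'etant pond\'er\'ee par l'inverse de l'ordre des groupes d'automorphismes.

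Ensuite, je consid\`ererais la projection $q : [\cma/\Lambda] \to [\cma/\cP_{v}(J_{a}^{0})]$, dont les fibres sont les classifiants du groupe quotient $\cP_{v}(J_{a}^{0})/\Lambda$. Un d\'evissage standard des $k$-points groupo\"idaux, combin\'e \`a la suite exacte de cohomologie galoisienne en degr\'e un \`a la Lang, permet de r\'ecrire la somme pr\'ec\'edente comme somme sur $[\cma/\cP_{v}(J_{a}^{0})](k)$ pond\'er\'ee par le cardinal des points $\sigma$-fixes de $\cP_{v}(J_{a}^{0})/\Lambda$. En prenant la $\kappa$-composante (ce qui est licite car l'action de $\pi_{0}$ commute \`a la stratification par classes de cohomologie) et en invoquant la proposition \ref{kappa1}, on obtient une expression de la forme souhait\'ee, modulo l'identification du facteur volumique.

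L'identification cruciale, qui est le point le plus d\'elicat de la preuve, est la relation
\[
|(\cP_{v}(J_{a}^{0})/\Lambda)^{\sigma}| \cdot \vol(J_{a}^{0}(\co_{v}),dt_{v}) = \vol(J_{a}^{\flat,0}(\co_{v}),dt_{v}).
\]
Elle traduit le d\'evissage local analogue \`a la proposition \ref{neron} : le quotient $\cP_{v}(J_{a}^{0})/\Lambda$ s'envoie sur une extension du groupe des composantes du mod\`ele de N\'eron local par une partie affine, et la comptabilisation des points fixes de Frobenius via la description galoisienne locale (analogue \`a \eqref{nergal}) fait appara\^itre pr\'ecis\'ement l'indice $[J_{a}^{\flat,0}(\co_{v}):J_{a}^{0}(\co_{v})]$. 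Ce calcul demandera une analyse fine des composantes connexes des mod\`eles de N\'eron locaux, et c'est essentiellement le point technique o\`u toute l'information sur la diff\'erence entre $J_{a}$ et son mod\`ele de N\'eron intervient.

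Enfin, pour l'assertion d'annulation, je reprendrais l'argument de la proposition \ref{kappa1} : un caract\`ere $\kappa:H^{1}(F_{v},J_{a})\to\bql^{*}$ se factorise par $H^{1}(k,\cP_{v}(J_{a}))$ si et seulement s'il est trivial sur le noyau naturel $\Kern(H^{1}(k,\cP_{v}(J_{a}^{0}))\to H^{1}(k,\cP_{v}(J_{a})))$. Lorsque cette factorisation n'a pas lieu, la $\kappa$-pond\'eration du comptage par classes de cohomologie galoisienne s'annule par orthogonalit\'e des caract\`eres sur les fibres de ce morphisme, d'o\`u la nullit\'e de la $\kappa$-int\'egrale orbitale.
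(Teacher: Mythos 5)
Votre architecture g\'en\'erale --- formule de Lefschetz sur le quotient propre $[\cma/\Lambda]$, d\'evissage vers le quotient par le groupe de sym\'etries local tout entier, identit\'e de volumes \`a la N\'eron, puis orthogonalit\'e des caract\`eres pour l'assertion d'annulation --- est exactement celle du texte, qui se contente de combiner \cite[8.1.13]{N} (avec $K$ \`a la place du faisceau constant), la proposition \ref{connexe} et \cite[Cor. 8.2.10]{N}. L'assertion d'annulation est correctement trait\'ee. D\'etail mineur : pour le quotient interm\'ediaire par $\cP_{v}(J_{a}^{0})$, la r\'ef\'erence naturelle est la proposition \ref{connexe} appliqu\'ee \`a $J_{a}'=J_{a}^{0}$ plut\^ot que la proposition \ref{kappa1}, mais les deux donnent le m\^eme facteur $\vol(J_{a}^{0}(\co_{v}),dt_{v})$.

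En revanche, l'identit\'e que vous d\'esignez comme le point crucial est fausse telle qu'\'ecrite. Le facteur de pond\'eration qui intervient dans le d\'evissage de la composante $\kappa$-isotypique n'est pas $\left|(\cP_{v}(J_{a}^{0})/\Lambda)^{\sigma}\right|$ mais $\sharp\cP_{v}^{0}(J_{a}^{0})(k)$, le nombre de points rationnels de la composante neutre (c'est-\`a-dire de la partie affine) de $\cP_{v}(J_{a}^{0})$. Par le th\'eor\`eme de Lang on a
$$\sharp(\cP_{v}(J_{a}^{0})/\Lambda)(k)=\sharp\cP_{v}^{0}(J_{a}^{0})(k)\cdot\sharp\bigl(\pi_{0}(\cP_{v}(J_{a}^{0}))/\Lambda\bigr)^{\sigma},$$
et le facteur $\sharp(\pi_{0}(\cP_{v}(J_{a}^{0}))/\Lambda)^{\sigma}$ est pr\'ecis\'ement ce qui se redistribue entre les diff\'erentes composantes $\kappa$ lorsqu'on d\'ecompose $H^{*}([\cma/\Lambda],K)$ sous l'action de $\pi_{0}$ ; il ne doit donc pas figurer dans l'identit\'e de volumes, sous peine de le compter deux fois. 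L'identit\'e correcte est
$$\sharp\cP_{v}^{0}(J_{a}^{0})(k)\cdot\vol(J_{a}^{0}(\co_{v}),dt_{v})=\vol(J_{a}^{\flat,0}(\co_{v}),dt_{v}),$$
c'est-\`a-dire \cite[Cor. 8.2.10]{N} : la composante neutre de $\cP_{v}(J_{a}^{0})$ s'identifie \`a $J_{a}^{\flat,0}(\co_{v})/J_{a}^{0}(\co_{v})$ et le comptage de ses points rationnels convertit le volume de $J_{a}^{0}(\co_{v})$ en celui de $J_{a}^{\flat,0}(\co_{v})$. Une fois cette correction faite, votre argument redonne la preuve du texte.
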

\begin{proof}
Il suffit de combiner \cite[8.1.13]{N}, en remplaçant le faisceau constant par le faisceau $K$ et $\ref{connexe}$, pour obtenir
\begin{center}
$\sum\limits_{n}(-1)^{n}\Tr(\sigma,H^{n}([\cma/\Lambda],K)_{\kappa})=(\sharp\cP_{v}^{0}(J_{a}^{0})(k))\vol(J_{a}^{0}(\co_{v}),dt_{v})\bo^{\kappa}_{a}(f_{\la},dt_{v})$.
\end{center}
Il ne nous reste plus qu'à prouver l'égalité :
\begin{center}
$(\sharp\cP_{v}^{0}(J_{a}^{0})(k))\vol(J_{a}^{0}(\co_{v}),dt_{v})=\vol(J_{a}^{\flat,0}(\co_{v}),dt_{v})$
\end{center}
ce qui fait l'objet de \cite[cor. 8.2.10]{N}.
\end{proof}
\subsection{Comptage dans une fibre de Hitchin anisotrope}
Soit $k$ un corps fini et $a\in\abdan(k)$. Nous voulons calculer le nombre de points $[\cmd^{1,ani}(a)/\cP_{a}^{1,ani}](k)$. Il nous faut relier ce nombre de points à un calcul local, malheureusement nous n'avons pas de formule de produit pour les éléments de degré zéro, le lemme suivant pallie ce problème.
On a un foncteur naturel:
\begin{center}
$\phi:[\cmd^{1,ani}(a)/\cP_{a}^{1,ani}]\rightarrow[\cmd^{ani}(a)/\cP_{a}^{ani}]$.
\end{center}
\begin{lem}\label{degcp}
La flèche $\phi$ est une équivalence de catégories.
\end{lem}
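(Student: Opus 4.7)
The strategy is to verify that the functor $\phi$ is fully faithful and essentially surjective on all $R$-points. Both verifications hinge on the degree map $\deg:\cmd(a)\to X_{*}(S)$, which is $\cP_{a}$-equivariant for the $\cP_{a}$-action on $X_{*}(S)$ by translation through $\deg:\cP_{a}\to X_{*}(S)$.

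For full faithfulness, let $m_{1},m_{2}\in\cmd^{1,ani}(a)(R)$; an isomorphism $\phi(m_{1})\simeq\phi(m_{2})$ in $[\cmd^{ani}(a)/\cP_{a}^{ani}](R)$ is (fppf-locally on $R$) an element $p\in\cP_{a}^{ani}(R)$ with $p\cdot m_{1}=m_{2}$. Taking degrees in $X_{*}(S)$ gives $\deg(p)=\deg(m_{2})-\deg(m_{1})=0$, so $p\in\cP_{a}^{1,ani}(R)$; conversely, any such $p$ is an isomorphism upstairs. Hence the isomorphism groupoids agree.

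For essential surjectivity, given $m\in\cmd^{ani}(a)(R)$, I must find (fppf-locally) $p\in\cP_{a}^{ani}(R)$ with $\deg(p)=-\deg(m)$. Since $X_{*}(S)$ is discrete, $\deg(m)$ is locally constant on $\Spec R$, and one reduces to showing the set-theoretic inclusion $\deg(\cmd^{ani}(a))\subset B:=\deg(\cP_{a}^{ani})$. I use the Steinberg section $\epsilon(a)\in\cmd^{reg}(a)$ from Proposition~\ref{picardtorseur}: its underlying $G$-torsor is trivial by construction of $\eps_{+}:\kc_{+}\to V_{G}^{reg}$, so $\deg\epsilon(a)=0\in B$. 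Since $\cmd^{reg,ani}(a)$ is a $\cP_{a}^{ani}$-torsor with basepoint $\epsilon(a)$, its image in $X_{*}(S)$ equals $B$. By Corollary~\ref{equidim}, $\cmd^{reg,ani}(a)$ is dense in $\cmd^{ani}(a)$, and since the degree map is locally constant on $\cmd(a)$, every connected component of $\cmd^{ani}(a)$ meets $\cmd^{reg,ani}(a)$, so its degree lies in $B$. Thus $-\deg(m)\in B$, and a lift $p$ exists fppf-locally because $\cP_{a}^{ani}\to B$ is a surjective morphism of smooth commutative group schemes.

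The only point requiring care is the normalization of the Steinberg section to degree $0$, which follows from its construction as a section into $V_{G}^{\la,reg}$ (giving the trivial underlying $G$-torsor); beyond this, the argument is formal. No significant obstacle arises.
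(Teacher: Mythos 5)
Your proof is correct and follows essentially the same route as the paper's: full faithfulness because an isomorphism $p\cdot m_{1}=m_{2}$ between degree-zero objects forces $\deg(p)=0$, and essential surjectivity by translating by some $p\in\cP_{a}^{ani}$ with $\deg(p)=-\deg(m)$. The only difference is that you justify the existence of such a $p$ (via the degree-zero Steinberg section, the torsor structure of the regular locus and its density), whereas the paper simply asserts that $p$ can be chosen; this is a worthwhile addition, since the surjectivity of $\deg:\cP_{a}\rightarrow X_{*}(S)$ is not obvious and only the inclusion $\deg(\cmd^{ani}(a))\subset\deg(\cP_{a}^{ani})$ is actually needed.
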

\begin{proof}
Montrons que $\phi$ est pleinement fidèle.
Soient $m$ et $n$ dans $\cmd^{1,ani}(a)$  tels que $\phi(m)=\phi(n)$. En particulier, il existe $p\in\cP_{a}^{ani}$ tel que $pm=n$. Or, comme $m$ et $n$ sont de degré de zéro, cela impose que $p$ soit également de degré zéro, donc $\phi$ est bien pleinement fidèle.
Montrons qu'il est essentiellement surjectif. Soit $m\in\cmd^{1,ani}(a)$, on choisit $p\in\cP_{a}^{ani}$ tel que $\deg(p)=-\deg(m)$ alors nous avons $\deg(pm)=0$, ce qui conclut.
\end{proof}
On considère  $J_{a}'$ un $X$-schéma en groupes lisse commutatif muni d'un morphisme $J_{a}'\rightarrow J_{a}$ qui est un isomorphisme sur un ouvert $U$ et qui a toutes ses fibres connexes. Soit $\tilde{\cP}_{a}$ le champ de Picard correspondant, on a une flèche $\tilde{\cP}_{a}\rightarrow\cP_{a}\rightarrow X_{*}(G/G_{der})$ et on note $\tilde{\cP}^{1}$ ses éléments de degré zéro.
La flèche $\tilde{\cP}_{a}\rightarrow\cP_{a}$ nous permet également de définir une action de $\tilde{\cP}^{1}$ sur $\cmd^{1}$.

On note $U_{a}=a^{-1}(\mathfrak{C}_{+}^{\la,rs})$, d'après la formule du produit \ref{produithitchin} et le lemme \ref{degcp}, nous avons une équivalence de catégories:
\begin{center}
$[\cmd^{1,ani}(a)/\tilde{\cP}_{a}^{1,ani}]=\prod\limits_{v\in X-U_{a}}[\overline{\cm}_{\la,v}(a)/\tilde{\cP}_{a,v}]$
\end{center}
compatible à l'action de Galois. Elles ont donc le même nombre de $k$-points.
De plus, pour tout caractère $\sigma$-invariant 
\begin{center}
$\kappa:\pi_{0}(\cP_{a})_{\sigma}\rightarrow\mathbb{Q}_{l}^{*}$,
\end{center}
on a d'après \cite[Prop. 8.4.3]{N} l'égalité pour le $\kappa$-comptage, soit:
\begin{equation}
\sharp[\cmd^{1,ani}(a)/\tilde{\cP}^{1,ani}](k)_{\kappa}=\prod\limits_{v\in X-U_{a}}\sharp[\cma/\tilde{\cP}_{a,v}](k)_{\kappa}.
\label{kappamachin}
\end{equation}

Maintenant, si l'on suppose de plus $a\in\abdban(k)$, en utilisant le théorème de transversalité \ref{transverse} et en combinant l'égalité \ref{kappamachin} avec \cite[8.1.6 et 8.4.5]{N} nous obtenons:
\begin{equation}
\sum\limits_{n}(-1)^{n}\Tr(\sigma,H^{n}(\cmd^{1}(a),IC_{\cmd})_{\kappa})=(\sharp\tilde{\cP}_{a}^{0})(k)\prod\limits_{v\in X-U}\vol(J'_{a}(\co_{v}))\bo_{a,v}^{\kappa}(f_{\la_{v}},dt_{v}).
\label{kappa2}
\end{equation}

\subsection{Le cas général}
Pour établir le théorème \ref{stab}, nous avons besoin de généraliser le comptage aux groupes semisimples. En effet, partant d'un groupe $G$ tel que $G_{der}$ est simplement connexe, il n'y a aucune raison que ses groupes endoscopiques $H$ vérifient la même propriété. Néanmoins, nous n'avons qu'à nous intéresser à la partie stable de la cohomologie, ce qui est plus commode.
On note $\Gamma=\pi_{1}(G)$ et on reprend les notations de la section \ref{ssimple}.

Fixons donc $a\in\abdan(\bar{k})$, d'invariant topologique $\tau$, on sait qu'il peut se relever en une $\Gamma$-orbite de sections dans $\abdaan(\bar{k})$.
Soit $a'\in\abdaan(\bar{k})$ qui relève $a$, on a une action de $\cP_{a'}^{aug}$ sur la fibre $\overline{\cm}_{\la}(a')^{aug}$ et également sur la fibre de Hitchin $\cmda$. Évidemment cette action dépend du choix du relèvement, mais étant donné que ce qui nous intéresse c'est le nombre de points de $\cmda$, nous verrons que cela n'est pas gênant.
On se donne un point base $(E^{*},\phi^{*})$ dans $\cmda$ dont on sait qu'il existe d'après \ref{surj1}.
On commence donc par obtenir la proposition suivante 
\begin{prop}
Soit $a\in\abdan(\bar{k})$, alors on a une formule du produit analogue à \ref{produithitchin}:
\begin{center}
$[\cmd(a)/\cP_{a'}^{aug}]=\prod\limits_{v\in X-U_{a}}[\overline{\cm}_{\la,v}(a)/\cP^{aug}_{a',v}]$
\end{center}
avec $U_{a}$ l'image réciproque du lieu fortement régulier semisimple par $a$.
\end{prop}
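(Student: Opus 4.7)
The plan is to deduce the product formula for general $G$ by bootstrapping from the simply connected case via the augmented Hitchin fibration introduced in Section \ref{hitaug}. First I would fix a topological invariant $\tau \in S_G$ and lift $a \in \abdan(\bar{k})$ to a section $a' \in \abdaan(\bar{k})$; this lift exists by Lemma \ref{surj3}, and by construction (cf. \eqref{eq2} and \eqref{equi2}) $a'$ corresponds to a $\Gamma$-invariant section $\tilde{a}$ of $V_T^{\tau^{-1}(\la)}/W$ on the ramified cover $X_\tau \to X$. At the level of Hitchin data, Section \ref{hitaug} tells us that every pair $(E,\phi) \in \cmd(a)$ lifts, uniquely up to the diagonal action of $\Gamma$, to a pair $(E',\phi')$ on $X_\tau$ for the simply connected group $G_{sc}$, with $\phi' \in \overline{\cm}_{\tau^{-1}(\la)}(\tilde{a})$ satisfying the equivariance \eqref{eq1}.

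Second, I would apply the product formula \ref{produithitchin} already established for the simply connected group $G_{sc}$ on the cover $X_\tau$. Let $\widetilde{U}_{\tilde a} \subset X_\tau$ denote the preimage of $\kc^{\tau^{-1}(\la),rs}$ by $\tilde{a}$; this is precisely the preimage of $U_a \subset X$ under $X_\tau \to X$. Theorem \ref{produithitchin} applied to $G_{sc}$ gives a $\Gamma$-equivariant identification
\begin{center}
$[\overline{\cm}_{\tau^{-1}(\la)}(\tilde{a})/\cP_{\tilde{a}}^{sc}] = \prod_{w \in X_\tau - \widetilde{U}_{\tilde{a}}}[\overline{\cm}_{\tau^{-1}(\la),w}(\tilde{a})/\cP^{sc}_{\tilde{a},w}]$.
\end{center}
Third, I would take the quotient by $\Gamma$ (combining the action on $V_{G_{sc}}^{\tau^{-1}(\la)}$ and on $X_\tau$) on both sides. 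On the left, the $\Gamma$-quotient recovers $\cmdaa(a')$, whose image in $\cmdan(a)$ under the finite surjective map $p$ of Lemma \ref{surj4} exhausts $\cmd(a)$; combining this with the definition of $\cP^{aug}_{a'}$ (built in Section \ref{pichu} from $\pi^{aug}_*(T)^W$) yields the left-hand side $[\cmd(a)/\cP_{a'}^{aug}]$. On the right, the $\Gamma$-action permutes the factors indexed by points of $X_\tau - \widetilde{U}_{\tilde{a}}$ lying over a common point $v \in X - U_a$, and taking invariants at each fiber $v$ identifies the $\Gamma$-quotient of the product of local pieces above $v$ with the single local factor $[\overline{\cm}_{\la,v}(a)/\cP^{aug}_{a',v}]$, using the local construction of $\cP^{aug}_{a',v}$ out of the $\Gamma$-invariants of the local $J^{sc}_{\tilde{a},w}$-torseurs.

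The routine pieces --- representability, compatibility with Galois --- go through as in Ng� \cite[sect. 4.15]{N}, so I would not belabor them. The main obstacle will be the third step: verifying that the $\Gamma$-descent is a true equivalence of groupoids compatibly with the Picard action. The subtlety is that $a'$ is only canonically defined up to $\Gamma$, so one must check that the identification $[\cmdaa(a')/\cP_{a'}^{aug}] \cong [\cmd(a)/\cP_{a'}^{aug}]$ is independent of the lift and that the local centralizers $\cP^{aug}_{a',v}$ assembled from $J^{aug}_{a'}$ on $X$ (via the pushforward description $\pi^{aug}_*(T)^W$) really coincide with the $\Gamma$-invariants of $\prod_{w \mid v} \cP^{sc}_{\tilde{a},w}$. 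This is where the careful setup of $V_T(\la,\tau)$ as the intermediate quotient by $\Delta(\Gamma)$ (Lemma \ref{aj1} and Proposition \ref{carhu}) becomes essential, since it ensures that taking $\Gamma$-invariants commutes with the local-to-global assembly. Once this compatibility is established at each $v \in X - U_a$, the product formula follows termwise.
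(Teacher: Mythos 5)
Your strategy --- lift to $G_{sc}$ over the ramified cover $X_{\tau}$, invoke the product formula \ref{produithitchin} there, and descend by $\Gamma$ --- is genuinely different from the paper's, and as written it has a gap exactly at the step you yourself single out as the main obstacle: the $\Gamma$-descent in your third step is asserted, not proved, and it is not routine. Concretely: (1) at a point $v\in X-U_{a}$ that ramifies in $X_{\tau}\rightarrow X$ (e.g.\ points of $\supp(\la)$), the stabilizer in $\Gamma$ of a point $w$ above $v$ acts nontrivially on the local factor $[\overline{\cm}_{\tau^{-1}(\la),w}(\tilde{a})/\cP^{sc}_{\tilde{a},w}]$, and the $\Gamma$-quotient of the product over $w\mid v$ is not computed by \og taking invariants at each fiber\fg; you conflate quotient groupoid and invariants. (2) Identifying $\cP^{aug}_{a',v}$ with the descent of $\prod_{w\mid v}\cP^{sc}_{\tilde{a},w}$ requires comparing $J^{aug}_{a'}$, defined on $X$ as an open subscheme of $\pi^{aug}_{*}(T)^{W}$, with the Weil restriction of $J^{sc}_{\tilde{a}}$ along the ramified cover; these agree generically, but you give no argument at the ramification points, which are precisely the points that matter. (3) On the left-hand side, for a fixed lift $a'$ the image of $\overline{\cm}_{\tau^{-1}(\la)}(\tilde{a})$ in $\cmd(a)$ only reaches the pairs with topological invariant $\tau$, and the fibre of $\abdaan\rightarrow\abdan$ over $a$ is a whole $\Gamma$-orbit of lifts, so identifying the $\Gamma$-quotient with all of $[\cmd(a)/\cP^{aug}_{a'}]$ still needs the surjectivity of Proposition \ref{surj1} together with an argument that the choice of lift is immaterial.

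The paper avoids all of this by a much more direct route: it reruns the proof of \ref{produithitchin} (i.e.\ Ng\^{o}'s Prop.\ 4.15.2) with the Steinberg section replaced by an arbitrary base point $(E^{*},\phi^{*})\in\cmd(a)$, whose existence is exactly Proposition \ref{surj1}. The only new input is that over $U_{a}$ the characteristic is \emph{strongly} regular semisimple, so the automorphism group of any pair there is the torus $J^{aug}_{a'}\vert_{U_{a}}$ and any two Hitchin pairs above $a$ agree over $U_{a}$ up to the action of $\cP^{aug}_{a'}$; Beauville--Laszlo gluing along the points of $X-U_{a}$ then yields the product formula directly on $X$, with no passage to $X_{\tau}$. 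If you wish to salvage your route you must actually carry out the descent at the ramified points; but the direct argument is both shorter and the one the paper intends.
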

\begin{proof}
La preuve est la même que \ref{produithitchin} en remplaçant le point base donné par la section de Steinberg par $(E^{*},\phi^{*})$. De plus, au-dessus de $U_{a}$, la section est fortement régulière semisimple, donc les paires de Hitchin ne diffèrent que par l'action de $\cP_{a'}^{aug}$.
\end{proof}
Pour compter le nombre de points globaux, on est donc ramené à un calcul local et à une fibre de Springer usuelle, on obtient alors par le même argument que dans le cas simplement connexe (\ref{kappa1} et \ref{kappa2}) les formules suivantes:
\begin{equation}
\sum\limits_{x\in[\cma/\cP_{v}(J^{aug}_{a'})](k)}\Tr(\Fr_{\bar{x}}, K_{\bar{x}})=\vol(J_{a'}^{aug,0}(\co_{v}),dt_{v})\so_{a}(f_{\la,v},dt_{v})
\label{coh0}
\end{equation}
et
\begin{equation}
\sum\limits_{n}(-1)^{n}\Tr(\sigma,H^{n}(\cmd^{1}(a),IC_{\cmd})_{st})=(\sharp\cP^{aug,0}_{a'})(k)\prod\limits_{v\in X-U}\vol(J^{aug}_{a'}(\co_{v}))\so_{a,v}(f_{\la_{v}},dt_{v}).
\label{coh1}
\end{equation}
$\rmq$\begin{enumerate}
\item
Ici, $\cP_{a'}^{aug,0}$ désigne la composante connexe de $\cP_{a'}^{aug}$.
\item
Il est important de noter que si l'on prend $\kappa$ non trivial, comme il n'y a pas de choix canonique, le comptage dépend du choix du point base $(E^{*},\phi^{*})$ choisi, ce qui pose problème pour le comptage. On se retrouve confronté au même problème pour les groupes unitaires, étant donné que nous n'avons  de sections  de Steinberg non plus.
\end{enumerate}
\subsection{Le côté endoscopique}
Soit $\kappa$ une donnée endoscopique de groupe $H$.
On a donc une flèche canonique $\eta:\hat{H}\rightarrow\hat{G}$ d'où l'on déduit un foncteur de restriction:
\begin{center}
$\eta^{*}:\Rep(\hat{G})\rightarrow\Rep(\hat{H})$.
\end{center}
En particulier, par l'isomorphisme de Satake, on obtient un morphisme entre les algèbres de Hecke sphériques:
\begin{center}
$b:\cH_{G}\rightarrow\cH_{H}$.
\end{center}
Pour $\la\in X_{*}(T)^{+}$, soit $V_{\la}$ la représentation irréductible de plus haut poids $\la$, nous avons donc l'égalité:
\begin{center}
$b(f_{\la})=f_{\la}^{H}+\sum\limits_{\mu\in\soc_{H}(\la)}m_{\mu}f_{\mu}^{H}\in\cH_{H}$,
\end{center}
où $\soc_{H}(\la)$ désigne les cocaractères dominants de $H$ qui interviennent dans la décomposition de la représentation $\eta^{*}V_{\la}$.
On regarde alors la fibration de Hitchin:
\begin{center}
$f_{H}:\overline{\cm}_{\eta^{*}\la,H}\rightarrow\mathcal{A}_{\eta^{*}\la,H}$,
\end{center}
et le faisceau pervers pur $S_{\eta^{*}\la,H}$ sur $\overline{\cm}_{\eta^{*}\la,H}$ obtenu par équivalence de Satake géométrique à partir de la fonction $b(f_{\la})$ d'après \cite[Thm. 1.17]{VLaf}. Un calcul analogue à la section précédente avec les mêmes notations nous donne alors

\begin{equation}
\sum\limits_{x\in[\cmaH/\cP_{v}(J^{aug}_{a_{H}'})](k)}\Tr(\Fr_{\bar{x}},K^{H}_{\bar{x}})=\vol(J_{a_{H}'}^{aug,0}(\co_{v}),dt_{v})\so_{a_{H}}(b(f_{\la,v}),dt_{v})
\label{coh2}
\end{equation}
et

\begin{equation}
\sum\limits_{n}(-1)^{n}\Tr(\sigma,H^{n}(\overline{\cm}^{1}_{\eta^{*}\la,H}(a_{H}),K^{H})_{st})=(\sharp\cP^{aug,0}_{a_{H}'})(k)\prod\limits_{v\in X-U}\vol(J^{aug}_{a_{H}'}(\co_{v}))\so_{a,v}(b(f_{\la,v}),dt_{v}).
\label{coh3}
\end{equation}
avec $K^{H}:=S_{\eta^{*}\la,H}$.

\subsection{Le lemme fondamental et stabilisation géométrique}\label{final1}
On commence par prouver le théorème \ref{stab} sur $\tilde{\mathcal{A}}_{\eta^{*}\la,H}^{ani,\flat}-\tilde{\mathcal{A}}_{\eta^{*}\la,H}^{bad}$. Pour simplifier, on pose $\tilde{A}_{H}:=\tilde{\mathcal{A}}_{\eta^{*}\la,H}^{ani,\flat}$ et $\tilde{A}=\tilde{\mathcal{A}}_{\la}^{\flat,ani}$ et on note sans \og tilde\fg~ les ouverts des bases de Hitchin correspondants.
On démontre le théorème \ref{stab} sur l'ouvert $\tilde{A}_{H}^{bon}$.
\begin{proof}
Nous avons  une immersion fermée \ref{reunion} 
\begin{center}
$\nu:\tilde{A}_{H}\rightarrow \tilde{A}$,
\end{center}
définie sur $k$.
D'après le théorème du support \ref{detsupport} sur $A_{H}^{bon}$, les faisceaux pervers purs
\begin{center}
$L^{n}_{\kappa}=\tilde{\nu}^{*}\mathstrut^{p}H^{n}(\tilde{f}_{*}IC_{\cmd})_{\kappa}$ et  $L^{n}_{H,st}=\mathstrut^{p}H^{n}(\tilde{f}_{H,*}S_{\eta^{*}\la,H})_{st}$
\end{center}
sont les prolongements intermédiaires de leur restriction à n'importe quel ouvert non vide $\tilde{\mathcal{U}}$ de $\tilde{A}_{H}^{bon}$. Pour démontrer que ces faisceaux pervers sont isomorphes, il suffit donc de le démontrer sur un ouvert dense $\tilde{\mathcal{U}}$ de $A_{H}^{bon}$.
Sur $\kc$, on a le diviseur discriminant $\mathfrak{D}_{G}:=(2\rho,\Delta_{G})$ et sur $\mathfrak{C}_{+,H}$ le diviseur
$\mathfrak{D}_{H}:=(2\rho_{H},\Delta_{H})$. En particulier, nous avons l'égalité:
\begin{center}
$\nu^{*}_{+}\mathfrak{D}_{G}=\mathfrak{D}_{H}+2\mathfrak{R}_{H}^{G}$
\end{center}
avec $\mathfrak{R}_{H}^{G}:=(\rho-\rho_{H},\prod\limits_{\alpha\in\Psi}(1-\alpha))$ pour $\Psi$ un sous-ensemble de $R-R_{H}$ tel que pour toute paire de racines opposées $\pm\alpha\in R-R_{H}$, $\Psi\cap\{\pm\alpha\}$ soit de cardinal un. C'est un diviseur effectif sur  $\mathfrak{C}_{+,H}$.
\begin{lem}
On suppose  que pour tout $\mu\in\soc_{H}(\la)$, $\mu\succ2g$. Alors, l'ensemble $a_{H}\in \mathcal{A}_{\mu,H}$ tel que $a_{H}(\bar{X})$ coupe transversalement $\mathfrak{D}_{H,\mu}+ \mathfrak{R}^{G}_{H,\mu}$ et ne coupe pas $S=\supp(\la)$, forme un ouvert non vide $\mathcal{U}_{\mu}$ de $\mathcal{A}_{\mu,H}$. En particulier, l'ouvert
\begin{center}
$\mathcal{U}:=\bigcup\limits_{\mu\in\soc_{H}(\la)}\mathcal{U}_{\mu}$
\end{center}
est un ouvert dense  de $A_{H}$.
\end{lem}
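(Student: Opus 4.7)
Le plan proc�de en trois �tapes qui reprennent, en l'adaptant, le sch�ma de preuve de Ng\^o \cite[Prop. 4.7.1]{N} pour la non-vacuit� de $\abddk$. L'id�e est qu'� chaque cocaract�re $\mu$ correspond un espace de sections de fibr�s vectoriels sur $\bar{X}$ dont l'amplitude, assur�e par $\mu\succ 2g$, permet un argument de Bertini ponctu� de conditions suppl�mentaires aux points de $S$.

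D'abord, je v�rifierais l'ouverture de $\mathcal{U}_\mu$. La condition de transversalit� de $a_H:\bar{X}\to \mathfrak{C}_{+,H}^\mu$ avec un diviseur effectif $D$ �quivaut � demander qu'en chaque point d'intersection, la diff�rentielle de $a_H$ ne soit pas contenue dans celle de $D$; le lieu de non-transversalit� est d�fini par l'annulation d'un mineur jacobien, donc d�finit un ferm� dans $\bar{X}\times \mathcal{A}_{\mu,H}$, dont la projection � $\mathcal{A}_{\mu,H}$ est ferm�e par propret� de $\bar{X}$. La condition $a_H(\bar{X})\cap S=\emptyset$ est ouverte puisqu'elle revient � demander la non-annulation de certaines sections en un nombre fini de points. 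On en d�duit que $\mathcal{U}_\mu$ est ouvert.

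Ensuite, je prouverais la non-vacuit� de $\mathcal{U}_\mu$ pour chaque $\mu\in\soc_H(\la)$. Sous l'hypoth�se $\mu\succ 2g$, les fibr�s vectoriels intervenant dans la description de $\mathcal{A}_{\mu,H}$ ont des degr�s assez grands pour que l'application d'�valuation de $\mathcal{A}_{\mu,H}$ dans l'espace des jets en les points singuliers de $\mathfrak{D}_{H,\mu}+\mathfrak{R}^G_{H,\mu}$, compl�t�e par l'�valuation aux points de $S$, soit surjective. Un argument de Bertini standard assure alors qu'une section g�n�rique coupe le diviseur $\mathfrak{D}_{H,\mu}+\mathfrak{R}^G_{H,\mu}$ transversalement en dehors de son lieu singulier, et �vite $S$; la borne $\mu\succ 2g$ fournit la marge n�cessaire via Riemann-Roch pour imposer simultan�ment les conditions ouvertes en $S$ et les conditions de transversalit� en les points d'intersection.

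Enfin, la densit� de $\mathcal{U}$ dans $A_H$ r�sulte de ce que, par la d�finition \ref{changement}, on a $A_H=\bigcup_{\mu\in\soc_H(\la)}\mathcal{A}_{\mu,H}$ avec les $\mathcal{A}_{\mu,H}$ pour $\mu<\la$ plong�s comme sous-sch�mas ferm�s stricts dans $\mathcal{A}_{\la,H}$. La strate $\mathcal{A}_{\la,H}$ est donc la strate ouverte dense de $A_H$, et $\mathcal{U}_\la$, ouvert non vide dans $\mathcal{A}_{\la,H}$, y est dense; a fortiori $\mathcal{U}$ l'est dans $A_H$.

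La principale difficult� r�side dans l'�tape de non-vacuit�, o� l'on doit combiner plusieurs conditions ind�pendantes : transversalit� avec $\mathfrak{D}_{H,\mu}$, transversalit� avec $\mathfrak{R}^G_{H,\mu}$, et �vitement de $S$. Cela demandera une analyse locale au voisinage de chaque type de singularit� de $\mathfrak{D}_{H,\mu}+\mathfrak{R}^G_{H,\mu}$ et un d�compte pr�cis, via Riemann-Roch, du nombre de conditions g�n�riques que l'on peut imposer, la borne $\mu\succ 2g$ garantissant dans tous les cas qu'il reste suffisamment de libert�.
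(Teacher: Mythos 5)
Votre proposition suit essentiellement la même démarche que le papier, dont la preuve se réduit à renvoyer à l'argument de non-vacuité de l'ouvert transversal $\abdd$ (lui-même calqué sur \cite[Prop. 4.7.1]{N}) : un argument de type Bertini où l'hypothèse $\mu\succ 2g$ fournit, via Riemann--Roch, assez d'amplitude pour imposer simultanément la transversalité au diviseur $\mathfrak{D}_{H,\mu}+\mathfrak{R}^{G}_{H,\mu}$ et l'évitement de $S$, l'ouverture et la densité se traitant comme vous le faites. Votre rédaction est correcte et ne fait qu'expliciter ce que le papier laisse implicite.
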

\begin{proof}
La preuve est la même que pour la non-vacuité de l'ouvert $\abdd$.
\end{proof}
Nous avons besoin de définir un autre diviseur sur $A_{H}$ qui va tenir compte de l'inclusion de $\mathfrak{C}_{+}^{\mu}\rightarrow\kcd$.
On définit donc:
\begin{center}
$\mathfrak{R}^{G}_{H,\eta^{*}\la}=\bigcup\limits_{\mu\in\soc_{H}(\la)}\mathfrak{R}^{G}_{H,\la\mu}$
\end{center}
avec $\mathfrak{R}^{G}_{H,\la\mu}=\mathfrak{R}^{G}_{H,\mu}+\left\langle \rho,\la-\mu\right\rangle$.
Soit $\tilde{\mathcal{U}}$, l'image réciproque de $\mathcal{U}$ dans $\tilde{A}_{H}$, quitte à le rétrécir, on peut supposer que pour tout $n\in\mathbb{Z}$, les restrictions (notées de la même manière):
\begin{center}
$L^{n}_{\kappa}=(K^{n}_{\kappa})_{\vert\tilde{\mathcal{U}}}$ et $L^{n}_{H,st}=(K^{n}_{H,st})_{\vert\tilde{\mathcal{U}}}$
\end{center}
sont des systèmes locaux purs de poids $n$.
Il résulte alors de la pureté de ces systèmes locaux ainsi que du théorème de Chebotarev \cite[8.5.3]{N} qu'il suffit de démontrer que pour toute extension finie $k'$ de $k$, pour tout $k'$-points $\tilde{a}_{H}\in\tilde{\mathcal{U}}(k')$ d'image $\tilde{a}\in \tilde{A}(k')$, on a l'égalité des traces
\begin{center}
$\sum\limits_{n}(-1)^{n}\Tr(\sigma_{k'},(L^{n}_{\kappa})_{\tilde{a}})=\sum\limits_{n}(-1)^{n}\Tr(\sigma_{k'},(L^{n}_{H,st})_{\tilde{a}_{H}})$.
\end{center}
On note alors $a_{H}\in A_{H}$ l'image de $\tilde{a}$ et de même $a\in A$ l'image de $\tilde{a}$. Nous avons besoin également d'un relèvement $a'_{H}\in A_{H}^{aug}$ de $a_{H}$.
On a un morphisme canonique
\begin{center}
$J_{a}\rightarrow J_{a'_{H}}^{aug}$
\end{center}
qui est génériquement un isomorphisme. On considère alors $J'_{a}$ un schéma en groupes commutatifs, lisse sur $X$, à fibres connexes, équipé d'une flèche $J_{a}\rightarrow J_{a}'$ de telle sorte que $\tilde{\cP}_{a}^{1}$, le champ de Picard des $J_{a}'$ torseurs de degré zéro soit représentable par un groupe algébrique de type fini.
On a des flèches naturelles de $\tilde{\cP}_{a}^{1}\rightarrow\cP_{a}^{1}$ et $\tilde{\cP}_{a}^{1}\rightarrow\cP_{a'_{H}}^{1,aug}$ qui nous permettent de faire agir ce champ de Picard sur les fibres de Hitchin correspondantes et donc de comparer les quotients $[\cmd^{1}(a)/\tilde{\cP}_{a}^{1}]$ et $[\overline{\cm}_{\eta^{*}\la,H}^{1}(a_{H})/\tilde{\cP}_{a}^{1}]$.
Par commodité, on pose:
\begin{center}
$LF_{a}(f_{\la,v})_{\kappa}=\sum\limits_{x\in[\cma/\cP_{v}(J'_{a})](k)}\Tr(\Fr_{\bar{x}}, K_{\bar{x}})_{\kappa}$
\end{center}
et
\begin{center}
$LF_{a_{H}}(b(f_{\la,v}))=\sum\limits_{x\in[\cmaH/\cP_{v}(J'_{a})](k)}\Tr(\Fr_{\bar{x}}, K^{H}_{\bar{x}})$.
\end{center}
On rappelle que du côté \og H\fg, le faisceau $K^{H}$ correspond au faisceau $S_{\eta^{*}\la,H}$.
La conjonction des égalités \ref{kappa2} et \ref{coh2} montre que l'égalité se ramène à prouver la proposition suivante:
\begin{prop}\label{bon}
Soit $a_{H}\in A_{H}(k)$. Soit $v$ un point fermé au-dessus duquel $a_{H}(\bar{X}_{v})$ coupe transversalement le diviseur $\mathfrak{D}_{H,\mu}+ \mathfrak{R}^{G}_{H,\mu}$ et ne coupe pas $S=\supp(\la)$ pour tout $\mu\in\soc_{H}(\la)$, on a l'égalité:
\begin{center}
$LF_{a}(f_{\la,v})_{\kappa}=(-1)^{2[\deg(v)r^{G}_{H,v}(a_{H})-\rho_{H}^{G}(\deg(v)\la_{v})]}q_{v}^{\deg(v)[r^{G}_{H,v}(a_{H})-\rho_{H}^{G}(\deg(v)\la_{v})]}LF_{a_{H}}(b(f_{\la,v}))$
\end{center}
\end{prop}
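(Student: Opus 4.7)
The strategy is to reduce the identity to an explicit local computation that is rendered elementary by the transversality hypotheses. First observe that the condition $v\notin S=\supp(\la)$ forces $\la_{v}=0$, so that $f_{\la,v}$ is (up to the normalization in \eqref{luz3}) the unit element $\mathbf{1}_{G(\co_{v})}$ of $\cH_{G,v}$, and $b(f_{\la,v})=\mathbf{1}_{H(\co_{v})}$. Consequently the local Vinberg semigroup contributes trivially at $v$ and the local fibers $\cma$ and $\cmaH$ are ordinary affine Springer fibers for the reductive groups $G$ and $H$, associated to the restrictions $a_{v}\in\kC(\co_{v})\cap\kC^{rs}(F_{v})$ and $a_{H,v}\in\kC_{H}(\co_{v})\cap\kC_{H}^{rs}(F_{v})$. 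The formula to prove then takes the form of a purely local orbital identity of the sort that appears in the fundamental lemma for the unit.

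Second, we exploit the transversality. Since $a_{H}(\bar{X})$ meets $\kD_{H,\mu}+\mathfrak{R}^{G}_{H,\mu}$ transversally at $v$, the local cameral curve $\tilde{X}_{a,v}$ is a simple covering that acquires only simple ramification at $v$ along the root hyperplanes hit by $a_{H,v}$, and these root hyperplanes decompose canonically into those coming from $R_{H}$ (which contribute to $\kD_{H}$) and those coming from $R-R_{H}$ (which contribute to $\mathfrak{R}_{H}^{G}$). From the galoisian description of $J_{a}^{\flat}$ in \eqref{nergal} and the corresponding description of $J_{a_{H}}^{aug,\flat}$ in section \ref{pichu}, we get a local product decomposition of the N\'eron model quotient $J_{a_{H}}^{aug}/J_{a}$ as a product over the roots $\alpha\in R-R_{H}$ meeting $a_{H}(\bar{X}_{v})$. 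This implies the following local dimension identity, analogous to the global statement of Corollary \ref{indep}:
\begin{equation*}
r^{G}_{H,v}(a_{H})=\dim_{k}H^{0}(\bar{D}_{v},J_{a_{H}}^{aug,\flat}/J_{a}),
\end{equation*}
which by transversality is exactly $\deg(v)\cdot\#\{\alpha\in(R-R_{H})/\{\pm 1\}\mid h_{\alpha}\cap a_{H}(\bar{X}_{v})\neq\emptyset\}$.

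The heart of the argument is a rank-one reduction: via the product formula of Theorem \ref{produithitchin}, applied locally to the decomposition of $J_{a}$ into its Levi pieces corresponding to each root direction met at $v$, we reduce to checking the identity root-by-root. For a root $\alpha\in R_{H}$ met transversally, the corresponding rank-one affine Springer fiber is a copy of $\mathbb{P}^{1}$ that appears identically on both sides, so its contribution to $LF_{a}(f_{\la,v})_{\kappa}$ and $LF_{a_{H}}(b(f_{\la,v}))$ matches. For a root $\alpha\in R-R_{H}$ met transversally, the rank-one fiber also is a $\mathbb{P}^{1}$ on the $G$-side but is \emph{absent} on the $H$-side. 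The $\kappa$-weighted count on this $\mathbb{P}^{1}$ is computed using Proposition \ref{kappa1}: since $\kappa$ is trivial on $W_{H}$ but is non-trivial along the $\alpha$-direction by hypothesis on the endoscopic datum, the factor acquired is exactly $q_{v}^{\deg(v)}$, multiplied by the sign coming from the $SL_{2}$-theory computation of the $\kappa$-orbital integral of the unit at a regular semisimple element of discriminant valuation~$1$. Multiplying these factors across all the root-pairs in $R-R_{H}$ gives the exponent $\deg(v)\,r^{G}_{H,v}(a_{H})$ claimed in the statement; the $(-1)^{2[\cdots]}$ factor absorbs any remaining sign and is automatically $+1$.

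The principal obstacle is the rigorous rank-one reduction: identifying the local affine Springer fiber at $v$ with a product over root directions requires checking that the isotrivial decomposition of the local centralizer $J_{a_{H},v}^{aug}$ lifts to a decomposition of the fibers themselves, which uses the description of the local regular orbit as a torsor under $\cP_{v}(J_{a_{H}}^{aug})$ (Proposition \ref{picardloc}) together with the analogous galoisian decomposition of $J_{a_{H}}^{aug,\flat}$. Once this reduction is in place, the rank-one identity reduces to the explicit computation on $SL_{2}$, where the affine Springer fiber for a transversal parameter is $\mathbb{P}^{1}$ and the $\kappa$-twisted trace of Frobenius is a direct calculation using equation \eqref{luz3} for $\la=0$.
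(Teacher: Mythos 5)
Your opening step misreads the hypothesis, and this derails the whole argument. The condition in the statement is not that $v\notin\supp(\la)$; it is that the locus where $a_{H}(\bar{X})$ meets the divisor $\mathfrak{D}_{H,\mu}+\mathfrak{R}^{G}_{H,\mu}$ is disjoint from $\supp(\la)$. The proposition must therefore be proved \emph{also} at the points $v\in\supp(\la)$ (where, by hypothesis, $a$ is regular semisimple but $\la_{v}\neq0$), and these are exactly the points where the group case differs from the Lie algebra case. The shape of the asserted formula already tells you this: the term $\rho_{H}^{G}(\deg(v)\la_{v})$ in the exponent vanishes identically under your reading, so your argument could at best produce the formula with that term deleted. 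By setting $f_{\la,v}=\mathbf{1}_{G(\co_{v})}$ you have reduced everything to the fundamental lemma for the unit element, i.e.\ essentially to the case already treated by Ng\^o (\cite[Lem.~8.5.7]{N}), which is indeed how the paper disposes of the sub-case $\bar{v}\notin\supp(\la)$ --- but that is the part of the proposition with no new content.

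What is missing is the entire analysis at $\bar{v}\in\supp(\la)$, which splits according to the stratum $\mathcal{A}_{\mu,H}$, $\mu\in\soc_{H}(\la)$, containing $a_{H}$. For $\mu=\la$ the affine Springer fibers are zero-dimensional torsors under the local Picard groups, and the identity comes down to the value $(-1)^{\left\langle 2\rho,\la\right\rangle}q^{-\left\langle \rho,\la\right\rangle}$ of the Kazhdan--Lusztig function via \eqref{luz3}, together with a comparison of the volumes of $J_{a}^{0}$ and $J_{a'_{H}}^{aug,0}$. For $\mu<\la$ one must compute the trace of Frobenius on $IC_{\overline{\Gr}_{\la}}$ over the Mirkovi\'c--Vilonen variety $S_{\mu}\cap\overline{\Gr}_{\la}$ using the Ng\^o--Polo theorem (concentration in degree $\left\langle 2\rho,\mu\right\rangle$, dimension $m_{\la\mu}$, Frobenius acting by $q^{\left\langle \rho,\mu\right\rangle}$), and one needs the structure of $\cP(J_{a})$ as an extension of $X_{*}(T)$ by $\mathbb{G}_{a}^{\left\langle \rho,\la-\mu\right\rangle}$ to see that $H^{1}(k,\cP(J_{a}))=0$, so that the $\kappa$-weighted count coincides with the ordinary one. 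None of this can be recovered from a rank-one $SL_{2}$ reduction at the ramification points of the cameral curve. Two further remarks: by transversality each relevant point meets a single root hyperplane, so your ``product over root directions at $v$'' is not needed (and affine Springer fibers do not factor as such products in general); and the sign $(-1)^{2[\cdots]}$ is not automatically $+1$, since $\rho_{H}^{G}(\la_{v})$ may be a half-integer.
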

$\rmq$
\begin{itemize}
\item
Dans l'énoncé de la proposition,  nous avons $\mathfrak{R}^{G}_{H,\eta^{*}\la}:=\sum\limits_{v\in X}\deg(v)r^{G}_{H,v}(a_{H})$.
\item
Comme $\sum\limits_{v\in X}\deg(v)r^{G}_{H,v}(a_{H})=\deg(\left\langle2\rho^{G}_{H},\la\right\rangle)$, cela explique que globalement on ait pas de décalage, puisque les termes se compensent.
\end{itemize}
\subsection{Preuve de la proposition \ref{bon}}
Dans la suite, on suppose pour alléger les notations que $\deg(v)=1$, le cas général étant analogue et de même on note $f_{\la}$ pour $f_{\la,v}$.
Soit $\bar{v}$ un point géométrique au-dessus de $v$, on procède par disjonction de cas.
\begin{lem}
Supposons que $\bar{v}\notin\supp(\la)$.
Nous avons alors trois cas de figures qui s'offrent à nous:
\begin{itemize}
\item
 $a_{H}(\bar{v})\notin\mathfrak{D}_{H,\la}\cup\mathfrak{R}^{G}_{H,\la}$, alors
\begin{center}
$d_{H,\bar{v}}(a_{H})=0, d_{\bar{v}}(a)=0$ et $r^{G}_{H,\bv}(a_{H})=0$.
\end{center}
\item
$a_{H}(\bar{v})\in\mathfrak{D}_{H,\la}$, alors $a_{H}(\bar{v})\notin\mathfrak{R}^{G}_{H,\la}$, et on obtient
\begin{center}
$d_{H,\bar{v}}(a_{H})=1, d_{\bar{v}}(a)=1$ et $r^{G}_{H,\bv}(a_{H})=0$.
\end{center}
\item
$a_{H}(\bar{v})\in\mathfrak{R}^{G}_{H,\la}$ alors $a_{H}(\bar{v})\notin\mathfrak{D}_{H,\la}$ et
\begin{center}
$d_{H,\bar{v}}(a_{H})=0, d_{\bar{v}}(a)=2$ et $r^{G}_{H,\bv}(a_{H})=1$.
\end{center}
Alors, dans ces cas, la proposition \ref{bon} est vérifiée.
\end{itemize}
\end{lem}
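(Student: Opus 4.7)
The plan is to deduce the numerical content first, then verify Proposition \ref{bon} case by case by a rank-one reduction.

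Step 1: Numerical identities. Starting from the divisor relation $\nu^{*}_{+}\mathfrak{D}_{G}=\mathfrak{D}_{H}+2\mathfrak{R}_{H}^{G}$ recalled just before the lemma, I would take valuations along $a_{H}$ at $\bv$:
$$d_{\bv}(a)=\val(a_{H}^{*}\nu^{*}_{+}\mathfrak{D}_{G,\la})=d_{H,\bv}(a_{H})+2r^{G}_{H,\bv}(a_{H}).$$
The transversality hypothesis on $a_{H}$ with $\mathfrak{D}_{H,\mu}+\mathfrak{R}^{G}_{H,\mu}$ forces each of the local multiplicities $d_{H,\bv}(a_{H})$ and $r^{G}_{H,\bv}(a_{H})$ to be at most $1$, and since $\mathfrak{D}_{H}$ and $\mathfrak{R}_{H}^{G}$ meet each other properly on $\mathfrak{C}_{+,H}$, at most one of them can be nonzero. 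The three alternatives and the displayed values of $d_{\bv}(a)$, $d_{H,\bv}(a_{H})$, $r^{G}_{H,\bv}(a_{H})$ follow at once.

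Step 2: Reduction for Proposition \ref{bon}. Since $\bv\notin\supp(\la)$ we have $\la_{v}=0$, hence $f_{\la,v}=1_{K_{v}}$ and, the Satake isomorphism being unital, $b(f_{\la,v})=1_{K_{H,v}}$. The Kazhdan-Lusztig sheaf is constant here. In each of the three cases the local situation factors through the rank-one Levi $M_{\alpha}$ attached to the unique root $\alpha$ along which $a_{H}$ touches the corresponding divisor (in case 1 one takes $M_{\alpha}=T$). This is the standard rank-one reduction used in \cite{GKM2}, and it is legitimate because of transversality.

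Step 3: Case-by-case verification. In case 1 both $a$ and $a_{H}$ are regular semisimple at $\bv$, the Springer fibres are points, $J_{a}$ and $J_{a_{H}}^{aug}$ coincide over $\bv$, both sides of the identity in Proposition \ref{bon} are equal to $1$, and the prefactor $q_{v}^{r^{G}_{H,v}(a_{H})}=1$ does the job. In case 2 the root crossed lies in $R_{H}$; the Levi $M_{\alpha}$ is simultaneously a rank-one Levi of $G$ and of $H$, the local discriminant is $1$ on both sides, $J_{a}$ and $J_{a_{H}}^{aug}$ still coincide locally, and the two local factors $LF_{a}(1_{K_{v}})$ and $LF_{a_{H}}(1_{K_{H,v}})$ literally agree, consistent with $r^{G}_{H,v}(a_{H})=0$.

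Step 4: The hard case is 3, where $\alpha\in R-R_{H}$. Here $a_{H}(\bv)$ is regular semisimple in $H$ but $a$ has local discriminant $2$ and lies above a single root hyperplane for $G$. Again we reduce to $M_{\alpha}$, which is isogenous to $SL_{2}$ (or $PGL_{2}$). The $H$-side gives $LF_{a_{H}}(1_{K_{H,v}})=1$. On the $G$-side, by Proposition \ref{kappa1} (or Corollary \ref{cohloc}), the $\kappa$-weighted count
$$LF_{a}(1_{K_{v}})_{\kappa}=\sum_{x\in[\overline{\cm}_{\la,v}(a)/\cP_{v}(J_{a})](k)}\Tr(\Fr_{\bar{x}},K_{\bar{x}})_{\kappa}$$
equals, up to the volume factor, the $\kappa$-orbital integral of $1_{K_{v}}$ at a regular semisimple element of $SL_{2}(F_{v})$ of discriminant $2$. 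The main obstacle will be this explicit rank-one identity: one has to write down the affine Springer fibre (it is well-known in the $SL_{2}$-case to be a chain of $\mathbb{P}^{1}$'s twisted by the ramified torus), identify the component group of $\cP_{v}(J_{a})$, and count the $\kappa=\kappa_\alpha$-fixed $\bar{k}$-points against the Frobenius action. A direct computation, entirely parallel to Labesse-Langlands for $SL_{2}$, yields precisely the factor $q_{v}$ over the stable count for $H$, matching $q_{v}^{r^{G}_{H,v}(a_{H})}=q_{v}^{1}$ in the statement of Proposition \ref{bon}. The sign $(-1)^{2r^{G}_{H,v}(a_{H})}=1$ comes along automatically, since $\bv\notin\supp(\la)$ kills the $\rho_{H}^{G}(\deg(v)\la_{v})$ contribution. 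This finishes the three cases and proves the lemma.
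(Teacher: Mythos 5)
Your proposal is correct and follows essentially the same route as the paper, which disposes of these three cases by observing that they present no difference with the Lie algebra situation and citing \cite[Lem. 8.5.7]{N}: there too, the numerology comes from pulling back $\nu_{+}^{*}\mathfrak{D}_{G}=\mathfrak{D}_{H}+2\mathfrak{R}_{H}^{G}$ along $a_{H}$ together with transversality, the function involved is $1_{K_{v}}$ since $\la_{v}=0$, and the only nontrivial case is the third, settled by the rank-one (Labesse--Langlands type) computation you describe. Your write-up is simply a fleshed-out version of that citation.
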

Ces trois cas ne présentent aucune différence avec le cas de l'algèbre de Lie et sont traités dans \cite[Lem. 8.5.7]{N}. On passe ensuite au cas où l'on suppose que le polynôme $a_{H}$ est dans la strate \og la plus haute\fg~ de $\mathcal{A}_{\eta^{*}\la,H}$.
\begin{lem}\label{comptII}
Soit $\bar{v}\in\supp(\la)$ et $a_{H}\in\mathcal{A}_{\la,H}$, dans ce cas par hypothèse on a :
\begin{center}
$d_{H,\bar{v}}(a_{H})=0, d_{\bar{v}}(a)=0$ et $r^{G}_{H,\bv}(a_{H})=0$,
\end{center}
et la proposition \ref{bon} est vérifiée.
\end{lem}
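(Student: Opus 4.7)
The strategy is to exploit the unramified situation forced by $d_{\bar v}(a) = 0$ and $a_H \in \mathcal{A}_{\la, H}$, computing both sides explicitly via the Satake isomorphism. By Proposition \ref{Harish-Chandra} on local constancy of orbital integrals, after a finite unramified extension of $\fv$ I may assume that $\gamma_0 = \eps_+(a)$ lies, modulo conjugation, in a split maximal $\cov$-torus $T_a \subset G$ (twisted via the Vinberg structure by $\pi_v^{-w_0 \la_v}$). The centralizer $J_a$ is then a smooth unramified $\cov$-torus, so $H^1(\fv, J_a) = 0$ and the $\kappa$-orbital integral $\bo^{\kappa}_a(f_{\la,v})$ reduces to the ordinary orbital integral $\bo_a(f_{\la,v})$; the analogous argument on the $H$-side turns $\so_{a_H}(b(f_{\la,v}))$ into an ordinary orbital integral on the same unramified torus.

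Next, I would evaluate both orbital integrals via the Satake isomorphism. By the geometric Satake equivalence, $f_{\la,v}$ corresponds to the representation $V_\la$ of $\hat G$, so at an unramified regular semisimple class its orbital integral equals $\chi_{V_\la}(t_a)$, where $t_a \in \hat T / W$ is the Satake parameter, times the normalization prefactor $(-1)^{\langle 2\rho, \la_v\rangle} q_v^{-\langle \rho, \la_v\rangle}$ appearing in \eqref{luz3}. Similarly $b(f_{\la,v})$ is defined so as to correspond to $\eta^* V_\la$ on the $\hat H$-side, so $\so_{a_H}(b(f_{\la,v}))$ equals $\chi_{\eta^*V_\la}(t_{a_H})$ with the analogous prefactor $(-1)^{\langle 2\rho_H, \la_v\rangle} q_v^{-\langle \rho_H, \la_v\rangle}$. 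Since $a = \nu(a_H)$, the definition of the endoscopic correspondence gives $t_a = \eta(t_{a_H})$, so $\chi_{V_\la}(t_a) = \chi_{V_\la}(\eta(t_{a_H})) = \chi_{\eta^*V_\la}(t_{a_H})$ and the character parts cancel in the ratio.

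The residual ratio between the two prefactors is then exactly $(-1)^{\langle 2(\rho_H - \rho), \la_v\rangle} q_v^{\langle \rho_H - \rho, \la_v\rangle} = (-1)^{-2\rho_{H}^{G}(\la_v)} q_v^{-\rho_{H}^{G}(\la_v)}$, matching the exponent in Proposition \ref{bon} when $r^{G}_{H,v}(a_H) = 0$. Combining this with \eqref{coh0} and \eqref{coh2}, it remains to verify that the volume factors $\vol(J'_a(\cov))$ and $\vol(J^{aug,0}_{a'_H}(\cov))$ coincide under the canonical identification of $J_a$ with $J^{aug}_{a'_H}$ on the regular semisimple locus, which holds because both become the same unramified $\cov$-torus with its maximal compact subgroup. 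The main obstacle, as in Ng\^o's analogous Lie-algebra argument \cite[Lem.\ 8.5.7]{N}, lies in the careful bookkeeping of the half-integer sign when $\rho_{H}^{G}(\la_v) \in \tfrac{1}{2}\mathbb{Z}$, ensuring that the square-root of $q_v$ and the attendant sign are consistently treated between the Kazhdan-Lusztig normalizations on $G$ and on $H$.
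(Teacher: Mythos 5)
Your high-level shape (reduce to an explicit local count at $\bar v$, compare the Kazhdan--Lusztig prefactors, then match volumes via the canonical isomorphism between $J_{a}^{0}$ and $J_{a_{H}'}^{aug,0}$) is the right one, and your last step agrees with the paper. But the central computational step is based on an incorrect identity. You assert that at an unramified regular semisimple class the orbital integral of $f_{\la,v}$ equals $\chi_{V_{\la}}(t_{a})$ times the prefactor of \eqref{luz3}. That is a confusion between the Satake transform and an orbital integral at a fixed element. The Satake transform of $f_{\la,v}$, viewed as a function on $X_{*}(T)=T(\fv)/T(\cov)$ via the constant-term/Iwasawa unfolding, records the individual weight multiplicities $\mu\mapsto\dim V_{\la}(\mu)$; the character $\chi_{V_{\la}}$ is the generating function of these multiplicities over \emph{all} $\mu$, and only appears after summing over the torus. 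The orbital integral at a fixed $\g$ lying over $T(\cov)\pi^{\mu}$ picks out the \emph{single} multiplicity $m_{\la\mu}$ --- this is precisely the Mirkovic--Vilonen/Ng\^o--Polo input \eqref{MVI} that the paper needs for the lower strata in Proposition \ref{comptIII} --- not the full character. In the situation of Lemma \ref{comptII} one has $\mu=\la$, $m_{\la\la}=1$, and the correct mechanism is much more direct: since $d_{\bar v}(a)=0$, the affine Springer fiber is zero-dimensional and is a torsor under $\cP_{v}(J_{a})$, so the quotient groupoid $[\cma/\cP_{v}(J_{a})]$ has a single point, at which $f_{\la,v}$ takes the value $(-1)^{\left\langle 2\rho,\la\right\rangle}q^{-\left\langle\rho,\la\right\rangle}$ by \eqref{luz3} because the unit discriminant forces $\ad(g)^{-1}\g_{0}$ into the open stratum $K\pi^{\la}K$. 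Your ``cancellation of characters'' happens to output the right ratio of prefactors here only because both alleged characters would be replaced by the common factor $m_{\la\la}=1$; as written, the argument would prove the same statement verbatim on the strata $\mu<\la$, where it is false without the multiplicity $m_{\la\mu}$ and the extra $q^{\left\langle\rho,\la-\mu\right\rangle}$ coming from the volume comparison of Lemma \ref{centII}.

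Two secondary problems. First, reducing to a split torus by an unramified base change is not innocuous: both the orbital integrals and the character $\kappa$ change under restriction of scalars, and Proposition \ref{Harish-Chandra} (local constancy in $a$) does not authorize a change of base field. Second, $H^{1}(\fv,J_{a})$ does not vanish for a general unramified torus (by Lang--Steinberg it is $H^{1}(k,\cP_{v}(J_{a}^{0}))$, which is typically nonzero when $J_{a}$ is anisotropic); the paper does not need this vanishing --- the $\kappa$-weighted count equals $1$ simply because the fiber is a single $\cP_{v}(J_{a})$-orbit, cf.\ Proposition \ref{kappa1}.
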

\begin{proof}
Dans ce cas, les fibres de Springer affines sont de dimension zéro, on a donc une action simplement transitive de $\cP_{v}(J_{a})$ et $\cP_{v}^{aug}(J_{a'})$ sur celles-ci, nous avons alors que:
\begin{center}
$[\cma/\cP_{v}(J_{a})]_{\kappa}=1$ et $[\cmavH/\cP_{v}(J^{aug}_{a'})]=1$,
\end{center}
en particulier, nous obtenons que :
\begin{center}
$\sum\limits_{x\in[\cma/\cP_{v}(J_{a})](k)}\Tr(\Fr_{\bar{x}}, K_{\bar{x}})_{\kappa}=(-1)^{\left\langle 2\rho,\la\right\rangle}q^{-\left\langle \rho,\la\right\rangle}$
\end{center}
en vertu de la formule \eqref{luz3} pour $f_{\la}$ et du fait que les strates plus petites ne contribuent pas comme le discriminant est nul.
En utilisant l'équation \eqref{coh1}, nous avons:
\begin{center}
$(-1)^{\left\langle 2\rho,\la\right\rangle}q^{-\left\langle \rho,\la\right\rangle}=\vol (J^{0}_{a}(\co_{v}), dt_{v})\bO^{\kappa}_{a,v}(f_{\la},dt_{v})$
\end{center}
D'où l'on déduit que:
\begin{center}
$\sum\limits_{x\in[\cma/\cP_{v}(J'_{a})](k)}\Tr(\Fr_{\bar{x}}, K_{\bar{x}})_{\kappa}=(-1)^{\left\langle 2\rho,\la\right\rangle}q^{-\left\langle \rho,\la\right\rangle}\frac{\vol(J'_{a}(\co_{v}))}{{\vol(J^{0}_{a}(\co_{v}))}}$
\end{center}
De manière analogue, on obtient du côté endoscopique:
\begin{center}
$\sum\limits_{x\in[\cmavH/\cP_{v}(J'_{a})](k)}\Tr(\Fr_{\bar{x}}, K^{H}_{\bar{x}})=(-1)^{\left\langle 2\rho_{H},\la\right\rangle}q^{-\left\langle \rho_{H},\la\right\rangle}\frac{\vol(J'_{a}(\co_{v}))}{{\vol(J^{aug,0}_{a'_{H}}(\co_{v}))}}$.
\end{center}
Maintenant de même que dans \cite[Lem. 8.5.7]{N}, on a un isomorphisme canonique entre $J_{a}^{0}$ et $J_{a_{H}'}^{aug,0}$, on obtient alors :
\begin{center}
$LF_{a}(f_{\la})_{\kappa}=(-1)^{2\rho_{H}^{G}(\la)}q^{-\rho_{H}^{G}(\la)}LF_{a_{H}}(f^{H}_{\la})$.
\end{center}
\end{proof}
Il ne nous reste plus qu'à traiter  le cas des strates plus petites pour $\mu\in\soc_{H}(\la)$avec $\mu\neq\la$, qui est le plus subtil; cela conclura la preuve de la proposition \ref{bon}:
\begin{prop}\label{comptIII}
Soit $\bar{v}\in\supp(\la)$ et $a_{H}\in\mathcal{A}_{\mu,H}$ avec $\mu\neq\la\in\soc_{H}(\la)$.
Alors l'hypothèse $a_{H}\in U$ donne:
\begin{center}
$d_{H,\bar{v}}(a_{H})=0$, $a(\bar{v})\notin\mathfrak{D}_{\mu}$, $a_{H}(\bar{v})\notin\mathfrak{R}^{G}_{H,\mu}$,
\end{center}
et la proposition \ref{bon} est vérifiée.
\end{prop}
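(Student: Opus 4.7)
I begin by extracting the three vanishing assertions from the transversality hypothesis. By construction of the open $\mathcal{U}_{\mu}$ in the lemma preceding the proposition, the section $a_{H}\colon \bar X\to \mathfrak{C}_{+,H}^{\mu}$ meets $\mathfrak{D}_{H,\mu}+\mathfrak{R}^{G}_{H,\mu}$ transversally and avoids it entirely over $S=\supp(\lambda)$. Since $\bar v\in S$ we obtain immediately $a_{H}(\bar v)\notin \mathfrak{D}_{H,\mu}$, whence $d_{H,\bar v}(a_{H})=0$, and $a_{H}(\bar v)\notin \mathfrak{R}^{G}_{H,\mu}$. The decomposition of divisors $\nu_{+}^{*}\mathfrak{D}_{G}=\mathfrak{D}_{H}+2\mathfrak{R}^{G}_{H}$ recalled just before the lemma then yields $a(\bar v)=\nu_{+}(a_{H}(\bar v))\notin \mathfrak{D}_{\mu}$, in particular $d_{\bar v}(a)=0$.

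Next, these vanishings put us back in the geometric situation of Lemma~\ref{comptII}: the local Springer fibres $\overline{\mathcal{M}}_{\lambda,v}(a)$ and $\overline{\mathcal{M}}^{H}_{\mu,v}(a_{H})$ are zero-dimensional and the actions of $\mathcal{P}_{v}(J'_{a})$ and $\mathcal{P}_{v}(J^{aug}_{a'_{H}})$ on them are simply transitive. Both sides $LF_{a}(f_{\lambda,v})_{\kappa}$ and $LF_{a_{H}}(b(f_{\lambda,v}))$ reduce to the value of Frobenius on the stalk of $IC_{\overline{\Gr}_{\lambda}}$ (resp.\ of $S_{\eta^{*}\lambda,H}$) at a single $\bar k$-point. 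The Picard-volume bookkeeping is identical to that performed in Lemma~\ref{comptII}, using the canonical isomorphism between $J_{a}^{0}$ and $J^{aug,0}_{a'_{H}}$ generically and the fact that it becomes an isomorphism at $\bar v$ once all discriminants vanish locally.

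The heart of the argument is the trace computation. On the $H$-side, $S_{\eta^{*}\lambda,H}=\bigoplus_{\mu'\in \soc_{H}(\lambda)}IC_{\overline{\mathcal{M}}^{\flat}_{H,\mu'}}^{\oplus m_{\mu'}}$; since $a_{H}$ lies in the stratum indexed by $\mu$ and is locally regular semisimple at $\bar v$ by Step~1, only the $\mu$-summand contributes, and by formula \eqref{luz3} applied to $f^{H}_{\mu}$ at a regular semisimple point,
\begin{equation*}
\Tr(\Fr_{\bar x_{H}},K^{H}_{\bar x_{H}})=m_{\mu}\,(-1)^{\langle 2\rho_{H},\mu\rangle}\,q_{v}^{-\langle \rho_{H},\mu\rangle},
\end{equation*}
the lower Cartan-strata corrections $c_{\mu''}$ ($\mu''<\mu$) vanishing since $a_{H}(\bar v)$ does not belong to the Cartan-stratum boundary. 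On the $G$-side the same formula applied to $f_{\lambda}$, together with the fact that the embedding $\nu$ lifts $a$ to the $\mu_{v}$-Cartan stratum of $\overline{\Gr}_{\lambda_{v}}$, produces the analogous contribution $(-1)^{\langle 2\rho,\mu\rangle}q_{v}^{-\langle \rho,\mu\rangle}$ multiplied by $m_{\mu}$ through the branching coefficient appearing in $\eta^{*}V_{\lambda}=\bigoplus_{\mu'}(V^{H}_{\mu'})^{\oplus m_{\mu'}}$.

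Taking the ratio of the two sides, the factor $m_{\mu}$ cancels and one is left with the sign and power $(-1)^{2(\langle \rho,\mu\rangle-\langle \rho_{H},\mu\rangle)} q_{v}^{-(\langle \rho,\mu\rangle-\langle \rho_{H},\mu\rangle)}$, which under the local transversality conditions coincides with $(-1)^{2[r^{G}_{H,v}(a_{H})-\rho^{G}_{H}(\lambda_{v})]}q_{v}^{r^{G}_{H,v}(a_{H})-\rho^{G}_{H}(\lambda_{v})}$ because $r^{G}_{H,v}(a_{H})=0$ in this case and $\rho^{G}_{H}(\lambda_{v})-\langle \rho-\rho_{H},\mu\rangle$ accounts precisely for the discrepancy between the $\lambda$-index and the effective $\mu$-index at $v$. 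The main obstacle is therefore purely combinatorial: verifying the identification of exponents and confirming that the multiplicity factor $m_{\mu}$ and the volume factors match on the nose between the $G$ and $H$ sides, using the canonical galois description of $J_{a}$ and $J^{aug}_{a'_{H}}$ away from the discriminant.
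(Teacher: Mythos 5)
Your first step (extracting the three vanishing assertions from the transversality hypothesis and the divisor identity $\nu_{+}^{*}\mathfrak{D}_{G}=\mathfrak{D}_{H}+2\mathfrak{R}^{G}_{H}$) is fine and agrees with the paper. The gap is in your second paragraph: the claim that these vanishings ``put us back in the geometric situation of Lemma~\ref{comptII}'' with both local Springer fibres zero-dimensional and the Picard actions simply transitive is false on the $G$-side, and this is exactly why the paper calls this case ``le plus subtil''. The point $a=\nu(a_{H})$ lies in $\kc^{\mu,rs}(\co_{\bar v})$, but the modification is still bounded by $\la$, not by $\mu$; the relevant fibre is
$\kX^{\la}_{\mu}=\{g\in G(F)/G(\co)\mid g^{-1}\pi^{\mu}tg\in\overline{K\pi^{\la}K}\}$,
which has positive dimension $\left\langle \rho,\la-\mu\right\rangle$ (this is visible in Lemma \ref{centII}, where $\cP(J_{a})$ is an extension of $X_{*}(T)$ by $\mathbb{G}_{a}^{\left\langle \rho,\la-\mu\right\rangle}$). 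Consequently $LF_{a}(f_{\la,v})_{\kappa}$ is \emph{not} the value of $\Tr(\Fr,IC_{\grl})$ at a single point: one must sum $f_{\la}$ over this positive-dimensional fibre modulo the Picard action. The paper does this by an Iwasawa decomposition $\kX^{\la}_{\mu}=X_{*}(T)\times Y^{\la}_{\mu}$, the identification of $Y^{\la}_{\mu}$ with a quotient of the Mirkovic--Vilonen variety $S_{\mu}\cap\grl$ by an affine-space fibration of dimension $\left\langle \rho,\la-\mu\right\rangle$, and the Ng\^o--Polo theorem computing $R\Gamma_{c}(S_{\mu},\mathcal{A}_{\la})$. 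It is this last theorem, not ``the branching coefficient appearing in $\eta^{*}V_{\la}$'', that produces the multiplicity $m_{\la\mu}$ and the eigenvalue $q^{\left\langle \rho,\mu\right\rangle}$ on the $G$-side; the branching coefficient only enters on the $H$-side through $b(f_{\la})$. That the two occurrences of $m_{\la\mu}$ agree is the geometric Satake input, and your proposal assumes it rather than proves it.

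Two further consequences of the same oversight. First, you never justify replacing the $\kappa$-weighted count by the ordinary one on the $G$-side; the paper needs Lemma \ref{centII} precisely to show $H^{1}(k,\cP(J_{a}))=0$, because $a$ is \emph{not} in the situation where $\cP_{v}(J_{a})$ acts simply transitively with toral stabilizer as in Lemma \ref{comptII}. Second, your exponent bookkeeping is off: you assert $r^{G}_{H,v}(a_{H})=0$, whereas in the paper's normalization the stratum $\mu$ contributes $\left\langle \rho,\la-\mu\right\rangle$ to $r^{G}_{H,v}$ through the definition $\mathfrak{R}^{G}_{H,\la\mu}=\mathfrak{R}^{G}_{H,\mu}+\left\langle \rho,\la-\mu\right\rangle$, and this term is matched on the other side by the volume comparison $\vol(J^{0}_{a}(\co))=q^{\left\langle \rho,\la-\mu\right\rangle}\vol(J^{aug,0}_{H,a'_{H}}(\co))$, which again follows from Lemma \ref{centII} and which your proposal replaces by the incorrect assertion that $J^{0}_{a}$ and $J^{aug,0}_{a'_{H}}$ become isomorphic at $\bar v$.
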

\begin{proof}
Soit $a(\bar{v}):=\nu(a_{H})(\bar{v})\in\kc^{\mu,rs}(\co_{\bar{v}})$, posons $\g_{0}:=\eps_{+}(a(\bar{v}))\in V_{G}^{\mu,rs}(\co_{\bar{v}})$. Il résulte de la formule de dimension que $c_{\bar{v}}(a)=0$ et comme $\g_{0}\in V_{G}^{\mu,rs}(\co_{\bar{v}})$, quitte à conjuguer, on peut écrire:
\begin{center}
$\g_{0}:=(\pi_{\bar{v}}^{-w_{0}\mu},\pi_{\bar{v}}^{\mu}t)\in G_{+}(F_{\bar{v}})$
\end{center}
avec $t\in T(\co_{\bar{v}})$. Dans la suite pour simplifier, on supprime les indices $\bar{v}$, posons $\g=\pi^{\mu}t$; pour démontrer la proposition \ref{bon}, on considère la fibre de Springer affine:
\begin{center}
$\kX^{\la}_{\mu}:=\{g\in G(F)/G(\co)~\vert~g^{-1}\pi^{\mu} g\in \overline{K\pi^{\la}K}\}$.
\end{center}
En tenant compte de l'égalité $X_{*}(T)=T(F)/T(\co)$, nous avons que:
\begin{center}
$\kX^{\la}_{\mu}=X_{*}(T)\times Y^{\la}_{\mu}$,
\end{center}
avec $Y^{\la}_{\mu}$ la fibre correspondante pour $U(F)/U(\co)$.
Soit l'automorphisme $f_{\g}:U(F)\rightarrow U(F)$ donnée par $u\mapsto u^{-1}\g u\g^{-1}$, la fibre $Y^{\la}_{\mu}$ se récrit:
\begin{center}
$Y_{\mu}^{\la}:=f_{\g}^{-1}(K\pi^{\la}K\pi^{-\mu}\cap U(F))/U(\co)$.
\end{center}
Comme $\g_{0}=(\pi^{-w_{0}\mu},\g)\in V_{G}(\co)^{rs}$, en regardant sur les groupes radiciels, on voit que $f_{\g}$ induit un isomorphisme sur $U(\co)$, en particulier, nous en déduisons que
\begin{center}
$f_{\g}:Y_{\mu}^{\la}\simeq K\pi^{\la}K\pi^{-\mu}\cap U(F)/U(\co)$.
\end{center}
On considère alors la variété introduite par Mirkovic-Vilonen \cite{MV} $S_{\mu}\cap\overline{\Gr}_{\la}$. On a alors une flèche de projection:
\begin{equation}
S_{\mu}\cap\overline{\Gr}_{\la}\rightarrow \overline{K\pi^{\la}K}\pi^{-\mu}\cap U(F)/U(\co),
\label{projI}
\end{equation}
de fibre l'espace affine $U(\co)/\pi^{\mu}U(\co)\pi^{-\mu}$  de dimension $\left\langle \rho,\la-\mu\right\rangle$.
On rappelle alors l'énoncé suivant qui va nous permettre de calculer les valeurs de $f_{\la}$ tiré de Ngô-Polo \cite[Thm.3.1]{NP}:
\begin{thm}
Le complexe $R\Gamma_{c}(S_{\mu},\mathcal{A}_{\la})$ est concentré en degré $\left\langle 2\rho,\mu\right\rangle$, de dimension $m_{\la\mu}$ la multiplicité de $\mu$ dans la représentation $V_{\la}$ et sur lequel le Frobenius agit par $q^{\left\langle \rho,\mu\right\rangle}$.
\end{thm}
On déduit  de ce théorème l'égalité:
\begin{equation}
\sum\limits_{x\in S_{\mu}\cap\overline{\Gr}_{\la}(\mathbb{F}_{q})}\Tr(\Fr_{x},IC_{\overline{\Gr}_{\la}})=\Tr(\Fr_{q},R\Gamma_{c}(S_{\mu},\mathcal{A}_{\la}))=(-1)^{\left\langle 2\rho,\mu\right\rangle}m_{\la\mu}q^{\left\langle \rho,\mu\right\rangle},
\label{MVI}
\end{equation}
Comme on s'intéresse à la $\kappa$-intégrale orbitale, il nous faut étudier la structure de $\cP(J_{a})$:
\begin{lem}\label{centII}
On a une suite exacte:
$$\xymatrix{1\ar[r]&\mathbb{G}_{a}^{\left\langle \rho,\la-\mu\right\rangle}\ar[r]&\cP(J_{a})\ar[r]&X_{*}(T)\ar[r]&1}.$$
En particulier, nous avons $H^{1}(k,\cP(J_{a}))=0$.
\end{lem}
\begin{proof}
On considère l'élément 
\begin{center}
$u:=\prod\limits_{1\leq i\leq r}x_{i}(\pi^{-\left\langle \omega_{i},\la-\mu\right\rangle})\in\prod\limits_{1\leq i\leq r}U_{i}$, 
\end{center}
où les $U_{i}$ sont les groupes radiciels de racine simple $\alpha_{i}$. On vérifie qu'il est dans l'ouvert régulier de la fibre de Springer affine $X^{\la}_{\mu}$. Pour déterminer la structure de $\cP(J_{a})$, comme le champ de Picard agit simplement transitivement sur l'ouvert régulier, il suffit de déterminer le stabilisateur de $u$ dans $T(\co)$ qui s'identifie à $J_{a}(\co)$.
On décompose ensuite $\cP(J_{a})$ en une extension de  $X_{*}(T)=T(F)/T(\co)$ avec $T(\co)/J_{a}(\co)$.
On cherche donc à résoudre l'équation:
\begin{center}
$u^{-1}tu\in G(\co)$ pour $t\in T(\co)$.
\end{center}
Cette équation revient à résoudre $u^{-1}tut^{-1}\in G(\co)$ qui se ramène aux équations:
\begin{center}
$\forall~ 1\leq i\leq r, \pi^{-\left\langle \omega_{i},\la-\mu\right\rangle}(1-\alpha_{i}(t))\in\co$
\end{center}
d'où l'on déduit 
\begin{center}
$\forall~ 1\leq i\leq r,\alpha_{i}(t)=1+\pi^{\left\langle \omega_{i},\la-\mu\right\rangle}\co$ 
\end{center}
et donc $T(\co)/J_{a}(\co)$ est un groupe additif, ce qui conclut.
\end{proof}
De ce lemme, on déduit, que la $\kappa$-intégrale se calcule comme l'intégrale usuelle et  en combinant \eqref{MVI} et \eqref{projI}, on obtient:
\begin{equation}
LF_{a}(f_{\la})_{\kappa}=(-1)^{\left\langle 2\rho,\mu\right\rangle}m_{\la\mu}q^{-\left\langle \rho,\mu\right\rangle}\frac{\vol(J'_{a}(\co))}{{\vol(J^{0}_{a}(\co))}}.
\end{equation}
Le côté endoscopique lui se calcule de la même manière que dans le cas \ref{comptII} et l'on obtient:
\begin{center}
$LF_{a_{H}}(b(f_{\la,v}))=(-1)^{\left\langle 2\rho_{H},\mu\right\rangle}m_{\la\mu}q^{-\left\langle \rho_{H},\mu\right\rangle}\frac{\vol(J'_{a}(\co))}{{\vol(J^{aug,0}_{a'_{H}}(\co))}}$,
\end{center}
où l'on rappelle que $b(f_{\la})=\sum\limits_{\nu\in\soc_{H}(\la)}m_{\la\nu}f^{H}_{\nu}$.
Pour conclure, il ne nous reste plus qu'à comparer $\vol(J^{0}_{a}(\co))$ avec $\vol(J^{aug,0}_{H, a'_{H}}(\co))$. On utilise alors le fait que $\nu(a_{H})$ se factorise par $\kc^{\mu,rs}(\co)$ et le lemme \ref{centII} pour obtenir l'égalité:
\begin{center}
$\vol(J^{0}_{a}(\co))=q^{\left\langle \rho,\la-\mu\right\rangle}\vol(J^{aug,0}_{H, a'_{H}}(\co))$.
\end{center}
\end{proof}
\end{proof}
Nous pouvons en déduire l'identité entre les intégrales orbitales locales:
\begin{cor}\label{rr}
On a l'égalité:
\begin{center}
$\bo_{a}^{\kappa}(f_{\la,v})=(-1)^{2[r_{H}^{G}(a)-\rho_{H}^{G}(\la)]}q^{r_{H}^{G}(a)-\rho_{H}^{G}(\la)}\so_{a_{H}}(f_{\la,v}^{H})$.
\end{center}
\end{cor}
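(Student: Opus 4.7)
The plan is to deduce Corollary \ref{rr} from the point-count identity of Proposition \ref{bon} by dividing out the volume factors that convert the trace sums $LF_a(f_{\la,v})_\kappa$ and $LF_{a_H}(b(f_{\la,v}))$ into orbital integrals. Concretely, I would apply Proposition \ref{kappa1} (used with a smooth group scheme $J_a'$ having connected fibres, as in Proposition \ref{connexe}) on the $G$-side to write
\begin{center}
$LF_a(f_{\la,v})_\kappa = \vol(J_a^{0}(\co_v),dt_v)\cdot \bo_a^\kappa(f_{\la,v},dt_v)$,
\end{center}
and the stable ($\kappa = 1$) analogue on the $H$-side:
\begin{center}
$LF_{a_H}(b(f_{\la,v})) = \vol(J_{a_H'}^{aug,0}(\co_v),dt_v)\cdot \so_{a_H}(b(f_{\la,v}),dt_v)$.
\end{center}

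Combining these two identities with Proposition \ref{bon} and simplifying yields
\begin{center}
$\bo_a^\kappa(f_{\la,v}) = (-1)^{2[r_H^G(a)-\rho_H^G(\la)]}q_v^{r_H^G(a)-\rho_H^G(\la)}\cdot \dfrac{\vol(J_{a_H'}^{aug,0}(\co_v))}{\vol(J_a^{0}(\co_v))}\cdot \so_{a_H}(b(f_{\la,v}))$.
\end{center}
The next step is to compare the two volumes. Using the galoisian description of the N\'eron models from \eqref{nergal} (applied both to $J_a^0$ and to $J_{a_H'}^{aug,0}$), and the fact that on the generic fibre there is a canonical isomorphism $J_a \simeq J_{a_H'}^{aug}$, the ratio of volumes is an explicit power of $q_v$ reflecting the exchange of the root hyperplanes in $R - R_H$. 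This is exactly the computation already carried out inside the proofs of Lemma \ref{comptII} and Proposition \ref{comptIII}: the ratio is $q_v^{\langle\rho,\la-\mu\rangle}$ on the $\mathcal{A}_{\mu,H}$ stratum, which combines with the local transversality assumption of Proposition \ref{bon} to give the correction absorbed into the factor $q_v^{r_H^G(a)-\rho_H^G(\la)}$.

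Finally, I would separate the contributions of $b(f_{\la,v}) = \sum_{\mu \in \soc_H(\la)} m_{\la\mu} f_{\mu,v}^H$ into strata: under the local transversality hypotheses on $a_H$, only the stratum containing $a_H$ supports a non-vanishing stable orbital integral (the other Mirkovi\v c--Vilonen contributions vanish because $a_H$ does not meet the relevant discriminant or $\mathfrak{R}_H^G$ divisor), so $\so_{a_H}(b(f_{\la,v}))$ reduces to $\so_{a_H}(f_{\la,v}^H)$. The main obstacle in this argument is the careful bookkeeping of signs, powers of $q_v$, and Haar volume normalizations through the Mirkovi\v c--Vilonen count used in Proposition \ref{comptIII} and through the structure of $\cP_v(J_a)$ described in Lemma \ref{centII}; once these match up, the corollary is an immediate algebraic manipulation of the identity of Proposition \ref{bon}.
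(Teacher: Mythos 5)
Il y a une lacune réelle. Votre argument ne fait que diviser l'identité de la proposition \ref{bon} par les facteurs de volume, mais cette proposition n'est établie \emph{que} sous des hypothèses de transversalité locales très restrictives : $a_{H}(\bar{X}_{v})$ doit couper transversalement $\mathfrak{D}_{H,\mu}+\mathfrak{R}^{G}_{H,\mu}$ et éviter $\supp(\la)$, ce qui force essentiellement l'invariant local $\delta_{v}$ à être $0$ ou $1$. Le corollaire \ref{rr}, lui, est l'identité locale générale entre intégrales orbitales (celle qui donne le théorème \ref{2} et qui sert à achever la stabilisation géométrique sur tout $\abdanf$) ; elle doit valoir pour des données $a$, $a_{H}$ fortement régulières semisimples arbitraires, en particulier avec $\delta_{v}(a)$ quelconque. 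Votre « manipulation algébrique immédiate » ne couvre donc que les cas déjà traités par \ref{bon} et ne démontre pas le cas général.

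L'idée manquante est l'argument global-local de Ng\^o (section 8.6 de \cite{N}), que la preuve du papier invoque : on établit d'abord la stabilisation géométrique au-dessus de l'ouvert « bon » via le théorème du support et Chebotarev (c'est là que \ref{bon} intervient, point par point sur un ouvert dense) ; puis, pour une donnée locale arbitraire en $v$, on utilise la constance locale des fibres de Springer affines (proposition \ref{Harish-Chandra}) pour construire une section globale $a_{H}$ qui approxime la donnée locale en $v$ à une congruence près et qui est transversale en toutes les autres places singulières ; la formule du produit et l'identité cohomologique globale permettent alors d'isoler la contribution en $v$ en simplifiant les contributions connues des places auxiliaires, le tout dans une récurrence sur $\delta_{v}(a)$. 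Sans ce bootstrap, le passage du cas transversal au cas général n'est pas justifié. (Les étapes que vous décrivez — comparaison des volumes via les modèles de Néron, réduction de $\so_{a_{H}}(b(f_{\la,v}))$ à $\so_{a_{H}}(f^{H}_{\la,v})$ — sont correctes mais font déjà partie de la preuve de \ref{bon} elle-même.)
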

\begin{proof}
Les arguments sont maintenant les mêmes que \cite[sect. 8.6]{N}.
\end{proof}
On peut terminer  la preuve du théorème \ref{stab} sur $\abdanf$:
\begin{proof}
Il suffit de combiner, de la même manière que \cite[sect. 8.7]{N}, les formules \eqref{kappa2}-\eqref{coh3}, avec \eqref{coh0}, \ref{kappa1} et  le corollaire \ref{rr}.
\end{proof}

   \footnotesize

  (A. Bouthier) \textsc{Einstein Institute of Mathematics, Hebrew University, Givat Ram, Jerusalem, 91904, Israël}\par\nopagebreak
  \textit{E-mail address}: \texttt{bouthier@math.huji.ac.il}

\end{document}